\newtheorem{thm}{Theorem}[section]
\newtheorem{prop}[thm]{Proposition}
\newtheorem{lem}[thm]{Lemma}
\newtheorem{cor}[thm]{Corollary}  \theoremstyle{definition}
\newtheorem{df}[thm]{Definition}   \theoremstyle{definition}
\newtheorem{ques}[thm]{Question}
\newtheorem{rem}[thm]{Remark}                \theoremstyle{plain}
 \theoremstyle{definition}
\newtheorem{ex}[thm]{Example}   
\def\CC{\Bbb{C}}
\def\RR{\Bbb{R}}  
\def\DD{\Bbb{D}}
\def\CCI{\hat{\CC}}        \def\NN{\Bbb{N}} 
\def\B1{{\rm\kern.32em\vrule    width.12em       height1.4ex
depth-.05ex\kern-.28em 1}}
\def\G{\Gamma}
\def\g{\gamma }
\def\l{\lambda }
\def\ov{\overline}
\def\GN{\Gamma ^{\NN }}
\def\CMX{\text{CM}(Y)}
\def\emCMX{\text{{\em CM}}(Y)}
\def\OCM{\text{OCM}}
\def\OCMX{\text{OCM}(Y)}
\def\emOCMX{\text{{\em OCM}}(Y)}
\def\HMX{\text{HM}(Y)}
\def\NHM{\text{NHM}}
\def\NHMX{\text{NHM}(Y)}
\def\CPn{\Bbb{C} \Bbb{P}^{n}}
\def\MHDt{\text{MHD}(\tau )}
\def\emMHDt{\text{{\em MHD}}(\tau )}
\def\Rat{\text{Rat}}
\def\emRat{\text{{\em Rat}}}
\def\Ratp{\text{Rat}_{+}}
\def\emRatp{\text{{\em Rat}}_{+}}
\def\suppt{\text{supp}\, \tau}
\def\Cpt{\text{Cpt}}
\def\emCpt{\text{\em Cpt}}
\def\Hol{\text{H\"{o}l}}
\def\emHol{\text{{\em H\"{o}l}}}
\def\Min{\text{Min}}
\def\emMin{\text{\em Min}}
\def\LSfc{\text{LS}({\cal U}_{f,\tau }(\hat{\Bbb{C}}))}
\def\emLSfc{\text{{\em LS}}({\cal U}_{f,\tau }(\hat{\Bbb{C}}))}
\def\LSfac{\text{LS}({\cal U}_{f,\tau ,\ast }(\hat{\Bbb{C}}))}
\def\emLSfac{\text{{\em LS}}({\cal U}_{f,\tau ,\ast }(\hat{\Bbb{C}}))}
\def\LSfk{\text{LS}({\cal U}_{f,\tau }(S_{\tau }))}
\def\emLSfk{\text{{\em LS}}({\cal U}_{f,\tau }(S_{\tau }))}
\def\LSfl{\text{LS}({\cal U}_{f,\tau }(L))}
\def\emLSfl{\text{{\em LS}}({\cal U}_{f,\tau }(L))}
\def\Ufc{{\cal U}_{f,\tau }(\hat{\Bbb{C}})}
\def\Uvc{{\cal U}_{v,\tau }(\hat{\Bbb{C}})}
\def\Ufac{{\cal U}_{f,\tau ,\ast }(\hat{\Bbb{C}})}
\def\Uvac{{\cal U}_{v,\tau ,\ast }(\hat{\Bbb{C}})}
\def\Uvk{{\cal U}_{v,\tau }(S_{\tau })}
\def\Uvak{{\cal U}_{v,\tau ,\ast }(S_{\tau })}
\def\Ufl{{\cal U}_{f,\tau }(L)}
\def\Uvl{{\cal U}_{v,\tau }(L)}
\def\Uval{{\cal U}_{v,\tau ,\ast }(L)}
\begin{document}
\title{Random complex dynamics and\\ semigroups of holomorphic maps
\footnote{Published in Proc. London Math. Soc. (2011), 102 (1), 50--112. 
2000 Mathematics Subject Classification. 
37F10, 30D05. Keywords: Random dynamical systems, random complex dynamics, 
random iteration, Markov process, rational semigroups, polynomial semigroups,   
Julia sets, fractal geometry, cooperation principle, noise-induced order.}}

\author{Hiroki Sumi\\  
Department of Mathematics, 
Graduate School of Science, Osaka University\\ 
1-1, Machikaneyama, Toyonaka, Osaka, 560-0043, Japan \\ 
{\bf E-mail: sumi@math.sci.osaka-u.ac.jp}\\ 
http://www.math.sci.osaka-u.ac.jp/\textasciitilde sumi/welcomeou-e.html
\date{May 15, 2010}
}
\maketitle
\begin{abstract}
We investigate the random dynamics of rational maps on the Riemann sphere $\CCI $ and the dynamics 
of semigroups of rational maps on $\CCI .$ 
%We see that the both fields are related to each other very deeply. 
We show that regarding random complex dynamics of polynomials, 
in most cases, the chaos of the averaged system disappears,  
due to the cooperation of the generators. 
We investigate the iteration and spectral properties of transition operators.
% and the dynamics of associated semigroups of rational maps.   
%We define several kinds of Julia sets of the associated Markov processes and 
%we study the properties and the dimension of them.  
We show that under certain conditions, in the limit stage, 
``singular functions on the complex plane'' appear. 
In particular, we consider the functions $T$ which represent the probability of tending to infinity 
with respect to the random dynamics of polynomials.  
Under certain conditions  
these functions $T$ are  
 complex analogues of the devil's staircase and Lebesgue's singular functions. 
 More precisely, we show that these functions $T$ are continuous on $\CCI $ and vary only on 
the Julia sets of associated semigroups.  
Furthermore, by using ergodic theory and potential theory, 
we investigate the non-differentiability and regularity of these 
%complex analogues of singular functions.   
functions. 
We find many phenomena which can hold in the random complex dynamics and the dynamics of 
semigroups of rational maps, but cannot hold in the usual iteration 
dynamics of a single holomorphic map. 
%We systematically investigate 
%those phenomena and their mechanisms.   
We carry out a systematic study of these phenomena and their mechanisms. 
\end{abstract}
\section{Introduction}
In this paper, we investigate the random dynamics of rational maps on the Riemann sphere $\CCI $ and the dynamics 
of rational semigroups (i.e., semigroups of non-constant rational maps 
where the semigroup operation is functional composition) on $\CCI .$ 
We see that the both fields are related to each other very deeply.
In fact, we develop both theories simultaneously.  

One motivation for research in complex dynamical systems is to describe some
 mathematical models on ethology. For 
 example, the behavior of the population 
 of a certain species can be described by the 
 dynamical system associated with iteration of a polynomial 
 $f(z)= az(1-z)$ 
 such that $f$ preserves the unit interval and 
 the postcritical set in the plane is bounded 
 (cf. \cite{D}). However, when there is a change in the natural environment,  
some species have 
 several strategies to survive in nature. 
From this point of view, 
 it is very natural and important not only to consider the dynamics 
of iteration, where the same survival strategy (i.e., function) is repeatedly applied, but also 
to consider random 
 dynamics, where a new strategy might be applied at each time step.  
The first study of random complex dynamics was given by J. E. Fornaess and  N. Sibony (\cite{FS}). 
For research on random complex dynamics of quadratic polynomials, 
see \cite{Br1, Br2, BBR, Bu1, Bu2, GQL}.  
For research on random dynamics of polynomials (of general degrees) 
with bounded planar postcritical set, see the author's works \cite{S11, S10,SdpbpI, SdpbpII, SdpbpIII, Ssugexp}. 
  
The first study of dynamics of rational semigroups was 
conducted by
A. Hinkkanen and G. J. Martin (\cite{HM}),
who were interested in the role of the
dynamics of polynomial semigroups (i.e., semigroups of non-constant polynomial maps) while studying
various one-complex-dimensional
moduli spaces for discrete groups,
and
by F. Ren's group (\cite{GR}), 
 who studied 
such semigroups from the perspective of random dynamical systems.
Since the Julia set $J(G)$ of a finitely generated rational semigroup 
$G=\langle h_{1},\ldots, h_{m}\rangle $ has 
``backward self-similarity,'' i.e.,  
$J(G)=\bigcup _{j=1}^{m}h_{j}^{-1}(J(G))$ (see Lemma~\ref{l:bss} and \cite[Lemma 1.1.4]{S1}),  
the study of the dynamics of rational semigroups can be regarded as the study of  
``backward iterated function systems,'' and also as a generalization of the study of 
self-similar sets in fractal geometry.  
   
For recent work on the dynamics of rational semigroups, 
see the author's papers \cite{S1}--\cite{Ssugexp}, \cite{Skokyu10}, and 
\cite{SS, SU3, SU1, SU2, SU4}. 

 In order to consider the random dynamics of a family of polynomials on $\CCI $, 
 let $T_{\infty }(z)$ be the probability of tending to $\infty \in \CCI $ 
starting with the initial value $z\in \CCI .$ 
In this paper, we see that under certain conditions, 
the function 
$T_{\infty }: \CCI \rightarrow [0,1]$ is continuous on $\CCI $ and has 
some singular properties (for instance, varies only on a thin fractal set, the so-called  
Julia set of a polynomial semigroup), and this function is a complex analogue of 
the devil's staircase (Cantor function) or Lebesgue's singular functions 
(see Example~\ref{ex:dc1}, Figures~\ref{fig:dcjulia}, \ref{fig:dcgraphgrey2}, and \ref{fig:dcgraphudgrey2}). 
%Moreover, in this paper we will see that under some condition, 
%for any fixed $z\in \CCI $, $T_{\infty }(z)$ is real-analytic with respect to 
%the probability parameter, and the partial derivative of $T_{\infty }(z)$ with respect to 
%the probability parameter can be regarded as a complex analogue of 
%the Takagi function (see figure~\ref{fig:ctgraphgrey1}). 
Before going into detail, let us recall the definition of 
the devil's staircase (Cantor function) and Lebesgue's singular functions. 
Note that the following definitions look a little bit different from those in \cite{YHK}, 
but it turns out that they are equivalent to those in \cite{YHK}.  
\begin{df}[\cite{YHK}]
Let $\varphi :\RR \rightarrow [0,1] $ be the unique bounded function 
which satisfies the following functional equation:
\begin{equation}\label{dsdef}
\frac{1}{2}\varphi (3x)+\frac{1}{2}\varphi (3x-2)\equiv \varphi (x),\  
\varphi |_{(-\infty ,0]}\equiv 0,\ \varphi |_{[1,+\infty )}\equiv 1.  
\end{equation}
The function $\varphi |_{[0,1]}: [0,1]\rightarrow [0,1]$
is called the {\bf devil's staircase (or Cantor function)}.
\end{df} 
\begin{rem}
The above $\varphi :\RR \rightarrow [0,1]$ is continuous on $\RR $ and varies precisely on 
the Cantor middle third set. Moreover, it is monotone (see Figure~\ref{fig:dsgraph1-1}). 
\end{rem} 
\begin{df}[\cite{YHK}]
Let $0<a<1$ be a constant. 
We denote by $\psi _{a}: \RR \rightarrow [0,1]$ the unique bounded function which 
satisfies the following 
functional equation: 
\begin{equation}\label{Lebesguedef}
a\psi _{a} (2x)+(1-a)\psi _{a} (2x-1)\equiv \psi _{a} (x), \ 
\psi _{a} |_{(-\infty ,0]}\equiv 0,\ \psi _{a} |_{[1,+\infty )}\equiv 1.  
\end{equation}
For each $a\in (0,1)$ with $a\neq 1/2$, the function $L_{a}:=\psi _{a}|_{[0,1]}:[0,1]\rightarrow [0,1]$ is called {\bf Lebesgue's singular function} 
with respect to the parameter $a.$ 
\end{df} 
\begin{rem}
The function $\psi _{a}:\RR \rightarrow [0,1]$ is continuous on $\RR $, monotone on $\RR $, and 
strictly monotone on $[0,1]$. Moreover, if $a\neq 1/2$, then for almost every $x\in [0,1]$ with respect to the one-dimensional Lebesgue 
measure, the derivative of $\psi _{a}$ at $x$ is equal to zero (see Figure~\ref{fig:dsgraph1-1}). 
For the details on the devil's staircase and Lebesgue's singular functions and their related topics, 
see \cite{YHK,HY}.  
%Moreover, in \cite{SeSh}, it was shown that for each fixed $x\in [0,1]$, 
%the function $a\mapsto L_{a}(x)$ is real-analytic on $(0,1).$ 
\end{rem}
%\begin{df}[\cite{YHK}]
%Let $0<a<1$ be a constant. 
%We denote by $\phi : \RR \rightarrow \RR $ be the unique bounded function which satisfies 
%the following functional equation: 
%\begin{equation}\label{takagidef}
%\frac{1}{2}\phi (2x)+\frac{1}{2}\phi (2x-1)+
%\psi _{1/2}(2x)-\psi _{1/2}(2x-1)\equiv 
%\phi (x),\ 
%\phi |_{(-\infty ,0] \cup [1,+\infty )}\equiv 0. 
%\end{equation}
%The function $S:=\frac{\phi }{2}|_{[0,1]}:[0,1]\rightarrow \RR $ is called 
%the {\bf Takagi function}.   
%\end{df}
%\begin{rem}
%\label{tfunctionrem}
%The Takagi function is continuous on $[0,1]$ but non-differentiable at every point of $[0,1]$ 
%(see figure~\ref{fig:dsgraph1-1}). Moreover, 
%in \cite{HY}, it was shown that 
%the function $x\mapsto \frac{1}{2}\frac{\partial L_{a}(x)}{\partial a}|_{a=1/2}$ on $[0,1]$ is equal to 
%the Takagi function. 
%\end{rem}
 \begin{figure}[htbp]
\caption{(From left to right) The graphs of the devil's staircase and Lebesgue's singular function.}
\ \ \ \ \ \ \ \ \ \ \ \ \ \ \ \ \ \ \ \ \ \ \ \ \ \ \ \ \ \ \ 
\ \ \ \ \ \ \ \ \ \ \ \ \ \ \ 
%\begin{minipage}{.2\linewidth}     
\includegraphics[width=2.1cm,height=2.1cm, origin =c, angle =-90]{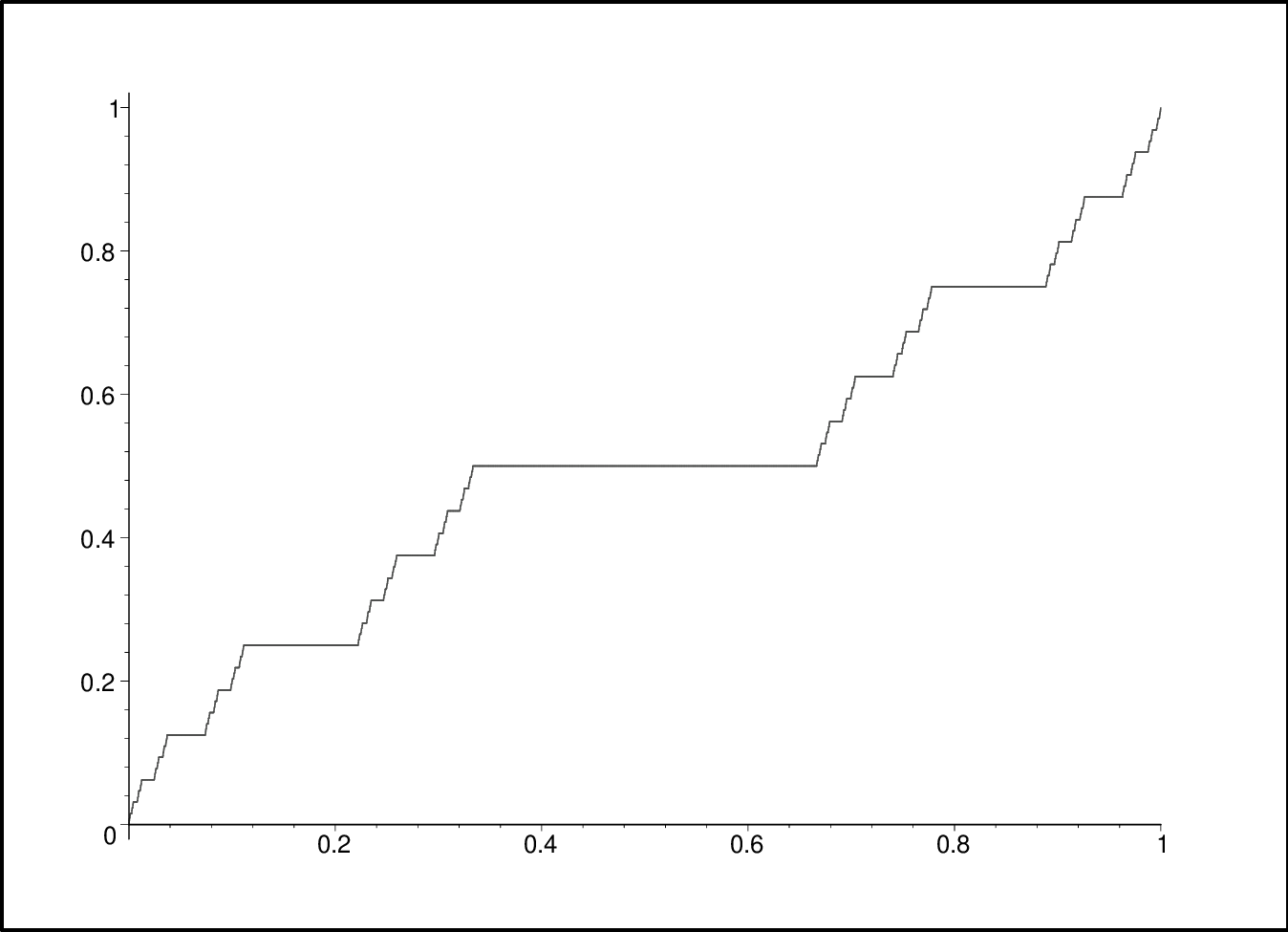}
\label{fig:dsgraph1-1}
%\end{minipage}
%\begin{minipage}{.2\linewidth}
\includegraphics[width=2.1cm,height=2.1cm, origin =c, angle =-90]{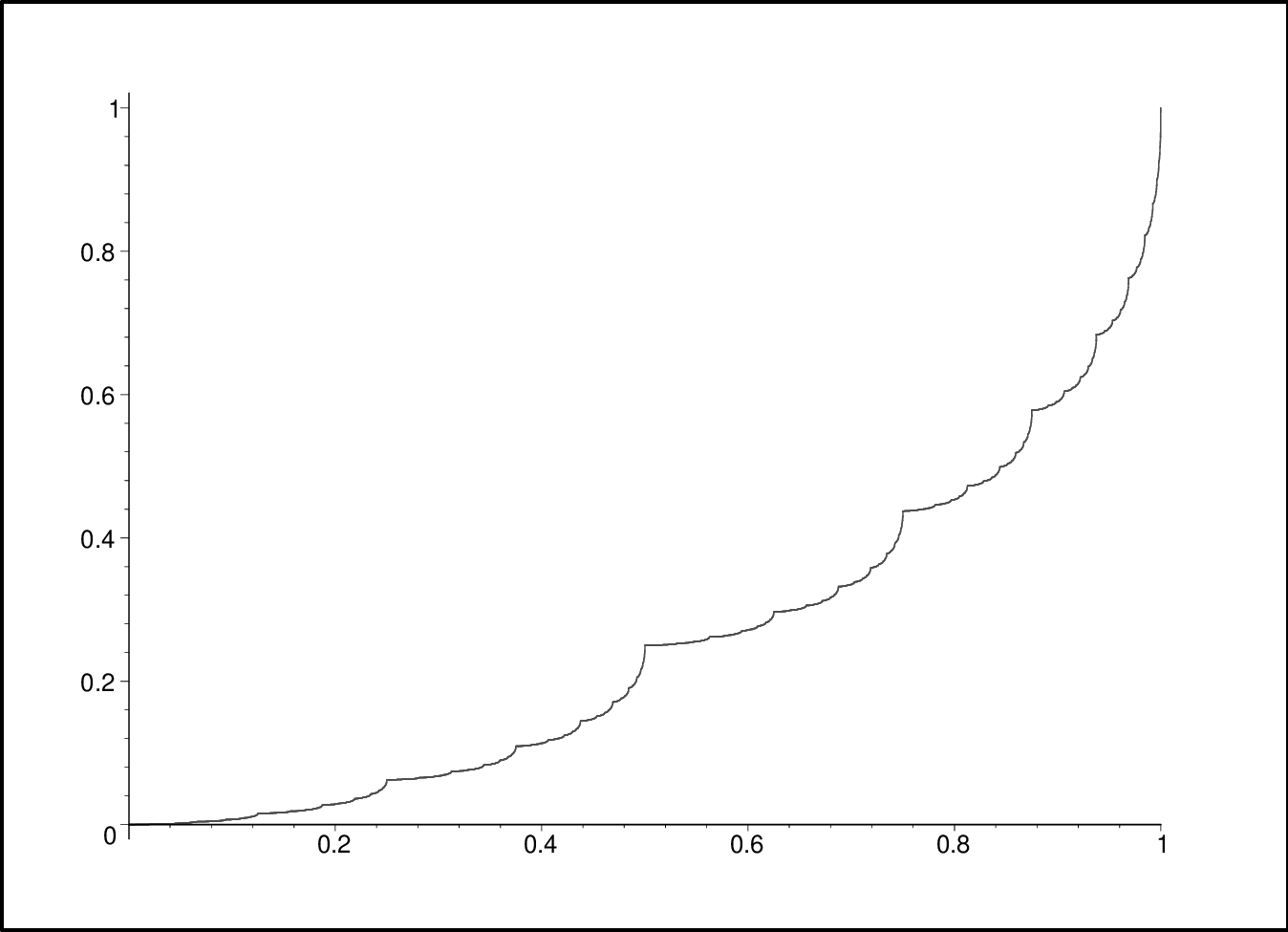}\label{fig:lebfcn}
%\end{minipage}
%\begin{minipage}{.2\linewidth}
%\includegraphics[width=2.1cm,height=2.1cm, origin =c, angle =-90]{takagifcnmaple1.eps}
%\end{minipage}
\end{figure}
These singular functions defined on $[0,1]$ can be redefined by using random dynamical systems 
on $\RR $ as follows. 
Let $f_{1}(x):= 3x, f_{2}(x):=3(x-1)+1\ (x\in \RR )$ and 
we consider the random dynamical system (random walk) 
on $\RR $ such that at every step we choose $f_{1}$ with probability $1/2 $ and $f_{2}$ with probability $1/2.$ 
We set $\hat{\RR }:= \RR \cup \{ \pm \infty \} .$ We denote by $T_{+\infty }(x)$ the 
probability of tending to $+\infty \in \hat{\RR }$ starting with the initial value $x\in \RR .$   
Then, we can see that the function $T_{+\infty }|_{[0,1]}:[0,1]\rightarrow [0,1]$ is equal to the devil's staircase. 

 Similarly, let $g_{1}(x):= 2x, g_{2}(x):= 2(x-1)+1\ (x\in \RR )$ and let 
 $0<a<1$ be a constant. We consider the random dynamical system on $\RR $ such that 
 at every step we choose the map $g_{1}$ with probability $a$ and the map $g_{2}$ with 
probability $1-a.$ Let $T_{+\infty ,a}(x)$ be the probability of tending to 
$+\infty $ starting with the initial value $x\in \RR .$ 
Then, we can see that the function $T_{+\infty ,a}|_{[0,1]}: [0,1]\rightarrow [0,1]$ is 
equal to Lebesgue's singular function $L_{a}$ with respect to the parameter $a.$ 
%Therefore, as in Remark~\ref{tfunctionrem}, 
%the function $x\mapsto \frac{1}{2}\frac{\partial T_{+\infty ,a}(x)}{\partial a}|_{a=1/2}$ defined on 
%$[0,1]$ is equal to the Takagi function. In particular, the Takagi function is equal to 
%the half of the partial derivative of the function of probability of tending to $+\infty $ 
%with respect to the probability parameter $a.$ 
   
% In fact, it is easy to show that $T_{+\infty },T_{+\infty ,a}$ and 
%$ \frac{\partial T_{+\infty ,a}(x)}{\partial a}|_{a=1/2}$ satisfies 
%(\ref{dsdef}), (\ref{Lebesguedef}), and (\ref{takagidef}), respectively. 
We remark that in most of the literature, the theory of random dynamical systems has not been used directly to investigate 
these singular functions on the interval, although some researchers have used 
it implicitly. 

 One of the main purposes of this paper is to consider the complex analogue of the above story. 
In order to do that, we have to investigate the independent and identically-distributed (abbreviated by i.i.d.) 
random dynamics of rational maps and 
the dynamics of semigroups of rational maps on $\CCI $ simultaneously. 
We develop both the theory of random dynamics of rational maps and that of 
the dynamics of semigroups of rational maps. The author thinks this is the best strategy  
since when we want to investigate 
one of them, we need to investigate the other. 
%the i.i.d. random dynamics of rational maps, 
%we need to 
%investigate the dynamics of semigroups of rational maps, and when we want to 
%investigate the dynamics of semigroups of rational maps, 
%we need to investigate 
%the i.i.d. random dynamics of rational maps.
  
 %In order to proceed, we must first the difine,  
To introduce the main idea of this paper, 
we let $G$ be a rational semigroup and denote by $F(G)$ the Fatou set of $G$, which is defined to be  
the maximal open subset of $\CCI $ where $G$ is equicontinuous with respect to the spherical distance on $\CCI $.    
We call $J(G):=\CCI \setminus F(G)$ the Julia set of $G.$  
The Julia set is backward invariant under each element $h\in G$, but 
might not be forward invariant. This is a difficulty of the theory of rational semigroups. 
Nevertheless, we ``utilize'' this as follows.  
The key to investigating random complex dynamics is to consider the 
following {\bf kernel Julia set} of $G$, which is defined by 
$J_{\ker }(G)=\bigcap _{g\in G}g^{-1}(J(G)).$ This is the largest forward 
invariant subset of $J(G)$ under the action of $G.$ Note that 
if $G$ is a group or if $G$ is a commutative semigroup, 
then $J_{\ker }(G)=J(G).$ 
However, for a general rational semigroup $G$ generated by a family of 
rational maps $h$ with $\deg (h)\geq 2$, it may happen that 
$\emptyset =J_{\ker }(G)\neq J(G) $ (see subsection~\ref{Conjkeremp}, section~\ref{Examples}).  

Let Rat be the space of all non-constant rational maps on the Riemann sphere $\CCI $, 
endowed with the distance $\kappa $ which is defined by 
$\kappa (f,g):=\sup _{z\in \CCI }d(f(z),g(z))$, where $d$ denotes the spherical distance on $\CCI .$  
Let Rat$_{+}$ be the space of all rational maps $g$ with $\deg (g)\geq 2.$ Let 
${\cal P}$ be the space of all polynomial maps $g$ with $\deg (g)\geq 2.$  
Let $\tau $ be a Borel probability measure on Rat with compact support. 
We consider the i.i.d. random dynamics on $\CCI $ such that 
at every step we choose a map $h\in \mbox{Rat}$ according to $\tau .$ 
Thus this determines a time-discrete Markov process with time-homogeneous transition probabilities 
on the phase space 
$\CCI $ such that for each $x\in \CCI $ and 
each Borel measurable subset $A$ of $\CCI $, 
the transition probability 
$p(x,A)$ of the Markov process is defined as $p(x,A)=\tau (\{ g\in \Rat \mid g(x)\in A\} ).$ 
Let $G_{\tau }$ be the 
rational semigroup generated by the support of $\tau .$ 
Let $C(\CCI )$ be the space of all complex-valued continuous functions on $\CCI $ endowed with 
the supremum norm. 
Let $M_{\tau }$ be the operator on $C(\CCI )$ 
defined by $M_{\tau }(\varphi )(z)=\int \varphi (g(z)) d\tau (g).$ 
This $M_{\tau }$ is called the transition operator of the Markov process induced by $\tau .$ 
For a topological space $X$, let ${\frak M}_{1}(X)$ be the space of all 
Borel probability measures on $X$ endowed with the topology 
induced by the weak convergence (thus $\mu _{n}\rightarrow \mu $ in ${\frak M}_{1}(X)$ if and only if 
$\int \varphi d\mu _{n}\rightarrow \int \varphi d\mu $ for each bounded continuous function $\varphi :X\rightarrow \RR $). 
Note that if $X$ is a compact metric space, then ${\frak M}_{1}(X)$ is compact and metrizable. 
For each $\tau \in {\frak M}_{1}(X)$, we denote by supp$\, \tau $ the topological support of $\tau .$  
Let ${\frak M}_{1,c}(X)$ be the space of all Borel probability measures $\tau $ on $X$ such that supp$\,\tau $ is 
compact.     
Let $M_{\tau }^{\ast }:{\frak M}_{1}(\CCI )\rightarrow {\frak M}_{1}(\CCI )$ 
be the dual of $M_{\tau }$.  
%where ${\frak M}_{1}(\CCI )$ denotes the space of all Borel probability measures on 
%$\CCI  $ endowed with the weak topology. 
This $M_{\tau }^{\ast }$ can be regarded as the ``averaged map'' 
on the extension ${\frak M}_{1}(\CCI )$ of $\CCI $ (see Remark~\ref{r:Phi}).  
We define the ``Julia set'' $J_{meas}(\tau )$ of 
the dynamics of $M_{\tau }^{\ast }$ as the set of all elements $\mu \in {\frak M}_{1}(\CCI )$ 
satisfying that for each neighborhood $B$ of $\mu $, $\{ (M_{\tau }^{\ast })^{n}|_{B}:B\rightarrow {\frak M}_{1}(\CCI )\} _{n\in \NN }$ 
is not equicontinuous on $B$ (see Definition~\ref{d:ytau}). 
For each sequence $\gamma =(\gamma _{1}, \gamma _{2},\ldots )\in (\Rat )^{\NN }$, 
we denote by $J_{\gamma }$ the set of non-equicontinuity of the sequence 
$\{ \gamma _{n}\circ \cdots \circ \gamma _{1}\} _{n\in \NN }$ with respect to the spherical distance on $\CCI .$  This $J_{\gamma }$ is called the Julia set of 
$\gamma .$  Let $\tilde{\tau }:=\otimes _{j=1}^{\infty }\tau \in {\frak M}_{1}((\Rat)^{\NN }).$ 
 
We prove the following theorem. 
\begin{thm}[Cooperation Principle I, see Theorem~\ref{kerJthm1} and Proposition~\ref{Jkeremptygenprop1}]
\label{t:thmA}
Let $\tau \in {\frak M}_{1,c}({\emRat}).$ Suppose   
 that $J_{\ker }(G_{\tau })=\emptyset .$ Then  
 $J_{meas}(\tau )=\emptyset .$ Moreover, for $\tilde{\tau }$-a.e. $\gamma \in (\emRat) ^{\NN }$, 
 the $2$-dimensional Lebesgue measure of $J_{\gamma }$ is equal to zero. 
\end{thm}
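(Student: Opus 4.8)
The plan is to prove the two assertions of Theorem~\ref{t:thmA} in sequence, with the second (the measure-zero statement for $\tilde\tau$-a.e.\ $\gamma$) being the substantive part and the first ($J_{meas}(\tau)=\emptyset$) being a relatively soft consequence of equicontinuity combined with the hypothesis $J_{\ker}(G_\tau)=\emptyset$. For the first part, I would argue that when $J_{\ker}(G_\tau)=\emptyset$, every point of $\CCI$ lies in $g^{-1}(F(G_\tau))$ for some $g\in G_\tau$, and since $\suppt$ is compact and the maps have degree $\geq 2$, one gets a uniform contraction statement: there is $n_0$ and a finite collection of words such that for every $z$ the orbit under a definite-probability set of length-$n_0$ compositions lands in a region where the dynamics is equicontinuous. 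Averaging, $M_\tau^{n_0}$ pushes any $\varphi\in C(\CCI)$ toward locally constant behaviour, and iterating shows $\{(M_\tau^\ast)^n\}$ is equicontinuous at every $\mu\in{\frak M}_1(\CCI)$; hence $J_{meas}(\tau)=\emptyset$. (This is essentially the content of the cited Theorem~\ref{kerJthm1}, so here I would mainly be assembling that statement.)

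For the second part, the strategy is a Borel--Cantelli / Fubini argument on the skew product. Fix a finite generating set coming from $\suppt$ (or work directly with $\suppt$ using compactness). Because $J_{\ker}(G_\tau)=\emptyset$, for each $z\in\CCI$ there is a word $g\in G_\tau$ with $g(z)\in F(G_\tau)$; by compactness of $\suppt$ and openness of the Fatou set, one can choose finitely many words $g_1,\dots,g_k$ and a finite open cover $V_1,\dots,V_k$ of $\CCI$ such that $g_i(V_i)$ is contained in a compact subset of $F(G_\tau)$ on which all elements of $G_\tau$ are uniformly equicontinuous (here one uses that $F(G_\tau)$ is forward invariant under $G_\tau$, which is immediate, so once an orbit enters a "good" part of $F(G_\tau)$ it stays equicontinuous forever). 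Each $g_i$ is realized by a block of symbols of some bounded length $\ell$, and it occurs in $\gamma$ with probability bounded below, uniformly; so for $\tilde\tau$-a.e.\ $\gamma$, for every compact piece of the cover the corresponding good block appears infinitely often at bounded-density times. I would then show that once such a block has acted, the subsequent maps $\gamma_n\circ\cdots\circ\gamma_1$ are equicontinuous in a neighbourhood of that piece, so that piece is disjoint from $J_\gamma$. Covering $\CCI$ by finitely many such pieces, $J_\gamma=\emptyset$ for $\tilde\tau$-a.e.\ $\gamma$ — which is much stronger than $\mathrm{Leb}_2(J_\gamma)=0$ and in particular implies it.

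There is a gap in that last sentence: it is not true in general that $J_\gamma=\emptyset$ $\tilde\tau$-a.s., because the "good block" needed to tame a given point $z$ depends on $z$, and a single realization $\gamma$ cannot simultaneously tame all of $\CCI$ by one finite list if the cover has to be refined near $J(G_\tau)$. So the honest argument keeps the null-set language: for a fixed $z\in\CCI$ and $\tilde\tau$-a.e.\ $\gamma$ one of the good blocks for a neighbourhood of $z$ occurs, hence $z\notin J_\gamma$; equivalently, $\tilde\tau\otimes\mathrm{Leb}_2(\{(\gamma,z):z\in J_\gamma\})=0$ by Fubini, where I use that the set $\{(\gamma,z): z\in J_\gamma\}$ is measurable (the Julia set depends measurably on $\gamma$ because non-equicontinuity is an $F_\sigma$-type condition testable on a countable dense set of scales and points). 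Applying Fubini again in the other order yields $\mathrm{Leb}_2(J_\gamma)=0$ for $\tilde\tau$-a.e.\ $\gamma$, which is exactly the claim.

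The main obstacle I expect is making the "good block occurs with uniformly positive probability, uniformly over the relevant piece of $\CCI$" step precise while controlling the dependence on the base point: one needs that finitely many pieces suffice to cover $\CCI$ and that on each piece a fixed finite family of admissible words drives it into a forward-invariant equicontinuous compact subset of $F(G_\tau)$. This requires a genuine compactness argument using $J_{\ker}(G_\tau)=\emptyset$ (so that $\bigcup_{g\in G_\tau} g^{-1}(F(G_\tau))=\CCI$), together with a quantitative equicontinuity estimate on $F(G_\tau)$ that survives all further compositions — the forward invariance $G_\tau(F(G_\tau))\subset F(G_\tau)$ is what rescues this, and is precisely why the kernel Julia set, rather than $J(G_\tau)$ itself, is the right object. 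Once that uniform "absorption into the Fatou set" is in hand, the probabilistic Borel--Cantelli step and the two applications of Fubini are routine.
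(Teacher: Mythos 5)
Your overall route is the same as the paper's: use $J_{\ker}(G_{\tau })=\emptyset $ plus compactness of $\mbox{supp}\,\tau $ to produce finitely many ``good blocks'' (padded to a common length, which is possible precisely because $F(G_{\tau })$ is forward invariant), show that for each fixed $z$ the orbit is absorbed into $F(G_{\tau })$ for $\tilde{\tau }$-a.e.\ $\gamma $, and then get the two conclusions by (i) upgrading pointwise absorption to equicontinuity of $\{ M_{\tau }^{n}(\varphi )\} $ and hence of $\{ (M_{\tau }^{\ast })^{n}\} $ on ${\frak M}_{1}(\CCI )$ (the paper's Lemmas~\ref{FJmeaslem2} and \ref{FJmeaslem1}, the latter via Ascoli--Arzel\`a), and (ii) Fubini applied to the closed set $\tilde{J}(f)$ (Proposition~\ref{Jkeremptygenprop2}). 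So this is not a different proof, and your self-correction about $J_{\gamma }$ not being empty a.s.\ is exactly right.

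The one step where your stated reasoning would fail as written is the probabilistic core. Both before and after your self-correction you phrase it as: ``one of the good blocks for a neighbourhood of $z$ occurs somewhere in $\gamma $, hence $z\notin J_{\gamma }$.'' A block $g_{i}$ with $g_{i}(V_{i})\subset F(G_{\tau })$ appearing at positions $n+1,\ldots ,n+\ell $ acts on $\gamma _{n,1}(z)$, not on $z$; it only helps if the \emph{current} position $\gamma _{n,1}(z)$ happens to lie in $V_{i}$ at that moment. Since the position is random and the index of the required block changes with it, an unconditional Borel--Cantelli on occurrences of a fixed block does not close the argument. What is needed (and what the paper's Lemma~\ref{Lkerlem} supplies) is the conditional estimate: at each block-time, \emph{given the past}, the probability that the next length-$k$ block is good for wherever the orbit currently sits is at least $1-a$ with $a<1$ uniform over the finite cover, so $\tilde{\tau }(\{ \gamma :\gamma _{jk,1}(z)\notin F(G_{\tau })\mbox{ for }j=1,\ldots ,n\} )\leq a^{n}\rightarrow 0$. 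Your closing paragraph gestures at exactly this (``uniform absorption'' at each step), so the gap is in the formulation rather than the plan, but the conditioning is the whole content of the lemma and should not be labelled routine Borel--Cantelli. A last cosmetic point: the theorem is stated for $\mbox{Rat}$, not $\mbox{Rat}_{+}$, so no degree hypothesis is available or needed.
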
   
This theorem means that if all the maps in the support of $\tau $ cooperate,  
the set of sensitive initial values of the averaged system disappears. Note that 
for any $h\in \mbox{Rat}_{+}$, $J_{meas}(\delta _{h})\neq \emptyset .$ 
Thus the above result deals with a phenomenon which can hold in 
the random complex dynamics but cannot hold in the usual iteration dynamics of 
a single rational map $h$ with $\deg (h)\geq 2.$  

From the above result and some further detailed arguments, we prove the following theorem. 
To state the theorem, for a $\tau \in {\frak M}_{1,c}(\Rat)$, we denote by $U_{\tau }$ 
the space of all finite linear combinations of unitary eigenvectors of $M_{\tau }:C(\CCI )\rightarrow 
C(\CCI )$, where an eigenvector  is said to be unitary if the absolute value of 
the corresponding eigenvalue is equal to one. Moreover, 
we set ${\cal B}_{0,\tau }:= \{ \varphi \in C(\CCI )\mid M_{\tau }^{n}(\varphi )\rightarrow 0\} .$ 
%(Theorem~\ref{t:mtauspec}).  
Under the above notations, we have the following.   
\begin{thm}[Cooperation Principle II: Disappearance of Chaos, see Theorem~\ref{t:mtauspec}]
\label{t:thmB}\ \\ 
Let $\tau \in {\frak M}_{1,c}(\emRat).$ Suppose that  
$J_{\ker }(G_{\tau })=\emptyset $ and $J(G_{\tau })\neq \emptyset $.  
Then we have all of the following statements. 
\begin{itemize}
\item[{\em (1)}]
There exists a direct decomposition 
$C(\CCI )= U_{\tau }\oplus {\cal B}_{0,\tau }$.  
Moreover, $\dim _{\CC } U_{\tau }<\infty $ and ${\cal B}_{0,\tau }$ is a closed subspace 
of $C(\CCI ).$   
Moreover, there exists a non-empty $M_{\tau }^{\ast }$-invariant compact subset $A$ of ${\frak M}_{1}(\CCI )$ 
with finite topological dimension such that 
for each $\mu \in {\frak M}_{1}(\CCI )$, 
$d((M_{\tau }^{\ast })^{n}(\mu ), A)\rightarrow 0$ in ${\frak M}_{1}(\CCI )$ 
as $n\rightarrow \infty $. 
%we completely investigate the structure of ${\cal U}_{\tau }$ and the 
%set of unitary eigenvalues of $M_{\tau }$ (Theorem~\ref{t:mtauspec}). 
Furthermore, each element of $U_{\tau }$ is locally constant on 
$F(G_{\tau })$. 
%(Theorem~\ref{t:mtauspec}-\ref{t:mtauspec2}). 
Therefore each element of $U_{\tau }$ is a continuous function 
on $\CCI $ which varies only on the Julia set $J(G_{\tau }).$  
\item[{\em (2)}]
For each $z\in \CCI $, there exists a Borel subset ${\cal A}_{z}$ 
of $(\emRat)^{\NN }$ with $\tilde{\tau }({\cal A}_{z})=1$ with the following property.
\begin{itemize}
\item For each $\gamma =(\gamma _{1},\gamma _{2},\ldots )\in {\cal A}_{z}$, 
there exists a number $\delta =\delta (z,\gamma )>0$ such that 
$\mbox{diam}(\gamma _{n}\cdots \gamma _{1}(B(z,\delta )))\rightarrow 0$ as 
$n\rightarrow \infty $, where diam denotes the diameter with respect to the 
spherical distance on $\CCI $, and $B(z,\delta )$ denotes the ball with center $z$ and radius 
$\delta .$ 
\end{itemize}
\item[{\em (3)}] 
There exists at least one and at most finitely many minimal sets for $(G_{\tau },\CCI )$, 
where we say that a non-empty compact subset $L$ of $\CCI $ is a minimal set for $(G_{\tau },\CCI )$ 
if $L$ is minimal in 
$\{ C\subset \CCI \mid \emptyset \neq C \mbox{ is compact}, \forall g\in G_{\tau }, g(C)\subset C\} $ 
with respect to inclusion. 
\item[{\em (4)}] 
Let $S_{\tau }$ be the union of minimal sets for $(G_{\tau },\CCI )$. Then 
for each $z\in \CCI $ there exists a Borel subset ${\cal C}_{z}$ of $(\emRat)^{\NN }$ with 
$\tilde{\tau }({\cal C}_{z})=1$ such that for each $\gamma =(\gamma _{1},\gamma _{2},\ldots )\in {\cal C}_{z}$, 
$d(\gamma _{n}\cdots \gamma _{1}(z),S_{\tau })\rightarrow 0$ as $n\rightarrow \infty .$ 
\end{itemize}
\end{thm}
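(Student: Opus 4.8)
The plan is to establish the four statements of Theorem~\ref{t:thmB} by first exploiting Theorem~\ref{t:thmA} (Cooperation Principle I) to get strong contraction/equicontinuity information, and then running a spectral analysis of $M_{\tau }$ on $C(\CCI )$. First I would show that when $J_{\ker }(G_{\tau })=\emptyset $, for each point $z\in \CCI $ and $\tilde{\tau }$-a.e. $\gamma $ there is a neighbourhood of $z$ whose image under $\gamma _{n}\cdots \gamma _{1}$ shrinks to a point; the mechanism is that $J_{meas}(\tau )=\emptyset $ forces $\{(M_{\tau }^{\ast })^{n}\}$ to be equicontinuous on all of $\frak{M}_{1}(\CCI )$, and combined with the fact that $J(G_{\tau })\neq \emptyset $ means there genuinely are attracting directions, a Borel--Cantelli / martingale argument over the skew product $(\Rat )^{\NN }\times \CCI $ produces the almost-sure local contraction in statement (2). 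Here I would use that the skew product map $\gamma \mapsto J_{\gamma }$ has Lebesgue-null fibres a.s., so the ``non-contracting'' set is negligible.

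Next, for statement (1), the idea is the standard Ionescu-Tulcea--Marinescu / quasi-compactness scheme: show $M_{\tau }$ acting on $C(\CCI )$ is ``mean ergodic with a spectral gap on the peripheral spectrum,'' i.e. the part of the spectrum on the unit circle consists of finitely many eigenvalues with finite-dimensional eigenspaces and the rest of $C(\CCI )$ is $\mathcal{B}_{0,\tau }$. Equicontinuity of $\{M_{\tau }^{n}\}$ on $C(\CCI )$ (which follows from the local-contraction statement just proved, since $M_{\tau }^{n}\varphi (z)=\int \varphi (\gamma _{n}\cdots \gamma _{1}(z))\,d\tilde{\tau }(\gamma )$ and the integrand becomes locally constant in $z$ up to small error) gives, via Ascoli and an averaging (Cesàro) argument, that every orbit closure is compact, hence $C(\CCI )=U_{\tau }\oplus \mathcal{B}_{0,\tau }$ with $\dim _{\CC }U_{\tau }<\infty $; the finite-dimensionality comes from the fact that unitary eigenfunctions must be locally constant on $F(G_{\tau })$ (because they are built from limits of $M_{\tau }^{n}\varphi $, which are locally constant there by the contraction), and a family of such functions, all determined by their values on the finitely many ``ends'' picked out near $J(G_{\tau })$, cannot be infinite-dimensional. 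The invariant compact set $A\subset \frak{M}_{1}(\CCI )$ with finite topological dimension is then the attractor of $M_{\tau }^{\ast }$: one takes $A$ to be the closure of $\bigcup _{\mu }\omega $-limit sets, and finite-dimensionality follows because $A$ lies in the finite-dimensional affine space dual to $U_{\tau }$ intersected with $\frak{M}_{1}(\CCI )$. That each $U_{\tau }$-element is locally constant on $F(G_{\tau })$ and hence varies only on $J(G_{\tau })$ is then immediate.

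For statements (3) and (4), the plan is to use compactness of $\frak{M}_{1}(\CCI )$ and Zorn's lemma to get existence of at least one minimal set $L$ for $(G_{\tau },\CCI )$; finiteness of the number of minimal sets follows because distinct minimal sets are disjoint, and each minimal set carries (by a fixed-point argument for $M_{\tau }^{\ast }$ restricted to probability measures supported on it) an invariant measure which is an element of the finite-dimensional simplex $A\cap \frak{M}_{1}(\CCI )$ detected by $U_{\tau }$ — so there can be only finitely many. For statement (4), with $S_{\tau }$ the union of the minimal sets, I would show $d(\gamma _{n}\cdots \gamma _{1}(z),S_{\tau })\to 0$ a.s.\ by a sub-martingale / Lyapunov argument: the function $z\mapsto d(z,S_{\tau })$-type quantities, averaged, are handled by the contraction from statement (2) together with the fact that any $\omega$-limit set of the random orbit is a non-empty compact forward-invariant set, hence contains a minimal set, hence meets $S_{\tau }$; equicontinuity upgrades ``$\omega$-limit set meets $S_{\tau }$'' to ``orbit converges to $S_{\tau }$.''

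The main obstacle I expect is the passage in statement (1) from equicontinuity of $\{M_{\tau }^{n}\}$ to the \emph{direct sum} decomposition with \emph{finite-dimensional} $U_{\tau }$ and a genuine spectral gap: equicontinuity alone gives relative compactness of orbits and a mean-ergodic decomposition, but controlling the peripheral spectrum (ruling out infinitely many unitary eigenvalues, and ensuring the complementary part actually converges to $0$ rather than merely Cesàro-converging) requires delicate use of the geometry of $J(G_{\tau })$ versus $J_{\ker }(G_{\tau })=\emptyset $ — specifically that the ``attracting part'' of the dynamics is uniform in a suitable sense on compact subsets of $F(G_{\tau })$ and that $J(G_{\tau })$, while non-empty, is ``shared'' backward-invariantly so that mass cannot persist on it. This is where the interplay between the semigroup Julia set and the random dynamics is essential, and where I would expect to invoke the detailed structure results for $G_{\tau }$ alluded to in the references \cite{S1}--\cite{Ssugexp}.
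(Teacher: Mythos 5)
Your proposal has genuine gaps at the two places where the paper's proof actually does its work. First, the finite-dimensionality of $U_{\tau }$ and the finiteness of the peripheral point spectrum are never established: you correctly note that relative compactness of the orbits $\{ M_{\tau }^{n}(\varphi )\} $ plus the splitting theorem for almost periodic operators gives $C(\CCI )=\overline{U_{\tau }}\oplus {\cal B}_{0,\tau }$, but your argument that $U_{\tau }$ is finite-dimensional ("determined by values on finitely many ends near $J(G_{\tau })$") is not a proof — local constancy on $F(G_{\tau })$ gives no dimension bound, since $F(G_{\tau })$ typically has infinitely many components — and your later argument for statement (3) derives finiteness of the minimal sets \emph{from} finite-dimensionality of $U_{\tau }$, which is circular. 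The paper's logical order is the reverse: it first proves, by a purely geometric/probabilistic argument, that $\sharp \Min (G_{\tau },\CCI )<\infty $ (distinct minimal sets cannot share a Fatou component, and they all must meet a fixed compact subset of $F(G_{\tau })$ built from the escape data supplied by $J_{\ker }(G_{\tau })=\emptyset $); it then shows the restriction map $U_{\tau }\rightarrow C(S_{\tau })$ is injective (because every orbit is attracted to $S_{\tau }$), and that on each minimal set $L$ the unitary eigenvalues form a finite cyclic group with one-dimensional eigenspaces (the argument with $|\varphi |\equiv 1$ on $L$ and convex combinations of points of $S^{1}$). That chain is what rules out infinitely many unitary eigenvalues; you flag this as "the main obstacle" but do not resolve it.

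Second, your mechanism for statement (2) does not work as stated. Equicontinuity of $\{ (M_{\tau }^{\ast })^{n}\} $ on ${\frak M}_{1}(\CCI )$ is a statement about the averaged operator and does not yield pathwise contraction of the compositions $\gamma _{n}\circ \cdots \circ \gamma _{1}$; and appealing to "$J_{\gamma }$ has Lebesgue-null fibres a.s." can only give the conclusion for Lebesgue-a.e. $z$, whereas statement (2) is asserted for \emph{every} $z\in \CCI $. The missing engine is the pair of lemmas the paper relies on throughout: (i) for each fixed $z$, $J_{\ker }(G_{\tau })=\emptyset $ plus a covering of $J(G_{\tau })$ by finitely many neighbourhoods with escape words, together with a geometric-series estimate on the probability of never using an escape word, shows that $\tilde{\tau }$-a.e.\ $\gamma $ eventually maps $z$ into $F(G_{\tau })$; and (ii) the hyperbolic-metric contraction argument showing that for $\tilde{\tau }$-a.e.\ $\gamma $ every limit function of $\{ \gamma _{n,1}\} $ on each component of $F(G_{\tau })$ is constant. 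Statements (2), (3), (4), the local constancy of elements of $U_{\tau }$, and the finite-dimensionality all hang off these two facts, and neither appears in your proposal (your $\omega $-limit-set argument for (4) also fails as written, since the $\omega $-limit set of a single random orbit need not be forward invariant under all of $G_{\tau }$).
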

This theorem means that if all the maps in the support of $\tau $ cooperate, the chaos of the averaged system 
disappears. 
Theorem~\ref{t:thmB} describes new phenomena which can hold in random complex dynamics but 
cannot hold in the usual iteration dynamics of a single $h\in \mbox{Rat}_{+}.$ For example, 
for any $h\in \mbox{Rat}_{+}$, if we take a point $z\in J(h)$, where $J(h)$ denotes the Julia set of the semigroup generated by 
$h$, then for any ball $B$ with 
$B\cap J(h)\neq \emptyset $, $h^{n}(B)$ expands as $n\rightarrow \infty $, and 
we have infinitely many minimal sets (periodic cycles) of $h.$ 

In Theorem~\ref{t:mtauspec}, we completely investigate the structure of $U_{\tau }$ and the 
set of unitary eigenvalues of $M_{\tau }$ (Theorem~\ref{t:mtauspec}). 
Using the above result, we show that if $\dim _{\CC } U_{\tau }>1$ and 
int$(J(G_{\tau }))=\emptyset $ where int$(\cdot )$ denotes the set of interior points,  
then $F(G_{\tau })$ has infinitely many connected components (Theorem~\ref{t:mtauspec}-\ref{t:mtauspeccfi}).  
%Furthermore, we show that there exist at most finitely many minimal sets (see Definition~\ref{d:minimal}) for 
%$(G_{\tau },\CCI )$, and that the union of the minimal sets is a kind of 
%``attractor'' (Theorem~\ref{t:mtauspec}-\ref{t:mtauspec3},\ref{t:mtauspec4}). 
%Note that for any $h\in \mbox{Rat}_{+}$, we have infinitely many 
%minimal sets (periodic cycles) of $h.$ 
Thus the random complex dynamics can be applied to the theory of dynamics of rational semigroups. 
The key to proving Theorem~\ref{t:thmB} (Theorem~\ref{t:mtauspec})
 is to show that for almost every $\gamma =(\gamma _{1},\gamma _{2},\ldots )\in (\Rat )^{\NN }$ 
with respect to $\tilde{\tau }:=\otimes _{j=1}^{\infty }\tau $ and for each compact set $Q$ contained in a connected component $U$ 
of $F(G_{\tau })$, $\mbox{diam}\gamma _{n}\circ \cdots \circ \gamma _{1}(Q)\rightarrow 0$ as $n\rightarrow \infty .$ 
This is shown by using careful arguments on the hyperbolic metric of each connected component of $F(G_{\tau }).$   
Combining this with the decomposition theorem on ``almost periodic operators'' on Banach spaces from \cite{Ly}, 
we prove Theorem~\ref{t:thmB} (Theorem~\ref{t:mtauspec}).  

Considering these results, we have the following natural question: ``When is the kernel Julia set empty?'' 
Since the kernel Julia set of $G$ is forward invariant under $G$, Montel's theorem implies that 
if $\tau $ is a Borel probability measure on ${\cal P}$ with compact support, and 
if the support of $\tau $ contains an admissible subset of ${\cal P}$ (see Definition~\ref{d:adm}), then 
$J_{\ker }(G_{\tau })=\emptyset $ (Lemma~\ref{l:aigpjke}). 
In particular, if the support of $\tau $ contains an interior point with respect to the topology of ${\cal P}$, 
then $J_{\ker }(G_{\tau })=\emptyset $ (Lemma~\ref{l:ignejke}).  
 From this result, it follows  that 
for any Borel probability measure $\tau $ on ${\cal P}$ with compact support, 
there exists a Borel probability measure $\rho $ with finite support, such that $\rho $ is arbitrarily close to 
$\tau $,  such that the support of $\rho $ is arbitrarily close to the support 
of $\tau $ , and such that $J_{\ker }(G_{\rho })=\emptyset $ (Proposition~\ref{p:v1v2rho}). The above results mean that in a certain sense, 
$J_{\ker }(G_{\tau })=\emptyset $
for most Borel probability measures $\tau $ on 
${\cal P}.$ 
% in a sense. 
Summarizing these results we can state the following. 
\begin{thm}[Cooperation Principle III, see Lemmas~\ref{l:ignejke}, \ref{l:aigpjke}, Proposition~\ref{p:v1v2rho}] 
Let ${\frak M}_{1,c}({\cal P})$ be endowed with the topology ${\cal O}$ such that 
$\tau _{n}\rightarrow \tau $ in $({\frak M}_{1,c}({\cal P}), {\cal O})$ 
if and only if {\em (a)} $\int \varphi d\tau _{n}\rightarrow \int \varphi d\tau $ for each 
bounded continuous function $\varphi $ on ${\cal P}$, and {\em (b)} {\em supp}$\, \tau _{n}\rightarrow ${\em supp}$\, \tau $ with respect 
to the Hausdorff metric. We set $A:= \{ \tau \in {\frak M}_{1,c}({\cal P})\mid J_{\ker }(G_{\tau })=\emptyset \} $ 
and $B:=\{ \tau \in {\frak M}_{1,c}({\cal P})\mid J_{\ker }(G_{\tau })=\emptyset , \sharp \mbox{{\em supp}}\, \tau <\infty \} $.  
Then we have all of the following. 
\begin{itemize}
\item[{\em (1)}]
$A$ and $B$ are dense in $({\frak M}_{1,c}({\cal P}),{\cal O})$.  
\item[{\em (2)}] 
If the interior of the support of $\tau $ is not empty with respect to the topology of ${\cal P}$, 
then $\tau \in A.$ 
\item[{\em (3)}] 
For each $\tau \in A$,  
the chaos of the averaged system of the Markov process induced by $\tau $ disappears (more precisely, 
all the statements in Theorems~\ref{t:thmA}, \ref{t:thmB} hold). 
\end{itemize}
\end{thm}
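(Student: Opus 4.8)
The plan is to assemble the statement from the three cited results, together with two elementary facts about polynomial semigroups, so the proof is essentially a matter of bookkeeping.

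For part (1), I would invoke Proposition~\ref{p:v1v2rho} directly: given any $\tau \in {\frak M}_{1,c}({\cal P})$ and any neighbourhood of $\tau $ in $({\frak M}_{1,c}({\cal P}),{\cal O})$, that proposition produces a measure $\rho \in {\frak M}_{1,c}({\cal P})$ with finite support such that $\int \varphi \, d\rho $ is as close as we wish to $\int \varphi \, d\tau $ for bounded continuous $\varphi $, such that $\mbox{supp}\,\rho $ is as close as we wish to $\mbox{supp}\,\tau $ in the Hausdorff metric, and such that $J_{\ker }(G_{\rho })=\emptyset $. Such a $\rho $ lies in $B$, hence in $A$; since the neighbourhood was arbitrary, $B$ (and a fortiori $A$) is dense in $({\frak M}_{1,c}({\cal P}),{\cal O})$. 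The only point requiring attention is that the meaning of ``arbitrarily close'' in Proposition~\ref{p:v1v2rho} exactly matches conditions (a) and (b) defining the topology ${\cal O}$ in the statement, which is immediate from how ${\cal O}$ is phrased.

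For part (2), if the interior of $\mbox{supp}\,\tau $ with respect to the topology of ${\cal P}$ is non-empty, then in particular $\mbox{supp}\,\tau $ contains an interior point of ${\cal P}$, so Lemma~\ref{l:ignejke} gives $J_{\ker }(G_{\tau })=\emptyset $, i.e. $\tau \in A$. (Equivalently, a non-empty open subset of ${\cal P}$ is admissible in the sense of Definition~\ref{d:adm}, so Lemma~\ref{l:aigpjke} applies.) For part (3), fix $\tau \in A$. Since ${\cal P}\subset \Ratp \subset \Rat $ and $J_{\ker }(G_{\tau })=\emptyset $, Theorem~\ref{t:thmA} applies verbatim and yields $J_{meas}(\tau )=\emptyset $ together with the statement that the two-dimensional Lebesgue measure of $J_{\gamma }$ is zero for $\tilde{\tau }$-a.e.\ $\gamma $. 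To invoke Theorem~\ref{t:thmB} we must in addition check $J(G_{\tau })\neq \emptyset $: pick any $g\in \mbox{supp}\,\tau \subset {\cal P}$, so $\deg (g)\geq 2$ and hence $J(g)\neq \emptyset $; since $F(G_{\tau })$ is contained in the Fatou set of the subsemigroup $\langle g\rangle $, we get $\emptyset \neq J(g)\subset J(G_{\tau })$. Thus Theorem~\ref{t:thmB} applies and all of its conclusions hold, which is what ``the chaos of the averaged system disappears'' means.

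There is no genuine obstacle here; the two places demanding a moment's care are (i) matching the convergence furnished by Proposition~\ref{p:v1v2rho} to the topology ${\cal O}$ in part (1), and (ii) recording the elementary inclusion $J(G_{\tau })\supseteq J(g)\neq \emptyset $ that is needed to pass from Theorem~\ref{t:thmA} to the full strength of Theorem~\ref{t:thmB} in part (3).
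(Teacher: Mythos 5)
Your proposal is correct and follows essentially the same route as the paper, which states this theorem as a summary of Lemmas~\ref{l:ignejke}, \ref{l:aigpjke}, Proposition~\ref{p:v1v2rho} (applied with ${\cal Y}={\cal P}$, which is strongly admissible), and Theorems~\ref{kerJthm1} and \ref{t:mtauspec}. The two care points you flag — matching the convergence in Proposition~\ref{p:v1v2rho} to the topology ${\cal O}$, and verifying $J(G_{\tau })\supset J(g)\neq \emptyset $ so that Theorem~\ref{t:thmB} applies — are exactly the right ones, and your parenthetical reduction of part (2) to admissibility and Lemma~\ref{l:aigpjke} is the cleaner of the two routes you mention.
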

In the subsequent paper \cite{Sprep}, we investigate more detail on the above result (some results of \cite{Sprep} are 
announced in \cite{Skokyu10}).   

We remark that in 1983, by numerical experiments, K. Matsumoto and I. Tsuda (\cite{MT}) observed that 
if we add some uniform noise to the dynamical system associated with iteration of a chaotic map on the unit interval $[0,1]$, 
then under certain conditions, 
the quantities which represent chaos (e.g.,  entropy, Lyapunov exponent, etc.) decrease. More precisely, 
they observed that the entropy decreases and the Lyapunov exponent turns negative. They called this phenomenon 
``noise-induced order'', and many physicists have investigated it by numerical experiments, although there has been 
only a few mathematical supports for it.

Moreover, in this paper, we introduce 
``mean stable'' rational semigroups in subsection~\ref{Almostst}. 
If $G$ is mean stable, then $J_{\ker }(G)=\emptyset $ and a small perturbation 
$H$ of $G$ is still mean stable.    
We show that if $\Gamma $ is a compact subset of Rat$_{+}$ 
and if 
the semigroup $G$ generated by $\Gamma $ is semi-hyperbolic (see Definition~\ref{d:sh}) and $J_{\ker }(G)=\emptyset$,   
then there exists a neighborhood ${\cal V}$ of $\Gamma $ in the space of
non-empty compact subset of Rat such that 
for each $\Gamma '\in {\cal V}$, the semigroup $G'$ generated by 
$\Gamma '$ is mean stable, and $J_{\ker }(G')=\emptyset .$

 By using the above results, we investigate 
the random dynamics of polynomials. Let $\tau $ be a Borel probability 
measure on ${\cal P}$ with compact support.  
Suppose that $J_{\ker }(G_{\tau })=\emptyset $  
and the smallest filled-in Julia set $\hat{K}(G_{\tau })$ (see Definition~\ref{d:sfj}) of $G_{\tau }$ 
is not empty. 
Then we show that the function $T_{\infty ,\tau }$ of probability of tending to $\infty \in \CCI $ 
belongs to $U_{\tau }$ and is not constant (Theorem~\ref{kerJthm2}). 
Thus $T_{\infty ,\tau }$ is non-constant and continuous on $\CCI $ and varies only on $J(G_{\tau }).$ 
Moreover, the function $T_{\infty ,\tau }$ is characterized as the unique 
Borel measurable bounded function $\varphi :\CCI \rightarrow \RR $ which satisfies 
$M_{\tau }(\varphi )=\varphi ,$ $\varphi |_{F_{\infty }(G_{\tau })}\equiv 1$, and 
$\varphi |_{\hat{K}(G_{\tau })}\equiv 0$, where $F_{\infty }(G_{\tau })$ denotes the 
connected component of the Fatou set $F(G_{\tau })$ of $G_{\tau }$ containing $\infty $ 
(Proposition~\ref{p:chtinfty}).  From these results, we can show that $T_{\infty ,\tau }$ 
has a kind of ``monotonicity,'' and applying it, we get information regarding the structure of 
the Julia set $J(G_{\tau })$ of $G_{\tau }$ (Theorem~\ref{kerJthm3}).  
We call the function $T_{\infty ,\tau }$ a {\bf devil's coliseum}, 
especially when int$(J(G_{\tau }))=\emptyset $
 (see Example~\ref{ex:dc1}, Figures~\ref{fig:dcjulia}, \ref{fig:dcgraphgrey2}, 
and \ref{fig:dcgraphudgrey2}). Note that for any $h\in {\cal P}$, 
$T_{\infty ,\delta _{h}}$ is not continuous at any point of $J(h)\neq \emptyset .$ 
Thus the above results deal with a phenomenon which can hold in the random complex dynamics, but 
cannot hold in the usual iteration dynamics of a single polynomial. 

 It is a natural question to ask about the regularity of non-constant $\varphi \in  U_{\tau }$ 
 (e.g., $\varphi =T_{\infty ,\tau }$) on the Julia set $J(G_{\tau }).$ 
For a rational semigroup $G$, we set 
$P(G):=\overline{\bigcup _{h\in G}\{ \mbox{all critical values of } h:\CCI \rightarrow \CCI \}}$, 
where the closure is taken in $\CCI $, and we say that $G$ is {\bf hyperbolic} if $P(G)\subset F(G).$ 
If $G$ is generated by $\{ h_{1},\ldots ,h_{m}\} $ as a semigroup, we write 
$G=\langle h_{1},\ldots ,h_{m}\rangle .$ 
We prove the following theorem.
\begin{thm}[see Theorem~\ref{t:hnondiff} and Theorem~\ref{t:hdiffornd}]
\label{t:thmC}
Let $m\geq 2$ and let $(h_{1},\ldots ,h_{m})\in {\cal P}^{m}$. 
Let $G=\langle h_{1},\ldots ,h_{m}\rangle $. 
Let $0< p_{1},p_{2},\ldots ,p_{m}<1$ with $\sum _{i=1}^{m}p_{i}=1.$ 
Let $\tau =\sum _{i=1}^{m}p_{i}\delta _{h_{i}}.$  
Suppose that $h_{i}^{-1}(J(G))\cap h_{j}^{-1}(J(G))=\emptyset $ for each $(i,j)$ with $i\neq j$ 
and suppose also that 
$G$ is  hyperbolic.  
%(i.e., the closure $P(G)$ of critical orbits of $G$ is contained 
%in the Fatou set  $F(G)$).  
Then we have all of the following statements. 
\begin{itemize}
\item[{\em (1)}] 
$J_{\ker }(G_{\tau })=\emptyset $, 
int$(J(G_{\tau }))=\emptyset $,  
%$\varphi $ varies precisely on $J(G_{\tau })$ for each non-constant 
%$\varphi \in {\cal U}_{\tau }$ 
%(Lemma~\ref{l:lsncnonc}).  
%Moreover, if in addition to the above assumptions, 
%we further suppose that $G$ is hyperbolic (i.e., the closure $P(G)$ of critical orbits of $G$ is contained 
%in the Fatou set  $F(G)$),  
%then it is shown  
%that 
 and $\dim _{H}(J(G))<2$, where $\dim _{H}$ denotes the 
Hausdorff dimension with respect to the spherical distance on $\CCI .$ 
\item[{\em (2)}] 
Suppose further that at least one of the following conditions {\em (a)(b)(c)} holds. 
\begin{itemize}
\item[{\em (a)}] $\sum _{j=1}^{m}p_{j}\log (p_{j}\deg (h_{j}))>0.$ 
\item[{\em (b)}] $P(G)\setminus \{ \infty \} $ is 
bounded in $\CC $. 
\item[{\em (c)}] $m=2.$
\end{itemize}
Then there exists a non-atomic ``invariant measure'' $\lambda $ on $J(G)$ with supp$\, \lambda =J(G)$ 
and an uncountable dense subset $A$ of $J(G)$ with $\lambda (A)=1$ and $\dim _{H}(A)>0$, 
such that 
for  every $z\in A$ and for each non-constant 
$\varphi \in U_{\tau }$, the pointwise H\"{o}lder exponent 
%(see Definition~\ref{d:phe}) of 
of $\varphi $ at $z$, which is defined to be 
$$\inf \{\alpha \in \RR \mid \limsup _{y\rightarrow z}\frac{|\varphi (y)-\varphi (z)|}{|y-z|^{\alpha }}=\infty \},$$
 is strictly less than $1$ and $\varphi $ is not differentiable at $z$ (Theorem~\ref{t:hnondiff}).  
% In particular, there exists an uncountable dense subset $A$ of $J(G)$ 
%with $\dim _{H}(A)>0$ such that 
%for each $z\in A$, and for each non-constant $\varphi \in {\cal U}_{\tau }$, 
%$\varphi $ is not differentiable at $z$ (Theorem~\ref{t:hnondiff}). 
%Moreover, we show that $\dim _{H}(A)>0.$  
\item[{\em (3)}] 
In {\em (2)} above, 
the pointwise H\"{o}lder exponent of $\varphi $ at $z$ can be represented in terms of 
$p_{j}, \log (\deg (h_{j}))$ and the integral of the sum of the values of the Green's function of 
the basin of $\infty $ for the sequence $\gamma =(\gamma _{1},\gamma _{2},\ldots )\in \{ h_{1},\ldots, h_{m}\} ^{\NN }$ 
at the finite critical points of $\gamma _{1}$ (Theorem~\ref{t:hnondiff}). 
\item[{\em (4)}]  
Under the assumption of (2), for almost every point $z\in J(G)$ with respect to the 
$\delta $-dimensional Hausdorff measure $H^{\delta }$ 
where $\delta =\dim _{H}(J(G))$,  the pointwise H\"{o}lder exponent of a non-constant $\varphi \in  U_{\tau }$ at $z$  
can be represented in terms of the $p_{j}$ and the derivatives of $h_{j}$ (Theorem~\ref{t:hdiffornd}). 
\end{itemize}
\end{thm}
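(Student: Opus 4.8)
The plan is to prove (1) directly from the separation hypothesis together with the thermodynamic formalism for hyperbolic semigroups, and to obtain (2)--(4) by reducing the pointwise H\"older exponent to an application of Birkhoff's ergodic theorem through a symbolic model of $J(G)$.

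\textbf{Part (1).} By backward self-similarity (Lemma~\ref{l:bss}), $J(G)=\bigcup_{j=1}^{m}h_{j}^{-1}(J(G))$, and the hypothesis $h_{i}^{-1}(J(G))\cap h_{j}^{-1}(J(G))=\emptyset$ ($i\neq j$) makes this a disjoint union; hence each $w\in J(G)$ lies in exactly one $h_{j}^{-1}(J(G))$, and since $m\geq 2$ we may pick $k\neq j$, whence $h_{k}(w)\in F(G)$ and $w\notin\bigcap_{g\in G}g^{-1}(J(G))=J_{\ker}(G)$. Thus $J_{\ker}(G_{\tau})=J_{\ker}(G)=\emptyset$. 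Since $G$ is hyperbolic and the pieces $h_{j}^{-1}(J(G))$ are pairwise disjoint, the inverse branches of $h_{1},\dots,h_{m}$ form a conformal iterated function system with the strong separation (hence open set) condition and limit set $J(G)$; by the thermodynamic formalism for hyperbolic rational semigroups (see \cite{SU1,SU2}), $\dim_{H}(J(G))$ is the Bowen parameter, the zero of the pressure of the geometric potential, and the pressure at exponent $2$ is strictly negative because the disjoint sets $h_{j}^{-1}(J(G))$ have bounded total area; hence $\dim_{H}(J(G))<2$, so $J(G)$ has zero planar Lebesgue measure and $\mathrm{int}(J(G))=\emptyset$ (and $F(G)\neq\emptyset$).

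\textbf{Setup for (2)--(4).} By Part (1), $J_{\ker}(G_{\tau})=\emptyset$, $J(G_{\tau})=J(G)\neq\emptyset$ and $\mathrm{int}(J(G_{\tau}))=\emptyset$, so Theorem~\ref{t:mtauspec} applies: $U_{\tau}$ is finite dimensional, spanned by finitely many unitary eigenfunctions of $M_{\tau}$, and every element of $U_{\tau}$ is continuous on $\CCI$ and locally constant on $F(G)$. I introduce the coding $\pi:\{1,\dots,m\}^{\NN}\to J(G)$, $\pi(i_{1},i_{2},\dots):=\bigcap_{n}(h_{i_{n}}\circ\cdots\circ h_{i_{1}})^{-1}(J(G))$; using uniform expansion near $J(G)$, the Koebe distortion theorem for the inverse branches, and disjointness, $\pi$ is a H\"older homeomorphism with $h_{i_{1}}\circ\pi=\pi\circ\sigma$ on $[i_{1}]$, and the cylinder $J_{i_{1}\dots i_{n}}:=\pi([i_{1}\dots i_{n}])=(h_{i_{n}}\circ\cdots\circ h_{i_{1}})^{-1}(J(G))$ has diameter comparable, with $n$-independent constants, to $|(h_{i_{n}}\circ\cdots\circ h_{i_{1}})'|^{-1}$ on it. Let $\lambda$ be the push-forward under $\pi$ of an ergodic, fully supported, non-atomic $\sigma$-invariant Gibbs measure on $\{1,\dots,m\}^{\NN}$ built from the weights $p_{j}$ and the geometric potential (such a measure exists as $m\geq 2$); then $\lambda$ is non-atomic, $\mathrm{supp}\,\lambda=J(G)$, and $\lambda$ has a positive $\lambda$-a.e.\ local dimension.

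\textbf{The oscillation estimate and the H\"older exponent.} Fix a non-constant $\varphi\in U_{\tau}$ and write $\varphi=\sum_{l}\varphi_{l}$ with $M_{\tau}\varphi_{l}=u_{l}\varphi_{l}$, $|u_{l}|=1$. The disjointness hypothesis decouples the orbit from $J(G)$: if $z\in J_{i_{1}\dots i_{n}}\subset h_{i_{1}}^{-1}(J(G))$ and $k\neq i_{1}$, then $h_{k}(z)$ lies at a positive distance from $J(G)$ (since $h_{k}(h_{i_{1}}^{-1}(J(G)))\cap J(G)=\emptyset$ and both sets are compact), and away from $J(G)$ the Fatou set has only finitely many components, on each of which every element of $U_{\tau}$ is constant. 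Hence for $n$ large and $z,z'$ in the same cylinder $J_{i_{1}\dots i_{n}}$, the ``escaping'' terms of $u_{l}\varphi_{l}(z)=\sum_{k}p_{k}\varphi_{l}(h_{k}z)$ agree for $z$ and $z'$, so $u_{l}(\varphi_{l}(z)-\varphi_{l}(z'))=p_{i_{1}}(\varphi_{l}(h_{i_{1}}z)-\varphi_{l}(h_{i_{1}}z'))$; iterating down to a bounded residual depth $N_{0}$ gives
$$
\varphi(z)-\varphi(z')=p_{i_{1}}\cdots p_{i_{n-N_{0}}}\sum_{l}u_{l}^{-(n-N_{0})}\bigl(\varphi_{l}(\zeta)-\varphi_{l}(\zeta')\bigr),\qquad \zeta,\zeta'\in J_{i_{n-N_{0}+1}\dots i_{n}}.
$$
For suitably chosen $z,z'\in J_{i_{1}\dots i_{n}}$ and a positive-density set of $n$ (using that $\varphi$ is non-constant, so the almost-periodic family $j\mapsto\sum_{l}u_{l}^{-j}\varphi_{l}$ does not degenerate to a constant on a fixed cylinder) this yields $|\varphi(z)-\varphi(z')|\asymp p_{i_{1}}\cdots p_{i_{n}}$, while $|z-z'|\asymp\mathrm{diam}\,J_{i_{1}\dots i_{n}}\asymp|(h_{i_{n}}\circ\cdots\circ h_{i_{1}})'(z)|^{-1}$. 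Taking the ratio of logarithms and invoking Birkhoff's ergodic theorem for the additive cocycles $\sum_{k}\log p_{i_{k}}$ and $\log|(h_{i_{n}}\circ\cdots\circ h_{i_{1}})'(z)|$ along $\gamma$ --- the latter's $\lambda$-average being, by the polynomial analogue of Przytycki's formula, $\sum_{j}p_{j}\log\deg h_{j}$ plus the integral of the sum of the values at the finite critical points of $\gamma_{1}$ of the Green's function of the basin of $\infty$ for $\gamma$ --- we obtain that for $\lambda$-a.e.\ $z$ the pointwise H\"older exponent of every non-constant $\varphi\in U_{\tau}$ at $z$ equals the ratio in (3), and that each of (a), (b), (c) forces it $<1$: (a) is precisely the inequality $\sum_{j}p_{j}\log\deg h_{j}>\sum_{j}p_{j}\log(1/p_{j})$, which keeps the ratio below $1$; (b) forces the minimal sets for $(G_{\tau},\CCI)$ to be exactly $\{\infty\}$ and one bounded set, so $U_{\tau}=\mathrm{span}\{1,T_{\infty,\tau}\}$ and the estimate is run directly for $T_{\infty,\tau}$; (c) makes the escape combinatorics explicit. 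Taking $A$ to be this $\lambda$-full-measure set, it is dense in $J(G)$ (as $\mathrm{supp}\,\lambda=J(G)$), uncountable (as $\lambda$ is non-atomic), and of positive Hausdorff dimension (as $\dim_{H}\lambda>0$), and on $A$ every non-constant $\varphi\in U_{\tau}$ has H\"older exponent $<1$ and hence is not differentiable; this proves (2) and (3). Part (4) is obtained by repeating the exponent computation with $\lambda$ replaced by the $\delta$-conformal measure of $G$ ($\delta=\dim_{H}J(G)$), which by \cite{SU1,SU2} is equivalent to $H^{\delta}|_{J(G)}$ and integrates the contraction cocycle $\log|h_{j}'|$ directly, so the exponent is expressed through the $p_{j}$ and the derivatives of the $h_{j}$.

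\textbf{Main obstacle.} The crux is the lower bound in the oscillation estimate --- that every non-constant $\varphi\in U_{\tau}$ actually separates points across deep cylinders by an amount $\asymp p_{i_{1}}\cdots p_{i_{n}}$, uniformly and for enough $n$ that the Birkhoff limit exists and stays $<1$. This requires the non-degeneracy of the almost-periodic family $j\mapsto\sum_{l}u_{l}^{-j}\varphi_{l}$ on cylinders, which rests on the fine description of $U_{\tau}$ and of the minimal sets in Theorem~\ref{t:mtauspec}, together with a precise accounting of how the escaping random orbit distributes onto those minimal sets; it is exactly here that the hypotheses (a), (b), (c) enter. The distortion estimates, by contrast, follow routinely from hyperbolicity and the Koebe theorem, and the ergodic-theoretic bookkeeping is standard once the cocycles are set up.
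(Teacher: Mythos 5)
Your overall strategy coincides with the paper's: the decoupling identity $\varphi (a)-\varphi (b)=p_{w_{n}}\cdots p_{w_{1}}(\varphi (\gamma _{n,1}(a))-\varphi (\gamma _{n,1}(b)))$ coming from disjointness plus local constancy of $U_{\tau }$ on $F(G)$, Koebe distortion for inverse branches, Birkhoff's theorem for the cocycles $\log \tilde{p}$ and $\log \| f'\| _{s}$, and the Green's-function computation of the Lyapunov exponent. But there is a genuine gap in your symbolic model. The ``cylinder'' $J_{i_{1}\dots i_{n}}=(h_{i_{n}}\circ \cdots \circ h_{i_{1}})^{-1}(J(G))$ does not shrink: for $i_{1}=\cdots =i_{n}=1$ it contains $J(h_{1})$, whose diameter is a fixed positive number, while $|(h_{1}^{n})'|$ grows exponentially there; so the claimed comparability $\mbox{diam}\, J_{i_{1}\dots i_{n}}\asymp |(h_{i_{n}}\circ \cdots \circ h_{i_{1}})'|^{-1}$ is false, and $\pi $ is not a point map at all --- $\bigcap _{n}J_{i_{1}\dots i_{n}}$ is the fiberwise Julia set $J_{\gamma }$, an uncountable perfect set. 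Hence ``$z,z'$ in the same cylinder with $|z-z'|\asymp \mbox{diam}\, J_{i_{1}\dots i_{n}}$'' yields no H\"{o}lder information, and the ``push-forward under $\pi $ of a Gibbs measure on $\{1,\dots ,m\} ^{\NN }$'' is undefined. The correct localization (Lemmas~\ref{l:nondiff1}, \ref{l:diff1}) fixes $(\gamma ,z_{0})\in \tilde{J}(f)$ and pulls back a ball of fixed radius about $\gamma _{n,1}(z_{0})$ along the inverse branch of $\gamma _{n,1}$ at $z_{0}$; Koebe gives diameter $\asymp \| \gamma _{n,1}'(z_{0})\| _{s}^{-1}$, and one checks separately, using that every intermediate image has diameter at most $r_{0}/2$, that each composition $h_{u_{n}}\cdots h_{u_{1}}$ with $(u)\neq (w)$ maps this small set into $F(G)$. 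Correspondingly, the measure $\lambda $ is $(\pi _{\CCI })_{\ast }(\mu )$ where $\mu $ disintegrates over $\tilde{\tau }$ with the fiberwise Brolin measures $\mu _{\gamma }=dd^{c}G_{\gamma }$ as conditionals (Lemma~\ref{l:maxentmeas1}), not a pushforward from the shift.

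The second gap is the derivation of $u(h,p,\mu )<1$ from hypotheses (b) and (c). These hypotheses are not where the non-degeneracy of the oscillation lives --- that is Lemma~\ref{l:lsncnonc}, which needs only disjointness and hyperbolicity --- they are needed solely for the numerical inequality $u<1$. Your claim that (b) forces exactly two minimal sets and $U_{\tau }=\mbox{LS}(\{ 1,T_{\infty ,\tau }\} )$ is a non sequitur and would not bound $u$ in any case. The paper's argument is: under (b), $\Omega \equiv 0$, and the dimension formula $\dim _{H}(\lambda )=(\sum _{j}p_{j}\log \deg (h_{j})-\sum _{j}p_{j}\log p_{j})/(\sum _{j}p_{j}\log \deg (h_{j})+\int \Omega \, d\tilde{\tau })$ together with $\dim _{H}(\lambda )\leq \dim _{H}(J(G))<2$ forces $\sum _{j}p_{j}\log (p_{j}\deg (h_{j}))>0$, reducing (b) to (a); under (c), if $P^{\ast }(G)$ is unbounded then $\int \Omega \, d\tilde{\tau }>0$ and $-\sum _{j=1}^{2}p_{j}\log p_{j}\leq \log 2\leq \sum _{j=1}^{2}p_{j}\log \deg (h_{j})$ gives $u\leq \log 2/(\log 2+\int \Omega \, d\tilde{\tau })<1$. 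Your ``(c) makes the escape combinatorics explicit'' does not supply this. Parts (1) and (4) of your outline are essentially sound, modulo the remark that in (4) one must pass from the (non-invariant) $\delta $-conformal measure to the equivalent $f$-invariant Gibbs measure before applying Birkhoff.
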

Combining Theorems~\ref{t:thmA}, \ref{t:thmB}, \ref{t:thmC}, it follows that 
under the assumptions of Theorem~\ref{t:thmC}, the chaos of the averaged system disappears in the $C^{0}$ ``sense'', 
but it remains in the $C^{1}$ ``sense''. 
From Theorem~\ref{t:thmC}, we also obtain that if $p_{1}$ is small enough, 
then for almost every $z\in J(G)$ with respect to $H^{\delta }$ and for each $\varphi \in U_{\tau }$, 
$\varphi $ is differentiable at $z$ and the derivative of $\varphi $ at $z$ is equal to zero, even though  
a non-constant $\varphi \in U_{\tau }$ is not differentiable at any point 
of an uncountable dense subset of $J(G)$ (Remark~\ref{r:dinondi}).  
 To prove these results, we use Birkhoff's ergodic theorem, potential theory,  
 the Koebe distortion theorem and thermodynamic formalisms in ergodic theory.  
We can construct many examples of $(h_{1},\ldots ,h_{m})\in {\cal P}^{m}$ 
such that $h_{i}^{-1}(J(G))\cap h_{j}^{-1}(J(G))=\emptyset $ for each $(i,j)$ with $i\neq j,$ 
where $G=\langle h_{1},\ldots ,h_{m}\rangle $, $G$ is hyperbolic,  
$\hat{K}(G)\neq \emptyset $, and  
$ U_{\tau }$ possesses non-constant elements (e.g., $T_{\infty ,\tau }$) for any 
$\tau =\sum _{i=1}^{m}p_{i}\delta _{h_{i}}$ (see Proposition~\ref{semihyposcexprop}, Example~\ref{ex:dc1}, 
 Proposition~\ref{p:hyphigh}, Proposition~\ref{p:hypdispur}, and Remark~\ref{r:manydhk}). 

We also investigate the topology of the Julia sets $J_{\gamma }$ of sequences $\gamma \in (\mbox{supp}\, \tau )^{\NN }$, 
where $\tau $ is a Borel probability measure on ${\cal P}$ with compact support. 
We show that if $P(G_{\tau })\setminus \{ \infty \} $ is not bounded in $\CC $, 
then for almost every sequence $\gamma $ with respect to 
$\tilde{\tau }:=\otimes _{j=1}^{\infty }\tau $, 
the Julia set $J_{\gamma }$ of $\gamma $ has uncountably many connected components (Theorem~\ref{Punbddthm1}).  
This generalizes \cite[Theorem 1.5]{Br1} and \cite[Theorem 2.3]{BBR}.  
Moreover, we show that $\hat{K}(G_{\tau })=\emptyset $ if and only if  
$T_{\infty ,\tau }\equiv 1$, and that if $\hat{K}(G_{\tau })=\emptyset $,   
then for almost every $\gamma $ with respect to $\tilde{\tau }$, 
the $2$-dimensional Lebesgue measure of filled-in Julia set $K_{\gamma }$ (see Definition~\ref{d:kg}) 
of $\gamma $ is equal to zero  and $K_{\gamma }=J_{\gamma }$ has uncountably many connected 
components (Theorem~\ref{Tequals1thm1} and Example~\ref{ex:kead}). 
These results generalize \cite[Theorem 2.2]{BBR} and one of the statements of \cite[Theorem 2.4]{Br1}. 
 
Another matter of considerable interest is what happens when $J_{\ker }(G_{\tau })\neq \emptyset .$ 
We show that if $\tau $ is a Borel probability measure on Rat$_{+}$ with compact support and 
$G_{\tau }$ is ``semi-hyperbolic'' (see Definition~\ref{d:sh}), 
then $J_{\ker }(G_{\tau })\neq \emptyset $ if and only if $J_{meas}(\tau )\neq \emptyset $ 
(Theorem~\ref{t:dimjpt0jkn}). We define several types of ``smaller Julia sets'' of $M_{\tau }^{\ast }$. 
We denote by $J_{pt}^{0}(\tau )$ the ``pointwise Julia set'' of $M_{\tau }^{\ast }$  
restricted to $\CCI $ (see Definition~\ref{d:manyFJ}). 
We show that if $G_{\tau }$ is semi-hyperbolic, 
then $\dim _{H}(J_{pt}^{0}(\tau ))<2$ (Theorem~\ref{t:dimjpt0jkn}). 
Moreover, if $J_{\ker }(G_{\tau })\neq \emptyset $, $G_{\tau }$ is semi-hyperbolic,  and $\sharp \mbox{supp}\, \tau <\infty $, 
then $\overline{J_{pt}^{0}(\tau )}=J(G_{\tau })$ (Theorem~\ref{t:dimjpt0jkn}).  
Thus the dual of the transition operator of the Markov process induced by $\tau $ can detect the Julia set of $G_{\tau }.$ 
To prove these results, we utilize some observations concerning  
semi-hyperbolic rational semigroups that may be found in \cite{S4,S7}. 
In particular, the continuity of $\gamma \mapsto J_{\gamma }$ 
is required. (This is non-trivial, and 
does not hold for an arbitrary rational semigroup.)  

Moreover, even when $J_{\ker }(G_{\tau })\neq \emptyset $, it is shown that 
if $J_{\ker }(G_{\tau })$ is included in the unbounded component of 
the complement of the intersection of the set of non-semi-hyperbolic points of $G_{\tau }$ and $J(G_{\tau })$, 
then for almost every $\gamma \in {\cal P}^{\NN }$ with respect to $\tilde{\tau }$, 
the $2$-dimensional Lebesgue measure of the Julia set $J_{\gamma }$ of $\gamma $ is equal to 
zero (Theorem~\ref{t:jkucuh}).  
To prove this result, we again utilize observations concerning the kernel Julia set of $G_{\tau }$, 
and non-constant limit functions must be handled carefully (Lemmas~\ref{Lkerlem}, \ref{l:jkejgcuh} and \ref{l:jkucuhjkj}).  

As pointed out in the previous paragraphs, we find many new phenomena 
which can hold in random complex dynamics and the dynamics of rational semigroups, 
but cannot hold in the usual iteration dynamics of a single rational map. 
These new phenomena and their mechanisms are systematically investigated. 

 In the proofs of all results, we employ the skew product map 
 associated with the support of $\tau $ (Definition~\ref{d:sp}), and some detailed observations  
concerning the skew product are required. It is a new idea to use the kernel Julia set  
of the associated semigroup to investigate random complex dynamics. Moreover, 
it is both natural and new to combine the theory of random complex dynamics and the theory of rational semigroups. 
Without considering the Julia sets of rational semigroups, we are unable to discern the singular properties of 
the non-constant finite linear combinations $\varphi $ (e.g., $\varphi =T_{\infty ,\tau }$, a devil's coliseum) of the unitary eigenvectors of $M_{\tau }$.   
 
 In section~\ref{Pre}, we give some fundamental notations and definitions. 
In section~\ref{Results}, we present the main results of this paper. 
In section~\ref{Tools}, we introduce the basic tools used to prove the main results. 
In section~\ref{Proofs}, we provide the proofs of the main results. 
In section~\ref{Examples}, we give many examples to which the main results are applicable. 

 In the subsequent paper \cite{Sprep}, we investigate the stability and bifurcation of 
 $M_{\tau }$ (some results of \cite{Sprep} are announced in \cite{Skokyu10}).  
\ 

\noindent {\bf Acknowledgment:} The author thanks Rich Stankewitz for valuable comments.  
This work was supported by JSPS Grant-in-Aid for Scientific Research(C) 21540216. 
\section{Preliminaries}
\label{Pre}
In this section, we give some basic definitions and notations on the dynamics of semigroups 
of holomorphic maps and the i.i.d. random dynamics of holomorphic maps.

\ 

\noindent {\bf Notation:} 
Let $(X,d)$ be a metric space, $A$ a subset of $X$, and $r>0$. We set 
$B(A,r):= \{ z\in X\mid d(z,A)<r\} .$ Moreover, 
for a subset $C$ of $\CC $, we set 
$D(C,r):= \{ z\in \CC \mid \inf _{a\in C}|z-a|<r\} .$ 
Moreover, for any topological space $Y$ and for any subset $A$ of $Y$, we denote by int$(A)$ the set of all interior points of $A.$

\begin{df}
Let $Y$ be a metric space. 
We set $\CMX := \{ f:Y\rightarrow Y\mid f \mbox{ is continuous}\} $ 
endowed with the compact-open topology.  
Moreover, we set $\OCMX := \{ f\in \CMX \mid 
f \mbox{ is an open map} \} $ endowed 
with the relative topology from $\CMX .$ 
Furthermore, we set 
$C(Y):= \{ \varphi :Y\rightarrow \CC \mid \varphi \mbox{ is continuous }\} .$ 
When $Y$ is compact, we endow $C(Y)$ with the supremum norm $\| \cdot \| _{\infty }.$ 
Moreover, for a subset ${\cal F}$ of $C(Y)$, we set 
${\cal F}_{nc}:=\{ \varphi \in {\cal F}\mid \varphi \mbox{ is not constant}\} .$ 
\end{df}
\begin{df}
Let $Y$ be a complex manifold. We set 
$\HMX := \{ f: Y\rightarrow Y \mid f \mbox{ is holomorphic}\} $ endowed 
with the compact open topology. Moreover, 
we set $\NHMX := \{ f\in \HMX \mid f \mbox{ is not constant}\} $ endowed with the 
compact open topology.  
\end{df}
\begin{rem}
$\CMX $,  $\OCMX $,  $\HMX $, and $\NHMX $ are semigroups with the semigroup operation being 
functional composition. 
\end{rem}
\begin{df} A {\bf rational semigroup} is a semigroup  
generated by a family of non-constant rational maps on 
the Riemann sphere $\CCI $ with the semigroup operation being 
functional composition(\cite{HM,GR}). A 
{\bf polynomial semigroup } is a 
semigroup generated by a family of non-constant 
polynomial maps. 
%\begin{df} 
We set 
Rat : $=\{ h:\CCI \rightarrow \CCI \mid 
h \mbox { is a non-constant rational map}\} $
endowed with the distance $\kappa $ which is defined 
by $\kappa (f,g):=\sup _{z\in \CCI }d(f(z),g(z))$, where $d$ denotes the 
spherical distance on $\CCI .$   
%We set 
%Poly :$=\{ h:\CCI \rightarrow \CCI 
%\mid h \mbox{ is a non-constant polynomial }\} $ endowed with 
%the relative topology from Rat.   
Moreover, we set 
$\Ratp:=\{ h\in \mbox{Rat}\mid \deg (h)\geq 2\} $ endowed with the 
relative topology from Rat. 
Furthermore, we set 
%Poly$_{\deg \geq 2}
%:= \{ g\in \mbox{Poly}\mid \deg (g)\geq 2\} $ 
${\cal P}:= \{ g:\CCI \rightarrow \CCI \mid 
g \mbox{ is a polynomial}, \deg (g)\geq 2\} $
endowed with the relative topology from 
Rat.  
%\end{df}

\end{df}
\begin{df}
Let $Y$ be a compact metric space and 
let $G$ be a subsemigroup of $\CMX. $   
%\begin{itemize}
%\item 
The {\bf Fatou set }of $G$ is defined to be  
$F(G):=\\ \{ z\in Y \mid \exists \mbox{ neighborhood } U \mbox{ of }z$  
\mbox{s.t.} $\{ g|_{U}:U\rightarrow Y \} _{g\in G}$ is equicontinuous 
on  $U \} .$ (For the definition of equicontinuity, see \cite{Be}.) 
%\item 
The {\bf Julia set }of $G$ is defined to be 
 $J(G):= Y \setminus F(G).$ 
%\item 
If $G$ is generated by $\{ g_{i}\} _{i}$, then 
we write $G=\langle g_{1},g_{2},\ldots \rangle .$
If $G$ is generated by a subset $\G $ of $\CMX$, then we write
 $G=\langle \G \rangle .$  
%More generally, 
%if $G$ is generated by $\{ h_{\lambda } : \lambda \in \Lambda \} $, 
%then we write $G=\langle h_{\lambda }: \lambda \in \Lambda \rangle .$ 
%\item 
For finitely many elements $g_{1},\ldots, g_{m}\in \CMX$, 
we set  $F(g_{1},\ldots ,g_{m}):=F(\langle g_{1},\ldots ,g_{m}\rangle )$ 
and $J(g_{1},\ldots ,g_{m}):=J(\langle g_{1},\ldots ,g_{m}\rangle )$.  
%\item 
For a subset $A$ of $Y$, we set 
$G(A):= \bigcup _{g\in G}g(A)$ and 
$G^{-1}(A):= \bigcup _{g\in G}g^{-1}(A).$ 
%\item 
We set $G^{\ast }:= G\cup \{ \mbox{Id}\} $, where 
Id denotes the identity map. 
%\end{itemize}
\end{df}
By using the method in \cite{HM,GR}, it is easy to see that the following 
lemma holds. 
\begin{lem}
\label{ocminvlem}
Let $Y$ be a compact metric space and 
let $G$ be a subsemigroup of  $\emOCMX. $ Then 
for each $h\in G$, $h(F(G))\subset F(G)$ and $h^{-1}(J(G))\subset J(G).$ 
Note that the equality does not hold in general. 
\end{lem}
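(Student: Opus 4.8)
The statement to prove is Lemma~\ref{ocminvlem}: for a subsemigroup $G$ of $\OCMX$ with $Y$ compact, each $h\in G$ satisfies $h(F(G))\subset F(G)$ and $h^{-1}(J(G))\subset J(G)$.

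\textbf{Plan of proof.} The two assertions are equivalent by taking complements, since $J(G)=Y\setminus F(G)$: indeed $h(F(G))\subset F(G)$ says $F(G)\subset h^{-1}(F(G))$, and complementing gives $h^{-1}(J(G))\subset J(G)$. So it suffices to prove $h(F(G))\subset F(G)$. The plan is to fix $z\in F(G)$ and a map $h\in G$, and to produce a neighborhood of $h(z)$ on which the family $\{g|_U : g\in G\}$ is equicontinuous. The natural candidate is $U:=h(V)$ where $V$ is a neighborhood of $z$ on which $\{g|_V : g\in G\}$ is equicontinuous; this is where openness of $h$ is used, to guarantee that $h(V)$ is actually a neighborhood of $h(z)$ (an arbitrary continuous map need not send neighborhoods to neighborhoods, which is exactly why the lemma can fail outside $\OCMX$, and why the stronger equality also fails).

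\textbf{Key steps, in order.} First, I would recall the definition: $z\in F(G)$ means there is an open $V\ni z$ with $\{g|_V\}_{g\in G}$ equicontinuous on $V$. Second, set $U:=h(V)$; since $h$ is an open map, $U$ is open, and $h(z)\in U$, so $U$ is an open neighborhood of $h(z)$. Third, I would show $\{g|_U\}_{g\in G}$ is equicontinuous on $U$. The subtle point here is that to conclude equicontinuity of the family restricted to $U$, I want to write each $g\in G$ restricted to $U$ as a uniform limit-style comparison using composition; the cleanest route is: given $\varepsilon>0$, equicontinuity of $\{g|_V\}$ at the relevant points of $V$ gives a $\delta>0$ (uniform over $g\in G$) controlling $d(g(v_1),g(v_2))$ when $d(v_1,v_2)<\delta$; I then need a modulus-of-continuity argument for $h$ on $V$ — but $h$ being continuous on a compact space is uniformly continuous, so there is $\eta>0$ with $d(h(v_1),h(v_2))<\delta$ forcing... no, that is the wrong direction. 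Instead I should use that the composition $g\circ h$ for $g\in G$ lies in $G^\ast$ (in fact $gh\in G$ when $G$ is a semigroup), so $\{(g\circ h)|_V\}_{g\in G}\subset\{g'|_V\}_{g'\in G}$ is equicontinuous on $V$; combined with surjectivity of $h:V\to U$, equicontinuity of $\{g|_U\}_{g\in G}$ follows by pulling points of $U$ back to $V$.

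\textbf{The main obstacle.} The genuine content — the place where one must be careful — is step three: equicontinuity of $\{g|_U\}$ on $U$ is \emph{not} automatic from equicontinuity of $\{g\circ h|_V\}$ on $V$, because two nearby points of $U$ need not have nearby preimages in $V$ under $h$ (the open map $h$ need not be a local homeomorphism, e.g. at critical points). The resolution is to argue at the level of the definition of the Fatou set directly rather than via an abstract composition argument: given $w_0=h(z)\in U$, pick $v_0\in V$ with $h(v_0)=w_0$; by continuity of $h$ at $v_0$ and openness of $h$, for every small ball $B(w_0,r)\subset U$ there is a ball $B(v_0,s)\subset V$ with $h(B(v_0,s))\supset B(w_0,r')$ for some $r'>0$ — and on $B(v_0,s)$ the family $\{g\circ h\}_{g\in G}$ is equicontinuous; since $g|_{B(w_0,r')} = (g\circ h)\circ(h|_{B(v_0,s)})^{-1}$ only set-theoretically, I instead observe that equicontinuity is a statement about images, so $\mathrm{diam}\big(g(B(w_0,r'))\big)\le \mathrm{diam}\big(g(h(B(v_0,s)))\big)=\mathrm{diam}\big((g\circ h)(B(v_0,s))\big)$, which is small uniformly in $g$. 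This \emph{diameter-shrinking} reformulation of equicontinuity (valid on the compact $Y$) sidesteps the absence of an inverse for $h$ and completes the argument; this reformulation, together with the use of openness of $h$ to ensure $h(V)$ contains a neighborhood of $h(z)$, is the heart of the matter, and the failure of equality is precisely because $h^{-1}$ of a Fatou neighborhood need not lie inside $F(G)$.
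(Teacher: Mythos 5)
Your proof is correct and is essentially the standard argument (the one in Hinkkanen--Martin / Gong--Ren that the paper cites in lieu of a proof): use that $\{g\circ h\}_{g\in G}\subset G$ is equicontinuous on a Fatou neighborhood $V$ of $z$, and use openness of $h$ to extract a ball $B(h(z),r')\subset h(B(z,s))$ on which the containment $g(B(h(z),r'))\subset (g\circ h)(B(z,s))\subset B(g(h(z)),\epsilon)$ gives equicontinuity of $G$ at $h(z)$; the complementation step reducing $h^{-1}(J(G))\subset J(G)$ to $h(F(G))\subset F(G)$ is also right. The only point worth making explicit is that the same argument applies at every point $w_{0}$ of the open set $h(V)$ (choosing any preimage $v_{0}\in V$), so that $h(V)$ is genuinely a neighborhood on which $G$ is equicontinuous, as the paper's definition of $F(G)$ requires.
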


The following is the key to investigating random complex dynamics. 
\begin{df}Let $Y$ be a compact metric space and 
let $G$ be a subsemigroup of $\CMX. $  
We set $J_{\ker }(G):= \bigcap _{g\in G}g^{-1}(J(G)).$ 
This is called the {\bf kernel Julia set} of $G.$  
\end{df}
\begin{rem}
\label{r:kjulia}
Let $Y$ be a compact metric space and 
let $G$ be a subsemigroup of $\CMX. $
%\begin{enumerate}
%\item 
(1) $J_{\ker }(G)$ is a compact subset of $J(G).$ 
%\item 
(2) For each $h\in G$, 
$h(J_{\ker }(G))\subset J_{\ker }(G).$  
%\item 
(3) If $G$ is a rational semigroup and 
if $F(G)\neq \emptyset $, then 
int$(J_{\ker }(G))=\emptyset .$ 
%\item 
(4) If $G$ is generated by a single map or if $G$ is a group, then 
$J_{\ker }(G)=J(G).$ However, for a general rational semigroup $G$, 
it may happen that $\emptyset =J_{\ker }(G)\neq J(G)$ (see subsection~\ref{Conjkeremp} and section~\ref{Examples}).  
%\end{enumerate}
\end{rem}
The following {\bf postcritical set} is important when we investigate the dynamics of 
rational semigroups. 
\begin{df}
For a rational semigroup $G$,  
%We set \\ 
let $P(G):= 
\overline{\bigcup _{g\in G}\{ \mbox{all critical values of } 
g:\CCI \rightarrow \CCI \}}$ where the closure is taken in $\CCI .$  
This is called the {\bf postcritical set} of $G.$ 
\end{df} 
\begin{rem}
\label{r:pg}
If $\Gamma \subset \mbox{Rat}$ and $G=\langle \Gamma \rangle $, then 
$P(G)=\overline{G^{\ast }(\bigcup _{h\in \G }\{ \mbox{all critical values of } h\} )}.$ 
From this one may know the figure of $P(G)$, in the finitely generated case, using a 
computer.   
\end{rem}
\begin{df}Let $G$ be a rational semigroup.
Let $N$ be a positive integer. 
We denote by $SH_{N}(G)$ the set of points $z\in \CCI $ 
satisfying that  
there exists a positive number $\delta $  
such that for each $g\in G$, 
$\deg (g:V\rightarrow B(z,\delta ))\leq N$, 
for each connected component $V$ of $g^{-1}(B(z,\delta )).$ 
Moreover, 
we set $UH(G):= \CCI \setminus \bigcup _{N\in \NN }SH_{N}(G).$ 
\end{df}
%\begin{rem}
%\label{r:uhsp}
%We have $UH(G)\subset P(G).$ 
%\end{rem}
\begin{df}
\label{d:sh}
Let $G$ be a rational semigroup. We say that 
$G$ is {\bf hyperbolic} if $P(G)\subset F(G).$ 
We say that $G$ is {\bf semi-hyperbolic} if $UH(G)\subset F(G).$ 
\end{df}
\begin{rem}
\label{r:uhsp}
We have $UH(G)\subset P(G).$ If $G$ is hyperbolic, then $G$ is semi-hyperbolic.
\end{rem}
It is sometimes important to investigate the dynamics of sequences of maps. 
\begin{df}
Let $Y$ be a compact metric space. 
For each $\gamma =(\gamma _{1},\gamma _{2},\ldots )\in 
(\CMX)^{\NN }$ and each $m,n\in \NN $ with $m\geq n$, we set
$\gamma _{m,n}=\gamma _{m}\circ \cdots \circ \gamma _{n}$ and we set  
$$F_{\gamma }:= \{ z\in Y \mid 
\exists \mbox{ neighborhood } U \mbox{ of } z \mbox{ s.t. } 
\{ \gamma _{n,1}\} _{n\in \NN } \mbox{ is equicontinuous on }  
U\} $$ and $J_{\gamma }:= Y \setminus F_{\gamma }.$ 
The set $F_{\gamma }$ is called the {\bf Fatou set} of the sequence $\gamma $ and 
the set $J_{\gamma }$ is called the {\bf Julia set} of the sequence $\gamma .$ 
\end{df}
\begin{rem}
Let $Y=\CCI $ and let $\gamma \in (\Ratp) ^{\NN }$. Then by \cite[Theorem 2.8.2]{Be}, $J_{\gamma }\neq \emptyset .$ 
Moreover, if $\Gamma $ is a non-empty compact subset of $\Ratp$ and $\gamma \in \Gamma ^{\NN }$, 
then by \cite{S4}, $J_{\gamma }$ is a perfect set and $J_{\gamma }$ has uncountably many points.  
\end{rem}
We now give some notations on random dynamics. 
\begin{df}
\label{d:d0}
For a topological space $Y$, we denote by 
${\frak M}_{1}(Y)$ the space of all Borel probability measures on  $Y$ endowed 
with the topology such that 
$\mu _{n}\rightarrow \mu $ in ${\frak M}_{1}(Y)$ if and only if 
for each bounded continuous function $\varphi :Y\rightarrow \CC $, 
$\int \varphi \ d\mu _{n}\rightarrow \int \varphi \ d\mu .$ 
 Note that if $Y$ is a compact metric space, then 
${\frak M}_{1}(Y)$ is a compact metric space with the metric 
$d_{0}(\mu _{1},\mu _{2}):=\sum _{j=1}^{\infty }\frac{1}{2^{j}}
\frac{|\int \phi _{j}d\mu _{1}-\int \phi _{j}d\mu _{2}|}{1+|\int \phi _{j}d\mu _{1}-\int \phi _{j}d\mu _{2}|}$, 
where $\{ \phi _{j}\} _{j\in \NN }$ is a dense subset of $C(Y).$  
Moreover, for each $\tau \in {\frak M}_{1}(Y)$, 
we set $\suppt :=\{ z\in Y\mid \forall \mbox{ neighborhood } U \mbox{ of }z,\ 
\tau (U)>0\} .$ Note that $\suppt $ is a closed subset of $Y.$ 
Furthermore, 
we set ${\frak M}_{1,c}(Y):= \{ \tau \in {\frak M}_{1}(Y)\mid \suppt \mbox{ is compact}\} .$ 

%\begin{df}
For a complex Banach space ${\cal B}$, we denote by ${\cal B}^{\ast }$ the 
space of all continuous complex linear functionals $\rho :{\cal B}\rightarrow \CC $, 
endowed with the weak$^{\ast }$ topology. 
%the norm $\| \rho \| := \sup _{\varphi \in {\cal B}, \| \varphi \| =1} |\rho (\varphi )|.$  
%\end{df}
\end{df}
For any $\tau \in {\frak M}_{1}(\CMX)$, we will consider the i.i.d. random dynamics on $Y $ such that 
at every step we choose a map $g\in \CMX $ according to $\tau $ 
(thus this determines a time-discrete Markov process with time-homogeneous transition probabilities 
on the phase space 
$Y$ such that for each $x\in Y$ and 
each Borel measurable subset $A$ of $Y$, 
the transition probability 
$p(x,A)$ of the Markov process is defined as $p(x,A)=\tau (\{ g\in \CMX \mid g(x)\in A\} )$).  
\begin{df} 
\label{d:ytau}
Let $Y$ be a compact metric space.  
%and let ${\cal F}$ be a subset of $\CMX .$ 
Let $\tau \in {\frak M}_{1}(\CMX).$  
\begin{enumerate}
\item  We set $\Gamma _{\tau }:= \mbox{supp}\, \tau $ (thus $\Gamma _{\tau }$ is a 
closed subset of $\CMX $).     
Moreover, we set $X_{\tau }:= (\Gamma _{\tau })^{\NN }$ $
 (=\{ \gamma  =(\gamma  _{1},\gamma  _{2},\ldots )\mid \gamma  _{j}\in \Gamma _{\tau }\ (\forall j)\} )$ endowed with the product topology.  
Furthermore, we set $\tilde{\tau }:= \otimes\displaystyle _{j=1}^{\infty }\tau .$ 
This is the unique Borel probability measure 
on $X_{\tau }$ such that for each cylinder set 
$A=A_{1}\times \cdots \times A_{n}\times \Gamma _{\tau }\times  \Gamma _{\tau }\times \cdots $ in 
$X_{\tau }$, $\tilde{\tau }(A)=\prod _{j=1}^{n}\tau (A_{j}).$ 
 We denote by $G_{\tau }$ the subsemigroup of $\CMX$ generated by 
the subset $\Gamma _{\tau }$ of $\CMX .$   
\item 
%We denote by ${\cal M}_{1}(\CCI )$ the space of 
%all Borel probability measures on $\CCI $, endowed with 
%the weak topology. Note that ${\cal M}_{1}(\CCI )$ is 
%a compact metric space. 
%Let $C(Y )$ be the Banach space of all continuous functions on $Y $ endowed with the 
%supremum norm.  
Let $M_{\tau }$ be the operator 
on $C(Y ) $ defined by $M_{\tau }(\varphi )(z):=\int _{\Gamma _{\tau }}\varphi (g(z))\ d\tau (g).$ 
$M_{\tau }$ is called the {\bf transition operator} of the Markov process induced by $\tau .$ 
Moreover, let $M_{\tau }^{\ast }: C(Y)^{\ast }
\rightarrow C(Y)^{\ast }$ be the dual of $M_{\tau }$, which is defined as 
$M_{\tau }^{\ast }(\mu )(\varphi )=\mu (M_{\tau }(\varphi ))$ for each 
$\mu \in C(Y)^{\ast }$ and each $\varphi \in C(Y).$ 
Remark: we have $M_{\tau }^{\ast }({\frak M}_{1}(Y))\subset {\frak M}_{1}(Y)$ and  
for each $\mu \in {\frak M}_{1}(Y )$ and each open subset $V$ of $Y $, 
we have $M_{\tau }^{\ast }(\mu )(V)=\int _{\Gamma _{\tau }}\mu (g^{-1}(V))\ d\tau (g).$ 
\item  
We denote by $F_{meas }(\tau )$ the 
set of $\mu \in {\frak M}_{1}(Y )$
satisfying that there exists a neighborhood $B$ 
of $\mu $ in ${\frak M}_{1}(Y )$ such that 
the sequence  $\{ (M_{\tau }^{\ast })^{n}|_{B}: 
B\rightarrow {\frak M}_{1}(Y ) \} _{n\in \NN }$
is equicontinuous on $B.$
%\item 
We set $J_{meas}(\tau ):= {\frak M}_{1}(Y )\setminus 
F_{meas}(\tau ).$
\item We denote by $F_{meas}^{0}(\tau )$ the 
set of $\mu \in {\frak M}_{1}(Y)$ satisfying that 
the sequence$\{ (M_{\tau }^{\ast })^{n}:
{\frak M}_{1}(Y )\rightarrow {\frak M}_{1}(Y )\} _{n\in \NN }$ is 
equicontinuous at the one point $\mu .$ 
%Note that 
%$F_{meas}(\tau )\subset F_{meas}^{0}(\tau ).$ 
%\item 
We set $J_{meas}^{0}(\tau ):= {\frak M}_{1}(Y )\setminus F_{meas}^{0}(\tau ).$ 
\end{enumerate}
\end{df}
\begin{rem}
We have $F_{meas}(\tau )\subset F_{meas}^{0}(\tau )$ and $J_{meas}^{0}(\tau )\subset J_{meas}(\tau ).$ 
\end{rem}
\begin{rem}
\label{r:ggt}
Let $\Gamma $ be a closed subset of Rat. Then there exists a 
$\tau \in {\frak M}_{1}(\mbox{Rat})$ such that $\Gamma _{\tau }=\Gamma .$ 
By using this fact, we sometimes apply the results on random complex dynamics  
to the study of the dynamics of rational semigroups. 
\end{rem}
\begin{df}
\label{d:Phi}
Let $Y$ be a compact metric space. 
Let $\Phi :Y \rightarrow {\frak M}_{1}(Y )$ be the topological embedding 
defined by: $\Phi (z):=\delta _{z}$, where $\delta _{z}$ denotes the 
Dirac measure at $z.$ Using this topological embedding $\Phi :Y \rightarrow {\frak M}_{1}(Y )$, 
we regard $Y $ as a compact subset of ${\frak M}_{1}(Y ).$ 
\end{df}
\begin{rem}
\label{r:Phi}
If $h\in \CMX $ and $\tau =\delta _{h}$, then 
we have $M_{\tau }^{\ast }\circ \Phi = \Phi \circ h$ on $Y.$ 
Moreover, for a general $\tau \in {\frak M}_{1}(\CMX)$, 
$M_{\tau }^{\ast }(\mu )=\int h_{\ast }(\mu )d\tau (h)$ for each 
$\mu \in {\frak M}_{1}(Y).$ 
Therefore, for a general $\tau \in {\frak M}_{1}(\CMX)$, 
the map $M_{\tau }^{\ast }:{\frak M}_{1}(Y)\rightarrow {\frak M}_{1}(Y)$ 
can be regarded as the ``averaged map'' on the extension ${\frak M}_{1}(Y)$ of 
$Y.$ 
\end{rem}

\begin{rem}
If $\tau =\delta _{h}\in {\frak M}_{1}(\Ratp)$ with $h\in  \Ratp $, then 
$J_{meas}(\tau )\neq \emptyset $. In fact, 
using the embedding $\Phi :\CCI \rightarrow {\frak M}_{1}(\CCI )$, 
we have $\emptyset \neq \Phi (J(h))\subset J_{meas}(\tau ).$     
\end{rem}
The following is an important and interesting object in random dynamics. 
\begin{df}
Let $Y$ be a compact metric space and let $A$ be a subset of $Y.$ 
Let $\tau \in {\frak M}_{1}(\CMX).$ For each $z\in Y$,  
we set 
$T_{A,\tau }(z):= \tilde{\tau }(\{ \gamma =(\gamma _{1},\gamma _{2},\ldots )\in X_{\tau }\mid 
d(\gamma _{n,1}(z),A)\rightarrow 0 \mbox{ as } n\rightarrow \infty \}).$ 
This is the probability of tending to $A$ starting with the initial value $z\in Y.$    
For any $a\in Y$,  we set $T_{a,\tau }:=T_{\{ a\} ,\tau }.$ 
\end{df}
%\begin{df}
%Let $\tau \in {\frak M}_{1}({\cal P}).$ 
%For each $z\in \CCI $, we set  
%$$T_{\infty ,\tau }(z):= 
%\tilde{\tau }(\{ \gamma =(\gamma _{1},\gamma _{2},\ldots )\in 
%{\cal P}^{\NN }\mid \gamma _{n}\circ \cdots \circ \gamma _{1}(z)\rightarrow \infty \mbox{ as } 
%n\rightarrow \infty \} ) .$$ 
%This is the {\bf probability of tending to $\infty $ starting with the initial value 
%$z\in \CCI $ }, with respect to the i.i.d. random dynamics on $\CCI $ such that at every step 
%we choose a polynomial map according to the probability distribution $\tau .$  
%\end{df}
\section{Results}
\label{Results} 
In this section, we present the main results of this paper.  
\subsection{General results and properties of $M_{\tau }$}
\label{Genres} 

In this subsection, we present some general results and 
some results on properties of the iteration of $M_{\tau }:C(\CCI )\rightarrow C(\CCI )$ and 
$M_{\tau }^{\ast }:C(\CCI )^{\ast }\rightarrow C(\CCI )^{\ast }.$ 
The proofs are given in subsection~\ref{pfgenres}. 
We need some notations. 

\begin{df}
Let $Y$ be a $n$-dimensional smooth manifold.  
We denote by Leb$_{n}$ the two-dimensional Lebesgue measure on $Y .$ 
\end{df}
\begin{df}
Let ${\cal B}$ be a complex vector space and let $M:{\cal B}\rightarrow {\cal B}$ be a linear operator.  
Let $\varphi \in {\cal B}$ and $a\in \CC $ be such that 
$\varphi \neq 0, |a|=1$, and $M(\varphi )=a\varphi .$ Then we say that  
$\varphi $ is a unitary eigenvector of $M$ with respect to $a$, 
and we say that $a$ is a unitary eigenvalue.   
\end{df}
\begin{df}
Let $Y$ be a compact metric space and let 
$\tau \in {\frak M}_{1}(\CMX).$ 
Let $K$ be a non-empty subset of $Y$ such that 
$G(K)\subset K$. 
We denote by ${\cal U}_{f,\tau }(K)$ the set of 
all unitary eigenvectors of $M_{\tau }:C(K)\rightarrow C(K)$. Moreover, 
we denote by ${\cal U}_{v,\tau }(K)$ the set of all 
unitary eigenvalues of $M_{\tau }:C(K)\rightarrow C(K).$  
Similarly, we denote by 
${\cal U}_{f,\tau ,\ast}(K)$ the set of all unitary eigenvectors of 
$M_{\tau }^{\ast }:C(K)^{\ast }\rightarrow C(K)^{\ast }$, 
%where $(M_{\tau })_{\ast }(\rho )(\varphi )=\rho (M_{\tau }(\varphi ))$, and 
 and we denote by ${\cal U}_{v,\tau ,\ast }(K)$ the set of all 
unitary eigenvalues of $M_{\tau }^{\ast }:C(K)^{\ast }\rightarrow C(K)^{\ast }.$ 
%Finally, we set ${\cal U}_{f,\tau }:={\cal U}_{f,\tau }(Y), {\cal U}_{v,\tau }:= {\cal U}_{v,\tau }(Y),$ 
%${\cal U}_{f,\tau ,\ast }:={\cal U}_{f,\tau ,\ast }(Y),$ and ${\cal U}_{v,\tau ,\ast }:={\cal U}_{v,\tau, \ast }(Y).$  
\end{df}
%\begin{rem}
%\label{r:ueigen}
%Let $Y$ be a compact metric space and let 
%$\tau \in {\frak M}_{1}(Y).$ 
%By \cite[p 324, Proposition 4.2]{Sh}, 
%the set ${\cal U}_{f,\tau }$ is a group under poitwise multiplication, 
%and the set ${\cal U}_{v,\tau }$ is a subgroup of $S^{1}.$
%Moreover, the map $\varphi \mapsto a_{\varphi }$, where $\varphi \in {\cal U}_{f,\tau }$ 
%and $a_{\varphi }\in {\cal U}_{v,\tau }$ denotes the corresponding unitary eigenvalue, is a group homomorphism.  
%\end{rem}
\begin{df}
Let $V$ be a complex vector space and let $A$ be a subset of $V.$ 
We set $\mbox{LS}(A):= \{ \sum _{j=1}^{m}a_{j}v_{j}\mid a_{1},\ldots ,a_{m}\in \CC , 
v_{1},\ldots ,v_{m}\in A, m\in \NN \} .$ 
\end{df}
\begin{df}
Let $Y$ be a topological space and let $V$ be a subset of $Y.$ 
We denote by $C_{V}(Y)$ the space of all $\varphi \in C(Y)$ such that 
for each connected component $U$ of $V$, there exists a constant $c_{U}\in \CC $ with 
$\varphi |_{U}\equiv c_{U}.$ 
\end{df}
\begin{rem}
$C_{V}(Y)$ is a linear subspace of $C(Y).$ Moreover, if 
$Y$ is compact, metrizable, and locally connected and $V$ is an open subset of $Y$, then 
$C_{V}(Y)$ is a closed subspace of $C(Y).$ Furthermore, 
if $Y$ is compact, metrizable, and locally connected, $\tau \in {\frak M}_{1}(\CMX)$, and $G_{\tau }$ is a 
subsemigroup of $\OCMX$, then $M_{\tau }(C_{F(G_{\tau })}(Y))\subset C_{F(G_{\tau })}(Y).$  
\end{rem}
\begin{df}
For a topological space $Y$, we denote by Cpt$(Y)$ the space of all non-empty compact subsets of $Y$. 
If $Y$ is a metric space, we endow Cpt$(Y)$  
with the Hausdorff metric.
\end{df}
\begin{df}
\label{d:minimal}
Let $Y$ be a metric space and let $G$ be a subsemigroup of $\CMX .$ Let $K\in \Cpt(Y).$ 
We say that $K$ is a minimal set for $(G,Y)$ if 
%$G(K)\subset K$ and 
$K$ is minimal among the space 
$\{ L\in \Cpt(Y)\mid G(L)\subset L\} $ with respect to inclusion. 
Moreover, we set $\Min(G,Y):= \{ K\in \Cpt(Y)\mid K \mbox{ is a minimal set for } (G,Y)\} .$  
\end{df}
\begin{rem}
\label{r:minimal}
Let $Y$ be a metric space and let $G$ be a subsemigroup of $\CMX .$
By Zorn's lemma, it is easy to see that 
if $K_{1}\in \Cpt(Y)$ and $G(K_{1})\subset K_{1}$, then 
there exists a $K\in \Min(G,Y)$ with $K\subset K_{1}.$  
Moreover, it is easy to see that 
for each $K\in \Min(G,Y)$ and each $z\in K$, 
$\overline{G(z)}=K.$ In particular, if $K_{1},K_{2}\in \Min(G,Y)$ with $K_{1}\neq K_{2}$, then 
$K_{1}\cap K_{2}=\emptyset .$ Moreover, 
by the formula $\overline{G(z)}=K$, we obtain that 
for each $K\in \Min(G,Y)$, either (1) $\sharp K<\infty $ or (2) $K$ is perfect and 
$\sharp K>\aleph _{0}.$ 
Furthermore, it is easy to see that 
if $\Gamma \in \Cpt(\CMX), G=\langle \Gamma \rangle $, and 
$K\in \Min(G,Y)$, then $K=\bigcup _{h\in \Gamma }h(K).$   
\end{rem}
\begin{df}
Let $Y$ be a compact metric space. 
Let $\rho \in  C(Y)^{\ast }.$  
We denote by $a(\rho )$ the set of points $z\in Y$ 
which satisfies that there exists a neighborhood $U$ of $z$ in $Y$ 
such that for each $\varphi \in C(Y)$ with $\mbox{supp}\, \varphi \subset U$, 
$\rho (\varphi )=0.$ 
We set $\mbox{supp}\, \rho := Y\setminus a(\rho ).$ 
\end{df}

\begin{df}
Let $\{ \varphi _{n}:U\rightarrow \CCI \} _{n=1}^{\infty }$ be a sequence of 
holomorphic maps on an open set $U$ of $\CCI .$ Let 
$\varphi :U\rightarrow \CCI $ be a holomorphic map. 
We say that $\varphi $ is a limit function of 
$\{ \varphi _{n}\} _{n=1}^{\infty }$ if 
there exists a strictly increasing sequence $\{ n_{j}\} _{j=1}^{\infty }$ in 
$\NN $ such that $\varphi _{n_{j}}\rightarrow \varphi $ as $j\rightarrow \infty $ locally 
uniformly on $U.$ 
\end{df}
\begin{df}
For a topological space $Z$, we denote by Con$(Z)$ the set of all 
connected components of $Z.$  
\end{df}
\begin{df}
Let $G$ be a rational semigroup. 
We set $J_{res}(G):= \{ z\in J(G)\mid \forall U\in \mbox{Con}(F(G)), z\not\in \partial U\} .$ 
This is called the {\bf residual Julia set} of $G.$ 
\end{df}
We now present the main results. 
\begin{thm}[Cooperation Principle I]
\label{kerJthm1} 
Let $\tau \in {\frak M}_{1,c}(\text{{\em NHM}}(\CC \Bbb{P}^{n}))$, 
where $\CC \Bbb{P}^{n}$ denotes the $n$-dimensional complex projective space.   
Suppose that   
$J_{\ker}(G_{\tau })=\emptyset .$ 
%Let $f:X_{\tau }\times \CCI 
%\rightarrow X_{\tau }\times \CCI $ be the skew product associated with 
%supp $\tau .$
Then, 
$F_{meas}(\tau )={\frak M}_{1}(\CC \Bbb{P}^{n})$, and 
for $\tilde{\tau }$-a.e. $\gamma  \in (\mbox{{\em NHM}}(\CC \Bbb{P}^{n}))^{\NN }$,   
$\mbox{{\em Leb}}_{2n}(J_{\gamma  })=0.$  
\end{thm}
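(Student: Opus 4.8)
The plan is to deduce both conclusions from the hypothesis $J_{\ker}(G_\tau) = \emptyset$ by exploiting the forward invariance of the kernel Julia set together with a normal-families (Montel) argument. First I would unwind what $J_{\ker}(G_\tau) = \emptyset$ buys us: since $J_{\ker}(G) = \bigcap_{g\in G} g^{-1}(J(G))$ is the largest forward-invariant subset of $J(G)$ (Remark~\ref{r:kjulia}(2)), its emptiness says that no point of $\CCI$ — or rather of $\CC\Bbb{P}^n$ — has its entire forward $G_\tau$-orbit staying in $J(G_\tau)$. Concretely, for every $z$ there is some $g \in G_\tau$ with $g(z) \in F(G_\tau)$, and on $F(G_\tau)$ the semigroup is equicontinuous. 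The key point I would establish is that this forces, for $\tilde\tau$-a.e. $\gamma$, that the orbit $\gamma_{n,1}(z)$ eventually ``lands in the Fatou set and stays controlled,'' so that $J_\gamma$ is small; and simultaneously that $M_\tau^*$ is equicontinuous everywhere on $\frak{M}_1(\CC\Bbb{P}^n)$.

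For the measure-theoretic statement $\mathrm{Leb}_{2n}(J_\gamma) = 0$ for a.e. $\gamma$, I would argue as follows. Using the skew product map $f$ over $X_\tau = \Gamma_\tau^{\NN}$ (Definition~\ref{d:sp}, invoked later in the paper), the fibered Julia sets $J_\gamma$ assemble into the Julia set $\tilde J$ of $f$. The hypothesis $J_{\ker}(G_\tau) = \emptyset$ should translate into: for every $z \in \CC\Bbb{P}^n$, there is a word $\beta = (\beta_1,\ldots,\beta_k)$ (a finite initial segment) with $\beta_k \cdots \beta_1(z) \in F(G_\tau)$. Since $\Gamma_\tau = \mathrm{supp}\,\tau$ and $\tilde\tau$ gives positive measure to every cylinder around such a word, a Borel–Cantelli / zero-one type argument along the orbit shows that for $\tilde\tau$-a.e. $\gamma$ and a.e. starting point, the orbit enters $F(G_\tau)$ and the forward images of a small neighborhood shrink — hence $z \in F_\gamma$. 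Integrating the resulting a.e.-vanishing statement over $z$ with Fubini (against $\mathrm{Leb}_{2n} \otimes \tilde\tau$) yields $\mathrm{Leb}_{2n}(J_\gamma) = 0$ for $\tilde\tau$-a.e. $\gamma$. The point requiring care is that we must rule out a positive-measure set of ``bad'' $z$ that never escape to $F(G_\tau)$ along $\gamma$; this is precisely where the forward invariance of $J_{\ker}$ and its emptiness are used — any such persistent set would be forward invariant and contained in $J(G_\tau)$.

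For the statement $F_{meas}(\tau) = \frak{M}_1(\CC\Bbb{P}^n)$, I would show $M_\tau^*$ is equicontinuous on all of $\frak{M}_1(\CC\Bbb{P}^n)$. By the duality $M_\tau^*(\mu) = \int h_*(\mu)\,d\tau(h)$ (Remark~\ref{r:Phi}) and since $\frak{M}_1(\CC\Bbb{P}^n)$ is compact metrizable, it suffices to control $(M_\tau^n \varphi)$ for $\varphi$ ranging over a dense set in $C(\CC\Bbb{P}^n)$ — or better, to show that $\{(M_\tau^*)^n\}$ is an equicontinuous family, equivalently that $\{M_\tau^n : C(\CC\Bbb{P}^n) \to C(\CC\Bbb{P}^n)\}$ is equicontinuous on $C$-norm balls (by Ascoli-type reasoning, uniform equicontinuity of the averaged iterates). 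The engine is again the previous paragraph: for each $z$, for $\tilde\tau$-a.e. $\gamma$ the diameter $\mathrm{diam}(\gamma_{n,1}(B(z,\delta))) \to 0$; averaging, $M_\tau^n\varphi$ becomes ``asymptotically locally constant'' near $z$ with a modulus of continuity uniform in $z$ by compactness, which gives equicontinuity of $\{M_\tau^n\varphi\}_n$ and hence of $\{(M_\tau^*)^n\}_n$ on all of $\frak{M}_1(\CC\Bbb{P}^n)$. Thus $F_{meas}(\tau)$ is the whole space.

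The main obstacle, I expect, is making rigorous the claim that $J_{\ker}(G_\tau) = \emptyset$ implies the a.e.-escape to $F(G_\tau)$ along random orbits — i.e., converting a statement about the semigroup ($J_{\ker} = \emptyset$) into a quantitative statement about $\tilde\tau$-generic sequences. The subtlety is uniformity: one needs that the ``escape time'' to $F(G_\tau)$, while finite for each $z$, can be handled measurably and that the probabilities of escaping within a bounded number of steps are bounded below on compact pieces, so that Borel–Cantelli applies. This likely requires a compactness argument on $\CC\Bbb{P}^n$ (covering it by finitely many neighborhoods on each of which a fixed finite word pushes into $F(G_\tau)$) combined with the fact that $\Gamma_\tau$ is the support of $\tau$, so every such word has positive $\tilde\tau$-cylinder mass. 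The holomorphicity / openness of the maps (elements of $\mathrm{NHM}(\CC\Bbb{P}^n)$) and Montel's theorem are what guarantee shrinking of diameters once an orbit sits inside the equicontinuity locus.
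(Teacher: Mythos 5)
Your proposal follows essentially the same route as the paper: the paper reduces Theorem~\ref{kerJthm1} to Propositions~\ref{Jkeremptygenprop1} and \ref{Jkeremptygenprop2}, whose engine is exactly the argument you outline --- cover the compact set of non-escaped points by finitely many neighborhoods each admitting a finite word (of positive $\tilde{\tau}$-cylinder mass, since $\Gamma_{\tau}=\mbox{supp}\,\tau$) pushing into $F(G_{\tau})$, deduce geometric decay of the probability of never entering $F(G_{\tau})$ (Lemma~\ref{Lkerlem}), then apply Fubini for the Lebesgue statement and the equivalence $F_{pt}^{0}(\tau)=Y\Leftrightarrow F_{meas}(\tau)={\frak M}_{1}(Y)$ for the first statement. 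The only minor overstatement is your claim that diameters of small balls shrink to $0$ (that stronger conclusion is only established later, for $\CCI$, in Theorem~\ref{t:mtauspec}); here mere equicontinuity of $G_{\tau}$ on compact subsets of $F(G_{\tau})$ suffices and is what the paper uses.
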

\begin{thm}[Cooperation Principle II: Disappearance of Chaos]
\label{t:mtauspec}
Let $\tau \in {\frak M}_{1,c}(\emRat)$ and let 
$S_{\tau }:=\bigcup _{L\in \emMin (G_{\tau },\CCI )}L.$  Suppose that  
$J_{\ker }(G_{\tau })=\emptyset $ and $J(G_{\tau })\neq \emptyset $.
Then, all of the following statements 1,$\ldots ,$21 hold. 
\begin{enumerate}
\item \label{t:mtauspec2}
Let ${\cal B}_{0,\tau }:= \{ \varphi \in C(\CCI )\mid M_{\tau }^{n}(\varphi )\rightarrow 0 \mbox{ as }n\rightarrow \infty \} .$ 
Then, ${\cal B}_{0,\tau }$ is a closed subspace of $C(\CCI )$ and there exists a direct sum decomposition 
$C(\CCI )=\mbox{{\em LS}}({\cal U}_{f,\tau }(\CCI ))\oplus {\cal B}_{0,\tau }.$ 
Moreover, $\mbox{{\em LS}}({\cal U}_{f,\tau }(\CCI ))\subset C_{F(G_{\tau })}(\CCI )$ and 
$\dim _{\CC }(\mbox{{\em LS}}({\cal U}_{f,\tau }(\CCI )))<\infty .$ 
\item \label{t:mtauspec2-1}
Let $q:=\dim_{\CC }(\emLSfc ).$ 
Let $\{ \varphi _{j}\} _{j=1}^{q}$ be a basis of 
$\emLSfc$ such that for each $j=1,\ldots ,q$, there exists an $\alpha _{j}\in \Uvc $ with 
$M_{\tau }(\varphi _{j})=\alpha _{j}\varphi _{j}.$ Then, 
there exists a unique family $\{ \rho _{j}:C(\CCI )\rightarrow \CC \} _{j=1}^{q}$ of 
complex linear functionals such that for each $\varphi \in C(\CCI )$, 
$\| M_{\tau }^{n}(\varphi -\sum _{j=1}^{q}\rho _{j}(\varphi )\varphi _{j})\| _{\infty }
\rightarrow 0$ as $n\rightarrow \infty .$ 
Moreover, $\{ \rho _{j}\} _{j=1}^{q}$ satisfies all of the following.
\begin{itemize}
\item[{\em (a)}] For each $j=1,\ldots ,q$, $\rho _{j}:C(\CCI )\rightarrow \CC $ is continuous. 
\item[{\em (b)}] For each $j=1,\ldots ,q$, $M_{\tau }^{\ast }(\rho _{j})=\alpha _{j}\rho _{j}.$ 
\item[{\em (c)}] For each $(i,j)$, $\rho _{i}(\varphi _{j})=\delta _{ij}.$ 
Moreover, $\{ \rho _{j}\} _{j=1}^{q}$ is a basis of $\emLSfac.$  
\item[{\em (d)}] For each $j=1,\ldots ,q$, {\em supp}$\, \rho _{j}\subset S_{\tau }.$ 
\end{itemize} 
\item \label{t:mtauspecj3} 
We have $\sharp J(G_{\tau })\geq 3.$ In particular, 
for each $U\in \mbox{{\em Con}}(F(G_{\tau }))$, 
we can take the hyperbolic metric on $U.$ 

\item \label{t:mtauspec1}
There exists a Borel measurable subset ${\cal A}$ of $(\emRat)^{\NN }$ with $\tilde{\tau }({\cal A})=1$ such that 
\begin{itemize}
\item[{\em (a)}]
for each $\gamma \in {\cal A}$ and for each $U\in \mbox{{\em Con}}(F(G_{\tau }))$, 
each limit function of $\{ \gamma _{n,1}|_{U}\} _{n=1}^{\infty }$ is constant, and 
\item[{\em (b)}]
for each $\gamma \in {\cal A}$ and for each $Q\in \mbox{{\em Cpt}}(F(G_{\tau }))$, 
$\sup _{a\in Q}\| \gamma _{n,1}'(a)\| _{h}\rightarrow 0$ as $n\rightarrow \infty $, 
where $\| \gamma _{n,1}'(a) \| _{h}$ denotes the norm of the derivative 
of $\gamma _{n,1}$ at a point $a$ measured from the hyperbolic metric on the element $U_{0}\in \mbox{{\em Con}}(F(G_{\tau }))$ with 
$a\in U_{0}$ to that on the element $U_{n}\in \mbox{{\em Con}}(F(G_{\tau }))$ with $\gamma _{n,1}(a)\in U_{n}.$ 
\end{itemize} 
\item \label{t:mtauspecaz} For each $z\in \CCI $, there exists a Borel subset ${\cal A}_{z}$ 
of $(\emRat)^{\NN }$ with $\tilde{\tau }({\cal A}_{z})=1$ with the following property.
\begin{itemize}
\item For each $\gamma =(\gamma _{1},\gamma _{2},\ldots )\in {\cal A}_{z}$, 
there exists a number $\delta =\delta (z,\gamma )>0$ such that 
$\mbox{diam}(\gamma _{n}\cdots \gamma _{1}(B(z,\delta )))\rightarrow 0$ as 
$n\rightarrow \infty $, where diam denotes the diameter with respect to the 
spherical distance on $\CCI $, and $B(z,\delta )$ denotes the ball with center $z$ and radius 
$\delta .$ 
\end{itemize}

\item \label{t:mtauspec3}
$\sharp \emMin(G_{\tau },\CCI )<\infty .$
\item \label{t:mtauspec4}
%Let $K:= \bigcup _{L\in \emMin(G_{\tau },\CCI )}L$ 
Let $W:= \bigcup _{A\in \text{{\em Con}}(F(G_{\tau })), A\cap S_{\tau }\neq \emptyset }A$. 
Then $S_{\tau }$ is compact. Moreover, for each $z\in \CCI $ there exists a Borel measurable subset 
${\cal C}_{z}$ of $(\emRat)^{\NN }$ with $\tilde{\tau }({\cal C}_{z})=1$ such that 
for each $\gamma \in {\cal C}_{z}$, there exists an $n\in \NN $ with 
$\gamma _{n,1}(z)\in W$ and   
$d(\gamma _{m,1}(z),S_{\tau })\rightarrow 0$ as $m\rightarrow \infty .$ 
%Furthermore, setting 
%for each $z\in \CCI $ there exists a Borel measurable subset ${\cal V}_{z}$ of $X_{\tau }$ 
%with $\tilde{\tau }({\cal V}_{z})=1$ such that 
%for each $\gamma \in {\cal V}_{z}$ there exists an $n\in \NN $ with 
%$\gamma _{n,1}(z)\in W.$ 
\item \label{t:mtauspec5}
Let $L\in \emMin(G_{\tau}, \CCI )$ and 
$r_{L}:= \dim _{\CC }(\emLSfl ).$ Then, 
${\cal U}_{v,\tau }(L)$ is a finite subgroup of $S^{1}$ with 
$\sharp {\cal U}_{v,\tau }(L)=r_{L}.$  
Moreover, there exists an $a_{L}\in S^{1}$   
and a family $\{ \psi _{L,j}\} _{j=1}^{r_{L}}$ in ${\cal U}_{f,\tau }(L)$ 
such that 
\begin{itemize}
\item[{\em (a)}]
\label{t:mtauspec5a}
$a_{L}^{r_{L}}=1$, ${\cal U}_{v,\tau }(L)=\{ a_{L}^{j}\} _{j=1}^{r_{L}}$, 
\item[{\em (b)}]
\label{t:mtauspec5b}
$M_{\tau }(\psi _{L,j})=a_{L}^{j}\psi _{L,j} $ for each $j=1,\ldots ,r_{L}$, 
\item[{\em (c)}] 
\label{t:mtauspec5c}
$\psi _{L,j}=(\psi _{L,1})^{j}$ for each $j=1,\ldots ,r_{L}$, and 
\item[{\em (d)}] 
\label{t:mtauspec5d}
$\{ \psi _{L,j}\} _{j=1}^{r_{L}}$ is a basis of $\mbox{{\em LS}}({\cal U}_{f,\tau }(L)).$ 
\end{itemize}
\item \label{t:mtauspec6}
Let $\Psi _{S_{\tau }}: \mbox{{\em LS}}({\cal U}_{f,\tau }(\CCI ))\rightarrow 
C(S_{\tau })$ be the map defined by $\varphi \mapsto \varphi |_{S_{\tau }}.$ 
Then, $\Psi _{S_{\tau }}(\mbox{{\em LS}}({\cal U}_{f,\tau }(\CCI )))=\emLSfk$ and 
$\Psi _{S_{\tau }}: \emLSfc \rightarrow \emLSfk $ is a linear isomorphism. 
Furthermore, $\Psi _{S_{\tau }}\circ M_{\tau }=M_{\tau }\circ \Psi _{S_{\tau }}$ 
on $\emLSfc .$ 
\item \label{t:mtauspec7}
$\Uvc=\Uvk=\bigcup _{L\in \emMin(G_{\tau },\CCI )}\Uvl=\bigcup _{L\in \emMin(G_{\tau },\CCI )}\{ a_{L}^{j}\} _{j=1}^{r_{L}}$ 
and $\dim _{\CC }(\emLSfc )=\sum _{L\in \emMin(G_{\tau },\CCI )}r_{L}.$ 
\item \label{t:mtauspec7-1}
$\Uvac=\Uvc ,\ \Uvak=\Uvk$,  and $\Uval=\Uvl$ for each $L\in \emMin(G_{\tau },\CCI ).$ 
\item \label{t:mtauspec7-2} 
Let $L\in \emMin(G_{\tau },\CCI ).$ 
Let $\Lambda _{r_{L}}:= \{ g_{1}\circ \cdots \circ g_{r_{L}}\mid 
\forall j, g_{j}\in \Gamma _{\tau }\} .$ Moreover, 
let $G_{\tau }^{r_{L}}:= \langle \Lambda _{r_{L}}\rangle .$ 
Then, $r_{L}=\sharp \emMin(G_{\tau }^{r_{L}},L).$  
\item \label{t:mtauspec8}
There exists a basis $\{ \varphi _{L,i}\mid L\in \emMin(G_{\tau },\CCI ), i=1,\ldots ,r_{L}\} $ 
of $\emLSfc $ and a basis $\{ \rho _{L,i}\mid L\in \emMin(G_{\tau },\CCI ), i=1,\ldots ,r_{L}\} $ 
of $\emLSfac $ such that for each $L\in \emMin(G_{\tau },\CCI )$ and for each 
$i=1,\ldots ,r_{L}$, we have all of the following. 
\begin{itemize}
\item[{\em (a)}]
$M_{\tau }(\varphi _{L,i})=a_{L}^{i}\varphi _{L,i}.$ 
\item[{\em (b)}] 
$|\varphi _{L,i}||_{L}\equiv 1.$ 
\item[{\em (c)}] $\varphi _{L,i}|_{L'}\equiv 0$ for any  $L'\in \emMin(G_{\tau },\CCI )$ with $L'\neq L.$ 
\item[{\em (d)}] $\varphi _{L,i}|_{L}=(\varphi _{L,1}|_{L})^{i}.$  
\item[{\em (e)}] $\mbox{{\em supp}}\, \rho _{L,i}=L.$ 
\item[{\em (f)}] $\rho _{L,i}(\varphi _{L,j})=\delta _{ij}$ for each $j=1,\ldots ,r_{L}.$   
\end{itemize}
\item \label{t:mtauspecdual}
For each $\nu \in {\frak M}_{1}(\CCI )$, 
$d_{0}((M_{\tau }^{\ast })^{n}(\nu ), \emLSfac \cap {\frak M}_{1}(\CCI ))\rightarrow 0$ as 
$n\rightarrow \infty. $ Moreover, 
$\dim _{T}(\emLSfac \cap {\frak M}_{1}(\CCI ))\leq 2\dim _{\CC }\emLSfc <\infty $, 
where $\dim _{T}$ denotes the topological dimension.  
\item \label{t:mtauspec9}
For each $L\in \emMin(G_{\tau },\CCI )$, 
$T_{L,\tau }:\CCI \rightarrow [0,1]$ is continuous 
and   
$M_{\tau }(T_{L,\tau })=T_{L,\tau }$.   
Moreover, $\sum _{L\in \emMin(G_{\tau },\CCI )}T_{L,\tau }(z)=1$ for each 
$z\in \CCI .$ 
\item \label{t:mtauspec9-2}
If $\sharp \emMin (G_{\tau },\CCI )\geq 2$, then 
{\em (a)} for each $L\in \emMin (G_{\tau },\CCI )$, $T_{L,\tau }(J(G_{\tau }) )=[0,1]$, 
and {\em (b)} $\dim _{\CC }(\emLSfc )>1.$   
\item \label{t:mtauspec10}
$S_{\tau }= \{ \overline{z\in F(G)\cap S_{\tau }\mid \exists g\in G_{\tau } \mbox{ {\em s.t.} } g(z)=z, |m(g,z)|<1} \} $, 
where the closure is taken in $\CCI $, and $m(g,z)$ denotes the multiplier (\cite{Be}) of $g$ at the fixed point $z.$  
\item \label{t:mtauspec11}
If $\Gamma _{\tau } \cap \emRatp \neq \emptyset $, then 
$S_{\tau }= \{ \overline{z\in F(G)\cap S_{\tau }\mid \exists g\in G_{\tau }\cap \emRatp \mbox{ {\em s.t.} } g(z)=z, |m(g,z)|<1} \}\subset UH(G_{\tau })\subset P(G_{\tau }).$ 
\item \label{t:mtauspec12} 
If $\dim _{\CC }(\emLSfc )>1$, then 
for any $\varphi \in \emLSfc _{nc}$ there exists  
an uncountable subset $A$ of $\CC $ such that 
for each $t\in A$, 
$\emptyset \neq \varphi ^{-1}(\{ t\} )\cap J(G_{\tau })\subset J_{res}(G_{\tau }).$ 
\item \label{t:mtauspeccfi}
If $\dim _{\CC }(\emLSfc )>1$ and {\em int}$(J(G_{\tau }))=\emptyset $, 
then $\sharp \mbox{{\em Con}}(F(G_{\tau }))=\infty .$ 
\item \label{t:mtauspecdeg1}
Suppose that $G_{\tau }\cap \mbox{{\em Aut}}(\CCI )\neq \emptyset $, where 
{\em Aut}$(\CCI )$ denotes the set of all holomorphic automorphisms on $\CCI .$ If 
there exists a loxodromic or parabolic element of $G_{\tau }\cap \mbox{{\em Aut}}(\CCI )$, then 
$\sharp \mbox{\em Min}(G_{\tau },\CCI )=1$ and $\dim _{\CC }(\mbox{\em LS}({\cal U}_{f,\tau }(\CCI )))=1.$ 
\end{enumerate}
\end{thm}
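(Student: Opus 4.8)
The plan is to use the given element $g\in G_{\tau }\cap \mbox{{\em Aut}}(\CCI )$ to locate all minimal sets of $(G_{\tau },\CCI )$, and then to combine the hypothesis $J_{\ker }(G_{\tau })=\emptyset $ with an ``equality case of the triangle inequality'' argument and the already-established cyclic structure of the unitary eigenvalues to force the relevant eigenspace to be one–dimensional. I would use freely statements~\ref{t:mtauspec5}, \ref{t:mtauspec7}, \ref{t:mtauspec8} of the present theorem, Remark~\ref{r:minimal} (nonemptiness and pairwise disjointness of minimal sets, the dichotomy finite/perfect, and $\overline{G_{\tau }(z)}=L$ for $z\in L\in \Min (G_{\tau },\CCI )$), Lemma~\ref{ocminvlem}, and the classical facts about Möbius maps: a loxodromic $g$ has an attracting fixed point $\alpha $ and a repelling one $\beta $ with $g^{n}\to \alpha $ locally uniformly on $\CCI \setminus \{ \beta \} $ and $\beta \in J(\langle g\rangle )$, while a parabolic $g$ has a unique fixed point $\alpha $ (and no $\beta $) with $g^{n}\to \alpha $ uniformly on $\CCI $.

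\textbf{Step 1 (the unique minimal set and the reduction).} Let $L\in \Min (G_{\tau },\CCI )$. Since $L$ is compact, $g(L)\subset L$, and $g^{n}(w)\to \alpha $ for every $w\in L$ with $w\neq \beta $, I get $\alpha \in L$ as soon as $L$ contains a point $\neq \beta $. In the parabolic case this is automatic. In the loxodromic case I first exclude $L=\{ \beta \} $: if $\{ \beta \} $ were minimal then $\beta $ would be a common fixed point of $G_{\tau }$, but $\beta \in J(\langle g\rangle )\subset J(G_{\tau })$ and $J_{\ker }(G_{\tau })=\emptyset $ give some $w_{0}\in G_{\tau }$ with $w_{0}(\beta )\in F(G_{\tau })$, contradicting $w_{0}(\beta )=\beta \in J(G_{\tau })$. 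Hence every minimal set contains $\alpha $, and since distinct minimal sets are disjoint (Remark~\ref{r:minimal}) there is exactly one, say $L_{0}$; thus $\sharp \Min (G_{\tau },\CCI )=1$. By statement~\ref{t:mtauspec7}, $\dim _{\CC }(\LSfc )=\sum _{L\in \Min (G_{\tau },\CCI )}r_{L}=r_{L_{0}}$, where $r_{L_{0}}=\sharp {\cal U}_{v,\tau }(L_{0})$ and ${\cal U}_{v,\tau }(L_{0})$ is a finite cyclic subgroup of $S^{1}$ containing $1$ (statement~\ref{t:mtauspec5}); so it remains to prove ${\cal U}_{v,\tau }(L_{0})=\{ 1\} .$

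\textbf{Step 2 (one-dimensionality).} Suppose $a\in {\cal U}_{v,\tau }(L_{0})$ has exact order $d\geq 2$, with eigenvector $\psi \in C(L_{0})$, $M_{\tau }\psi =a\psi $. A maximum-principle argument on the minimal set $L_{0}$ (a real continuous solution of $M_{\tau }\phi =\phi $ attaining its maximum at $z_{0}$ is constant on $\overline{G_{\tau }(z_{0})}=L_{0}$) shows $|\psi |$ is constant; normalize $|\psi |\equiv 1$ (one may also invoke statement~\ref{t:mtauspec8}(b)). Then for each $w\in L_{0}$, $1=|a\psi (w)|=|\int \psi (h(w))\, d\tau (h)|\leq \int |\psi (h(w))|\, d\tau (h)=1$, so the equality case of the triangle inequality, continuity, and $\mbox{supp}\, \tau =\Gamma _{\tau }$ give $\psi (h(w))=a\psi (w)$ for all $h\in \Gamma _{\tau }$, hence $\psi (\gamma _{n,1}(w))=a^{n}\psi (w)$ for all $\gamma \in X_{\tau }$, $n\geq 0$, $w\in L_{0}$. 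Using $\overline{G_{\tau }(\alpha )}=L_{0}$ and $a^{d}=1$, this yields $\psi (L_{0})\subset \{ a^{j}\psi (\alpha ):0\leq j\leq d-1\} $, and the $d$ sets $L_{0}^{(j)}:=L_{0}\cap \psi ^{-1}(a^{j}\psi (\alpha ))$ are closed, nonempty, pairwise disjoint, cover $L_{0}$, and satisfy $h(L_{0}^{(j)})\subset L_{0}^{(j+1\bmod d)}$ for all $h\in \Gamma _{\tau }$. Write $g=g_{k}\circ \cdots \circ g_{1}$ with $g_{i}\in \Gamma _{\tau }$ (each of degree one, since $\deg g=1$). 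For $w\in L_{0}^{(j)}$ with $w\neq \beta $ we have $g^{n}(w)\in L_{0}^{(j+nk\bmod d)}$ and $g^{n}(w)\to \alpha \in L_{0}^{(0)}$; as $L_{0}^{(0)}$ is a relative neighborhood of $\alpha $ in $L_{0}$ (the other $L_{0}^{(j')}$ are closed and miss $\alpha $), comparing two consecutive large $n$ gives $j+nk\equiv 0\equiv j+(n+1)k \pmod d$, so $d\mid k$ and then $j\equiv 0$, i.e.\ $w\in L_{0}^{(0)}$. Hence $L_{0}\setminus \{ \beta \} \subset L_{0}^{(0)}$, so $L_{0}^{(1)},\dots ,L_{0}^{(d-1)}$ are nonempty pairwise disjoint subsets of $\{ \beta \} $: in the parabolic case this is already absurd, and in general it forces $d=2$, $L_{0}^{(1)}=\{ \beta \} $, $L_{0}^{(0)}=L_{0}\setminus \{ \beta \} $. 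Then $\beta $ is isolated in $L_{0}$, so $L_{0}$ is finite (Remark~\ref{r:minimal}); moreover each $h\in \Gamma _{\tau }$ maps $L_{0}^{(0)}$ into $\{ \beta \} $ and $\{ \beta \} $ into $L_{0}^{(0)}$, so injectivity of the degree-one $g_{i}$ forces $L_{0}^{(0)}=\{ \alpha \} $, i.e.\ $L_{0}=\{ \alpha ,\beta \} $ with every $h\in \Gamma _{\tau }$ interchanging $\alpha $ and $\beta $. Consequently $\{ \alpha ,\beta \} $ is forward invariant under $G_{\tau }$ as a set; since $\beta \in J(G_{\tau })$, Lemma~\ref{ocminvlem} applied to an $h\in \Gamma _{\tau }$ with $h(\alpha )=\beta $ forces $\alpha \in J(G_{\tau })$, so $\{ \alpha ,\beta \} $ is a nonempty forward-invariant subset of $J(G_{\tau })$, contradicting $J_{\ker }(G_{\tau })=\emptyset $. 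Therefore ${\cal U}_{v,\tau }(L_{0})=\{ 1\} $ and $\dim _{\CC }(\LSfc )=1.$

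\textbf{Main obstacle.} The delicate point is the last part of Step 2 in the loxodromic case: a priori the repelling fixed point $\beta $ may lie in the minimal set, and the only route I see to exclude the resulting degenerate two-point minimal set $\{ \alpha ,\beta \} $ with interchanging generators is to play the attracting/repelling dynamics of $g$ against the emptiness of $J_{\ker }(G_{\tau })$, as above; everything else is bookkeeping with the spectral structure of $M_{\tau }$ already established in statements~\ref{t:mtauspec5}, \ref{t:mtauspec7}, \ref{t:mtauspec8}.
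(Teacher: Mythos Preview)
Your proof is correct. Both your Step~1 and Step~2 are valid, and the final contradiction via $J_{\ker }(G_{\tau })=\emptyset $ in the degenerate two-point case is clean.

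The paper's own argument is more modular and considerably shorter: it simply says that statement~\ref{t:mtauspecdeg1} follows from statements~\ref{t:mtauspec10} and~\ref{t:mtauspec7-2}. The intended reading is this. For Step~1, statement~\ref{t:mtauspec10} guarantees that every minimal set $L$ meets $F(G_{\tau })$ (indeed contains attracting fixed points there); any such point is $\neq \beta $, so iterating $g$ sends it to $\alpha $, whence $\alpha \in L$ and the minimal set is unique. For Step~2, statement~\ref{t:mtauspec7-2} gives $r_{L_{0}}=\sharp \Min (G_{\tau }^{r_{L_{0}}},L_{0})$; since $g^{r_{L_{0}}}\in G_{\tau }^{r_{L_{0}}}$ is again loxodromic/parabolic, and since the hypotheses of the theorem also hold for the convolution power $\tau ^{r_{L_{0}}}$ (in particular $J_{\ker }(G_{\tau }^{r_{L_{0}}})=\emptyset $, using $J(G_{\tau }^{r_{L_{0}}})=J(G_{\tau })$ and forward invariance of $F(G_{\tau })$), one re-applies statement~\ref{t:mtauspec10} to conclude that every $L'\in \Min (G_{\tau }^{r_{L_{0}}},L_{0})$ contains $\alpha $, forcing $r_{L_{0}}=1$.

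So the difference is mainly in Step~2: you re-derive by hand the cyclic decomposition $L_{0}=\bigsqcup _{j}L_{0}^{(j)}$ (which is exactly the content of the proof of statement~\ref{t:mtauspec7-2}, i.e.\ Lemma~\ref{l:suppllj}) and then run a direct combinatorial argument with the orbit of $g$ through the pieces, while the paper packages that decomposition into statement~\ref{t:mtauspec7-2} and bootstraps by re-invoking statement~\ref{t:mtauspec10} for $G_{\tau }^{r_{L_{0}}}$. Your route is more self-contained and avoids checking that the theorem's hypotheses persist for $\tau ^{r_{L_{0}}}$; the paper's route is shorter once those earlier statements are in hand. A minor remark: in Step~1 you could also cite statement~\ref{t:mtauspec10} directly to rule out $L=\{ \beta \} $ (since $\beta \in J(G_{\tau })$ cannot lie in the closure of points of $F(G_{\tau })\cap \{ \beta \} $), which is the paper's way and is equivalent to your $J_{\ker }$ argument.
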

\begin{rem}
\label{r:sjgg3}
Let $G$ be a rational semigroup with $G\cap \Ratp \neq \emptyset .$ Then by \cite[Theorem 4.2.4]{Be}, 
$\sharp (J(G))\geq 3.$ 
\end{rem}
\begin{rem}
\label{r:uminj}Let $\tau \in {\frak M}_{1,c}(\mbox{Rat})$ be such that 
$J_{\ker }(G_{\tau })=\emptyset $ and $J(G_{\tau })\neq \emptyset .$ 
The union $S_{\tau }$ of minimal sets for $(G_{\tau },\CCI )$ may meet $J(G_{\tau }).$ 
See Example~\ref{r:parajkere}. 
\end{rem}
\begin{rem}
\label{r:ls1}
Let $\tau \in {\frak M}_{1,c}(\mbox{Rat})$ be such that 
$J_{\ker }(G_{\tau })=\emptyset $ and $J(G_{\tau })\neq \emptyset .$ 
Then $\dim _{\CC }(\LSfc )>1$ if and only if $(\LSfc )_{nc}\neq \emptyset .$ 
\end{rem}
\begin{df}
\label{d:sfj}
Let $G$ be a polynomial semigroup. 
We set \\ 
$\hat{K}(G):= \{ z\in \CC \mid \{ g(z)\mid g\in G\} \mbox{ is bounded in }\CC \} .$  
$\hat{K}(G)$ is called the {\bf smallest filled-in Julia set} of $G.$ 
For any $h\in {\cal P}$, we set $K(h):=\hat{K}(\langle h\rangle ).$ This is called the filled-in Julia set of $h.$ 
\end{df}
\begin{rem}
Let $\tau \in {\frak M}_{1,c}({\cal P})$ be such that 
$J_{\ker }(G_{\tau })=\emptyset $ and $\hat{K}(G_{\tau })\neq \emptyset .$ 
Then  
 $\sharp \Min (G_{\tau },\CCI )\geq 2.$ Thus 
by Theorem~\ref{t:mtauspec}-\ref{t:mtauspec9-2}, 
$\dim _{\CC }(\LSfc )>1.$ 
\end{rem}
\begin{rem}
\label{r:pjkekie}
There exist many examples of $\tau \in {\frak M}_{1,c}({\cal P})$ 
such that $J_{\ker }(G_{\tau })=\emptyset , \hat{K}(G_{\tau })\neq \emptyset $ 
and int$(J(G_{\tau }))=\emptyset $ (see Proposition~\ref{semihyposcexprop}, 
Proposition~\ref{p:hyphigh}, Proposition~\ref{p:hypdispur}, Theorem~\ref{t:hnondiff}, 
and \cite[Theorem 2.3]{S2}).  
\end{rem}
\subsection{Properties on $T_{\infty ,\tau }$}
\label{ProT}
In this subsection, we present some results on  properties of $T_{\infty ,\tau }$ 
for a $\tau \in {\frak M}_{1,c}({\cal P}).$  
Moreover, we present some results on the structure of $J(G_{\tau })$ for a $\tau \in {\frak M}_{1,c}({\cal P})$ 
with $J_{\ker }(G_{\tau })=\emptyset .$ 
The proofs are given in subsection~\ref{pfProT}. 
 
By Theorem~\ref{kerJthm1} or Theorem~\ref{t:mtauspec}, we obtain the following result. 
\begin{thm}
\label{kerJthm2}
Let $\tau \in {\frak M}_{1,c}({\cal P})$.  
Suppose that 
$J_{\ker}(G_{\tau })=\emptyset .$ Then, the function 
$T_{\infty ,\tau }:\CCI \rightarrow [0,1]$ is continuous on the whole \nolinebreak $\CCI $, 
and $M_{\tau }(T_{\infty ,\tau })=T_{\infty ,\tau }.$   
\end{thm}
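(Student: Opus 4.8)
The plan is to deduce Theorem~\ref{kerJthm2} from the general structure theory already established, namely Theorem~\ref{kerJthm1} (Cooperation Principle I) together with the specialization to $\CCI$ and to polynomial measures. Observe first that $\tau \in {\frak M}_{1,c}({\cal P})$ means $\tau$ is supported on polynomials of degree $\geq 2$, so in particular $\tau \in {\frak M}_{1,c}(\emRat)$ and $\tau \in {\frak M}_{1,c}(\emNHM(\CCI))$ after identifying $\CCI$ with $\CC\Bbb{P}^1$. Since $J_{\ker}(G_\tau) = \emptyset$ by hypothesis, Theorem~\ref{kerJthm1} applies and gives $F_{meas}(\tau) = {\frak M}_1(\CCI)$, and for $\tilde\tau$-a.e.\ $\gamma$ we have $\mathrm{Leb}_2(J_\gamma) = 0$; moreover Theorem~\ref{t:mtauspec} applies whenever $J(G_\tau) \neq \emptyset$.

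First I would establish continuity of $T_{\infty,\tau}$. The key point is that for a.e.\ sequence $\gamma = (\gamma_1,\gamma_2,\ldots) \in {\cal P}^{\NN}$, the event ``$\gamma_{n,1}(z) \to \infty$'' is governed by the dynamics near the basin of infinity, and the function $T_{\infty,\tau}$ should coincide with a continuous eigenfunction of $M_\tau$. Concretely, I would argue as follows. If $J(G_\tau) = \emptyset$ then $F(G_\tau) = \CCI$ and one checks directly (using normality and that each generator is a polynomial of degree $\geq 2$, so $\infty$ is superattracting for each) that $T_{\infty,\tau}$ is locally constant, hence continuous. If $J(G_\tau) \neq \emptyset$, I would invoke Theorem~\ref{t:mtauspec}: there $\sharp\,\Min(G_\tau,\CCI) \geq 2$ (since $\{\infty\}$ is one minimal set — it is fixed by every polynomial — and there is at least one other, for instance a minimal set inside $J(G_\tau)$ or a bounded attractor), and for each minimal set $L$ the function $T_{L,\tau}$ is continuous with $M_\tau(T_{L,\tau}) = T_{L,\tau}$ by Theorem~\ref{t:mtauspec}-\ref{t:mtauspec9}. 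Taking $L = \{\infty\}$, this yields exactly that $T_{\infty,\tau} = T_{\{\infty\},\tau}$ is continuous on $\CCI$ and satisfies $M_\tau(T_{\infty,\tau}) = T_{\infty,\tau}$.

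The remaining work is to verify the hypotheses and bookkeeping needed to invoke Theorem~\ref{t:mtauspec}-\ref{t:mtauspec9} with $L = \{\infty\}$: namely that $\{\infty\}$ is genuinely a minimal set for $(G_\tau,\CCI)$, which is immediate since every $g \in \Gamma_\tau \subset {\cal P}$ fixes $\infty$, so $\{\infty\} \in \Cpt(\CCI)$ is $G_\tau$-invariant and is clearly minimal; and that ``$d(\gamma_{n,1}(z),\{\infty\}) \to 0$'' is the same event as ``$\gamma_{n,1}(z) \to \infty$'', which holds by definition of the spherical metric. One also must separately treat the degenerate case $J(G_\tau) = \emptyset$ as above, and the case $\hat K(G_\tau) = \emptyset$ (where $T_{\infty,\tau} \equiv 1$ and continuity is trivial). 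A minor technical check is that $\Min(G_\tau,\CCI)$ is nonempty and that $\infty \notin$ any minimal set contained in a bounded region, so the various $T_{L,\tau}$ are genuinely distinct; but this is automatic from Remark~\ref{r:minimal} ($K_1 \cap K_2 = \emptyset$ for distinct minimal sets).

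The main obstacle I anticipate is purely organizational rather than substantive: one must make sure that when $J(G_\tau) = \emptyset$ the conclusion still holds, since then Theorem~\ref{t:mtauspec} is not available — but in that case $F(G_\tau) = \CCI$, every $\gamma_{n,1}$-orbit is controlled by normality, and since each generator is a polynomial with superattracting fixed point at $\infty$, one shows $\{z : \text{orbit bounded}\}$ and its complement are both open, forcing $T_{\infty,\tau}$ to be locally constant and hence a continuous $M_\tau$-fixed function. Alternatively, and more cleanly, since $J_{\ker}(G_\tau) = \emptyset$ always, one could avoid the case split by proving directly from Theorem~\ref{kerJthm1} that for a.e.\ $\gamma$ the diameter of $\gamma_{n,1}$ on small balls in $F(G_\tau)$ shrinks (Theorem~\ref{t:mtauspec}-\ref{t:mtauspecaz} gives this), deducing that $z \mapsto T_{\infty,\tau}(z)$ is constant on each Fatou component and then that $M_\tau(T_{\infty,\tau}) = T_{\infty,\tau}$ by the tower property / Markov property of $\tilde\tau$, and finally that a bounded $M_\tau$-harmonic function that is locally constant on $F(G_\tau)$ with $\emptyset = J_{\ker}(G_\tau)$ extends continuously across $J(G_\tau)$. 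I would lean on Theorem~\ref{t:mtauspec}-\ref{t:mtauspec9} as the cleanest route and relegate the $J(G_\tau) = \emptyset$ case to a one-line remark.
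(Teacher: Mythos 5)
Your main route is correct, and it is in fact the second of the two routes the paper itself flags in the sentence preceding the theorem (``By Theorem~\ref{kerJthm1} or Theorem~\ref{t:mtauspec}, we obtain the following result''): every element of $\Gamma _{\tau }\subset {\cal P}$ fixes $\infty $, so $\{ \infty \} $ is a minimal set for $(G_{\tau },\CCI )$, and since $T_{\infty ,\tau }=T_{\{ \infty \} ,\tau }$ by definition, Theorem~\ref{t:mtauspec}-\ref{t:mtauspec9} delivers continuity and $M_{\tau }$-invariance in one stroke. The paper's written proof takes the other, lighter route: it first shows $T_{\infty ,\tau }(y)=\lim _{n}M_{\tau }^{n}(\phi )(y)$ for a test function $\phi \in C(\CCI )$ equal to $1$ near $\infty $ with support in $F_{\infty }(G_{\tau })$ (Lemma~\ref{tinftyphilem1}, which simultaneously yields $M_{\tau }(T_{\infty ,\tau })=T_{\infty ,\tau }$), and then uses $F_{meas}(\tau )={\frak M}_{1}(\CCI )$ from Theorem~\ref{kerJthm1} to obtain equicontinuity of $\{ M_{\tau }^{n}(\phi )\} _{n}$ at every point of $\CCI $ (Lemmas~\ref{FJmeaslem1} and \ref{lem:fpt0tconti}), whence the pointwise limit is continuous. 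Your version buys brevity at the cost of invoking the full spectral machinery of Theorem~\ref{t:mtauspec}; the paper's version needs only Cooperation Principle I. There is no circularity in your approach, since Theorem~\ref{t:mtauspec} is proved without reference to Theorem~\ref{kerJthm2}.

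Two small corrections, neither of which damages the argument. First, the case $J(G_{\tau })=\emptyset $ that you treat at length is vacuous: $\Gamma _{\tau }\subset {\cal P}\subset \Ratp$, so $G_{\tau }\cap \Ratp \neq \emptyset $ and $\sharp J(G_{\tau })\geq 3$ by Remark~\ref{r:sjgg3}; thus the hypothesis $J(G_{\tau })\neq \emptyset $ of Theorem~\ref{t:mtauspec} holds automatically and no case split is needed. Second, your parenthetical claim that $\sharp \Min (G_{\tau },\CCI )\geq 2$ is false in general: a minimal set other than $\{ \infty \} $ is a compact forward-invariant subset of $\CC $ and hence lies in $\hat{K}(G_{\tau })$, so when $\hat{K}(G_{\tau })=\emptyset $ the singleton $\{ \infty \} $ is the unique minimal set. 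Fortunately all your argument actually uses is that $\{ \infty \} \in \Min (G_{\tau },\CCI )$, which is always true.
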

\begin{rem}
Let $h\in {\cal P}$ and let $\tau := \delta _{h}.$ Then, $T_{\infty ,\tau }(\CCI )=\{ 0,1\} $ and 
$T_{\infty ,\tau }$ is not continuous at every point in $J(h)\neq \emptyset .$  
\end{rem}
On the one hand, we have the following, due to Vitali's theorem. 
\begin{lem}
\label{Tlem1}
Let $\tau \in {\frak M}_{1,c}({\cal P})$.   
%Suppose that $\infty \in F(G_{\tau }).$  
Then, for each 
connected component $U$ of $F(G_{\tau })$, there exists a constant
 $C_{U}\in [0,1]$ such that 
 $T_{\infty ,\tau }|_{U}\equiv C_{U}.$ 
\end{lem}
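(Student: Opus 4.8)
The plan is to show that $T_{\infty,\tau}$ is constant on each connected component $U$ of $F(G_\tau)$ by exhibiting it there as a locally uniform limit of holomorphic functions and invoking Vitali's theorem. First I would fix a component $U$ of $F(G_\tau)$ and recall the probabilistic description $T_{\infty,\tau}(z)=\tilde\tau(\{\gamma\in X_\tau\mid d(\gamma_{n,1}(z),\infty)\to 0\})$. Since $\tilde\tau=\otimes_{j=1}^\infty\tau$ is a product measure and the event ``$\gamma_{n,1}(z)\to\infty$'' is determined up to $\tilde\tau$-null sets, one can rewrite $T_{\infty,\tau}$ as a limit, as $N\to\infty$, of the finite-stage functions
\[
T^{(N)}_{\infty,\tau}(z):=\int_{X_\tau}\mathbf{1}_{B(\infty,1/N)}\bigl(\gamma_{N,1}(z)\bigr)\,d\tilde\tau(\gamma),
\]
or, more cleanly, to observe that $T_{\infty,\tau}=\lim_{N\to\infty}M_\tau^N(u_N)$ for suitable continuous ``cutoff'' functions $u_N$ approximating $\mathbf 1$ near $\infty$; the exact bookkeeping here is routine once the monotone/dominated convergence setup is in place.

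The key point is that each iterate $M_\tau^n(\varphi)$, when restricted to the component $U\subset F(G_\tau)$, is a normal family, so that the relevant limits inherit holomorphy — but $T_{\infty,\tau}$ is real-valued, hence the honest way to get holomorphy is slightly different. Concretely, I would argue as follows: on $U$, for $\tilde\tau$-a.e.\ $\gamma$, the maps $\gamma_{n,1}|_U$ form a normal family (this is precisely equicontinuity on $U\subset F(G_\tau)$), so any locally uniform limit $\psi$ of a subsequence is either a nonconstant holomorphic map or a constant, and in the polynomial-semigroup setting limit functions taking values in a neighborhood of $\infty$ must be the constant $\infty$ (no nonconstant holomorphic map into $\CCI$ can have image contained in a small disk about $\infty$ while $U$ is connected). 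Hence for each $\gamma$ and each $z\in U$, the indicator $\mathbf 1\{\gamma_{n,1}(z)\to\infty\}$ is, by the normal-families dichotomy, constant on the connected component $U$ — more precisely, the set $\{z\in U\mid \gamma_{n,1}(z)\to\infty\}$ is both open and closed in $U$ for $\tilde\tau$-a.e.\ $\gamma$, hence equals $\emptyset$ or $U$. Integrating the indicator $z\mapsto \mathbf 1\{\gamma_{n,1}(z)\to\infty\}$ over $\gamma$ then shows $T_{\infty,\tau}|_U$ is constant, since the fibered sets stabilize as open-closed subsets of $U$.

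An alternative and perhaps cleaner route, the one the ``Vitali'' hint suggests, is: pick $z_0\in U$ and show directly that the sequence $z\mapsto \mathbf 1_{B(\infty,r)}(\gamma_{n,1}(z))$ cannot oscillate; but since that is not holomorphic, instead apply Vitali to the holomorphic iterates themselves. Consider the map $\Phi_{n,\gamma}:=\gamma_{n,1}|_U$. By Vitali's theorem, if $\{\Phi_{n,\gamma}\}$ converges at one point of $U$ it converges locally uniformly on $U$; combined with normality on $F(G_\tau)$, for $\tilde\tau$-a.e.\ $\gamma$ every subsequential limit on $U$ is constant, and whether that constant is $\infty$ does not depend on the starting point $z\in U$. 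Therefore the event $\{\gamma_{n,1}(z)\to\infty\}$ has, for each $z\in U$, the same $\tilde\tau$-probability, namely $C_U:=T_{\infty,\tau}(z_0)$, and $T_{\infty,\tau}|_U\equiv C_U\in[0,1]$.

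The main obstacle I anticipate is the measurability/limit-exchange step: justifying that $z\mapsto \tilde\tau(\{\gamma\mid \gamma_{n,1}(z)\to\infty\})$ is exactly the integral of the $\{0,1\}$-valued function $\gamma\mapsto \mathbf 1\{\gamma_{n,1}(z)\to\infty\}$ and that this function is, for $\tilde\tau$-a.e.\ $\gamma$, constant in $z$ over each component of $F(G_\tau)$ — this requires knowing that the ``bad'' set of $\gamma$ (where normality on $U$ fails, or where a nonconstant limit occurs) is $\tilde\tau$-null, which in turn uses that $U\subset F(G_\tau)$ means $\{\gamma_{n,1}|_U\}$ is equicontinuous for the semigroup, hence for every sequence in $X_\tau=(\Gamma_\tau)^{\NN}$. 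Once that is granted, the rest is a short normal-families/Vitali argument, and the conclusion $T_{\infty,\tau}|_U\equiv C_U$ follows immediately.
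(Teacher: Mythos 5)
Your core argument is the same as the paper's (Lemmas~\ref{tinftyphilem1} and \ref{tinftyconstlem1}): for each fixed $\gamma$, the escaping set $\{z\in U\mid \gamma_{n,1}(z)\to\infty\}$ is open and closed in $U$, hence $\emptyset$ or all of $U$, so the event $\{\gamma\mid \gamma_{n,1}(z)\to\infty\}$ is literally the same subset of $X_{\tau }$ for every $z\in U$ and integrating over $\gamma$ gives the constant $C_{U}$. Two points need repair, though neither is fatal. First, drop the ``$\tilde{\tau}$-a.e.\ $\gamma$'' qualifiers: $\{\gamma_{n,1}|_{U}\}_{n}$ is normal for \emph{every} $\gamma\in X_{\tau}$, since each $\gamma_{n,1}$ belongs to $G_{\tau}$ and $U\subset F(G_{\tau})$; this also dissolves your measurability worry, because no exceptional null set of $\gamma$'s ever appears. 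Second, your parenthetical justification is false as stated: a nonconstant holomorphic map on a connected domain can perfectly well have image inside a small disk about $\infty$. What is true, and what you actually need, is specific to these iterates: since $\mbox{supp}\,\tau$ is compact in ${\cal P}$, there is a forward-invariant neighborhood $W$ of $\infty$ on which $\gamma_{n,1}\to\infty$ uniformly for every $\gamma$; hence no subsequential limit $\psi$ of $\gamma_{n,1}|_{U}$ can take a finite value in $W$, and if $\psi(z_{0})=\infty$ then openness of a nonconstant $\psi$ (or the identity theorem applied to the accumulating escaping points) forces $\psi\equiv\infty$, which gives closedness of the escaping set. The paper reaches the same conclusion slightly differently: once $\gamma_{N,1}(y)\in F_{\infty}(G_{\tau})$ for one $y\in U$, forward invariance of the Fatou set (Lemma~\ref{ocminvlem}) and connectedness put all of $\gamma_{N,1}(U)$ inside $F_{\infty}(G_{\tau})$, where the local degree of the iterates at the superattracting point $\infty$ blows up and every limit function is the constant $\infty$. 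Either route works; yours just needs the corrected justification.
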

%\begin{rem}
%Higher dimensional version of Theorem~\ref{kerJthm1} can be shown. 
%Moreover, higher dimensional (and modified) version of Theorem~\ref{kerJthm2} and Lemma~\ref{Tlem1} can be 
%shown. However, we omit the detail.  
%\end{rem}
\begin{df}
Let $G$ be a polynomial semigroup. 
If $\infty \in F(G)$, then we denote by $F_{\infty }(G)$ the connected component 
of $F(G)$ containing $\infty .$ (Note that if $G$ is generated by 
a compact subset of ${\cal P}$, then $\infty \in F(G).$) 
\end{df}
We give a characterization of $T_{\infty ,\tau }.$ 
\begin{prop}
\label{p:chtinfty}
Let $\tau \in {\frak M}_{1,c}({\cal P}).$ Suppose that 
$J_{\ker }(G_{\tau })=\emptyset $ and $\hat{K}(G_{\tau })\neq \emptyset .$ 
Then, there exists a unique bounded Borel measurable function $\varphi :\CCI \rightarrow \RR $ 
such that $\varphi =M_{\tau }(\varphi )$,  
$\varphi |_{F_{\infty }(G_{\tau })}\equiv 1$ and 
$\varphi |_{\hat{K}(G_{\tau })}\equiv 0.$ 
Moreover, $\varphi =T_{\infty ,\tau }.$ 
\end{prop}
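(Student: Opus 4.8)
The plan is to prove two things: existence and uniqueness. For existence, I would simply verify that $\varphi := T_{\infty ,\tau }$ has all the required properties. By Theorem~\ref{kerJthm2}, since $J_{\ker }(G_{\tau })=\emptyset $, the function $T_{\infty ,\tau }$ is continuous (hence bounded and Borel measurable) and satisfies $M_{\tau }(T_{\infty ,\tau })=T_{\infty ,\tau }$. The normalizations are elementary consequences of the definition of $T_{\infty ,\tau }$: if $z\in F_{\infty }(G_{\tau })$, then $\gamma _{n,1}(z)\to \infty $ for every $\gamma \in X_{\tau }$ (because $F_{\infty }(G_{\tau })$ is contained in the basin of $\infty $ for the whole semigroup, as $\tau $ has compact support in ${\cal P}$), so $T_{\infty ,\tau }(z)=1$; if $z\in \hat{K}(G_{\tau })$, then $\{ g(z)\mid g\in G_{\tau }\} $ is bounded in $\CC $, so the orbit under any $\gamma $ stays in a fixed compact subset of $\CC $ and cannot tend to $\infty $, giving $T_{\infty ,\tau }(z)=0$. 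This disposes of existence.

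The substance is uniqueness. Suppose $\varphi $ is any bounded Borel measurable function with $\varphi =M_{\tau }(\varphi )$, $\varphi |_{F_{\infty }(G_{\tau })}\equiv 1$, and $\varphi |_{\hat{K}(G_{\tau })}\equiv 0$. Set $\psi := \varphi -T_{\infty ,\tau }$; then $\psi $ is bounded Borel measurable, $M_{\tau }(\psi )=\psi $, and $\psi $ vanishes on $F_{\infty }(G_{\tau })\cup \hat{K}(G_{\tau })$. The key tool is the martingale/ergodic interpretation of $M_{\tau }$-invariance: iterating $\psi =M_{\tau }(\psi )$ gives $\psi (z)=\int _{X_{\tau }}\psi (\gamma _{n,1}(z))\, d\tilde{\tau }(\gamma )$ for all $n$, and the sequence $n\mapsto \psi (\gamma _{n,1}(z))$ is a bounded martingale on $(X_{\tau },\tilde{\tau })$ for each fixed $z$ (with respect to the filtration generated by the first $n$ coordinates), hence converges $\tilde{\tau }$-a.e.; moreover $\psi (z)$ equals the expectation of the limit. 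So it suffices to show that for $\tilde{\tau }$-a.e.\ $\gamma $, the limit of $\psi (\gamma _{n,1}(z))$ is $0$. I would argue this by showing that $\tilde{\tau }$-a.e.\ the orbit $\gamma _{n,1}(z)$ eventually enters (and, in the relevant sense, stays near) the set $S_{\tau }\cup \hat{K}(G_{\tau })$ on which one controls $\psi $; more precisely, I would invoke Theorem~\ref{t:mtauspec}, whose hypotheses hold here ($J_{\ker }(G_{\tau })=\emptyset $, and $J(G_{\tau })\neq \emptyset $ since $\hat{K}(G_{\tau })\neq \emptyset $ forces $\sharp J(G_{\tau })\geq 3$). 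By Theorem~\ref{t:mtauspec}-\ref{t:mtauspec4}, for each $z$ there is a full-measure set ${\cal C}_{z}$ of $\gamma $ with $d(\gamma _{m,1}(z),S_{\tau })\to 0$. Combining with the structure of minimal sets: for a polynomial semigroup with $\hat{K}(G_{\tau })\neq \emptyset $, each minimal set $L$ is either $\subset \hat{K}(G_{\tau })$ (a ``bounded'' minimal set, on which $\psi =0$ by hypothesis after passing through the Fatou components accumulating it, using continuity of $T_{\infty ,\tau }$ and that $\varphi $ is constant $=0$ there) or $L=\{ \infty \}$ (on which $\psi =0$ since $\infty \in F_{\infty }(G_{\tau })$). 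Since $\psi $ is invariant and these are the only places orbits accumulate, the a.e.\ limit of $\psi (\gamma _{n,1}(z))$ along the orbit is $0$, whence $\psi (z)=0$.

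The main obstacle is the last point: controlling $\psi $ near a bounded minimal set $L\subset \hat{K}(G_{\tau })$. The hypothesis only gives $\psi |_{\hat{K}(G_{\tau })}=0$ and $\psi |_{F_{\infty }(G_{\tau })}=0$, whereas an orbit approaching $L$ need not land in $\hat{K}(G_{\tau })$ itself — it may approach along Fatou components $U$ with $U\cap S_{\tau }\neq \emptyset $. Here I would use that $\varphi $ need not be continuous a priori, so I cannot just take limits of $\varphi $; instead I must use the invariance $\psi =M_{\tau }\psi $ together with Theorem~\ref{t:mtauspec}-\ref{t:mtauspec1} (limit functions of $\{\gamma _{n,1}|_U\}$ are constant) and -\ref{t:mtauspec2}, \ref{t:mtauspec8} to show that $\psi $, being $M_{\tau }$-invariant and bounded, must in fact lie in $\mbox{LS}({\cal U}_{f,\tau }(\CCI ))$ with eigenvalue $1$, hence is a continuous function constant on each Fatou component and determined by its values on $S_{\tau }$; then the vanishing on $\hat{K}(G_{\tau })\supset $ (the bounded minimal sets) and on $\{\infty \}$ pins it down via the explicit basis $\{ \rho _{L,i}\}$ of functionals with $\mbox{supp}\, \rho _{L,i}=L$. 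Concretely: expand $\psi = \sum \rho _{L,i}(\psi )\varphi _{L,i}$; invariance forces only the $i$ with $a_L^i=1$ terms, i.e.\ one term per minimal set; $\rho _{L,i}(\psi )$ is read off from $\psi |_L$, which is $0$ for every $L$ (bounded ones inside $\hat K(G_\tau)$, and $\{\infty\}\subset F_\infty(G_\tau)$); hence $\psi \equiv 0$. This reduces uniqueness to the spectral decomposition already established, which I think is the cleanest route.
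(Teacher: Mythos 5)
Your existence step and the first half of your uniqueness argument coincide with the paper's proof: iterate $\varphi =M_{\tau }^{n}(\varphi )$, write $M_{\tau }^{n}(\varphi )(z)=\int _{X_{\tau }}\varphi (\gamma _{n,1}(z))\,d\tilde{\tau }(\gamma )$, and use Theorem~\ref{t:mtauspec}-\ref{t:mtauspec4} to see that almost every orbit eventually enters the forward-invariant set $W$ of Fatou components meeting $S_{\tau }$. You also correctly isolate the one real difficulty: an orbit approaching a bounded minimal set does so through Fatou components, where the hypotheses on $\varphi $ say nothing a priori. But your proposed resolution of that difficulty does not work. The decomposition $C(\CCI )=\mbox{LS}({\cal U}_{f,\tau }(\CCI ))\oplus {\cal B}_{0,\tau }$ of Theorem~\ref{t:mtauspec}-\ref{t:mtauspec2} is a statement about the Banach space of \emph{continuous} functions; a bounded Borel measurable $M_{\tau }$-invariant function has no reason to belong to $C(\CCI )$, and showing that it does (equivalently, that it equals the continuous function $T_{\infty ,\tau }$) is precisely the content of the uniqueness assertion. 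So the step ``$\psi $, being $M_{\tau }$-invariant and bounded, must lie in $\mbox{LS}({\cal U}_{f,\tau }(\CCI ))$'' is either unjustified or circular, and the expansion $\psi =\sum \rho _{L,i}(\psi )\varphi _{L,i}$ is not available (the functionals $\rho _{L,i}$ are only defined on $C(\CCI )$).

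The gap has a much simpler fix, which is what the paper uses: Lemma~\ref{ksetfundlem1} gives $F(G_{\tau })\cap \hat{K}(G_{\tau })=\mbox{int}(\hat{K}(G_{\tau }))$, so for any $U\in \mbox{Con}(F(G_{\tau }))$ the set $U\cap \hat{K}(G_{\tau })$ is both open and closed in $U$; hence any Fatou component meeting a bounded minimal set $L\subset \hat{K}(G_{\tau })$ is entirely contained in $\hat{K}(G_{\tau })$. Consequently $W\subset F_{\infty }(G_{\tau })\cup \hat{K}(G_{\tau })$, and since $F_{\infty }(G_{\tau })$ and $\hat{K}(G_{\tau })$ are each forward invariant, once $\gamma _{n,1}(z)$ enters $W$ it remains in one of these two sets, on which $\varphi $ and $T_{\infty ,\tau }$ agree by hypothesis. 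Dominated convergence then gives $\varphi (z)=\lim _{n}M_{\tau }^{n}(\varphi )(z)=T_{\infty ,\tau }(z)$ directly; no martingale convergence theorem and no spectral decomposition are needed. With that substitution your argument is complete and is the paper's argument.
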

\begin{rem}
Combining Theorem~\ref{kerJthm2} and Lemma~\ref{Tlem1}, it follows that 
under the assumptions of Theorem~\ref{kerJthm2}, if $T_{\infty ,\tau }\not\equiv 1$, 
then the function $T_{\infty ,\tau }$ is continuous on $\CCI $ and varies only on the Julia set 
$J(G_{\tau })$ of $G_{\tau }.$ In this case, the function $T_{\infty ,\tau }$ is called the 
{\bf devil's coliseum} (see Figures~\ref{fig:dcgraphgrey2}, \ref{fig:dcgraphudgrey2}). 
This is a complex analogue of the devil's staircase or Lebesgue's singular functions. 
We will see the monotonicity of this function $T_{\infty ,\tau }$ in Theorem~\ref{kerJthm3}.   
\end{rem}
%\newpage
In order to present the result on the monotonicity of the function 
$T_{\infty ,\tau }:\CCI \rightarrow [0,1]$, the level set of $T_{\infty ,\tau }|_{J(G_{\tau })}$ 
and the structure of the Julia set $J(G_{\tau })$, 
we need the following notations.   
\begin{df}
Let $K_{1},K_{2}\in \mbox{Cpt}(\CCI ).$  
\begin{enumerate}
\item 
``$K_{1}<_{s}K_{2}$'' indicates that $K_{1}$ is included in the union of all bounded 
components of $\CC \setminus K_{2}.$
\item ``$K_{1}\leq _{s}K_{2}$'' indicates that $K_{1}<_{s}K_{2}$ or $K_{1}=K_{2}.$
\end{enumerate}
\end{df}
\begin{rem}
This ``$\leq _{s}$'' is a partial order in $\mbox{Cpt}(\CCI ).$ 
This ``$\leq _{s}$'' is called the {\bf surrounding order.} 
\end{rem}
We present a necessary and sufficient condition for $T_{\infty ,\tau }$ to be the constant function $1.$ 
\begin{lem}
\label{kemptylem1}
Let $\tau \in {\frak M}_{1,c}({\cal P})$.  
Then, the following {\em (1)}, {\em (2)}, and {\em (3)} are equivalent.
%\begin{enumerate}
%\item \label{kemptylem1-1}
{\em (1)} $T_{\infty ,\tau }\equiv 1.$
%\item \label{kemptylem1-2}
{\em (2)} $T_{\infty ,\tau }|_{J(G_{\tau })}\equiv 1.$ 
%\item \label{kemptylem1-3}
{\em (3)} $\hat{K}(G_{\tau })=\emptyset .$ 
%\end{enumerate}

\end{lem}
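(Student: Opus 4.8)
The plan is to prove the cycle of implications (1) $\Rightarrow$ (2) $\Rightarrow$ (3) $\Rightarrow$ (1). The implication (1) $\Rightarrow$ (2) is immediate, since restricting the constant function $1$ to any subset again gives the constant $1$. For (3) $\Rightarrow$ (1), suppose $\hat{K}(G_{\tau })=\emptyset$. Then for every $z\in\CC$ the orbit $\{g(z)\mid g\in G_{\tau}\}$ is unbounded in $\CC$. The key point is that the generators all lie in the compact set $\Gamma_{\tau}\subset{\cal P}$, so there is a uniform $R>0$ with $\{|z|\geq R\}\subset F_{\infty}(G_{\tau})$ and, moreover, $g(\{|z|\geq R\})\subset\{|z|\geq R\}$ for all $g\in\Gamma_{\tau}$ (standard escape-radius estimate for a compact family of polynomials of degree $\geq 2$); I would also arrange that any orbit that leaves $\{|z|<R\}$ stays out and escapes to $\infty$ under any further composition. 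Combining this with $\hat{K}(G_{\tau})=\emptyset$, I claim that for \emph{every} $z\in\CCI$ and \emph{every} $\gamma\in X_{\tau}$ one has $\gamma_{n,1}(z)\to\infty$: indeed $z$ is not in $\hat K(G_\tau)$, so along any particular trajectory $\gamma$ the point must eventually enter $\{|z|\geq R\}$ (otherwise the whole forward $\Gamma_\tau$-orbit of $z$ would be bounded, forcing $z\in\hat K(G_\tau)$ — here I use that the trajectory set realizes arbitrary finite compositions). Once in $\{|z|\geq R\}$ it escapes to $\infty$. Hence $T_{\infty,\tau}(z)=1$ for all $z$, i.e. $T_{\infty,\tau}\equiv 1$.

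The substantive implication is (2) $\Rightarrow$ (3), which I would prove by contraposition: assume $\hat{K}(G_{\tau})\neq\emptyset$ and produce a point of $J(G_{\tau})$ where $T_{\infty,\tau}<1$. Since $\hat K(G_\tau)$ is a nonempty compact subset of $\CC$ that is forward invariant under $G_\tau$ (if $g\in G_\tau$ and $z\in\hat K(G_\tau)$ then $g(z)\in\hat K(G_\tau)$, because $G_\tau(g(z))\subset G_\tau(z)$ is bounded), Remark~\ref{r:minimal} gives a minimal set $L\in\Min(G_{\tau},\CCI)$ with $L\subset\hat K(G_\tau)$; in particular $L\subset\CC$, so $L\neq F_\infty(G_\tau)$ and no trajectory starting in $L$ can tend to $\infty$, whence $T_{\infty,\tau}|_{L}\equiv 0$. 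On the other hand, $\{\infty\}$ is also a minimal set (it is fixed by every polynomial), so $\sharp\Min(G_\tau,\CCI)\geq 2$. By Theorem~\ref{t:mtauspec}-\ref{t:mtauspec9-2}(a) — applicable because $J_{\ker}(G_\tau)=\emptyset$ is \emph{not} assumed here, so I must instead argue directly — I would show $L\cap J(G_\tau)\neq\emptyset$: a minimal set contained in $\CC$ cannot lie in $F(G_\tau)$ for a polynomial semigroup whose generators have degree $\geq 2$, since a bounded open forward-invariant Fatou component would have to contain an attracting-type behaviour forcing $\hat K$ structure, and more to the point any nonempty compact $G_\tau$-invariant set meeting $F(G_\tau)$ would contradict the expansion/normal-family dichotomy on $\partial\hat K(G_\tau)\subset J(G_\tau)$. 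Concretely: $\partial\hat K(G_\tau)\subset J(G_\tau)$ (boundary points of the filled Julia set are not in the Fatou set, by the usual argument that near such a point both escaping and non-escaping behaviour occur, destroying equicontinuity), and taking a minimal set $L'\subset\hat K(G_\tau)$ one sees $L'$ must meet $\partial\hat K(G_\tau)$ or else $L'$ sits in $\mathrm{int}(\hat K(G_\tau))$; in the latter case one chases $L'$ to the boundary using minimality $\overline{G_\tau(z)}=L'$ together with the fact that orbits accumulate on $J(G_\tau)$. Either way we get $z_0\in L'\cap J(G_\tau)$ with $T_{\infty,\tau}(z_0)=0<1$, contradicting (2).

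The main obstacle is precisely the last step: showing that the minimal set inside $\hat K(G_\tau)$ genuinely intersects $J(G_\tau)$, i.e. that $T_{\infty,\tau}|_{J(G_\tau)}$ is not identically $1$ whenever $\hat K(G_\tau)\neq\emptyset$. The cleanest route is to invoke $\partial\hat{K}(G_{\tau})\subset J(G_{\tau})$ (which I would prove from the definition of equicontinuity: at a boundary point of $\hat K(G_\tau)$ one finds both a nearby point whose orbit stays bounded and a nearby point whose orbit escapes past the escape radius $R$, so the family $G_\tau$ is not equicontinuous there) together with $\partial\hat K(G_\tau)\neq\emptyset$ (true since $\hat K(G_\tau)$ is a nonempty proper compact subset of $\CC$, as $\{|z|\geq R\}\cap\hat K(G_\tau)=\emptyset$). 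Then $T_{\infty,\tau}$ vanishes on all of $\hat K(G_\tau)\supset\partial\hat K(G_\tau)$, and $\partial\hat K(G_\tau)$ is a nonempty subset of $J(G_\tau)$ on which $T_{\infty,\tau}=0$, so $T_{\infty,\tau}|_{J(G_\tau)}\not\equiv 1$. This directly gives the contrapositive of (2) $\Rightarrow$ (3) and completes the cycle. $\qed$
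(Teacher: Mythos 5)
Your implications (1)$\Rightarrow$(2) and (2)$\Rightarrow$(3) are fine, and the ``cleanest route'' you settle on for (2)$\Rightarrow$(3) is essentially the paper's argument: take $y\in\partial\hat{K}(G_{\tau })\subset J(G_{\tau })$ (this inclusion is Lemma~\ref{ksetfundlem1}, proved from forward invariance of $\hat{K}(G_{\tau })$) and observe that no trajectory starting at $y$ can tend to $\infty $, so $T_{\infty ,\tau }(y)=0$. The long detour through minimal sets and Theorem~\ref{t:mtauspec}-\ref{t:mtauspec9-2} is unnecessary and, as you half-acknowledge, not actually available since $J_{\ker }(G_{\tau })=\emptyset $ is not a hypothesis here; you are right to discard it.

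The genuine gap is in (3)$\Rightarrow$(1). You claim that $\hat{K}(G_{\tau })=\emptyset $ forces $\gamma _{n,1}(z)\rightarrow \infty $ for \emph{every} $z$ and \emph{every} $\gamma \in X_{\tau }$, arguing that otherwise ``the whole forward $\Gamma _{\tau }$-orbit of $z$ would be bounded.'' That inference is false: a single trajectory $\gamma $ only realizes the nested compositions $\gamma _{n}\circ \cdots \circ \gamma _{1}$, not arbitrary elements of $G_{\tau }$, so boundedness of $\{\gamma _{n,1}(z)\}_{n}$ says nothing about boundedness of $G_{\tau }(z)$. Concretely, take $h_{1}(z)=z^{2}$ and $h_{2}$ with $K(h_{2})\cap K(h_{1})=\emptyset $, so that $\hat{K}(G_{\tau })\subset K(h_{1})\cap K(h_{2})=\emptyset $; then for $z=0$ and $\gamma =(h_{1},h_{1},\ldots )$ one has $\gamma _{n,1}(0)=0$ for all $n$. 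The correct statement is only that for each $z$, $\gamma _{n,1}(z)\rightarrow \infty $ for $\tilde{\tau }$-\emph{almost every} $\gamma $, and this is where the real work lies: one uses that $\hat{K}(G_{\tau })=\bigcap _{g\in G_{\tau }}g^{-1}(\CCI \setminus F_{\infty }(G_{\tau }))=\emptyset $ together with compactness of $\CCI \setminus F_{\infty }(G_{\tau })$ and of $\Gamma _{\tau }$ to produce a length $k$ and a constant $a<1$ such that, from any point not yet in $F_{\infty }(G_{\tau })$, the probability that the next $k$ random maps fail to send it into $F_{\infty }(G_{\tau })$ is at most $a$; iterating gives that the probability of never entering $F_{\infty }(G_{\tau })$ is zero (this is Lemma~\ref{Lkerlem}), and once in $F_{\infty }(G_{\tau })$ the orbit tends to $\infty $ by Lemma~\ref{tinftyphilem1}. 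Without this measure-theoretic (Borel--Cantelli type) step, your proof of (3)$\Rightarrow$(1) does not go through.
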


By Theorem~\ref{kerJthm2} and Lemma~\ref{Tlem1}, we obtain the following result.  
\begin{thm}[Monotonicity of $T_{\infty, \tau }$ and the structure of $J(G_{\tau })$]
\label{kerJthm3}   
Let $\tau \in {\frak M}_{1,c}({\cal P})$.  Suppose that
 $J_{\ker }(G_{\tau })=\emptyset $ and    
$\hat{K}(G_{\tau })\neq \emptyset .$ Then, 
we have all of the following.
\begin{enumerate}
\item \label{kerJthm3-1}
 {\em int}$(\hat{K}(G_{\tau }))\neq \emptyset .$ 
\item \label{kerJthm3-2}
$T_{\infty ,\tau }(J(G_{\tau }))=[0,1].$ 
\item \label{kerJthm3-3}
For each $t_{1},t_{2}\in [0,1]$ with $0\leq t_{1}<t_{2}\leq 1$, 
we have $T_{\infty ,\tau }^{-1}(\{ t_{1}\} )<_{s}T_{\infty ,\tau }^{-1}(\{ t_{2}\} )\cap 
J(G_{\tau }).$ 
\item \label{kerJthm3-4}
For each $t\in (0,1)$, we have  
$\hat{K}(G_{\tau })<_{s}T_{\infty ,\tau }^{-1}(\{ t\} )\cap J(G_{\tau })<_{s}
\overline{F_{\infty }(G_{\tau })}.$ 
\item \label{kerJthm3-5}
There exists an uncountable dense subset $A$ of $[0,1]$ with $\sharp ([0,1]\setminus A)\leq \aleph _{0}$ 
such that 
for each $t\in A$, we have   
$\emptyset \neq T_{\infty ,\tau }^{-1}(\{ t\} )\cap J(G_{\tau })\subset J_{res}(G_{\tau }).$ 
\end{enumerate} 

\end{thm}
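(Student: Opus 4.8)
\textbf{Proof proposal for Theorem~\ref{kerJthm3}.}
The plan is to combine the continuity and invariance of $T_{\infty ,\tau }$ from Theorem~\ref{kerJthm2} with the locally-constant behaviour on $F(G_{\tau })$ from Lemma~\ref{Tlem1}, together with classical facts about filled-in Julia sets of polynomial semigroups. First I would prove (1): since $\hat{K}(G_{\tau })\neq \emptyset $ is compact and $J_{\ker }(G_{\tau })=\emptyset $, there is some $g\in G_{\tau }$ with $g^{-1}(J(G_{\tau }))\not\supset J(G_{\tau })$; more to the point, because $T_{\infty ,\tau }$ is continuous, $\{T_{\infty ,\tau }<1\}$ is open and contains $\hat{K}(G_{\tau })$, while $\hat{K}(G_{\tau })$ has non-empty interior exactly when it is not reduced to a totally disconnected set. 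I would argue that if $\mbox{int}(\hat K(G_\tau))=\emptyset$ then $\hat K(G_\tau)\subset J(G_\tau)$ and one can push points of $\hat K(G_\tau)$ around by $G_\tau$ to show $J_{\ker}(G_\tau)\cap \hat K(G_\tau)\neq\emptyset$, contradicting $J_{\ker}(G_\tau)=\emptyset$; alternatively invoke that a compact connected-complement set surrounded by the Julia set with empty interior forces $T_{\infty,\tau}\equiv 1$ via the maximum principle applied through $M_\tau$, contradicting $\hat K(G_\tau)\neq\emptyset$ and Lemma~\ref{kemptylem1}.

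Next, for (3) — the surrounding-order monotonicity — I would use the fact that $M_\tau(T_{\infty,\tau})=T_{\infty,\tau}$ gives a sub-mean-value property: for $z\in\CC$, $T_{\infty,\tau}(z)=\int T_{\infty,\tau}(g(z))\,d\tau(g)$, and each $g\in\mathcal P$ maps bounded complementary components to bounded complementary components. The key geometric input is that $T_{\infty,\tau}$ restricted to a bounded complementary component of $\CC\setminus T_{\infty,\tau}^{-1}(\{t_2\})$ must, by continuity, the maximum principle for the harmonic-like structure induced by $M_\tau$, and Lemma~\ref{Tlem1} (constancy on Fatou components), take values $\le t_2$ — and strictly below if we are strictly inside. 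To make this precise I would show: if $x\in T_{\infty,\tau}^{-1}(\{t_1\})$ with $t_1<t_2$, then $x$ cannot lie in the unbounded component of $\CC\setminus T_{\infty,\tau}^{-1}(\{t_2\})$, because on the unbounded component $T_{\infty,\tau}$ is forced (again by a maximum-principle argument through the invariance equation, using that $T_{\infty,\tau}=1$ near $\infty$) to be $\ge t_2$. Part (2), $T_{\infty,\tau}(J(G_\tau))=[0,1]$, then follows from the intermediate value theorem: $T_{\infty,\tau}$ is continuous, equals $0$ on $\hat K(G_\tau)\neq\emptyset$ and $1$ on $F_\infty(G_\tau)$, so it attains every value in $[0,1]$; and by Lemma~\ref{Tlem1} any value in $(0,1)$ that is attained on a Fatou component is also attained (as a boundary value) on $J(G_\tau)$, while directly one checks the preimage of any $t\in(0,1)$ cannot be contained in $F(G_\tau)$ since $F(G_\tau)$-components are open and $T_{\infty,\tau}$ is non-constant there only by passing through $J(G_\tau)$. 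Part (4) is the special case of (3) applied with the outer set $\overline{F_\infty(G_\tau)}$ (where $T_{\infty,\tau}\equiv 1$) and the inner set $\hat K(G_\tau)$ (where $T_{\infty,\tau}\equiv 0$), noting $0<t<1$ places $T_{\infty,\tau}^{-1}(\{t\})\cap J(G_\tau)$ strictly between them in the surrounding order, using (1) to know $\mbox{int}(\hat K(G_\tau))\neq\emptyset$ so the strict inequality $<_s$ genuinely holds.

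For (5), I would argue that the level sets $T_{\infty,\tau}^{-1}(\{t\})\cap J(G_\tau)$ for $t$ in an uncountable set avoid every $\partial U$, $U\in\mbox{Con}(F(G_\tau))$, hence lie in $J_{res}(G_\tau)$. The point is that by Lemma~\ref{Tlem1} each Fatou component $U$ carries a single constant value $C_U$, so $\partial U\subset T_{\infty,\tau}^{-1}(\{C_U\})$; since $\mbox{Con}(F(G_\tau))$ is countable, the set $\{C_U\mid U\in\mbox{Con}(F(G_\tau))\}$ is countable, and for $t$ outside this countable set, $T_{\infty,\tau}^{-1}(\{t\})$ meets no $\partial U$, so $T_{\infty,\tau}^{-1}(\{t\})\cap J(G_\tau)\subset J_{res}(G_\tau)$. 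Combined with (2), which says $T_{\infty,\tau}^{-1}(\{t\})\cap J(G_\tau)\neq\emptyset$ for every $t\in[0,1]$, this gives the desired uncountable dense $A$ with $\sharp([0,1]\setminus A)\le\aleph_0$.

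\textbf{Main obstacle.} The hard part will be rigorously establishing the maximum-principle statements that drive (3) and (4): that $T_{\infty,\tau}$, although merely continuous and not harmonic, obeys a surrounding-type extremum principle forcing its values on the unbounded complementary component of a level set to stay above $t$. This requires exploiting the invariance $M_\tau(T_{\infty,\tau})=T_{\infty,\tau}$ carefully — propagating values through the random dynamics $g_{n}\circ\cdots\circ g_1$ and using that polynomials respect the nesting of bounded/unbounded complementary components — together with the probabilistic meaning of $T_{\infty,\tau}$ as the escape probability, which makes monotonicity under the surrounding order intuitively clear but technically delicate because of the non-forward-invariance of $J(G_\tau)$ and the need to control limit functions on Fatou components (cf.\ the role of $J_{\ker}(G_\tau)=\emptyset$ in Theorem~\ref{t:mtauspec}).
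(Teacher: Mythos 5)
Your treatments of parts (1), (2) and (5) are essentially the paper's: for (1) the clean version of your first alternative is exactly right (if $\mbox{int}(\hat K(G_{\tau }))=\emptyset $ then $\hat K(G_{\tau })\subset J(G_{\tau })$, and forward invariance $g(\hat K(G_{\tau }))\subset \hat K(G_{\tau })$ would force $\hat K(G_{\tau })\subset J_{\ker }(G_{\tau })$; equivalently, take $y\in \partial \hat K(G_{\tau })\subset J(G_{\tau })$, find $g$ with $g(y)\in F(G_{\tau })$, and conclude $g(y)\in F(G_{\tau })\cap \hat K(G_{\tau })=\mbox{int}(\hat K(G_{\tau }))$); (2) and (5) are verbatim the intended arguments. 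The genuine gap is in part (3), which is the engine of the theorem. You propose to derive the surrounding-order monotonicity from a ``maximum principle'' for $M_{\tau }$-invariant functions, and you yourself flag this as the unresolved main obstacle. No such principle is available: the identity $T_{\infty ,\tau }(z)=\int T_{\infty ,\tau }(g(z))\,d\tau (g)$ averages over the \emph{maps}, not over a spatial neighborhood of $z$, so it carries no extremum principle, and $T_{\infty ,\tau }$ is not harmonic. Moreover you phrase the claim with the full level set $T_{\infty ,\tau }^{-1}(\{ t_{2}\} )$, whereas the theorem asserts surrounding by the smaller set $T_{\infty ,\tau }^{-1}(\{ t_{2}\} )\cap J(G_{\tau })$; being in a bounded component of the complement of the larger set does not imply being in a bounded component of the complement of the smaller one, so even if your claim were established it would not yield statement (3).

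The missing idea is purely topological and uses only what you already have (continuity of $T_{\infty ,\tau }$, Lemma~\ref{Tlem1}, and $T_{\infty ,\tau }\equiv 1$ near $\infty $). Suppose (3) fails: then some $w_{0}\in T_{\infty ,\tau }^{-1}(\{ t_{1}\} )$ lies in the unbounded component $A$ of $\CC \setminus (T_{\infty ,\tau }^{-1}(\{ t_{2}\} )\cap J(G_{\tau }))$. Join $\infty $ to $w_{0}$ by a curve $\zeta $ inside $A$. Since $T_{\infty ,\tau }(\infty )=1\geq t_{2}>t_{1}=T_{\infty ,\tau }(w_{0})$, the intermediate value theorem gives $s$ with $T_{\infty ,\tau }(\zeta (s))=t_{2}$; because $\zeta (s)\in A$ it cannot lie in $T_{\infty ,\tau }^{-1}(\{ t_{2}\} )\cap J(G_{\tau })$, hence $\zeta (s)\in F(G_{\tau })$. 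Let $U$ be its Fatou component; by Lemma~\ref{Tlem1} and continuity, $T_{\infty ,\tau }\equiv t_{2}$ on $\overline U$. Following $\zeta $ from $s$ toward $w_{0}$ (where the value is $t_{1}\neq t_{2}$), the curve must exit $U$ through a point of $\partial U\subset J(G_{\tau })\cap T_{\infty ,\tau }^{-1}(\{ t_{2}\} )$, contradicting $\zeta \subset A$. With (3) repaired this way, your deduction of (4) goes through, and no analytic machinery is needed anywhere in the proof.
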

\begin{rem}
If $G$ is generated by a single map $h\in {\cal P}$, then 
$\partial \hat{K}(G)=\partial F_{\infty }(G)=J(G)$ and so 
$\hat{K}(G)$ and $\overline{F_{\infty }(G)}$ cannot be separated. 
However, under the assumptions of Theorem~\ref{kerJthm3}, the theorem implies that  
$\hat{K}(G_{\tau })$ and $\overline{F_{\infty }(G_{\tau })}$ are separated by 
the uncountably many level sets $\{ T_{\infty ,\tau }|_{J(G_{\tau })}^{-1}(\{ t\} )\} _{ t\in (0,1)} $, 
and that these level sets are totally ordered with respect to the surrounding order, 
respecting the usual order in $(0,1).$ Note that there are many $\tau \in {\frak M}_{1,c}({\cal P})$ 
such that $J_{\ker }(G_{\tau })=\emptyset $ and $\hat{K}(G_{\tau })\neq \emptyset $. 
See section~\ref{Examples}.     
\end{rem}
\begin{rem}
\label{rem:anygetau}
For each $\Gamma \in \mbox{Cpt}(\mbox{Rat})$, there exists a $\tau \in {\frak M}_{1}(\mbox{Rat})$ such that 
$\G _{\tau }=\G .$ Thus, Theorem~\ref{kerJthm3} tells us the information of 
the Julia set of a polynomial semigroup $G$ generated by a compact subset $\G $ of 
${\cal P}$ such that $J_{\ker }(G)=\emptyset $ and $\hat{K}(G)\neq \emptyset .$ 
\end{rem}

%\begin{lem}
%\label{l:fmcmono}
%Let $\tau \in {\frak M}_{1}({\cal P}).$ 
%Suppose that $\infty \in F(G_{\tau }).$ 
%Let $U$ be a bounded multiply connected component of $F(G_{\tau }).$ 
%Let $B$ be a bounded component of $\CC \setminus U.$ 
%Then, for each $y\in B$ and each $z\in U$, 
%$T_{\infty ,\tau }(y)\leq T_{\infty ,\tau }(z).$ 
%\end{lem}
%
%\begin{prop}
%\label{p:fmcnotconst}
%Let $\tau \in {\frak M}_{1}({\cal P}).$   
%Let $U$ be a bounded multiply connected component of $F(G_{\tau }).$ 
%Let $C$ be a bounded component of $\partial U.$  
%Let $V$ be an open subset of $\CCI $ such that $V\cap  C \neq \emptyset .$ 
%Then, we have the following.
%\begin{enumerate}
%\item \label{p:fmcnotconst1}
%If $\infty \in F(G_{\tau })$ and {\em int}$(\hat{K}(G_{\tau }))\neq \emptyset $, then 
%$T_{\infty ,\tau }|_{V}$ is not constant. 
%\item \label{p:fmcnotconst2}
%If supp$\, \tau $ is compact, $\sharp \mbox{{\em supp}}\,\tau \leq \aleph _{0} $ and $\hat{K}(G_{\tau })\neq \emptyset $, 
%then $T_{\infty ,\tau }|_{V}$ is not constant. 
%\end{enumerate}
%\end{prop}
Applying Theorem~\ref{kerJthm2} and Lemma~\ref{Tlem1}, we obtain the following result. 
\begin{thm}
\label{thm:jkreyintj}
Let $\G $ be a non-empty compact subset of ${\cal P}$ and let $G=\langle \Gamma \rangle .$  
%be the polynomial semigroup generated by $\G .$ 
Suppose that $\hat{K}(G)\neq \emptyset $ and $J_{\ker }(G)=\emptyset $. 
Then, at least one of the following statements {\em (a)} and {\em (b)} holds. %\linebreak 
%\begin{itemize}
%\item[{\em (a)}]

\vspace{2mm} 
{\em (a)} {\em int}$(J(G))\neq \emptyset .$ \ 
%\item[{\em (b)}] 
{\em (b)} $\sharp \{ U\in \mbox{{\em Con}}(F(G))\mid U\neq F_{\infty }(G) \mbox{ and } U\not\subset \mbox{{\em int}}(\hat{K}(G))\} =\infty .$  
%\end{itemize}
\end{thm}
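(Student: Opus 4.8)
The plan is to argue by contradiction: assume both (a) and (b) fail, so $\mathrm{int}(J(G))=\emptyset$ and the set $\mathcal{E}:=\{U\in\mathrm{Con}(F(G))\mid U\neq F_\infty(G)\text{ and }U\not\subset\mathrm{int}(\hat K(G))\}$ is finite. First I would choose $\tau\in\mathfrak{M}_{1,c}(\mathcal P)$ with $\Gamma_\tau=\Gamma$ (possible by Remark~\ref{rem:anygetau}), so that $G_\tau=G$, $J_{\ker}(G_\tau)=\emptyset$ and $\hat K(G_\tau)\neq\emptyset$. By Theorem~\ref{kerJthm2} the function $T:=T_{\infty,\tau}$ is continuous on $\CCI$ with $M_\tau(T)=T$, and by Lemma~\ref{Tlem1} it is constant on each component of $F(G)$. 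By Lemma~\ref{kemptylem1}, since $\hat K(G)\neq\emptyset$ we have $T\not\equiv 1$, and by Theorem~\ref{kerJthm3}-\ref{kerJthm3-2} we have $T(J(G))=[0,1]$; in particular $T$ is genuinely non-constant and takes a dense set of values on $J(G)$.

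The core idea is that under the two failure assumptions $T$ would be forced to be locally constant on all of $\CCI$, hence globally constant (as $\CCI$ is connected), contradicting $T(J(G))=[0,1]$. To see local constancy: $T$ is locally constant on $F(G)$ by Lemma~\ref{Tlem1}; on $\mathrm{int}(\hat K(G))$ it equals $0$ (by Proposition~\ref{p:chtinfty}, $T|_{\hat K(G)}\equiv 0$, and this is an open set); on $F_\infty(G)$ it equals $1$. The remaining issue is the behavior of $T$ near points of $J(G)$. Since $\mathrm{int}(J(G))=\emptyset$, every point $z\in J(G)$ is in $\overline{F(G)}$. I would now use that $F(G)\setminus(F_\infty(G)\cup\mathrm{int}(\hat K(G)))$ has only finitely many components (the set $\mathcal E$), so each such component $U_i$ has a well-defined constant value $c_i=T|_{U_i}$. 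Combined with the values $0$ on $\mathrm{int}(\hat K(G))$ and $1$ on $F_\infty(G)$, the function $T$ takes at most $|\mathcal E|+2$ values on $F(G)$. By continuity of $T$ and density of $F(G)$ in $\CCI$ (since $\mathrm{int}(J(G))=\emptyset$), $T$ takes at most $|\mathcal E|+2$ values on all of $\CCI$. But $T(J(G))=[0,1]$ is uncountable — contradiction.

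There is a subtlety: I must be careful that $F(G)$ is actually dense in $\CCI$, which follows from $\mathrm{int}(J(G))=\emptyset$; and that "$T$ takes finitely many values on a dense set + $T$ continuous $\Rightarrow$ $T$ takes finitely many values everywhere" — this holds because the closure of a finite set is itself, so $T(\CCI)=T(\overline{F(G)})\subseteq\overline{T(F(G))}=T(F(G))$, a finite set. The one place where I should double-check the bookkeeping is whether components of $F(G)$ contained in $\hat K(G)$ but not in $\mathrm{int}(\hat K(G))$ can occur: a component $U$ of $F(G)$ is open, so if $U\subset\hat K(G)$ then $U\subset\mathrm{int}(\hat K(G))$; hence every component of $F(G)$ is either $F_\infty(G)$, or contained in $\mathrm{int}(\hat K(G))$, or belongs to $\mathcal E$, and the trichotomy is clean.

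The main obstacle I anticipate is not the logical skeleton above but confirming the two supporting facts cleanly: (i) that $\mathrm{int}(J(G))=\emptyset$ indeed forces $F(G)$ dense — this is immediate since $\CCI\setminus F(G)=J(G)$ has empty interior — and (ii) making sure that the finiteness hypothesis in (b) is exactly the statement "$\mathcal E$ finite" so that the value set of $T$ on $F(G)$ is finite. Both are essentially formal once unwound, so I expect the proof to be short: set up $\tau$, invoke Theorems~\ref{kerJthm2}, \ref{kerJthm3} and Lemma~\ref{Tlem1} and Proposition~\ref{p:chtinfty}, then run the counting/density argument to contradict $T(J(G))=[0,1]$.
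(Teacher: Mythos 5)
Your proposal is correct and follows essentially the same route as the paper: pick $\tau$ with $\Gamma_{\tau}=\Gamma$, note that failure of (b) forces $T_{\infty,\tau}(F(G))$ to be a finite set, use $\mbox{int}(J(G))=\emptyset$ plus continuity to conclude $T_{\infty,\tau}(\CCI)$ is finite, and contradict $T_{\infty,\tau}(J(G))=[0,1]$ from Theorem~\ref{kerJthm3}. Your extra bookkeeping (the trichotomy of Fatou components and the values $0$ on $\mbox{int}(\hat{K}(G))$ and $1$ on $F_{\infty}(G)$) just makes explicit the finiteness step that the paper states more briefly.
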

\begin{rem}
There exist finitely generated polynomial semigroups $G$ in ${\cal P}$ 
such that int$(J(G))\neq \emptyset $ and $J(G)\neq \CCI $ (see \cite{HM2}, Example~\ref{ex:pjne}).  
\end{rem}
%\item 
%We say that $G$ is {\bf hyperbolic} if $P(G)\subset F(G).$  
\subsection{Planar postcritical set and the condition that $\hat{K}(G_{\tau })=\emptyset $}
\label{Planar}
In this subsection, we present some results which are deduced from the condition that the planar postcritical set is unbounded. 
Moreover, we present some results which are deduced from the condition that $\hat{K}(G_{\tau })=\emptyset .$ 
The proofs are given in subsection~\ref{pfPlanar}.  
\begin{df}
For a polynomial semigroup $G$, we set 
$P^{\ast }(G):= P(G)\setminus \{ \infty \} .$ This is called the 
{\bf planar postcritical set} of the polynomial semigroup $G.$ 
\end{df} 
%\end{df}
\begin{df}
Let $Y$ be a complete metric space. We say that a subset $A$ of $Y$ is residual 
if $A$ contains a countable intersection of open dense subsets of $Y.$ 
Note that by Baire's category theorem, a residual subset $A$ of $Y$ is dense in $Y.$ 
\end{df} 
The following theorem generalizes \cite[Theorem 1.5]{Br1} and  \cite[Theorem 2.3]{BBR}. 
\begin{thm}
\label{Punbddthm1}
Let $\G \in \mbox{{\em Cpt}}({\cal P})$ and let 
$G=\langle \G \rangle .$  
Suppose that $P^{\ast }(G)$ is not bounded in $\CC .$ 
Then, there exists a residual subset ${\cal U}$ of $\GN $ such that 
for each $\tau \in {\frak M}_{1}({\cal P})$ with $\G _{\tau }=\G $, 
we have $\tilde{\tau }({\cal U})=1$, and such that 
for each $\g \in {\cal U}$,   
the Julia set $J_{\gamma }$ of $\gamma $ has 
uncountably many connected components.  
\end{thm}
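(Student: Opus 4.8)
The plan is to study the Julia set $J_\gamma$ for a typical sequence $\gamma \in \Gamma^{\NN}$ by exploiting the unboundedness of $P^*(G)$ to locate, infinitely often along $\gamma$, a backward branch that ``cuts'' $K_\gamma$ into disconnected pieces. Concretely, since $P^*(G)=\overline{G^*(\bigcup_{h\in\Gamma}\{\text{critical values of }h\})}\setminus\{\infty\}$ is unbounded in $\CC$, there is a map $g=h_{i_k}\circ\cdots\circ h_{i_1}\in G^*$ (and a critical value $v$ of some $h\in\Gamma$) with $|g(v)|$ arbitrarily large; in particular $g(v)$ can be taken outside $\hat K(G)$ (the case $\hat K(G)=\emptyset$ being handled separately, e.g. via Lemma~\ref{kemptylem1}, or absorbed because then $K_\gamma$ is typically a Cantor set anyway). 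The first step is therefore a ``combinatorial richness'' lemma: the set of finite words $w$ over $\Gamma$ such that $f_w:=h_{w_{|w|}}\circ\cdots\circ h_{w_1}$ has a finite critical point whose image under $f_w$ escapes to $\infty$ (equivalently lies outside $\hat K(G)$) is \emph{cofinal} in a strong sense — one can prepend such a word to any given prefix. From this, a Borel--Cantelli argument over $\tilde\tau=\otimes\tau$ shows that for $\tilde\tau$-a.e. $\gamma$, infinitely many ``blocks'' of $\gamma$ realize such a word; and because the conclusion we want is topological and $\tilde\tau$ has full support on $\Gamma^{\NN}$, upgrading ``a.e.'' to ``residual'' is done by the standard observation that the relevant event is a countable intersection of open dense cylinder-type sets in $\Gamma^{\NN}$.

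The second step is the geometric heart: if at stage $n$ the sequence $\gamma$ has produced a map $\gamma_{n,1}$ possessing a finite critical point $c$ with $\gamma_{n,1}(c)\notin \hat K(G)$, then one shows that the filled-in Julia set $K_{\sigma^n\gamma}$ of the shifted sequence pulls back under $\gamma_{n,1}$ to a set with at least two components separating $c$. The mechanism is the classical one for disconnected polynomial Julia sets: a critical point escaping the ``target'' filled-in set forces the relevant level curve of the appropriate Green's function (here the Green's function $G_{\sigma^n\gamma}$ of the basin of $\infty$ for the tail sequence, which exists and behaves well because $\Gamma$ is compact in $\mathcal P$, so degrees and coefficients are uniformly controlled) to figure-eight through the critical value, and pulling back produces a proper subcontinuum of $K_\gamma$ whose complement in $K_\gamma$ is nonempty and ``surrounded.'' I would phrase this using the surrounding order $\leq_s$ already introduced: one gets nested compact sets $K_\gamma \supsetneq E_1 \supsetneq E_2 \supsetneq\cdots$, each $E_{j+1}$ strictly surrounded inside a bounded complementary component of $E_j$, arising from the infinitely many good blocks. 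Uniform boundedness of $\bigcup_\gamma K_\gamma$ (again from compactness of $\Gamma$) keeps everything in a fixed disk and gives the needed normal-family/compactness control on the Green's functions and on continuity of $\gamma\mapsto J_\gamma$ where it is needed.

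The third step converts the nested separating structure into ``uncountably many components of $J_\gamma$.'' Having infinitely many disjoint ``annular gaps'' $A_j$ (the bounded complementary components created at stage $j$) distributed so that $K_\gamma$ meets both the inside and the outside of each $A_j$, one codes points of $K_\gamma$ by the binary itinerary recording, at each level $j$, which side of the $j$-th separation the point lies on; distinct itineraries that eventually differ lie in distinct components, and a Cantor set of itineraries is realized because at each level both sides are nonempty and compact (using $K_{\sigma^n\gamma}$ perfect, from the remark after the definition of $J_\gamma$). Finally $K_\gamma=J_\gamma$ on the set where this happens — or more simply, $\partial K_\gamma\subset J_\gamma$ and the separations already pass through $J_\gamma = \partial K_\gamma \cup (\text{bounded Fatou components' boundaries})$ — so $J_\gamma$ inherits uncountably many components. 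The main obstacle I anticipate is the second step: making the ``escaping critical point forces a separation of the \emph{tail}'s filled-in set that survives pullback'' argument rigorous in the \emph{non-autonomous} setting, since one cannot directly invoke single-map polynomial dynamics; this requires the careful Green's-function and Böttcher-type estimates for sequences, uniform in $\gamma\in\Gamma^{\NN}$, which is exactly where compactness of $\Gamma$ and the results of \cite{S4} on sequences of polynomials get used. The passage from a.e. to residual, and the coding argument, I expect to be routine once the separation lemma is in hand.
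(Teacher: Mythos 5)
Your step 1 (finding a word $g=h_n\circ\cdots\circ h_1$ whose composition has an escaping critical value, thickening it to neighborhoods $B_1\times\cdots\times B_n$, and showing the resulting event is both residual and of full $\tilde\tau$-measure by Borel--Cantelli) is exactly what the paper does. But from there the paper takes a much shorter, indirect route that you should compare against: it argues by dichotomy. If $J_\gamma$ had only finitely many components, then Lemma~\ref{finjgcompolem} (via the self-similarity $f_{\gamma,n}(B)=J_{\sigma^n(\gamma)}$ for an open component $B$) produces an $s$ with $J_{\sigma^s(\gamma)}$ connected; since $\gamma\in{\cal U}$, some later block $\gamma_{s+m,s+1}$ has a critical point in $A_{\infty,\sigma^s(\gamma)}$, and the Riemann--Hurwitz formula applied to $\gamma_{s+m,s+1}:A_{\infty,\sigma^s(\gamma)}\to A_{\infty,\sigma^{s+m}(\gamma)}$ between simply connected basins gives a contradiction. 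So $J_\gamma$ has infinitely many components, and Lemma~\ref{infinjgcompolem} (a Baire-category argument on the quotient space of components, using claim 1 that no component is isolated) rules out exactly $\aleph_0$ many. None of your Green's-function, figure-eight, or pullback machinery is needed.

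There is also a genuine gap in your step 3 as written. A \emph{linearly nested} family of separations --- compact sets $E_1\supsetneq E_2\supsetneq\cdots$ with annular gaps $A_j$ ordered by the surrounding order --- does not yield uncountably many itineraries: a point's itinerary is then forced to be of the form ``inside $A_1,\dots,A_j$, outside $A_{j+1},A_{j+2},\dots$'' for some $j$ (or inside all), which is only a countable set of codes. To get a Cantor set of components by coding you need genuine binary branching, i.e.\ you must show that \emph{both} sides of each separation contain points of $K_\gamma$ that are themselves separated at every later stage; in the non-autonomous setting this is precisely the hard uniformity you flag as your ``main obstacle'' in step 2, and it is not established. (The clean way to close this hole is the paper's Lemma~\ref{infinjgcompolem}: once you have $\geq\aleph_0$ components, Baire category upgrades to $>\aleph_0$ for free.) A smaller but real inaccuracy: ``the critical value escapes to $\infty$'' is \emph{not} equivalent to ``lies outside $\hat K(G)$''; a point outside $\hat K(G)$ merely has unbounded $G$-orbit and need not escape under the particular tail sequence $\sigma^n(\gamma)$. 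The condition you actually need (and which unboundedness of $P^*(G)$ does supply, since a forward-invariant neighborhood of $\infty$ lies in $F_\infty(G)$) is that the critical value lands in $F_\infty(G)$, which guarantees escape under \emph{every} sequence by the claim in Lemma~\ref{tinftyphilem1}.
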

\begin{ques}
What happens if  $\hat{K}(G_{\tau })=\emptyset $ (i.e., if $T_{\infty ,\tau }\equiv 1$) ?
\end{ques}
\begin{df}
\label{d:kg}
Let $\gamma =(\gamma _{1},\gamma _{2},\ldots )\in {\cal P}^{\NN }.$ We set 
$K_{\gamma }:= \{ z\in \CC \mid \{ \gamma _{n,1}(z)\} _{n\in \NN } 
\mbox{ is bounded in } \CC \} .$ Moreover, 
we set 
$A_{\infty ,\g }:= \{ z\in \CCI \mid \gamma _{n,1}(z)\rightarrow \infty \} .$ 
\end{df}

\begin{thm}
\label{Tequals1thm1}
Let $\tau \in {\frak M}_{1,c}({\cal P})$. Suppose that $\hat{K}(G_{\tau })=\emptyset $. Then, 
we have all of the following statements 1,$\ldots $,4. 
\begin{enumerate}
\item $J_{\ker }(G_{\tau })=\emptyset .$ 
\item $F_{meas}(\tau )={\frak M}_{1}(\CCI )$ and $(M_{\tau }^{\ast })^{n}(\nu )\rightarrow \delta _{\infty }$ 
as $n\rightarrow \infty $ uniformly on $\nu \in {\frak M}_{1}(\CCI ).$   
\item $T_{\infty ,\tau }\equiv 1$ on $\CCI .$ 
%\newpage 
\item For $\tilde{\tau }$-a.e. $\gamma \in {\cal P}^{\NN }$,  
%\begin{enumerate}
%\item 
{\em (a)} {\em Leb}$_{2}(K_{\gamma })=0$, 
%\item 
{\em (b)} $K_{\gamma }=J_{\gamma }$, and 
%\item 
{\em (c)} $K_{\gamma }=J_{\gamma }$ has uncountably many connected components. 

%\end{enumerate}

\end{enumerate}

\end{thm}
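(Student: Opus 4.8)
The plan is to derive all four statements from the hypothesis $\hat K(G_\tau)=\emptyset$ in a logical cascade, using the already-established machinery (Theorems~\ref{kerJthm1}, \ref{kerJthm2}, \ref{t:mtauspec}) together with classical complex-analytic compactness.

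First I would prove statement~1. Since $\tau$ has compact support in $\mathcal P$, the semigroup $G_\tau$ is generated by a compact family of polynomials of degree $\geq 2$; by Remark~\ref{r:pg} the postcritical set and the dynamics are controlled by a compact parameter set. The key point is that $\hat K(G_\tau)=\emptyset$ means every $z\in\CC$ has an orbit under $G_\tau$ that is unbounded in $\CC$, so $\infty$ is, in a strong sense, ``swallowing'' all of $\CC$ under forward iteration. I would show $J_{\ker}(G_\tau)=\bigcap_{g\in G_\tau}g^{-1}(J(G_\tau))$ is empty by a Montel/normal-families argument: if $z\in J_{\ker}(G_\tau)$ then $z$ and all its $G_\tau$-forward iterates lie in $J(G_\tau)$, hence stay in a fixed compact subset of $\CC$ (the Julia set of a compactly generated polynomial semigroup is compact in $\CC$ when $\hat K(G_\tau)=\emptyset$? — actually one must be careful: $J(G_\tau)$ could contain $\infty$). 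Here the correct route is: $z\in J_{\ker}(G_\tau)$ implies $\overline{G_\tau(z)}\subset J(G_\tau)$; but $\hat K(G_\tau)=\emptyset$ forces $G_\tau(z)$ to be unbounded, so $\infty\in\overline{G_\tau(z)}\subset J(G_\tau)$, and moreover $J_\ker$ being forward invariant and compact (Remark~\ref{r:kjulia}) would have to contain a neighborhood-type structure near $\infty$; combining with the fact that near $\infty$ the maps are uniformly expanding (as polynomials of degree $\geq 2$, a neighborhood of $\infty$ lies in $F(G_\tau)$ when generators have a common attracting behavior at $\infty$ — indeed $\infty\in F(G_\tau)$ always for compactly generated polynomial semigroups) yields a contradiction. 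So $J_{\ker}(G_\tau)=\emptyset$.

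With statement~1 in hand, statements~2 and~3 follow quickly. By Theorem~\ref{kerJthm1} (Cooperation Principle~I applied on $\CCI=\CC\Bbb P^1$), $F_{meas}(\tau)=\mathfrak M_1(\CCI)$. To get $(M_\tau^\ast)^n(\nu)\to\delta_\infty$ uniformly, I would argue that $\delta_\infty$ is a fixed point of $M_\tau^\ast$ (since every generator fixes $\infty$), that $\{\delta_\infty\}$ is the unique minimal set meeting $F_\infty(G_\tau)$, and in fact—because $\hat K(G_\tau)=\emptyset$—it is the \emph{only} minimal set for $(G_\tau,\CCI)$: any minimal set $L$ satisfies $L=\bigcup_{h\in\Gamma_\tau}h(L)$ (Remark~\ref{r:minimal}), and if $L\neq\{\infty\}$ then $L\cap\CC\neq\emptyset$ and forward-invariance plus $\hat K(G_\tau)=\emptyset$ pushes orbits to $\infty$, contradicting compactness of $L$ unless $\infty\in L$, then minimality gives $L=\{\infty\}$. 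Hence $\sharp\Min(G_\tau,\CCI)=1$, so by Theorem~\ref{t:mtauspec}-\ref{t:mtauspec9-2} we get $\dim_\CC(\mathrm{LS}(\mathcal U_{f,\tau}(\CCI)))=1$, i.e. $\mathcal U_{f,\tau}(\CCI)$ consists only of constants, and by Theorem~\ref{t:mtauspecdual}, $(M_\tau^\ast)^n(\nu)\to\delta_\infty$ for every $\nu$; uniformity follows from compactness of $\mathfrak M_1(\CCI)$ together with equicontinuity ($F_{meas}(\tau)=\mathfrak M_1(\CCI)$). For statement~3: $T_{\infty,\tau}$ is continuous (Theorem~\ref{kerJthm2}), $M_\tau$-invariant, and equals $1$ on $F_\infty(G_\tau)$; by Lemma~\ref{kemptylem1}, the three conditions $T_{\infty,\tau}\equiv1$, $T_{\infty,\tau}|_{J(G_\tau)}\equiv1$, and $\hat K(G_\tau)=\emptyset$ are equivalent, so $\hat K(G_\tau)=\emptyset$ gives $T_{\infty,\tau}\equiv1$ directly.

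Statement~4 is where the real work lies and will be the main obstacle. For $\tilde\tau$-a.e. $\gamma$: (a) I would use the relation between $T_{\infty,\tau}\equiv1$ and the fact that $\tilde\tau(\{\gamma: A_{\infty,\gamma}=\CCI\ \tilde\tau\text{-a.e. starting point}\})$ is large—more precisely, $T_{\infty,\tau}(z)=1$ for all $z$ means for a.e.\ $\gamma$ the basin $A_{\infty,\gamma}$ has full area, and a Fubini argument on $\CC\times\mathcal P^\NN$ (integrating the indicator that $\gamma_{n,1}(z)\to\infty$ against $\mathrm{Leb}_2\otimes\tilde\tau$) yields $\mathrm{Leb}_2(K_\gamma)=0$ for $\tilde\tau$-a.e.\ $\gamma$. (b) The equality $K_\gamma=J_\gamma$: $J_\gamma\subset K_\gamma$ always for polynomial sequences (the basin of $\infty$ is in $F_\gamma$); conversely $\mathrm{int}(K_\gamma)$ would be a bounded Fatou component, and I would rule out bounded Fatou components of $\gamma$ for a.e.\ $\gamma$ by the cooperation/shrinking argument: bounded stable behavior would contradict the full-area escape to $\infty$, or one invokes that a bounded component of $F_\gamma$ must be a component of $F(G_\gamma)$-type set on which iterates shrink (Theorem~\ref{t:mtauspec}-\ref{t:mtauspec1}) while simultaneously $\mathrm{Leb}_2(K_\gamma)=0$ forces such a component to be empty. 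This needs care and is the crux. (c) That $K_\gamma=J_\gamma$ has uncountably many connected components: I would check that $P^\ast(G_\tau)$ must be unbounded when $\hat K(G_\tau)=\emptyset$ (a postcritical set bounded in $\CC$ together with all critical points escaping is incompatible for polynomials of degree $\geq2$—a standard filled-Julia-set argument shows $\hat K(G_\tau)\supset$ something nonempty if $P^\ast(G_\tau)$ is bounded), and then apply Theorem~\ref{Punbddthm1} to conclude $J_\gamma$ has uncountably many components for $\tilde\tau$-a.e.\ $\gamma$; combined with (b) this gives the claim. The hardest step is establishing (b), ruling out bounded Fatou components of the random sequence almost surely, for which I expect to combine the hyperbolic-metric contraction estimates of Theorem~\ref{t:mtauspec}-\ref{t:mtauspec1} with the a.e.\ vanishing of $\mathrm{Leb}_2(K_\gamma)$ and a Koebe-type distortion argument near $\infty$.
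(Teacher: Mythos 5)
Your proposal is correct and follows essentially the same route as the paper's proof: $\hat{K}(G_{\tau })=\emptyset $ yields $T_{\infty ,\tau }\equiv 1$ (Lemma~\ref{kemptylem1}) and $J_{\ker }(G_{\tau })=\emptyset $, Theorem~\ref{kerJthm1} then gives statement 2, a Fubini argument gives $\mbox{Leb}_{2}(K_{\gamma })=0$ almost surely, and the unboundedness of $P^{\ast }(G_{\tau })$ combined with Theorem~\ref{Punbddthm1} gives the uncountably many components. The one misjudgment is your treatment of 4(b) as "the crux" requiring hyperbolic-metric contraction and Koebe distortion near $\infty $: it is immediate from 4(a), since $\mbox{Leb}_{2}(K_{\gamma })=0$ forces $\mbox{int}(K_{\gamma })=\emptyset $, hence $K_{\gamma }=\partial K_{\gamma }=J_{\gamma }$ (using that $\partial K_{\gamma }=J_{\gamma }$ for any sequence of polynomials from a compact subset of ${\cal P}$) --- which is exactly the one-line option you mention and then set aside in favor of heavier machinery.
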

\begin{rem}
\label{r:pnckem}
Let $\tau \in {\frak M}_{1,c}({\cal P})$. Suppose that $J_{\ker }(G_{\tau })=\emptyset .$ 
From Theorem~\ref{kerJthm2} and Theorem~\ref{Tequals1thm1}, it follows that  
$\hat{K}(G_{\tau })\neq \emptyset $ if and only if 
$(\LSfc )_{nc}\neq \emptyset .$ 
\end{rem}
\begin{ex}
\label{ex:ke}
Let $\tau \in {\frak M}_{1,c}({\cal P})$ and suppose that 
there exist two elements $h_{1},h_{2}\in \Gamma _{\tau }$ such that 
$K(h_{1})\cap K(h_{2})=\emptyset .$ Then $\hat{K}(G_{\tau })=\emptyset .$ 
For more examples of $\tau $ with $\hat{K}(G_{\tau })=\emptyset $, 
see Example~\ref{ex:kead}. 
\end{ex}
\subsection{Conditions to be Leb$_{2}(J_{\gamma })=0$ for $\tilde{\tau }$-a.e. $\gamma $ (even if $J_{\ker }(G_{\tau })\neq \emptyset $)}
\label{Conleb}
In this subsection, we present some sufficient conditions to be 
Leb$_{2}(J_{\gamma })=0$ for $\tilde{\tau }$-a.e. $\gamma $.  
More precisely, we show that even if $J_{\ker }(G_{\tau })\neq \emptyset $, 
under certain conditions, for $\tilde{\tau }$-a.e. $\gamma $,  for Leb$_{2}$-a.e. $z\in \CCI $, 
there exists a number $n_{0}\in \NN $ such that for each $n$ with $n\geq n_{0}$, 
$\gamma _{n,1}(z)\in F(G_{\tau }).$ 
The proofs are given in subsection~\ref{pfConleb}. 
We also define other kinds of Julia sets of $M_{\tau }^{\ast }.$ 
\begin{df}
\label{d:manyFJ}
Let $Y$ be a compact metric space. 
% and let ${\cal F}$ be a 
%subset of $\CMX .$ 
Let $\tau \in {\frak M}_{1}(\CMX ).$ 
Regarding $Y $ as a compact subset of ${\frak M}_{1}(Y)$ as in Definition~\ref{d:Phi}, we use 
the following notation.   
\begin{enumerate}
\item 
We denote by $F_{pt}(\tau )$ the set of 
$z\in Y $ satisfying that 
there exists a neighborhood $B$ of $z $ in $Y $ such that 
the sequence$\{ (M_{\tau }^{\ast })^{n}|_{B}:B\rightarrow 
{\frak M}_{1}(Y )\} _{n\in \NN }$ is equicontinuous on $B.$ 
%\item 
We set $J_{pt}(\tau ):= Y \setminus F_{pt}(\tau ).$ 
\item Similarly, we denote by 
$F_{pt }^{0}(\tau )$ the set of 
$z\in Y $ such that 
the sequence $\{ (M_{\tau }^{\ast })^{n}|_{Y }: Y 
\rightarrow {\frak M}_{1}(Y)\} _{n\in \NN }$ is 
equicontinuous at the one point $z\in Y.$ 
%Note that $F_{pt }(\tau )\subset F_{pt}^{0}(\tau ).$  
%\item 
We set $J_{pt}^{0}(\tau ):= Y \setminus F_{pt}^{0}(\tau ).$ 
\end{enumerate}

\end{df}
\begin{rem}
We have $F_{pt}(\tau )\subset F_{pt}^{0}(\tau )$ and  
%$F_{meas}(\tau )\subset F_{meas}^{0}(\tau )$, 
$J_{pt}^{0}(\tau )\subset J_{pt}(\tau )\cap J_{meas}^{0}(\tau )$.
%, and 
%$J_{meas}^{0}(\tau )\subset J_{meas}(\tau ).$ 
\end{rem}
We also need the following notations on the skew products. 
In fact, we heavily use the idea and the notations of the dynamics of skew products, 
to prove many results of this paper. 

\begin{df}
%[\cite{S7}]
\label{d:sp}
Let $Y$ be a compact metric space and 
let $\Gamma $ be a non-empty compact subset of  
$\CMX .$   
We define a map $f:\GN \times Y \rightarrow 
\GN \times Y $ as follows: 
For a point $(\gamma  ,y)\in \GN \times Y $ where 
$\gamma =(\gamma _{1},\gamma _{2},\ldots )$, we set 
$f(\gamma  ,y):= (\sigma (\gamma  ), \gamma  _{1}(y))$, where 
$\sigma :\GN \rightarrow \GN $ is the shift map, that is, 
$\sigma (\gamma  _{1},\gamma  _{2},\ldots )=(\gamma  _{2},\gamma  _{3},\ldots ).$ 
The map $f:\GN \times Y \rightarrow \GN \times Y $ 
is called the {\bf skew product associated with the 
generator system }$\G .$ 
Moreover, we use the following notation. 
\begin{enumerate}
\item 
Let 
$\pi : \GN \times \CCI \rightarrow \GN $  
and $\pi _{Y }:\GN \times Y \rightarrow Y $ be 
the canonical projections. 
For each 
$\gamma  \in \GN $ and $n\in \NN $, we set 
$f_{\gamma  }^{n}:= f^{n}|_{\pi ^{-1}\{ \gamma  \} } : 
\pi ^{-1}\{ \gamma  \} \rightarrow 
\pi ^{-1}\{ \sigma ^{n}(\gamma  )\} .$  
Moreover, we set 
$f_{\gamma  ,n}:= \gamma  _{n}\circ \cdots \circ \gamma  _{1}.$ 
\item 
For each $\gamma \in \Gamma ^{\NN }$, 
we set $J^{\gamma  }:= \{ \gamma  \} \times J_{\gamma  }
\ (\subset \GN \times Y )$. 
%\item 
Moreover, we set 
$\tilde{J}(f):= \overline{\bigcup _{\gamma  \in \GN }J^{\gamma  }}$, 
where the closure is taken in the product space $\GN \times Y .$ 
Furthermore, we set $\tilde{F}(f):= (\Gamma ^{\NN }\times Y)\setminus \tilde{J}(f).$  
%(Note that $f^{-1}(\tilde{J}(f))=\tilde{J}(f)=f(\tilde{J}(f)).$)  
\item 
For each $\gamma  \in \GN $, we set 
$\hat{J}^{\gamma  ,\Gamma }:= \pi ^{-1}\{  \gamma  \} \cap \tilde{J}(f)$, 
$\hat{F}^{\gamma ,\Gamma }:= \pi ^{-1}(\{ \gamma \} )\setminus \hat{J}^{\gamma ,\G }$,  
$\hat{J}_{\gamma ,\Gamma }:= \pi _{Y }(\hat{J}^{\gamma ,\Gamma })$, 
and $\hat{F}_{\g, \G }:= Y\setminus \hat{J}_{\g ,\G }.$   
Note that $J_{\gamma }\subset \hat{J}_{\gamma ,\Gamma }.$   
\item When $\Gamma \subset \mbox{Rat}$, 
for each $z=(\gamma  ,y)\in \GN \times \CCI $, 
we set $f'(z):=(\gamma  _{1})'(y).$ 
%More generally, for each $n\in \NN $ and 
%$z=(\gamma  ,y)\in \GN \times \CCI $, 
%we set $(f^{n})'(z):= (f_{\gamma  ,n})'(y).$ 
\end{enumerate}
\end{df}
\begin{rem}
Under the above notation, let $G=\langle \Gamma \rangle .$  
Then   
$\pi _{Y }(\tilde{J}(f))\subset J(G)$ and 
$\pi \circ f=\sigma \circ \pi $ on $\Gamma ^{\NN }\times Y .$ 
Moreover, for each $\g \in \Gamma ^{\NN }$, 
$\gamma _{1}(J_{\gamma })\subset J_{\sigma (\gamma )}$, 
$\gamma _{1}(\hat{J}_{\gamma ,\Gamma })\subset \hat{J}_{\sigma (\gamma ),\Gamma }$, 
and $f(\tilde{J}(f))\subset \tilde{J}(f)$ (see Lemma~\ref{genskewprodinvlem1}). 
Furthermore, if $\Gamma \in \Cpt(\Rat)$, then 
for each $\gamma \in \Gamma ^{\NN }$, 
$\gamma _{1}(J_{\gamma })=J_{\sigma (\gamma )}$, $\gamma _{1}^{-1}(J_{\sigma (\gamma )})=J_{\gamma }$, 
$\gamma _{1}(\hat{J}_{\gamma ,\Gamma })=\hat{J}_{\sigma (\gamma ),\Gamma }$, 
$\gamma _{1}^{-1}(\hat{J}_{\sigma (\gamma ),\Gamma })=\hat{J}_{\gamma ,\Gamma }$,  
$f(\tilde{J}(f))=\tilde{J}(f)=f^{-1}(\tilde{J}(f))$, and $f(\tilde{F}(f))=\tilde{F}(f)=f^{-1}(\tilde{F}(f))$ 
(see \cite[Lemma 2.4]{S4}). 
%$$\begin{CD}
%\Gamma ^{\NN }\times \CCI @>{f}>>\Gamma ^{\NN }\times \CCI \\ 
%@V{\pi}VV 
%@VV{\pi }V\\ 
%\Gamma ^{\NN }@>>{\sigma }>\Gamma ^{\NN }  
%\end{CD}
%$$
%\item If $\sharp J(G)\geq 3$, then $\pi _{\CCI }(\tilde{J}(f))=J(G).$ 
%\end{enumerate}
\end{rem}

We now present the results. 
Even if $J_{\ker}(G_{\tau })\neq \emptyset $, we have the following. 
\begin{thm}
\label{t:jkucuh}
Let $\tau \in {\frak M}_{1,c}({\cal P})$.   
Suppose that  
$J_{\ker }(G_{\tau })$ is included in the unbounded component of  
$\CC \setminus (UH(G_{\tau })\nolinebreak \cap \nolinebreak J(G_{\tau }))$.   
 Then, we have the following.
\begin{enumerate}
\item  
For $\tilde{\tau }$-a.e.  
$\gamma \in X_{\tau }$,  
{\em Leb}$_{2}(J_{\gamma })=\mbox{{\em Leb}}_{2}(\hat{J}_{\g ,\G _{\tau }})=0.$ 
\item For {\em Leb}$_{2}$-a.e. $y\in \CCI $, 
there exists a Borel subset ${\cal A}_{y}$ of $X_{\tau }$ with $\tilde{\tau }({\cal A}_{y})=1$ 
such that for each $\gamma \in {\cal A}_{y}$, 
there exists an $n=n(y,\gamma )\in \NN $ 
with $\gamma _{n,1}(y)\in F(G_{\tau }).$ 
\item {\em Leb}$_{2}(J_{pt}^{0}(\tau ))=0.$
\item For {\em Leb}$_{2}$-a.e. point $y\in \CCI $, 
$T_{\infty ,\tau }$ is continuous at $y.$ 
\end{enumerate}
\end{thm}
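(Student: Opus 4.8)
The plan is to reduce everything to a single measure-theoretic estimate on the skew product $f:X_\tau\times\CCI\to X_\tau\times\CCI$ associated with $\Gamma_\tau$, and then harvest the four conclusions in order. First I would invoke the results of \cite{S4,S7} on semi-hyperbolic rational semigroups: since $J_{\ker}(G_\tau)$ lies in the unbounded component of $\CC\setminus(UH(G_\tau)\cap J(G_\tau))$, the ``bad set'' $UH(G_\tau)\cap J(G_\tau)$ is separated from $J_{\ker}(G_\tau)$ by a Jordan curve in $F(G_\tau)$, and on the complementary compact region the semigroup is semi-hyperbolic. The key step is to show that for $\tilde\tau$-a.e.\ $\gamma$, $\mathrm{Leb}_2(\hat J_{\gamma,\Gamma_\tau})=0$. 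Using that $G_\tau$ is semi-hyperbolic near the relevant part of $J(G_\tau)$, one obtains (via the bounded-degree/Koebe-type distortion arguments of \cite{S4,S7}, together with the continuity of $\gamma\mapsto J_\gamma$ — which is available here precisely because of semi-hyperbolicity) that the fiberwise Julia sets $\hat J_{\gamma,\Gamma_\tau}$ shrink under backward iteration with a uniform contraction of area; this forces $\mathrm{Leb}_2(\hat J_{\gamma,\Gamma_\tau})=0$ for a.e.\ $\gamma$, and since $J_\gamma\subset\hat J_{\gamma,\Gamma_\tau}$, also $\mathrm{Leb}_2(J_\gamma)=0$. The role of $J_{\ker}(G_\tau)$ being pushed into the ``good'' region is what lets the non-constant limit functions (which would otherwise obstruct the argument when $J_{\ker}(G_\tau)\neq\emptyset$) be controlled — this is where Lemmas~\ref{Lkerlem}, \ref{l:jkejgcuh}, \ref{l:jkucuhjkj} are used. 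This (1) is the main obstacle; the remaining three statements are comparatively soft consequences.

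For statement (2), I would apply Fubini's theorem on $X_\tau\times\CCI$ with the product measure $\tilde\tau\times\mathrm{Leb}_2$. From (1), $\int\mathrm{Leb}_2(\hat J_{\gamma,\Gamma_\tau})\,d\tilde\tau(\gamma)=0$, so the set $\{(\gamma,y):y\in\hat J_{\gamma,\Gamma_\tau}\}$ has $(\tilde\tau\times\mathrm{Leb}_2)$-measure zero; hence for $\mathrm{Leb}_2$-a.e.\ $y$, $\tilde\tau(\{\gamma:y\in\hat J_{\gamma,\Gamma_\tau}\})=0$. For such a $y$ and for $\tilde\tau$-a.e.\ $\gamma$ we have $y\notin\hat J_{\gamma,\Gamma_\tau}=\hat F_{\gamma,\Gamma_\tau}^{c}$, i.e.\ $y\in\hat F_{\gamma,\Gamma_\tau}$. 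By the invariance relation $\gamma_1^{-1}(\hat J_{\sigma(\gamma),\Gamma_\tau})=\hat J_{\gamma,\Gamma_\tau}$ (valid since $\Gamma_\tau\in\mathrm{Cpt}(\mathrm{Rat})$, as noted in the remark following Definition~\ref{d:sp}), together with the fact that $\pi_{\CCI}(\tilde J(f))\subset J(G_\tau)$ and that points of $\hat F^{\gamma,\Gamma_\tau}$ are carried into $\tilde F(f)$, one deduces that the forward orbit $\gamma_{n,1}(y)$ eventually enters $F(G_\tau)$: indeed if it never did, $(\sigma^n\gamma,\gamma_{n,1}(y))$ would stay in $\tilde J(f)$ for all $n$, contradicting $y\in\hat F_{\gamma,\Gamma_\tau}$. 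Taking ${\cal A}_y:=\{\gamma: y\notin\hat J_{\gamma,\Gamma_\tau}\}$ gives the claim with $\tilde\tau({\cal A}_y)=1$.

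For statement (3), I would show $J_{pt}^0(\tau)\subset\{y:\tilde\tau(\{\gamma:y\in\hat J_{\gamma,\Gamma_\tau}\})>0\}$ up to the negligible set already handled, or more directly: if $y$ is a point as in (2), then for $\tilde\tau$-a.e.\ $\gamma$ the orbit enters $F(G_\tau)$ and thereafter, by Theorem~\ref{t:mtauspec}-\ref{t:mtauspec1} applied to the semi-hyperbolic sub-dynamics (or by the hyperbolic-metric contraction argument), $\mathrm{diam}\,\gamma_{n,1}(B(y,\delta))\to 0$; this is exactly equicontinuity of $\{(M_\tau^\ast)^n\}$ at $\delta_y$ via the embedding $\Phi$ and the identity $M_\tau^\ast(\delta_y)(V)=\int\mu(g^{-1}(V))d\tau$, so $y\in F_{pt}^0(\tau)$. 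Hence $J_{pt}^0(\tau)$ is contained in a $\mathrm{Leb}_2$-null set, giving $\mathrm{Leb}_2(J_{pt}^0(\tau))=0$. Finally, for statement (4), I would use Proposition~\ref{p:chtinfty} / the martingale characterization of $T_{\infty,\tau}$: for $y$ as in (2), with probability one the orbit $\gamma_{n,1}(y)$ eventually lands in $F(G_\tau)$, and on each Fatou component $T_{\infty,\tau}$ is locally constant (Lemma~\ref{Tlem1}); an Egorov/uniformity argument over a positive-measure set of $\gamma$ plus continuity of $M_\tau$ then shows $T_{\infty,\tau}$ is continuous at $y$. Thus $T_{\infty,\tau}$ is continuous off a $\mathrm{Leb}_2$-null set. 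The only genuinely hard part is (1); (2)--(4) are Fubini plus the already-established structure theory. $\qed$
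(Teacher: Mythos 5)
Your overall architecture coincides with the paper's: everything reduces to showing $\mbox{Leb}_{2}(\hat{J}_{\gamma ,\Gamma _{\tau }})=0$ for $\tilde{\tau }$-a.e.\ $\gamma $, which is obtained by first proving $J_{\ker }(G_{\tau })\subset J_{\gamma }$ for \emph{every} $\gamma $ (Lemma~\ref{l:jkucuhjkj}) and then running the Lebesgue-density-point/Koebe argument of Lemma~\ref{l:jkejgcuh}; statements (2)--(4) then follow from Fubini via Lemmas~\ref{l:pctjjg} and \ref{l:mgjpt0} and from Lemma~\ref{lem:fpt0tconti}. One caveat on your description of (1): nothing is separated by a Jordan curve in $F(G_{\tau })$, the semigroup is not semi-hyperbolic on a ``complementary compact region,'' and the continuity of $\gamma \mapsto J_{\gamma }$ is neither available (since $G_{\tau }$ is not assumed semi-hyperbolic) nor used. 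What the unbounded-component hypothesis actually buys is the exclusion of non-constant limit functions along orbits accumulating on $J_{\ker }(G_{\tau })$: such a limit function would produce an open set $V$ with $\partial V\subset J(G_{\tau })\cap UH(G_{\tau })$ and $V\cap F_{\infty }(G_{\tau })=\emptyset $ containing the accumulation point, putting that point of $J_{\ker }(G_{\tau })$ into a bounded component of $\CC \setminus (UH(G_{\tau })\cap J(G_{\tau }))$ --- a contradiction. Disjointness of $J_{\ker }(G_{\tau })$ from $UH(G_{\tau })$ then supplies the uniform local degree bound needed for the Koebe step. Since you defer to the correct lemmas, this imprecision does not break the plan.

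The substantive problem is in your step (3). You invoke Theorem~\ref{t:mtauspec}-\ref{t:mtauspec1} (or the hyperbolic-metric contraction) to get $\mbox{diam}\,\gamma _{n,1}(B(y,\delta ))\rightarrow 0$ after the orbit enters $F(G_{\tau })$. Theorem~\ref{t:mtauspec} requires $J_{\ker }(G_{\tau })=\emptyset $, which is precisely what is \emph{not} assumed in Theorem~\ref{t:jkucuh}; the whole interest of this theorem is that it covers $J_{\ker }(G_{\tau })\neq \emptyset $, and in that case one cannot expect diameters to shrink on Fatou components. Fortunately, shrinking is not needed: once the orbit $\gamma _{n_{0},1}(y)$ lands, with probability at least $1-\epsilon $, in a fixed compact subset $K$ of $F(G_{\tau })$, plain equicontinuity of $G_{\tau }$ on $K$ (the definition of the Fatou set) already yields equicontinuity of $\{ M_{\tau }^{n}(\phi )\} _{n}$ at $y$, hence $y\in F_{pt}^{0}(\tau )$; this is exactly Lemma~\ref{FJmeaslem2}, applied after the Fubini step. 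With that substitution, (4) goes through as you say, since $T_{\infty ,\tau }=\lim _{n}M_{\tau }^{n}(\phi )$ pointwise and equicontinuity at $y$ passes continuity to the limit (Lemma~\ref{lem:fpt0tconti}).
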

\begin{rem}
\label{r:jkup}
Let $\tau \in {\frak M}_{1,c}({\cal P}). $ 
If $J_{\ker }(G_{\tau })$ is included in the unbounded component of 
$\CC \setminus (P(G_{\tau })\cap J(G_{\tau }))$, then 
 $J_{\ker }(G_{\tau })$ is included in the unbounded component of 
$\CC \setminus (UH(G_{\tau })\cap J(G_{\tau }))$ (see Remark~\ref{r:uhsp}).  
\end{rem}
%\begin{cor}
%\label{c:jkucp}
%Let $\tau \in {\frak M}_{1,c}({\cal P})$.   
%Suppose that  
%$J_{\ker }(G_{\tau })$ is included in the unbounded component of  
%$\CC \setminus (P(G_{\tau })\cap J(G_{\tau }))$.   
% Then, we have the following.
%\begin{enumerate}
%\item 
%For almost every 
%$\gamma \in X_{\tau }$ with respect to $\tilde{\tau }$, 
%{\em Leb}$_{2}(J_{\gamma })=\mbox{{\em Leb}}_{2}(\hat{J}_{\g ,\G _{\tau }})=0.$ 
%\item For almost every $y\in \CCI $ with respect to {\em Leb}$_{2}$ and for almost every 
%$\g \in X_{\tau }$ with respect to $\tilde{\tau }$, there exists an $n\in \NN $ 
%such that $\gamma _{n}\cdots \g _{1}(y)\in F(G_{\tau }).$ 
%\item {\em Leb}$_{2}(J_{pt}^{0}(\tau ))=0.$
%\item For almost every point $y\in \CCI $ with respect to {\em Leb}$_{2}$, 
%$T_{\infty ,\tau }$ is continuous at $y.$ 
%
%
%\end{enumerate}
%\end{cor}
\begin{rem}
\label{r:pjker}
Let $\tau \in {\frak M}_{1,c}({\cal P}).$ 
Suppose that for each $h\in \G _{\tau }$, 
$h$ is a real polynomial and each critical value of $h$ in $\CC $ belongs to $\RR .$ 
Suppose also that for each $z\in P(G_{\tau })\cap J(G_{\tau })$, 
there exists an element $g_{z}\in G_{\tau }$ such that 
$g_{z}(z)\in F(G_{\tau }).$ 
Then $J_{\ker }(G_{\tau })$ is included in the unbounded component of 
$\CC \setminus (UH(G_{\tau })\cap J(G_{\tau })).$ 
\end{rem}
\subsection{Conditions to be $J_{\ker }(G)=\emptyset $}
\label{Conjkeremp}
In this subsection, we present some sufficient conditions to be $J_{\ker }(G)=\emptyset .$ 
The proofs are given in subsection~\ref{pfConjkeremp}. 
 
The following is a natural question. 
\begin{ques}
When do we have that $J_{\ker}(G)=\emptyset $?
\end{ques}
We give several answers to this question. 
\begin{lem}
\label{l:ignejke}
Let $\Gamma $ be a subset of {\em Rat} such that the interior of $\Gamma $ with respect to the 
topology of {\em Rat} is not empty. 
Let $G=\langle \Gamma \rangle .$ Suppose that $F(G)\neq \emptyset .$ 
Then, $J_{\ker }(G)=\emptyset .$  
\end{lem}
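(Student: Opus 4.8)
The plan is to show directly that $J_{\ker}(G)=\bigcap_{g\in G}g^{-1}(J(G))=\emptyset$ by exploiting that $\Gamma$ has nonempty interior in $\text{Rat}$ while $F(G)$ is a nonempty open set. The key observation is that $J_{\ker}(G)$ is forward invariant under every element of $G$ (Remark~\ref{r:kjulia}(2)), so if it were nonempty it would contain a forward orbit; I will contradict this by producing, from a single point of $J_{\ker}(G)$, a map in $G$ sending that point into $F(G)$, which is impossible since $g^{-1}(F(G))\subset F(G)$ forces any point of $J_{\ker}(G)$ to have its entire forward $G$-orbit inside $J(G)$.

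First I would fix $z_0\in F(G)$ (possible since $F(G)\neq\emptyset$) and pick a small open ball $B=B(z_0,r)\subset F(G)$. Suppose for contradiction that there is a point $w\in J_{\ker}(G)$. Since $\Gamma$ has nonempty interior with respect to the topology of $\text{Rat}$ (the metric $\kappa$ of uniform spherical convergence), there is some $h_0\in\Gamma$ and $\varepsilon>0$ such that every $g\in\text{Rat}$ with $\kappa(g,h_0)<\varepsilon$ lies in $\Gamma$, hence in $G$. Now I want to perturb $h_0$ within this $\varepsilon$-ball to a map $g$ with $g(w)\in B$. This is the heart of the argument: the set $\{g(w)\mid g\in\text{Rat},\ \kappa(g,h_0)<\varepsilon\}$ contains a spherical neighborhood of $h_0(w)$ of radius roughly $\varepsilon$ — indeed, given a target point $p$ with $d(p,h_0(w))<\varepsilon$, one can post-compose $h_0$ with a Möbius transformation $\phi$ close to the identity with $\phi(h_0(w))=p$, and then $\phi\circ h_0$ is within $\varepsilon'$ of $h_0$ in the $\kappa$-metric for a suitable choice (being careful: post-composition with a near-identity Möbius map changes $\kappa$-distance in a controlled way uniformly on $\CCI$). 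Alternatively, and perhaps more cleanly, I would just use that for $w$ fixed the evaluation map $g\mapsto g(w)$ from a $\kappa$-neighborhood of $h_0$ is open onto a neighborhood of $h_0(w)$, which is classical. Iterating if necessary (first steer $h_0(w)$ anywhere, then note $B$ was an arbitrary ball in $F(G)$, or simply make $\varepsilon$ large enough is not allowed, so one may need to compose finitely many such perturbed maps to reach $B$ — but since $F(G)$ is open and nonempty and Möbius maps act transitively, a single perturbation of $h_0$ suffices once we are allowed to choose the target within distance $\varepsilon$ of $h_0(w)$, which is a fixed positive number, and $F(G)$ being open we can route through it... actually the cleanest route: choose the perturbation so that $g(w)\in F(G)$, which only requires $F(G)$ to be a nonempty open set and $d(g(w),h_0(w))$ controllable, but $h_0(w)$ may be far from $F(G)$; so instead compose: there exist $g_1,\dots,g_k$ each a small perturbation, with $g_k\cdots g_1(w)\in F(G)$, using that the perturbations can nudge the orbit and eventually any orbit point can be nudged into the fixed open set $F(G)$ — here one genuinely uses openness of $F(G)$ together with the fact that at each step we have an $\varepsilon$-ball of freedom).

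Once such $g=g_k\circ\cdots\circ g_1\in G$ with $g(w)\in F(G)$ is produced, we reach the contradiction: by Lemma~\ref{ocminvlem} applied to $G\subset\OCM(\CCI)$ (rational semigroups lie in $\OCMX$), $g^{-1}(J(G))\subset J(G)$, equivalently $g(F(G))\subset F(G)$ is false in general but what we need is $w\in J_{\ker}(G)\subset g^{-1}(J(G))$ gives $g(w)\in J(G)$, contradicting $g(w)\in F(G)$. Hence $J_{\ker}(G)=\emptyset$.

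The main obstacle is the perturbation/steering step: making rigorous that, starting from any $w\in\CCI$, one can compose finitely many maps each lying in a fixed $\kappa$-neighborhood of $h_0$ so as to land in the prescribed nonempty open set $F(G)$. The clean way is Montel-type: since $J_{\ker}(G)$ is forward $G$-invariant and $G$ contains an open set of rational maps, the orbit $\{g(w): g\in G\}$ of any $w\in J_{\ker}(G)$ would be contained in $J(G)$, yet by perturbing we can make this orbit omit at most... — better, we can directly show the family $\{g|_U : g\in G\}$ is not normal fails nowhere, i.e. we show every $w$ has a neighborhood on which some perturbed iterates are equicontinuous, forcing $w\in F(G)$; concretely, because the maps $g$ near $h_0\circ(\text{anything in }G)$ sweep out a whole open set of values, $\overline{G(w)}$ has nonempty interior, so $\overline{G(w)}\cap F(G)\neq\emptyset$ would need $F(G)=\emptyset$ — wait, that is not immediate either. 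I expect the actual proof (Lemma~\ref{l:aigpjke} and its specialization here) packages this via an ``admissible subset'' argument; in this sketch I would cite that mechanism: an open subset of $\text{Rat}$ is admissible, Montel's theorem then shows $g^{-1}(F(G))$ cannot be everywhere inside $J(G)$, giving $J_{\ker}(G)=\emptyset$. So the one genuinely delicate point to nail down is the openness of $g\mapsto g(w)$ on a $\kappa$-ball and the finite composition steering into the open set $F(G)$; everything else is bookkeeping with Lemma~\ref{ocminvlem} and Remark~\ref{r:kjulia}.
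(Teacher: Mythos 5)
The weight of your argument rests on the ``steering'' step --- composing finitely many $\kappa $-small perturbations of $h_{0}$ so that the orbit of $w$ lands in the fixed open set $F(G)$ --- and that step is both unjustified and, as you yourself half-suspect, not how the proof goes. At each stage you only control the image point within the set of values reachable by maps $\kappa $-close to $h_{0}$ applied to wherever the previous stage left you; nothing forces this reachable set ever to meet $F(G)$, and you supply no mechanism for it. You explicitly leave this ``genuinely delicate point'' open, and your fallback (citing admissibility and Lemma~\ref{l:aigpjke}) is circular here, since that lemma is itself proved by the very argument you are missing.

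The observation you brush past in your last paragraph is the entire proof, and it makes steering unnecessary. Fix $w\in J_{\ker }(G)$ and $h_{0}$ interior to $\Gamma .$ A single holomorphic family $\{ g_{\lambda }\} $ inside that $\kappa $-ball with $\lambda \mapsto g_{\lambda }(w)$ nonconstant (perturb a coefficient of $h_{0}$; this is Lemma~\ref{l:gignejke}) gives, by the open mapping theorem, a nonempty open set $U\subset \{ g(w)\mid g\in \Gamma \} .$ Forward invariance of the kernel Julia set (Remark~\ref{r:kjulia}) puts $U\subset J_{\ker }(G)$, so $\mbox{int}(J_{\ker }(G))\neq \emptyset .$ But then every $g\in G$ maps $U$ into $J(G)$, i.e.\ the family $G|_{U}$ omits the nonempty open set $F(G)$ (more than three points), so Montel's theorem makes $G$ normal on $U$ and hence $U\subset F(G)$, contradicting $U\subset J(G).$ This is exactly the content of Remark~\ref{r:kjulia}(3): $F(G)\neq \emptyset $ forces $\mbox{int}(J_{\ker }(G))=\emptyset .$ In short, the contradiction comes from the openness of the set of first-step orbit points itself, not from manufacturing a single $g\in G$ with $g(w)\in F(G).$
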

\begin{df}
Let $\Lambda $ be a finite dimensional complex manifold and 
let $\{ g_{\lambda }\} _{\lambda \in \Lambda }$ be a 
family of rational maps on $\CCI .$ We say that 
$\{ g_{\lambda }\} _{\lambda \in \Lambda }$ is 
a holomorphic family of rational maps if 
the map $(z,\lambda )\in \CCI \times \Lambda \mapsto g_{\lambda }(z)\in \CCI $ is 
holomorphic on $\CCI \times \Lambda .$ We say that 
 $\{ g_{\lambda }\} _{\lambda \in \Lambda }$ is 
a holomorphic family of polynomials if $\{ g_{\lambda }\} _{\lambda \in \Lambda }$ is 
a holomorphic family of rational maps and each $g_{\lambda }$ is a polynomial.   
\end{df}
\begin{df}
\label{d:adm} 
Let ${\cal Y}$ be a subset of ${\cal P}.$ 
\begin{enumerate}
\item 
We say that ${\cal Y}$ is {\em admissible} if 
for each $z_{0}\in \CC $ there exists a holomorphic family of 
polynomials $\{ g_{\lambda }\} _{\lambda \in \Lambda }$ such that 
$\{ g_{\lambda }\mid \lambda \in \Lambda \} \subset {\cal Y}$ and 
the map $\lambda \mapsto g_{\lambda }(z_{0})$ is nonconstant in $\Lambda .$ 
\item We say that ${\cal Y}$ is {\em strongly admissible} if 
for each $(z_{0},h_{0})\in \CC \times {\cal Y}$ there exists a
 holomorphic family $\{ g_{\lambda }\} _{\lambda \in \Lambda }$ of polynomials and a point 
 $\lambda _{0}\in \Lambda $ 
 such that $\{ g_{\lambda }\mid \lambda \in \Lambda \} \subset {\cal Y} ,$ 
$g_{\lambda _{0}}=h_{0},$ and the map $\lambda \mapsto g_{\lambda }(z_{0})\in \CC $ 
is nonconstant in any neighborhood of $\lambda _{0}$ in $\Lambda .$

\end{enumerate}
\end{df}
\begin{ex}\ 
\begin{enumerate}
\item Let ${\cal Y}$ be a strongly admissible subset of ${\cal P}.$ 
Let ${\cal Y}$ be endowed with the relative topology from ${\cal P}.$
If $\G $ is a non-empty  open subset  of ${\cal Y}$, then 
$\G $ is strongly admissible. 
If $\G '$ is a subset of ${\cal Y}$ such that the interior of $\G '$ 
in ${\cal Y}$ is not empty, then $\G '$ is admissible. 
\item ${\cal P}$ is strongly admissible. If $\Gamma $ is a subset of ${\cal P}$ such that 
the interior of $\Gamma $ in ${\cal P}$ is not empty, then 
$\G $ is admissible. 
\item 
For a fixed $h_{0}\in {\cal P},  
{\cal Y}:=\{ h_{0}+c\mid c\in \CC \} $ is a strongly admissible closed subset of ${\cal P}.$  
If $\G $ is a subset of ${\cal Y}$ such that the interior of $\G $ in ${\cal Y}$ is not empty, 
then $\G $ is admissible. 
\end{enumerate}
\end{ex}
\begin{lem}
\label{l:aigpjke}
%Let ${\cal Y}$ be an admissible complex submanifold of an open subset of ${\cal P}.$ 
Let $\Gamma $ be a relative compact admissible subset of ${\cal P}.$  
Let $G=\langle \G \rangle .$  
Then, $J_{\ker }(G)=\emptyset .$ 
\end{lem}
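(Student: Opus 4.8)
\textbf{Proof proposal for Lemma~\ref{l:aigpjke}.}
The plan is to show directly that no point $z_0\in\CC$ can lie in $J_{\ker}(G)$ by exploiting the forward invariance of the kernel Julia set together with Montel's theorem. First I would recall from Remark~\ref{r:kjulia}(2) that $J_{\ker}(G)$ is forward invariant: $h(J_{\ker}(G))\subset J_{\ker}(G)$ for every $h\in G$. Since $\Gamma$ is admissible, fix an arbitrary $z_0\in\CC$ and take a holomorphic family of polynomials $\{g_\lambda\}_{\lambda\in\Lambda}$ with $\{g_\lambda\mid\lambda\in\Lambda\}\subset\Gamma$ and $\lambda\mapsto g_\lambda(z_0)$ nonconstant. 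The point is that if $z_0\in J_{\ker}(G)$, then $g_\lambda(z_0)\in J_{\ker}(G)\subset J(G)$ for every $\lambda\in\Lambda$, so the nonconstant holomorphic map $\lambda\mapsto g_\lambda(z_0)$ takes all its values in $J(G)$; hence $J(G)$ has nonempty interior, i.e. $\mathrm{int}(J(G))\neq\emptyset$.

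Next I would derive a contradiction from $\mathrm{int}(J(G))\neq\emptyset$ using that $\Gamma$ is relatively compact. Because $\Gamma$ is relatively compact in ${\cal P}$ (hence in $\mathrm{Rat}_+$), one checks that the generated semigroup $G=\langle\Gamma\rangle$ has $F(G)\neq\emptyset$: indeed, every element of $G$ is a polynomial of degree $\geq 2$ whose coefficients stay in a bounded region, so there is a common neighborhood of $\infty$ on which all of $G$ is uniformly attracted to $\infty$ (the standard escaping-neighborhood argument: there is $R>0$ so that $|g(z)|>|z|$ for all $|z|\geq R$ and all $g\in\overline{\Gamma}$, whence $\{g^n\}$ escapes locally uniformly near $\infty$). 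Thus $\infty\in F(G)$, so $F(G)\neq\emptyset$. By Remark~\ref{r:kjulia}(3), if $F(G)\neq\emptyset$ then $\mathrm{int}(J_{\ker}(G))=\emptyset$; and in fact the open subset $g_\lambda(z_0)$-image above lies in $J_{\ker}(G)$, not merely in $J(G)$, so we already have $\mathrm{int}(J_{\ker}(G))\neq\emptyset$, contradicting Remark~\ref{r:kjulia}(3). Hence $z_0\notin J_{\ker}(G)$; since $z_0\in\CC$ was arbitrary, $J_{\ker}(G)\subset\{\infty\}$, and since $\infty\in F(G)\subset F(G_\tau)$ we conclude $J_{\ker}(G)\cap\{\infty\}=\emptyset$, so $J_{\ker}(G)=\emptyset$.

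Let me make the image-in-$J_{\ker}(G)$ step precise, since that is where the forward invariance of the \emph{kernel} Julia set (as opposed to $J(G)$ itself) does the real work. If $z_0\in J_{\ker}(G)=\bigcap_{g\in G}g^{-1}(J(G))$, then in particular $z_0\in g^{-1}(J(G))$ for all $g$, but more importantly $h(z_0)\in J_{\ker}(G)$ for every $h\in G$ by the computation $g(h(z_0))\in J(G)$ for all $g\in G^{\ast}$ (using $gh\in G$ and $g\in G$ when $g=\mathrm{Id}$ is handled by $h(z_0)=h(z_0)\in g_\lambda^{-1}(\cdots)$, or simply: the argument only needs $h(z_0)\in J(G)$, which follows from $z_0\in h^{-1}(J(G))$). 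Taking $h=g_\lambda$ ranging over the family, the set $\{g_\lambda(z_0)\mid\lambda\in\Lambda\}$ is contained in $J_{\ker}(G)$ and, being the image of a connected complex manifold under a nonconstant holomorphic map, has nonempty interior in $\CCI$. This is the contradiction.

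The main obstacle I anticipate is organizing the logical quantifiers cleanly: admissibility gives, for \emph{each} $z_0$, \emph{some} holomorphic family, and one must be careful that the nonconstancy of $\lambda\mapsto g_\lambda(z_0)$ genuinely forces an open image (the nonconstant image of a finite-dimensional complex manifold under a holomorphic map is open in $\CCI$ by the open mapping theorem applied along a suitable one-parameter slice through a point where the derivative is nonzero). A secondary technical point is verifying $F(G)\neq\emptyset$ from relative compactness of $\Gamma\subset{\cal P}$; this is routine but must be stated, since Remark~\ref{r:kjulia}(3) is invoked. Once these two points are in place, the proof is short. I would also remark that this argument reproves Lemma~\ref{l:ignejke} as the special case where $\Gamma$ has nonempty interior in ${\cal P}$ (then one may take the family to be translations $g_0+c$, which are admissible), modulo the hypothesis $F(G)\neq\emptyset$ that there was assumed and here is automatic.
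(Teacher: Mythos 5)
Your proof is correct and is essentially the paper's own argument: the paper deduces $\infty\in F(G)$ from relative compactness and then invokes Lemma~\ref{l:gignejke}, whose proof is precisely your combination of forward invariance of $J_{\ker}(G)$, the open image of the nonconstant holomorphic family $\lambda\mapsto g_{\lambda}(z_{0})$, and Remark~\ref{r:kjulia}(3). The only difference is that you inline that lemma rather than cite it.
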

\begin{prop}
\label{p:v1v2rho}
Let ${\cal Y}$ be a closed subset of an open subset of ${\cal P}.$ Suppose that ${\cal Y}$ is 
strongly admissible. 
Let $\tau \in {\frak M}_{1,c}({\cal Y})$. 
Let $V_{1}$ be any neighborhood of $\tau $ in ${\frak M}_{1}({\cal Y})$ and 
$V_{2}$ be any neighborhood of $\Gamma _{\tau }$ in {\em Cpt}$(\CCI ).$ 
Then, there exists an element $\rho \in {\frak M}_{1}({\cal Y})$ such that 
$\rho \in V_{1}$, $\Gamma _{\rho }\in V_{2}$, $\sharp \Gamma _{\rho }<\infty $, 
and $J_{\ker }(G_{\rho })=\emptyset .$ 
\end{prop}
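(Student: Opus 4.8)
The plan is to approximate $\tau$ weakly by a finitely supported measure in the usual way, and then to ``perturb'' each atom slightly, using strong admissibility, so that the generated semigroup acquires an interior point in ${\cal P}$ (or at least contains an admissible subset), after which $J_{\ker}(G_\rho)=\emptyset$ follows from Lemma~\ref{l:ignejke} or Lemma~\ref{l:aigpjke}. First I would fix a metric $d_0$ for the weak topology on ${\frak M}_1({\cal Y})$ as in Definition~\ref{d:d0}, and the Hausdorff metric on $\Cpt(\CCI)$ (equivalently $\Cpt({\cal P})$ near $\Gamma_\tau$, since $\Gamma_\tau$ is compact in ${\cal Y}\subset{\cal P}$). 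Given $V_1, V_2$, choose $\varepsilon>0$ so that the $\varepsilon$-ball around $\tau$ lies in $V_1$ and the $\varepsilon$-neighborhood (Hausdorff) of $\Gamma_\tau$ lies in $V_2$. Since $\Gamma_\tau=\suppt$ is compact, cover it by finitely many balls $B(h_i,\varepsilon/2)$, $i=1,\dots,N$, with $h_i\in\Gamma_\tau$, and let $\{E_i\}$ be the associated Borel partition ($E_1=B(h_1,\varepsilon/2)\cap\Gamma_\tau$, $E_i=(B(h_i,\varepsilon/2)\setminus\bigcup_{k<i}B(h_k,\varepsilon/2))\cap\Gamma_\tau$), discarding empty pieces. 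Set $p_i:=\tau(E_i)>0$, so $\sum p_i=1$.

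Next I would promote the naive approximant $\sum_i p_i\delta_{h_i}$ to one whose support is ``admissible''. Because ${\cal Y}$ is strongly admissible, for each $i$ pick (using Definition~\ref{d:adm}(2), with $z_0$ an arbitrary fixed point of $\CC$ and $h_0=h_i$) a holomorphic family $\{g^{(i)}_\lambda\}_{\lambda\in\Lambda_i}\subset{\cal Y}$ with $g^{(i)}_{\lambda_0^{(i)}}=h_i$ and $\lambda\mapsto g^{(i)}_\lambda(z_0)$ nonconstant near $\lambda_0^{(i)}$. By continuity of $\lambda\mapsto g^{(i)}_\lambda$ into $({\cal Y},\kappa)$, there is a small $\delta_i>0$ such that $\kappa(g^{(i)}_\lambda,h_i)<\varepsilon/2$ whenever $|\lambda-\lambda_0^{(i)}|<\delta_i$; choose a finite set $\Lambda_i'\subset\{|\lambda-\lambda_0^{(i)}|<\delta_i\}$ large enough that $\{g^{(i)}_\lambda\mid\lambda\in\Lambda_i'\}$ already forces nonconstancy of $\lambda\mapsto g^{(i)}_\lambda(z_0)$ — actually I only need that the finite family generates enough maps; more robustly, I replace each atom $h_i$ by a small cluster of points $\{g^{(i)}_\lambda\}_{\lambda\in\Lambda_i'}$ all within $\varepsilon/2$ of $h_i$, distribute the mass $p_i$ among them (say uniformly), and call the resulting measure $\rho$. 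Then $\sharp\Gamma_\rho<\infty$; $\Gamma_\rho$ is within Hausdorff distance $\varepsilon$ of $\Gamma_\tau$ (each new point is $\varepsilon/2$-close to some $h_i\in\Gamma_\tau$, and each $h_i$ is $\varepsilon/2$-close to — in fact equals, up to the family — one of the new points), so $\Gamma_\rho\in V_2$; and $\rho$ is $d_0$-close to $\tau$ because each test function $\phi_j$ is uniformly continuous on the compact ${\cal Y}$, giving $|\int\phi_j d\rho-\int\phi_j d\tau|\le \sup_{\kappa(f,g)<\varepsilon}|\phi_j(f)-\phi_j(g)| + (\text{error from }p_i\text{ vs }\tau(E_i))$, which can be made $<\varepsilon$ by shrinking $\varepsilon$ and refining the partition at the outset. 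Hence $\rho\in V_1$.

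Finally, to get $J_{\ker}(G_\rho)=\emptyset$ I would arrange that $\Gamma_\rho$ contains an admissible subset of ${\cal P}$: by the definition, I need that for each $z_0\in\CC$ some holomorphic subfamily sitting inside $\Gamma_\rho$ moves $z_0$. The simplest route is to observe that the construction above already selected, for the one fixed $z_0$, a family $\{g^{(i)}_\lambda\}$ moving it; but admissibility demands this for \emph{every} $z_0\in\CC$. Here is where the main obstacle lies, and I would handle it by instead invoking Lemma~\ref{l:aigpjke} via a uniform statement: since ${\cal Y}$ is strongly admissible and $\Gamma_\tau$ compact, a finite perturbation can be chosen so that $\langle\Gamma_\rho\rangle$ contains, for a single well-chosen $z_0$ lying in $J(G_\tau)$ (nonempty, else $J_{\ker}=\emptyset$ already), a nonconstant holomorphic-family motion; then forward-invariance of $J_{\ker}(G_\rho)$ under $G_\rho$ together with Montel — exactly the mechanism cited in the Introduction for Lemma~\ref{l:aigpjke} — forces $J_{\ker}(G_\rho)=\emptyset$. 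Concretely: if $z_0\in J_{\ker}(G_\rho)$, then $G_\rho(z_0)\subset J(G_\rho)$ omits (at least) the points of $F(G_\rho)\neq\emptyset$, so $\{g|_{U}\}_{g\in G_\rho}$ is normal on a neighborhood $U$ of $z_0$ by Montel; but the holomorphic family $\{g_\lambda\}\subset\Gamma_\rho$ with $\lambda\mapsto g_\lambda(z_0)$ nonconstant, composed with itself, produces a non-normal family near $z_0$ (standard argument: the family $\{g_\lambda^n\}$ or finite compositions escapes any three-point omission), a contradiction. So $J_{\ker}(G_\rho)=\emptyset$. The delicate point throughout is bookkeeping the three simultaneous smallness requirements ($d_0$-closeness of measures, Hausdorff-closeness of supports, finiteness of support) against the freedom needed for strong admissibility; I expect that to be routine once the partition is chosen fine enough at the start, but it is the step that requires care.
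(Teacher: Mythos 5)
Your construction of the finitely supported approximant $\rho$ and the bookkeeping of the two closeness conditions are fine and match the first step of the paper's argument. The genuine gap is in the last paragraph, where you try to conclude $J_{\ker}(G_{\rho})=\emptyset$. Your ``concrete'' argument invokes ``the holomorphic family $\{g_{\lambda}\}\subset\Gamma_{\rho}$ with $\lambda\mapsto g_{\lambda}(z_{0})$ nonconstant'' --- but $\Gamma_{\rho}$ is a \emph{finite} set, and a nonconstant holomorphic family is parametrized by a complex manifold, hence uncountable; no finite set of maps can contain one. Replacing each atom by a finite cluster of points sampled from a holomorphic family destroys exactly the structure that Lemmas~\ref{l:ignejke} and \ref{l:aigpjke} exploit (and, incidentally, the mechanism there is not a normality argument about iterates: it is that the forward-invariant set $J_{\ker}(G)$ would contain the nonempty open set $\{g_{\lambda}(z_{0})\mid\lambda\in\Lambda\}$, contradicting $\mbox{int}(J_{\ker}(G))=\emptyset$ when $F(G)\neq\emptyset$, cf.\ Remark~\ref{r:kjulia} and the proof of Lemma~\ref{l:gignejke}). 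Also, you would need the family at a point of $J_{\ker}(G_{\rho})$, whose location you cannot fix in advance, not at a preselected $z_{0}\in J(G_{\tau})$.

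What is missing is a finiteness-extraction step, which is how the paper closes this gap (Lemma~\ref{l:rgnbdhjkef}). The paper first fattens a \emph{single} atom $h_{1}$ of the finitely supported approximant to a compact neighborhood $U_{1}$ of $h_{1}$ in ${\cal Y}$; the set $\Lambda_{1}=U_{1}\cup\{h_{2},\ldots,h_{r}\}$ is still Hausdorff-close to $\Gamma_{\tau}$, and since $U_{1}$ is a neighborhood in the strongly admissible set ${\cal Y}$, it contains for every $z$ a nonconstant holomorphic family, so $J_{\ker}(\langle\Lambda_{1}\rangle)=\emptyset$ by Lemma~\ref{l:gignejke}. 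The key observation is then that $J_{\ker}(\langle\Lambda_{1}\rangle)=\emptyset$ is witnessed by \emph{finitely many} semigroup elements $g_{1},\ldots,g_{r}$ with $\bigcap_{j}g_{j}^{-1}(J(\langle\Lambda_{1}\rangle))=\emptyset$ (a compactness argument on $J$), each $g_{j}$ a finite word in the generators; the finitely many generators occurring in these words form a finite set $\Lambda_{2}\subset\Lambda_{1}$ with $J_{\ker}(\langle\Lambda_{2}\rangle)\subset\bigcap_{j}g_{j}^{-1}(J(\langle\Lambda_{1}\rangle))=\emptyset$, using $J(\langle\Lambda_{2}\rangle)\subset J(\langle\Lambda_{1}\rangle)$. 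Distributing the mass $p_{1}$ over $\Lambda_{2}\cap U_{1}$ then yields $\rho$. Without some version of this ``emptiness of the kernel Julia set is a finitely witnessed condition'' step, your finite perturbation cannot be shown to have empty kernel Julia set.
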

\begin{rem}[Cooperation Principle III]
\label{r:CPIII}
By Lemma~\ref{l:aigpjke}, Proposition~\ref{p:v1v2rho}, Theorems~\ref{kerJthm1}, \ref{t:mtauspec}, 
we can state that for most $\tau \in {\frak M}_{1,c}({\cal P})$, 
the chaos of the averaged system of the Markov process induced by $\tau $ disappears. 
In the subsequent paper \cite{Sprep}, we investigate the further detail regarding  this result.  
Some results of \cite{Sprep} are announced in \cite{Skokyu10}. 
\end{rem}
\begin{ex}
\label{ex:kead}
Let $\tau \in {\frak M}_{1,c}({\cal P})$ be such that $\Gamma _{\tau }$ is admissible. 
Suppose that there exists an element $h\in \Gamma _{\tau }$ with int$(K(h))=\emptyset .$ 
Then $\hat{K}(G_{\tau })=\emptyset $ and the statements in Theorem~\ref{Tequals1thm1} hold.  
For, if $\hat{K}(G_{\tau })\neq \emptyset $, 
then since $\Gamma _{\tau }$ is admissible and since $G_{\tau }(\hat{K}(G_{\tau }))\subset \hat{K}(G_{\tau })$, 
we have int$(\hat{K}(G_{\tau }))\neq \emptyset .$ However, since int$(K(h))=\emptyset $, 
this is a contradiction. Thus $\hat{K}(G_{\tau })=\emptyset .$ 

From the above argument, 
we obtain many examples of $\tau \in {\frak M}_{1,c}({\cal P})$ such that 
$\hat{K}(G_{\tau })=\emptyset .$ 
For example, if $h(z)=z^{2}+c$ belongs to the boundary of the Mandelbrot set and 
$\Gamma _{\tau }$ contains a neighborhood of $h$ in the $c$-plane, 
then from the above argument, $\hat{K}(G_{\tau })=\emptyset $ and the 
statements in Theorem~\ref{Tequals1thm1} hold. 
Thus the above argument generalizes 
\cite[Theorem 2.2]{BBR} and 
a statement in \cite[Theorem 2.4]{Br1}.  
\end{ex}
%\begin{thm}
%\label{gdisjker}
%Let $G$ be a polynomial semigroup generated by a subset of ${\cal P}.$ 
%Suppose that $P^{\ast }(G)$ is bounded in $\CC $ and 
%$J(G)$ is disconnected. Then $J_{\ker }(G)=\emptyset $ and 
%$T_{\infty ,\tau }\not\equiv 1$ for each 
%$\tau \in {\frak M}_{1}({\cal P} ) $ with $G_{\tau }=G.$ 
%\end{thm}
\subsection{Mean stability}
\label{Almostst}
In this subsection, we introduce mean stable rational semigroups, and 
we present some results on mean stability. 
The proofs are given in subsection~\ref{pfAlmostst}. 
\begin{df}
\label{d:as}
Let $Y$ be a compact metric space and let $\Gamma \in \Cpt(\CMX).$ 
Let $G=\langle \Gamma \rangle .$ 
We say that $G$ is {\bf mean stable} if 
there exist non-empty open subsets $U,V$ of $F(G)$ and a number $n\in \NN $ 
such that all of the following hold.
\begin{itemize}
\item[(1)]
$\overline{V}\subset U$ and $\overline{U}\subset F(G).$ 
\item[(2)]
For each $\gamma \in \Gamma ^{\NN }$, 
$\gamma _{n,1}(\overline{U})\subset V.$ 
\item[(3)]
For each point $z\in Y$, there exists an element 
$g\in G$ such that  
$g(z)\in U.$ 
\end{itemize} 
Note that this definition does not depend on the choice of a compact set $\Gamma $ which generates 
$G.$  Moreover, for a $\Gamma \in \Cpt(\CMX)$, 
we say that $\Gamma $ is mean stable if $\langle \Gamma \rangle $ is mean stable. 
Furthermore, for a $\tau \in {\frak M}_{1,c}(\CMX)$, we say that $\tau $ is mean stable if $G_{\tau }$ is mean stable. 
\end{df}
\begin{rem}
\label{r:asjkere}
It is easy to see that if $G$ is mean stable, then $J_{\ker }(G)=\emptyset .$ 
\end{rem}
By Montel's theorem, it is easy to see that the following lemma holds.
\begin{lem}
\label{l:asnbd}
Let $\Gamma \in \emCpt(\emRat)$ be mean stable. Suppose 
$\sharp (\CCI \setminus V)\geq 3$, where $V$ is the open set coming from Definition~\ref{d:as}. 
%$\sharp J(\langle \G \rangle )\geq 3.$ 
Then there exists a neighborhood ${\cal U}$ of $\Gamma $ in $\emCpt(\emRat)$ with respect to 
the Hausdorff metric such that 
 each $\Gamma '\in {\cal U}$ is mean stable. 
\end{lem}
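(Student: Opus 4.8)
The plan is to show that mean stability of $\Gamma$ is detected by a \emph{finite} amount of data (a finite subword of the skew product, together with an open cover condition), and then use that each of these finitely many conditions is stable under Hausdorff perturbation of $\Gamma$. First I would fix the open sets $U,V\subset F(G)$ and the number $n\in\NN$ coming from Definition~\ref{d:as}. Since $\overline{V}\subset U$, $\overline{U}\subset F(G)$, and the map $(\gamma,z)\mapsto \gamma_{n,1}(z)$ on $\Gamma^{\NN}\times\CCI$ depends continuously on $(\gamma_1,\dots,\gamma_n)\in\Gamma^n$ (functional composition is continuous on $\mathrm{Rat}$ in the metric $\kappa$), condition~(2) "$\gamma_{n,1}(\overline{U})\subset V$ for all $\gamma$" is really a condition on the compact set $\Gamma^n\subset\mathrm{Rat}^n$ of length-$n$ words; by compactness of $\overline{U}$ and openness of $V$ there is an $\varepsilon>0$ such that $g_n\circ\cdots\circ g_1(\overline{U})\subset V$ whenever each $g_i$ is within $\varepsilon$ (in $\kappa$) of some element of $\Gamma$. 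This gives robustness of (2).

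Next I would handle condition~(3), the "attracting from everywhere" clause: for each $z\in\CCI$ there is $g\in G$ with $g(z)\in U$. The key observation is that (2) already forces the orbit of every point to eventually enter $V$: more precisely, by iterating (2), for each $\gamma\in\Gamma^{\NN}$ we get $\gamma_{kn,1}(\overline{U})\subset V\subset U$ for all $k\geq 1$, so $U$ is "mapped into itself along every branch after $n$ steps," and combined with (3) this means $\bigcup_{m\geq 0}\{z : \exists g\in G, g(z)\in U\}=\CCI$. Here is where I would invoke Montel / the hypothesis $\sharp(\CCI\setminus V)\geq 3$: since $V$ omits at least three points and $\Gamma'$ is a small perturbation, the family of all branch compositions of length a multiple of $n$ restricted to a neighborhood of any given point is still normal once some iterate lands near $V$, so the "swallowing" behaviour persists. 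Concretely, I would argue that (3) can be replaced by a compactness statement: there is a finite $m_0$ such that for every $z\in\CCI$ there is a word $g$ of length $\leq m_0$ in $\Gamma$ with $g(z)\in U'$, where $U'\Subset U$ is a slightly smaller open set with $\overline{U'}\subset U$; this uses compactness of $\CCI$ and openness of $U$ together with the fact that $G(z)$ accumulates on $U$. This finite-word, finite-length reformulation is again stable under $\varepsilon$-perturbation of the generators, for the same continuity reason as before.

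Having reduced mean stability of $\Gamma$ to finitely many open conditions on finitely many words, the conclusion follows: choose $\varepsilon>0$ small enough that all of these conditions (the robust form of (2), and the finite-length form of (3), with the slightly shrunken sets $U',V'$ satisfying $\overline{V'}\subset U'$, $\overline{U'}\subset F(G)\subset F(G')$ — the last inclusion because adding a Fatou-set condition and $F(G')\supset$ a neighborhood of $\overline{U'}$ follows from equicontinuity of $G$ on $\overline{U'}$ surviving small perturbations, again by Montel using $\sharp(\CCI\setminus V)\ge3$) hold for every $\Gamma'$ in the $\varepsilon$-Hausdorff neighborhood $\cal U$ of $\Gamma$. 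Then each such $\Gamma'$ is mean stable with the same data $U',V',n$ (and $m_0$ absorbed into a possibly larger $n$, since one may always replace $n$ by any larger multiple).

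The main obstacle I anticipate is condition~(3): unlike (2), it is not a priori a finite or closed condition, because "$\exists g\in G$" quantifies over an infinite semigroup, and a perturbation could in principle destroy the particular elements $g$ used to bring each $z$ into $U$. The crux is therefore to show that once (1) and (2) hold, (3) is equivalent to a uniform, finite-length version — i.e.\ that there is a single $m_0$ working for all $z$ simultaneously, and that the target $U$ can be shrunk to $U'\Subset U$ — which is precisely the step where compactness of $\CCI$, the inclusion $\overline{V}\subset U$, and the three-omitted-points hypothesis (to get normality/Montel) all enter. I expect this uniformization-of-(3) step to carry essentially the whole proof; once it is in place, the perturbation stability is a routine $\varepsilon$-argument from continuity of composition on $\mathrm{Rat}$.
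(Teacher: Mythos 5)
Your proposal is correct and is essentially the argument the paper has in mind: the paper offers no written proof beyond the remark that the lemma follows ``by Montel's theorem,'' and your reduction of mean stability to finitely many open conditions on finitely many words (robust form of condition (2) via compactness of $\Gamma ^{n}$, a finite open cover uniformizing condition (3), and Montel applied to the words of length a multiple of $n$ -- which all map $\overline{U}$ into $V$ with $\sharp (\CCI \setminus V)\geq 3$ -- to get $\overline{U'}\subset F(G')$) supplies exactly the missing details. The only imprecision is that Montel is not needed for condition (3), which is pure compactness; its genuine role is the one you state at the end, namely securing the Fatou-set inclusion in condition (1) for the perturbed semigroup.
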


\begin{prop}
\label{p:hyppjke}
Let $\G \in \mbox{{\em Cpt}}({\emRatp})$.   
Suppose that $J_{\ker }(\langle \G \rangle)=\emptyset $ and $\langle \G\rangle $ is semi-hyperbolic. 
Then there exists an open neighborhood $U$ of $\G $ in 
$ \mbox{{\em Cpt}}(\mbox{{\em Rat}})$ such that for each 
$\G '\in U$, $\G '$ is mean stable and 
$J_{\ker}(\langle \G '\rangle )=\emptyset .$  
\end{prop}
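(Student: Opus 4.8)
The plan is to first upgrade the hypotheses ``$J_{\ker}(\langle\Gamma\rangle)=\emptyset$ and $\langle\Gamma\rangle$ semi-hyperbolic'' into the (stronger-looking but in this setting available) statement that $\langle\Gamma\rangle$ itself is mean stable, and then invoke Lemma~\ref{l:asnbd} to get stability of this property under Hausdorff-small perturbation. So the first and main step is: \emph{if $G=\langle\Gamma\rangle$ is semi-hyperbolic with $\Gamma\in\Cpt(\Ratp)$ and $J_{\ker}(G)=\emptyset$, then $G$ is mean stable.} To produce the data $(U,V,n)$ required by Definition~\ref{d:as}, I would work with the skew product $f:\Gamma^{\NN}\times\CCI\to\Gamma^{\NN}\times\CCI$ and its Julia set $\tilde J(f)$. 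Since $G$ is semi-hyperbolic, the results of \cite{S4,S7} quoted in the excerpt give that $\gamma\mapsto J_\gamma$ is continuous on $\Gamma^{\NN}$ and that one has good expansion/contraction control near $\tilde J(f)$; in particular $\pi_{\CCI}(\tilde J(f))\subset J(G)$ and, because $J_{\ker}(G)=\emptyset$, for every $\gamma$ the set $J_\gamma$ is ``small'' in the sense that points can be pushed off it. Concretely, the hypothesis $J_{\ker}(G)=\bigcap_{g\in G}g^{-1}(J(G))=\emptyset$ means: for every $z\in\CCI$ there is $g\in G$ with $g(z)\in F(G)$. This already gives condition (3) of Definition~\ref{d:as} once $U$ is chosen to be (a suitable forward-invariant-up-to-$V$) open subset of $F(G)$.

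The heart of step one is producing $U,V$ and $n$ with $\overline V\subset U\subset\overline U\subset F(G)$ and $\gamma_{n,1}(\overline U)\subset V$ for all $\gamma\in\Gamma^{\NN}$. For this I would argue by compactness and a hyperbolic-contraction estimate on $F(G)$. Using semi-hyperbolicity, $\tilde J(f)$ is compact and $\pi_{\CCI}(\tilde J(f))\subset J(G)$, so its complement in $\Gamma^{\NN}\times\CCI$ is an open neighborhood $\tilde F(f)$ of $\Gamma^{\NN}\times(\text{some open subset of }F(G))$. One shows (this is the semi-hyperbolic contraction principle, cf. \cite{S4,S7}) that on a compact ``core'' $K$ of $F(G)$ that $G^\ast$-attracts a neighborhood of $S_\tau$-type minimal sets, the spherical diameters $\operatorname{diam}(\gamma_{n,1}(K))$ tend to $0$ uniformly in $\gamma\in\Gamma^{\NN}$, and orbits of points in $K$ stay in $K$ after finitely many steps. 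Then take $V$ a small open neighborhood of the attracting core, $U$ a slightly larger one with $\overline V\subset U\subset\overline U\subset F(G)$, and $n$ large enough that $\gamma_{n,1}(\overline U)\subset V$ for all $\gamma$; the uniformity in $\gamma$ is exactly what compactness of $\Gamma$ and the semi-hyperbolic estimates provide. Condition (3) then follows from $J_{\ker}(G)=\emptyset$ plus the fact that once an orbit enters $U$ it is trapped and contracted into $V$. (One also needs $\sharp(\CCI\setminus V)\ge 3$ for the later application of Lemma~\ref{l:asnbd}, which is automatic here since $V$ is a proper subset of $F(G)$ and $\sharp J(G)\ge 3$ by Remark~\ref{r:sjgg3}.)

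For step two, having shown $\Gamma$ is mean stable with the attendant open set $V$ satisfying $\sharp(\CCI\setminus V)\ge 3$, Lemma~\ref{l:asnbd} immediately yields a Hausdorff-neighborhood $U$ of $\Gamma$ in $\Cpt(\Rat)$ such that every $\Gamma'\in U$ is mean stable. Finally, by Remark~\ref{r:asjkere}, mean stability of $\langle\Gamma'\rangle$ forces $J_{\ker}(\langle\Gamma'\rangle)=\emptyset$, which is the remaining assertion. So the structure is: (a) semi-hyperbolic $+$ $J_{\ker}=\emptyset$ $\Rightarrow$ mean stable; (b) apply Lemma~\ref{l:asnbd}; (c) apply Remark~\ref{r:asjkere}.

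The main obstacle is step (a), specifically the \emph{uniform-in-$\gamma$} contraction of a core compact subset of $F(G)$ into itself and eventually into an arbitrarily small $V$. This is where semi-hyperbolicity is essential and where one must cite (and correctly apply) the distortion and expansion estimates for semi-hyperbolic rational semigroups from \cite{S4,S7} — in particular the continuity of $\gamma\mapsto J_\gamma$, so that $\tilde J(f)$ is closed and its complement is a genuine open neighborhood of a piece of $F(G)\times\Gamma^{\NN}$ on which hyperbolic-metric contraction is uniform. Handling possible non-constant limit functions of $\{\gamma_{n,1}\}$ on components of $F(G)$ (the subtlety flagged repeatedly in the introduction) is avoided here only because $J_{\ker}(G)=\emptyset$ guarantees every Fatou component is eventually mapped toward the attracting minimal sets; making this precise uniformly is the real work.
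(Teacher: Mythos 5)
Your plan is correct and follows essentially the same route as the paper: the paper first shows (Lemmas~\ref{l:shatt1}, \ref{l:att1}, \ref{l:attas}) that semi-hyperbolicity plus $F(G)\neq\emptyset$ makes $A(G)$ an attractor with uniform hyperbolic-metric contraction of compact subsets of $F(G)$, deduces mean stability from $J_{\ker}(G)=\emptyset$, and then concludes via Lemma~\ref{l:asnbd} and Remark~\ref{r:asjkere} exactly as in your steps (b) and (c). The only cosmetic difference is that the paper realizes your compact ``core'' explicitly as $A(G)$ and gets the uniform contraction directly from the hyperbolic metric on finitely many Fatou components rather than through the skew-product formalism.
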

\begin{rem}
\label{r:hyppjke}
Let $\G \in \mbox{Cpt}({\Ratp})$.   
Suppose that $J_{\ker }(\langle \G \rangle)=\emptyset $ and $\langle \G\rangle $ is semi-hyperbolic. 
Then for a small perturbation $\G '$ of $\G $, 
$\G '$ is mean stable, which is the consequence of Proposition~\ref{p:hyppjke}, 
but $\langle \G '\rangle $ may not be semi-hyperbolic. See Proposition~\ref{semihyposcexprop}-(c).  
\end{rem}
\begin{prop}
\label{p:asmt}
Let $\tau \in {\frak M}_{1,c}$ be mean stable. 
Suppose that $J(G_{\tau })\neq \emptyset .$ 
Let $V$ be the set coming from Definition~\ref{d:as}. 
Let $S_{\tau }:=\bigcup _{L\in \emMin (G_{\tau },\CCI )}L.$ 
Then we have all of the following.
\begin{enumerate}
\item \label{p:asmt1}
$S_{\tau }\subset \overline{G_{\tau }^{\ast }(\overline{V})}\subset F(G_{\tau }).$ 
\item \label{p:asmt2} 
Let $W:=\bigcup _{A\in \mbox{{\em Con}}(F(G_{\tau })), A\cap S_{\tau }\neq \emptyset } A.$ 
Let ${\cal U}_{W}:=\{ \varphi \in C_{W}(W)\mid \exists a\in S^{1}, M_{\tau }(\varphi )=a\varphi , 
\varphi \neq 0\} $ Moreover, 
let $\Psi _{W}:\emLSfc\rightarrow C_{W}(W)$ be the map defined by 
$\varphi \mapsto \varphi |_{W}.$ 
Then $\Psi _{W}(\emLSfc)=\mbox{{\em LS}}({\cal U}_{W})$ and 
$\Psi _{W}: \emLSfc \rightarrow \mbox{{\em LS}}({\cal U}_{W})$ 
is a linear isomorphism. 
\item \label{p:asmt3}
Let $Z:=\bigcup _{A\in \mbox{{\em Con}}(F(G_{\tau })), A\cap \overline{G^{\ast }(\overline{V})}\neq \emptyset } A.$ 
Let ${\cal U}_{Z}:=\{ \varphi \in C_{Z}(Z)\mid \exists a\in S^{1}, M_{\tau }(\varphi )=a\varphi , 
\varphi \neq 0\} $ Moreover, 
let $\Psi _{Z}:\emLSfc\rightarrow C_{Z}(Z)$ be the map defined by 
$\varphi \mapsto \varphi |_{Z}.$ 
Then $\Psi _{Z}(\emLSfc)=\mbox{{\em LS}}({\cal U}_{Z})$ and 
$\Psi _{Z}: \emLSfc \rightarrow \mbox{{\em LS}}({\cal U}_{Z})$ 
is a linear isomorphism. 
\end{enumerate}
\end{prop}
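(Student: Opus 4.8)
The plan is to deduce everything from statement~\ref{p:asmt1} together with the structure theorem for semigroups with empty kernel Julia set. Since $G_{\tau }$ is mean stable we have $J_{\ker }(G_{\tau })=\emptyset $ (Remark~\ref{r:asjkere}), so with the standing hypothesis $J(G_{\tau })\neq \emptyset $ all of Theorem~\ref{t:mtauspec} is available; I shall use in particular that $\LSfc $ is finite dimensional and $\LSfc \subset C_{F(G_{\tau })}(\CCI )$ (Theorem~\ref{t:mtauspec}-\ref{t:mtauspec2}), that $\sharp \emMin (G_{\tau },\CCI )<\infty $ (Theorem~\ref{t:mtauspec}-\ref{t:mtauspec3}), that $\Psi _{S_{\tau }}:\LSfc \to \LSfk $ is a linear isomorphism (Theorem~\ref{t:mtauspec}-\ref{t:mtauspec6}), and that for every $z\in \CCI $ there is a Borel set of $\gamma $ of full $\tilde{\tau }$-measure along which $d(\gamma _{n,1}(z),S_{\tau })\to 0$ (Theorem~\ref{t:mtauspec}-\ref{t:mtauspec4}). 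After proving statement~\ref{p:asmt1} I will isolate an abstract lemma from which statements~\ref{p:asmt2} and~\ref{p:asmt3} both follow by specialization.

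For statement~\ref{p:asmt1}, let $U,V,n$ be as in Definition~\ref{d:as}. I would first show $\overline{G_{\tau }^{\ast }(\overline{V})}\subset F(G_{\tau })$ by exhibiting a compact set $\tilde{K}\subset F(G_{\tau })$ containing $G_{\tau }^{\ast }(\overline{V})$: put $\tilde{K}:=\bigcup _{k=0}^{n}\{ g_{k}\circ \cdots \circ g_{1}(\overline{U})\mid g_{1},\ldots ,g_{k}\in \Gamma _{\tau }\} $ (the $k=0$ summand being $\overline{U}$). Each summand is the image of the compact set $(\Gamma _{\tau })^{k}\times \overline{U}$ under a continuous evaluation map, so $\tilde{K}$ is compact, and $\tilde{K}\subset F(G_{\tau })$ because $\overline{U}\subset F(G_{\tau })$ and $F(G_{\tau })$ is forward invariant (Lemma~\ref{ocminvlem}). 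Given any $g=g_{m}\circ \cdots \circ g_{1}\in G_{\tau }$, write $m=qn+r$ with $q\geq 0$ and $1\leq r\leq n$; using $V\subset \overline{V}\subset U\subset \overline{U}$ and applying condition~(2) of Definition~\ref{d:as} $q$ times gives $g_{qn}\circ \cdots \circ g_{1}(\overline{U})\subset V$, hence $g(\overline{V})\subset g_{m}\circ \cdots \circ g_{qn+1}(\overline{U})\subset \tilde{K}$; together with $\overline{V}\subset \tilde{K}$ this yields $G_{\tau }^{\ast }(\overline{V})\subset \tilde{K}$, so $\overline{G_{\tau }^{\ast }(\overline{V})}\subset \tilde{K}\subset F(G_{\tau })$. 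For the inclusion $S_{\tau }\subset \overline{G_{\tau }^{\ast }(\overline{V})}$, fix $L\in \emMin (G_{\tau },\CCI )$ and $z\in L$; condition~(3) gives $g_{0}\in G_{\tau }$ with $g_{0}(z)\in U$, and then any word $g_{1}\in G_{\tau }$ of length $n$ satisfies $g_{1}g_{0}(z)\in V$ by condition~(2). Since $g_{1}g_{0}(z)\in G_{\tau }(z)\subset L$, minimality gives $L=\overline{G_{\tau }(g_{1}g_{0}(z))}\subset \overline{G_{\tau }(\overline{V})}\subset \overline{G_{\tau }^{\ast }(\overline{V})}$; taking the union over $L$ proves statement~\ref{p:asmt1}.

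The abstract lemma is: if $\Omega $ is a $G_{\tau }$-forward-invariant union of finitely many connected components of $F(G_{\tau })$ with $S_{\tau }\subset \Omega $, then $M_{\tau }$ preserves the finite dimensional space $C_{\Omega }(\Omega )$, and restriction $\Psi _{\Omega }:\varphi \mapsto \varphi |_{\Omega }$ is a linear isomorphism from $\LSfc $ onto $\mbox{LS}(\{ \varphi \in C_{\Omega }(\Omega )\mid \exists a\in S^{1},\ M_{\tau }\varphi =a\varphi ,\ \varphi \neq 0\} )$. Here $\Psi _{\Omega }$ lands in $C_{\Omega }(\Omega )$ since $\LSfc \subset C_{F(G_{\tau })}(\CCI )$; it sends a unitary eigenvector to a unitary eigenvector on $\Omega $ (forward invariance of $\Omega $ transfers the eigenvalue equation to the restriction), which is nonzero since it is already nonzero on $S_{\tau }\subset \Omega $ by Theorem~\ref{t:mtauspec}-\ref{t:mtauspec6}; and $\Psi _{\Omega }$ is injective because $\varphi |_{\Omega }=0$ forces $\varphi |_{S_{\tau }}=0$, hence $\varphi =0$. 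Conversely, restriction $C_{\Omega }(\Omega )\to C(S_{\tau })$ sends unitary eigenvectors to unitary eigenvectors of $M_{\tau }$ on $C(S_{\tau })$ and is injective on the span of the unitary eigenvectors: the crucial point is that a $\psi \in C_{\Omega }(\Omega )$ with $M_{\tau }\psi =a\psi $, $|a|=1$, and $\psi |_{S_{\tau }}=0$ must vanish identically, since $|\psi (z)|=|M_{\tau }^{k}\psi (z)|\leq \int |\psi (\gamma _{k,1}(z))|\, d\tilde{\tau }(\gamma )$ for all $k$, while $\psi $ is bounded, is continuous on $\Omega \supset S_{\tau }$ and vanishes on the compact set $S_{\tau }$, and $\gamma _{k,1}(z)$ stays in $\Omega $ and converges to $S_{\tau }$ for $\tilde{\tau }$-a.e. $\gamma $ (Theorem~\ref{t:mtauspec}-\ref{t:mtauspec4}), so the reverse Fatou lemma forces $\psi (z)=0$. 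Thus $\dim _{\CC }\mbox{LS}(\{ \ldots \} )\leq \dim _{\CC }\LSfk =\dim _{\CC }\LSfc $, and an injective linear map between finite dimensional spaces of equal dimension is onto. Finally $W$ and $Z$ fit the lemma: both are unions of components of $F(G_{\tau })$; $S_{\tau }$ and $\overline{G_{\tau }^{\ast }(\overline{V})}$ are forward invariant (for the latter, $h(G_{\tau }^{\ast }(\overline{V}))\subset G_{\tau }(\overline{V})\subset G_{\tau }^{\ast }(\overline{V})$), whence so are $W$ and $Z$; by statement~\ref{p:asmt1} both $S_{\tau }$ and $\overline{G_{\tau }^{\ast }(\overline{V})}$ are compact subsets of the open set $F(G_{\tau })$ and therefore meet only finitely many components of $F(G_{\tau })$, so $W$ and $Z$ have finitely many components; and $S_{\tau }\subset W$ by definition while $S_{\tau }\subset \overline{G_{\tau }^{\ast }(\overline{V})}\subset Z$ by statement~\ref{p:asmt1}. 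Specializing the lemma to $\Omega =W$ and $\Omega =Z$ gives statements~\ref{p:asmt2} and~\ref{p:asmt3}.

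The main obstacle I anticipate is statement~\ref{p:asmt1}, and within it the verification that $\overline{G_{\tau }^{\ast }(\overline{V})}$ stays inside $F(G_{\tau })$: this requires using the mean-stability contraction uniformly over words of arbitrary length, handled via the Euclidean-division bookkeeping above, and it is exactly this geometric fact --- not any spectral input --- that makes $W$ and $Z$ have only finitely many connected components and hence legitimizes the dimension count in the abstract lemma. Once statement~\ref{p:asmt1} is in place, the spectral statements~\ref{p:asmt2} and~\ref{p:asmt3} are essentially bookkeeping on top of Theorem~\ref{t:mtauspec}.
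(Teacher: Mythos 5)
Your proof is correct and follows the same route as the paper's (very terse) argument: statement~1 is extracted directly from Definition~\ref{d:as}, and statements~2 and~3 are obtained from Theorem~\ref{t:mtauspec}-\ref{t:mtauspec6} (injectivity of restriction to $S_{\tau }$) together with Theorem~\ref{t:mtauspec}-\ref{t:mtauspec4} (a.s.\ convergence of orbits to $S_{\tau }$), which are precisely the two ingredients the paper cites. Your abstract lemma and the dimension count supply the details the paper leaves to the reader, and the Euclidean-division bookkeeping for $\overline{G_{\tau }^{\ast }(\overline{V})}\subset F(G_{\tau })$ is a valid way to make "it is easy to see from the definition of mean stability" precise.
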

\begin{rem}
\label{r:asmtf}
Under the assumptions and notation of Proposition~\ref{p:asmt}, 
we have $\dim _{\CC }C_{W}(W)<\infty $ and $\dim _{\CC }C_{Z}(Z)<\infty .$ 
Thus, in order to seek $\Ufc $ and $\Uvc $, it suffices to consider the 
eigenvectors and eigenvalues of the matrix representation of 
$M_{\tau }$ on the finite dimensional linear space $C_{W}(W)$ or $C_{Z}(Z).$ 
\end{rem}
\begin{rem}
\label{r:gastv}
Let $\Gamma \in \Cpt(\Ratp)$ and let 
$G=\langle \Gamma \rangle .$ 
\begin{enumerate}
\item 
Suppose that $G$ is semi-hyperbolic and $J_{\ker }(G)=\emptyset .$ 
Then by Proposition~\ref{p:hyppjke}, $G$ is mean stable. 
Moreover, by Lemma~\ref{l:shatt1}, the set $V$ in Definition~\ref{d:as} can be taken to be a  
small neighborhood of $A(G)$ in $F(G)$, where 
$A(G):=\overline{G(\{ z\in \CCI \mid \exists g\in G \mbox{ s.t. }g(z)=z, |m(g,z)|<1\} ) }. $
In this case, $\{ A\in \mbox{Con }(F(G))\mid A\cap \overline{G^{\ast }(\overline{V})}\neq \emptyset \} 
=\{ A\in \mbox{Con}(F(G))\mid A\cap A(G)\neq \emptyset .\} .$ 
\item 
Similarly, suppose that $G$ is hyperbolic and $J_{\ker }(G)=\emptyset .$ 
Then by Proposition~\ref{p:hyppjke}, $G$ is mean stable. 
Moreover, by Lemma~\ref{l:shatt1}, the set $V$ in Definition~\ref{d:as} can be taken to be a 
small neighborhood of $P(G)$ in $F(G)$. 
In this case, $\{ A\in \mbox{Con }(F(G))\mid A\cap \overline{G^{\ast }(\overline{V})}\neq \emptyset \} 
=\{ A\in \mbox{Con}(F(G))\mid A\cap P(G)\neq \emptyset .\} .$ 
\end{enumerate}
\end{rem}
%We now give a characterization of $T_{\infty ,\tau }.$ 
%\begin{prop}
%\label{p:uniquet}
%Let $\tau \in {\frak M}_{1,c}({\cal P})$. 
%Suppose that $\tau $ is almost stable and $\hat{K}(G_{\tau })\neq \emptyset .$ 
%$UH(G_{\tau })\setminus \{ \infty \} \subset \mbox{{\em int}}(\hat{K}(G_{\tau }))$ 
%and $J_{\ker}(G_{\tau })=\emptyset $. 
%Then, there exists a unique bounded Borel measurable function $\varphi :\CCI \rightarrow \RR $ such that 
%$M_{\tau }(\varphi )=\varphi $, $\varphi |_{F_{\infty }(G_{\tau })}\equiv 1$, and 
%$\varphi |_{\hat{K}(G_{\tau })} \equiv 0.$ Moreover, 
%$\varphi =T_{\infty ,\tau }.$   
%\end{prop}
\subsection{Necessary and Sufficient conditions to be $J_{\ker }(G_{\tau })\neq \emptyset $}
\label{Suffnec}
In this subsection, we present some results on necessary and sufficient conditions to be 
$J_{\ker }(G_{\tau })\neq \emptyset .$ 
The proofs are given in subsection~\ref{Suffnec}. 

The following is a natural question. 
\begin{ques}
What happens if $J_{\ker }(G_{\tau })\neq \emptyset $?
\end{ques}
\begin{df}
Let $Y$ be a compact metric space with $\dim _{H}(Y)<\infty $ and 
let $\tau \in \frak{M}_{1,c}(\CMX)$. 
Since the function $\gamma \mapsto \dim _{H}(\hat{J}_{\g, \G _{\tau }})$ is Borel measurable and 
since $(\sigma , \tilde{\tau })$ is ergodic, there exists a 
number $a\in [0,\infty )$ such that for $\tilde{\tau }$-a.e. $\gamma \in \Gamma _{\tau }$, 
$\dim _{H}(\hat{J}_{\g ,\Gamma _{\tau }})=a.$ 
We set $\MHDt := a.$  
\end{df}
\begin{rem}
\label{r:mhdt}
Let $\Gamma \in \Cpt(\mbox{Rat}_{+})$ and let 
$G=\langle \Gamma \rangle$.   
Suppose that $G$ is semi-hyperbolic and  
$F(G)\neq \emptyset .$ Then, 
$\gamma \mapsto J_{\g }$ is continuous on $\Gamma ^{\NN }$  with respect to the Hausdorff metric 
(this is non-trivial) 
and 
for each $\gamma \in \Gamma ^{\NN }$, $J_{\g }=\hat{J}_{\g, \Gamma}$ 
(see Lemma~\ref{l:shatt1} and \cite[Theorem 2.14]{S4}). 
 Moreover, there exists 
a constant $0\leq b<2 $ such that for each $\gamma \in \Gamma ^{\NN }$, 
$\dim _{H}(J_{\g })\leq b$ (see Lemma~\ref{l:shatt1} and \cite[Theorem 1.16]{S7}).  
Note that if we do not assume semi-hyperbolicity, then $\gamma \mapsto J_{\gamma }$ is not 
continuous in general. 
\end{rem}
\begin{thm}
\label{t:dimjpt0jkn}
Let $\tau \in {\frak M}_{1,c}(\emRatp )$. 
Suppose that $G_{\tau }$ is semi-hyperbolic and  
$F(G_{\tau })\neq \emptyset .$ Then, 
we have all of the following.
\begin{enumerate}
\item \label{t:dimjpt0jkn1}
$\dim _{H}(J_{pt}^{0}(\tau ))\leq \emMHDt <2.$ 
\item \label{t:dimjpt0jkn2}
$J_{\ker }(G_{\tau }) \subset J_{pt}^{0}(\tau ).$  
\item \label{t:dimjpt0jkn3}
$F_{meas}(\tau )={\frak M}_{1}(\CCI )$ if and only if 
$J_{\ker }(G_{\tau })=\emptyset .$ 
If $J_{\ker }(G_{\tau })\neq \emptyset $, 
then $J_{meas}(\tau )={\frak M}_{1}(\CCI ).$ 
\item \label{t:dimjpt0jkn4}
If, in addition to the assumption, $\sharp \Gamma _{\tau } <\infty $, 
then we have the following.
\begin{enumerate}
\item[{\em (a)}] \label{t:dimjpt0jkn4a}
$G_{\tau }^{-1}(J_{\ker }(G_{\tau }))\subset J_{pt}^{0}(\tau ).$
\item[{\em (b)}]
 \label{t:dimjpt0jkn4b}
Either $F_{meas}(\tau )={\frak M}_{1}(\CCI ) $ or $J_{pt}(\tau )=J(G_{\tau }).$ 
\end{enumerate} 
\end{enumerate}
\end{thm}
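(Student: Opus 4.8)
The plan is to prove the four assertions in turn, working throughout with the skew product $f:\Gamma _{\tau }^{\NN }\times \CCI \to \Gamma _{\tau }^{\NN }\times \CCI $ and the facts available because $G_{\tau }$ is semi-hyperbolic with $F(G_{\tau })\neq \emptyset $ (Remark~\ref{r:mhdt}, Lemma~\ref{l:shatt1}, \cite{S4,S7}): $\gamma \mapsto J_{\gamma }$ is continuous on $\Gamma _{\tau }^{\NN }$ for the Hausdorff metric, $J_{\gamma }=\hat{J}_{\gamma ,\Gamma _{\tau }}$ for every $\gamma $, $\sup _{\gamma }\dim _{H}(J_{\gamma })\leq b<2$, and $f$ is uniformly expanding along $\tilde{J}(f)$ in a suitable metric. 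I will use repeatedly that $(M_{\tau }^{\ast })^{n}(\delta _{z})(\varphi )=M_{\tau }^{n}(\varphi )(z)=\int \varphi (\gamma _{n,1}(z))\,d\tilde{\tau }(\gamma )$, so $(M_{\tau }^{\ast })^{n}(\delta _{z})$ is the $\tilde{\tau }$-law of $\gamma _{n,1}(z)$, and that $F(G_{\tau })\subset F_{pt}(\tau )$ (equicontinuity of $\{ g|_{U}\} _{g\in G_{\tau }}$ on a neighbourhood $U$ of $z\in F(G_{\tau })$ gives, via uniform continuity of test functions, equicontinuity of $\{ (M_{\tau }^{\ast })^{n}|_{U}\} _{n}$ acting on Dirac measures), hence $J_{pt}(\tau )\subset J(G_{\tau })$. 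For (1), I would first prove $J_{pt}^{0}(\tau )\subset E:=\{ z\in \CCI \mid \tilde{\tau }(\{ \gamma \mid z\in J_{\gamma }\} )>0\} $: if $z\in \hat{F}_{\gamma ,\Gamma _{\tau }}$ for $\tilde{\tau }$-a.e.\ $\gamma $, then by semi-hyperbolicity $f^{n}$ contracts near $(\gamma ,z)\in \tilde{F}(f)$, and continuity of $\gamma \mapsto J_{\gamma }$ makes the escape into $F(G_{\tau })$ uniform off a $\gamma $-set of arbitrarily small measure, so a dominated-convergence argument under $\int(\cdot)\,d\tilde{\tau }$ gives equicontinuity of $\{ (M_{\tau }^{\ast })^{n}|_{\CCI }\} $ at $z$. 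I would then bound $\dim _{H}(E)$ by slicing: for $s>\emMHDt$, if $H^{s}(E)>0$ pick a compact $K\subset E$ with $0<H^{s}(K)<\infty $, set $\mu :=H^{s}|_{K}$, and observe (Fubini, since $\tilde{J}(f)$ is closed) that $\int _{K}\tilde{\tau }(\{ \gamma \mid z\in J_{\gamma }\} )\,d\mu (z)=\int _{\Gamma _{\tau }^{\NN }}\mu (K\cap J_{\gamma })\,d\tilde{\tau }(\gamma )$; the right side vanishes since $\dim _{H}(J_{\gamma })=\emMHDt<s$ a.e., while the left side is positive---a contradiction. Hence $\dim _{H}(J_{pt}^{0}(\tau ))\leq \dim _{H}(E)\leq \emMHDt \leq b<2$.

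For (2), given $z_{0}\in J_{\ker }(G_{\tau })$---a forward $G_{\tau }$-invariant compact subset of $J(G_{\tau })$ with empty interior---the plan is to use the uniform expansion of $f$ along $\tilde{J}(f)$ to force arbitrarily small balls $B(z_{0},\delta )$ to be blown up to a definite size by $\gamma _{n,1}$, for a set of $\gamma $ of positive $\tilde{\tau }$-measure and with $n$ independent of $\gamma $ (this step needs that $z_{0}\in J_{\gamma }$ for enough $\gamma $, which should follow from forward invariance of $J_{\ker }(G_{\tau })$ together with the identity $\gamma _{1}^{-1}(J_{\sigma (\gamma )})=J_{\gamma }$ from the Remark after Definition~\ref{d:sp} and the semi-hyperbolic structure). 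Since $\gamma _{n,1}(z_{0})$ remains in the fixed compact set $J_{\ker }(G_{\tau })$, this expansion is incompatible with equicontinuity of $\{ (M_{\tau }^{\ast })^{n}|_{\CCI }\} $ at $z_{0}$: one chooses a point $z'\in B(z_{0},\delta )$ and a test function so that $(M_{\tau }^{\ast })^{n}(\delta _{z'})$ and $(M_{\tau }^{\ast })^{n}(\delta _{z_{0}})$ stay $d_{0}$-separated by a constant independent of $\delta $. Hence $z_{0}\in J_{pt}^{0}(\tau )$.

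For (3): if $J_{\ker }(G_{\tau })=\emptyset $, then Theorem~\ref{kerJthm1} (with $n=1$, $\CC \Bbb{P}^{1}=\CCI $) gives $F_{meas}(\tau )={\frak M}_{1}(\CCI )$. Conversely I claim $J_{\ker }(G_{\tau })\neq \emptyset $ forces $J_{meas}(\tau )={\frak M}_{1}(\CCI )$. The set $J_{meas}(\tau )$ is closed and, in fact, convex: if $t\mu _{1}+(1-t)\mu _{2}\in F_{meas}(\tau )$ with equicontinuity on a neighbourhood $B$, then pulling $B$ back along the affine map $\nu \mapsto t\nu +(1-t)\mu _{2}$ and using that each $(M_{\tau }^{\ast })^{n}$ is affine while $d_{0}$ is a fixed nondecreasing function of the test-function pairings forces equicontinuity of $\{ (M_{\tau }^{\ast })^{n}\} $ at $\mu _{1}$, a contradiction (and symmetrically for $\mu _{2}$). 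Next, $\Phi (\CCI )\subset J_{meas}(\tau )$: given $z\in \CCI $ and a neighbourhood $B$ of $\delta _{z}$, fix $z_{0}\in J_{\ker }(G_{\tau })\subset J_{pt}^{0}(\tau )$ (by (2)) and a small $s>0$ with $(1-s)\delta _{z}+s\delta _{z_{0}'}\in B$ for all $z_{0}'$ near $z_{0}$; since $z_{0}\in J_{pt}^{0}(\tau )$ there are, for every $\delta >0$, a point $z_{0}'\in B(z_{0},\delta )$ and $n\in \NN $ with $d_{0}((M_{\tau }^{\ast })^{n}(\delta _{z_{0}}),(M_{\tau }^{\ast })^{n}(\delta _{z_{0}'}))$ bounded below, whence the measures $(1-s)\delta _{z}+s\delta _{z_{0}}$ and $(1-s)\delta _{z}+s\delta _{z_{0}'}$ lie in $B$, are $d_{0}$-arbitrarily close as $z_{0}'\to z_{0}$, yet their $n$-th $(M_{\tau }^{\ast })$-images stay $d_{0}$-separated by a constant $\epsilon _{1}(s)>0$, contradicting equicontinuity on $B$. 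Therefore $J_{meas}(\tau )\supset \overline{\mbox{conv}}(\Phi (\CCI ))={\frak M}_{1}(\CCI )$. Combining the two directions with the tautology $F_{meas}(\tau )={\frak M}_{1}(\CCI )\Leftrightarrow J_{meas}(\tau )=\emptyset $ yields the equivalence and the last sentence of (3).

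For (4), assume in addition $\sharp \Gamma _{\tau }<\infty $, so each $h\in \Gamma _{\tau }$ has $\tau (\{ h\} )>0$. For (a), I would show $J_{pt}^{0}(\tau )$ is closed under $\Gamma _{\tau }$-preimages, whence $G_{\tau }^{-1}(J_{\ker }(G_{\tau }))\subset G_{\tau }^{-1}(J_{pt}^{0}(\tau ))\subset J_{pt}^{0}(\tau )$: if $h\in \Gamma _{\tau }$, $z_{0}\in J_{pt}^{0}(\tau )$, $h(w)=z_{0}$, pick witnesses $z_{0}^{(j)}\to z_{0}$ and $m_{j}\to \infty $ (one may take $m_{j}\to \infty $ since each $(M_{\tau }^{\ast })^{m}|_{\CCI }$ is continuous) with $d_{0}((M_{\tau }^{\ast })^{m_{j}}(\delta _{z_{0}^{(j)}}),(M_{\tau }^{\ast })^{m_{j}}(\delta _{z_{0}}))\geq \epsilon _{0}$, lift to $w^{(j)}\to w$ with $h(w^{(j)})=z_{0}^{(j)}$ (openness of $h$), and exploit the decomposition $(M_{\tau }^{\ast })^{m_{j}+1}(\delta _{w^{(j)}})=\sum _{h_{i}\in \Gamma _{\tau }}\tau (\{ h_{i}\} )\,(M_{\tau }^{\ast })^{m_{j}}(\delta _{h_{i}(w^{(j)})})$: pairing with a test function realizing the $\epsilon _{0}$-gap for the $h$-term, while the sibling terms either converge to their counterparts (when $h_{i}(w)\in F_{pt}^{0}(\tau )$) or are absorbed by an induction on the number of siblings $h_{i}(w)$ lying in $J_{pt}^{0}(\tau )$, shows that $(M_{\tau }^{\ast })^{m_{j}+1}(\delta _{w^{(j)}})$ and $(M_{\tau }^{\ast })^{m_{j}+1}(\delta _{w})$ stay $d_{0}$-separated, i.e.\ $w\in J_{pt}^{0}(\tau )$. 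For (b), by (3) we may assume $J_{\ker }(G_{\tau })\neq \emptyset $ and must show $J_{pt}(\tau )=J(G_{\tau })$; the inclusion $\subset $ was noted above, and for $\supset $ we use that $J_{pt}(\tau )$ is closed and, by (a), contains $G_{\tau }^{-1}(J_{\ker }(G_{\tau }))$, which is dense in $J(G_{\tau })$ because backward orbits of non-exceptional points of $J(G_{\tau })$ are dense in $J(G_{\tau })$ for a finitely generated semi-hyperbolic semigroup (cf.\ \cite{HM,S1}) and $J_{\ker }(G_{\tau })$ consists of such points; thus $J(G_{\tau })=\overline{G_{\tau }^{-1}(J_{\ker }(G_{\tau }))}\subset J_{pt}(\tau )$. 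The crux of the whole argument is that $M_{\tau }^{\ast }$ averages over $\gamma $ and may a priori destroy the sensitivity carried by individual sequences; the real work---in (2) and (4a)---is to show that instability survives this integration, which is exactly where the semi-hyperbolic machinery (uniform expansion on $\tilde{J}(f)$, continuity of $\gamma \mapsto J_{\gamma }$) and, in (4), the atomicity forced by $\sharp \Gamma _{\tau }<\infty $ are needed; I expect the sibling-cancellation step in (4a) to be the single most delicate point.
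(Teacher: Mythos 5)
Your treatment of part (1) is essentially the paper's own argument (Lemma~\ref{l:dimjpt0} together with Lemma~\ref{l:mgjpt0} and Remark~\ref{r:mhdt}), and your reductions in (3) and (4b) are sound once (2) and (4a) are available. The genuine gaps sit exactly at the two points you yourself flag as ``the crux.'' First, your proof of (2) starts from the premise that semi-hyperbolicity gives uniform expansion of $f$ along $\tilde{J}(f)$; it does not --- only hyperbolicity (expandingness) does, and under the hypotheses of this theorem critical points of generators may lie in $J(G_{\tau })$. Second, and more seriously, even granting expansion you give no mechanism by which the separation of individual fibered orbits survives the integration over $\gamma $: if $\gamma _{n,1}(z')$ and $\gamma _{n,1}(z_{0})$ are far apart for each $\gamma $ but land in $\gamma $-dependent positions, the averaged measures $(M_{\tau }^{\ast })^{n}(\delta _{z'})$ and $(M_{\tau }^{\ast })^{n}(\delta _{z_{0}})$ can still coincide. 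The paper's proof of (2) (Lemma~\ref{l:atdimjpt0}) uses a completely different, softer mechanism that is absent from your proposal: by Lemma~\ref{l:shatt1} semi-hyperbolicity produces an attractor $A(G_{\tau })\subset F(G_{\tau })$, and one takes $\varphi \in C(\CCI )$ with $\varphi \equiv 1$ on a neighborhood of $J_{\ker }(G_{\tau })$ and $\varphi \equiv 0$ near $A(G_{\tau }).$ Forward invariance of $J_{\ker }(G_{\tau })$ forces $M_{\tau }^{m}(\varphi )(z_{0})=1$ for every $m$, while statement (1) of that lemma supplies points $z_{n}\rightarrow z_{0}$ whose orbits almost surely escape to $F(G_{\tau })$ and hence converge to $A(G_{\tau })$, so $M_{\tau }^{m}(\varphi )(z_{n})\rightarrow 0$; the separation $1-0$ is manifestly stable under averaging. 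Your converse in (3), which rests on $J_{\ker }(G_{\tau })\subset J_{pt}^{0}(\tau )$, inherits this gap (though your convexity route for passing from Dirac masses to all of ${\frak M}_{1}(\CCI )$ is a legitimate alternative to the paper's direct construction of nearby atomic measures).

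For (4a) the sibling-cancellation step is not a proof. After lifting, the difference $(M_{\tau }^{\ast })^{m_{j}+1}(\delta _{w^{(j)}})-(M_{\tau }^{\ast })^{m_{j}+1}(\delta _{w})$ is a convex combination over $h_{i}\in \Gamma _{\tau }$ of differences at the points $h_{i}(w^{(j)})$ and $h_{i}(w)$; the terms with $h_{i}(w)\in J_{pt}^{0}(\tau )$ are of the same order as the term you want to isolate and may cancel it, and an induction on the number of such siblings gives no control over those terms along the particular witness sequence you have chosen. Indeed, the general backward invariance of $J_{pt}^{0}(\tau )$ that you are implicitly asserting is not what the paper proves. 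The paper again avoids cancellation by positivity: with the same $\varphi \geq 0$ as above and $z_{0}\in (h_{i_{r}}\cdots h_{i_{1}})^{-1}(J_{\ker }(G_{\tau }))$, the single word $(i_{1},\ldots ,i_{r})$ followed by arbitrary continuations keeps the orbit in $J_{\ker }(G_{\tau })$, giving $M_{\tau }^{m}(\varphi )(z_{0})\geq p_{i_{r}}\cdots p_{i_{1}}>0$ for all large $m$, while all other words contribute nonnegatively; Lebesgue-typical nearby points again give $M_{\tau }^{m}(\varphi )\rightarrow 0$. To repair the proposal you should abandon the expansion route entirely and build (2), the converse of (3), and (4a) on the attractor $A(G_{\tau })$ and this invariant-test-function device.
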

\begin{rem}
\label{r:hypdeg2sh}
Let $G$ be a hyperbolic rational semigroup with $G\cap \Ratp \neq \emptyset .$  
Then, $G$ is semi-hyperbolic and $F(G)\neq \emptyset .$ 
\end{rem}
\subsection{Singular properties and regularity of 
non-constant finite linear combinations of unitary eigenvectors of $M_{\tau }$}
\label{Singular}
In this subsection, we present some 
results on singular properties and regularity of non-constant 
finite linear combinations $\varphi $ of unitary eigenvectors of $M_{\tau }:C(\CCI )\rightarrow 
C(\CCI )$. It turns out that under certain conditions, such $\varphi $ 
is non-differentiable at each point of an uncountable dense subset of $J(G_{\tau })$ 
(see Theorem~\ref{t:hnondiff}).  Moreover, we investigate the pointwise H\"{o}lder exponent 
of such $\varphi $ (see Theorem~\ref{t:hnondiff} and Theorem~\ref{t:hdiffornd}).   
The proofs are given in subsection~\ref{pfSingular}. 
%\begin{prop}
%\label{p:jnonconst1}
%Let $(h_{1},\ldots ,h_{m})\in {\cal P}^{m}$ and 
%let $G=\langle h_{1},\ldots ,h_{m}\rangle .$ 
%Suppose that 
%$\hat{K}(G)\neq \emptyset $ and 
%$\bigcup _{(i,j):i\neq j}h_{i}^{-1}(J(G))\cap h_{j}^{-1}(J(G))$  
%is either empty or totally disconnected. 
%Let $p=(p_{1},\ldots ,p_{m})\in {\cal W}_{m}$ 
%and let $\tau =\sum _{j=1}^{m}p_{j}\delta _{h_{j}}.$ 
%Then, $\mbox{{\em int}}(J(G))=\emptyset $ and 
%$$J(G)=\{ z\in \CCI \mid  \mbox{{\em for any neighborhood }}V \mbox{{\em of }} z \mbox{ {\em in }} \CC , 
%T_{\infty ,\tau }|_{V} \mbox{ {\em is not constant}}\} .$$
%\end{prop}
\begin{lem}
\label{l:disjker}
Let $m\in \NN $ with $m\geq 2.$ 
Let $Y$ be a compact metric space and let $h_{1},h_{2},\ldots ,h_{m}\in \emOCMX.$ 
Let $G=\langle h_{1},\ldots ,h_{m}\rangle .$ Suppose that 
for each $(i,j)$ with $i\neq j$, $h_{i}^{-1}(J(G))\cap h_{j}^{-1}(J(G))=\emptyset .$ 
Then, $J_{\ker }(G)=\emptyset .$ 
\end{lem}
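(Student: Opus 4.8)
The plan is to show directly that any point $z \in J_{\ker}(G)$ would have to lie in $h_i^{-1}(J(G))$ for \emph{every} $i = 1,\ldots,m$, which contradicts the disjointness hypothesis. Recall that $J_{\ker}(G) = \bigcap_{g \in G} g^{-1}(J(G))$, so in particular $J_{\ker}(G) \subset g^{-1}(J(G))$ for each generator $g = h_i$. Thus if $z \in J_{\ker}(G)$, then $z \in h_i^{-1}(J(G))$ for each $i$. If $m \geq 2$, pick $i \neq j$; then $z \in h_i^{-1}(J(G)) \cap h_j^{-1}(J(G)) = \emptyset$, a contradiction. Hence $J_{\ker}(G) = \emptyset$.

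First I would note that this argument only uses $J_{\ker}(G) \subset h_i^{-1}(J(G))$ for the generators $h_i$, which is immediate from the definition since each $h_i \in G$. No openness of the maps and no structural properties of $J(G)$ are actually needed for this implication — the hypotheses $h_i \in \OCMX$ and "$Y$ compact metric" are only there to make $J(G)$, $F(G)$ well-defined in the sense of the paper (via Definition of Fatou/Julia sets for subsemigroups of $\CMX$), and Lemma~\ref{ocminvlem} is not even required. So the proof is genuinely a one-line observation: the intersection defining $J_{\ker}(G)$ includes the terms $h_1^{-1}(J(G))$ and $h_2^{-1}(J(G))$, whose pairwise intersection is empty.

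I do not anticipate any real obstacle here; the only thing to be careful about is the trivial edge case where $J(G) = \emptyset$ (then $h_i^{-1}(J(G)) = \emptyset$ and the disjointness hypothesis holds vacuously, and indeed $J_{\ker}(G) = \emptyset$ as well), but this is subsumed by the same computation. One could phrase the whole thing as: $J_{\ker}(G) = \bigcap_{g\in G} g^{-1}(J(G)) \subset h_1^{-1}(J(G)) \cap h_2^{-1}(J(G)) = \emptyset$. That is the entire proof, and I would write it in essentially that compressed form.
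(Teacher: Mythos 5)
Your proof is correct, and it is essentially the argument the paper intends: the entire content is that $J_{\ker }(G)=\bigcap _{g\in G}g^{-1}(J(G))\subset h_{1}^{-1}(J(G))\cap h_{2}^{-1}(J(G))=\emptyset $, since each generator $h_{i}$ belongs to $G.$ The only cosmetic difference is that the paper's one-line proof formally cites the backward self-similarity $J(G)=\bigcup _{j}h_{j}^{-1}(J(G))$ (Lemma~\ref{l:bss}), which, as you correctly observe, is not actually needed for this implication; your purely definitional version is, if anything, slightly more economical.
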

\begin{df}
For each $m\in \NN $, we set 
${\cal W}_{m}:=\{ (p_{1},\ldots ,p_{m})\in (0,1)^{m}\mid \sum _{j=1}^{m}p_{j}=1\} .$ 
\end{df}
\begin{lem}
\label{l:lsncnonc}
Let $m\in \NN $ with $m\geq 2.$ 
Let $h=(h_{1},\ldots ,h_{m})\in (\emRat )^{m}$ and let 
$G=\langle h_{1},\ldots ,h_{m}\rangle .$ 
Let $p=(p_{1},\ldots ,p_{m})\in {\cal W}_{m}$ and let 
$\tau =\sum _{j=1}^{m}p_{j}\delta _{h_{j}}.$ 
Suppose that $J(G)\neq \emptyset $ and that 
$h_{i}^{-1}(J(G))\cap h_{j}^{-1}(J(G))=\emptyset $ for 
each $(i,j)$ with $i\neq j.$ 
Then {\em int}$(J(G))=\emptyset $ and for each $\varphi \in (\mbox{{\em LS}}({\cal U}_{f,\tau }(\CCI )))_{nc}$, 
$$J(G)=\{ z\in \CCI \mid \mbox{ for any neighborhood } U \mbox{ of } z,  
\varphi |_{U} \mbox{ is non-constant} \} .$$ 
\end{lem}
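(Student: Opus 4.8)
\textbf{Proof proposal for Lemma~\ref{l:lsncnonc}.}

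The plan is to establish the two assertions separately, drawing on the backward self-similarity of $J(G)$ and on the structure theory of $\mbox{LS}({\cal U}_{f,\tau }(\CCI ))$ provided by Theorem~\ref{t:mtauspec}. First, observe that the disjointness hypothesis $h_{i}^{-1}(J(G))\cap h_{j}^{-1}(J(G))=\emptyset $ for $i\neq j$, together with Lemma~\ref{l:disjker}, gives $J_{\ker }(G_{\tau })=\emptyset $; since $J(G)\neq \emptyset $, we are in the setting where Theorem~\ref{t:mtauspec} applies, so $\mbox{LS}({\cal U}_{f,\tau }(\CCI ))\subset C_{F(G_{\tau })}(\CCI )$ and every non-constant $\varphi $ in this space is locally constant on $F(G_{\tau })=F(G)$. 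To see that $\mbox{int}(J(G))=\emptyset $: by Lemma~\ref{l:bss} (backward self-similarity), $J(G)=\bigcup _{j=1}^{m}h_{j}^{-1}(J(G))$, and the union is \emph{disjoint} by hypothesis; each $h_{j}^{-1}(J(G))$ is therefore open and closed in $J(G)$. If $\mbox{int}(J(G))\neq\emptyset$, I would derive a contradiction with $J_{\ker}(G)=\emptyset$ via Montel (a forward-invariant nonempty open subset of $J(G)$ would force a point of $J_{\ker}(G)$), or alternatively use the standard fact that a rational semigroup $G$ with $\deg\ge 2$ generators and $\mbox{int}(J(G))\neq\emptyset$ has $J(G)=\CCI$, contradicting $F(G)\neq\emptyset$ which follows from $J_{\ker}(G)=\emptyset$ (Remark~\ref{r:kjulia}(3) type reasoning, since $\emptyset = J_{\ker}(G)$ and if $F(G)=\emptyset$ then $J(G)=\CCI=J_{\ker}(G)$).

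For the main equality, write $E:=\{ z\in \CCI \mid \mbox{for any neighborhood } U \mbox{ of } z, \varphi |_{U} \mbox{ is non-constant}\}$. The inclusion $E\subset J(G)$ is immediate: if $z\in F(G)$ then, since $\varphi$ is locally constant on $F(G)$, there is a neighborhood of $z$ on which $\varphi$ is constant, so $z\notin E$. The substance is the reverse inclusion $J(G)\subset E$. Fix $z\in J(G)$ and a neighborhood $U$ of $z$; I must show $\varphi|_U$ is non-constant. Suppose for contradiction $\varphi|_U\equiv c$. The idea is to propagate this constancy using the functional equation $M_\tau(\varphi)=\alpha\varphi$ (for the relevant unitary eigenvalue $\alpha$, after decomposing $\varphi$ into eigencomponents — or, more cleanly, first reduce to the case that $\varphi$ itself is a unitary eigenvector, since if $\varphi$ is non-constant at least one eigencomponent $\varphi_k$ is non-constant and, because the $h_j^{-1}(J(G))$ are disjoint, the arguments below apply componentwise). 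Using backward self-similarity, $z$ lies in exactly one $h_{j_1}^{-1}(J(G))$; the disjointness and openness-in-$J(G)$ of the pieces let me choose a possibly smaller neighborhood $U'$ of $z$ so that the preimage structure is "clean". Then I would use the relation that $\varphi$ restricted near $z$ is determined by the values of $\varphi$ near the points $g(z)$, $g\in \Gamma_\tau$, via $M_\tau$; iterating and invoking the expansion property of $J(G)$ (for $z\in J(G)$, $\bigcup_n \bigcup_{w\in \Gamma_\tau^n} w(B(z,\delta))$ eventually covers a neighborhood of $J(G)$, by a Montel/normality argument using $J_{\ker}(G)=\emptyset$) would force $\varphi$ to be constant on a neighborhood of all of $J(G)$, hence on $\CCI$ by connectedness considerations with $F(G)$ — contradicting that $\varphi$ is non-constant.

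A cleaner route for $J(G)\subset E$, which I would try first: by Theorem~\ref{t:mtauspec}-\ref{t:mtauspec9-2} and the surrounding remarks, since $\dim_\CC(\mbox{LS}({\cal U}_{f,\tau}(\CCI)))>1$ (which holds here because $\varphi$ non-constant exists, cf. Remark~\ref{r:ls1}), statement \ref{t:mtauspec12} of Theorem~\ref{t:mtauspec} gives an uncountable set $A\subset\CC$ with $\varphi^{-1}(\{t\})\cap J(G_\tau)\subset J_{res}(G_\tau)$ nonempty for each $t\in A$; combined with the fact that $\varphi$ is continuous and the level sets meeting $J(G)$ are "spread through" $J(G)$, one shows $\varphi|_U$ must take at least two values whenever $U\cap J(G)\neq\emptyset$. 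Concretely: $\varphi(J(G))\supset$ (the closure of) an uncountable set and, by the backward self-similar / minimal-set structure, $\varphi$ has no local extrema on $J(G)$ that are isolated, so no neighborhood meeting $J(G)$ can be a level set. The main obstacle I anticipate is precisely this last step — ruling out that $\varphi$ is locally constant at some point of $J(G)$ — since it requires transferring the "varies everywhere on $J(G)$" intuition into a rigorous argument; the disjointness hypothesis is what makes it work, because it prevents the kind of "fusion" of Julia-set pieces under which $\varphi$ could be locally constant on part of $J(G)$ while non-constant globally. I would handle it by a clean induction on the backward-orbit covering of a neighborhood of $z\in J(G)$ using the functional equation for $M_\tau$ and the strict forward invariance failure (i.e. $J_{\ker}(G)=\emptyset$) to guarantee the covering reaches a region where $\varphi$ is known to be non-constant.
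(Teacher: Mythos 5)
Your framework is the right one (Lemma~\ref{l:disjker} for $J_{\ker }(G)=\emptyset $, Theorem~\ref{t:mtauspec} for $\varphi \in C_{F(G)}(\CCI )$ and the eigenvector structure, and the easy inclusion $E\subset J(G)$), but both of the substantive steps have genuine gaps. For int$(J(G))=\emptyset $: the ``standard fact'' you invoke — that a rational semigroup with int$(J(G))\neq \emptyset $ must have $J(G)=\CCI $ — is false for semigroups; the paper itself notes (remark after Theorem~\ref{thm:jkreyintj}, citing \cite{HM2}) that there are finitely generated polynomial semigroups with int$(J(G))\neq \emptyset $ and $J(G)\neq \CCI $. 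Your Montel alternative is not carried out and does not obviously work either: under the disjointness hypothesis every $z\in J(G)$ is sent into $F(G)$ by all but one generator, so a nonempty interior of $J(G)$ produces no point of $J_{\ker }(G)$. The paper disposes of this step by citing \cite[Theorem 2.3]{S2}.

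The more serious gap is in $J(G)\subset E$: your propagation runs in the wrong direction. From $\varphi =M_{\tau }^{nl}(\varphi )=\sum _{|w|=nl}p_{w}\,\varphi \circ h_{w}$ and constancy of $\varphi $ on a neighborhood of $z_{1}\in J(G)$, the disjointness lets you conclude constancy of $\varphi \circ h_{w_{0}}$ near $z_{1}$ only for the \emph{single} word $w_{0}$ with $h_{w_{0}}(z_{1})\in J(G)$; for every other word $w'$ one has $h_{w'}(z_{1})\in F(G)$, where $\varphi $ is locally constant anyway, so nothing is gained. Constancy therefore spreads only along the forward orbit of $z_{1}$ under its Julia coding — a single orbit, which need not be dense in $J(G)$ — and your Montel covering claim cannot repair this, since the covering by $h_{w}(B(z,\delta ))$ over \emph{all} words $w$ carries no information about $\varphi $. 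The paper's proof reverses the direction: fix an arbitrary $z_{0}\in J(G)$, first show $E(G)\cap J(G)=\emptyset $ (from $\sharp E(G)\leq 2$ by \cite[Lemma 2.3]{S3}, backward invariance of $E(G)\cap J(G)$, and the disjointness), then use density of the backward orbit of the non-exceptional point $z_{0}$ (\cite[Lemma 2.3]{S3} again) to produce $z_{1}\in V\cap J(G)$ and a word $w$ of length $nl$ with $h_{w}(z_{1})=z_{0}$, and only then apply the propagation step to conclude that $\varphi $ is constant on the open set $h_{w}(W)\ni z_{0}$. As $z_{0}$ is arbitrary, $\varphi $ is locally constant on $\CCI $, hence constant — the contradiction. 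The exceptional-set computation and the backward-orbit density are exactly the ingredients missing from your proposal, and your fallback ``cleaner route'' via Theorem~\ref{t:mtauspec}-\ref{t:mtauspec12} does not exclude local constancy of $\varphi $ on a piece of $J(G)$ either. (Minor point: the paper also avoids your componentwise reduction to eigenvectors by using a single $l$ with $M_{\tau }^{l}\equiv \mbox{id}$ on all of LS$({\cal U}_{f,\tau }(\CCI ))$.)
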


\begin{df}
Let $U$ be a domain in $\CCI $ and let 
$g:U\rightarrow \CCI $ be a meromorphic function. 
For each $z\in U$, we denote by $\| g'(z)\| _{s}$ the 
norm of the derivative of $g$ at $z$ with respect to the spherical metric. 
\end{df}
\begin{df}
\label{d:dfu}
Let $m\in \NN .$ 
Let $h=(h_{1},\ldots, h_{m})\in (\Rat)^{m}$ be an element such that 
$h_{1},\ldots ,h_{m}$ are 
mutually distinct. 
We set 
$\Gamma := \{ h_{1},\ldots ,h_{m}\} .$ 
Let $f:\Gamma ^{\NN }\times 
\CCI \rightarrow \Gamma ^{\NN }\times \CCI $ be the 
skew product associated with $\Gamma .$  
Let $\mu \in {\frak M}_{1}(\Gamma ^{\NN }\times \CCI )$ be an $f$-invariant Borel probability measure. 
%We set ${\cal W}_{m}:= \{ (p_{1},\ldots ,p_{m})\in \RR ^{m}\mid \sum _{j=1}^{m}p_{j}=1,\ p_{j}>0 (\forall j)\} .$ 
For each $p=(p_{1},\ldots ,p_{m})\in {\cal W}_{m}$, 
we define a function $\tilde{p}:\Gamma ^{\NN }\times \CCI \rightarrow \RR $ by 
$\tilde{p}(\gamma ,y):=p_{j}$  if $\gamma _{1}=h_{j} $ 
(where $\gamma =(\gamma _{1},\gamma _{2},\ldots )$), and  we set 
$$u(h,p,\mu ):= 
\frac{-(\int _{\Gamma ^{\NN }\times \CCI }\log \tilde{p}\ d\mu  )}
{\int _{\Gamma ^{\NN }\times \CCI }\log \| f'\| _{s} \ d\mu }$$ 
(when the integral of the denominator converges). 
\end{df}
\begin{df} \label{d:green} 
Let $h=(h_{1},\ldots , h_{m})\in {\cal P}^{m}$ be an element such that 
$h_{1},\ldots ,h_{m}$ are 
mutually distinct. 
We set 
$\Gamma := \{ h_{1},\ldots ,h_{m}\} .$ 
For any $(\gamma ,y)\in \Gamma ^{\NN }\times \CC $, 
let $G_{\gamma }(y):= \lim _{n\rightarrow \infty }\frac{1}{\deg (\gamma _{n,1})}
\log ^{+}|\gamma _{n,1}(y)|$, 
where $\log ^{+}a:=\max\{\log a,0\} $ for each $a>0.$  
By the arguments in \cite{Se}, for each $\gamma \in \Gamma ^{\NN }$, 
$G_{\gamma }(y) $ exists, 
%$(\gamma ,y)\mapsto G_{\gamma }(y)$ is continuous on $\Gamma ^{\NN }\times \CC $, 
$G_{\gamma }$ is subharmonic on $\CC $, and 
$G_{\gamma }|_{A_{\infty ,\gamma }}$ is equal to the Green's function on 
$A_{\infty ,\gamma }$ with pole at $\infty $.
% for each $\gamma \in \Gamma ^{\NN }.$ 
Moreover, $(\gamma ,y)\mapsto G_{\gamma }(y)$ is continuous on $\Gamma ^{\NN }\times \CC .$ 
Let $\mu _{\gamma }:=dd^{c}G_{\gamma }$, where $d^{c}:=\frac{i}{2\pi }(\overline{\partial }-\partial ).$ 
Note that by the argument in \cite{J1,J2}, 
$\mu _{\gamma }$ is a Borel probability measure on $J_{\gamma }$ such that 
$\mbox{supp}\, \mu _{\gamma }=J_{\gamma }.$ 
Furthermore, for each $\gamma \in \Gamma ^{\NN }$, 
let $\Omega (\gamma )=\sum _{c} G_{\gamma }(c)$, where $c$ runs over all critical points of 
$\gamma _{1}$ in $\CC $, counting multiplicities.   
\end{df}
\begin{rem}
\label{r:maxrelent}
Let $h=(h_{1},\ldots ,h_{m})\in (\Ratp)^{m}$ be an element such that 
$h_{1},\ldots ,h_{m}$ are mutually distinct. 
%and $\deg (h_{j})\geq 2$ for each $j=1,\ldots ,m.$ 
Let $\Gamma =\{ h_{1},\ldots ,h_{m}\} $ and 
let $f:\Gamma ^{\NN }\times \CCI \rightarrow \Gamma ^{\NN }\times \CCI $ 
be the skew product map associated with $\Gamma .$ 
Moreover, let $p=(p_{1},\ldots ,p_{m})\in {\cal W}_{m}$ and 
let $\tau =\sum _{j=1}^{m}p_{j}\delta _{h_{j}}\in {\frak M}_{1}(\Gamma ).$ 
Then, there exists a unique $f$-invariant ergodic Borel probability measure 
$\mu $ on $\Gamma ^{\NN }\times \CCI $ such that $\pi _{\ast }(\mu )=\tilde{\tau }$ and   
$h_{\mu }(f|\sigma )=\max _{\rho \in {\frak E}_{1}(\Gamma ^{\NN }\times \CCI ): 
f_{\ast }(\rho )=\rho, \pi _{\ast }(\rho )=\tilde{\tau} }  h_{\rho }(f|\sigma )=\sum _{j=1}^{m}p_{j}\log (\deg (h_{j}))$, 
where $h_{\rho }(f|\sigma )$ denotes the relative metric entropy 
of $(f,\rho )$ with respect to $(\sigma, \tilde{\tau })$, and 
${\frak E}_{1}(\cdot )$ denotes the space of ergodic measures (see \cite{S3}).  
This $\mu $ is called the {\bf maximal relative entropy measure} for $f$ with respect to 
$(\sigma ,\tilde{\tau }).$   
\end{rem}
\begin{df}
\label{d:phe}
Let $V$ be a non-empty open subset of $\CCI .$ Let $\varphi :V \rightarrow \CC $ be a function and 
let $y\in V $ be a point. Suppose that $\varphi $ is bounded around $y.$ 
Then we set  
$$\mbox{H\"{o}l}(\varphi ,y):= 
\inf \{ \beta \in \RR \mid \limsup _{z\rightarrow y}
\frac{|\varphi (z)-\varphi (y)|}{d(z,y)^{\beta }}=\infty \}, $$
where $d$ denotes the spherical distance. 
This is called the {\bf pointwise H\"{o}lder exponent of $\varphi $ at $y.$} 
\end{df}
\begin{rem} 
If $\mbox{H\"{o}l}(\varphi ,y)<1$, then 
$\varphi $ is non-differentiable at $y.$ 
If 
 $\mbox{H\"{o}l}(\varphi ,y)>1$, then 
$\varphi $ is differentiable at $y$ and the derivative at $y$ is equal to $0.$
\end{rem}
%\begin{df}
%Let $Y$ be a compact metric space and let ${\cal F}$ be a 
%subset of $C(Y).$ We set 
%${\cal F}_{nc}:= \{ \varphi \in {\cal F}\mid \varphi \mbox{ is not constant} \}.$ 
%\end{df}
We now present a result on non-differentiability of non-constant finite linear combinations of 
unitary eigenvectors of $M_{\tau }$ 
at almost every point in $J(G_{\tau })$ with respect to the projection of the 
maximal relative entropy measure. 
\begin{thm}[{\bf Non-differentiability of $\varphi \in (\LSfc )_{nc}$ 
at points in $J(G_{\tau })$}]
\label{t:hnondiff}
Let $m\in \NN $ with $m\geq 2.$ 
Let $h=(h_{1},\ldots ,h_{m})\in (\emRatp)^{m}$ and we set 
$\Gamma := \{ h_{1},h_{2},\ldots ,h_{m}\} .$ 
Let $G=\langle h_{1},\ldots ,h_{m}\rangle .$ 
Let $p=(p_{1},\ldots ,p_{m})\in {\cal W}_{m}.$  
Let $f:\Gamma ^{\NN }\times \CCI \rightarrow \Gamma ^{\NN }\times \CCI $ be the  
skew product associated with $\Gamma .$  
Let 
$\tau := \sum _{j=1}^{m}p_{j}\delta _{h_{j}}\in {\frak M}_{1}(\Gamma )
\subset {\frak M}_{1}({\cal P }).$
Let 
$\mu \in {\frak M}_{1}(\Gamma ^{\NN }\times \CCI )$ be the maximal relative entropy measure 
 for $f:\Gamma ^{\NN }\times \CCI \rightarrow \Gamma ^{\NN }\times \CCI $ with respect to 
 $(\sigma ,\tilde{\tau }).$ 
%(Note that the existence and the uniqueness of the maximal relative 
%entropy measure has been shown in \cite{S3}).  
Moreover, let 
$\lambda := (\pi _{\CCI })_{\ast }(\mu )\in {\frak M}_{1}(\CCI ).$ 
Suppose that  
%$\hat{K}(G)\neq \emptyset $, 
$G$ is hyperbolic, and 
$h_{i}^{-1}(J(G))\cap h_{j}^{-1}(J(G))=\emptyset $ for each 
$(i,j)$ with $i\neq j$. 
Then, we have all of the following. 
\begin{enumerate}
\item \label{t:hnondiff0} $G_{\tau }=G$ is mean stable and $J_{\ker }(G)=\emptyset .$ 
\item \label{t:hnondiff1}
$0<\dim _{H}(J(G))<2 .$ 
\item \label{t:hnondiff2} 
{\em supp} $\lambda =J(G).$
\item \label{t:hnondiff3} 
For each $z\in J(G)$, $\lambda (\{ z\} )=0.$ 
\item \label{t:hnondiff4} 
%For almost every $z_{0}\in J(G)$ with respect to $\lambda $, 
There exists a Borel subset $A$ of $J(G)$ with $\lambda (A)=1$ such that 
for each $z_{0}\in A$ and each $\varphi \in (\mbox{{\em LS}}({\cal U}_{f,\tau }(\CCI )))_{nc}$, 
$
\mbox{{\em H\"{o}l}}(\varphi , z_{0})=
u(h,p,\mu ).$ 
\item \label{t:hnondiff4-1}
If $h=(h_{1},\ldots ,h_{m})\in {\cal P}^{m}$, then 
$$
u(h,p,\mu )=\frac{-(\sum _{j=1}^{m}p_{j}\log p_{j})}
{\sum _{j=1}^{m}p_{j}\log \deg (h_{j})+\int _{\Gamma ^{\NN }}\Omega (\gamma )\ d\tilde{\tau }(\gamma )}
$$
and 
\begin{align*}
2 > & \dim _{H}(\{ z\in J(G) \mid \mbox{ for each }\varphi \in (\mbox{{\em LS}}({\cal U}_{f,\tau }(\CCI )))_{nc},\ 
\emHol (\varphi ,z)=u(h,p,\mu )\} )\\ 
\geq & \frac{\sum _{j=1}^{m}p_{j}\log \deg (h_{j})-\sum _{j=1}^{m}p_{j}\log p_{j}}
{\sum _{j=1}^{m}p_{j}\log \deg (h_{j})+\int _{\Gamma ^{\NN }}\Omega (\gamma )\ d\tilde{\tau }(\gamma )}>0. 
\end{align*} 
\item \label{t:hnondiff5} 
Suppose $h=(h_{1},\ldots ,h_{m})\in {\cal P}^{m}.$ 
Moreover, suppose that at least one of the following {\em (a)}, {\em (b)}, and {\em (c)} holds:
%\begin{itemize}
%\item[{\em (a)}]
{\em (a)} $\sum _{j=1}^{m}p_{j}\log (p_{j}\deg (h_{j}))>0.$
%\item[{\em (b)}]
{\em (b)} $P^{\ast }(G)$ is bounded in $\CC .$ 
%\item[{\em (c)}]
{\em (c)} $m=2.$ 
% \end{itemize}
Then, $u(h,p,\mu )<1$ and  
for each non-empty open subset $U$ of $J(G)$ there exists an uncountable dense subset $A_{U}$ of $U$ such that 
for each $z\in A_{U}$ and each $\varphi \in (\mbox{{\em LS}}({\cal U}_{f,\tau }(\CCI )))_{nc}$, 
$\varphi $ is non-differentiable at $z.$ 
\end{enumerate}
 
\end{thm}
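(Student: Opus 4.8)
The plan is to set up the dynamics of the skew product $f:\Gamma^{\NN}\times\CCI\to\Gamma^{\NN}\times\CCI$ and exploit its hyperbolicity, then feed the resulting distortion and ergodic-theoretic estimates into a self-similarity argument for the unitary eigenvectors of $M_\tau$. First I would establish items \ref{t:hnondiff0}--\ref{t:hnondiff3}. Statement \ref{t:hnondiff0} follows from Lemma~\ref{l:disjker} (the disjointness hypothesis $h_i^{-1}(J(G))\cap h_j^{-1}(J(G))=\emptyset$ gives $J_{\ker}(G)=\emptyset$) together with Proposition~\ref{p:hyppjke} applied to the hyperbolic—hence semi-hyperbolic (Remark~\ref{r:uhsp})—semigroup $G$, noting that a finitely generated semigroup is generated by a compact set. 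Since $G$ is hyperbolic, $P(G)\subset F(G)$, so standard distortion/conformal-measure arguments for expanding rational semigroups give $\dim_H(J(G))<2$ (this is essentially the content of Theorem~\ref{t:thmC}(1), which is being proved here); positivity $\dim_H(J(G))>0$ holds because $J(G)\supset J_\gamma$ for $\gamma\in\Gamma^{\NN}$ and each $J_\gamma$ is a perfect set with uncountably many points. For \ref{t:hnondiff2}: the maximal relative entropy measure $\mu$ has $\pi_{\CCI}$-support equal to $\tilde J(f)$ projected down, which equals $J(G)$ under hyperbolicity; and \ref{t:hnondiff3} (non-atomicity of $\lambda$) follows since $\mu$ is the measure of maximal relative entropy, so it has positive entropy and no atoms, while $\lambda=(\pi_{\CCI})_*\mu$ inherits this because fibers are handled by the backward-contraction structure.

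The heart of the argument is \ref{t:hnondiff4}. Fix a non-constant $\varphi\in(\mathrm{LS}({\cal U}_{f,\tau}(\CCI)))_{nc}$. By Theorem~\ref{t:mtauspec}, $\varphi$ is locally constant on $F(G)$ and satisfies a relation $M_\tau^{N}(\varphi)=c\varphi$ for some root of unity; working with $\Gamma^N$ instead of $\Gamma$ I may assume $M_\tau(\varphi)=c\varphi$, i.e. $\varphi(z)=c^{-1}\sum_j p_j\varphi(h_j(z))$. This is the functional equation driving everything. For a point $z_0\in J(G)$ and a typical (with respect to $\mu$) backward orbit $\gamma=(\gamma_1,\gamma_2,\dots)$, one has inverse branches $\gamma_{n,1}^{-1}$ defined near $z_0$, contracting at an exponential rate governed by $\|f'\|_s$ along the orbit. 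Iterating the functional equation $n$ times expresses $\varphi$ near $z_0$ as a weighted average of values of $\varphi$ on $n$-th level pieces, the weight of the $\gamma$-piece being $\approx p_{\gamma_1}\cdots p_{\gamma_n}$ and its diameter being $\approx \exp(-S_n\log\|f'\|_s)$. Comparing the oscillation of $\varphi$ against the piece-diameter and applying Birkhoff's ergodic theorem to $\log\tilde p$ and $\log\|f'\|_s$ with respect to the ergodic measure $\mu$ yields, for $\mu$-a.e. $(\gamma,z_0)$,
$$
\mbox{H\"ol}(\varphi,z_0)=\frac{-\int\log\tilde p\,d\mu}{\int\log\|f'\|_s\,d\mu}=u(h,p,\mu),
$$
and one collects all such $z_0$ into a Borel set $A$ with $\lambda(A)=1$. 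The key technical inputs are the Koebe distortion theorem (to control distortion of the inverse branches so that "$\approx$" becomes genuine two-sided bounds) and the fact that a non-constant $\varphi$ genuinely oscillates on every $n$-th level piece meeting $J(G)$ — this is exactly Lemma~\ref{l:lsncnonc}, which says $J(G)$ is precisely the non-constancy locus of $\varphi$. I also need that the lower bound on oscillation does not decay too fast; this comes from the uniform expansion (hyperbolicity) plus the fact that the pieces shrink to points of $J(G)$ where $\varphi$ is non-constant.

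For \ref{t:hnondiff4-1}, in the polynomial case I would rewrite the denominator $\int\log\|f'\|_s\,d\mu$ using the Green's function: by the Manning–Przytycki type formula (the arguments of \cite{Se,J1,J2} adapted to skew products, cf. Definition~\ref{d:green}), $\int\log\|f_\gamma'\|\,d\mu_\gamma = \log\deg + \Omega(\gamma)$ fiberwise, and integrating over $\gamma$ against $\tilde\tau$ plus using $h_\mu(f|\sigma)=\sum p_j\log\deg(h_j)$ gives the stated closed form; the numerator $-\int\log\tilde p\,d\mu=-\sum p_j\log p_j$ is immediate since $\tilde p$ depends only on $\gamma_1$ and $\pi_*\mu=\tilde\tau$. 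The Hausdorff-dimension bounds on the level set follow by a standard mass-distribution / Frostman argument applied to $\lambda$ restricted to $A$, using that $\lambda$ has local dimension $h_\mu(f|\sigma)/\chi$ where $\chi=\int\log\|f'\|_s\,d\mu$. Finally \ref{t:hnondiff5}: it suffices to show $u(h,p,\mu)<1$, for then $\mbox{H\"ol}(\varphi,z_0)<1$ at every $z_0\in A$ forces non-differentiability there, and since $A$ is $\lambda$-full and $\mathrm{supp}\,\lambda=J(G)$, $A$ meets every non-empty open subset $U$ of $J(G)$ in an uncountable set. The inequality $u<1$ means $-\sum p_j\log p_j<\sum p_j\log\deg(h_j)+\int\Omega\,d\tilde\tau$; under (a) this is $\sum p_j\log(p_j\deg h_j)>0\le\int\Omega\,d\tilde\tau$ directly (since $\Omega\ge0$); under (b) boundedness of $P^*(G)$ forces $\int\Omega\,d\tilde\tau>0$ and one checks the remaining term dominates $-\sum p_j\log p_j$ via $\deg(h_j)\ge2$ and a convexity estimate; case (c) $m=2$ is a direct computation with two parameters.

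The main obstacle I anticipate is the lower bound in the H\"older-exponent computation in \ref{t:hnondiff4}: upper-bounding $\mbox{H\"ol}(\varphi,z_0)$ (i.e. showing $\varphi$ is at least that rough) requires quantitative non-constancy of $\varphi$ on shrinking pieces uniformly along a $\mu$-typical backward orbit, which is delicate because a priori $\varphi$ could be nearly constant on many pieces; controlling this needs a careful bounded-distortion argument combined with the structure theorem for ${\cal U}_{f,\tau}(\CCI)$ and the disjointness hypothesis to prevent cancellation in the weighted averages.
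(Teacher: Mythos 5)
Your overall route coincides with the paper's: statement (1) via Lemma~\ref{l:disjker} and Proposition~\ref{p:hyppjke}; statements (2)--(4) by quoting known results on hyperbolic (expanding) rational semigroups and the maximal relative entropy measure; statement (5) by iterating the eigenvalue equation for $M_{\tau }$ so that, thanks to the disjointness hypothesis, only one branch survives in the difference $\varphi (a)-\varphi (b)$, then combining Koebe distortion, the quantitative non-constancy supplied by Lemma~\ref{l:lsncnonc}, and Birkhoff's theorem applied to $\log \tilde{p}$ and $\log \| f'\| _{s}$; and statement (6) by the fiberwise potential-theoretic computation of the Lyapunov exponent. You also correctly isolate the delicate point (the lower bound on the oscillation along a $\mu $-typical backward orbit), which the paper resolves exactly as you suggest.

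However, your treatment of case (b) of statement (7) contains a genuine error. If $P^{\ast }(G)$ is bounded in $\CC $, then every finite critical point of every $\gamma _{1}$ has bounded forward orbit along the sequence, so $G_{\gamma }$ vanishes there and $\Omega (\gamma )=0$ for every $\gamma $ --- the opposite of your claim that boundedness ``forces $\int \Omega \, d\tilde{\tau }>0$''. With $\Omega \equiv 0$ the required inequality becomes $-\sum _{j}p_{j}\log p_{j}<\sum _{j}p_{j}\log \deg (h_{j})$, and this does \emph{not} follow from $\deg (h_{j})\geq 2$ and convexity: for $m=3$, $p_{j}=1/3$, $\deg (h_{j})=2$ it fails, since $\log 3>\log 2$. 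The paper's actual argument uses the upper bound $\dim _{H}(\lambda )\leq \dim _{H}(J(G))<2$ from statement (6): with $\Omega \equiv 0$ this gives $2\sum _{j}p_{j}\log \deg (h_{j})>\sum _{j}p_{j}\log \deg (h_{j})-\sum _{j}p_{j}\log p_{j}$, i.e.\ $\sum _{j}p_{j}\log (p_{j}\deg (h_{j}))>0$, which reduces case (b) to case (a). The geometric input $\dim _{H}(J(G))<2$ is thus essential here and is missing from your sketch. A second, minor point: the local dimension of $\lambda $ is $h_{\mu }(f)/\chi $ with $h_{\mu }(f)=h_{\mu }(f|\sigma )+h_{\tilde{\tau }}(\sigma )$, not $h_{\mu }(f|\sigma )/\chi $; with the formula as you wrote it the lower bound in statement (6) would lose the $-\sum _{j}p_{j}\log p_{j}$ term.
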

\begin{rem}
By Theorems~\ref{t:mtauspec} and \ref{t:hnondiff}, it follows that 
under the assumptions of Theorem~\ref{t:hnondiff}, the chaos 
of the averaged system disappears in the $C^{0}$ ``sense'', but it 
remains in the $C^{1}$ ``sense''. 
\end{rem}
We now present a result on the representation of pointwise H\"{o}lder 
exponent of $\varphi \in (\LSfc )_{nc}$ at almost every point 
in $J(G_{\tau })$ with respect to the 
$\delta $-dimensional Hausdorff measure, where $\delta =\dim _{H}(J(G_{\tau })).$ 
\begin{thm}
\label{t:hdiffornd}
Let $m\in \NN $ with $m\geq 2.$ 
Let $h=(h_{1},\ldots ,h_{m})\in (\emRatp)^{m}$ and we set 
$\Gamma := \{ h_{1},h_{2},\ldots ,h_{m}\} .$ 
Let $G=\langle h_{1},\ldots ,h_{m}\rangle .$ 
Let $p=(p_{1},\ldots ,p_{m})\in {\cal W}_{m}.$  
Let $f:\Gamma ^{\NN }\times \CCI \rightarrow \Gamma ^{\NN }\times \CCI $ be the  
skew product associated with $\Gamma .$  
Let 
$\tau := \sum _{j=1}^{m}p_{j}\delta _{h_{j}}\in {\frak M}_{1}(\Gamma )
\subset {\frak M}_{1}(\emRatp).$
Suppose that  
%$\hat{K}(G)\neq \emptyset $, 
$G$ is hyperbolic and 
$h_{i}^{-1}(J(G))\cap h_{j}^{-1}(J(G))=\emptyset $ for each 
$(i,j)$ with $i\neq j$. 
Let $\delta := \dim _{H}J(G)$ and let $H^{\delta }$ be the 
$\delta$-dimensional Hausdorff measure.  
%(Remark: by \cite{S6}, $0<\delta <2$ and 
%$0<H^{\delta }(J(G))<\infty $). 
Let $\tilde{L}: C(\tilde{J}(f))\rightarrow C(\tilde{J}(f))$ be the operator 
defined by $\tilde{L}(\varphi )(z)=\sum _{f(w)=z}\varphi (w)\| f'(w)\| _{s} ^{-\delta }.$ 
Moreover, let $L:C(J(G))\rightarrow C(J(G))$ be the operator defined by 
$L(\varphi )(z)=\sum _{j=1}^{m}\sum _{h_{j}(w)=z}\varphi (w)\| h_{j}'(w)\| _{s}^{-\delta }.$ 
%Let $\tilde{\nu } \in {\frak M}_{1}(\tilde{J}(f))$ be the unique element such that 
%$\tilde{L}^{\ast } (\tilde{\nu })=\tilde{\nu }.$ Moreover, 
%let $\tilde{\alpha }=\lim _{n\rightarrow \infty }\tilde{L}^{n}1\in C(\tilde{J}(f))$ and 
%$\alpha =\lim _{n\rightarrow \infty }L^{n}1\in C(J(G))$ (note that 
%the existence and uniqueness of $\tilde{\nu }$ and the existence of $\tilde{\alpha }$ and 
%$\alpha $ have been shown in \cite{S3}). 
%Furthermore, let $\tilde{\rho }:= \tilde{\alpha }\tilde{\nu }\in {\frak M}_{1}(\tilde{J}(f))$ 
%and $\nu :=(\pi _{\CCI })_{\ast }(\tilde{\nu })\in {\frak M}_{1}(J(G)).$ 
Then, we have all of the following.
\begin{enumerate}
\item \label{t:hdiffornd0} 
$G_{\tau }=G$ is mean stable and $J_{\ker }(G)=\emptyset .$ 
\item \label{t:hdiffornd1}
There exists a unique element 
$\tilde{\nu } \in {\frak M}_{1}(\tilde{J}(f))$ such that 
$\tilde{L}^{\ast } (\tilde{\nu })=\tilde{\nu }.$ 
Moreover, the limits 
$\tilde{\alpha }=\lim _{n\rightarrow \infty }\tilde{L}^{n}(1)\in C(\tilde{J}(f))$ and 
$\alpha =\lim _{n\rightarrow \infty }L^{n}(1)\in C(J(G))$ exist, 
where $1$ denotes the constant function taking its value $1$. 
\item \label{t:hdiffornd2}
Let $\nu :=(\pi _{\CCI })_{\ast }(\tilde{\nu })\in {\frak M}_{1}(J(G)).$ Then 
$0<\delta <2$, $0<H^{\delta }(J(G))<\infty $, and 
$\nu =\frac{H^{\delta }}{H^{\delta }(J(G))}.$
\item \label{t:hdiffornd3} 
Let $\tilde{\rho }:= \tilde{\alpha }\tilde{\nu }\in {\frak M}_{1}(\tilde{J}(f)).$  
%and $\nu :=(
Then $\tilde{\rho }$ is $f$-invariant and ergodic. Moreover, $\min _{z\in J(G)}\alpha (z)>0.$ 
\item \label{t:hdiffornd4} 
%for $H^{\delta }$-a.e. $z_{0}\in J(G)$, 
There exists a Borel subset of $A$ of $J(G)$ with $H^{\delta }(A)=H^{\delta }(J(G))$ 
such that for each $z_{0}\in A$ and each $\varphi \in (\mbox{{\em LS}}({\cal U}_{f,\tau }(\CCI )))_{nc},$   
$$\mbox{{\em H\"{o}l}}(\varphi ,z_{0})=
u(h,p,\tilde{\rho })=
\frac{-\sum _{j=1}^{m}(\log p_{j})\int _{h_{j}^{-1}(J(G))}\alpha (y)\ dH^{\delta }(y)}{\sum _{j=1}^{m}\int _{h_{j}^{-1}(J(G))}
\alpha (y)\log \| h_{j}'(y)\| _{s}\ dH^{\delta }(y)}.$$
\end{enumerate}
\end{thm}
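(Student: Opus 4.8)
The plan is to read off assertions (1)--(4) by assembling results already in the paper with the thermodynamic formalism for hyperbolic rational semigroups, and then to reduce the last assertion to a Birkhoff-ergodic-theorem computation modelled on the one behind Theorem~\ref{t:hnondiff}(\ref{t:hnondiff4}). First, since $h_{i}^{-1}(J(G))\cap h_{j}^{-1}(J(G))=\emptyset$ for $i\neq j$, Lemma~\ref{l:disjker} gives $J_{\ker}(G)=\emptyset$, and since $G$ is hyperbolic it is semi-hyperbolic with $F(G)\neq\emptyset$ (Remark~\ref{r:hypdeg2sh}), so $G=G_{\tau}$ is mean stable by Proposition~\ref{p:hyppjke}; this is exactly Theorem~\ref{t:hnondiff}(\ref{t:hnondiff0}), which is assertion (1).

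For assertions (2)--(4) I would apply Ruelle--Perron--Frobenius theory to the skew product $f$ restricted to $\tilde{J}(f)$ with the H\"older potential $-\delta\log\|f'\|_{s}$; hyperbolicity ($P(G)\subset F(G)$) makes $f|_{\tilde{J}(f)}$ fibrewise expanding, and $\tilde{L}$ is precisely the associated transfer operator. Bowen's formula for (semi-)hyperbolic rational semigroups (\cite{S7}, \cite{SU1}) identifies $\delta=\dim_{H}J(G)$ as the zero of $t\mapsto P(-t\log\|f'\|_{s})$, so the leading eigenvalue of $\tilde{L}$ is $1$; the standard conclusions then give the unique $\tilde{\nu}$ with $\tilde{L}^{\ast}\tilde{\nu}=\tilde{\nu}$, the convergences $\tilde{L}^{n}(1)\to\tilde{\alpha}$ with $\tilde{L}\tilde{\alpha}=\tilde{\alpha}$, $\min\tilde{\alpha}>0$, and that $\tilde{\rho}=\tilde{\alpha}\tilde{\nu}$ is the ($f$-invariant, ergodic) equilibrium state of $-\delta\log\|f'\|_{s}$. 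The disjointness hypothesis makes $\pi_{\CCI}\colon\tilde{J}(f)\to J(G)$ a homeomorphism: $\gamma\mapsto J_{\gamma}$ is continuous (Remark~\ref{r:mhdt}), so $\tilde{J}(f)=\bigcup_{\gamma}\{\gamma\}\times J_{\gamma}$; if $z\in J_{\gamma}\cap J_{\gamma'}$ with $\gamma_{1}=h_{a},\gamma_{1}'=h_{b}$ then $z\in h_{a}^{-1}(J(G))\cap h_{b}^{-1}(J(G))$, forcing $a=b$, and inductively $\gamma=\gamma'$; and surjectivity onto $J(G)$ follows from the backward self-similarity $J(G)=\bigsqcup_{j}h_{j}^{-1}(J(G))$ (Lemma~\ref{l:bss}) together with hyperbolicity. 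This homeomorphism conjugates $\tilde{L}$ to $L$ (sending the constant $1$ to $1$), so $L^{n}(1)\to\alpha$, $L\alpha=\alpha$, $\min_{J(G)}\alpha>0$, $\tilde{\alpha}\circ\pi_{\CCI}^{-1}=\alpha$, and $\nu=(\pi_{\CCI})_{\ast}\tilde{\nu}$ is the $\delta$-conformal measure on $J(G)$. Finally $0<\delta<2$ is Theorem~\ref{t:hnondiff}(\ref{t:hnondiff1}), while $0<H^{\delta}(J(G))<\infty$ and $\nu=H^{\delta}/H^{\delta}(J(G))$ are the usual consequences of the existence of the conformal measure and of bounded (Koebe) distortion of the inverse branches. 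This gives assertions (2), (3), (4).

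For the last assertion, recall from Theorem~\ref{t:mtauspec} that every $\varphi\in\LSfc$ is a finite combination of unitary eigenvectors, is continuous on $\CCI$, locally constant on $F(G)$, and varies only on $J(G)$, and that by Lemma~\ref{l:lsncnonc} a non-constant such $\varphi$ is non-constant in every neighbourhood of every point of $J(G)$, with ${\rm int}(J(G))=\emptyset$. The core claim is that for $H^{\delta}$-a.e. $z_{0}\in J(G)$,
\[
\Hol(\varphi,z_{0})=u(h,p,\tilde{\rho})=\frac{-\int\log\tilde{p}\,d\tilde{\rho}}{\int\log\|f'\|_{s}\,d\tilde{\rho}}.
\]
I would obtain this by running the scheme of Theorem~\ref{t:hnondiff}(\ref{t:hnondiff4}) with $\tilde{\rho}$ in place of the maximal relative entropy measure: by disjointness each $z_{0}$ has a unique itinerary $(h_{j_{1}},h_{j_{2}},\ldots)$ under $J(G)=\bigcup_{w}h_{w_{1}}^{-1}\circ\cdots\circ h_{w_{n}}^{-1}(J(G))$, and, writing $V_{n}(z_{0})$ for the generation-$n$ piece containing $z_{0}$ and $\gamma_{n,1}=h_{j_{n}}\circ\cdots\circ h_{j_{1}}$, Koebe gives $\operatorname{diam}V_{n}(z_{0})\asymp\|(\gamma_{n,1})'(z_{0})\|_{s}^{-1}$, while the eigenvalue relations $M_{\tau}\varphi_{i}=a_{i}\varphi_{i}$ express the oscillation of $\varphi$ over $V_{n}(z_{0})$ as $\asymp\prod_{k=1}^{n}p_{j_{k}}$, the lower bound using Lemma~\ref{l:lsncnonc} to preclude collapse on pieces. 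Taking logarithms and applying Birkhoff's ergodic theorem to $(f,\tilde{\rho})$ — so that $-\tfrac1n\sum_{k=1}^{n}\log p_{j_{k}}\to-\int\log\tilde{p}\,d\tilde{\rho}$ and $\tfrac1n\log\|(\gamma_{n,1})'(z_{0})\|_{s}\to\int\log\|f'\|_{s}\,d\tilde{\rho}$ — yields $\Hol(\varphi,z_{0})=u(h,p,\tilde{\rho})$ for $(\pi_{\CCI})_{\ast}\tilde{\rho}$-a.e. $z_{0}$; and since $(\pi_{\CCI})_{\ast}\tilde{\rho}=\alpha\nu$ with $\alpha$ bounded away from $0$ and $\infty$ and $\nu=H^{\delta}/H^{\delta}(J(G))$, this is exactly $H^{\delta}$-a.e. $z_{0}$. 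The stated closed form then follows by unwinding $u(h,p,\tilde{\rho})$: $\int\log\tilde{p}\,d\tilde{\rho}=\sum_{j}(\log p_{j})\,\tilde{\rho}(\{\gamma_{1}=h_{j}\})$ and $\tilde{\rho}(\{\gamma_{1}=h_{j}\})=\int_{\{\gamma_{1}=h_{j}\}}\tilde{\alpha}\,d\tilde{\nu}$, which the conjugacy pushes to $\int_{h_{j}^{-1}(J(G))}\alpha\,d\nu=H^{\delta}(J(G))^{-1}\int_{h_{j}^{-1}(J(G))}\alpha\,dH^{\delta}$, and the same rewriting of the denominator gives the asserted expression, the factors $H^{\delta}(J(G))^{-1}$ cancelling.

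The main obstacle is the two-sided oscillation estimate for an arbitrary $\varphi\in(\LSfc)_{nc}$: the upper bound $\operatorname{osc}_{V_{n}(z_{0})}\varphi\lesssim\prod_{k}p_{j_{k}}$ is soft, but the matching lower bound along a set of itineraries of full $\tilde{\rho}$-measure — needed so that the $\limsup$ defining $\Hol(\varphi,z_{0})$ is genuinely attained and equals $u(h,p,\tilde{\rho})$ rather than merely being bounded by it — requires carefully combining the relations $M_{\tau}\varphi_{i}=a_{i}\varphi_{i}$, the non-collapse of $\varphi$ on pieces from Lemma~\ref{l:lsncnonc}, and the uniform bounded distortion of the inverse branches coming from hyperbolicity and Koebe. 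This is precisely where the proof of Theorem~\ref{t:hnondiff} does its real work, and assertion (5) here is obtained by feeding the conformal equilibrium state $\tilde{\rho}$, in place of the maximal relative entropy measure, into the same mechanism.
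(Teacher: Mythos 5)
Your proposal is correct and follows essentially the same route as the paper: assertion (1) via Lemma~\ref{l:disjker} and Proposition~\ref{p:hyppjke}, assertions (2)--(4) by Ruelle--Perron--Frobenius theory for the transfer operator with potential $-\delta\log\|f'\|_{s}$ on the expanding system (the paper routes this through the cited results of \cite{S6}, after first manufacturing a level-$k$ open set condition from the expansion), and assertion (5) by feeding the conformal equilibrium state $\tilde{\rho}$ into the ergodic H\"older-exponent mechanism, which is exactly Proposition~\ref{p:erghol}. The two-sided oscillation estimate you flag as the main obstacle is indeed already packaged in Proposition~\ref{p:erghol} (via Lemmas~\ref{l:nondiff1} and \ref{l:diff1}), so no new work is needed there.
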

\begin{rem}
\label{r:tinftyuns}
Let $m\in \NN $ with $m\geq 2.$ 
Let $h=(h_{1},\ldots ,h_{m})\in {\cal P}^{m}$ and let 
$G=\langle h_{1},\ldots ,h_{m}\rangle .$ 
Let $p=(p_{1},\ldots ,p_{m})\in {\cal W}_{m}$ and 
let $\tau =\sum _{j=1}^{m}p_{j}\delta _{h_{j}}.$ 
Suppose that $\hat{K}(G)\neq \emptyset $, 
$G$ is hyperbolic, and $h_{i}^{-1}(J(G))\cap h_{j}^{-1}(J(G))=\emptyset $ 
for each $(i,j)$ with $i\neq j.$ Then, by 
Lemma~\ref{l:disjker} and Theorem~\ref{kerJthm2}, 
$T_{\infty ,\tau }\in (\mbox{LS}({\cal U}_{f,\tau }(\CCI )))_{nc}.$ 
\end{rem}
\begin{rem}
\label{r:dinondi}
Let $m\in \NN $ with $m\geq 2.$ 
Let $h=(h_{1},\ldots ,h_{m})\in {\cal P}^{m}$ and we set 
$\Gamma := \{ h_{1},\ldots ,h_{m}\} .$ 
Let $G=\langle h_{1},\ldots ,h_{m}\rangle .$ 
Let $p=(p_{1}, \ldots ,p_{m})\in {\cal W}_{m}.$  
Let $f:\Gamma ^{\NN }\times \CCI \rightarrow \Gamma ^{\NN }\times \CCI $ be the  
skew product associated with $\Gamma .$  
Let 
$\tau := \sum _{j=1}^{m}p_{j}\delta _{h_{j}}\in {\frak M}_{1}(\Gamma )
\subset {\frak M}_{1}({\cal P }).$
Suppose that 
%$J_{\ker }(G)=\emptyset , $ 
$\hat{K}(G)\neq \emptyset $, $G$ is hyperbolic, and 
$h_{i}^{-1}(J(G))\cap h_{j}^{-1}(J(G))=\emptyset $ for each 
$(i,j)$ with $i\neq j.$ Moreover, suppose we have at least one of the following 
(a),(b),(c):  
(a) $\sum _{j=1}^{m}p_{j}\log (p_{j}\deg (h_{j}))>0$. (b) $P^{\ast }(G)$ is bounded in $\CC .$ 
(c) $m=2.$ 
Then, 
combining Theorem~\ref{t:hnondiff}, Theorem~\ref{t:hdiffornd}, and Remark~\ref{r:tinftyuns}, 
it follows that there exists a number $q>0$ such that 
if $p_{1}<q$, then we have all of the following. 
\begin{enumerate}
\item Let $\mu $ be the maximal relative entropy measure 
for $f$ with respect to $(\sigma ,\tilde{\tau }).$ 
Let $\lambda =(\pi _{\CCI})_{\ast }\mu \in {\frak M}_{1}(J(G)).$ 
Then for $\lambda $-a.e. $z_{0}\in J(G)$ and for any $\varphi \in \LSfc _{nc}$ (e.g., $\varphi =T_{\infty ,\tau }$),  
$\limsup _{n\rightarrow \infty }\frac{|\varphi (y)-\varphi (z_{0})|}
{|y-z_{0}|}=\infty $ and $\varphi $ is not differentiable at $z_{0}.$ 
\item 
Let $\delta =\dim _{H}(J(G))$ and let $H^{\delta }$ be the $\delta $-dimensional 
Hausdorff measure. Then $0<H^{\delta }(J(G))<\infty $ and 
for $H^{\delta }$-a.e. $z_{0}\in J(G)$ and for any $\varphi \in \LSfc $ (e.g., $\varphi =T_{\infty ,\tau }$),  
$\limsup _{n\rightarrow \infty }\frac{|\varphi (y)-\varphi (z_{0})|}
{|y-z_{0}|}=0 $ and $\varphi $ is differentiable at $z_{0}.$ 
\end{enumerate}
\end{rem}
Combining Theorem~\ref{t:mtauspec} and Theorem~\ref{t:hnondiff}, 
we obtain the following result. 
\begin{cor}
\label{c:hypdisjc}
Let $m\in \NN $ with $m\geq 2.$ 
Let $h=(h_{1},\ldots ,h_{m})\in {\cal P}^{m}$ and we set 
$\Gamma := \{ h_{1},\ldots ,h_{m}\} .$ 
Let $G=\langle h_{1},\ldots ,h_{m}\rangle .$ 
Let $p=(p_{1}, \ldots ,p_{m})\in {\cal W}_{m}.$  
Let $f:\Gamma ^{\NN }\times \CCI \rightarrow \Gamma ^{\NN }\times \CCI $ be the  
skew product associated with $\Gamma .$  
Let 
$\tau := \sum _{j=1}^{m}p_{j}\delta _{h_{j}}\in {\frak M}_{1}(\Gamma )
\subset {\frak M}_{1}({\cal P }).$
Suppose that 
%$J_{\ker }(G)=\emptyset , $ 
$\hat{K}(G)\neq \emptyset $, $G$ is hyperbolic, and 
$h_{i}^{-1}(J(G))\cap h_{j}^{-1}(J(G))=\emptyset $ for each 
$(i,j)$ with $i\neq j.$ Moreover, suppose we have at least one of 
the following {\em (a), (b), (c):}   
{\em (a)} $\sum _{j=1}^{m}p_{j}\log (p_{j}\deg (h_{j}))>0.$ 
{\em (b)} $P^{\ast }(G)$ is bounded in $\CC .$ 
{\em (c)} $m=2.$  
Let $\varphi \in C(\CCI )$. Then, we have exactly one of the following {\em (i)} and {\em (ii)}.
\begin{itemize}
\item[{\em (i)}]
There exists a constant function $\zeta \in C(\CCI )$ such that 
$M_{\tau }^{n}(\varphi )\rightarrow  \zeta $ as $n\rightarrow \infty $ in $C(\CCI ).$ 
\item[{\em (ii)}]
There exists an element $\psi \in (\mbox{{\em LS}}({\cal U}_{f,\tau }(\CCI )))_{nc}$ 
and a number $l\in \NN $ such that 
\begin{itemize}
\item 
$M_{\tau }^{l}(\psi )=\psi $, 
\item 
$\{ M_{\tau }^{j}(\psi )\} _{j=0}^{l-1}\subset (\mbox{{\em LS}}({\cal U}_{f,\tau }(\CCI )))_{nc}\subset C_{F(G)}(\CCI )$, 
\item 
there exists an uncountable dense subset $A$ of $J(G)$  such that for each 
$z_{0}\in A$ and each $j$, $M_{\tau }^{j}(\psi )$ is not differentiable at $z_{0}$, and 
\item $M_{\tau }^{nl+j}(\varphi )\rightarrow M_{\tau }^{j}(\psi )$ as $n\rightarrow \infty $ for each 
$j=0,\ldots ,l-1.$ 
\end{itemize} 
\end{itemize}
\end{cor}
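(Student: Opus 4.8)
The plan is to obtain the dichotomy by assembling Theorem~\ref{t:mtauspec} and Theorem~\ref{t:hnondiff}. Write $U_{\tau }:=\mbox{LS}({\cal U}_{f,\tau }(\CCI ))$ and let $(U_{\tau })_{nc}$ denote its non-constant elements. Since $0<p_{j}<1$ for every $j$, we have $\G _{\tau }=\{ h_{1},\ldots ,h_{m}\} $ and $G_{\tau }=G$. By Lemma~\ref{l:disjker}, the hypothesis $h_{i}^{-1}(J(G))\cap h_{j}^{-1}(J(G))=\emptyset $ $(i\neq j)$ gives $J_{\ker }(G_{\tau })=\emptyset $; and since each $h_{j}\in {\cal P}$ we have $G_{\tau }\cap \Ratp \neq \emptyset $, so $J(G_{\tau })\neq \emptyset $ by Remark~\ref{r:sjgg3}. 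Hence Theorem~\ref{t:mtauspec} applies: there is a direct sum decomposition $C(\CCI )=U_{\tau }\oplus {\cal B}_{0,\tau }$ with ${\cal B}_{0,\tau }=\{ \varphi \in C(\CCI )\mid M_{\tau }^{n}(\varphi )\rightarrow 0\} $ closed and $U_{\tau }$ a finite-dimensional, $M_{\tau }$-invariant subspace of $C_{F(G)}(\CCI )$ (part~\ref{t:mtauspec2}); moreover $\sharp \Min(G_{\tau },\CCI )<\infty $ (part~\ref{t:mtauspec3}) and $U_{\tau }$ has a basis of eigenvectors of $M_{\tau }$ all of whose eigenvalues are roots of unity (parts~\ref{t:mtauspec5},\ref{t:mtauspec8}).

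First I would record a common period. Put $l:=\mathrm{lcm}\{ r_{L}\mid L\in \Min(G_{\tau },\CCI )\} \in \NN $. Since $M_{\tau }|_{U_{\tau }}$ is diagonalisable and each of its eigenvalues is an $l$-th root of unity, $M_{\tau }^{l}=\mbox{id}$ on $U_{\tau }$. Now fix $\varphi \in C(\CCI )$ and write $\varphi =\psi +\eta $ with $\psi \in U_{\tau }$ and $\eta \in {\cal B}_{0,\tau }$. Then for every $j\in \{ 0,\ldots ,l-1\} $,
$$M_{\tau }^{nl+j}(\varphi )=M_{\tau }^{j}(\psi )+M_{\tau }^{nl+j}(\eta )\longrightarrow M_{\tau }^{j}(\psi )\quad\text{in }C(\CCI )\text{ as }n\rightarrow \infty ,$$
because $M_{\tau }^{l}(\psi )=\psi $ while $M_{\tau }^{nl+j}(\eta )\rightarrow 0$. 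This single convergence statement will feed both alternatives, the choice being dictated by whether $\psi $ is constant.

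If $\psi $ is a constant function $\zeta $, then $M_{\tau }^{j}(\psi )=\zeta $ for every $j$ (constants are fixed by $M_{\tau }$), so every arithmetic subsequence above tends to $\zeta $ and hence $M_{\tau }^{n}(\varphi )\rightarrow \zeta $; this is alternative (i). If $\psi $ is non-constant, then so is $M_{\tau }^{j}(\psi )$ for each $j$: if some $M_{\tau }^{j}(\psi )$ were constant, applying $M_{\tau }^{l-j}$ would force $\psi =M_{\tau }^{l}(\psi )$ to be constant. Thus $\{ M_{\tau }^{j}(\psi )\} _{j=0}^{l-1}\subset (U_{\tau })_{nc}\subset C_{F(G)}(\CCI )$ and $M_{\tau }^{l}(\psi )=\psi $. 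The remaining clause of (ii) is exactly the content of Theorem~\ref{t:hnondiff}-\ref{t:hnondiff5}, whose hypotheses ($h\in {\cal P}^{m}$, $G$ hyperbolic, $h_{i}^{-1}(J(G))\cap h_{j}^{-1}(J(G))=\emptyset $, and one of (a),(b),(c)) are precisely those assumed here: applying it with the open set $J(G)$ itself yields an uncountable dense subset $A\subset J(G)$ at each point of which every element of $(U_{\tau })_{nc}$ — in particular each $M_{\tau }^{j}(\psi )$ — is non-differentiable. Together with the displayed convergence this establishes (ii). (The hypothesis $\hat{K}(G)\neq \emptyset $ is not used in this argument; together with $J_{\ker }(G_{\tau })=\emptyset $ it only guarantees, via Theorem~\ref{t:mtauspec}-\ref{t:mtauspec9-2} and Remark~\ref{r:ls1}, that $(U_{\tau })_{nc}\neq \emptyset $, so that alternative (ii) is actually realised for suitable $\varphi $.)

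Finally, (i) and (ii) are mutually exclusive: if both held, then with $\zeta $ as in (i) and $\psi ,l$ as in (ii), the $j=0$ clause of (ii) gives $M_{\tau }^{nl}(\varphi )\rightarrow \psi $, while (i) gives $M_{\tau }^{nl}(\varphi )\rightarrow \zeta $, and uniqueness of limits in $C(\CCI )$ forces $\psi =\zeta $, contradicting that $\psi $ is non-constant. Since we have shown at least one alternative always holds, exactly one does. I expect no essential new difficulty here — the proof is bookkeeping around Theorems~\ref{t:mtauspec} and \ref{t:hnondiff}; the only points needing a moment's care are, first, recording that $M_{\tau }|_{U_{\tau }}$ has finite order, which is immediate from diagonalisability plus the description of the unitary eigenvalues in Theorem~\ref{t:mtauspec}, and, second, observing that the uncountable dense set furnished by Theorem~\ref{t:hnondiff}-\ref{t:hnondiff5} already serves all non-constant finite linear combinations simultaneously, so no extra uniformisation over the finitely many functions $M_{\tau }^{j}(\psi )$ is required.
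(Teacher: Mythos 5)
Your proof is correct and is exactly the argument the paper intends: the corollary is stated there as a direct combination of Theorem~\ref{t:mtauspec} (the decomposition $C(\CCI )=\mbox{LS}({\cal U}_{f,\tau }(\CCI ))\oplus {\cal B}_{0,\tau }$ with $M_{\tau }$ of finite order on the first summand) and Theorem~\ref{t:hnondiff}-\ref{t:hnondiff5}, and you have filled in the bookkeeping — the choice of $l$, the convergence $M_{\tau }^{nl+j}(\varphi )\rightarrow M_{\tau }^{j}(\psi )$, the dichotomy on whether the $\mbox{LS}({\cal U}_{f,\tau }(\CCI ))$-component is constant, and the mutual exclusivity — accurately. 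Your side remarks (that one uncountable dense set from Theorem~\ref{t:hnondiff}-\ref{t:hnondiff5} serves all non-constant elements simultaneously, and that $\hat{K}(G)\neq \emptyset $ is only needed to make alternative (ii) non-vacuous) are also correct.
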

We present a result on H\"{o}lder continuity of $\varphi \in \LSfc .$ 
\begin{thm}
\label{t:hholder}
Let $m\in \NN $ with $m\geq 2.$ 
Let $h=(h_{1},\ldots ,h_{m})\in \emRatp^{m}$ and we set 
$\Gamma := \{ h_{1},\ldots ,h_{m}\} .$ 
Let $G=\langle h_{1},\ldots ,h_{m}\rangle .$ 
Let $p=(p_{1}, \ldots ,p_{m})\in {\cal W}_{m}$ and 
let $\tau := \sum _{j=1}^{m}p_{j}\delta _{h_{j}}\in {\frak M}_{1}(\Gamma )
\subset {\frak M}_{1}(\emRatp ).$ 
Suppose that  
 $G$ is hyperbolic and 
$h_{i}^{-1}(J(G))\cap h_{j}^{-1}(J(G))=\emptyset $ for each 
$(i,j)$ with $i\neq j.$  
%Let $f:\Gamma ^{\NN }\times \CCI \rightarrow \Gamma ^{\NN }\times \CCI $ be the  
%skew product associated with $\Gamma .$  
Then, $G$ is mean stable and there exists an $\alpha >0$ such that 
for each $\varphi \in \mbox{{\em LS}}({\cal U}_{f,\tau }(\CCI ))$, 
$\varphi :\CCI \rightarrow [0,1]$ is $\alpha $-H\"{o}lder continuous on 
$\CCI .$ 
%In particular, $T_{\infty ,\tau }$ is H\"{o}lder continuous on $\CCI .$ 
\end{thm}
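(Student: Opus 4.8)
The plan is to deduce mean stability from results already established, reduce Hölder continuity to a statement about a single unitary eigenvector, and then run an oscillation estimate along the natural conformal iterated function system hidden in the hypotheses. First, mean stability: by Lemma~\ref{l:disjker} the disjointness hypothesis gives $J_{\ker}(G)=\emptyset$, and since every $h_j\in\mathrm{Rat}_+$, $G$ is hyperbolic with $G\cap\mathrm{Rat}_+\neq\emptyset$, hence semi-hyperbolic with $F(G)\neq\emptyset$ (Remark~\ref{r:hypdeg2sh}); Proposition~\ref{p:hyppjke} then produces an open neighborhood of $\Gamma$ in $\mathrm{Cpt}(\mathrm{Rat})$ consisting of mean stable systems, and in particular $G$ itself is mean stable. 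Also $J(G)\neq\emptyset$ (indeed $\sharp J(G)\ge 3$, Remark~\ref{r:sjgg3}), so Theorem~\ref{t:mtauspec}, part~\ref{t:mtauspec2}, applies: $\LSfc$ is finite dimensional and $\LSfc\subset C_{F(G)}(\CCI)$, i.e.\ every element of $\LSfc$ is continuous on $\CCI$ and locally constant on $F(G)$. Since $\LSfc$ is finite dimensional and is, by definition, spanned by unitary eigenvectors of $M_\tau$, it has a basis $\varphi_1,\dots,\varphi_q$ of unitary eigenvectors; as a finite linear combination of $\alpha$-Hölder functions is again $\alpha$-Hölder, it suffices to exhibit one $\alpha>0$ (independent of which eigenvector we look at) such that each $\varphi_k$ is $\alpha$-Hölder on $\CCI$.

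So fix a unitary eigenvector $\varphi$, $M_\tau(\varphi)=a\varphi$ with $|a|=1$. By hyperbolicity choose $\delta_0>0$ with $B(J(G),\delta_0)\cap P(G)=\emptyset$ and $\|h_j'(z)\|_s\ge\lambda>1$ for all $j$ and all $z\in B(J(G),\delta_0)$, and set $\Lambda:=\max_j\sup_{\overline{B(J(G),\delta_0)}}\|h_j'\|_s<\infty$. For $0<\delta<\delta'<\delta_0$ put $V:=B(J(G),\delta)$, $V':=B(J(G),\delta')$; these have finitely many components and, since $V'\cap P(G)=\emptyset$, every component of $h_j^{-1}(V')$ maps univalently onto a component of $V'$. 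Using the disjoint backward self-similarity $J(G)=\bigsqcup_{j=1}^m h_j^{-1}(J(G))$ (Lemma~\ref{l:bss} together with the hypothesis), note that for $i\neq j$ we have $h_i\big(h_j^{-1}(J(G))\big)\subset F(G)$; since there are only finitely many components $W$ of $h_j^{-1}(V)$ meeting $J(G)$, after shrinking $\delta$ we may arrange that for each such $W$ and each $i\neq j$ the connected set $h_i(W)$ lies in a single component of $F(G)$. As $\varphi$ is constant on components of $F(G)$, each $\varphi\circ h_i$ is constant on such a $W$; hence the functional equation $a\varphi=\sum_i p_i(\varphi\circ h_i)$ restricted to $W$ becomes $a\varphi|_W=p_j(\varphi\circ h_j)|_W+C_W$ for a constant $C_W$. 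Writing $\psi_{j,W}$ for the inverse of $h_j|_W$, this is $\varphi\circ\psi_{j,W}=\tfrac{p_j}{a}\varphi+\tfrac{C_W}{a}$ on $h_j(W)$, which yields the self-similar oscillation identity $\mathrm{osc}(\varphi,W)=p_j\,\mathrm{osc}(\varphi,h_j(W))$, where $\mathrm{osc}(\varphi,A):=\sup_{x,y\in A}|\varphi(x)-\varphi(y)|$.

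Now fix $\zeta_0\in J(G)$. By disjoint backward self-similarity there is a unique $\gamma=(\gamma_1,\gamma_2,\dots)\in\Gamma^{\NN}$ with $\zeta_n:=\gamma_{n,1}(\zeta_0)\in J(G)$ for all $n$; let $W_n$ be the component of $\gamma_{n,1}^{-1}(V)$ containing $\zeta_0$. Peeling off one inverse branch at a time and applying the oscillation identity $n$ times gives $\mathrm{osc}(\varphi,W_n)=\big(\prod_{k=1}^n p_{j_k}\big)\,\mathrm{osc}(\varphi,V_0^{(n)})\le 2\|\varphi\|_\infty\,p_{\max}^{\,n}$, where $p_{\max}:=\max_j p_j<1$ and $V_0^{(n)}$ is the component of $V$ containing $\zeta_n$. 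On the other hand $\gamma_{n,1}^{-1}$ (the relevant composition of inverse branches) is univalent on a definite enlargement of $V_0^{(n)}$ coming from $V'$, so by the Koebe distortion / one-quarter theorem applied in local charts, $W_n\supset B\big(\zeta_0,c_1\|(\gamma_{n,1})'(\zeta_0)\|_s^{-1}\big)\supset B(\zeta_0,c_1\Lambda^{-n})$ for a uniform $c_1>0$. Hence, given $z$ with $d(z,\zeta_0)<c_1\Lambda^{-1}$, choosing $n$ maximal with $c_1\Lambda^{-n}\ge d(z,\zeta_0)$ gives $z\in\overline{W_n}$ and therefore $|\varphi(z)-\varphi(\zeta_0)|\le\mathrm{osc}(\varphi,\overline{W_n})\le 2\|\varphi\|_\infty p_{\max}^{\,n}\le C_1\,d(z,\zeta_0)^{\alpha}$ with $\alpha:=\log(1/p_{\max})/\log\Lambda>0$, a constant and exponent independent of $\varphi$ and $\zeta_0$. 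Finally, for arbitrary $z,w\in\CCI$ with $d(z,w)$ small: if $z$ or $w$ lies in $J(G)$ the previous estimate applies directly; if both lie in $F(G)$ in the same component then $\varphi(z)=\varphi(w)$; and if they lie in different components of $F(G)$ the spherical geodesic $[z,w]$ must meet $J(G)$ at some $\zeta$ with $d(z,\zeta),d(w,\zeta)\le d(z,w)$, and the triangle inequality with the single-point estimate finishes it. For $d(z,w)$ not small the bound is trivial since $\varphi$ is bounded. Thus each $\varphi_k$, hence every element of $\LSfc$, is $\alpha$-Hölder on $\CCI$.

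The main obstacle is the middle step: converting the ``forward'' relation $M_\tau\varphi=a\varphi$ into the ``backward'' self-similar relation $\mathrm{osc}(\varphi,W)=p_j\,\mathrm{osc}(\varphi,h_j(W))$. This is exactly where the hypotheses are used — the disjointness $h_i^{-1}(J(G))\cap h_j^{-1}(J(G))=\emptyset$ forces $h_i(W)$ into $F(G)$ for $i\neq j$, and local constancy of $\varphi$ on $F(G)$ (from Theorem~\ref{t:mtauspec}) then collapses all but one term of the functional equation on each piece — together with the need to choose the neighborhood $V$ small enough that this holds uniformly over the finitely many pieces, and the uniform Koebe estimate $W_n\supset B(\zeta_0,c_1\Lambda^{-n})$ controlling the geometry of the pieces (here hyperbolicity, via $P(G)\cap J(G)=\emptyset$ and expansion, is essential). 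Once these are in place, the oscillation recursion and the passage to the global Hölder bound are routine.
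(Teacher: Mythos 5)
Your proposal is correct and follows essentially the same route as the paper: mean stability via Lemma~\ref{l:disjker} and Proposition~\ref{p:hyppjke}, then the collapse of the eigenequation to the single branch $p_j\varphi\circ h_j$ on small backward pieces (the disjointness hypothesis forcing the other branches into $F(G)$, where $\varphi$ is locally constant by Theorem~\ref{t:mtauspec}), Koebe distortion to compare the size of those pieces with $\|\gamma_{n,1}'\|_s^{-1}$, and finally the spherical-geodesic argument to pass from $J(G)$ to all of $\CCI$ --- which is exactly the mechanism of Lemma~\ref{l:diff1} that the paper invokes. The only differences are presentational: you rederive the branch-collapse identity as an oscillation recursion for individual unitary eigenvectors and read off the explicit exponent $\alpha=\log(1/p_{\max})/\log\Lambda$, whereas the paper works with all of $\LSfc$ at once via $M_{\tau}^{l}=\mathrm{id}$ and cites the argument of Lemma~\ref{l:diff1}.
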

%\begin{rem}
%$T_{\infty ,\tau}$ is a {\bf complex analogue of devil's staircase or Lebesgue singular functions} 
%(see figure~\ref{fig:dcgraphgrey2}, \ref{fig:dcgraphudgrey2}). 
%Moreover, 
%$z\mapsto \frac{\partial T}{\partial p}
%(h_{1},h_{2},p,z)$ is a {\bf complex analogue of Takagi function} (see figure~\ref{fig:ctgraphgrey1}). 
%For the definition and the properties of the 
%devil's staircase, Lebesgue singular functions, 
%the Takagi function, 
%and further singular functions on 
%$\RR $, see \cite{YHK} etc. 
%(however, in these references, the relation between the singular functions on $\RR $ and 
%the random dynamical systems was not written).  
%\end{rem}
\begin{rem}
In the proof of Theorem~\ref{t:hnondiff}, we use the Birkhoff ergodic theorem and the 
Koebe distortion theorem, 
in order to show that for each $\varphi \in (\mbox{LS}({\cal U}_{f,\tau }))_{nc}$, 
$\mbox{H\"{o}l}(\varphi ,z_{0})=u(h,p,\mu ).$ 
Moreover, we apply potential theory in order to calculate 
$u(h,p,\mu )$ by using $p$, $\deg (h_{j})$, and $\Omega (\gamma ).$  
\end{rem}
%\newpage 
\section{Tools}
\label{Tools} 
In this section, we give some basic tools to prove the main results.

\begin{lem}[Lemma 0.2 in \cite{S4}]
\label{l:bss}
Let $Y$ be a compact metric space and let $\Gamma \in \emCpt(\emOCMX)$.  
Let $G=\langle \Gamma \rangle .$  
Then, $J(G)=\bigcup _{h\in \Gamma }h^{-1}(J(G)).$ In particular, if 
$G=\langle h_{1},\ldots ,h_{m}\rangle \subset \emOCMX $, then 
$J(G)=\bigcup _{j=1}^{m}h_{j}^{-1}(J(G)).$ This property is called the {\bf backward self-similarity}.  
\end{lem}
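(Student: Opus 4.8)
The plan is to prove the two inclusions $\bigcup_{h\in\Gamma}h^{-1}(J(G))\subset J(G)$ and $J(G)\subset\bigcup_{h\in\Gamma}h^{-1}(J(G))$ separately. The first is essentially free: since $\Gamma\subset G$ and $\Gamma\subset\text{OCM}(Y)$, Lemma~\ref{ocminvlem} gives $h^{-1}(J(G))\subset J(G)$ for every $h\in\Gamma$, and taking the union over $h\in\Gamma$ is all that is needed. (If $J(G)=\emptyset$ the whole statement is trivial, so from now on I would assume $J(G)\neq\emptyset$, equivalently $F(G)\neq Y$.)

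For the reverse inclusion I would argue by contraposition: fix $z_0\in Y$ with $h(z_0)\in F(G)$ for every $h\in\Gamma$, and show $z_0\in F(G)$. The proof rests on three standard facts. First, the evaluation map $(h,y)\mapsto h(y)$ is continuous on $\Gamma\times Y$ (compact-open topology, $Y$ locally compact Hausdorff) and $\Gamma\times Y$ is compact, hence this map is uniformly continuous; in particular $\Gamma$ is \emph{uniformly equicontinuous} on $Y$, and $\Gamma(z_0):=\{h(z_0)\mid h\in\Gamma\}$ is a compact subset of $F(G)$. Second, $G^{\ast}$ is \emph{uniformly equicontinuous on every compact subset of $F(G)$}: cover such a compact set by finitely many open neighborhoods on which $G$ is equicontinuous (these exist by the definition of $F(G)$) and use compactness. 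Third, every $g\in G$ factors as $g=g'\circ h$ with $h\in\Gamma$ and $g'\in G^{\ast}$ (peel off the innermost factor of a finite composition from $\Gamma$; if $g\in\Gamma$ take $g'=\mathrm{Id}$).

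Now I would interpose a compact ``buffer'' set inside $F(G)$. Since $\Gamma(z_0)$ is compact, $J(G)$ is closed, and they are disjoint, $2r:=d(\Gamma(z_0),J(G))>0$; put $K_1:=\overline{B(\Gamma(z_0),r)}$, which is a compact subset of $F(G)$. Using uniform equicontinuity of $\Gamma$, choose $\delta_0>0$ so that $d(y,z_0)<\delta_0$ forces $d(h(y),h(z_0))<r$, hence $h(y)\in K_1$, for all $h\in\Gamma$; set $U:=B(z_0,\delta_0/2)$, so that $\Gamma(z')\subset K_1$ for every $z'\in U$. Given $\varepsilon>0$, by uniform equicontinuity of $G^{\ast}$ on $K_1$ pick $\eta\in(0,r]$ with the property that $y,y'\in K_1$, $d(y,y')<\eta$ imply $d(g'(y),g'(y'))<\varepsilon$ for all $g'\in G^{\ast}$; then by uniform equicontinuity of $\Gamma$ pick $\delta\in(0,\delta_0]$ with $d(z',z)<\delta$ implying $d(h(z'),h(z))<\eta$ for all $h\in\Gamma$. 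For $z',z\in U$ with $d(z',z)<\delta$ and arbitrary $g=g'\circ h\in G$ we then have $h(z'),h(z)\in K_1$ and $d(h(z'),h(z))<\eta$, so $d(g(z'),g(z))=d(g'(h(z')),g'(h(z)))<\varepsilon$. Hence $\{g|_U\}_{g\in G}$ is equicontinuous on $U$, so $z_0\in F(G)$, which gives $J(G)\subset\bigcup_{h\in\Gamma}h^{-1}(J(G))$. The finitely generated case follows by taking $\Gamma=\{h_1,\dots,h_m\}$.

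The direction deduced from Lemma~\ref{ocminvlem} is routine; the one real point of care is the contrapositive argument, where the main obstacle is the bookkeeping in the nested equicontinuity estimate. One must insert the compact buffer $K_1\subset F(G)$ so that the uniform equicontinuity of $G^{\ast}$ there can be composed with the uniform equicontinuity of $\Gamma$, and one must simultaneously arrange that all points of a fixed neighborhood of $z_0$ have their $\Gamma$-images inside $K_1$; this is exactly what the choice of $\delta_0$ and $U$ achieves.
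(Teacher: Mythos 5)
Your proof is correct and follows essentially the same route as the paper: the paper gets the inclusion $\bigcup_{h\in\Gamma}h^{-1}(J(G))\subset J(G)$ from Lemma~\ref{ocminvlem} and refers to the method of \cite[Lemma 0.2]{S4} for the reverse inclusion, and that method is precisely your contrapositive argument (compactness of $\Gamma$ giving uniform equicontinuity of the generators and compactness of $\Gamma(z_0)\subset F(G)$, then composing with the equicontinuity of $G^{\ast}$ on a compact buffer in $F(G)$). The details you supply — the factorization $g=g'\circ h$ and the nested $\delta$–$\eta$ bookkeeping — are exactly the ones the cited lemma uses, and they are carried out correctly.
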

\begin{proof}
By Lemma~\ref{ocminvlem}, $J(G)\supset \bigcup _{h\in \Gamma }h^{-1}(J(G)).$ 
By using the method in the proof of \cite[Lemma 0.2]{S4}, 
we easily see that $J(G)\subset \bigcup _{h\in \Gamma }h^{-1}(J(G)).$ 
Thus, $J(G)= \bigcup _{h\in \Gamma }h^{-1}(J(G)).$
\end{proof}
\noindent {\bf Notation:} 
Let $Y$ be a topological space. Let $\mu \in {\frak M}_{1}(Y)$ and 
let $\varphi :Y\rightarrow \RR $ be a bounded continuous function. 
Then we set $\mu (\varphi ):= \int _{Y} \varphi \ d\mu .$

\begin{lem}
\label{FJmeaslem1}
Let $Y$ be a compact metric space and 
let $\tau \in {\frak M}_{1}(\emCMX).$ 
Then, we have the following.
\begin{enumerate}
\item
\label{FJmeaslem1-1} 
$(M_{\tau }^{\ast })^{-1}(F_{meas}(\tau ))\subset F_{meas}(\tau )$,\ 
and $(M_{\tau }^{\ast })^{-1}(F_{meas}^{0}(\tau ))\subset F_{meas}^{0}(\tau ).$ 
\item 
\label{FJmeaslem1-2}
Let $y\in Y $ be a point. Then, 
$y\in F_{pt}(\tau )$ if and only if 
 there exists a neighborhood $U$ of $y$ in 
$Y $ such that for any $\phi \in C(Y )$, 
the sequence $\{ z\mapsto M_{\tau }^{n}(\phi )(z)\} _{n\in \NN }$ 
of functions on $U$ is equicontinuous on $U.$ 
Similarly, $y\in F_{pt}^{0}(\tau )$ if and only if 
for any $\phi \in C(Y )$, the sequence 
$\{ z\mapsto M_{\tau }^{n}(\phi )(z)\} _{\in \NN }$ of 
functions on $Y $ is equicontinuous at the one point 
$y.$ 
\item 
\label{FJmeaslem1-3} 
$F_{meas}(\tau )\cap Y \subset F_{pt}(\tau ).$ 
\item 
\label{FJmeaslem1-4}
$F_{meas}^{0}(\tau )\cap Y =F_{pt }^{0}(\tau ).$ 
\item 
\label{FJmeaslem1-5}
%Let $G_{\tau }$ be the rational semigroup generated by supp $\tau .$ 
$F(G_{\tau })\subset F_{pt}(\tau ).$ 
%\item 
\item 
\label{FJmeaslem1-6}
$F_{pt}^{0}(\tau )=Y $ if and only if 
$F_{meas}(\tau )={\frak M}_{1}(Y ).$  
\end{enumerate}
\end{lem}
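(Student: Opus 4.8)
The plan is to establish the six assertions essentially by unwinding the definitions of the various Fatou-type sets and using the duality $M_\tau^{*}(\mu)(\varphi)=\mu(M_\tau(\varphi))$ together with the embedding $\Phi:Y\hookrightarrow{\frak M}_1(Y)$, $\Phi(z)=\delta_z$, which satisfies $M_\tau^{*}\circ\Phi=\Phi\circ\cdots$, more precisely $(M_\tau^{*})^n(\delta_z)(\varphi)=M_\tau^n(\varphi)(z)$. First, for (1), I would observe that $M_\tau^{*}:{\frak M}_1(Y)\to{\frak M}_1(Y)$ is continuous (as $Y$ is compact, so ${\frak M}_1(Y)$ is a compact metric space and $\mu\mapsto\mu(M_\tau(\varphi))$ is continuous for each $\varphi\in C(Y)$). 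If $\mu_0\in(M_\tau^{*})^{-1}(F_{meas}(\tau))$, pick a neighborhood $B$ of $M_\tau^{*}(\mu_0)$ on which $\{(M_\tau^{*})^n|_B\}_n$ is equicontinuous; then $B':=(M_\tau^{*})^{-1}(B)$ is a neighborhood of $\mu_0$, and for $n\ge 1$, $(M_\tau^{*})^n|_{B'}=(M_\tau^{*})^{n-1}|_B\circ(M_\tau^{*})|_{B'}$; since $M_\tau^{*}$ is (uniformly) continuous on the compact space and the tail family is equicontinuous on $B$, the composed family is equicontinuous on $B'$ (the $n=0$ map is just the continuous $M_\tau^{*}$, harmless). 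This gives $\mu_0\in F_{meas}(\tau)$. The same argument with a single point replacing the neighborhood $B$ yields the statement for $F_{meas}^0(\tau)$.

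For (2), I would simply translate equicontinuity of $\{(M_\tau^{*})^n|_U\}$ at the level of measures (restricted to $U\subset Y\subset{\frak M}_1(Y)$) into equicontinuity of the scalar sequences $\{z\mapsto M_\tau^n(\phi)(z)\}$: using the metric $d_0$ on ${\frak M}_1(Y)$ built from a dense sequence $\{\phi_j\}\subset C(Y)$, and the identity $(M_\tau^{*})^n(\delta_z)(\phi)=M_\tau^n(\phi)(z)$, closeness of $(M_\tau^{*})^n(\delta_z)$ and $(M_\tau^{*})^n(\delta_w)$ in $d_0$ is equivalent to uniform (over $n$) closeness of $M_\tau^n(\phi_j)(z)$ and $M_\tau^n(\phi_j)(w)$ for all $j$, and by density this is equivalent to the analogous statement for all $\phi\in C(Y)$ (note each $M_\tau^n$ is a norm-$1$ operator, so approximating $\phi$ by $\phi_j$ in sup-norm controls $M_\tau^n(\phi)-M_\tau^n(\phi_j)$ uniformly in $n$). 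Statement (4) is then immediate: $F_{meas}^0(\tau)\cap Y=F_{pt}^0(\tau)$ because both say exactly that $\{(M_\tau^{*})^n\}$ is equicontinuous at the one point $\delta_y$, and the definition of $F_{pt}^0(\tau)$ is precisely this. Statement (3) follows similarly but only gives an inclusion, since equicontinuity of $\{(M_\tau^{*})^n|_B\}$ on a neighborhood $B$ of $\delta_y$ in ${\frak M}_1(Y)$ restricts to equicontinuity on the smaller neighborhood $B\cap Y$ of $y$ in $Y$, giving $y\in F_{pt}(\tau)$; the reverse need not hold because a neighborhood of $\delta_y$ in $Y$ is far smaller than one in ${\frak M}_1(Y)$.

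For (5), I would use that if $z\in F(G_\tau)$ then there is a neighborhood $U$ of $z$ on which $\{g|_U\}_{g\in G_\tau}$ is equicontinuous; since for any $\phi\in C(Y)$ and any $\gamma=(\gamma_1,\gamma_2,\dots)\in(\Gamma_\tau)^{\NN}$ we have $M_\tau^n(\phi)(z)=\int\phi(\gamma_{n,1}(z))\,d\tilde\tau(\gamma)$, and each composition $\gamma_{n,1}$ lies in $G_\tau^{*}$, the equicontinuity of $G_\tau$ on $U$ together with uniform continuity of $\phi$ forces $\{z\mapsto M_\tau^n(\phi)(z)\}$ to be equicontinuous on $U$; by part (2) this gives $z\in F_{pt}(\tau)$. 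Finally, for (6): the direction $F_{meas}(\tau)={\frak M}_1(Y)\Rightarrow F_{pt}^0(\tau)=Y$ follows from (4) since $F_{meas}(\tau)\subset F_{meas}^0(\tau)$ and the latter meets $Y$ in $F_{pt}^0(\tau)$. For the converse, if $F_{pt}^0(\tau)=Y$, i.e.\ $F_{meas}^0(\tau)\supset Y$, I would first upgrade this to $F_{meas}^0(\tau)={\frak M}_1(Y)$ and then to $F_{meas}(\tau)={\frak M}_1(Y)$: the point is that pointwise equicontinuity of $\{(M_\tau^{*})^n\}$ at every $\delta_z$ (for $z\in Y$), combined with the fact that $(M_\tau^{*})^n$ is affine and every $\mu\in{\frak M}_1(Y)$ is a barycenter of Dirac masses, propagates to equicontinuity at every $\mu$; since ${\frak M}_1(Y)$ is compact, pointwise equicontinuity of a sequence of continuous maps on a compact metric space into a compact metric space is equivalent to uniform equicontinuity, hence $F_{meas}(\tau)={\frak M}_1(Y)$. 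I expect the main obstacle to be this last step: making rigorous the passage from equicontinuity at Dirac masses to equicontinuity (with uniform modulus) on all of ${\frak M}_1(Y)$. The clean way is to note $d_0((M_\tau^{*})^n\mu,(M_\tau^{*})^n\nu)$ is controlled by $\sup_j|\!\int M_\tau^n(\phi_j)\,d\mu-\int M_\tau^n(\phi_j)\,d\nu|$, and each $M_\tau^n(\phi_j)$, being equicontinuous in $n$ at every point of the compact $Y$, is in fact uniformly equicontinuous in $n$; an equicontinuous family in $C(Y)$ that is also uniformly bounded is, by Arzelà–Ascoli, relatively compact, and one shows $|\!\int\psi\,d\mu-\int\psi\,d\nu|$ is small uniformly over $\psi$ in this compact set when $\mu,\nu$ are $d_0$-close, giving the required uniform modulus of continuity for $\{(M_\tau^{*})^n\}$ on ${\frak M}_1(Y)$.
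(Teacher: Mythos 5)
Your treatments of statements (1), (2), (3) and (5) follow the same lines as the paper's proof, and your route to (6) (pointwise equicontinuity at every $\delta _{z}$, upgraded via Arzel\`a--Ascoli and uniformity of weak convergence over compact subsets of $C(Y)$) is a direct-form version of the paper's contradiction argument; both are fine.

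The genuine gap is in statement (4), which you declare ``immediate'' on the grounds that both sides ``say exactly that $\{ (M_{\tau }^{\ast })^{n}\} $ is equicontinuous at the one point $\delta _{y}$.'' They do not. By Definition~\ref{d:manyFJ}, $y\in F_{pt}^{0}(\tau )$ means the \emph{restrictions} $(M_{\tau }^{\ast })^{n}|_{Y}$ are equicontinuous at $y$, i.e.\ the orbits are tested only against Dirac masses $\delta _{y'}$ with $y'$ near $y$; whereas $\delta _{y}\in F_{meas}^{0}(\tau )$ requires control against \emph{every} $\mu \in {\frak M}_{1}(Y)$ that is $d_{0}$-close to $\delta _{y}$, and such $\mu $ need not be supported near $y$ (it may carry a small amount of mass far away). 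The inclusion $F_{meas}^{0}(\tau )\cap Y\subset F_{pt}^{0}(\tau )$ is indeed trivial, but the converse is the real content of (4) and needs an argument: given $\phi \in C(Y)$ and $\epsilon >0$, one first uses the pointwise equicontinuity of $\{ M_{\tau }^{n}(\phi )\} _{n}$ at $y$ to get $\delta _{1}>0$ with $|M_{\tau }^{n}(\phi )(y)-M_{\tau }^{n}(\phi )(y')|<\epsilon $ for all $n$ and all $y'\in B(y,\delta _{1})$, then chooses $\delta _{2}>0$ so that $d_{0}(\delta _{y},\mu )<\delta _{2}$ forces $\mu (\{ y'\mid d(y',y)\geq \delta _{1}\} )<\epsilon $, and finally splits $((M_{\tau }^{\ast })^{n}(\mu ))(\phi )=\int M_{\tau }^{n}(\phi )\,d\mu $ into the integral over $B(y,\delta _{1})$ (controlled by $\epsilon $) and the integral over its complement (controlled by $2\epsilon \| \phi \| _{\infty }$). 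This mass-splitting step is exactly what the paper supplies and what is missing from your proposal; without it, (4) as you state it is unproved. (The same distinction is why your (3) is only an inclusion, as you correctly note, so the asymmetry should have flagged that (4) cannot be purely definitional.)
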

\begin{proof}
Since $M_{\tau }^{\ast }:{\frak M}_{1}(Y)\rightarrow {\frak M}_{1}(Y)$ is continuous, 
it is easy to see that statement \ref{FJmeaslem1-1} holds. 

Let $\{ \phi _{j}\} _{j\in \NN }$ be a dense subset of $C(Y)$ and 
let $d_{0}$ be as in Definition~\ref{d:d0}. 
 We now prove statement \ref{FJmeaslem1-2}. 
 Let $y\in F_{pt}(\tau ).$ Then there exists a neighborhood 
 $U$ of $y$ in $X$ with the following property that 
 for each $z\in U$ and each $\epsilon >0$ there exists a 
 $\delta =\delta (z,\epsilon )>0$ such that 
 if $d(z,z')<\delta, z'\in U$ then for each $n\in \NN $, 
$d_{0}((M_{\tau }^{\ast })^{n}(\delta _{z}), (M_{\tau }^{\ast })^{n}(\delta _{z'}))<\epsilon .$ 
Let $z\in U$ and let $\epsilon >0.$ 
Let $\phi \in C(Y)$ be any element and let $\phi _{j}$ be such that 
$\| \phi -\phi _{j}\| _{\infty }<\epsilon .$ 
Let $\delta =\delta (z,\frac{\epsilon }{2^{j}}).$ 
Then for each $n\in \NN $ and each $z'\in U$ with $d(z,z')<\delta $,  
$\frac{|(M_{\tau }^{\ast })^{n}(\delta _{z}))(\phi _{j})-((M_{\tau }^{\ast })^{n}(\delta _{z'}))(\phi _{j})|}
{1+ | ((M_{\tau }^{\ast })^{n}(\delta _{z}))(\phi _{j})- ((M_{\tau }^{\ast })^{n}(\delta _{z'}))(\phi _{j})|}<\epsilon .$ 
Hence for each $n\in \NN $ and each $z'\in U$ with $d(z,z')<\delta $,  
$|((M_{\tau }^{\ast })^{n}(\delta _{z}))(\phi _{j})-((M_{\tau }^{\ast })^{n}(\delta _{z'}))(\phi _{j})|
<\frac{\epsilon }{1-\epsilon }.$   
It follows that for each $n\in \NN $ and each $z'\in U$ with $d(z,z')<\delta $,  
\begin{align*}
|((M_{\tau }^{\ast })^{n}(\delta _{z}))(\phi )- ((M_{\tau }^{\ast })^{n}(\delta _{z'}))(\phi )|
& \leq  |   ((M_{\tau }^{\ast })^{n}(\delta _{z}))(\phi )-  ((M_{\tau }^{\ast })^{n}(\delta _{z}))(\phi _{j})| \\ 
& \ \ + | ((M_{\tau }^{\ast })^{n}(\delta _{z}))(\phi _{j})- ((M_{\tau }^{\ast })^{n}(\delta _{z'}))(\phi _{j})| \\ 
& \ \ + | ((M_{\tau }^{\ast })^{n}(\delta _{z'}))(\phi _{j})-  ((M_{\tau }^{\ast })^{n}(\delta _{z'}))(\phi )| \\ 
& \leq 2\epsilon +\frac{\epsilon }{1-\epsilon }.
\end{align*}
Therefore, $\{ z\mapsto M_{\tau }^{n}(\phi )(z)\} _{n\in \NN } $ is equicontinuous on $U.$ 
To show the converse, let $y\in X$ and suppose that there exists a neighborhood $U$ of $y$ in $X$ 
such that  for any $\phi \in C(Y)$, 
$\{ z\mapsto M_{\tau }^{n}(\phi )(z)\} _{n\in \NN } $ is equicontinuous on $U.$
Let $z\in U.$ 
For each $\epsilon >0$, there exists an $n_{0}\in \NN $ such that 
$\sum _{n\geq n_{0}}\frac{1}{2^{n}}<\epsilon .$ 
Moreover, there exists a $\delta >0$ such that 
if $z'\in U$ and $d(z,z')<\delta $, then 
for each $n\in \NN $ and each $j=1,\ldots , n_{0}$, 
$|M_{\tau }^{n}(\phi _{j})(z)-M_{\tau }^{n}(\phi _{j})(z')|<\epsilon /n_{0}.$ 
It follows that if $z'\in U$  and $d(z,z')<\delta $, then for each $n\in \NN $,  
$d_{0}((M_{\tau }^{\ast })^{n}(\delta _{z}), (M_{\tau }^{\ast })^{n}(\delta _{z'}))\leq 2\epsilon .$ 
Therefore, $y\in F_{pt}(\tau ).$ 
Thus, we have proved that $y\in F_{pt}(\tau )$ if and only if 
there exists a neighborhood $U$ of $y$ such that 
for any $\phi \in C(Y)$, 
$\{ z\mapsto M_{\tau }^{n}(\phi )(z)\} _{n\in \NN } $ is equicontinuous on $U.$ 
Similarly, we can prove that 
$y\in F_{pt}^{0}(\tau )$ if and only if 
for any $\phi \in C(Y)$, 
$\{ z\mapsto M_{\tau }^{n}(\phi )(z)\} _{n\in \NN } $ is equicontinuous at the one point $y.$ 
Hence, we have proved statement~\ref{FJmeaslem1-2}.  

 Statement \ref{FJmeaslem1-3} easily follows from the definition of $F_{meas}(\tau )$ and $F_{pt}(\tau ).$ 
 
We now prove statement \ref{FJmeaslem1-4}. 
From the definition of $F_{meas}^{0}(\tau )$ and $F_{pt}^{0}(\tau )$, 
it is easy to see that $F_{meas}^{0}(\tau )\cap Y \subset F_{pt}^{0}(\tau ).$ 
To show the opposite inclusion, let $y\in F_{pt}^{0}(\tau ).$ 
Let $\epsilon >0$ and let $\phi \in C(Y ).$ 
Then there exists a $\delta _{1}>0$ such that 
for each $y'\in Y $ with $d(y,y')<\delta _{1}$ and each $n\in \NN $, 
we have $|M_{\tau }^{n}(\phi )(y)-M_{\tau }^{n}(\phi )(y')|<\epsilon .$ 
Moreover, 
there exists a $\delta _{2}>0$ such that 
for each $\mu \in {\frak M}_{1}(Y )$ with $d_{0}(\delta _{y},\mu )<\delta _{2}$, 
we have $\mu (\{ y'\in Y \mid d(y',y)\geq \delta _{1}\} )<\epsilon .$ 
Hence, for each $\mu \in {\frak M}_{1}(Y )$ with $d_{0}(\delta _{y},\mu )<\delta _{2}$ and 
for each 
$n\in \NN $,  
we have 
%\begin{equation*}
\begin{align*}
|((M_{\tau }^{\ast })^{n}(\delta _{y}))(\phi )-((M_{\tau }^{\ast })^{n}(\mu ))(\phi )|
& =   |\int _{B(y,\delta _{1})}M_{\tau }^{n}(\phi )(y)\ d\mu (y')-
   \int _{B(y,\delta _{1})}M_{\tau }^{n}(\phi )(y')\ d\mu (y')| \\ 
& \ \ + |\int _{Y \setminus B(y,\delta _{1})}M_{\tau }^{n}(\phi )(y)\ d\mu (y')-
   \int _{Y \setminus B(y,\delta _{1})}M_{\tau }^{n}(\phi )(y')\ d\mu (y')| \\   
& \leq \int _{B(y,\delta _{1})}|M_{\tau }^{n}(\phi )(y)-M_{\tau }^{n}(\phi )(y')|\ d\mu (y') +2\epsilon \| \phi \| _{\infty } \\  
& \leq \epsilon +2\epsilon \| \phi \| _{\infty }.   
\end{align*}
%\end{equation*}
Hence, $\delta _{y}\in F_{meas}^{0}(\tau ).$ 
Therefore, $F_{pt}^{0}(\tau )\subset F_{meas}^{0}(\tau )\cap Y .$ 
Thus, we have proved statement~\ref{FJmeaslem1-4}. 

  We now prove statement~\ref{FJmeaslem1-5}. 
Let $y\in F(G_{\tau }).$ 
Then there exists a neighborhood $B$ of $y$ in $Y$ 
such that $G_{\tau }$ is equicontinuous on $B.$ 
Let $\phi \in C(Y)$ and let $\epsilon >0.$ 
Since $\phi :Y\rightarrow \RR $ is uniformly continuous, 
there exists a $\delta _{1}>0$ such that 
for each $z,z'\in Y$ with $d(z,z')<\delta _{1}$, we have 
$|\phi (z)-\phi (z')|<\epsilon .$ 
Let $z\in B.$ Since $G_{\tau }$ is equicontinuous on $B$, there exists a 
$\delta _{2}>0$ such that 
for each $z'\in B$ with $d(z,z')<\delta _{2}$ and for each $g\in G_{\tau }$, 
we have $d(g(z),g(z'))<\delta _{1}.$ 
Hence, for each $z'\in B$ with $d(z,z')<\delta _{2}$ and for each $n\in \NN $, we have 
\begin{align*}
|M_{\tau }^{n}(\phi )(z)-M_{\tau }^{n}(\phi )(z')|
& =|\int \phi (\gamma _{n,1}(z))\ d\tilde{\tau }(\gamma )-\int \phi (\gamma _{n,1}(z'))\ d\tilde{\tau }(\gamma )|\\ 
& \leq \int |\phi (\gamma _{n,1}(z))-\phi (\gamma _{n,1}(z'))|\ d\tilde{\tau }(\gamma )<\epsilon .
\end{align*}  
From statement \ref{FJmeaslem1-2}, it follows that 
$y\in F_{pt}(\tau ).$ Therefore, $F(G_{\tau })\subset F_{pt}(\tau ).$ 
Thus, we have proved statement~\ref{FJmeaslem1-5}. 

 We now prove statement~\ref{FJmeaslem1-6}. 
 It is easy to see that if $F_{meas}(\tau )={\frak M}_{1}(Y)$ then $F_{pt}^{0}(\tau )=Y.$ 
 To show the converse, 
 suppose $F_{pt}^{0}(\tau )=Y.$ 
Then $F_{pt}(\tau )=Y.$ 
Suppose that there exists an element $\mu \in J_{meas}^{0}(\tau ).$  
Then there exists an element $\phi \in C(Y)$, an $\epsilon >0$, a strictly increasing sequence 
$\{ n_{j} \} _{j\in \NN }$ of positive integers, and a sequence $\{ \mu _{j}\} _{j\in \NN }$ 
in ${\frak M}_{1}(Y)$ with 
$\mu _{j}\rightarrow \mu $  such that for each $j\in \NN $, 
\begin{equation}
\label{FJmeaslempfeq1}
|((M_{\tau }^{\ast })^{n_{j}}(\mu ))(\phi )-((M_{\tau }^{\ast })^{n_{j}}(\mu _{j}))(\phi )|\geq \epsilon .
\end{equation}
Combining $F_{pt}(\tau )=Y$ and the Ascoli-Arzela theorem, 
we may assume that there exists an element $\psi \in C(Y)$ such that 
$M_{\tau }^{n_{j}}(\phi )\rightarrow \psi $ as $j\rightarrow \infty .$ 
Hence, for each large $j\in \NN $, 
$\| M_{\tau }^{n_{j}}(\phi )-\psi \| _{\infty }< \frac{\epsilon }{3}.$ 
Moreover, since $\mu _{j}\rightarrow \mu $, 
we have that for each large $j\in \NN $, 
$|\mu _{j}(\psi )-\mu (\psi )|<\frac{\epsilon }{3}.$ 
It follows that for a large $j\in \NN $, 
\begin{align*}
|((M_{\tau }^{\ast })^{n_{j}}(\mu ))(\phi )-((M_{\tau }^{\ast })^{n_{j}})(\mu _{j})(\phi )|
& \leq |((M_{\tau }^{\ast })^{n_{j}}(\mu ))(\phi )-\mu (\psi )|  
 + |\mu (\psi )-\mu _{j}(\psi )| \\ 
&\ \ \ + |\mu _{j}(\psi )-((M_{\tau }^{\ast })^{n_{j}})(\mu _{j})(\phi )|\\ 
&<\epsilon .
\end{align*}
However, this contradicts (\ref{FJmeaslempfeq1}). 
Hence, $F_{meas}^{0}(\tau )={\frak M}_{1}(Y).$ 
Therefore, $F_{meas}(\tau )={\frak M}_{1}(Y).$ 
Thus, we have proved statement~\ref{FJmeaslem1-6}. 

 Hence, we have completed the proof of Lemma~\ref{FJmeaslem1}. 
\end{proof}
\begin{lem}
\label{FJmeaslem2}
Let $Y$ be a compact metric space and let 
$\tau \in {\frak M}_{1,c}(\emCMX )$ with $\G _{\tau }\subset \emOCMX .$  
Let $y\in Y $ be a point. Suppose that 
%{\em supp}\,$\tau $ is compact, and that  
$\tilde{\tau }\left(\{ \gamma  =(\gamma  _{1},\gamma  _{2},\gamma  _{3},\ldots )\in X_{\tau }\mid 
y\in \bigcap _{j=1}^{\infty }\gamma  _{1}^{-1}\cdots \gamma  _{j}^{-1}(J(G_{\tau }))\} \right) 
=0.$ Then, we have that 
$y\in F_{pt}^{0}(\tau )=F_{meas}^{0}(\tau )\cap Y .$ 
\end{lem}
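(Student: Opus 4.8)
The plan is to show that the point $y$ is a point of equicontinuity for $\{(M_\tau^*)^n|_Y : Y \to {\frak M}_1(Y)\}_{n\in\NN}$ at $y$, which by Lemma~\ref{FJmeaslem1}(\ref{FJmeaslem1-2}) and (\ref{FJmeaslem1-4}) is equivalent to showing that for every $\phi \in C(Y)$ the sequence $\{z \mapsto M_\tau^n(\phi)(z)\}_{n\in\NN}$ is equicontinuous at $y$. Fix $\phi \in C(Y)$ and $\epsilon > 0$. First I would rewrite the iterate using the skew product: $M_\tau^n(\phi)(z) = \int_{X_\tau} \phi(\gamma_{n,1}(z))\, d\tilde\tau(\gamma)$, so it suffices to control $\int_{X_\tau} |\phi(\gamma_{n,1}(y)) - \phi(\gamma_{n,1}(z))|\, d\tilde\tau(\gamma)$ uniformly in $n$ for $z$ near $y$. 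The idea is to split $X_\tau$ into a ``good'' set of sequences $\gamma$ for which $y$ eventually lands in (and thereafter stays near) the Fatou set $F(G_\tau)$ — along such $\gamma$ the orbit maps $\gamma_{n,1}$ are equicontinuous at $y$ uniformly from some point on — and a ``bad'' set which, by the hypothesis, we can make to have small $\tilde\tau$-measure.

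More precisely, set $E := \{\gamma \in X_\tau \mid y \in \bigcap_{j=1}^\infty \gamma_1^{-1}\cdots\gamma_j^{-1}(J(G_\tau))\}$, which by hypothesis has $\tilde\tau(E) = 0$. For $\gamma \notin E$ there is a smallest $j = j(\gamma)$ with $\gamma_{j,1}(y) \in F(G_\tau)$ (using Lemma~\ref{ocminvlem}, once $\gamma_{j,1}(y) \in F(G_\tau)$ we have $\gamma_{k,1}(y) \in F(G_\tau)$ for all $k \geq j$, since each $\gamma_{k}\cdots\gamma_{j+1} \in G_\tau$ maps $F(G_\tau)$ into itself and $\gamma_{k,1}(y) = (\gamma_k \cdots \gamma_{j+1})(\gamma_{j,1}(y))$). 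So $X_\tau \setminus E = \bigcup_{N\in\NN} E_N$ where $E_N := \{\gamma \notin E \mid j(\gamma) \leq N\}$ is an increasing sequence of Borel sets with $\bigcup_N E_N = X_\tau \setminus E$, hence $\tilde\tau(E_N) \to 1$. Choose $N$ with $\tilde\tau(X_\tau \setminus E_N) < \epsilon / (4\|\phi\|_\infty + 1)$. The bad part of the integral is then bounded by $2\|\phi\|_\infty \tilde\tau(X_\tau\setminus E_N) < \epsilon/2$, uniformly in $n$ and $z$.

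For the good part, over $E_N$, I would argue as follows. The family of finite words $\{\gamma_{j,1} \mid \gamma \in X_\tau, j \leq N\}$ is a relatively compact subset of the space of continuous self-maps of $Y$ (since $\Gamma_\tau$ is compact and composition of $\leq N$ maps is continuous), so it is equicontinuous; hence there is $\delta_1 > 0$ such that $d(z,y) < \delta_1$ implies $d(\gamma_{j,1}(z), \gamma_{j,1}(y)) < r$ for all $\gamma \in X_\tau$ and all $j \leq N$, where $r > 0$ is to be chosen. Now, for each $\gamma \in E_N$, the point $w_\gamma := \gamma_{j(\gamma),1}(y)$ lies in $F(G_\tau)$; here is where I must be a little careful, since $F(G_\tau)$ is open but not uniformly so. The cleanest route: for $\gamma \in E_N$, $\gamma_{j(\gamma),1}(y) \in F(G_\tau)$ and $G_\tau$ is equicontinuous on a neighborhood of this point, so $\{g(\gamma_{j(\gamma),1}(\cdot))\}_{g \in G_\tau^\ast}$ — equivalently $\{\gamma_{n,1}(\cdot)\}_{n \geq j(\gamma)}$ together with $\gamma_{j(\gamma),1}$ — is equicontinuous at $y$; combined with the finitely-many-maps equicontinuity for $n < j(\gamma) \leq N$, we get that $\{\gamma_{n,1}\}_{n\in\NN}$ is equicontinuous at $y$, for each fixed $\gamma \in E_N$. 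The remaining issue — making this uniform in $\gamma \in E_N$ so as to produce a single $\delta$ — is the main obstacle. I expect to handle it by a compactness/covering argument: cover $F(G_\tau)$-values $\{w_\gamma : \gamma \in E_N\}$ (a subset of the compact set $\bigcup_{j\leq N}\{\gamma_{j,1}(y)\} \cap F(G_\tau)$, whose closure in $F(G_\tau)$ may be extracted via relative compactness of the word set and the fact that we only need the values at $y$) by finitely many balls on which $G_\tau$ is equicontinuous, use a Lebesgue-number argument to get a uniform modulus, and then choose $r$ (hence $\delta_1$) small enough that $d(z,y) < \delta_1$ forces $\gamma_{j(\gamma),1}(z)$ into the same equicontinuity ball as $w_\gamma$; uniform equicontinuity of $G_\tau^\ast$ on that finite collection of balls then bounds $d(\gamma_{n,1}(z),\gamma_{n,1}(y))$ by a quantity making $|\phi(\gamma_{n,1}(z)) - \phi(\gamma_{n,1}(y))| < \epsilon/2$ for all $n$ and all $\gamma \in E_N$. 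Putting the two parts together gives $|M_\tau^n(\phi)(z) - M_\tau^n(\phi)(y)| < \epsilon$ for all $n$ whenever $d(z,y) < \delta := \delta_1$, which is exactly the equicontinuity at $y$ we wanted, so $y \in F_{pt}^0(\tau)$ and the stated equality $F_{pt}^0(\tau) = F_{meas}^0(\tau) \cap Y$ from Lemma~\ref{FJmeaslem1}(\ref{FJmeaslem1-4}) finishes the proof.
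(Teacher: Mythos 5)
Your overall strategy is the same as the paper's (split $X_{\tau }$ into a small-measure bad set and a good set of sequences whose orbit of $y$ enters $F(G_{\tau })$ by a fixed time, control the good part via equicontinuity of $G_{\tau }$ on the Fatou set and equicontinuity of the finite word maps, and invoke Lemma~\ref{FJmeaslem1}), but the step you yourself flag as ``the main obstacle'' contains a genuine gap as written. Your covering argument rests on the claim that the set of entry points $\{ w_{\gamma }=\gamma _{j(\gamma ),1}(y)\mid \gamma \in E_{N}\} $ has closure contained in $F(G_{\tau })$, ``extracted via relative compactness of the word set.'' That does not follow: the full value set $\bigcup _{j\leq N}\{ \gamma _{j,1}(y)\} $ is indeed compact, but the subset of those values lying in the open set $F(G_{\tau })$ need not be relatively compact \emph{in} $F(G_{\tau })$ --- its closure can meet $J(G_{\tau })$, and near $J(G_{\tau })$ the equicontinuity neighborhoods of $G_{\tau }$ shrink to nothing, so no finite cover by ``equicontinuity balls'' with a uniform modulus exists. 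The missing idea is measure-theoretic, not topological: since $F(G_{\tau })$ is an open subset of a compact metric space, it is a countable increasing union of compact sets, so by continuity from below you can choose a single compact $K\subset F(G_{\tau })$ with $\tilde{\tau }(\{ \gamma \mid \gamma _{n_{0},1}(y)\in K\} )\geq 1-2\epsilon $ and throw the $\gamma $'s whose entry point misses $K$ into the small-measure bad set. On the fixed compact $K$ the uniformity you need is automatic.

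The paper also streamlines the bookkeeping in a way you may want to adopt: instead of the variable first-entry time $j(\gamma )$, it fixes one time $n_{0}$ with $\tilde{\tau }(\{ \gamma \mid \gamma _{n_{0},1}(y)\in F(G_{\tau })\} )\geq 1-\epsilon $ (this follows from your hypothesis plus forward invariance of $F(G_{\tau })$ and dominated convergence), picks $K$ at that single time, and then writes $M_{\tau }^{n_{0}+l}(\phi )=M_{\tau }^{n_{0}}(M_{\tau }^{l}(\phi ))$. Equicontinuity of $G_{\tau }$ on $K$ gives a $\delta _{1}$ such that $|M_{\tau }^{l}(\phi )(z)-M_{\tau }^{l}(\phi )(z')|<\epsilon $ for all $l$, all $z\in K$ and all $z'$ with $d(z,z')<\delta _{1}$, and compactness of $\Gamma _{\tau }$ gives a $\delta _{2}$ forcing $d(\gamma _{n_{0},1}(y),\gamma _{n_{0},1}(y'))<\delta _{1}$ for all $\gamma $; the split of the integral then closes the argument with no case analysis over entry times. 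With the compact-exhaustion fix inserted, your proof becomes correct and is essentially the paper's.
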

\begin{proof}
By the assumption of our lemma and Lemma~\ref{ocminvlem}, 
we obtain that for $\tilde{\tau }$-a.e. $\gamma \in X_{\tau }$,  
$\lim _{n\rightarrow \infty }1_{F(G_{\tau })}(\gamma _{n,1}(y))=1.$ 
Hence $\lim _{n\rightarrow \infty }\int _{X_{\tau }}1_{F(G_{\tau })}(\gamma _{n,1}(y))\ d\tilde{\tau }(\gamma )=1.$ 
Therefore, for a given $\epsilon >0$, 
there exists an $n_{0}\in \NN  $ such that for each 
$n\in \NN $ with $n\geq n_{0}$, 
$\tilde{\tau }(\{ \gamma \in X_{\tau }\mid \gamma _{n,1}(y)\in F(G_{\tau })\} )\geq 1-\epsilon .$ 
Since $F(G_{\tau })$ is an open subset of a compact metric space, $F(G_{\tau })$ is a countable union of 
compact subsets of $F(G_{\tau }).$ Hence, there exists a compact subset $K$ of $F(G_{\tau })$ such that 
$\tilde{\tau }(\{ \gamma \in X_{\tau }\mid \gamma _{n_{0},1}(y)\in K\} )\geq 1-2\epsilon .$ 
Since $G_{\tau }$ is equicontinuous on the compact set $K$, 
for a given $\phi \in C(Y)$, there exists a $\delta _{1}>0$ such that 
for each $z\in K$, $z'\in Y$ with $d(z,z')<\delta _{1}$ and for each $l\in \NN $, 
$|M_{\tau }^{l}(\phi )(z)-M_{\tau }^{l}(\phi )(z')|<\epsilon .$ 
Moreover, since $\G _{\tau }$ is compact, 
there exists a $\delta _{2}>0$ such that 
for each $y'\in Y$ with $d(y,y')<\delta _{2}$ and for each $\gamma \in X _{\tau }$,  
$d(\gamma _{n_{0}, 1}(y), \gamma _{n_{0}, 1}(y'))<\delta _{1}.$ 
It follows that for each $y'\in Y$ with $d(y,y')<\delta _{2}$ and for each $l\in \NN $, 
\begin{align*}
|M_{\tau }^{n_{0}+l}(\phi )(y)-M_{\tau }^{n_{0}+l}(\phi )(y')|
& =|M_{\tau }^{n_{0}}(M_{\tau }^{l}(\phi ))(y)-M_{\tau }^{n_{0}}(M_{\tau }^{l}(\phi ))(y')|\\ 
& =|\int _{X_{\tau }}\left( M_{\tau }^{l}(\phi )(\gamma _{n_{0}, 1}(y))-M_{\tau }^{l}(\phi )
(\gamma _{n_{0}, 1}(y'))\right)\ d\tilde{\tau }(\gamma )|\\ 
& \leq \int _{\{ \gamma \in X_{\tau }\mid \gamma _{n_{0}, 1}(y)\in K\} }
|M_{\tau }^{l}(\gamma _{n_{0}, 1}(y))-M_{\tau }^{l}(\phi )(\gamma _{n_{0}, 1}(y')|\ 
d\tilde{\tau }(\gamma ) \\ 
& \ \ \ +\int _{\{ \gamma \in X_{\tau }\mid \gamma _{n_{0}, 1}(y)\not\in K\} }
 |M_{\tau }^{l}(\gamma _{n_{0}, 1}(y))-M_{\tau }^{l}(\phi )(\gamma _{n_{0}, 1}(y')|\ 
 d\tilde{\tau }(\gamma ) \\ 
& \leq \epsilon +2\epsilon \cdot 2\| \phi \| _{\infty }. 
\end{align*} 
Therefore, by Lemma~\ref{FJmeaslem1}-\ref{FJmeaslem1-2}, 
we obtain that $y\in F_{pt}^{0}(\tau ).$ 
Thus, we have completed the proof of Lemma~\ref{FJmeaslem2}. 
\end{proof}
\begin{lem}
\label{genskewprodinvlem1}
Let $Y$ be a compact metric space and let $\Gamma \in \emCpt (\emCMX ).$
% be a compact subset of $\emCMX .$ 
Let $f:\GN \times Y \rightarrow \GN \times Y$ be the 
skew product associated with $\Gamma .$ 
Then, $f(\tilde{J}(f))\subset \tilde{J}(f)$ and 
for each $\g \in \G $, 
$\gamma _{1}(J_{\gamma })\subset J_{\sigma (\gamma )}$ and 
$\gamma _{1}(\hat{J}_{\gamma ,\G })\subset \hat{J}_{\gamma ,\Gamma }.$ 
\end{lem}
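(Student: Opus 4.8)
The plan is to prove the three inclusions in the order $\gamma_1(J_\gamma)\subset J_{\sigma(\gamma)}$, then $f(\tilde{J}(f))\subset\tilde{J}(f)$, then $\gamma_1(\hat{J}_{\gamma,\Gamma})\subset\hat{J}_{\sigma(\gamma),\Gamma}$ (the form recorded in the Remark preceding the lemma), since each step will be used to derive the next. Throughout I will use the identities $\gamma_{m,1}=(\sigma(\gamma))_{m-1,1}\circ\gamma_1$ for $m\ge 2$ and $\gamma_{1,1}=\gamma_1$, together with the fact that the skew product $f$ is continuous on $\GN\times Y$: the shift $\sigma$ is continuous, and $(\gamma,y)\mapsto\gamma_1(y)$ factors as $\GN\times Y\to\Gamma\times Y\to Y$, where the second map is the evaluation map, which is continuous because $Y$ is compact (hence locally compact Hausdorff).

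For the first inclusion I would argue by contraposition: it suffices to show that if $z\in Y$ and $\gamma_1(z)\in F_{\sigma(\gamma)}$, then $z\in F_\gamma$. Choose an open neighbourhood $V$ of $\gamma_1(z)$ on which $\{(\sigma(\gamma))_{n,1}\}_{n\in\NN}$ is equicontinuous and set $U:=\gamma_1^{-1}(V)$, an open neighbourhood of $z$ by continuity of $\gamma_1$. Fix $z_0\in U$ and $\varepsilon>0$; by equicontinuity of $\{(\sigma(\gamma))_{n,1}|_V\}$ at $\gamma_1(z_0)$ there is $\delta_1>0$ such that $w'\in V$ with $d(\gamma_1(z_0),w')<\delta_1$ forces $d((\sigma(\gamma))_{n,1}(\gamma_1(z_0)),(\sigma(\gamma))_{n,1}(w'))<\varepsilon$ for all $n$, and by continuity of $\gamma_1$ at $z_0$ there is $\delta_2>0$ with $d(z_0,z')<\delta_2\Rightarrow d(\gamma_1(z_0),\gamma_1(z'))<\min\{\delta_1,\varepsilon\}$. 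Then for every $z'\in U$ with $d(z_0,z')<\delta_2$ and every $m\in\NN$ one gets $d(\gamma_{m,1}(z_0),\gamma_{m,1}(z'))<\varepsilon$ (the case $m=1$ directly, the case $m\ge 2$ via $\gamma_{m,1}=(\sigma(\gamma))_{m-1,1}\circ\gamma_1$). Hence $\{\gamma_{m,1}\}$ is equicontinuous on $U$, so $z\in F_\gamma$; note that only the continuity of $\gamma_1$ is used, not openness or surjectivity.

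The remaining inclusions follow formally. From $\gamma_1(J_\gamma)\subset J_{\sigma(\gamma)}$ we get $f(J^\gamma)=\{\sigma(\gamma)\}\times\gamma_1(J_\gamma)\subset J^{\sigma(\gamma)}$, hence $f\bigl(\bigcup_{\gamma\in\GN}J^\gamma\bigr)\subset\bigcup_{\gamma\in\GN}J^\gamma$; applying $f(\overline{A})\subset\overline{f(A)}$ for the continuous map $f$ with $A=\bigcup_\gamma J^\gamma$ yields $f(\tilde{J}(f))\subset\overline{\bigcup_\gamma J^\gamma}=\tilde{J}(f)$. Finally, since $y\in\hat{J}_{\gamma,\Gamma}$ means exactly $(\gamma,y)\in\tilde{J}(f)$, and $\pi\circ f=\sigma\circ\pi$, for such $y$ we have $f(\gamma,y)=(\sigma(\gamma),\gamma_1(y))\in\pi^{-1}\{\sigma(\gamma)\}\cap\tilde{J}(f)=\hat{J}^{\sigma(\gamma),\Gamma}$, so $\gamma_1(y)=\pi_{Y}(f(\gamma,y))\in\hat{J}_{\sigma(\gamma),\Gamma}$.

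The only non-routine point is the equicontinuity argument of the second paragraph: because $\gamma_1$ need not be open, one cannot transport $V$ back to a neighbourhood of $z$ along a homeomorphism and must instead take $U=\gamma_1^{-1}(V)$ and push the equicontinuity estimate through $\gamma_1$ pointwise. Once this is settled, the passage to $\tilde{J}(f)$ and to $\hat{J}_{\gamma,\Gamma}$ is immediate from the continuity of $f$ and the relation $\pi\circ f=\sigma\circ\pi$.
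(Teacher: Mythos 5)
Your proposal is correct and follows the same route as the paper's own proof: show $\gamma_1(z)\in F_{\sigma(\gamma)}\Rightarrow z\in F_\gamma$ (which the paper dismisses as ``easy to see'' and you spell out correctly via $U=\gamma_1^{-1}(V)$ and $\gamma_{m,1}=(\sigma(\gamma))_{m-1,1}\circ\gamma_1$), then deduce $f(\tilde{J}(f))\subset\tilde{J}(f)$ from continuity of $f$, and read off the fibered inclusion. You are also right to interpret the last inclusion as $\gamma_1(\hat{J}_{\gamma,\Gamma})\subset\hat{J}_{\sigma(\gamma),\Gamma}$, matching the remark preceding the lemma.
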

\begin{proof}
Let $\gamma \in \GN. $ Let $y\in Y$ and 
suppose $\gamma _{1}(y)\in F_{\sigma (\g )}.$ Then 
it is easy to see that $y\in F_{\gamma }.$ Hence, 
we have $\gamma _{1}(J_{\gamma })\subset J_{\sigma (\gamma )}.$ 
By the continuity of $f:\GN \times Y\rightarrow \GN \times Y$, 
we obtain $f(\tilde{J}(f))\subset \tilde{J}(f).$ 
Therefore, $\gamma _{1}(\hat{J}_{\gamma ,\Gamma })\subset \hat{J}_{\gamma ,\Gamma }.$  
Thus, we have completed the proof of our lemma. 
\end{proof}

\begin{lem}
%[\cite{S12}]
\label{l:pctjjg}
Let $\Gamma \in \emCpt (\emRat )$ and let 
$G=\langle \G \rangle $.  
%Suppose that $\sharp (J(G))\geq 3.$ 
Let $f: \Gamma ^{\NN }\times \CCI \rightarrow 
\Gamma ^{\NN } \times \CCI $ be the skew product associated with 
$\Gamma .$   
Then, 
$\pi _{\CCI }(\tilde{J}(f))=J(G)$  
%\end{lem}
%\begin{lem}
%Suppose that supp $\tau $ is compact, and that 
%$\sharp (J(G_{\tau }))\geq 3.$ 
%Let $f:X_{\tau }\times \CCI \rightarrow 
%X_{\tau }\times \CCI $ be the skew product associated with 
%supp $\tau .$  
%Then, for each 
and 
for each 
$\gamma  =(\gamma  _{1},\gamma  _{2},\ldots )\in \Gamma ^{\NN }$, 
we have $\hat{J}_{\gamma  ,\Gamma }=
\bigcap _{j=1}^{\infty }\gamma  _{1}^{-1}\cdots \gamma  _{j}^{-1}(J(G)).$
\end{lem}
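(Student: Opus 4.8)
The plan is to deduce both identities from three facts already available: the backward self-similarity $J(G)=\bigcup_{h\in\Gamma}h^{-1}(J(G))$ (Lemma~\ref{l:bss}); the forward invariance $f(\tilde J(f))\subset\tilde J(f)$ together with the fiberwise relations $\gamma_{1}^{-1}(J_{\sigma(\gamma)})=J_{\gamma}$ and $\pi_{\CCI}(\tilde J(f))\subset J(G)$ (Lemma~\ref{genskewprodinvlem1} and the remark following Definition~\ref{d:sp}, i.e.\ \cite[Lemma 2.4]{S4}); and the density statement $J(G)=\overline{\bigcup_{\gamma\in\Gamma^{\NN}}J_{\gamma}}$, which is the one non-formal ingredient (see the last paragraph). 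Since $\tilde J(f)$ is the disjoint union of the fibers $\pi^{-1}\{\gamma\}\cap\tilde J(f)=\hat J^{\gamma,\Gamma}$, one has $\pi_{\CCI}(\tilde J(f))=\bigcup_{\gamma}\hat J_{\gamma,\Gamma}$; hence once the fiberwise identity $\hat J_{\gamma,\Gamma}=\bigcap_{j\ge1}\gamma_{1}^{-1}\cdots\gamma_{j}^{-1}(J(G))$ is proved, the first assertion will follow from the second by taking the union over $\gamma$ and invoking Lemma~\ref{l:bss}.

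First I would prove the inclusion $\hat J_{\gamma,\Gamma}\subset\bigcap_{j\ge1}\gamma_{j,1}^{-1}(J(G))$ (note $\gamma_{1}^{-1}\cdots\gamma_{j}^{-1}=\gamma_{j,1}^{-1}$). If $(\gamma,y)\in\tilde J(f)$, then by forward invariance $f^{j}(\gamma,y)=(\sigma^{j}(\gamma),\gamma_{j,1}(y))\in\tilde J(f)$, so $\gamma_{j,1}(y)\in\pi_{\CCI}(\tilde J(f))\subset J(G)$ for every $j$; thus $y\in\bigcap_{j}\gamma_{j,1}^{-1}(J(G))$. (This intersection is moreover nested decreasing, since $\gamma_{j+1,1}^{-1}(J(G))=\gamma_{j,1}^{-1}(\gamma_{j+1}^{-1}(J(G)))\subset\gamma_{j,1}^{-1}(J(G))$ by Lemma~\ref{ocminvlem}, though this is not strictly needed.)

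The substantive inclusion is the reverse, $\bigcap_{j}\gamma_{j,1}^{-1}(J(G))\subset\hat J_{\gamma,\Gamma}$. Let $y_{0}$ satisfy $y_{n}:=\gamma_{n,1}(y_{0})\in J(G)$ for all $n\ge0$; I must show $(\gamma,y_{0})\in\tilde J(f)=\overline{\bigcup_{\eta}J^{\eta}}$. Fix $N\in\NN$ and $\epsilon>0$. Since $y_{N}\in J(G)=\overline{\bigcup_{\xi}J_{\xi}}$, choose $\xi\in\Gamma^{\NN}$ and $v\in J_{\xi}$ with $d(v,y_{N})<r$, where $r>0$ will be fixed shortly. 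Put $\eta:=(\gamma_{1},\dots,\gamma_{N},\xi_{1},\xi_{2},\dots)\in\Gamma^{\NN}$; then $\sigma^{N}(\eta)=\xi$ and, iterating $J_{\eta}=\eta_{1}^{-1}(J_{\sigma(\eta)})$ exactly $N$ times, $J_{\eta}=\gamma_{N,1}^{-1}(J_{\xi})$. Because $\gamma_{N,1}$ is a nonconstant rational map, hence a finite open branched covering of $\CCI$, the connected component $W$ of $\gamma_{N,1}^{-1}(B(y_{N},r))$ containing $y_{0}$ satisfies $\gamma_{N,1}(W)=B(y_{N},r)$ and $\mbox{diam}(W)\to0$ as $r\downarrow0$. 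Choosing $r$ so small that $\mbox{diam}(W)<\epsilon$ and using $v\in B(y_{N},r)=\gamma_{N,1}(W)$, we obtain $w\in W$ with $\gamma_{N,1}(w)=v\in J_{\xi}$, so $w\in\gamma_{N,1}^{-1}(J_{\xi})=J_{\eta}$ and $d(w,y_{0})<\epsilon$. Letting $N\to\infty$ with $\epsilon=1/N$ yields points $(\eta,w)\in\bigcup_{\eta}J^{\eta}$ with $\eta\to\gamma$ (the two sequences agree on the first $N$ coordinates) and $w\to y_{0}$; hence $(\gamma,y_{0})\in\tilde J(f)$, i.e.\ $y_{0}\in\hat J_{\gamma,\Gamma}$. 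This establishes the fiberwise identity; taking the union over $\gamma$ gives $\pi_{\CCI}(\tilde J(f))=\bigcup_{\gamma}\bigcap_{j}\gamma_{j,1}^{-1}(J(G))$, which is contained in $J(G)$ (each factor is, by Lemma~\ref{ocminvlem}) and contains every $z_{0}\in J(G)$: by Lemma~\ref{l:bss} one picks inductively $\gamma_{1},\gamma_{2},\dots\in\Gamma$ with $\gamma_{n,1}(z_{0})\in J(G)$ for all $n$, so $z_{0}\in\bigcap_{j}\gamma_{j,1}^{-1}(J(G))$.

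The main obstacle — and the only step that is not formal bookkeeping — is the density fact $J(G)\subset\overline{\bigcup_{\gamma}J_{\gamma}}$ used above. I would establish it via repelling cycles: given $g\in G$ with $\deg(g)\ge2$, written as a word $g=\gamma_{k,1}$ over $\Gamma$, the periodic sequence $\eta=(\gamma_{1},\dots,\gamma_{k},\gamma_{1},\dots,\gamma_{k},\dots)$ has $\eta_{nk,1}=g^{n}$, so $\{\eta_{m,1}\}_{m}$ fails to be equicontinuous at any repelling periodic point of $g$; hence each such point lies in $J_{\eta}$, and since repelling periodic points of $g$ are dense in $J(g)$ while $J(G)=\overline{\bigcup_{g\in G,\,\deg g\ge2}J(g)}$ (the standard density theorem for rational semigroups), one gets $J(G)\subset\overline{\bigcup_{\gamma}J_{\gamma}}$. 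This covers every case needed in the paper (in particular whenever $\Gamma\cap\Ratp\ne\emptyset$); for an arbitrary compact $\Gamma\subset\Rat$ the identity $\pi_{\CCI}(\tilde J(f))=J(G)$ may instead be quoted directly from \cite{S4}. Everything else reduces to the invariance relations recalled above and elementary properties of finite branched coverings of the sphere.
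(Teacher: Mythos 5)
Your approximation argument for the fiberwise identity $\hat{J}_{\gamma,\Gamma}=\bigcap_{j}\gamma_{j,1}^{-1}(J(G))$ is correct and is a genuine alternative to the paper's proof: the paper argues by contradiction, taking a product neighborhood $U\times V\subset \tilde{F}(f)$ of a putative point of $\bigcap_{j}\gamma_{j,1}^{-1}(J(G))\setminus \hat{J}_{\gamma,\Gamma}$ and pushing it forward with $f^{n}$ (via \cite[Lemma 2.1]{S7}) onto an entire fiber $\GN\times\{\gamma_{n,1}(y)\}$, contradicting $\gamma_{n,1}(y)\in J(G)=\pi_{\CCI}(\tilde{J}(f))$; your direct construction of points of $J^{\eta}$ converging to $(\gamma,y_{0})$, using $J_{\eta}=\gamma_{N,1}^{-1}(J_{\xi})$ and the shrinking of components of $\gamma_{N,1}^{-1}(B(y_{N},r))$, achieves the same thing and correctly leans on the equality $\gamma_{1}^{-1}(J_{\sigma(\gamma)})=J_{\gamma}$ available for compact $\Gamma$ from the remark after Definition~\ref{d:sp}.

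The genuine gap is in the first identity $\pi_{\CCI}(\tilde{J}(f))=J(G)$, on which everything else (including your density fact) rests. The lemma is stated for arbitrary $\Gamma\in\Cpt(\Rat)$, but your density argument requires repelling periodic points of elements of degree at least two, hence needs $G\cap\Ratp\neq\emptyset$ (and in any case $\sharp J(G)\geq 3$ for the density theorem $J(G)=\overline{\bigcup_{g\in G}J(g)}$ of \cite[Lemma 2.3 (g)]{S3}, which is the paper's Case 1). When $\Gamma\subset\mbox{Aut}(\CCI)$ and $\sharp J(G)\in\{1,2\}$, neither tool exists, and this is exactly where the paper spends most of its effort: when $J(G)=\{a\}$ and every element of $G$ is elliptic or the identity, it extracts $h_{1},h_{2}\in\Gamma$ whose fixed-point sets intersect in exactly one point, uses that the commutator $h_{1}h_{2}h_{1}^{-1}h_{2}^{-1}$ is parabolic to approximate a parabolic map by words in $\langle h_{1},h_{2}\rangle$, and builds a sequence $\gamma$ whose compositions drive the unit disc toward $a=\infty$, forcing $J_{\gamma}\neq\emptyset$; the two-point case needs a further argument with the squared generating set. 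Your fallback of quoting $\pi_{\CCI}(\tilde{J}(f))=J(G)$ from \cite{S4} does not close this, since \cite{S4} concerns skew products over maps of degree at least two; the all-M\"{o}bius degenerate cases are precisely what is not available elsewhere and what the paper's Cases 2--4 supply. (A smaller point: even in Case 1 your restriction to $\deg g\geq 2$ is an unnecessary weakening --- $J(G)=\overline{\bigcup_{\deg g\geq 2}J(g)}$ need not hold --- whereas $J(g)\subset J_{\eta}$ for the periodic word $\eta$ holds for every $g\in G$ because $\{g^{n}\}_{n}$ is a subsequence of $\{\eta_{m,1}\}_{m}$, which is all the paper needs.)
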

\begin{proof}
We first prove $\pi _{\CCI }(\tilde{J}(f))=J(G).$ 
Since $J_{\gamma }\subset J(G)$ for each $\gamma \in \Gamma ^{\NN }$, 
it is easy to see $\pi _{\CCI }(\tilde{J}(f))\subset J(G).$ 
In order to show the opposite inclusion, we consider the following four cases: 
Case 1: $\sharp (J(G))\geq 3$; Case 2: $J(G)=\emptyset $; Case 3: 
$J(G)=\{ a\} $; and Case 4: $J(G)=\{ a_{1},a_{2}\} ,a_{1}\neq a_{2}.$  

Suppose we have case 1: $\sharp (J(G))\geq 3.$ 
 Then, by \cite[Lemma 2.3 (g)]{S3}, 
$J(G)=\overline{\bigcup _{g\in G}J(g)}.$ Hence, 
$\pi _{\CCI }(\tilde{J}(f))=J(G).$ 

Suppose we have case 2: $J(G)=\emptyset .$ Then it is easy to see $\pi _{\CCI }(\tilde{J}(f))=J(G)=\emptyset .$  

Suppose we have case 3: $J(G)=\{ a\}, a\in \CCI .$ Then $G\subset \mbox{Aut}(\CCI ).$ 
Since $g^{-1}(J(G))\subset J(G)$ for each $g\in G$, it follows that 
$g(a)=a$ for each $g\in G.$ If there exists an element $g\in G$ with $|m(g,a)|<1$, 
then the repelling fixed point $b$ of $g$ is different from $a$ and $b\in J(G).$ 
This is a contradiction. Hence, $|m(g,a)|\geq 1$ for each $g\in G.$ 
If there exists an element $g$ such that $g$ is either loxodromic or parabolic, 
then $a\in J(g)\subset J(G)$ and it implies $\pi _{\CCI }(\tilde{J}(f))=J(G).$ 
Hence, in order to show $\pi _{\CCI }(\tilde{J}(f))=J(G)$, 
we may assume that each $g\in G$ is either an elliptic element or the identity map. 
Under this assumption, we will show the following claim: \\ 
\noindent Claim 1: There exists an element $\gamma \in \Gamma ^{\NN }$ 
such that $J_{\gamma }=\{ a\} .$ 

 In order to prove claim 1, 
since we are assuming $J(G)=\{ a\} \neq \emptyset $, 
there exists an $h_{1}\in \Gamma $ and an $h_{2}\in \Gamma $ such that 
$\sharp (\mbox{Fix}(h_{1}))=2$ and $\sharp (\mbox{Fix}(h_{1})\cap \mbox{Fix}(h_{2}))=1$, 
where $\mbox{Fix}(\cdot )$  denotes the set of all fixed points.  
By \cite[page 12]{Ma}, $h_{1}h_{2}h_{1}^{-1}h_{2}^{-1}$ is parabolic. 
 Hence, there exists a sequence 
$\{ g_{m}\} _{m=1}^{\infty }$ in the semigroup $\langle h_{1},h_{2}\rangle $ 
and a parabolic element $h\in \mbox{Aut}(\CCI )$ such that 
$g_{m}\rightarrow h$ as $m\rightarrow \infty .$ 
We may assume that 
$\mbox{Fix}(h_{1})\cap \mbox{Fix}(h_{2})=\{ a\} $ and $a=\infty .$ 
Then there exists a sequence $\{ n_{m}\} _{m=1}^{\infty }$ in $\NN \cup \{ 0\} $ such that 
$\sup \{ d(\infty , z) \mid z\in g_{m}^{n_{m}}\cdots g_{1}^{n_{1}}(\DD )\} \rightarrow 0$ as $m\rightarrow \infty$, 
where $\DD $ denotes the unit disc and $d$ denotes the spherical distance. 
Let $\gamma \in \{ h_{1},h_{2}\} ^{\NN }$ be an element and $\{ k_{m}\} _{m=1}^{\infty }$ 
a sequence in $\NN$ such that 
$\gamma _{k_{m},1}=g_{m}^{n_{m}}\cdots g_{1}^{n_{1}}$ for each $m\in \NN .$ 
Then $\sup \{ d(\infty ,z)\mid z\in \gamma _{k_{m},1}(\DD )\} \rightarrow 0$ as $m\rightarrow \infty .$ 
Hence, if $J_{\gamma }=\emptyset $, then $\gamma _{k_{m},1}\rightarrow \infty $ as $m\rightarrow \infty $ 
uniformly on $\CCI $. It implies that for each $\epsilon >0$ there exists a $j\in \NN $ such that 
$\gamma _{k_{j},1}(\CCI )\subset B(\infty ,\epsilon ).$ However, this is a contradiction. 
Therefore, we must have that $J_{\gamma }\neq \emptyset .$ 
Hence, we have proved claim 1. 

 By claim 1, $\pi _{\CCI }(\tilde{J}(f))=J(G)=\{ a\} .$ 
 
We now suppose we have case 4: $J(G)=\{ a_{1},a_{2}\} ,a_{1}\neq a_{2}.$ 
Then $G\subset \mbox{Aut}(\CCI )$. Since $g^{-1}(J(G))\subset J(G)$ for each $g\in G$, 
it follows that $g(J(G))=J(G)$ for each $g\in G.$ 
Hence there exists no parabolic element in $G.$ 
Let $\Lambda := \{ g_{1}\circ g_{2}\mid g_{1},g_{2}\in \Gamma \} .$ 
Then $\Lambda $ is a compact subset of $\mbox{Aut}(\CCI ).$ 
%By \cite[Lemma 2.3 (b)]{S3}, 
It is easy to see that 
$J(\langle \Lambda \rangle )=J(G).$  
Moreover, for each $g\in \Lambda $, $g(a_{i})=a_{i}$ for each $i=1,2.$ 
Since each $a_{i}$ belongs to $J(G)=J(\langle \Lambda \rangle )$, it follows that 
for each $i=1,2,$ there exists an element $g_{i}\in \Lambda $ such that 
$|m(g_{i},a_{i})|>1.$ Hence, $a_{i}\in J(g_{i}).$ 
Therefore, $\pi _{\CCI }(\tilde{J}(f))=J(G)=\{ a_{1},a_{2}\} .$   

 Thus, we have proved that $\pi _{\CCI }(\tilde{J}(f))=J(G).$ 
 
We now prove that for each $\gamma =(\gamma _{1},\gamma _{2},\ldots )\in \Gamma ^{\NN }$, 
$\hat{J}_{\gamma ,\Gamma }=\bigcap _{j=1}^{\infty }\gamma _{j,1}^{-1}(J(G)).$  
 Let $\g =(\g _{1},\g _{2},\ldots )\in \GN .$ 
By \cite[Lemma 2.1]{S7},  
we see that for each $j\in \NN $, 
$\g_{j,1}(\hat{J}_{\g ,\Gamma })=
\hat{J}_{\sigma ^{j}(\g ),\Gamma }
\subset J(G).$ 
Hence, 
$\hat{J}_{\g ,\Gamma}\subset 
\bigcap_{j=1}^{\infty }
\g _{j,1}^{-1}(J(G)).$
 Suppose that there exists a point 
 $(\g ,y)\in \GN \times \CCI $ such that  
 $y\in $ $\left( \bigcap_{j=1}^{\infty }
\g _{j,1}^{-1}(J(G))\right) \setminus 
\hat{J}_{\g ,\Gamma }.$ 
Then, we have 
$(\g ,y)\in (\GN \times \CCI ) 
\setminus \tilde{J}(f).$ 
Hence, there exists a 
neighborhood $U$
%=\{ x'\in \Sigma _{m}
%\mid x'_{i}=x_{i},\ i=1,\cdots ,n\} $ of 
of $\g $ in $\GN $ and a 
neighborhood $V$ of $y$ in $\CCI $ such that 
$U\times V\subset \tilde{F}(f).$ 
Then, there exists an $n\in \NN $ such that 
$\{ \rho \in X_{\tau }\mid \rho _{j}=\gamma _{j}, j=1,\ldots ,n\} \subset U.$ 
%$\sigma ^{n}(U)=\GN .$ 
% we have 
%$\sigma ^{n}(U)=\Sigma _{m}.$ 
Combining it with \cite[Lemma 2.1]{S7}, 
we obtain $\tilde{F}(f)\supset 
f^{n}(U\times V)\supset \GN \times 
\{ \gamma_{n,1}(y)\} .$ 
Moreover, 
since we have 
$\gamma_{n,1}(y)\in J(G)=
\pi _{\CCI }(\tilde{J}(f))$, 
we get that there exists an element 
$\g '\in \GN $ such that 
$(\g ', \gamma_{n,1}(y))\in 
\tilde{J}(f).$ However, it contradicts 
$(\g ',\gamma_{n,1}(y))\in 
\GN \times \{ \gamma_{n,1}(y)\} 
\subset \tilde{F}(f).$ 
Hence, we obtain 
$\hat{J}_{\g }(f)= 
\bigcap_{j=1}^{\infty }
\g_{j,1}^{-1}(J(G)).$

Thus, we have proved Lemma~\ref{l:pctjjg}.
\end{proof}

\begin{lem}
\label{Lkerlem}
Let $Y$ be a compact metric space and 
let $\tau \in {\frak M}_{1,c}(\emCMX)$.  
%Let $f:X_{\tau }\times \CCI \rightarrow X_{\tau }\times \CCI $ be the skew product associated with 
%supp $\tau .$
Let $V$ be a non-empty open subset of $Y $ such that 
$G_{\tau }(V)\subset V.$ For each 
$\gamma  =(\gamma  _{1},\gamma  _{2},\ldots )\in X_{\tau }$, 
we set 
$L_{\gamma  }:= \bigcap_{j=1}^{\infty }\gamma  _{j,1}^{-1}
(Y \setminus V).$ Moreover, we set 
$L_{\ker}:= \bigcap_{g\in G_{\tau }}g^{-1}(Y \setminus V).$ 
Let $y\in Y$ be a point. Then, 
we have that 
$$\tilde{\tau }(\{ \gamma  \in X_{\tau }\mid 
y\in L_{\gamma  },\ \liminf_{n\rightarrow \infty }
d(\gamma _{n,1}(y), L_{\ker})>0\})=0.$$ (When 
$L_{\ker }=\emptyset $, we set 
$d(z,L_{\ker}):=\infty $ for each $z\in Y .$) 
\end{lem}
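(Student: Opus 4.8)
The plan is to reduce the statement to a uniform ``escape probability'' estimate and then run a geometric‑decay (Borel--Cantelli type) argument along the orbit of $y$. Write $z_{n}(\g ):=\g _{n,1}(y)$ for $\g \in X_{\tau }$, with $z_{0}(\g )=y$. Since $G_{\tau }(V)\subset V$ one checks that $L_{\g }\cup L_{\ker }\subset Y\setminus V$ (if $z$ lay in $V$ and in $L_{\ker }$, then for $h\in \G _{\tau }$ we would have $h(z)\in V$ by forward invariance and $h(z)\notin V$ by the definition of $L_{\ker }$; similarly for $L_{\g }$). Hence on the event in the statement we have $z_{n}(\g )\in Y\setminus V$ for all $n$, while $\liminf _{n}d(z_{n}(\g ),L_{\ker })>0$. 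Since $L_{\ker }=\bigcap _{g\in G_{\tau }}g^{-1}(Y\setminus V)$ is closed, the latter forces, for some rational $\epsilon >0$ and some $M\in \NN $, that $z_{n}(\g )\in K_{\epsilon }$ for all $n\ge M$, where $K_{\epsilon }:=\{ z\in Y\setminus V\mid d(z,L_{\ker })\ge \epsilon \} $ is a compact subset of $Y$. So it suffices to prove $\tilde{\tau }(A_{\epsilon ,M})=0$ for every such pair, where $A_{\epsilon ,M}:=\{ \g \in X_{\tau }\mid z_{n}(\g )\in K_{\epsilon }\ \forall n\ge M\} $, and then sum over the countably many $(\epsilon ,M)$.

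The key estimate is: there exist $N\in \NN $ and $p_{0}>0$ with $\tilde{\tau }(\{ \g \in X_{\tau }\mid \g _{N,1}(z)\in V\} )\ge p_{0}$ for every $z\in K_{\epsilon }$. Fix $z\in K_{\epsilon }$; as $d(z,L_{\ker })>0$ we have $z\notin L_{\ker }$, so there is $g=h_{n}\circ \cdots \circ h_{1}\in G_{\tau }$ with each $h_{i}\in \G _{\tau }=\mbox{supp}\, \tau $ and $g(z)\in V$. By the (joint) continuity of iterated composition and evaluation on $\CMX $ in the uniform topology and the openness of $V$, there are open neighbourhoods $W_{z}\ni z$ in $Y$ and $A_{i}\ni h_{i}$ in $\CMX $ such that $(g_{n}\circ \cdots \circ g_{1})(w)\in V$ whenever $w\in W_{z}$ and $g_{i}\in A_{i}$; moreover $\prod _{i}\tau (A_{i})>0$ because each $h_{i}\in \mbox{supp}\, \tau .$ Covering the compact set $K_{\epsilon }$ by finitely many $W_{z^{(1)}},\ldots ,W_{z^{(\ell )}}$ and setting $N:=\max _{j}n(z^{(j)})$, $p_{0}:=\min _{j}\prod _{i}\tau (A_{i}^{z^{(j)}})>0$, we argue as follows. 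Given $z\in K_{\epsilon }$, pick $j$ with $z\in W_{z^{(j)}}$; the cylinder $E:=\{ \g \in X_{\tau }\mid \g _{i}\in A_{i}^{z^{(j)}},\ i\le n(z^{(j)})\} $ has $\tilde{\tau }(E)=\prod _{i}\tau (A_{i}^{z^{(j)}})\ge p_{0}$, and for $\g \in E$ we get $\g _{n(z^{(j)}),1}(z)\in V$, hence also $\g _{N,1}(z)\in V$ since $\g _{N,1}=(\g _{N}\circ \cdots \circ \g _{n(z^{(j)})+1})\circ \g _{n(z^{(j)}),1}$ with $\g _{N}\circ \cdots \circ \g _{n(z^{(j)})+1}\in G_{\tau }^{\ast }$ and $G_{\tau }^{\ast }(V)\subset V.$

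For the decay step, set $A_{k}:=\{ \g \mid z_{n}(\g )\in K_{\epsilon },\ M\le n\le M+kN\} $ for $k\ge 0$, so $A_{\epsilon ,M}=\bigcap _{k}A_{k}$ with $A_{0}\supseteq A_{1}\supseteq \cdots .$ The event $A_{k}$ depends only on the first $M+kN$ coordinates, and on $A_{k}$ we have $z_{M+kN}(\g )\in K_{\epsilon }.$ Because $\tilde{\tau }$ is a product measure, conditioning on the first $M+kN$ coordinates makes $(\g _{M+kN+1},\ldots ,\g _{M+(k+1)N})$ an independent $\tau ^{\otimes N}$‑distributed tuple, and $z_{M+(k+1)N}=\g _{M+(k+1)N}\circ \cdots \circ \g _{M+kN+1}\bigl(z_{M+kN}\bigr)$; by the key estimate the conditional probability of $\{ z_{M+(k+1)N}\in V\} $ is $\ge p_{0}$ on $A_{k}.$ Integrating, $\tilde{\tau }(A_{k}\cap \{ z_{M+(k+1)N}\notin V\} )\le (1-p_{0})\tilde{\tau }(A_{k})$, and since $K_{\epsilon }\subset Y\setminus V$ we have $A_{k+1}\subseteq A_{k}\cap \{ z_{M+(k+1)N}\notin V\} $, whence $\tilde{\tau }(A_{k+1})\le (1-p_{0})\tilde{\tau }(A_{k}).$ Iterating gives $\tilde{\tau }(A_{\epsilon ,M})=\lim _{k}\tilde{\tau }(A_{k})\le \lim _{k}(1-p_{0})^{k}=0$, which finishes the proof after summing over $(\epsilon ,M).$

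The one genuinely delicate point is the \emph{uniformity} of the lower bound $p_{0}>0$ in the key estimate: for a single $z$ one only gets a positive escape probability, and upgrading this to a bound uniform over $K_{\epsilon }$ uses the compactness of $K_{\epsilon }$, the joint continuity of composition and evaluation on $\CMX $, and the fact that neighbourhoods of points of $\mbox{supp}\, \tau $ carry positive $\tau $‑mass; the forward invariance $G_{\tau }(V)\subset V$ is exactly what lets us align all the escape times at the single value $N.$ Everything else is a routine product‑measure computation together with continuity of $z\mapsto d(z,L_{\ker })$ and of $z\mapsto \tilde\tau(\{\g\mid \g_{N,1}(z)\in V\})$ (lower semicontinuity suffices for the latter), which justify the measurability used above.
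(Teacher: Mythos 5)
Your proof is correct and follows essentially the same route as the paper's: both reduce the problem to the compact set $\{ z\in Y\setminus V\mid d(z,L_{\ker })\geq \epsilon \} $, use its compactness to extract finitely many ``escape words'' over $\G _{\tau }$ giving a uniform positive lower bound on the probability of entering the forward-invariant set $V$ within a bounded number of steps, and then obtain geometric decay of the bad event from the product structure of $\tilde{\tau }$. The only cosmetic difference is that you isolate the uniform escape estimate as a separate step and condition at times $M+kN$, whereas the paper normalizes all escape words to a common length $k$ and decomposes the event $E_{c}^{n}$ directly according to which covering ball the orbit visits at the multiples of $k$.
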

\begin{proof}
For each $c>0$, 
we set 
$E_{c}:= \{ \gamma \in X_{\tau }\mid 
y\in L_{\gamma }, \forall n, d(\gamma _{n, 1}(y), L_{\ker })\geq c\} .$ 
In order to prove our lemma, it is enough to show that 
for each $c>0$, $\tilde{\tau }(E_{c})=0.$ 
It clearly holds when $y\in V.$ Hence, we assume 
$y\in Y\setminus V.$ 
Let $B_{c}:= \{ z\in Y\setminus V\mid d(z, L_{\ker })\geq c\} .$ 
For each $z\in B_{c}$, 
there exists a positive integer $k(z)$, 
an element $(\alpha _{1,z},\ldots ,\alpha _{k(z),z})\in \G _{\tau }^{k(z)}$, 
a neighborhood $U_{z}$ of $ (\alpha _{1,z},\ldots ,\alpha _{k(z),z})$ in 
$ \G _{\tau }^{k(z)}$, and a $\delta _{z}>0$ such that 
for each $\tilde{\alpha }=(\tilde{\alpha }_{1},\ldots ,\tilde{\alpha }_{k(z)})\in U_{z}$, 
$\tilde{\alpha }_{k(z)}\cdots \tilde{\alpha }_{1}(B(z,\delta _{z}))\subset V.$ 
Since $B_{c}$ is compact, 
there exists an $l\in \NN $, a finite sequence $\{ k(j)\} _{j=1}^{l}$ in $\NN $, 
% of positive integers, 
a finite subset $\{ z_{j}\} _{j=1}^{l}$ of $B_{c}$, 
a finite subset $\{ \alpha _{j}=(\alpha _{j,1},\ldots ,\alpha _{j,k(j)})\in 
\G _{\tau }^{k(j)}\} _{j=1}^{l}$, 
a neighborhood  
$ U_{j}$ of $\alpha _{j}$ in $\G _{\tau }^{k(j)} $ 
for each $j=1,\ldots , l$, and a finite sequence $\{ \delta _{j}\} _{j=1}^{l}$, such that 
$\bigcup _{j=1}^{l}B(z_{j},\delta _{j})\supset B_{c}$ and 
such that for each $j=1,\ldots ,l$ and each $\tilde{\alpha }=(\tilde{\alpha }_{1},\ldots, \tilde{\alpha }_{k(j)})\in U_{j}$, 
$\tilde{\alpha }_{k(j)}\cdots \tilde{\alpha }_{1}(B(z_{j},\delta _{j}))\subset V.$   
Since $G_{\tau }(V)\subset V$, we may assume that 
there exists a $k\in \NN $ such that for each $j=1,\ldots ,l$, 
$k(j)=k.$  
For each $n\in \NN $, 
we set 
$E_{c}^{n}:= \{\gamma \in X_{\tau }\mid 
\gamma _{jk,1}(y)\in B_{c}, j=1,\ldots ,n\} .$ 
%Then $E_{c}^{n}$ is compact. 
For each $\gamma =(\gamma _{1},\gamma _{2},\ldots ) \in E_{c}^{n}$, 
there exists a neighborhood $A_{\gamma }$ of 
$(\gamma _{1},\ldots ,\gamma _{nk})$ in $\G _{\tau }^{nk}$ 
and a $j(\gamma )\in \NN $ such that 
for each $\tilde{\alpha }\in A_{\gamma }$, 
$\tilde{\alpha }_{nk}\cdots \tilde{\alpha }_{1}(y)\in B(z_{j(\gamma )}, \delta _{j(\gamma )}).$ 
Hence, there exists a finite sequence $\{ W_{i}\} _{i=1}^{r}$ of subsets 
of $\G _{\tau }^{nk}$ and a finite sequence $\{ p(i)\} _{i=1}^{r}$ of 
positive integers such that 
setting $E_{c}^{n,i}:= \{ \gamma \in E_{c}^{n}\mid (\gamma _{1},\ldots ,\gamma _{nk})\in W_{i}\} $, 
we have that $E_{c}^{n,i}\subset \{ \gamma \in X_{\tau }\mid \gamma _{nk, 1}(y)
\in B(z_{p(i)},\delta _{p(i)})\} $ and $E_{c}^{n}=\coprod _{i=1}^{r}E_{c}^{n,i}.$ 
Let $a:= \max _{j=1,\ldots ,l} \{ (\otimes _{s=1}^{k}\tau )(\G _{\tau }^{k}\setminus U_{j})\} (<1).$ 
Since $E_{c}^{n+1}=\coprod _{i=1}^{r}(E_{c}^{n+1}\cap E_{c}^{n,i})$, 
it follows that for each $n\in \NN $, 
\begin{align*}
\tilde{\tau }(E_{c}^{n+1})
& = \sum _{i=1}^{r}\tilde{\tau }(E_{c}^{n+1}\cap E_{c}^{n,i})
 \leq \sum _{i=1}^{r}\tilde{\tau }(\{ \gamma \in X_{\tau }\mid 
(\gamma _{nk+1},\ldots ,\gamma _{(n+1)k})\not\in U_{p(i)}\} \cap E_{c}^{n,i})\\ 
& =\sum _{i=1}^{r}(\otimes _{s=1}^{k}\tau )(\G _{\tau }^{k}\setminus U_{p(i)})\cdot 
\tilde{\tau }(E_{c}^{n,i}) 
 \leq a\sum _{i=1}^{r}\tilde{\tau }(E_{c}^{n,i})=a \tilde{\tau }(E_{c}^{n}).
\end{align*}  
Combining it with $E_{c}\subset \bigcap_{n=1}^{\infty }E_{c}^{n}$, 
we obtain that $\tilde{\tau }(E_{c})\leq \tilde{\tau }(\bigcap_{n=1}^{\infty }E_{c}^{n})=0.$ 
Thus, we have completed the proof of Lemma~\ref{Lkerlem}. 
\end{proof}
\begin{prop}[Cooperation Principle I] 
\label{Jkeremptygenprop1}
Let $Y$ be a compact metric space and let $\tau \in {\frak M}_{1,c}(\emCMX )$ with 
$\G _{\tau }\subset \emOCMX .$  
%Suppose that supp$\, \tau $ is compact and 
Suppose that $J_{\ker }(G_{\tau })=\emptyset .$ 
Then, $F_{meas}(\tau )={\frak M}_{1}(Y)$ and 
for each $y\in Y$, there exists a Borel subset ${\cal A}_{y}$ of 
$X_{\tau }$ with $\tilde{\tau }({\cal A}_{y})=1$ such that 
for each $\gamma \in {\cal A}_{y}$, 
% and for $\tilde{\tau }$-a.e. $\gamma \in X_{\tau }$,  
there exists an $n\in \NN $ with $\gamma _{n,1}(y)\in F(G_{\tau }).$ 
\end{prop}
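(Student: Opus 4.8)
The plan is to deduce everything from Lemma~\ref{Lkerlem} by choosing the open set $V$ to be the Fatou set $F(G_\tau)$. First I would note that by Lemma~\ref{ocminvlem}, since $\G_\tau \subset \emOCMX$, we have $g(F(G_\tau)) \subset F(G_\tau)$ for each $g \in G_\tau$, hence $G_\tau(F(G_\tau)) \subset F(G_\tau)$, so $V := F(G_\tau)$ satisfies the hypothesis of Lemma~\ref{Lkerlem}. With this choice, for each $\gamma \in X_\tau$ we have $L_\gamma = \bigcap_{j=1}^\infty \gamma_{j,1}^{-1}(Y\setminus F(G_\tau)) = \bigcap_{j=1}^\infty \gamma_{j,1}^{-1}(J(G_\tau))$, and $L_{\ker} = \bigcap_{g\in G_\tau} g^{-1}(J(G_\tau)) = J_{\ker}(G_\tau) = \emptyset$ by hypothesis. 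Since $L_{\ker} = \emptyset$, the convention $d(z,L_{\ker}) = \infty$ makes the condition $\liminf_{n\to\infty} d(\gamma_{n,1}(y),L_{\ker}) > 0$ automatic. Therefore Lemma~\ref{Lkerlem} gives, for each $y \in Y$,
$$\tilde{\tau}(\{\gamma \in X_\tau \mid y \in L_\gamma\}) = \tilde{\tau}\left(\left\{\gamma \in X_\tau \;\Big|\; y \in \bigcap_{j=1}^\infty \gamma_{1}^{-1}\cdots \gamma_{j}^{-1}(J(G_\tau))\right\}\right) = 0.$$

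Next I would feed this into Lemma~\ref{FJmeaslem2}: its hypothesis is exactly that the above $\tilde{\tau}$-measure is zero, so the lemma yields $y \in F_{pt}^0(\tau) = F_{meas}^0(\tau)\cap Y$ for every $y \in Y$, i.e.\ $F_{pt}^0(\tau) = Y$. By Lemma~\ref{FJmeaslem1}-\ref{FJmeaslem1-6}, $F_{pt}^0(\tau) = Y$ is equivalent to $F_{meas}(\tau) = {\frak M}_1(Y)$, which is the first assertion.

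For the second assertion, fix $y \in Y$. Arguing as in the proof of Lemma~\ref{FJmeaslem2}: since $\tilde{\tau}(\{\gamma \mid y \in \bigcap_{j=1}^\infty \gamma_{j,1}^{-1}(J(G_\tau))\}) = 0$ and, by Lemma~\ref{ocminvlem}, $\gamma_{n,1}(y) \in F(G_\tau) \Rightarrow \gamma_{m,1}(y) \in F(G_\tau)$ for all $m \geq n$, we get that for $\tilde{\tau}$-a.e.\ $\gamma \in X_\tau$ there exists $n \in \NN$ with $\gamma_{n,1}(y) \in F(G_\tau)$. Setting ${\cal A}_y$ to be this full-measure Borel set (it is Borel since the events $\{\gamma \mid \gamma_{n,1}(y) \in F(G_\tau)\}$ are open, using continuity of $\gamma \mapsto \gamma_{n,1}(y)$ and openness of $F(G_\tau)$, and ${\cal A}_y$ is their countable union) completes the proof. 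The only real work is the bookkeeping already done in Lemma~\ref{Lkerlem}; here there is no genuine obstacle, just the verification that $V = F(G_\tau)$ is forward invariant and that $L_{\ker} = J_{\ker}(G_\tau) = \emptyset$ makes the $\liminf$ condition vacuous, after which Lemmas~\ref{FJmeaslem1} and~\ref{FJmeaslem2} finish both claims.
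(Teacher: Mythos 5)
Your proof is correct and follows the same route as the paper: take $V=F(G_{\tau })$, check forward invariance via Lemma~\ref{ocminvlem}, apply Lemma~\ref{Lkerlem} with $L_{\ker }=J_{\ker }(G_{\tau })=\emptyset $ so the $\liminf $ condition is vacuous, and then invoke Lemmas~\ref{FJmeaslem2} and \ref{FJmeaslem1}-\ref{FJmeaslem1-6} to get $F_{meas}(\tau )={\frak M}_{1}(Y)$. Your extra remark on the Borel measurability of ${\cal A}_{y}$ is a harmless addition the paper leaves implicit.
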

\begin{proof}
Let $V:= F(G_{\tau }).$ 
By Lemma~\ref{ocminvlem}, for each $g\in G_{\tau }$, 
$g(V)\subset V.$ 
By Lemma~\ref{Lkerlem}, 
we obtain that  for each $y\in Y$ and for $\tilde{\tau }$-a.e. $\gamma \in X_{\tau }$,  
there exists an $n\in \NN $ such that $\gamma _{n,1}(y)\in F(G_{\tau }).$ 
From Lemma~\ref{FJmeaslem2} and Lemma~\ref{FJmeaslem1}-\ref{FJmeaslem1-6}, 
it follows that $F_{meas}(\tau )={\frak M}_{1}(Y).$ 
Thus, we have completed the proof of Proposition~\ref{Jkeremptygenprop1}.  
\end{proof}
\begin{prop}
\label{Jkeremptygenprop2}
Let $Y$ be a compact metric space. Let $\lambda $ be a Borel finite measure on $Y.$ 
Let $\tau \in {\frak M}_{1,c}(\emCMX )$ with $\G _{\tau }\subset \emOCMX .$ 
Suppose that $J_{\ker }(G_{\tau })=\emptyset .$ 
Then, for $\tilde{\tau }$-a.e. $\gamma \in X_{\tau }$,  
$\lambda (J_{\gamma })=\lambda (\hat{J}_{\gamma ,\G _{\tau} })=0.$
\end{prop}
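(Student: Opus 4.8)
The plan is to deduce Proposition~\ref{Jkeremptygenprop2} from Proposition~\ref{Jkeremptygenprop1} together with a Fubini-type argument and the dominated convergence theorem. First I would fix the open set $V:=F(G_{\tau })$, so that $Y\setminus V=J(G_{\tau })$ and, by Lemma~\ref{l:pctjjg} (applied with $\Gamma =\Gamma _{\tau }$ in the rational case, and by the analogous elementary observation in the general $\emOCMX$ setting that $\hat{J}_{\gamma ,\Gamma _{\tau }}=\bigcap _{j=1}^{\infty }\gamma _{j,1}^{-1}(J(G_{\tau }))$), the set $\hat{J}_{\gamma ,\Gamma _{\tau }}$ equals the ``tail'' set $L_{\gamma }=\bigcap _{j=1}^{\infty }\gamma _{j,1}^{-1}(Y\setminus V)$ appearing in Lemma~\ref{Lkerlem}. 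Since $J_{\gamma }\subset \hat{J}_{\gamma ,\Gamma _{\tau }}$ always, it suffices to prove that for $\tilde{\tau }$-a.e.\ $\gamma $, $\lambda (\hat{J}_{\gamma ,\Gamma _{\tau }})=0$.

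The key step is a change in the order of integration. Consider the non-negative measurable function $(\gamma ,y)\mapsto \mathbf{1}_{\hat{J}_{\gamma ,\Gamma _{\tau }}}(y)$ on $X_{\tau }\times Y$; its measurability follows since $\hat{J}_{\gamma ,\Gamma _{\tau }}=\bigcap _{j}\gamma _{j,1}^{-1}(J(G_{\tau }))$ is a countable intersection of sets depending continuously/measurably on $\gamma $. By Fubini's theorem applied to the product measure $\tilde{\tau }\otimes \lambda $,
\begin{equation*}
\int _{X_{\tau }}\lambda (\hat{J}_{\gamma ,\Gamma _{\tau }})\, d\tilde{\tau }(\gamma )
=\int _{Y}\tilde{\tau }\bigl(\{ \gamma \in X_{\tau }\mid y\in \hat{J}_{\gamma ,\Gamma _{\tau }}\} \bigr)\, d\lambda (y).
\end{equation*}
Now fix $y\in Y$. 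Since $J_{\ker }(G_{\tau })=\emptyset $, we have $L_{\ker }=\bigcap _{g\in G_{\tau }}g^{-1}(Y\setminus V)=\emptyset$, so in Lemma~\ref{Lkerlem} the condition $\liminf _{n}d(\gamma _{n,1}(y),L_{\ker })>0$ is automatically satisfied (with $d(z,\emptyset ):=\infty $). Hence Lemma~\ref{Lkerlem} gives $\tilde{\tau }(\{ \gamma \mid y\in L_{\gamma }\} )=0$, i.e.\ $\tilde{\tau }(\{ \gamma \mid y\in \hat{J}_{\gamma ,\Gamma _{\tau }}\} )=0$ for every $y\in Y$. Therefore the right-hand side of the displayed identity is $\int _{Y}0\, d\lambda (y)=0$, so $\int _{X_{\tau }}\lambda (\hat{J}_{\gamma ,\Gamma _{\tau }})\, d\tilde{\tau }(\gamma )=0$. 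Since the integrand is non-negative, it vanishes for $\tilde{\tau }$-a.e.\ $\gamma $, and the same holds a fortiori for $\lambda (J_{\gamma })\le \lambda (\hat{J}_{\gamma ,\Gamma _{\tau }})$.

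The only genuinely delicate point is the measurability needed to apply Fubini, namely that $(\gamma ,y)\mapsto \mathbf{1}_{\hat{J}_{\gamma ,\Gamma _{\tau }}}(y)$ is jointly $(\tilde{\tau }\otimes \lambda )$-measurable; I would handle this by writing $\hat{J}_{\gamma ,\Gamma _{\tau }}=\bigcap _{j=1}^{\infty }\{ y\mid \gamma _{j,1}(y)\in J(G_{\tau })\}$, noting that $(\gamma ,y)\mapsto \gamma _{j,1}(y)$ is continuous on $X_{\tau }\times Y$ and $J(G_{\tau })$ is closed, so each set $\{ (\gamma ,y)\mid \gamma _{j,1}(y)\in J(G_{\tau })\}$ is closed in $X_{\tau }\times Y$, whence the countable intersection is Borel. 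Everything else is a routine combination of Lemma~\ref{Lkerlem}, the identification of $\hat{J}_{\gamma ,\Gamma _{\tau }}$ via Lemma~\ref{l:pctjjg}, and Fubini; no new dynamical input beyond $J_{\ker }(G_{\tau })=\emptyset $ is required. (Note also that $\lambda $ being a finite Borel measure is exactly what makes the product measure and Fubini available; this is why the hypothesis is stated that way rather than for $\text{Leb}_{2n}$ specifically.)
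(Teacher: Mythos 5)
Your proof is correct and follows essentially the same route as the paper: apply Lemma~\ref{Lkerlem} with $V=F(G_{\tau })$ (so that $L_{\ker }=J_{\ker }(G_{\tau })=\emptyset $ and the $\liminf$ condition is vacuous) to get $\tilde{\tau }(\{ \gamma \mid y\in L_{\gamma }\} )=0$ for every $y\in Y$, then conclude by Fubini and the inclusion $J_{\gamma }\subset \hat{J}_{\gamma ,\Gamma _{\tau }}$ --- the paper packages the first step as Proposition~\ref{Jkeremptygenprop1} combined with the forward invariance of $\tilde{J}(f)$ from Lemma~\ref{genskewprodinvlem1}, but the content is identical. One caveat: the equality $\hat{J}_{\gamma ,\Gamma _{\tau }}=\bigcap _{j}\gamma _{j,1}^{-1}(J(G_{\tau }))$ is established in the paper (Lemma~\ref{l:pctjjg}) only for rational maps, and its nontrivial inclusion is not an ``elementary observation'' in the general $\emOCMX $ setting; fortunately your argument only uses the inclusion $\hat{J}_{\gamma ,\Gamma _{\tau }}\subset \bigcap _{j}\gamma _{j,1}^{-1}(J(G_{\tau }))=L_{\gamma }$, which does hold in general by Lemma~\ref{genskewprodinvlem1} together with $\pi _{Y}(\tilde{J}(f))\subset J(G_{\tau })$, so the proof stands as written once that claim is weakened to an inclusion.
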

\begin{proof}
By Proposition~\ref{Jkeremptygenprop1}, 
for each $y\in Y$, for $\tilde{\tau }$-a.e. $\gamma \in X_{\tau }$, 
there exists an $n\in \NN $ such that 
$\gamma _{n,1}(y)\in F(G_{\tau })\subset (Y\setminus \bigcup _{\gamma \in X_{\tau }}\hat{J}_{\gamma ,\Gamma _{\tau }}).$ 
Combining it with Lemma~\ref{genskewprodinvlem1}, 
we obtain that for each $y\in Y$, $\tilde{\tau }(\{ \gamma \in X_{\tau }\mid (\gamma ,y)\in \tilde{J}(f)\} )=0.$ 
From Fubini's theorem, it follows that 
for $\tilde{\tau }$-a.e. $\gamma \in X_{\tau }$, $\lambda (\hat{J}_{\g, \G _{\tau }})=0.$ 
Since $J_{\gamma }\subset \hat{J}_{\gamma ,\Gamma _{\tau }}$ for each $\gamma \in X_{\tau }$, 
we obtain that for $\tilde{\tau }$-a.e. $\gamma \in X_{\tau }$, $\lambda (J_{\g})=0.$ 
Thus, we have completed the proof of Proposition~\ref{Jkeremptygenprop2}.  
\end{proof}
\begin{lem}
\label{l:mgjpt0}
Let $Y$ be a compact metric space and 
let $\lambda $ be a Borel finite measure on $Y.$ 
Let $\tau \in {\frak M}_{1,c}(\emCMX )$ with $\G _{\tau }\subset \emOCMX.$ 
Suppose that 
for $\tilde{\tau }$-a.e. $\g \in X_{\tau }$,  
$\lambda (\bigcap _{j=1}^{\infty }\g _{1}^{-1}\cdots \g _{j}^{-1}(J(G_{\tau }))) =0.$ 
Then, for $\lambda $-a.e. $y\in Y$, 
there exists a Borel subset ${\cal A}_{y}$ of $X_{\tau }$ with $\tilde{\tau }({\cal A}_{y})=1$ 
such that for each $\gamma \in {\cal A}_{y}$, 
there exists an $n\in \NN $ with  
$\g _{n, 1}(y)\in F(G_{\tau })$. Moreover, $\lambda (J_{pt}^{0}(\tau ))=0.$ 
\end{lem}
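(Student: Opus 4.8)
The plan is to deduce the lemma from one application of Fubini's theorem, followed by the already–established Lemma~\ref{FJmeaslem2}. First I would introduce the set
$E:=\{(\gamma ,y)\in X_{\tau }\times Y\mid \gamma _{j,1}(y)\in J(G_{\tau })\ \text{for all }j\in \NN \}=\bigcap _{j=1}^{\infty }\{ (\gamma ,y)\mid \gamma _{j,1}(y)\in J(G_{\tau })\} .$
Since $Y$ is a compact (hence locally compact Hausdorff) metric space, the evaluation map $\CMX \times Y\to Y$ is continuous, so $(\gamma ,y)\mapsto \gamma _{j,1}(y)$ is continuous on $X_{\tau }\times Y$ for each fixed $j$; as $J(G_{\tau })$ is closed, $E$ is a closed — in particular Borel — subset of $X_{\tau }\times Y$. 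Its $\gamma $-section is $E_{\gamma }=\bigcap _{j=1}^{\infty }\gamma _{1}^{-1}\cdots \gamma _{j}^{-1}(J(G_{\tau }))$, which is exactly the set appearing in the hypothesis.

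Next I would run Fubini's theorem on the finite measure $\tilde{\tau }\otimes \lambda $. The hypothesis says $\lambda (E_{\gamma })=0$ for $\tilde{\tau }$-a.e. $\gamma \in X_{\tau }$, hence $(\tilde{\tau }\otimes \lambda )(E)=\int _{X_{\tau }}\lambda (E_{\gamma })\, d\tilde{\tau }(\gamma )=0$. Applying Fubini in the other order, the $y$-section $E^{y}:=\{ \gamma \in X_{\tau }\mid (\gamma ,y)\in E\} $ satisfies $\tilde{\tau }(E^{y})=0$ for $\lambda $-a.e. $y\in Y$. Fix such a point $y$.

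For such a $y$ I would set ${\cal A}_{y}:=X_{\tau }\setminus E^{y}$, a Borel (indeed open) set with $\tilde{\tau }({\cal A}_{y})=1$. If $\gamma \in {\cal A}_{y}$, then $y\notin E_{\gamma }$, so there is some $n\in \NN $ with $\gamma _{n,1}(y)\notin J(G_{\tau })$, i.e.\ $\gamma _{n,1}(y)\in F(G_{\tau })$; this gives the first assertion. For the ``moreover'' part, observe that $\tilde{\tau }(E^{y})=0$ is precisely the hypothesis of Lemma~\ref{FJmeaslem2} at the point $y$ (here we use $\Gamma _{\tau }\subset \OCMX $, via Lemma~\ref{ocminvlem}, so that Lemma~\ref{FJmeaslem2} applies), whence $y\in F_{pt}^{0}(\tau )$. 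Therefore $J_{pt}^{0}(\tau )=Y\setminus F_{pt}^{0}(\tau )$ is contained in the $\lambda $-null set $\{ y\in Y\mid \tilde{\tau }(E^{y})>0\} $, so $\lambda (J_{pt}^{0}(\tau ))=0$.

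I do not expect a genuine obstacle in this argument: it is essentially a measurability and bookkeeping exercise. The only points that need a line of care are the Borel measurability of $E$ (handled by the continuity of $(\gamma ,y)\mapsto \gamma _{j,1}(y)$ together with closedness of $J(G_{\tau })$), justifying the use of Fubini's theorem, and checking that the section condition $\tilde{\tau }(E^{y})=0$ reproduces verbatim the hypothesis of Lemma~\ref{FJmeaslem2}.
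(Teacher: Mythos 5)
Your argument is correct and follows essentially the same route as the paper: both define the set $M=E=\{(\gamma ,y)\mid \gamma _{n,1}(y)\in J(G_{\tau })\ \forall n\}$, apply Fubini's theorem to the product measure to pass from null $\gamma$-sections to null $y$-sections, and then invoke Lemma~\ref{FJmeaslem2} to conclude that $\lambda$-a.e. $y$ lies in $F_{pt}^{0}(\tau )$. Your additional remarks on the Borel measurability of $E$ and the explicit construction of ${\cal A}_{y}$ are fine elaborations of steps the paper leaves implicit.
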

\begin{proof}
Let $f:X_{\tau }\times Y\rightarrow X_{\tau }\times Y$ be the skew product associated with 
$\G _{\tau }.$ 
Let $M:= \{ (\g ,y)\in X_{\tau }\times Y \mid \forall n\in \NN , \gamma _{n,1}(y)\in J(G_{\tau })\} .$ 
By the assumption of our lemma and Fubini's theorem, 
we obtain that there exists a measurable subset $Z$ of $Y$ with $\lambda (Z)=\lambda (Y)$ such that 
for each $y\in Z$, $\tilde{\tau }(\{ \g \in X_{\tau }\mid (\g ,y)\in M\} )=0.$ 
For this $Z$, we have that for each $y\in Z$, 
$\tilde{\tau } (\{ \gamma \in X_{\tau }\mid y\in \bigcap_{j=1}^{\infty }\g _{1}^{-1}\cdots \g _{j}^{-1}(J(G_{\tau }))\})=0.$ 
By Lemma~\ref{FJmeaslem2}, we obtain $Z\subset F_{pt}^{0}(\tau ).$ 
Thus, we have completed the proof of our lemma. 
\end{proof}

\section{Proofs of the main results}
\label{Proofs}
In this section, we prove the main results. 
\subsection{Proofs of results in subsection~\ref{Genres}}
\label{pfgenres}
In this subsection, we give the proofs of subsection~\ref{Genres}. 

\ 

\noindent {\bf Proof of Theorem~\ref{kerJthm1}:} 
Since $\NHM (\CPn ) \subset \OCM (\CPn)$, 
the statement of Theorem~\ref{kerJthm1} follows from 
Proposition~\ref{Jkeremptygenprop1} and Proposition~\ref{Jkeremptygenprop2}.
\qed 

\ 

In order to prove Theorem~\ref{t:mtauspec}, we need several lemmas.
%
%\begin{lem}
%\label{l:mtau1}
%
%\end{lem}
%We now prove Theorem~\ref{t:mtauspec}.\\ 
%\noindent {\bf Proof of Theorem~\ref{t:mtauspec}:}
%We first show the following claim.
\begin{lem}
\label{l:sjg3}
Under the assumptions of Theorem~\ref{t:mtauspec}, 
$\sharp J(G_{\tau })\geq 3.$ 
\end{lem}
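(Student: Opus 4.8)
\textbf{Proof proposal for Lemma~\ref{l:sjg3}.}

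The plan is to rule out the possibilities $\sharp J(G_\tau) \in \{0,1,2\}$ one by one, using the two hypotheses $J_{\ker}(G_\tau) = \emptyset$ and $J(G_\tau) \neq \emptyset$. Since $J(G_\tau) \neq \emptyset$ by assumption, the cases $\sharp J(G_\tau) = 0$ is immediately excluded, and it remains to treat $\sharp J(G_\tau) = 1$ and $\sharp J(G_\tau) = 2$. In both of these cases, standard rational-dynamics arguments (as in \cite[Chapter 4]{Be}, used here in the semigroup setting exactly as in the proof of Lemma~\ref{l:pctjjg}, cases 3 and 4) force $G_\tau \subset \mathrm{Aut}(\CCI)$: indeed, if $J(G_\tau)$ is finite then no element of $G_\tau$ can have degree $\geq 2$, since the Julia set of any $g \in \Ratp$ is infinite and $J(g) \subset J(G_\tau)$ by the backward invariance in Lemma~\ref{ocminvlem} together with \cite[Theorem 4.2.4]{Be} (or Remark~\ref{r:sjgg3}). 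So throughout we may assume every generator, hence every element, of $G_\tau$ is a Möbius transformation.

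First I would handle $\sharp J(G_\tau) = 1$, say $J(G_\tau) = \{a\}$. Since $g^{-1}(J(G_\tau)) \subset J(G_\tau)$ for each $g \in G_\tau$ (Lemma~\ref{ocminvlem}) and $J(G_\tau)$ is a single point, every $g \in G_\tau$ fixes $a$. If some $g \in G_\tau$ were loxodromic or parabolic, then $a \in J(g)$ would be possible but more importantly $J_{\ker}(G_\tau) = \bigcap_g g^{-1}(\{a\}) = \{a\} \neq \emptyset$, contradicting the hypothesis; so I must argue $J_{\ker}(G_\tau) = \{a\}$ directly. Indeed, since each $g \in G_\tau$ fixes $a$, we have $g^{-1}(\{a\}) = \{a\}$ for each $g$ (an automorphism fixing $a$ has $a$ as its only preimage of $a$), hence $J_{\ker}(G_\tau) = \bigcap_{g \in G_\tau} g^{-1}(J(G_\tau)) = \{a\} \neq \emptyset$, a contradiction. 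Thus $\sharp J(G_\tau) \neq 1$.

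Next, $\sharp J(G_\tau) = 2$, say $J(G_\tau) = \{a_1, a_2\}$ with $a_1 \neq a_2$. Again $G_\tau \subset \mathrm{Aut}(\CCI)$, and since each $g \in G_\tau$ satisfies $g^{-1}(\{a_1,a_2\}) \subset \{a_1,a_2\}$ and $g$ is a bijection, $g$ permutes $\{a_1,a_2\}$, so $g(\{a_1,a_2\}) = \{a_1,a_2\}$ and consequently $g^{-1}(\{a_1,a_2\}) = \{a_1,a_2\}$. Therefore $J_{\ker}(G_\tau) = \bigcap_{g \in G_\tau} g^{-1}(J(G_\tau)) = \{a_1,a_2\} \neq \emptyset$, again contradicting the hypothesis. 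Hence $\sharp J(G_\tau) \notin \{0,1,2\}$, and since $J(G_\tau) \neq \emptyset$ it must be that $\sharp J(G_\tau) \geq 3$, as claimed. The only mild subtlety — the ``main obstacle'' such as it is — lies in justifying rigorously that a finite Julia set of a rational semigroup forces all elements to be automorphisms (equivalently, forces $G_\tau \cap \Ratp = \emptyset$); this follows from $J(g) \subset J(G_\tau)$ for $g \in G_\tau$ combined with the fact that $\sharp J(g) = \infty$ whenever $\deg g \geq 2$, i.e.\ from Remark~\ref{r:sjgg3}, but it should be spelled out. Everything after that reduction is a short invariance computation.
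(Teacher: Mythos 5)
Your proof is correct and follows essentially the same route as the paper: the paper's proof also observes that $\sharp J(G_{\tau })\leq 2$ forces $G_{\tau }\subset \mbox{Aut}(\CCI )$, deduces from Lemma~\ref{ocminvlem} and the finiteness of $J(G_{\tau })$ that $J(G_{\tau })$ is fully invariant, and concludes $J_{\ker }(G_{\tau })=J(G_{\tau })\neq \emptyset $, a contradiction. The only difference is cosmetic — you split into the cases $\sharp J(G_{\tau })=1$ and $\sharp J(G_{\tau })=2$, whereas the paper handles both at once.
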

%To show this claim, 
\begin{proof}
Suppose $\sharp J(G_{\tau })\leq 2.$ Then, 
$G_{\tau }\subset \mbox{Aut}(\CCI ).$ 
By Lemma~\ref{ocminvlem}, it follows that $G_{\tau }(J(G_{\tau })) =J(G_{\tau })$. 
This implies $J_{\ker }(G_{\tau })=J(G_{\tau })$, which contradicts our assumption. 
Thus, our lemma holds. 
\end{proof}

%We now prove statement~\ref{t:mtauspec1}. 
\begin{lem}
\label{l:aeconst}
Under the assumption of Theorem~\ref{t:mtauspec}, 
there exists a Borel measurable subset ${\cal A}$ of $X_{\tau }$ with $\tilde{\tau }({\cal A})=1$ such that 
for each $\gamma \in {\cal A}$ and for each $U\in \mbox{{\em Con}}(F(G_{\tau }))$, 
there exists no non-constant limit function of $\{ \gamma _{n,1}|_{U}:U\rightarrow \CCI \} _{n=1}^{\infty }.$   
\end{lem}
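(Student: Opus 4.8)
The plan is to first recast the conclusion as a quantitative statement about spherical diameters, and then prove that statement by combining the hyperbolic-metric geometry of the components of $F(G_{\tau})$ with the consequences of $J_{\ker}(G_{\tau})=\emptyset$ already isolated in Lemma~\ref{Lkerlem}. Since $\CCI $ is second countable, $F(G_{\tau})$ has at most countably many connected components, so it suffices to fix one $U\in \mbox{Con}(F(G_{\tau}))$ and produce a Borel set ${\cal A}_{U}$ with $\tilde{\tau }({\cal A}_{U})=1$ on which no non-constant limit function of $\{\gamma _{n,1}|_{U}\}_{n}$ occurs; then ${\cal A}:=\bigcap _{U}{\cal A}_{U}$ works. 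By the very definition of $F(G_{\tau})$ the family $\{\gamma _{n,1}|_{U}\}_{n}\subset G_{\tau }$ is equicontinuous, hence normal, on $U$; a routine normal-families argument, applied to a countable exhaustion of $U$ by compact sets $Q$ with non-empty interior, shows that ``every limit function of $\{\gamma _{n,1}|_{U}\}$ is constant'' is equivalent to ``$\mbox{diam}_{s}(\gamma _{n,1}(Q))\to 0$ as $n\to \infty $ for every such $Q$'', where $\mbox{diam}_{s}$ denotes spherical diameter. So it is enough to show that for each $U$ and each such $Q$ one has $\mbox{diam}_{s}(\gamma _{n,1}(Q))\to 0$ for $\tilde{\tau }$-a.e.\ $\gamma $.

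Next I would set up the hyperbolic-metric bookkeeping. By Lemma~\ref{l:sjg3}, $\sharp J(G_{\tau})\geq 3$, so every $W\in \mbox{Con}(F(G_{\tau}))$ carries its hyperbolic metric $\rho _{W}$; since every $g\in G_{\tau }$ restricts to a holomorphic map from the component containing a set $A\subset F(G_{\tau})$ to the component containing $g(A)$, it is $\rho $-distance non-increasing. Writing $W_{n}$ for the component containing $\gamma _{n,1}(Q)$, the hyperbolic diameters $R_{n}:=\mbox{diam}_{\rho _{W_{n}}}(\gamma _{n,1}(Q))$ thus form a non-increasing sequence, bounded above by $R_{0}=\mbox{diam}_{\rho _{U}}(Q)<\infty $. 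On the other hand, $J_{\ker }(G_{\tau})=\emptyset $ is equivalent to $\bigcup _{g\in G_{\tau }}g^{-1}(F(G_{\tau}))=\CCI $, so by compactness of $\CCI $ there are finitely many $g_{1},\ldots ,g_{N}\in G_{\tau }$, a compact set $K\subset F(G_{\tau})$, and an $r_{0}>0$ such that for every $w\in \CCI $ there is $i=i(w)$ with $g_{i}(\overline{B(w,r_{0})})\subset K$ (a Lebesgue-number argument on the cover $\{g_{j}^{-1}(F(G_{\tau}))\}$, enlarging $K$ to absorb finitely many pieces of an exhaustion of $F(G_{\tau})$). Since $K$ meets only finitely many components, there is $\eta (R)\to 0$ as $R\to 0^{+}$ such that any subset of $K$ of hyperbolic diameter $\leq R$ in its component has spherical diameter $\leq \eta (R)$. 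Finally, each $g_{i}$ is a word in $\mbox{supp}\,\tau $, and since neighbourhoods of points of $\mbox{supp}\,\tau $ have positive $\tau $-measure and $\mbox{supp}\,\tau $ is compact, there is $p_{0}>0$ such that, conditionally on $\gamma _{1},\ldots ,\gamma _{n}$, the next block of random maps realizes (spherically closely) the word $g_{i}$ for any prescribed $i$ with probability $\geq p_{0}$, uniformly in $n$.

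The probabilistic step then runs as follows: by a Borel--Cantelli/renewal argument in the spirit of the proof of Lemma~\ref{Lkerlem} (applied with $V=F(G_{\tau})$, so that there $L_{\gamma }=\hat{J}_{\gamma ,\Gamma _{\tau }}$ and $L_{\ker }=J_{\ker }(G_{\tau})=\emptyset $), using the uniform lower bound $p_{0}$, for $\tilde{\tau }$-a.e.\ $\gamma $ the cluster $\gamma _{n,1}(Q)$ is repeatedly driven into the fixed compact set $K$: indeed, once $\mbox{diam}_{s}(\gamma _{n,1}(Q))<r_{0}$ the whole cluster lies in a ball $\overline{B(w,r_{0})}$, applying the block realizing $g_{i(w)}$ sends it into $K$, and by the previous paragraph its hyperbolic diameter is then still $\leq R_{n}$ while its spherical diameter is $\leq \eta (R_{n})$. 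Combining this recurrence into $K$ with the monotonicity of $(R_{n})$, a.s.\ the orbit is from some time on confined to the finitely many components meeting $K$ and $R_{n}\to 0$, whence $\mbox{diam}_{s}(\gamma _{n,1}(Q))\to 0$.

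The hard part will be this last passage: upgrading the a.s.\ recurrence of $\gamma _{n,1}(Q)$ into $K$ to the statement that its spherical diameter tends to $0$ along the \emph{full} sequence, not merely along a subsequence. This is exactly the ``careful argument on the hyperbolic metric of each component of $F(G_{\tau})$'': one must control how much the cluster can spread out between consecutive descents into $K$, for which the monotonicity of $R_{n}$ and the impossibility of the orbit escaping ``to infinity within $F(G_{\tau})$'' are the essential ingredients, together with the fact that the descent maps are compositions involving rational maps of degree $\geq 2$, which prevents the cluster from settling into an isometric (rotational) regime. The same analysis, refined, will yield the derivative estimate in statement~\ref{t:mtauspec1}(b) of Theorem~\ref{t:mtauspec}.
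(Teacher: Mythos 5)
Your proposal assembles the right ingredients (the hyperbolic metric on components of $F(G_{\tau })$, finitely many ``descent'' words extracted from $J_{\ker }(G_{\tau })=\emptyset $ by compactness, and a geometric-decay/Borel--Cantelli estimate in the style of Lemma~\ref{Lkerlem}), and your reduction via normal families to a diameter statement is fine. But the argument does not close, and the two places where it fails are exactly the places you defer. First, the recurrence step is circular: to send the cluster $\gamma _{n,1}(Q)$ into $K$ by realizing the block $g_{i(w)}$ you need $\gamma _{n,1}(Q)\subset \overline{B(w,r_{0})}$, i.e.\ you need its spherical diameter already small, which is (essentially) the conclusion you are trying to prove; Lemma~\ref{Lkerlem} controls the orbit of a single point, not of a compact set of positive diameter. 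Second, even granting recurrence into $K$, monotonicity of the hyperbolic diameters gives only $R_{n}\downarrow R_{\infty }\geq 0$, not $R_{\infty }=0$; the missing input is a \emph{uniform strict contraction factor} $\alpha <1$ each time a descent block acts, and your remark about degree $\geq 2$ preventing an ``isometric regime'' is not a proof of this.

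The paper supplies precisely these two missing pieces, and does so by running the argument in contrapositive form at a single base point $a\in U$ rather than on a compact set. The descent words $g_{z_{j}}$ are chosen so that $g_{z_{j}}(\overline{V_{z_{j}}})\subset F(G_{\tau })$ where $V_{z_{j}}$ is a neighborhood of a point $z_{j}\in \partial J(G_{\tau })$; since $V_{z_{j}}$ sticks out of the Fatou component containing the orbit point, the Schwarz--Pick lemma (together with compactness of the neighborhoods $W_{j}\subset \Gamma _{\tau }^{k}$ of these words and the cutoff $c_{q}$ keeping the orbit point away from $\partial J(G_{\tau })$) yields a uniform bound $\| (\beta _{k}\cdots \beta _{1})'(\gamma _{kt,1}(a))\| _{h}\leq \alpha <1$ whenever the block $(\beta _{1},\ldots ,\beta _{k})$ lies in $W_{j}$. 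Consequently, if infinitely many blocks lie in the $W_{j}$'s then $\| \gamma _{n,1}'(a)\| _{h}\to 0$, which is incompatible with a non-constant limit function along a subsequence. Hence a non-constant limit forces all but finitely many of the relevant blocks to avoid the $W_{j}$'s, and the probability of that event is controlled by $\tilde{\tau }(B_{q,j,n,l+1})\leq \eta \,\tilde{\tau }(B_{q,j,n,l})$ with $\eta <1$, so it is $0$. You would need to add both the uniform contraction estimate and this contrapositive bookkeeping to make your outline into a proof.
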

\begin{proof}
Since $\sharp \mbox{Con}(F(G_{\tau }))\leq \aleph _{0}$, 
it is enough to show that for each $U\in \mbox{Con}(F(G_{\tau }))$, 
there exists a Borel measurable subset ${\cal A}_{U}$ of $X_{\tau }$ with 
$\tilde{\tau }({\cal A}_{U})=1$ such that for each 
$\gamma \in {\cal A}_{U}$, there exists no non-constant limit function of 
$\{ \gamma _{n,1}|_{U}:U\rightarrow \CCI \} _{n=1}^{\infty }$. 
In order to show this, let $U\in \mbox{Con}(F(G_{\tau }))$ and let $a\in U.$ 
Since $J_{\ker }(G_{\tau })=\emptyset $, for each $z\in \partial J(G_{\tau })$ there 
exists an element $g_{z}\in G_{\tau }$ and a disk neighborhood $V_{z}$ of $z$ in $\CCI $ such that 
$g_{z}(\overline{V_{z}})\subset F(G_{\tau }).$ 
Since $\partial J(G_{\tau })$ is compact, there exists a finite family 
$\{ z_{1},\ldots ,z_{p}\} $ of points in $\partial J(G_{\tau })$ such that 
$\bigcup _{j=1}^{p}V_{z_{j}}\supset \partial J(G_{\tau })$ and 
$g_{z_{j}}(\overline{V_{z_{j}}})\subset F(G_{\tau })$ for each 
$j=1,\ldots ,p.$ For each $j$, there exists a $k(j)\in \NN $ and an element 
$\alpha ^{j}=(\alpha _{1}^{j},\ldots ,\alpha _{k(j)}^{j})\in \Gamma _{\tau }^{k(j)}$ such that 
$g_{z_{j}}=\alpha _{k(j)}^{j}\circ \cdots \circ \alpha _{1}^{j}.$ 
Since $G_{\tau }(F(G_{\tau }))\subset F(G_{\tau })$, 
we may assume that there exists a $k\in \NN $ such that for each $j\in \NN $, 
$k(j)=k.$ For each $j$, let $W_{j}$ be a compact neighborhood of 
$\alpha ^{j}$ in $\Gamma _{\tau }^{k}$ such that 
for each $\beta =(\beta _{1},\ldots ,\beta _{k})\in W_{j}$, 
$\beta _{k}\cdots \beta _{1}(\overline{V_{z_{j}}})\subset F(G_{\tau }).$   
For each $j$, let $B_{j}:= \bigcup _{B\in \mbox{Con}(F(G_{\tau })), B\cap V_{z_{j}}\neq \emptyset }B.$ 
Let $n\in \NN $ and let 
$\{ c_{q}\} _{q\in \NN }$ be a decreasing sequence of positive numbers such that 
$c_{q}\rightarrow 0$ as $q\rightarrow \infty .$  
Let $(i_{1},\ldots ,i_{l})$ be a finite sequence of positive integers with 
$i_{1}< \cdots <i_{l}.$ Let $q>0.$   
We denote by $A_{q,j}(i_{1},\ldots ,i_{l})$ the set of elements 
$\gamma \in X_{\tau }$ which satisfies all of the following (a) and (b). 
\begin{itemize}
\item[(a)]
$\gamma _{kt,1}(a)\in (\CCI \setminus B(\partial J(G_{\tau }), c_{q}))\cap B_{j}$ if 
$t\in \{ i_{1},\ldots ,i_{l}\}.$  
\item[(b)]
$\gamma _{kt,1}(a)\not\in (\CCI \setminus B(\partial J(G_{\tau }), c_{q}))\cap B_{j}$ if 
$t\in \{ 1,\ldots ,i_{l}\} \setminus \{ i_{1},\ldots ,i_{l}\}.$  
%\item[(c)]
%$(\gamma _{ki_{s}+1}, \ldots ,\gamma _{ki_{s}+k})\not\in V_{j}$ for each $s=n,n+1,\ldots ,l.$ 
\end{itemize}
Moreover, when $l\geq n$,  
we denote by $B_{q,j,n}(i_{1},\ldots ,i_{l})$ the set of elements $\gamma \in X_{\tau }$ 
which satisfies items (a) and (b) above and the following (c). 
\begin{itemize}
\item[(c)]
$(\gamma _{ki_{s}+1}, \ldots ,\gamma _{ki_{s}+k})\not\in W_{j}$ for each $s=n,n+1,\ldots ,l.$
\end{itemize}
 Furthermore, we denote by   
$C_{q,j,n}(i_{1},\ldots ,i_{l})$ the set of elements $\gamma \in X_{\tau }$ 
which satisfies items (a) and (b) above and the following (d). 
 \begin{itemize}
\item[(d)]
$(\gamma _{ki_{s}+1}, \ldots ,\gamma _{ki_{s}+k})\not\in W_{j}$ for each $s=n,n+1,\ldots ,l-1.$
\end{itemize}
Furthermore, 
for each $q$, $j$, $n,l$ with $l\geq n$, 
let $B_{q,j,n,l}:= \bigcup _{i_{1}<\cdots <i_{l}}B_{q,j,n}(i_{1},\ldots ,i_{l}).$ 
Let ${\cal D}:= \bigcup _{q=1}^{\infty }\bigcup _{j=1}^{p}\bigcup _{n\in \NN }\bigcap _{l\geq n}B_{q,j,n,l}.$ 
We show the following claim. 

Claim 1. Let $\gamma \in X_{\tau }$ be such that there exists a non-constant limit function of 
$\{ \gamma _{n,1}|_{U}:U\rightarrow \CCI \} _{n=1}^{\infty }$. Then $\gamma \in {\cal D}.$ 

To show this claim, let $\gamma $ be such an element. Then there exists a $q\in \NN $, a $j\in \{ 1,\ldots ,p\} $,  
and a strictly increasing sequence $\{ i_{l}\} _{l=1}^{\infty }$ in $\NN $
such that $\gamma \in \bigcap _{l=1}^{\infty }A_{q,j}(i_{1},\ldots ,i_{l})$ and 
$\{ \gamma _{ki_{l},1}|_{U}:U\rightarrow \CCI \} _{l=1}^{\infty }$ converges to a non-constant map. 
 Suppose that there exists 
a strictly increasing sequence $\{ l_{p}\} _{p=1}^{\infty }$ in $\NN $ such that 
for each $p\in \NN $, 
$(\gamma _{ki_{l_{p}}+1},\ldots ,\gamma _{ki_{l_{p}}+k})\in W_{j}.$ 
By Lemma~\ref{l:sjg3}, for each $A\in \mbox{Con}(F(G_{\tau }))$, we can take the hyperbolic metric on $A.$ 
From the definition of $W_{j}$, we obtain that there exists a constant 
$0<\alpha <1 $ such that 
for each $p\in \NN $, 
$\| (\gamma _{ ki_{l_{p}}+k}\cdots \gamma _{ki_{l_{p}+1}})'(\g _{ki_{l_{p}},1}(a))\| _{h}\leq \alpha $, 
where for each $g\in G_{\tau }$ and for each $z\in F(G_{\tau })$, $\| g'(z) \| _{h}$ denotes the norm of the derivative of $g$ at $z$ 
measured from the  
hyperbolic metric on the element of $\mbox{Con}(F(G_{\tau }))$ containing 
%$\gamma _{ki_{l_{p}},1}(a)$ 
$z$
to that on the element of $\mbox{Con}(F(G_{\tau }))$ containing 
%$\gamma _{ki_{l_{p}}+k,1}(a)$. 
$g(z).$ 
Hence, 
$\| (\gamma _{ki_{l_{p}},1})'(a)\| _{h}\rightarrow 0$ as $p\rightarrow \infty $. 
However, this is a contradiction, since 
$\{ \gamma _{ki_{l_{p}},1}|_{U}\} _{p=1}^{\infty }$ converges to a non-constant map. 
Therefore, $\gamma \in {\cal D}.$ Thus, we have proved claim 1. 

 Let $\eta := \max _{j=1}^{p}(\otimes _{s=1}^{k}\tau )(\Gamma _{\tau }^{k}\setminus W_{j})\ (<1).$  
Then we have for each $(l,n)$ with $l\geq n,$  
\begin{align*}
\tilde{\tau }(B_{q,j,n}(i_{1},\ldots ,i_{l+1}))
& \leq \tilde{\tau }(C_{q,j,n}(i_{1},\ldots ,i_{l+1})\cap 
\{ \gamma \in X_{\tau }\mid (\gamma _{ki_{l+1}+1},\ldots ,\gamma _{ki_{l+1}+k})\not\in W_{j}\} )\\ 
& \leq \tilde{\tau }(C_{q,j,n}(i_{1},\ldots ,i_{l+1}))\cdot \eta .  
\end{align*}
Hence, for each $l$ with $l\geq n$, 
\begin{align*}
\tilde{\tau }(B_{q,j,n,l+1})
= & \tilde{\tau }(\bigcup _{i_{1}<\cdots <i_{l+1}}B_{q,j,n}(i_{1},\ldots ,i_{l+1})) 
=  \sum _{i_{1}<\cdots <i_{l+1}}\tilde{\tau }(B_{q,j,n}(i_{1},\ldots ,i_{l+1}))\\ 
\leq & \sum _{i_{1}<\cdots <i_{l+1}}\eta \tilde{\tau }(C_{q,j,n}(i_{1},\ldots ,i_{l+1})) 
=  \eta \tilde{\tau }(\bigcup _{i_{1}<\cdots <i_{l+1}}C_{q,j,n}(i_{1},\ldots ,i_{l+1}))\leq 
\eta \tilde{\tau }(B_{q,j,n,l}).
\end{align*}
%Let $\{ c_{i}\} _{i=1}^{\infty }$ be a decreasing seqence of postive numbers which converges to $0.$ 
Therefore $\tilde{\tau }({\cal D})\leq \sum _{q=1}^{\infty }\sum _{j=1}^{p}\sum _{n\in \NN }
\tilde{\tau }(\bigcap _{l\geq n}B_{q,j,n,l})=0.$ 
Thus, we have completed the proof of Lemma~\ref{l:aeconst}.     
\end{proof}
\begin{lem}
\label{l:hypmetcont}
Under the assumptions of Theorem~\ref{t:mtauspec},
 there exists a Borel measurable subset ${\cal V}$ of $X_{\tau }$ with 
 $\tilde{\tau }({\cal V})=1$ such that 
 for each $\gamma \in {\cal V}$ and for each $Q\in \emCpt(F(G_{\tau }))$, 
 $\sup _{a\in Q}\| \gamma _{n,1}'(a)\| _{h}\rightarrow 0$ as $n\rightarrow \infty $, 
 where $\| \gamma _{n,1}'(a)\| _{h}$ denotes the 
 norm of the derivative of $\gamma _{n,1}$ at a point $a$ measured from the hyperbolic metric on 
 the element $U_{0}\in \mbox{{\em Con}}(F(G_{\tau }))$ with $a\in U_{0}$ 
 to that on the element $U_{n}\in \mbox{{\em Con}}(F(G_{\tau }))$ with $\gamma _{n,1}(a)\in U_{n}.$ 
\end{lem}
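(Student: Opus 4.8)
The plan is to promote the pointwise decay statement of Lemma~\ref{l:aeconst} (no non-constant limit functions) into the uniform-on-compacta derivative estimate of Lemma~\ref{l:hypmetcont}, using a Borel--Cantelli argument along the returns of the orbit to the ``good'' pull-back sets constructed in the previous proof. First I would fix a countable exhaustion $F(G_\tau)=\bigcup_k Q_k$ by compact sets $Q_k$ with $Q_k\subset \mbox{int}(Q_{k+1})$; since any $Q\in\Cpt(F(G_\tau))$ lies in some $Q_k$ and $\|\gamma_{n,1}'(a)\|_h$ is, by the Schwarz--Pick lemma, non-increasing in $n$ (each $\gamma_j$ maps a component of $F(G_\tau)$ holomorphically into a component, hence is a contraction for the hyperbolic metrics, with the usual care that $\sharp J(G_\tau)\ge 3$ by Lemma~\ref{l:sjg3} so the hyperbolic metrics exist), it suffices to prove that for $\tilde\tau$-a.e.\ $\gamma$ and every $k$, $\sup_{a\in Q_k}\|\gamma_{n,1}'(a)\|_h\to 0$. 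I would intersect over $k$ and over a countable set of thresholds, so the whole problem reduces to: for fixed $Q=Q_k$ and fixed $\varepsilon>0$, the set of $\gamma$ with $\sup_{a\in Q}\|\gamma_{n,1}'(a)\|_h\ge\varepsilon$ for infinitely many $n$ has $\tilde\tau$-measure zero.

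The key step is a quantitative contraction input. Because $J_{\ker}(G_\tau)=\emptyset$, for each $z\in\partial J(G_\tau)$ there is $g_z\in G_\tau$ and a disk $V_z$ with $g_z(\overline{V_z})\subset F(G_\tau)$; as in the proof of Lemma~\ref{l:aeconst}, by compactness I get finitely many words of a common length $k$ with compact neighborhoods $W_j\subset\Gamma_\tau^k$ such that $\beta_k\cdots\beta_1(\overline{V_{z_j}})\subset F(G_\tau)$ for all $\beta\in W_j$, and $\{V_{z_j}\}$ covers $\partial J(G_\tau)$. Composing with such a block strictly contracts the hyperbolic metric: there is $0<\alpha<1$ with $\|(\beta_k\cdots\beta_1)'(w)\|_h\le\alpha$ for all $\beta\in W_j$ and all $w$ in the relevant component meeting $\overline{V_{z_j}}$ (this is exactly the estimate used in Claim~1 of Lemma~\ref{l:aeconst}, now read as a genuine bound on the derivative norm rather than just ruling out non-constant limits, and it holds uniformly because $W_j$ is compact). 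Meanwhile on a compact $Q\subset F(G_\tau)$ the iterates $\gamma_{n,1}|_Q$ stay in $F(G_\tau)$ (forward invariance, Lemma~\ref{ocminvlem}) and, by Koebe/normal-family compactness, the derivative norm is uniformly bounded above, so the orbit of $Q$ either eventually enters a compact subset of $F(G_\tau)$ near the covering disks — in which case contracting blocks occur with positive probability at each opportunity — or it spends infinitely much time near $\partial J(G_\tau)$.

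So I would run the Borel--Cantelli/geometric-series estimate from Lemma~\ref{l:aeconst} verbatim: stratify $X_\tau$ by the successive times the orbit of a chosen base point (and, by uniform continuity of all $\gamma_{n,1}$ on the compact $Q$, of all of $Q$ simultaneously — this uses that $\Gamma_\tau$ is compact) lands in the good region $B_j$, and on that stratification the probability of avoiding $W_j$ at each such time is at most $\eta:=\max_j(\otimes_{s=1}^k\tau)(\Gamma_\tau^k\setminus W_j)<1$. Hence the probability that after the $n$-th visit one \emph{never} again hits $W_j$ is $0$, so a.s.\ contracting blocks occur infinitely often, each multiplying $\sup_{a\in Q}\|\gamma_{n,1}'(a)\|_h$ by $\le\alpha$; combined with the uniform upper bound between blocks, $\sup_{a\in Q}\|\gamma_{n,1}'(a)\|_h\to 0$. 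Taking the countable intersection over $Q_k$ and $\varepsilon$ gives the full-measure set ${\cal V}$, and monotonicity in $n$ upgrades ``along the block endpoints'' to ``for all $n$''. The main obstacle I anticipate is the uniformity in $a\in Q$: one must ensure a single contracting block works for the whole compact set $Q$ at once (not just for one point), which I would handle by shrinking the neighborhoods $W_j$ and the disks $V_{z_j}$ and invoking uniform continuity of the length-$k$ compositions over the compact parameter set $\Gamma_\tau^k$, together with the fact that once $\gamma_{n,1}(Q)$ is contained in a single small disk $V_{z_j}$ the subsequent contraction estimate is automatically uniform over $\gamma_{n,1}(Q)$.
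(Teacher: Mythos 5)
There is a genuine gap. Your whole scheme rests on the claim that a ``good block'' $\beta \in W_{j}$ contracts the hyperbolic derivative by a uniform factor $\alpha<1$ ``for all $w$ in the relevant component meeting $\overline{V_{z_{j}}}$''. That is false: writing $\Omega'$ for the component of $U_{0}\cup V_{z_{j}}$ containing $U_{0}$, the bound one actually gets is $\| (\beta_{k}\cdots \beta_{1})'(w)\| _{h}\leq \rho _{\Omega '}(w)/\rho _{U_{0}}(w)$, and this ratio tends to $1$ as $w$ approaches the part of $\partial U_{0}$ not covered by $V_{z_{j}}$. The contraction factor is uniform only over base points $w$ lying in a fixed compact subset of $F(G_{\tau })$ (this is exactly why the proof of Lemma~\ref{l:aeconst} restricts to the sets $(\CCI \setminus B(\partial J(G_{\tau }),c_{q}))\cap B_{j}$ and stratifies over $q$). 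Consequently your Borel--Cantelli step proves less than you assert: it rules out the event ``the orbit visits the good \emph{compact} region infinitely often but a good block occurs at only finitely many of those visits'', not the event ``the orbit visits the good compact region only finitely often''. You yourself flag the latter possibility (``or it spends infinitely much time near $\partial J(G_{\tau })$'') and then never dispose of it; but if $d(\gamma _{n,1}(a_{0}),J(G_{\tau }))\rightarrow 0$, your uniform $\alpha $ is unavailable and, since $n\mapsto \| \gamma _{n,1}'(a)\| _{h}$ is merely non-increasing, its limit could be positive. Note that orbits really can accumulate on $J(G_{\tau })$ (minimal sets may meet $J(G_{\tau })$, cf.\ Example~\ref{r:parajkere}), so this case cannot be dismissed pointwise; it must be excluded almost surely.

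That exclusion is the one genuinely new ingredient of the paper's proof beyond Lemma~\ref{l:aeconst}, and it is missing from your proposal: one shows $\tilde{\tau }(\{ \gamma \mid d(\gamma _{n,1}(a_{0}),J(G_{\tau }))\rightarrow 0\} )=0$ by a separate geometric-decay estimate on the sets $E_{n,m}=\{ \gamma \mid \gamma _{ik,1}(a_{0})\in \bigcup _{j}V_{z_{j}},\ i=n,\ldots ,m\} $, using that whenever the orbit sits in some $V_{z_{j}}$ the next length-$k$ block lies in $W_{j}$ with probability bounded below and then ejects the orbit into a fixed compact subset of $F(G_{\tau })$. Once almost-sure infinitely many returns of $\gamma _{n,1}(a_{0})$ to a fixed compact set $C\subset F(G_{\tau })$ are secured (and, via the bounded hyperbolic diameter of $\gamma _{n,1}(Q)$, returns of the whole image of $Q$ to a slightly larger compact set), the paper finishes not by multiplying contraction factors but by invoking Lemma~\ref{l:aeconst}: the limit functions are constant, so the spherical derivatives tend to $0$ uniformly on $Q$, the hyperbolic and spherical metrics are comparable on $C$, hence $\| \gamma _{m_{j},1}'\| _{h}\rightarrow 0$ along the return times, and Schwarz--Pick monotonicity upgrades this to the full sequence. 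Your multiplicative-contraction mechanism would also work after the returns statement is proved, but without it the argument does not close.
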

\begin{proof}
Let ${\cal A}$ be the subset of $X_{\tau }$ in Lemma~\ref{l:aeconst}. 
Let $U\in \mbox{Con}(F(G_{\tau }))$ be an element and let $a_{0}\in U.$ 
Let $\eta (a_{0}):= \{ \gamma \in {\cal A}\mid d(\gamma _{n,1}(a_{0}),J(G_{\tau }))\rightarrow 0 \mbox{ as }
n\rightarrow \infty \} .$ 
Let $\{ z_{j}\} _{j=1}^{p}, \{ V_{z_{j}}\} _{j=1}^{p}$, 
$k, \{ W_{j}\} _{j=1}^{k}$ be as in the proof of Lemma~\ref{l:aeconst}. 
%Let $\gamma \in \eta (a_{0})$ be an element satisfying that 
%there exists a $j$ and a sequence $\{ n_{i}\} $ in $\NN $ such that 
%$\gamma _{n_{i},1}(a_{0})\in V_{z_{j}}$ for each $i\in \NN .$  
%Then there 
%exists a number $m\in \NN  $ such that for each 
%$i\in \NN $ with $i\geq m$, 
%$\gamma _{n_{i}k+k-1,n_{i}k}\not\in W_{j}.$ Thus the set of such $\gamma $ is of measure zero 
%with respect to $\tilde{\tau }.$ 
For each $(n,m)\in \NN ^{2}$ with $n\leq m$, let 
$E_{n,m}:=\{ \gamma \in {\cal A}\mid \gamma _{ik,1}(a_{0})\in \bigcup _{j=1}^{p}V_{z_{j}}, i=n,\ldots ,m\} $.  
Let $(n,m)\in \NN^{2}$ with $n\leq m.$ 
Then there exist  mutually disjoint Borel subsets $Z_{1},\ldots ,Z_{r}$ of $\Gamma _{\tau }^{mk}$ and 
a sequence $\{ j(s)\} _{s=1}^{r}\subset \{ 1,\ldots ,p\} $ such that 
setting $E_{n,m,s}:= \{ \gamma \in E_{n,m}\mid (\gamma _{1},\ldots ,\gamma _{mk})\in Z_{s}\} $, 
we have $E_{n,m,s}\subset \{ \gamma \in X_{\tau }\mid \gamma _{mk,1}(a_{0})\in V_{z_{j(s)}}\} $ and 
$E_{n,m}=\amalg _{s=1}^{r}E_{n,m,s}.$ 
Let $\alpha := \max _{j=1}^{p}\{ (\otimes _{i=1}^{k}\tau )(\Gamma _{\tau }^{k}\setminus W_{j}) \} \ (<1).$ 
Since $E_{n,m+1}=\amalg _{s=1}^{r}(E_{n,m+1}\cap E_{n,m,s})$, we have 
\begin{align*}
\tilde{\tau }(E_{n,m+1}) 
=    & \sum _{s=1}^{r}\tilde{\tau }(E_{n,m+1}\cap E_{n,m,s})\\ 
\leq & \sum _{s=1}^{r}\tilde{\tau }(\{ \gamma \in X_{\tau } \mid (\gamma _{mk+1},\ldots ,\gamma _{(m+1)k})\notin W_{j(s)}\} \cap E_{n,m,s})\\ 
=    & \sum _{s=1}^{r}(\otimes _{i=1}^{k}\tau )(\Gamma _{\tau }^{k}\setminus W_{j(s)})\cdot \tilde{\tau }(E_{n,m,s})\leq 
\alpha \sum _{s=1}^{r}\tilde{\tau }(E_{n,m,s})=\alpha \tilde{\tau }(E_{n,m}).
\end{align*} 
Therefore, $\tilde{\tau }(\bigcap _{m\in \NN : m\geq n}E_{n,m})=0$ for each $n\in \NN .$  
Thus, $\tilde{\tau }(\eta (a_{0}))=0.$ 
Let $\gamma \in {\cal A}\setminus \eta (a_{0})$ 
be an element. 
Then for each compact subset $Q_{0}$ of $U$ there exists a compact subset $C$ of 
$F(G_{\tau })$ and a strictly increasing sequence $\{ m_{j}\} _{j=1}^{\infty }$ in $\NN $ 
such that $\gamma _{m_{j},1}(Q_{0})\subset C$ for each $j\in \NN .$ 
Therefore, for each $\gamma \in {\cal A}\setminus \eta (a_{0})$, 
$\sup _{w_{0}\in Q_{0}}\| \gamma _{n,1}'(w_{0})\| _{h}\rightarrow 0$ as $n\rightarrow \infty .$ 
From these arguments, the statement of our lemma follows.   
\end{proof}
\begin{lem}
\label{l:azproof}
Under the assumptions of Theorem~\ref{t:mtauspec}, statement \ref{t:mtauspecaz} of Theorem~\ref{t:mtauspec} holds. 
\end{lem}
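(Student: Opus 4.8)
The goal is: for each $z\in\CCI$, there is a Borel set ${\cal A}_z\subset(\Rat)^{\NN}$ with $\tilde\tau({\cal A}_z)=1$ such that for every $\gamma\in{\cal A}_z$ there is a $\delta=\delta(z,\gamma)>0$ with $\mbox{diam}(\gamma_{n,1}(B(z,\delta)))\to 0$. The plan is to split according to whether $z\in F(G_\tau)$ or $z\in J(G_\tau)$, and in both cases reduce to the fact (already proved) that for $\tilde\tau$-a.e.\ $\gamma$, orbits of points eventually land in $F(G_\tau)$ (Proposition~\ref{Jkeremptygenprop1}) together with hyperbolic-contraction control on $F(G_\tau)$ (Lemma~\ref{l:hypmetcont}).

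First I would treat the case $z\in F(G_\tau)$. Pick a connected component $U\in\mbox{Con}(F(G_\tau))$ with $z\in U$ and a small closed ball $\overline{B(z,\delta_0)}\subset U$; set $Q:=\overline{B(z,\delta_0)}\in\mbox{Cpt}(F(G_\tau))$. By Lemma~\ref{l:hypmetcont} there is a full-measure set ${\cal V}$ such that for $\gamma\in{\cal V}$, $\sup_{a\in Q}\|\gamma_{n,1}'(a)\|_h\to 0$. Since $Q$ is connected and contained in a single component, integrating the derivative bound along paths in $Q$ (and comparing the hyperbolic metric on each $U_n\in\mbox{Con}(F(G_\tau))$ with the spherical metric, which is dominated on the relevant compact pieces — or simply noting that spherical diameter of $\gamma_{n,1}(Q)$ is controlled because $\gamma_{n,1}(Q)$ lies in one component and the hyperbolic diameter shrinks) gives $\mbox{diam}(\gamma_{n,1}(Q))\to 0$. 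So here ${\cal A}_z\supset{\cal V}$ works with $\delta=\delta_0$.

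Next, the case $z\in J(G_\tau)$, which is the main obstacle. Here no uniform neighborhood works, so I would argue measure-theoretically: by Proposition~\ref{Jkeremptygenprop1} applied with $y=z$, there is a full-$\tilde\tau$-measure set ${\cal A}_z^{0}$ such that each $\gamma\in{\cal A}_z^{0}$ satisfies $\gamma_{N,1}(z)\in F(G_\tau)$ for some $N=N(z,\gamma)\in\NN$. Fix such a $\gamma$ and $N$; let $w:=\gamma_{N,1}(z)\in U'\in\mbox{Con}(F(G_\tau))$. Since $\gamma_{N,1}$ is continuous (a finite composition of rational maps) and open, there is $\delta_1>0$ with $\gamma_{N,1}(\overline{B(z,\delta_1)})\subset$ a compact subset $Q'$ of $U'$. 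Now I want the \emph{tail} sequence $\sigma^N(\gamma)$ to contract $Q'$; the difficulty is that Lemma~\ref{l:hypmetcont}'s good set ${\cal V}$ is a property of the whole sequence, and I need it to hold for $\sigma^N(\gamma)$ for $\tilde\tau$-a.e.\ $\gamma$ simultaneously with the landing property. This is handled by the shift-invariance of $\tilde\tau$: the event ``$\sigma^N(\gamma)\in{\cal V}$'' has probability one for each fixed $N$, hence $\bigcap_{N\ge 1}\{\gamma:\sigma^N(\gamma)\in{\cal V}\}$ has probability one. Intersecting this with ${\cal A}_z^{0}$ gives the desired ${\cal A}_z$: for $\gamma$ in it, with $N=N(z,\gamma)$, the tail $\sigma^N(\gamma)$ lies in ${\cal V}$, so $\mbox{diam}((\sigma^N\gamma)_{n,1}(Q'))\to 0$ by the case above, and since $\gamma_{N+n,1}(B(z,\delta_1))=(\sigma^N\gamma)_{n,1}(\gamma_{N,1}(B(z,\delta_1)))\subset(\sigma^N\gamma)_{n,1}(Q')$, we get $\mbox{diam}(\gamma_{m,1}(B(z,\delta_1)))\to 0$ as $m\to\infty$. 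Taking $\delta=\delta_1$ finishes this case.

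Finally I would observe that the hypothesis $J(G_\tau)\neq\emptyset$ is not actually needed for this statement (only $J_{\ker}(G_\tau)=\emptyset$, via Proposition~\ref{Jkeremptygenprop1}), but since we are inside Theorem~\ref{t:mtauspec} it is available. The one technical point to write carefully is the passage from ``hyperbolic diameter $\to 0$ on a fixed compact set contained in one Fatou component'' to ``spherical diameter $\to 0$'': this is routine because on any compact subset of a hyperbolic Riemann surface the hyperbolic and spherical metrics are comparable, and the images $\gamma_{n,1}(Q')$, while moving among components, each lie in a single component where the local comparison is uniform once one notes the spherical diameters are bounded and shrinking — one can also bypass this by directly using part (b) of statement~\ref{t:mtauspec1}, which is exactly Lemma~\ref{l:hypmetcont}. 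I expect the measure-theoretic bookkeeping with the countably many shifted copies of ${\cal V}$ to be the only genuinely delicate step; everything else is assembly of already-established facts.
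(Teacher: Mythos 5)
Your proof is correct and follows essentially the same route as the paper: the paper also takes ${\cal A}_z$ to be the set from Proposition~\ref{Jkeremptygenprop1} intersected with $\bigcap_{n\geq 0}\sigma^{-n}({\cal A})$, exploiting shift-invariance of $\tilde{\tau}$ exactly as you do. The only cosmetic difference is that the paper cites the set ${\cal A}$ of Lemma~\ref{l:aeconst} (all limit functions on Fatou components are constant) where you cite Lemma~\ref{l:hypmetcont} (hyperbolic derivative decay); these are interchangeable here, and your concern about passing from hyperbolic to spherical diameter is handled by the standard comparison on components omitting the at-least-three points of $J(G_{\tau})$.
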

\begin{proof}
Let $z\in \CCI .$ By Proposition~\ref{Jkeremptygenprop1}, there exists a Borel subset 
${\cal A}'_{z}$ of $X_{\tau }$ with $\tilde{\tau }({\cal A}'_{z})=1$ such that 
for each $\gamma \in {\cal A}'_{z}$, there exists an $n\in \NN $ such that 
$\gamma _{n,1}(z)\in F(G_{\tau }).$ 
Let ${\cal A}_{z}:={\cal A}'_{z}\cap \bigcap _{n=0}^{\infty }\sigma ^{-n}({\cal A})$, 
where ${\cal A}$ is the set in Lemma~\ref{l:aeconst}. Then ${\cal A}_{z}$ satisfies the desired property. 
\end{proof}

\begin{lem}
\label{l:minfin}
Under the assumptions of Theorem~\ref{t:mtauspec}, 
$\sharp \emMin (G_{\tau },\CCI )<\infty .$ 
\end{lem}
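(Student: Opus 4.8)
\textbf{Proof proposal.} The plan is to funnel every minimal set through a single compact subset of $F(G_{\tau })$ that meets only finitely many Fatou components, and then apply Lemma~\ref{l:aeconst} along that set. First I would record the preliminaries: since $\Gamma _{\tau }\neq \emptyset $ and $J_{\ker }(G_{\tau })=\emptyset $, the Fatou set $F(G_{\tau })$ is non-empty (if $F(G_{\tau })=\emptyset $ then $J(G_{\tau })=\CCI $, whence $J_{\ker }(G_{\tau })=\CCI $), and by Lemma~\ref{l:sjg3} we have $\sharp J(G_{\tau })\geq 3$, so $G_{\tau }$ restricted to $F(G_{\tau })$ is a normal family. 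Next, for each $z\in \CCI $ there are $g_{z}\in G_{\tau }^{\ast }$ and an open disk $V_{z}\ni z$ with $g_{z}(\overline{V_{z}})\subset F(G_{\tau })$: for $z\in F(G_{\tau })$ take $g_{z}=\mbox{Id}$ and $V_{z}$ small, and for $z\in J(G_{\tau })$ use $J_{\ker }(G_{\tau })=\emptyset $ to find $g_{z}\in G_{\tau }$ with $g_{z}(z)\in F(G_{\tau })$ and then shrink $V_{z}$. By compactness of $\CCI $, finitely many $V_{z_{1}},\dots ,V_{z_{p}}$ cover $\CCI $; I would set $Q:=\bigcup _{j=1}^{p}g_{z_{j}}(\overline{V_{z_{j}}})$, a compact subset of $F(G_{\tau })$ with the property that for every $z\in \CCI $ there is $g\in G_{\tau }^{\ast }$ with $g(z)\in Q$. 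Since $Q$ is compact and $F(G_{\tau })$ is the disjoint union of its components, $Q$ meets only finitely many components $U_{1},\dots ,U_{s}$ of $F(G_{\tau })$; write $Q=\bigsqcup _{\nu =1}^{s}Q_{\nu }$ with $Q_{\nu }:=Q\cap U_{\nu }$, each a compact subset of $U_{\nu }$.

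Then I would fix $\gamma ^{\ast }$ in the full $\tilde{\tau }$-measure set ${\cal A}$ of Lemma~\ref{l:aeconst}. For each $\nu $ the family $\{ \gamma ^{\ast }_{n,1}|_{U_{\nu }}\} _{n}\subset G_{\tau }$ is normal on $U_{\nu }\subset F(G_{\tau })$, so by a diagonal argument over the finitely many indices $\nu $ there is a subsequence $n_{l}\to \infty $ and holomorphic maps $\phi _{\nu }:U_{\nu }\to \CCI $ with $\gamma ^{\ast }_{n_{l},1}|_{U_{\nu }}\to \phi _{\nu }$ locally uniformly; by Lemma~\ref{l:aeconst} each $\phi _{\nu }$ is a constant $c_{\nu }$. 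In particular $\gamma ^{\ast }_{n_{l},1}(Q_{\nu })\to \{ c_{\nu }\} $ as $l\to \infty $ for each $\nu $, since convergence is uniform on the compact set $Q_{\nu }$.

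Finally, let $L\in \Min(G_{\tau },\CCI )$ and pick $z_{0}\in L$. By construction there is $g\in G_{\tau }^{\ast }$ with $g(z_{0})\in Q$; since $L$ is forward invariant under $G_{\tau }$ (Lemma~\ref{ocminvlem}) and trivially under $\mbox{Id}$, we get $g(z_{0})\in L\cap Q$, so $L$ meets some $Q_{\nu }$. Choosing $a_{L}\in L\cap Q_{\nu }$, forward invariance gives $\gamma ^{\ast }_{n_{l},1}(a_{L})\in L$ for all $l$ (note $\gamma ^{\ast }_{n_{l},1}\in G_{\tau }$), while $\gamma ^{\ast }_{n_{l},1}(a_{L})\in \gamma ^{\ast }_{n_{l},1}(Q_{\nu })\to \{ c_{\nu }\} $; since $L$ is closed, $c_{\nu }\in L$. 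Thus every minimal set contains one of the finitely many points $c_{1},\dots ,c_{s}$, and since distinct minimal sets are pairwise disjoint (Remark~\ref{r:minimal}), $\sharp \Min(G_{\tau },\CCI )\leq s<\infty $.

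I expect the only genuinely delicate point to be the extraction in the middle paragraph, namely passing from ``every subsequential limit of $\gamma ^{\ast }_{n,1}$ on each $U_{\nu }$ is constant'' (Lemma~\ref{l:aeconst}) to ``$\gamma ^{\ast }_{n_{l},1}(Q_{\nu })$ collapses to a point for a single subsequence $n_{l}$ simultaneously in all $\nu $.'' This relies on the normality of $G_{\tau }$ on $F(G_{\tau })$ and on $Q$ meeting only finitely many components; both are consequences of compactness, so once Lemma~\ref{l:aeconst} is available the argument should go through without further difficulty. (Alternatively one could invoke the contraction statements of Lemmas~\ref{l:hypmetcont} and \ref{l:azproof} directly on the pieces $Q_{\nu }$, but using Lemma~\ref{l:aeconst} together with normality seems cleanest.)
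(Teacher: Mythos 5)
Your proof is correct. It rests on the same two ingredients as the paper's argument --- Lemma~\ref{l:aeconst} and the compact set $A=\bigcup _{j}g_{z_{j}}(\overline{V_{z_{j}}})\subset F(G_{\tau })$ built from the covering that appears in its proof --- but the finishing move is genuinely different. The paper argues by contradiction: from Lemma~\ref{l:aeconst} it deduces that two distinct minimal sets can never meet the same connected component of $F(G_{\tau })$ (otherwise a common constant limit would lie in both, contradicting disjointness), then shows that an infinite sequence of distinct minimal sets would have to approach $\partial J(G_{\tau })$ and hence infinitely many of them would meet the compact set $A$, which touches only finitely many components. You instead argue directly: you fix one $\gamma ^{\ast }$ in the full-measure set, extract a single subsequence along which the iterates contract each of the finitely many pieces $Q_{\nu }$ to a point $c_{\nu }$, and observe that every minimal set must contain one of $c_{1},\ldots ,c_{s}$; pairwise disjointness of minimal sets then gives the explicit bound $s$. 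Your route avoids both the contradiction and the ``shrinking to the boundary'' step, at the cost of the (routine, finite) diagonal extraction you flag; the normality you invoke is exactly the equicontinuity defining $F(G_{\tau })$ together with Arzel\`{a}--Ascoli, so that step is sound. Either argument is acceptable; yours is arguably the cleaner of the two and yields a concrete upper bound on $\sharp \emMin (G_{\tau },\CCI )$ in terms of the number of Fatou components met by $Q$.
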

\begin{proof}
Let $\{ z_{j}\} _{j=1}^{p}$, $\{ g_{z_{j}}\} _{j=1}^{p}$ and $\{ V_{j}\} _{j=1}^{p}$ 
be as in the proof of Lemma~\ref{l:aeconst}. 
 Then $\bigcup _{j=1}^{p}g_{z_{j}}(\overline{V_{j}})$ is a compact subset of $F(G_{\tau }).$ 
 Let $A:=\bigcup _{j=1}^{p}g_{z_{j}}(\overline{V_{j}}).$ 
Suppose $\sharp \Min (G_{\tau },\CCI )=\infty .$ 
Then there exists a sequence $\{ K_{n} \} _{n=1}^{\infty }$ of mutually distinct elements of $\Min(G_{\tau },\CCI ).$ 
By Lemma~\ref{l:aeconst}, for each $(n,m)$ with $n\neq m$, 
there exists no $U\in \mbox{Con}(F(G_{\tau }))$ such that $U\cap K_{n}\neq \emptyset $ and 
$U\cap K_{m}\neq \emptyset .$ Hence, there exists a sequence $\{ n_{j}\} _{j=1}^{\infty }$ in $\NN $ 
and a sequence $\{ a_{j}\} _{j=1}^{\infty }$ in $\CCI $ such that 
$a_{j}\in K_{n_{j}}$ for each $j$ and such that $d(a_{j},\partial J(G_{\tau }))\rightarrow 0$ as $j\rightarrow \infty .$ 
Hence, there exists a $j_{0}\in \NN $ such that for each $j\in \NN $ with $j\geq j_{0}$, 
$K_{n_{j}}\cap A\neq \emptyset .$ However, this is a contradiction. Thus, 
we have proved Lemma~\ref{l:minfin}. 
\end{proof}
\begin{lem}
\label{l:kcpt} Under the assumptions of Theorem~\ref{t:mtauspec}, 
statement~\ref{t:mtauspec4} of Theorem~\ref{t:mtauspec} holds.
\end{lem}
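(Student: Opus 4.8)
The plan is to reduce everything to Lemma~\ref{Lkerlem} together with a hyperbolic shadowing estimate. First, by Lemma~\ref{l:minfin} the set $\Min (G_{\tau },\CCI )$ is finite, and each minimal set is by definition a non-empty compact subset of $\CCI $; hence $S_{\tau }$ is a finite union of compact sets and is compact. Note also that $F(G_{\tau })\neq \emptyset $ (otherwise $J(G_{\tau })=\CCI =J_{\ker }(G_{\tau })$), and that no minimal set is contained in $J(G_{\tau })$: such a set would be a forward invariant compact subset of $J(G_{\tau })$, hence contained in $\bigcap _{g\in G_{\tau }}g^{-1}(J(G_{\tau }))=J_{\ker }(G_{\tau })=\emptyset $; in particular every minimal set meets $F(G_{\tau })$. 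The set $W$ is open, being a union of Fatou components, and $G_{\tau }(W)\subset W$: for $g\in G_{\tau }$ and a component $A$ with $A\cap S_{\tau }\neq \emptyset $, the set $g(A)$ is connected and lies in $F(G_{\tau })$ by Lemma~\ref{ocminvlem}, hence in a single component $A'$, and $\emptyset \neq g(A\cap S_{\tau })\subset A'\cap g(S_{\tau })\subset A'\cap S_{\tau }$ (as $g(S_{\tau })\subset S_{\tau }$), so $A'\subset W$.

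Next I will show that the orbit enters $W$ almost surely. Since $\G _{\tau }\subset \Ratp \subset \mbox{OCM}(\CCI )$ and $G_{\tau }(W)\subset W$, Lemma~\ref{Lkerlem} applies with $V:=W$. Its set $L_{\ker }=\bigcap _{g\in G_{\tau }}g^{-1}(\CCI \setminus W)$ is closed, hence compact, and forward invariant (if $g(x)\notin W$ for all $g\in G_{\tau }$, then $g(h(x))=(gh)(x)\notin W$ for all $g,h\in G_{\tau }$). If $L_{\ker }\neq \emptyset $ it would contain a minimal set $K$ by Remark~\ref{r:minimal}; for a point $x_{0}\in K$ we would have $K=\overline{G_{\tau }(x_{0})}\subset \overline{\CCI \setminus W}=\CCI \setminus W$, while $K\subset S_{\tau }$ and $K$ meets some component $A$ of $F(G_{\tau })$, so $A\cap S_{\tau }\neq \emptyset $ and $A\subset W$, contradicting $\emptyset \neq K\cap A\subset K\cap W$. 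Hence $L_{\ker }=\emptyset $, so $d(\cdot ,L_{\ker })\equiv \infty $ and Lemma~\ref{Lkerlem} gives, for each $z\in \CCI $, that $\tilde{\tau }(\{ \gamma \mid \gamma _{j,1}(z)\notin W\ \text{for all}\ j\geq 1\} )=0$. Thus for $\tilde{\tau }$-a.e. $\gamma $ there is $n\in \NN $ with $\gamma _{n,1}(z)\in W$.

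It remains to show that for such $\gamma $ one has $d(\gamma _{m,1}(z),S_{\tau })\to 0$. Fix $\gamma $ and $n$ with $\gamma _{n,1}(z)\in W$, let $A$ be the Fatou component containing $\gamma _{n,1}(z)$, and note $A\cap S_{\tau }\neq \emptyset $, so $A\cap L_{0}\neq \emptyset $ for some minimal set $L_{0}$. Fix $w_{0}\in A\cap L_{0}$ and a compact arc $Q\subset A$ joining $\gamma _{n,1}(z)$ to $w_{0}$. For $m\geq n+1$, both $\gamma _{m,1}(z)=\gamma _{m,n+1}(\gamma _{n,1}(z))$ and $p_{m}:=\gamma _{m,n+1}(w_{0})$ are images of points of the connected set $A$ under the holomorphic map $\gamma _{m,n+1}$, hence lie in one component $A_{m}$ of $F(G_{\tau })$; moreover $p_{m}\in G_{\tau }(L_{0})\subset L_{0}\subset S_{\tau }$. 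Applying Lemma~\ref{l:hypmetcont} to the shifted sequence $\sigma ^{n}(\gamma )$ (which lies in the relevant full-measure set for $\tilde{\tau }$-a.e. $\gamma $ and every $n$, by shift invariance of $\tilde{\tau }$) with the compact set $Q$, I get $\sup _{a\in Q}\| \gamma _{m,n+1}'(a)\| _{h}\to 0$ as $m\to \infty $, so the hyperbolic length of $\gamma _{m,n+1}(Q)$ inside $A_{m}$ tends to $0$ and therefore $d_{h,A_{m}}(\gamma _{m,1}(z),p_{m})\to 0$, where $d_{h,A_{m}}$ denotes hyperbolic distance on $A_{m}$. Since $\sharp J(G_{\tau })\geq 3$ by Lemma~\ref{l:sjg3}, fix three distinct points of $J(G_{\tau })$ and let $\Omega $ be $\CCI $ with these points removed; every $A_{m}$ lies in $\Omega $, so $d_{h,A_{m}}\geq d_{h,\Omega }$ on $A_{m}$, and on the fixed hyperbolic surface $\Omega $ the hyperbolic density is bounded below by a positive multiple of the spherical density, so $d_{h,\Omega }(x,y)\to 0$ forces $d(x,y)\to 0$. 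Hence $d(\gamma _{m,1}(z),S_{\tau })\leq d(\gamma _{m,1}(z),p_{m})\to 0$, and taking ${\cal C}_{z}$ to be the intersection of the finitely many full-measure sets used above completes the proof.

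The step I expect to be the main obstacle is the last one: Lemma~\ref{l:hypmetcont} only supplies hyperbolic contraction, whereas the orbit may pass through infinitely many distinct Fatou components $A_{m}$ with no a priori uniform comparison between their hyperbolic metrics and the spherical metric; the three-point comparison is what makes the conclusion uniform, and one has to verify carefully that $d_{h,\Omega }\to 0$ genuinely forces spherical convergence even when the relevant points approach the three punctures of $\Omega $.
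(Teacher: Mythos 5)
Your proof is correct and follows the same skeleton as the paper's: compactness of $S_{\tau }$ from Lemma~\ref{l:minfin}, forward invariance of $W$, and Lemma~\ref{Lkerlem} applied with $V=W$ and $L_{\ker }=\emptyset $ to get the orbit into $W$ almost surely. The only real divergence is the final step. The paper invokes Lemma~\ref{l:aeconst}: every limit function of $\{\gamma _{m,n+1}|_{A}\}$ on a component $A$ meeting $S_{\tau }$ is a constant, and that constant must lie in the compact forward-invariant set $S_{\tau }$ because $\gamma _{m,n+1}(w_{0})\in S_{\tau }$ for $w_{0}\in A\cap S_{\tau }$; this gives $d(\gamma _{m,1}(z),S_{\tau })\rightarrow 0$ directly in the spherical metric, with no comparison of metrics required. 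Your route through Lemma~\ref{l:hypmetcont} and the hyperbolic length of the image of an arc is also valid, and the comparison you flag as the main obstacle does hold (the hyperbolic density of $\CCI $ minus three points dominates a positive multiple of the spherical density, since it blows up at the punctures and is continuous and positive on the complement of neighborhoods of them, so $d_{h,\Omega }\rightarrow 0$ does force spherical convergence), but it is extra machinery that the limit-function argument sidesteps. Your verification that $L_{\ker }=\emptyset $ via a minimal set contained in $L_{\ker }$ is a clean alternative to the paper's direct observation that $\overline{G_{\tau }(z_{0})}$ contains a minimal set, every minimal set meets $F(G_{\tau })$, and hence $\overline{G_{\tau }(z_{0})}$ meets the open set $W$.
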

\begin{proof}
By Lemma~\ref{l:minfin}, 
$S_{\tau }=\bigcup _{L\in \Min(G_{\tau },\CCI )} L$ is compact. 
Moreover, $G_{\tau }(S_{\tau })\subset S_{\tau }.$ 
Let $W:= \bigcup _{A\in \mbox{Con}(F(G_{\tau })),A\cap S_{\tau }\neq \emptyset } A.$ 
Then $G_{\tau }(W)\subset W.$ 
Let $z_{0}\in \CCI .$ From the definition of $S_{\tau }$, 
$\overline{G_{\tau }(z_{0})}\cap S_{\tau }\neq \emptyset .$ 
Combining this with that $J_{\ker }(G_{\tau })=\emptyset $, 
we obtain that $\overline{G_{\tau }(z_{0})}\cap W\neq \emptyset .$ 
Thus, we have shown that for each $z_{0}\in \CCI $ there exists an element $g\in G_{\tau }$ such that 
$g(z_{0})\in W.$ 
Combining this with $G_{\tau }(W)\subset W$ and Lemma~\ref{Lkerlem}, 
it follows that for each $z_{0}\in \CCI $ there exists a Borel measurable subset 
${\cal V}_{z_{0}}$ of $X_{\tau }$ with $\tilde{\tau }({\cal V}_{z_{0}})=1$ such that 
for each $\gamma \in {\cal V}_{z_{0}}$, 
there exists an $n\in \NN $ with $\gamma _{n,1}(z_{0})\in W.$ 
By Lemma~\ref{l:aeconst}, 
there exists a Borel measurable subset ${\cal A}$ of $X_{\tau }$ with $\tilde{\tau }({\cal A})=1$ 
such that for each $\gamma \in {\cal A}$ and for each $z\in W$, 
$d(\gamma _{n,1}(z),S_{\tau })\rightarrow 0$ as $n\rightarrow \infty .$ 
For each $z_{0}\in \CCI $, let ${\cal C}_{z_{0}}:= {\cal V}_{z_{0}}\cap 
\bigcap _{n=0}^{\infty }\sigma ^{-n}({\cal A}).$ Then $\tilde{\tau }({\cal C}_{z_{0}})=1.$ 
Moreover, for each $\gamma \in {\cal C}_{z_{0}}$, 
$d(\gamma _{n,1}(z_{0}),S_{\tau })\rightarrow 0$ as $n\rightarrow \infty .$ 
Thus, we have proved our lemma. 
\end{proof}
\begin{lem}
\label{l:lscf}
Under the assumptions of Theorem~\ref{t:mtauspec}, 
$\emLSfc \subset C_{F(G_{\tau })}(\CCI ).$ 
\end{lem}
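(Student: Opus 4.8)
The goal is to show that every finite linear combination of unitary eigenvectors of $M_{\tau }:C(\CCI )\to C(\CCI )$ is locally constant on $F(G_{\tau })$. Since $\emLSfc $ is by definition the linear span of $\Ufc $, it suffices to prove that each individual unitary eigenvector $\varphi $ lies in $C_{F(G_{\tau })}(\CCI )$; the conclusion for linear combinations then follows because $C_{F(G_{\tau })}(\CCI )$ is a linear subspace of $C(\CCI )$. So fix $\varphi \in \Ufc $ with $M_{\tau }(\varphi )=\alpha \varphi $, $|\alpha |=1$, and fix a connected component $U\in \mbox{Con}(F(G_{\tau }))$; I must show $\varphi |_{U}$ is constant.

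The plan is to exploit the iterated functional equation $M_{\tau }^{n}(\varphi )=\alpha ^{n}\varphi $, which unravels as
$$\alpha ^{n}\varphi (z)=\int _{X_{\tau }}\varphi (\gamma _{n,1}(z))\ d\tilde{\tau }(\gamma )\qquad (z\in \CCI ),$$
together with the almost-sure contraction of the dynamics on Fatou components. Concretely, I would first invoke Lemma~\ref{l:sjg3} (valid under the hypotheses of Theorem~\ref{t:mtauspec}) to get $\sharp J(G_{\tau })\geq 3$, so that every $U\in \mbox{Con}(F(G_{\tau }))$ carries a hyperbolic metric. Then I would apply Lemma~\ref{l:aeconst} and Lemma~\ref{l:hypmetcont}: there is a Borel set ${\cal V}\subset X_{\tau }$ with $\tilde{\tau }({\cal V})=1$ such that for each $\gamma \in {\cal V}$, every limit function of $\{\gamma _{n,1}|_{U}\}_{n}$ is constant, and in fact $\sup _{a\in Q}\|\gamma _{n,1}'(a)\|_{h}\to 0$ as $n\to\infty $ for each compact $Q\subset F(G_{\tau })$. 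Now take two points $z_{1},z_{2}\in U$ and a compact connected set $Q\subset U$ containing both. For $\gamma \in {\cal V}$, integrating $\|\gamma _{n,1}'\|_{h}$ along a path in $Q$ from $z_{1}$ to $z_{2}$ and using that the spherical metric on $\CCI $ is dominated (up to a constant on $Q$ and on its images, which lie in at most countably many components) by the hyperbolic metric, one gets $d(\gamma _{n,1}(z_{1}),\gamma _{n,1}(z_{2}))\to 0$. Hence $\varphi (\gamma _{n,1}(z_{1}))-\varphi (\gamma _{n,1}(z_{2}))\to 0$ for $\tilde{\tau }$-a.e.\ $\gamma $, and since $\varphi $ is bounded, dominated convergence in the unravelled equation gives
$$\alpha ^{n}(\varphi (z_{1})-\varphi (z_{2}))=\int _{X_{\tau }}\bigl(\varphi (\gamma _{n,1}(z_{1}))-\varphi (\gamma _{n,1}(z_{2}))\bigr)\ d\tilde{\tau }(\gamma )\longrightarrow 0.$$
Since $|\alpha ^{n}|=1$, this forces $\varphi (z_{1})=\varphi (z_{2})$, so $\varphi $ is constant on $U$.

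The main technical obstacle is the comparison step: one must be careful that the images $\gamma _{n,1}(Q)$ may move among different (though only countably many) Fatou components, each with its own hyperbolic metric, and one needs the hyperbolic-to-spherical comparison $d(x,y)\le C_{V}\,d_{h,V}(x,y)$ to hold with a constant that does not blow up. This is handled by noting that any Fatou component $V$ of $G_{\tau }$ omits the set $J(G_{\tau })$, which has at least three points (Lemma~\ref{l:sjg3}), so a uniform Schwarz-type estimate applies: on any fixed compact $Q'\subset V$ the hyperbolic density is bounded below, but here we integrate derivatives from $Q$'s metric, so what is actually needed is only that the hyperbolic distance in the target dominates the spherical distance \emph{globally} up to a universal constant — which holds precisely because the hyperbolic metric of a hyperbolic Riemann surface of bounded geometry relative to $\CCI $ dominates the spherical metric there. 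Packaging this uniform comparison, and then passing from the a.e.\ pointwise contraction $\|\gamma _{n,1}'\|_h\to 0$ to the a.e.\ spherical-diameter contraction $\mbox{diam}(\gamma _{n,1}(Q))\to 0$ (this is essentially the content behind Theorem~\ref{t:mtauspec}-\ref{t:mtauspecaz}, proved in Lemma~\ref{l:azproof}), is the one place where care is required; once that is in hand, the dominated-convergence argument above closes the proof immediately.
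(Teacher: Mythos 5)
Your proposal is correct and follows essentially the same route as the paper: unravel $M_{\tau }^{n}(\varphi )=\alpha ^{n}\varphi $, use the almost-sure constancy of limit functions on Fatou components (Lemma~\ref{l:aeconst}) together with dominated convergence, and conclude from $|\alpha |=1$. The only differences are cosmetic: the paper first extracts a subsequence with $a^{n_{j}}\rightarrow 1$ to write $\varphi $ as a limit of $M_{\tau }^{n_{j}}(\varphi )$ and then invokes Lemma~\ref{l:aeconst} directly (via normality of $\{\gamma _{n,1}|_{U}\}$, no hyperbolic-to-spherical comparison needed), whereas you work with the full sequence and route through the derivative estimate of Lemma~\ref{l:hypmetcont}; both are fine.
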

\begin{proof}
Let $\varphi \in C(\CCI )$ be such that $\varphi \neq 0$ and $M_{\tau }(\varphi )=a\varphi $ 
for some $a\in S^{1}.$ By Theorem~\ref{kerJthm1}, 
there exists a sequence $\{ n_{j}\} _{j=1}^{\infty }$ in $\NN $  and 
an element $\psi \in C(\CCI )$ such that 
$M_{\tau }^{n_{j}}(\varphi )\rightarrow \psi $ 
and $a^{n_{j}}\rightarrow 1$ 
as $j\rightarrow \infty $. Thus 
$\varphi =\frac{1}{a^{n_{j}}}M_{\tau }^{n_{j}}(\varphi )\rightarrow \psi $ as $j\rightarrow \infty .$ 
Therefore $\varphi =\psi .$ 
Let $U\in \mbox{Con}(F(G_{\tau }))$ and let $x,y\in U.$ 
By Lemma~\ref{l:aeconst}, 
we have 
$\psi (x)-\psi (y)=\lim _{j\rightarrow \infty }(M_{\tau }^{n_{j}}(\varphi )(x)-M_{\tau }^{n_{j}}(\varphi )(y))
=0.$ Therefore, $\varphi =\psi \in C_{F(G_{\tau })}(\CCI ).$ 
Thus, we have proved our lemma.  
\end{proof}
\begin{lem}
\label{l:lspec}
Under the assumptions of Theorem~\ref{t:mtauspec}, 
statement~\ref{t:mtauspec5} holds.
\end{lem}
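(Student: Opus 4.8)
The plan is to work entirely on the compact, $G_{\tau}$-invariant set $L$, using heavily that $L$ is \emph{minimal}: by Remark~\ref{r:minimal}, $\overline{G_{\tau}(z)}=L$ for every $z\in L$ and $L=\bigcup_{h\in\Gamma_{\tau}}h(L)$. The first, and entirely soft, step is to prove that \emph{every $M_{\tau}$-fixed function in $C(L)$ is constant and every $\varphi\in{\cal U}_{f,\tau}(L)$ has constant modulus.} For the former, if $u\in C(L)$ is real with $M_{\tau}u=u$ and $u$ attains its minimum $m$ at $z_{0}$, then $m=\int u(g(z_{0}))\,d\tau(g)\geq m$ forces $u(g(z_{0}))=m$ for $\tau$-a.e. $g$, hence (by continuity of $g\mapsto u(g(z_{0}))$ on $\Gamma_{\tau}$) for all $g\in\Gamma_{\tau}$; thus $\{u=m\}$ is nonempty, closed and forward $\Gamma_{\tau}$-invariant, hence $=L$ by minimality, i.e. $u\equiv m$ (the complex case follows since $M_{\tau}$ commutes with conjugation). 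For the latter, if $M_{\tau}\varphi=a\varphi$ with $|a|=1$, then $|\varphi|\leq M_{\tau}|\varphi|$, so $|\varphi|\leq M_{\tau}^{n}|\varphi|$ for all $n$; picking an $M_{\tau}^{\ast}$-invariant $\mu\in{\frak M}_{1}(L)$ (Schauder--Tychonoff, or Ces\`aro averaging), whose support is a nonempty closed forward $\Gamma_{\tau}$-invariant subset of $L$ (this follows from $M_{\tau}^{\ast}\mu=\mu$ and $\mbox{supp}\,\tau=\Gamma_{\tau}$ by a short lower-semicontinuity argument) and hence equals $L$, the identity $\int(M_{\tau}|\varphi|-|\varphi|)\,d\mu=0$ of a nonnegative continuous integrand against the full-support measure $\mu$ gives $M_{\tau}|\varphi|=|\varphi|$; by the previous sentence $|\varphi|\equiv c$ is a positive constant.

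Next I would extract the \emph{cocycle relation}: writing $w:=\varphi/c\in C(L,S^{1})$, the equality $|M_{\tau}\varphi|=c=M_{\tau}|\varphi|$ is equality in the triangle inequality for $\int\varphi(g(z))\,d\tau(g)$, so for each $z$ there is $\theta(z)\in S^{1}$ with $\varphi(g(z))=\theta(z)c$ for $\tau$-a.e. $g$, and feeding this back into $M_{\tau}\varphi=a\varphi$ gives $\theta(z)=a\,w(z)$. Thus $w(g(z))=a\,w(z)$ for $\tau$-a.e. $g$, hence (continuity again) for every $g\in\Gamma_{\tau}$, and so $w(g_{n}\cdots g_{1}(z))=a^{n}w(z)$ for all words. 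From this everything is forced: (i) if $\varphi_{1},\varphi_{2}\in{\cal U}_{f,\tau}(L)$ share the eigenvalue $a$, then $w_{1}/w_{2}$ is $\Gamma_{\tau}$-invariant, hence $G_{\tau}$-invariant, hence constant on $\overline{G_{\tau}(z)}=L$, so $\varphi_{1}$ is a scalar multiple of $\varphi_{2}$ --- each unitary eigenspace of $M_{\tau}$ in $C(L)$ is $1$-dimensional; (ii) if $a_{1},a_{2}\in{\cal U}_{v,\tau}(L)$ with unimodular eigenfunctions $w_{1},w_{2}$, then $w_{1}w_{2}$ and $\overline{w_{1}}$ are unimodular eigenfunctions for $a_{1}a_{2}$ and $a_{1}^{-1}$ (direct computation with $M_{\tau}$), and $1$ is the eigenvalue of the constants, so ${\cal U}_{v,\tau}(L)$ is a subgroup of $S^{1}$. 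By (i) and linear independence of eigenvectors for distinct eigenvalues, $\dim_{\CC}(\emLSfl)=\sharp{\cal U}_{v,\tau}(L)$. \emph{Granting that this number is finite}, call it $r_{L}$; then ${\cal U}_{v,\tau}(L)$ is cyclic of order $r_{L}$, generated by a primitive $r_{L}$-th root of unity $a_{L}$; taking a unimodular eigenfunction $\psi_{L,1}$ for $a_{L}$ and setting $\psi_{L,j}:=(\psi_{L,1})^{j}$, the computation in (ii) gives $M_{\tau}(\psi_{L,j})=a_{L}^{j}\psi_{L,j}$, while $\psi_{L,r_{L}}=(\psi_{L,1})^{r_{L}}$ is an $M_{\tau}$-fixed unimodular function, hence constant; so statements (a)--(d) follow, together with $\sharp{\cal U}_{v,\tau}(L)=r_{L}=\dim_{\CC}(\emLSfl)$.

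It remains to prove ${\cal U}_{v,\tau}(L)$ is finite --- the one genuinely nontrivial point. The input is Lemma~\ref{l:aeconst} (available because $J_{\ker}(G_{\tau})=\emptyset$): for $\tilde{\tau}$-a.e. $\gamma$ and every $U\in\mbox{Con}(F(G_{\tau}))$ the family $\{\gamma_{n,1}|_{U}\}$ has no nonconstant limit function, so by normality $d(\gamma_{n,1}(x),\gamma_{n,1}(y))\to0$ whenever $x,y$ lie in a common $U$. Since a unitary eigenfunction satisfies $M_{\tau}^{n}\varphi=a^{n}\varphi$ with $|a|=1$ and $M_{\tau}^{n}\varphi(z)=\int\varphi(\gamma_{n,1}(z))\,d\tilde{\tau}(\gamma)$, dominated convergence gives $a^{n}(\varphi(x)-\varphi(y))=M_{\tau}^{n}\varphi(x)-M_{\tau}^{n}\varphi(y)\to0$, i.e. $\varphi(x)=\varphi(y)$: \emph{every unitary eigenfunction is constant on each set $U\cap L$ with $U\in\mbox{Con}(F(G_{\tau}))$.} When $L\subset F(G_{\tau})$ this finishes the proof, since then $L$, being compact in the open set $F(G_{\tau})$, meets only finitely many components, so $\dim_{\CC}(\emLSfl)<\infty$.

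In general one must also control $\varphi$ on $L\cap J(G_{\tau})$, and this is the main obstacle. Here I would first note that $M_{\tau}:C(\CCI)\to C(\CCI)$ is almost periodic (by Theorem~\ref{kerJthm1}, $F_{meas}(\tau)={\frak M}_{1}(\CCI)$, hence by Lemma~\ref{FJmeaslem1} the orbits $\{z\mapsto M_{\tau}^{n}\phi(z)\}_{n}$ are equicontinuous on $\CCI$, hence relatively compact in $C(\CCI)$), so the closed span of the unitary eigenfunctions of $M_{\tau}$ on $C(L)$ is a $G_{\tau}$-invariant closed $\ast$-subalgebra, i.e. $C(Q)$ for a minimal topological factor $(Q,T)$ of $(L,G_{\tau})$ on which every $g\in\Gamma_{\tau}$ acts by one and the same transformation $T$, with discrete spectrum. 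One then has to rule out $Q$ being an infinite monothetic group. I expect this to be done using the description of $S_{\tau}$ from statements~\ref{t:mtauspec10}--\ref{t:mtauspec11} (so that $L\subset S_{\tau}$ is either finite --- in which case finiteness of ${\cal U}_{v,\tau}(L)$ is automatic because $\dim_{\CC}C(L)<\infty$ --- or has $L\cap F(G_{\tau})$ dense, allowing the ``constant on $U\cap L$'' property to bound the number of values of each $\varphi$, or $G_{\tau}\subset\mbox{Aut}(\CCI)$, handled by statement~\ref{t:mtauspecdeg1}), forcing $Q$, and hence ${\cal U}_{v,\tau}(L)$, to be finite. Combining this with the two preceding paragraphs yields all of statement~\ref{t:mtauspec5}; everything except this last reduction is soft.
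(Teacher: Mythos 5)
Your preparatory steps are sound and essentially parallel the paper's: constant modulus of $\varphi\in\Ufl$ via minimality, the pointwise cocycle relation $\varphi (g(z))=a\varphi (z)$ for all $g\in \Gamma _{\tau }$ (your derivation via equality in the triangle inequality is a clean global version of the paper's Claims 2--3, which are stated only at a base point), one-dimensionality of each unitary eigenspace, the group structure of $\Uvl$, and the constancy of eigenfunctions on each set $U\cap L$, $U\in \mbox{Con}(F(G_{\tau }))$, via Lemma~\ref{l:aeconst}. The problem is exactly where you say it is: you do not prove that $\Uvl$ is finite, and the route you sketch (almost periodicity, a discrete-spectrum minimal factor, ruling out infinite monothetic groups, with input from statements~\ref{t:mtauspec10}, \ref{t:mtauspec11} and \ref{t:mtauspecdeg1}) is both incomplete and dangerous: statement~\ref{t:mtauspecdeg1} is deduced in the paper from statement~\ref{t:mtauspec7-2}, whose proof rests on the present lemma, so invoking it here risks circularity; and density of $L\cap F(G_{\tau })$ in $L$ does not by itself bound the number of values of $\varphi $, since $L$ may meet infinitely many Fatou components.

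The missing idea is a one-line return argument that combines three things you already have. Since $J_{\ker }(G_{\tau })=\emptyset $ and $g(L)\subset L$ for all $g\in G_{\tau }$, one cannot have $L\subset J(G_{\tau })$ (that would force $L\subset \bigcap _{g}g^{-1}(J(G_{\tau }))=J_{\ker }(G_{\tau })$), so there is a point $z_{0}\in L\cap F(G_{\tau })$; let $A_{0}$ be its Fatou component. By minimality $\overline{G_{\tau }(z_{0})}=L$, and since $A_{0}$ is open and meets $L$, there is a word $\beta =(\beta _{1},\ldots ,\beta _{l})\in \Gamma _{\tau }^{l}$ with $\beta _{l}\cdots \beta _{1}(z_{0})\in A_{0}$. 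Now for \emph{any} $\varphi \in \Ufl $ with eigenvalue $a$, your cocycle gives $\varphi (\beta _{l}\cdots \beta _{1}(z_{0}))=a^{l}\varphi (z_{0})$, while constancy of $\varphi $ on $A_{0}\cap L$ gives $\varphi (\beta _{l}\cdots \beta _{1}(z_{0}))=\varphi (z_{0})\neq 0$; hence $a^{l}=1$. The crucial point is that $l$ depends only on $(L,z_{0},A_{0})$ and not on $(\varphi ,a)$, so $\Uvl \subset \{ a\in S^{1}\mid a^{l}=1\} $ is finite. This is precisely how the paper closes the argument; with finiteness in hand, your remaining soft steps (cyclicity, $\psi _{L,j}=(\psi _{L,1})^{j}$, the basis property, $r_{L}=\sharp \Uvl =\dim _{\CC }(\emLSfl )$) go through verbatim.
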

\begin{proof}
Let $L\in \Min(G_{\tau },\CCI ).$ 
Let $\varphi \in \Ufl $ be such that $M_{\tau }(\varphi )=a\varphi $ for some $a\in S^{1}$ 
and $\sup _{z\in L}|\varphi (z)|=1.$ 
Let $\Omega := \{ z\in L\mid |\varphi (z)|=1\} .$ 
For each $z\in L$, we have 
$|\varphi (z)|= |M_{\tau }(\varphi )(z)|\leq M_{\tau }(|\varphi |)(z)\leq 1.$ 
Thus, $G_{\tau }(\Omega )\subset \Omega .$ 
Since $L\in \Min(G_{\tau },\CCI )$, $\overline{G_{\tau }(z)}=L$ for each 
$z\in \Omega .$ Hence, we obtain $\Omega =L.$ 
By using the argument of the proof of Lemma~\ref{l:lscf}, 
it is easy to see the following claim. \\ 
Claim 1: 
For each $A\in \mbox{Con}(F(G_{\tau }))$ with $A\cap L\neq \emptyset $, 
$\varphi |_{A\cap L}$ is constant.  

 Let $A_{0}\in \mbox{Con}(F(G_{\tau }))$ be an element with $A_{0}\cap L\neq \emptyset $ and 
 let $z_{0}\in A_{0}\cap L$ be a point. We now show the following claim. \\
Claim 2: The map $h\mapsto \varphi (h(z_{0})), h\in \Gamma _{\tau }$, is constant. 

 To show this claim, by claim 1 and that $\bigcup _{h\in \Gamma _{\tau }}\{ h(z_{0})\} $ is a  
compact subset of $F(G_{\tau })$, we obtain that 
$\varphi (z_{0})=\frac{1}{a}M_{\tau }(\varphi )(z_{0})$ is equal to a finite convex combination 
of elements of $S^{1}.$  Since $|\varphi (z_{0})|=1$, it follows that 
$h\mapsto \varphi (h(z_{0})), h\in \Gamma _{\tau }$ is constant. Thus, claim 2 holds. 

 By claim 2 and $M_{\tau }(\varphi )=a\varphi $, we immediately obtain the following claim.\\ 
Claim 3: For each $h\in \Gamma _{\tau }$, $\varphi (h(z_{0}))=a\varphi (z_{0}).$ 

Since $L\in \Min(G_{\tau },\CCI )$, $\overline{G_{\tau }(z_{0})}=L.$ 
Hence there exists an $l\in \NN $ and an element $\beta =(\beta _{1},\ldots ,\beta _{l})\in 
\Gamma _{\tau }^{l}$ such that $\beta _{l}\cdots \beta _{1}(z_{0})\in A_{0}.$ 
From claim 3, it follows that $a^{l}=1.$ 
Thus, we have shown that $\Uvl\subset \{ a\in S^{1}\mid a^{l}=1\} .$ 
Moreover, by claim 3 and the previous argument, we obtain that if $\varphi _{1},\varphi _{2}\in C(L)$ 
with $\sup _{z\in L}|\varphi _{i}(z)|=1$,  
$a_{1},a_{2}\in S^{1}$, and $M_{\tau }(\varphi _{i})=a_{i}\varphi _{i}$, then 
$|\varphi _{i}|\equiv 1$, 
$M_{\tau }(\varphi _{1}\varphi _{2})=a_{1}a_{2}\varphi _{1}\varphi _{2}$ and 
$M_{\tau }(\varphi _{1}^{-1})=a_{1}^{-1}\varphi _{1}^{-1}.$ 
From these arguments, it follows that $\Ufl$ is a finite subgroup of $S^{1}.$ 
Let $r_{L}:= \sharp \Ufl .$ 
Let $a_{L}\in \Ufl$ be an element such that 
$\{ a_{L}^{j}\} _{j=1}^{r_{L}}=\Ufl .$ 
By claim 3 and $\overline{G_{\tau }(z_{0})}=L$, we obtain that 
any element $\varphi \in C(L)$ satisfying $M_{\tau }(\varphi )=a_{L}^{j}\varphi $ 
is uniquely determined by the constant $\varphi |_{A_{0}\cap L}.$ 
Thus, for each $j=1,\ldots r_{L}$, there exists a unique 
$\psi _{L,j}\in \Ufl $ such that 
$M_{\tau }\psi _{L,j}=a_{L}^{j}\psi _{L,j}$ and $\psi _{L,j}|_{A_{0}\cap L}\equiv 1.$ 
It is easy to see that $\{ \psi _{L,j}\} _{j=1}^{r_{L}}$ is a basis of 
$\LSfl $. Moreover, by the previous argument, we obtain that 
$\psi _{L,j}=(\psi _{L,1})^{j}$ for each $j=1,\ldots ,r_{L}.$ 
Thus, we have proved our lemma.   
\end{proof}
\begin{lem}
\label{l:lsfklsfl}
Under the assumptions and notation of Theorem~\ref{t:mtauspec}, 
the map 
$\alpha : \emLSfk \rightarrow \oplus _{L\in \emMin(G_{\tau },\CCI )}\emLSfl$ 
defined by $\alpha (\varphi )=(\varphi |_{L})_{L\in \emMin (G_{\tau },\CCI )}$ 
is a linear isomorphism.  
\end{lem}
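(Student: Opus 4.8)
The plan is to handle the three assertions — $\alpha$ is well defined, $\alpha$ is injective, $\alpha$ is surjective — in turn, exploiting throughout the following structural facts about the minimal sets. By Lemma~\ref{l:minfin} there are only finitely many $L\in\Min(G_{\tau},\CCI)$; by Remark~\ref{r:minimal} they are pairwise disjoint and satisfy $h(L)\subset L$ for every $h\in\Gamma_{\tau}$, hence $g(L)\subset L$ for every $g\in G_{\tau}$. Consequently each $L$ is relatively clopen in $S_{\tau}=\bigcup_{L\in\Min(G_{\tau},\CCI)}L$ (closed since it is closed in $\CCI$, open since its complement in $S_{\tau}$ is the union of the finitely many remaining minimal sets), and the operator $M_{\tau}$ carries $C(S_{\tau})$ into itself and each $C(L)$ into itself by restriction, because for $z\in L$ and $g\in\Gamma_{\tau}$ one has $g(z)\in L\subset S_{\tau}$.

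First I would record well-definedness and injectivity, which are routine. Linearity of $\alpha$ is clear. If $\varphi\in\emLSfk$ with $M_{\tau}(\varphi)=a\varphi$ and $|a|=1$, then restricting this identity to an invariant set $L$ gives $M_{\tau}(\varphi|_{L})=a(\varphi|_{L})$, so $\varphi|_{L}$ is either $0$ or a unitary eigenvector of $M_{\tau}:C(L)\to C(L)$; by linearity $\varphi|_{L}\in\emLSfl$ for a general $\varphi\in\emLSfk$, so $\alpha$ is well defined. Injectivity is immediate: $\alpha(\varphi)=0$ forces $\varphi\equiv0$ on each $L$, hence on all of $S_{\tau}=\bigcup_{L}L$, so $\varphi=0$.

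The only point needing a (short) argument is surjectivity, and the step I would regard as the heart of the lemma is that extending a unitary eigenvector defined on a single minimal set by $0$ on the others again yields a unitary eigenvector on $S_{\tau}$. Precisely: given $L_{0}\in\Min(G_{\tau},\CCI)$ and $\varphi_{0}\in{\cal U}_{f,\tau}(L_{0})$ with $M_{\tau}(\varphi_{0})=a\varphi_{0}$, $\varphi_{0}\neq0$, I would define $\tilde{\varphi}_{0}\in C(S_{\tau})$ by $\tilde{\varphi}_{0}|_{L_{0}}=\varphi_{0}$ and $\tilde{\varphi}_{0}|_{L}\equiv0$ for $L\neq L_{0}$; this is continuous because each $L$ is clopen in $S_{\tau}$, and $\tilde{\varphi}_{0}\neq0$. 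Then $M_{\tau}(\tilde{\varphi}_{0})=a\tilde{\varphi}_{0}$: for $z\in L_{0}$ every $g\in\Gamma_{\tau}$ sends $z$ into $L_{0}$, so $M_{\tau}(\tilde{\varphi}_{0})(z)=M_{\tau}(\varphi_{0})(z)=a\tilde{\varphi}_{0}(z)$, while for $z$ in another minimal set $L$ one has $g(z)\in L$, hence $\tilde{\varphi}_{0}(g(z))=0$ and $M_{\tau}(\tilde{\varphi}_{0})(z)=0=a\tilde{\varphi}_{0}(z)$. Thus $\tilde{\varphi}_{0}\in{\cal U}_{f,\tau}(S_{\tau})$. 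To conclude, given $(\varphi_{L})_{L}\in\oplus_{L\in\Min(G_{\tau},\CCI)}\emLSfl$, write each $\varphi_{L}=\sum_{i}c_{L,i}\varphi_{L,i}$ with $\varphi_{L,i}\in{\cal U}_{f,\tau}(L)$ (directly from the definition of $\mbox{LS}$, or using the explicit basis $\{\psi_{L,j}\}$ of Lemma~\ref{l:lspec}), and set $\varphi:=\sum_{L}\sum_{i}c_{L,i}\tilde{\varphi}_{L,i}\in\emLSfk$; since $\tilde{\varphi}_{L,i}$ restricts to $\varphi_{L,i}$ on $L$ and to $0$ on the other minimal sets, $\varphi|_{L}=\varphi_{L}$ for every $L$, i.e. $\alpha(\varphi)=(\varphi_{L})_{L}$. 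Hence $\alpha$ is onto, and a bijective linear map is a linear isomorphism. I do not anticipate a real obstacle here: once the clopenness of the $L$ in $S_{\tau}$ (via finiteness and disjointness) and the forward invariance are in hand, everything else is bookkeeping.
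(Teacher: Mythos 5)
Your proposal is correct and follows essentially the same route as the paper, whose proof simply records the three ingredients you use — finiteness of $\Min(G_{\tau },\CCI )$ (Lemma~\ref{l:minfin}), pairwise disjointness, and the identity $(M_{\tau }(\varphi ))|_{L}=M_{\tau }(\varphi |_{L})$ coming from forward invariance of each $L$ — and then asserts the conclusion. Your write-up just makes explicit the bookkeeping (clopenness of each $L$ in $S_{\tau }$, extension by zero preserving the eigenvector equation) that the paper leaves to the reader.
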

\begin{proof}
By Lemma~\ref{l:minfin}, $\sharp \Min(G_{\tau },\CCI )<\infty .$ 
Moreover, elements of $\Min(G_{\tau },\CCI )$ are mutually disjoint. 
Furthermore, for each $L\in \Min (G_{\tau },\CCI )$ and for each $\varphi \in C(S_{\tau })$, 
$(M_{\tau }(\varphi ))|_{L}=M_{\tau }(\varphi |_{L}).$ 
Thus, we easily see that the statement of our lemma holds. 
\end{proof}
\begin{lem}
\label{l:phiinj}
Under the assumptions and notation of Theorem~\ref{t:mtauspec}, 
%statement~\ref{t:mtauspec6} holds.  
$\Psi _{S_{\tau }}(\emLSfc )\subset \emLSfk $ and 
$\Psi _{S_{\tau }}: \emLSfc \rightarrow \emLSfk $ is injective.   
\end{lem}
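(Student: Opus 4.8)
The plan is to deduce both assertions quickly from the forward invariance of $S_{\tau }$ together with the almost-sure convergence $d(\gamma _{n,1}(z),S_{\tau })\to 0$ already established. First I would record the basic compatibility: by Lemma~\ref{l:kcpt}, $S_{\tau }$ is compact, and since every $L\in \emMin(G_{\tau },\CCI )$ satisfies $g(L)\subset L$ for all $g\in G_{\tau }$, we have $G_{\tau }(S_{\tau })\subset S_{\tau }$; hence $M_{\tau }$ acts on $C(S_{\tau })$ and $M_{\tau }(\varphi |_{S_{\tau }})=(M_{\tau }\varphi )|_{S_{\tau }}$ for every $\varphi \in C(\CCI )$. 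From this the inclusion $\Psi _{S_{\tau }}(\emLSfc )\subset \emLSfk $ is immediate: if $M_{\tau }\varphi =a\varphi $ with $|a|=1$, then $M_{\tau }(\varphi |_{S_{\tau }})=a\,\varphi |_{S_{\tau }}$, so $\varphi |_{S_{\tau }}$ is either $0$ or a unitary eigenvector of $M_{\tau }$ on $C(S_{\tau })$; applying this termwise to a finite linear combination of unitary eigenvectors (i.e.\ to a general element of $\emLSfc $) yields the claim. Here $0$ counts as an element of $\emLSfk $, which is nonempty since the constant function $1$ lies in $\Ufk $.

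For injectivity, since $\Psi _{S_{\tau }}$ is linear it suffices to show that $\varphi \in \emLSfc $ with $\varphi |_{S_{\tau }}=0$ forces $\varphi =0$. I would first reduce to a single eigenvalue: write $\varphi =\sum _{j=1}^{k}\varphi _{j}$ with $M_{\tau }\varphi _{j}=a_{j}\varphi _{j}$ and $a_{1},\ldots ,a_{k}$ pairwise distinct in $S^{1}$, obtained by grouping the defining finite linear combination of unitary eigenvectors according to their eigenvalues. Then $\sum _{j}\varphi _{j}|_{S_{\tau }}=0$, and by the compatibility above each $\varphi _{j}|_{S_{\tau }}$ is an eigenvector of $M_{\tau }$ on $C(S_{\tau })$ with eigenvalue $a_{j}$ or is zero; since nonzero eigenvectors of a linear operator belonging to distinct eigenvalues are linearly independent while their sum vanishes, every $\varphi _{j}|_{S_{\tau }}$ must vanish. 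Thus it is enough to prove: if $M_{\tau }\varphi =a\varphi $ with $|a|=1$ and $\varphi |_{S_{\tau }}=0$, then $\varphi =0$.

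For that last step, fix $z\in \CCI $ and take the Borel set ${\cal C}_{z}\subset X_{\tau }$ with $\tilde{\tau }({\cal C}_{z})=1$ from Lemma~\ref{l:kcpt} (equivalently statement~\ref{t:mtauspec4}), so that $d(\gamma _{n,1}(z),S_{\tau })\to 0$ for every $\gamma \in {\cal C}_{z}$. Since $\varphi $ is continuous on the compact space $\CCI $ and vanishes on $S_{\tau }$, uniform continuity gives $\varphi (\gamma _{n,1}(z))\to 0$ for each such $\gamma $; as $|\varphi (\gamma _{n,1}(z))|\le \| \varphi \| _{\infty }$, the bounded convergence theorem yields $M_{\tau }^{n}(\varphi )(z)=\int \varphi (\gamma _{n,1}(z))\,d\tilde{\tau }(\gamma )\to 0$. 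But $M_{\tau }^{n}(\varphi )=a^{n}\varphi $, so $|\varphi (z)|=|M_{\tau }^{n}(\varphi )(z)|\to 0$ and hence $\varphi (z)=0$; as $z$ was arbitrary, $\varphi \equiv 0$. The only nonroutine ingredient is the almost-sure convergence $d(\gamma _{n,1}(z),S_{\tau })\to 0$, which is exactly Lemma~\ref{l:kcpt}, so I expect no genuine obstacle here — the delicate points are merely bookkeeping: checking that $S_{\tau }$ is forward invariant so that $M_{\tau }$ makes sense on $C(S_{\tau })$ and commutes with restriction, and the elementary reduction to a single eigenvalue.
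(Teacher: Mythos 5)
Your proof is correct and follows essentially the same route as the paper: the paper's own argument for injectivity takes a unitary eigenvector $\varphi$ with $M_{\tau }(\varphi )=a\varphi $ and $\varphi |_{S_{\tau }}\equiv 0$, and uses Lemma~\ref{l:kcpt} to conclude $M_{\tau }^{n_{j}}(\varphi )\rightarrow 0$ pointwise along a subsequence with $a^{n_{j}}\rightarrow 1$, whence $\varphi =0$. You merely replace the subsequence trick by the (slightly cleaner) observation that $|a^{n}|=1$, and you spell out the forward invariance of $S_{\tau }$, the inclusion $\Psi _{S_{\tau }}(\emLSfc )\subset \emLSfk $, and the reduction from a general linear combination to a single eigenvalue, all of which the paper leaves implicit.
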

\begin{proof}
We first prove the following claim. 

\noindent Claim 1: $\Psi _{S_{\tau }}: \LSfc \rightarrow C(S_{\tau })$ is injective. 

To prove this claim, let $\varphi \in \Ufc $ and let $a\in S^{1}$ with 
$M_{\tau }(\varphi )=a\varphi $ and suppose $\varphi |_{S_{\tau }}\equiv 0.$ 
Let $\{ n_{j}\} _{j=1}^{\infty }$ be a sequence in $\NN $ such that 
$a^{n_{j}}\rightarrow 1$ as $j\rightarrow \infty .$ 
By Lemma~\ref{l:kcpt}, it follows that 
$\varphi =\frac{1}{a^{n_{j}}}M_{\tau }^{n_{j}}(\varphi )\rightarrow 0$ as $j\rightarrow \infty .$ 
Thus $\varphi =0$, However, this is a contradiction. Therefore, claim 1 holds. 

 The statement of our lemma easily follows from claim 1. 
 Thus, we have proved our lemma.
% 
%\noindent Claim 2: $\Psi _{K}(\LSfc )\subset \LSfk $ and 
%$\Psi _{K}: \LSfc \rightarrow \LSfk $ is injective.   
\end{proof}
\begin{lem}
\label{l:cclsfcb0}
Under the assumptions and notation of Theorem~\ref{t:mtauspec}, 
${\cal B}_{0,\tau }$ is a closed subspace of $C(\CCI )$ and  
there exists a direct sum decomposition 
$C(\CCI )=\emLSfc \oplus {\cal B}_{0,\tau }.$ 
Moreover, $\dim _{\CC }(\emLSfc )<\infty $ and 
the projection $\pi :C(\CCI )\rightarrow \emLSfc $ is continuous.   
Furthermore, setting $r:=\prod _{L\in \emMin (G_{\tau },\CCI )}r_{L} $, we have that  
for each $\varphi \in \emLSfc $, $M_{\tau }^{r}(\varphi )=\varphi .$ 
\end{lem}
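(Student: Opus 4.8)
\textbf{Proof plan for Lemma~\ref{l:cclsfcb0}.}
The plan is to deduce the decomposition from the general theory of ``almost periodic operators'' on a Banach space, as developed by Lyubich (cited here as \cite{Ly}), applied to $M_{\tau}:C(\CCI)\rightarrow C(\CCI)$. First I would verify the hypotheses needed to invoke that theory: $M_{\tau}$ is a bounded linear operator with $\|M_{\tau}^{n}\|=1$ for all $n$ (it is a positive operator fixing the constants), and, crucially, by Theorem~\ref{kerJthm1} together with the Ascoli--Arzel\`a argument already used in Lemma~\ref{FJmeaslem1}, for every $\varphi\in C(\CCI)$ the orbit $\{M_{\tau}^{n}(\varphi)\}_{n\in\NN}$ is relatively compact in $C(\CCI)$ (equicontinuity comes from the fact that, for $\tilde\tau$-a.e.\ $\gamma$, $\gamma_{n,1}$ contracts near every point, which is the content of Proposition~\ref{Jkeremptygenprop1} and Lemma~\ref{FJmeaslem2}). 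Hence $M_{\tau}$ is almost periodic in the sense of \cite{Ly}, and the general decomposition theorem gives a direct sum $C(\CCI)=C_{r}\oplus C_{0}$ where $C_{r}$ is the closed linear span of the unitary eigenvectors and $C_{0}=\{\varphi\mid M_{\tau}^{n}(\varphi)\rightarrow 0\}$, with the projection onto $C_{r}$ continuous.

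Next I would identify $C_{r}$ with $\emLSfc$ and $C_{0}$ with ${\cal B}_{0,\tau}$. The identification $C_{0}={\cal B}_{0,\tau}$ is immediate from the definitions. The identification $C_{r}=\emLSfc$ requires showing $\emLSfc$ is closed, which will follow once we show $\dim_{\CC}(\emLSfc)<\infty$. For finite-dimensionality, I would use Lemma~\ref{l:phiinj}: $\Psi_{S_{\tau}}:\emLSfc\rightarrow\emLSfk$ is injective, and by Lemma~\ref{l:lsfklsfl} the space $\emLSfk$ embeds linearly into $\oplus_{L\in\emMin(G_{\tau},\CCI)}\emLSfl$; by Lemma~\ref{l:minfin} there are only finitely many minimal sets $L$, and by Lemma~\ref{l:lspec} each $\emLSfl$ has dimension $r_{L}<\infty$. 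Therefore $\dim_{\CC}(\emLSfc)\le\sum_{L}r_{L}<\infty$, so $\emLSfc$ is a finite-dimensional, hence closed, subspace; being the span of all unitary eigenvectors it coincides with $C_{r}$.

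Finally, for the statement $M_{\tau}^{r}(\varphi)=\varphi$ for all $\varphi\in\emLSfc$ with $r=\prod_{L}r_{L}$: it suffices to check this on a spanning set of unitary eigenvectors. If $M_{\tau}(\varphi)=\alpha\varphi$ with $|\alpha|=1$ and $\varphi\neq 0$, then by Lemma~\ref{l:lscf} $\varphi\in C_{F(G_{\tau})}(\CCI)$, and by Lemma~\ref{l:phiinj} $\varphi$ is determined by its restriction $\varphi|_{S_{\tau}}$, which is a nonzero element of $\emLSfk$; using the isomorphism of Lemma~\ref{l:lsfklsfl}, $\varphi|_{L}$ is a unitary eigenvector of $M_{\tau}$ on $C(L)$ for at least one $L$, so by Lemma~\ref{l:lspec}-(a) the eigenvalue $\alpha$ lies in ${\cal U}_{v,\tau}(L)$, a cyclic group of order $r_{L}$; hence $\alpha^{r_{L}}=1$ and a fortiori $\alpha^{r}=1$, giving $M_{\tau}^{r}(\varphi)=\alpha^{r}\varphi=\varphi$. (One must be slightly careful that an eigenvector may restrict to zero on some minimal sets, but by injectivity of $\Psi_{S_{\tau}}$ it is nonzero on at least one, and on that one the previous argument applies; since $r$ is a common multiple of all the $r_{L}$, the conclusion is uniform.)

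\textbf{Main obstacle.} The delicate point is establishing that $M_{\tau}$ is almost periodic in the precise sense required by \cite{Ly}, i.e.\ that \emph{every} orbit $\{M_{\tau}^{n}(\varphi)\}$ is relatively norm-compact in $C(\CCI)$ rather than merely pointwise convergent along subsequences. This is exactly where the hypothesis $J_{\ker}(G_{\tau})=\emptyset$ enters in an essential way, via Theorem~\ref{kerJthm1}, Proposition~\ref{Jkeremptygenprop1} and Lemma~\ref{FJmeaslem2}, which together force equicontinuity of $\{M_{\tau}^{n}(\varphi)\}$ on all of $\CCI$; combined with uniform boundedness, Ascoli--Arzel\`a yields the required compactness. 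Once this is in place, the rest is a matter of bookkeeping with the lemmas already proved.
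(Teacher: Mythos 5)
Your proposal is correct and follows essentially the same route as the paper: compactness of every orbit $\{M_{\tau }^{n}(\varphi )\}$ (from Theorem~\ref{kerJthm1}) feeds into Lyubich's decomposition theorem to get $C(\CCI )=\overline{\emLSfc }\oplus {\cal B}_{0,\tau }$, and then Lemmas~\ref{l:phiinj}, \ref{l:lspec} and \ref{l:lsfklsfl} give $\dim _{\CC }(\emLSfc )<\infty $ (hence closedness and continuity of the projection) together with $M_{\tau }^{r}=\mathrm{id}$ on $\emLSfc $. Your extra care in tracking which minimal set a unitary eigenvector restricts nontrivially to is a detail the paper leaves implicit, but it is the same argument.
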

\begin{proof}
By Theorem~\ref{kerJthm1}, 
for each $\varphi \in C(\CCI )$, 
$\overline{\bigcup _{n=1}^{\infty }\{ M_{\tau }^{n}(\varphi )\} }$ is 
compact in $C(\CCI ).$ 
By \cite[p.352]{Ly}, 
it follows that there exists a direct sum decomposition 
$C(\CCI ) =\overline{\LSfc} \oplus  {\cal B}_{0,\tau }.$ 
Moreover, combining Lemma~\ref{l:phiinj},  Lemma~\ref{l:lspec} and Lemma~\ref{l:lsfklsfl}, 
we obtain that  
$\dim _{\CC }(\LSfc )<\infty $ and for each $\varphi \in \LSfc $, $M_{\tau }^{r}(\varphi )=\varphi .$ 
Hence there exists a direct sum decomposition 
$C(\CCI )=\LSfc \oplus {\cal B}_{0,\tau }.$ 
Since ${\cal B}_{0,\tau }$ is closed in $C(\CCI )$ and 
$\dim _{\CC }(\LSfc )<\infty $, 
it follows that the projection $\pi : C(\CCI )\rightarrow \LSfc $ is continuous. 
Thus, we have proved our lemma.  
\end{proof}
\begin{lem}
\label{l:lsiso}
Under the assumptions and notation of Theorem~\ref{t:mtauspec}, 
statement~\ref{t:mtauspec6} holds. 
\end{lem}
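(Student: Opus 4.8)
The plan is to reduce everything to one surjectivity statement, since most of what is claimed is already in hand. The intertwining relation $\Psi_{S_\tau}\circ M_\tau=M_\tau\circ\Psi_{S_\tau}$ is immediate and in fact holds on all of $C(\CCI)$: each minimal set $L$ satisfies $G_\tau(L)\subset L$, hence $G_\tau(S_\tau)\subset S_\tau$, so $g(z)\in S_\tau$ whenever $g\in\Gamma_\tau$ and $z\in S_\tau$; consequently $M_\tau$ preserves $C(S_\tau)$ and $(M_\tau\varphi)|_{S_\tau}$ depends only on $\varphi|_{S_\tau}$. By Lemma~\ref{l:phiinj} we already know that $\Psi_{S_\tau}(\LSfc)\subset\LSfk$ and that $\Psi_{S_\tau}\colon\LSfc\to\LSfk$ is injective. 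So the whole lemma comes down to showing $\Psi_{S_\tau}(\LSfc)=\LSfk$, and since $\Psi_{S_\tau}(\LSfc)$ is a linear subspace containing the image and $\LSfk=\mathrm{LS}(\Ufk)$, it suffices to show that every $\varphi\in\Ufk$ is of the form $\psi|_{S_\tau}$ for some $\psi\in\LSfc$.

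For the extension step, first I would fix $\varphi\in\Ufk$ with $M_\tau\varphi=a\varphi$, $|a|=1$, $\varphi\neq0$, and observe that $a$ is a root of unity: by Lemma~\ref{l:lsfklsfl} the restriction map identifies $\LSfk$ with $\bigoplus_{L\in\Min(G_\tau,\CCI)}\LSfl$, and for each $L$ the function $\varphi|_L$ satisfies $M_\tau(\varphi|_L)=a\,\varphi|_L$ on $C(L)$ (using $h(L)\subset L$ for $h\in\Gamma_\tau$); since $\varphi\neq0$ some $\varphi|_L\neq0$, so $a\in\Uvl$, which by Lemma~\ref{l:lspec} is a finite subgroup of $S^1$, whence $a^s=1$ for some $s\in\NN$. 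Next, extend $\varphi$ to some $\varphi_0\in C(\CCI)$ by Tietze, and by Lemma~\ref{l:cclsfcb0} write $\varphi_0=\psi+\eta$ with $\psi\in\LSfc$ and $\eta\in{\cal B}_{0,\tau}$. Setting $r:=\prod_{L}r_L$ (so $M_\tau^{r}\psi=\psi$) and $N:=rs$, one has $M_\tau^{N}\psi=\psi$ and $a^{N}=1$, hence
\[
a^{-mN}M_\tau^{mN}(\varphi_0)=\psi+a^{-mN}M_\tau^{mN}(\eta)\longrightarrow\psi\qquad(m\to\infty)
\]
in $C(\CCI)$, because $\eta\in{\cal B}_{0,\tau}$ and $|a|=1$. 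On the other hand, since $M_\tau$ preserves $C(S_\tau)$ and $\varphi_0|_{S_\tau}=\varphi$, we get $M_\tau^{mN}(\varphi_0)|_{S_\tau}=a^{mN}\varphi$, so dividing by $a^{mN}$ and letting $m\to\infty$ (restriction being continuous) yields $\psi|_{S_\tau}=\varphi$, i.e. $\Psi_{S_\tau}(\psi)=\varphi$ with $\psi\in\LSfc$.

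This gives $\Ufk\subset\Psi_{S_\tau}(\LSfc)$, hence $\Psi_{S_\tau}(\LSfc)=\LSfk$, and combined with injectivity from Lemma~\ref{l:phiinj} and the intertwining relation above, statement~\ref{t:mtauspec6} follows. I do not expect a serious obstacle here: the machinery built in Lemmas~\ref{l:lspec}, \ref{l:lsfklsfl}, \ref{l:phiinj} and \ref{l:cclsfcb0} does the heavy lifting. The one point that must be handled with a little care is the averaging limit — one needs all the relevant unitary eigenvalues (on $S_\tau$ via Lemmas~\ref{l:lspec}, \ref{l:lsfklsfl}, and on $\CCI$ via the identity $M_\tau^r=\mathrm{id}$ on $\LSfc$ from Lemma~\ref{l:cclsfcb0}) to be roots of unity, so that one can choose a single exponent $N$ with $M_\tau^{N}|_{\LSfc}=\mathrm{id}$ and $a^N=1$ and thereby replace convergence along a subsequence by genuine convergence of $a^{-mN}M_\tau^{mN}(\varphi_0)$.
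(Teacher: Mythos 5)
Your proof is correct and follows essentially the same route as the paper: reduce to surjectivity via Lemma~\ref{l:phiinj}, extend from $S_{\tau }$ to $\CCI $ by Tietze, and project onto $\emLSfc $ using the decomposition $C(\CCI )=\emLSfc \oplus {\cal B}_{0,\tau }$ from Lemma~\ref{l:cclsfcb0} together with the compatibility of restriction to $S_{\tau }$ with $M_{\tau }$. The only (immaterial) difference is that the paper applies this to the specific basis elements $\tilde{\psi }_{L,j}$, for which $a_{L}^{r}=1$ makes the eigenvalue twisting automatic, whereas you treat an arbitrary $\varphi \in \Ufk $ and handle its eigenvalue explicitly by choosing $N$ with $a^{N}=1$.
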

\begin{proof}
It is easy to see that $\Psi _{S_{\tau }}\circ M_{\tau }=M_{\tau }\circ \Psi _{S_{\tau }}$ 
on $\LSfc .$ To prove our lemma, 
by Lemma~\ref{l:phiinj}, it is enough to show that 
$\Psi _{S_{\tau }}: \LSfc \rightarrow \LSfk \cong \oplus _{L\in \Min(G_{\tau },\CCI )}\LSfl $ 
is surjective. In order to show this, 
let $L\in \Min(G_{\tau },\CCI )$ and let $a_{L},r_{L}$, and $\{ \psi _{L,j}\} _{j=1}^{r_{L}}$ 
be as in Lemma~\ref{l:lspec} (statement~\ref{t:mtauspec5} of Theorem~\ref{t:mtauspec}).  
Let $\tilde{\psi }_{L,j}\in C(\CCI )$ be an element such that 
$\tilde{\psi }_{L,j}|_{L}=\psi _{L,j}$  
and 
$\tilde{\psi }_{L,j}|_{L'}\equiv 0$ for each $L'\in \Min(G_{\tau },\CCI )$ with 
$L'\neq L.$ 
%Let $\zeta := \tilde{\psi }_{L,j}-\pi (\tilde{\psi }_{L,j}).$ 
Let $r$ be the number in Lemma~\ref{l:cclsfcb0} and 
let $\pi : C(\CCI )\rightarrow \LSfc $ be the projection. Then 
$M_{\tau }^{rn}(\tilde{\psi }_{L,j})\rightarrow \pi (\tilde{\psi }_{L,j})$ as 
$n\rightarrow \infty .$ Therefore, 
$\pi (\tilde{\psi }_{L,j})|_{L}=\lim _{n\rightarrow \infty }
M_{\tau }^{rn}(\tilde{\psi }_{L,j}|_{L})=\psi _{L,j}.$ 
Similarly, $\pi (\tilde{\psi }_{L,j})|_{L'}\equiv 0$ for each 
$L'\in \Min(G_{\tau },\CCI )$ with $L'\neq L.$ 
%Let $q=\dim _{\CC }(\LSfc )$ and let $\{ \varphi _{i}\} _{i=1}^{q}$ 
%be a basis of $\LSfc $ such that for each 
%$i$, $M_{\tau }(\varphi _{i})=\alpha _{i}\varphi _{i}$ for some $\alpha _{i}\in \Uvc .$ 
%Let $b_{1},\ldots ,b_{q}\in \CC $ be such that 
%$\pi (\tilde{\psi }_{L,j})=\sum _{i=1}^{q}b_{i}\varphi _{i}.$ 
%Then $\tilde{\psi }_{L,j}|_{K}=\sum _{i=1}^{q}b_{i}\varphi _{i}|_{K}$ 
%and $M_{\tau }(\tilde{\psi }_{L,j}|_{K})=a_{L}^{j}\tilde{\psi }_{L,j}|_{K}.$ 
%From Lemma~\ref{l:phiinj}, it follows that 
%if $\alpha _{i}\neq a_{L}^{j}$, then $b_{i}=0.$ 
%Hence $M_{\tau }(\pi (\tilde{\psi }_{L,j}))=a_{L}^{j}\pi (\tilde{\psi }_{L,j}).$ 
%Moreover, $\pi (\tilde{\psi }_{L,j})\neq 0$, otherwise 
%$M_{\tau }^{n}(\tilde{\psi }_{L,j})\rightarrow 0$ as $n\rightarrow \infty $, 
%which contradicts that $\psi _{L,j}\in \Ufl .$ 
Therefore, $\Psi _{S_{\tau }}: \LSfc \rightarrow \LSfk $ is surjective. 
Thus, we have completed the proof of our lemma. 
\end{proof}
\begin{lem}
\label{l:pf7}
Under the assumptions of Theorem~\ref{t:mtauspec}, 
statement \ref{t:mtauspec7} holds.
%and \ref{t:mtauspec7-1} hold.
\end{lem}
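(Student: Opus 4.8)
The plan is to assemble the statement from the lemmas already proved in this subsection; no new machinery is needed, only careful bookkeeping with the finitely many disjoint minimal sets and the intertwining property of the restriction map $\Psi_{S_{\tau}}$. First I would dispose of the dimension formula $\dim_{\CC}(\emLSfc)=\sum_{L\in\emMin(G_{\tau},\CCI)}r_{L}$. By Lemma~\ref{l:lsiso} (statement~\ref{t:mtauspec6}) the restriction map $\Psi_{S_{\tau}}\colon\emLSfc\to\emLSfk$ is a linear isomorphism; by Lemma~\ref{l:lsfklsfl} the map $\varphi\mapsto(\varphi|_{L})_{L\in\emMin(G_{\tau},\CCI)}$ is a linear isomorphism from $\emLSfk$ onto $\bigoplus_{L}\emLSfl$; and by Lemma~\ref{l:lspec} (statement~\ref{t:mtauspec5}) the family $\{\psi_{L,j}\}_{j=1}^{r_{L}}$ is a basis of $\emLSfl$, so $\dim_{\CC}\emLSfl=r_{L}$. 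Composing these isomorphisms yields the claimed value of $\dim_{\CC}(\emLSfc)$.

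Next I would establish the chain of equalities of eigenvalue sets from right to left. The equality $\bigcup_{L}\Uvl=\bigcup_{L}\{a_{L}^{j}\}_{j=1}^{r_{L}}$ is immediate from Lemma~\ref{l:lspec}(a). For $\Uvk=\bigcup_{L}\Uvl$, recall that $\emMin(G_{\tau},\CCI)$ is finite (Lemma~\ref{l:minfin}) and its members are pairwise disjoint compact sets (Remark~\ref{r:minimal}), so $S_{\tau}=\coprod_{L}L$ and a function on $S_{\tau}$ is exactly a tuple of functions on the $L$'s; moreover $G_{\tau}(L)\subset L$ forces $(M_{\tau}\varphi)|_{L}=M_{\tau}(\varphi|_{L})$ for every $\varphi\in C(S_{\tau})$, since $M_{\tau}(\varphi)(z)=\int\varphi(g(z))\,d\tau(g)$ at $z\in L$ only uses $\varphi|_{L}$. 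Hence if $a\in\Uvk$ with eigenvector $\psi$, then $M_{\tau}(\psi|_{L})=a\,\psi|_{L}$ for each $L$ and $\psi|_{L}\neq 0$ for at least one $L$, giving $a\in\Uvl$; conversely, given $a\in\Uvl$ with eigenvector $\psi_{L}\in C(L)$, extending $\psi_{L}$ by $0$ on the remaining minimal sets produces a nonzero $\tilde\psi\in C(S_{\tau})$ with $M_{\tau}(\tilde\psi)=a\tilde\psi$, so $a\in\Uvk$.

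Finally I would prove $\Uvc=\Uvk$ using $\Psi_{S_{\tau}}$. If $a\in\Uvc$ with eigenvector $\varphi$, then $\varphi\in\emLSfc$, so by the injectivity in Lemma~\ref{l:phiinj} the restriction $\varphi|_{S_{\tau}}$ is nonzero, and (as $S_{\tau}$ is forward invariant) $M_{\tau}(\varphi|_{S_{\tau}})=(M_{\tau}\varphi)|_{S_{\tau}}=a\,\varphi|_{S_{\tau}}$, so $a\in\Uvk$. Conversely, let $a\in\Uvk$ with eigenvector $\psi\in C(S_{\tau})$; then $\psi\in\emLSfk$, so by surjectivity of $\Psi_{S_{\tau}}$ (Lemma~\ref{l:lsiso}) there is $\varphi\in\emLSfc$ with $\varphi|_{S_{\tau}}=\psi$, and since $\emLSfc$ is $M_{\tau}$-invariant and $\Psi_{S_{\tau}}\circ M_{\tau}=M_{\tau}\circ\Psi_{S_{\tau}}$ on $\emLSfc$, we get $\Psi_{S_{\tau}}(M_{\tau}(\varphi)-a\varphi)=M_{\tau}(\psi)-a\psi=0$; the injectivity of $\Psi_{S_{\tau}}$ on $\emLSfc$ then forces $M_{\tau}(\varphi)=a\varphi$ with $\varphi\neq 0$, i.e. $a\in\Uvc$.

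There is no genuine obstacle here — everything reduces to the earlier lemmas — but the one point that needs care is the last step: one must invoke that $\emLSfc$ is invariant under $M_{\tau}$ (being spanned by $M_{\tau}$-eigenvectors) in order to apply the injectivity of $\Psi_{S_{\tau}}$ to the element $M_{\tau}(\varphi)-a\varphi$, and one must keep track of the forward invariance $G_{\tau}(L)\subset L$ and $G_{\tau}(S_{\tau})\subset S_{\tau}$ throughout so that restriction commutes with $M_{\tau}$.
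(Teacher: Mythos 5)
Your proof is correct and follows essentially the same route as the paper, which simply cites Lemmas~\ref{l:lsiso}, \ref{l:lsfklsfl}, and \ref{l:lspec}; you have spelled out exactly the bookkeeping those citations compress (composing the two isomorphisms for the dimension count, transporting eigenvalues through $\Psi_{S_{\tau}}$ and through the decomposition over the finitely many disjoint minimal sets). Your care about the $M_{\tau}$-invariance of $\emLSfc$ in the last step is the right point to flag, and it is handled implicitly by the paper's cited lemmas.
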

\begin{proof}
Statement~\ref{t:mtauspec7} of Theorem~\ref{t:mtauspec} follows 
from Lemma~\ref{l:lsiso}, \ref{l:lsfklsfl}, and Lemma~\ref{l:lspec}.
\end{proof}   
\begin{lem}
\label{l:pf2-1}
Under the assumptions of Theorem~\ref{t:mtauspec}, 
statement~\ref{t:mtauspec2-1} holds. 
\end{lem}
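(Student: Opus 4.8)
The plan is to read everything off the decomposition $C(\CCI)=\emLSfc\oplus{\cal B}_{0,\tau}$ supplied by Lemma~\ref{l:cclsfcb0}. Let $\pi:C(\CCI)\to\emLSfc$ be the continuous projection onto the first factor and, for $\varphi\in C(\CCI)$, define $\rho_j(\varphi)$ by $\pi(\varphi)=\sum_{j=1}^{q}\rho_j(\varphi)\varphi_j$, i.e.\ the $\rho_j$ are the coordinate functionals of $\pi$ in the basis $\{\varphi_j\}$. First I would check existence together with the defining limit: since $\varphi-\sum_j\rho_j(\varphi)\varphi_j=\varphi-\pi(\varphi)\in{\cal B}_{0,\tau}$, the definition of ${\cal B}_{0,\tau}$ gives $\|M_\tau^n(\varphi-\sum_j\rho_j(\varphi)\varphi_j)\|_\infty\to0$. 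Statement~(a) is then immediate, because $\pi$ is linear and continuous and the coordinate functionals of a finite-dimensional space are linear and continuous. The relation $\rho_i(\varphi_j)=\delta_{ij}$ in~(c) follows by taking $\varphi=\varphi_j$, for which $\pi(\varphi_j)=\varphi_j$. For uniqueness, if $\{\rho_j'\}$ is another linear family with the stated property, then $\psi:=\sum_j(\rho_j(\varphi)-\rho_j'(\varphi))\varphi_j\in\emLSfc$ satisfies $M_\tau^n(\psi)\to0$; but by Lemma~\ref{l:cclsfcb0} there is an $r$ with $M_\tau^r|_{\emLSfc}=\mbox{id}$, so $\psi=M_\tau^{rn}(\psi)\to0$ forces $\psi=0$, hence $\rho_j=\rho_j'$.

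Next I would establish~(b) and the claim that $\{\rho_j\}$ is a basis of $\emLSfac$. Both $\emLSfc$ (spanned by eigenvectors) and ${\cal B}_{0,\tau}$ (invariant since $M_\tau$ is continuous) are $M_\tau$-invariant, so $\pi\circ M_\tau=M_\tau\circ\pi$; comparing coordinates in $\pi(M_\tau\varphi)=M_\tau\pi(\varphi)=\sum_k\alpha_k\rho_k(\varphi)\varphi_k$ yields $\rho_j(M_\tau\varphi)=\alpha_j\rho_j(\varphi)$, i.e.\ $M_\tau^{\ast}(\rho_j)=\alpha_j\rho_j$; since $|\alpha_j|=1$ and $\rho_j\neq0$ this makes each $\rho_j$ a unitary eigenvector of $M_\tau^{\ast}$, so $\mbox{LS}(\{\rho_j\})\subset\emLSfac$. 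For the reverse inclusion, given $\rho\in C(\CCI)^{\ast}$ with $M_\tau^{\ast}\rho=a\rho$, $|a|=1$, $\rho\neq0$: for $\varphi\in{\cal B}_{0,\tau}$ one has $\rho(\varphi)=a^{-n}\rho(M_\tau^n\varphi)\to0$, so $\rho$ annihilates ${\cal B}_{0,\tau}$; hence $\rho=\rho\circ\pi=\sum_j\rho(\varphi_j)\rho_j$. Together with $\rho_i(\varphi_j)=\delta_{ij}$ (which forces linear independence of the $\rho_j$), this shows $\{\rho_j\}_{j=1}^{q}$ is a basis of $\emLSfac$, completing~(c).

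The last point, and the one I expect to be the main obstacle, is~(d): $\mbox{supp}\,\rho_j\subset S_\tau$. The idea is to combine $M_\tau^{\ast}\rho_j=\alpha_j\rho_j$ with the almost-sure convergence of random orbits to $S_\tau$. Fix $z\notin S_\tau$; since $S_\tau$ is compact (Lemma~\ref{l:kcpt}) there is a neighbourhood $U$ of $z$ with $\overline U\cap S_\tau=\emptyset$, hence $d(\overline U,S_\tau)>0$. Let $\varphi\in C(\CCI)$ with $\mbox{supp}\,\varphi\subset U$. By statement~\ref{t:mtauspec4} of Theorem~\ref{t:mtauspec} (Lemma~\ref{l:kcpt}), for $\tilde\tau$-a.e.\ $\gamma$ we have $d(\gamma_{n,1}(z),S_\tau)\to0$, so $\gamma_{n,1}(z)\notin\overline U$ and $\varphi(\gamma_{n,1}(z))=0$ for all large $n$; as $|\varphi(\gamma_{n,1}(z))|\le\|\varphi\|_\infty$, bounded convergence gives $M_\tau^n(\varphi)(z)=\int\varphi(\gamma_{n,1}(z))\,d\tilde\tau(\gamma)\to0$, and the same at every point, with $\|M_\tau^n\varphi\|_\infty\le\|\varphi\|_\infty$. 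Viewing the continuous functional $\rho_j$ as a complex Borel measure on $\CCI$ via the Riesz representation theorem and applying dominated convergence, $\rho_j(M_\tau^n\varphi)\to0$; therefore $\rho_j(\varphi)=\alpha_j^{-n}\rho_j(M_\tau^n\varphi)\to0$, so $\rho_j(\varphi)=0$. Hence $z\in a(\rho_j)$, and since $z\notin S_\tau$ was arbitrary, $\mbox{supp}\,\rho_j\subset S_\tau$. The only genuine care needed is in the convergence step (pointwise convergence of $M_\tau^n\varphi$ and the measure interpretation of $\rho_j$ so that dominated convergence applies); the rest is formal manipulation of the finite-dimensional decomposition already provided by Lemmas~\ref{l:cclsfcb0}, \ref{l:lsiso}, and \ref{l:kcpt}.
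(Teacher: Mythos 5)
Your proposal is correct. For items (a), (b), (c) it follows the paper's own route essentially verbatim: define the $\rho_j$ as the coordinate functionals of the continuous projection $\pi :C(\CCI )\rightarrow \emLSfc$ from Lemma~\ref{l:cclsfcb0}, read off continuity and $\rho _{i}(\varphi _{j})=\delta _{ij}$, obtain $M_{\tau }^{\ast }(\rho _{j})=\alpha _{j}\rho _{j}$ from $\pi \circ M_{\tau }=M_{\tau }\circ \pi $, and show any unitary eigenvector $\rho $ of $M_{\tau }^{\ast }$ annihilates ${\cal B}_{0,\tau }$ and hence lies in $\mbox{LS}(\{ \rho _{j}\} )$ (the paper does this last step with a subsequence $a^{rn_{i}}\rightarrow 1$; your direct computation $\rho (\varphi )=a^{-n}\rho (M_{\tau }^{n}\varphi )\rightarrow 0$ is marginally cleaner). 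Your explicit uniqueness argument via $M_{\tau }^{r}|_{\emLSfc }=\mbox{id}$ is a welcome addition the paper leaves implicit. The one genuine divergence is item (d). The paper argues algebraically: for $\varphi $ supported off $S_{\tau }$, the forward invariance $G_{\tau }(S_{\tau })\subset S_{\tau }$ forces $M_{\tau }^{rn}(\varphi )|_{S_{\tau }}\equiv 0$ for all $n$, hence $\pi (\varphi )|_{S_{\tau }}=0$, and the injectivity of the restriction $\Psi _{S_{\tau }}$ (Lemma~\ref{l:phiinj}) gives $\rho _{j}(\varphi )=0$. You instead use the almost-sure attraction $d(\gamma _{n,1}(w),S_{\tau })\rightarrow 0$ from Lemma~\ref{l:kcpt} to get pointwise bounded convergence $M_{\tau }^{n}(\varphi )\rightarrow 0$, then Riesz representation and dominated convergence against $\rho _{j}$. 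Both are valid; the paper's version is lighter (no measure theory, and only invariance rather than attraction is needed), while yours could be streamlined further by noting that $M_{\tau }^{rn}(\varphi )\rightarrow \pi (\varphi )$ uniformly, so pointwise convergence to $0$ already forces $\pi (\varphi )=0$ without invoking the Riesz theorem.
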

\begin{proof}
Let $\{ \varphi _{j}\} $ and $\{ \alpha _{j}\} $ be as in 
statement~\ref{t:mtauspec2-1} of Theorem~\ref{t:mtauspec}. 
Let $\varphi \in C(\CCI )$. 
Then there exist a unique family $\{ \rho _{j}(\varphi )\} _{j=1}^{q}$ in $\CC $  
such that $\pi (\varphi )=\sum _{j=1}^{q}\rho _{j}(\varphi )\varphi _{j}.$ 
It is easy to see that $\rho _{j}:C(\CCI )\rightarrow \CC $ is a linear 
functional. 
Moreover, since $\pi :C(\CCI )\rightarrow \LSfc $ 
is continuous (Lemma~\ref{l:cclsfcb0}), 
each $\rho _{j}:C(\CCI )\rightarrow \CC $ is continuous. 
By Lemma~\ref{l:cclsfcb0} again, 
it is easy to see that $\rho _{i}(\varphi _{j})=\delta _{ij}.$ 
In order to show $M_{\tau }^{\ast }(\rho _{j})=\alpha _{j}\rho _{j}$, 
let $\varphi \in C(\CCI )$ and let $\zeta := \varphi -\pi (\varphi ).$ 
Then 
$M_{\tau }(\varphi )=\sum _{j=1}^{q}\rho _{j}(\varphi )\alpha _{j}\varphi _{j}+M_{\tau }(\zeta ).$ 
Hence $\rho _{j}(M_{\tau }(\varphi ))=\alpha _{j}\rho _{j}(\varphi ).$ 
Therefore, $M_{\tau }^{\ast }(\rho _{j})=\alpha _{j}\rho _{j}.$ 
In order to prove that $\{ \rho _{j}\} $ is a basis of 
$\LSfac$, let $\rho \in \Ufac $ and $a\in S^{1}$ be such that 
$M_{\tau }^{\ast }(\rho )=a\rho .$ 
Let $r$ be the number in Lemma~\ref{l:cclsfcb0}. 
Let $\{ n_{i}\} _{i=1}^{\infty }$ be a sequence in $\NN $ such that 
$a^{rn_{i}}\rightarrow 1$ as $i\rightarrow \infty .$ 
Let $\varphi \in C(\CCI) $ and let 
$\zeta =\varphi -\pi (\varphi ).$ 
Then $\rho (\varphi )=\frac{1}{a^{rn_{i}}}(M_{\tau }^{\ast })^{rn_{i}}(\rho )(\varphi )
=\frac{1}{a^{rn_{i}}}\rho (\sum _{j=1}^{q}\rho _{j}(\varphi )\varphi _{j}+
M_{\tau }^{rn_{i}}(\zeta ))\rightarrow \sum _{j=1}^{q}\rho _{j}(\varphi )
\rho (\varphi _{j})$ as $i\rightarrow \infty .$ 
Therefore $\rho \in \mbox{LS}(\{ \rho _{j}\} _{j=1}^{q}). $ Thus $\{ \rho _{j}\} _{j=1}^{q}$ is a basis of 
$\LSfac. $ 
In order to prove supp$\, \rho _{j}\subset S_{\tau }$, 
let $\varphi \in C(\CCI )$ be such that supp$\, \varphi \subset \CCI \setminus S_{\tau }.$ 
Let $\zeta =\varphi -\pi (\varphi ).$ Then 
$\varphi =\sum _{j=1}^{q}\rho _{j}(\varphi )\varphi _{j}+\zeta .$ 
Let $r$ be the number in Lemma~\ref{l:cclsfcb0}. 
Then $M_{\tau }^{rn}(\varphi )\rightarrow \sum _{j=1}^{q}\rho _{j}(\varphi )\varphi _{j}$ 
as $n\rightarrow \infty .$ 
Hence $\sum _{j=1}^{q}\rho _{j}(\varphi )\varphi _{j}|_{S_{\tau }}=
\lim _{n\rightarrow \infty }M_{\tau }^{rn}(\varphi |_{S_{\tau }})=0.$ 
By Lemma~\ref{l:phiinj}, we obtain $\rho _{j}(\varphi )=0$ for each $j.$ 
Therefore supp$\, \rho _{j}\subset S_{\tau }$ for each $j.$ 
Thus, we have completed the proof of our lemma. 
\end{proof}
\begin{lem}
\label{l:pf7-1}
Under the assumptions of Theorem~\ref{t:mtauspec}, 
statement~\ref{t:mtauspec7-1} holds. 
\end{lem}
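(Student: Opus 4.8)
The plan is to reduce all three identities to the argument already carried out in the proof of Lemma~\ref{l:pf2-1}, which only uses (i) relative compactness of the $M_\tau$-orbits in the relevant space $C(\cdot)$, (ii) finite-dimensionality of the span of the unitary eigenvectors, and (iii) the fact that some power of $M_\tau$ is the identity on that span. The identity $\Uvac=\Uvc$ is in fact immediate: by Lemma~\ref{l:cclsfcb0} $M_\tau^r=\mathrm{id}$ on $\LSfc$, so the eigenvalues $\alpha_1,\dots ,\alpha_q$ attached to the eigenbasis $\{\varphi_j\}$ of $\LSfc$ exhaust $\Uvc$ (every unitary eigenvector of $M_\tau$ on $C(\CCI )$ lies in $\Ufc\subset\LSfc$); by Lemma~\ref{l:pf2-1}(c) the dual family $\{\rho_j\}_{j=1}^q$, with $M_\tau^\ast(\rho_j)=\alpha_j\rho_j$, is a basis of $\LSfac$; and since every unitary eigenvector of $M_\tau^\ast$ lies in $\Ufac\subset\LSfac=\mathrm{LS}(\{\rho_j\})$, expanding it in this basis forces its eigenvalue into $\{\alpha_j\}$. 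Hence $\Uvac=\{\alpha_j\}=\Uvc$.

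For the remaining two identities, let $K$ denote either $S_\tau $, which is compact by Lemma~\ref{l:kcpt}, or a minimal set $L\in\emMin(G_\tau ,\CCI )$; in both cases $K$ is a compact subset of $\CCI $ with $G_\tau (K)\subset K$, so $M_\tau :C(K)\to C(K)$ is a well-defined linear contraction and the restriction map $\Psi_K:C(\CCI )\to C(K)$ is continuous, surjective (Tietze extension) and satisfies $\Psi_K\circ M_\tau =M_\tau \circ\Psi_K$ (this uses $G_\tau (K)\subset K$). The one new point is that $M_\tau $ on $C(K)$ has relatively compact orbits: given $\varphi\in C(K)$, choose an extension $\tilde\varphi\in C(\CCI )$, recall from Theorem~\ref{kerJthm1} (cf.\ the proof of Lemma~\ref{l:cclsfcb0}) that $\overline{\bigcup_{n\geq 1}\{ M_\tau^n(\tilde\varphi )\}}$ is compact in $C(\CCI )$, and note that $\Psi_K(M_\tau^n(\tilde\varphi ))=M_\tau^n(\varphi )$; since a continuous image of a relatively compact set is relatively compact, $\overline{\bigcup_{n\geq 1}\{ M_\tau^n(\varphi )\}}$ is compact in $C(K)$. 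The decomposition theorem for almost periodic operators (\cite[p.352]{Ly}) therefore gives $C(K)=\overline{\mathrm{LS}({\cal U}_{f,\tau }(K))}\oplus{\cal B}_{0,\tau }(K)$, where ${\cal B}_{0,\tau }(K)=\{\varphi\in C(K)\mid M_\tau^n(\varphi )\to 0\}$.

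Next, $\mathrm{LS}({\cal U}_{f,\tau }(K))$ is finite-dimensional: for $K=S_\tau $ by statement~\ref{t:mtauspec6} (Lemma~\ref{l:lsiso}), which moreover shows that $\Psi_{S_\tau }:\LSfc \to\LSfk $ is an $M_\tau $-equivariant isomorphism, so $M_\tau^r=\mathrm{id}$ on $\LSfk $ and $\Uvk $ coincides with the spectrum $\{\alpha_j\}$ of $M_\tau $ there; for $K=L$ by Lemma~\ref{l:lspec}, which provides the eigenbasis $\{\psi_{L,j}\}_{j=1}^{r_L}$ with $M_\tau (\psi_{L,j})=a_L^j\psi_{L,j}$, $a_L^{r_L}=1$, and $\Uvl =\{ a_L^j\}_{j=1}^{r_L}$, so $M_\tau^{r_L}=\mathrm{id}$ on $\LSfl $. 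In particular $\mathrm{LS}({\cal U}_{f,\tau }(K))$ is closed, the direct sum $C(K)=\mathrm{LS}({\cal U}_{f,\tau }(K))\oplus{\cal B}_{0,\tau }(K)$ has a continuous projection $\pi_K$, and some power $M_\tau^N$ equals $\mathrm{id}$ on $\mathrm{LS}({\cal U}_{f,\tau }(K))$. One now repeats the proof of Lemma~\ref{l:pf2-1} inside $C(K)$ word for word: fixing an eigenbasis $\{\chi_j\}$ of $\mathrm{LS}({\cal U}_{f,\tau }(K))$ with $M_\tau \chi_j=\beta_j\chi_j$ and writing $\pi_K(\varphi )=\sum_j\sigma_j(\varphi )\chi_j$, each $\sigma_j\in C(K)^\ast$ is continuous with $M_\tau^\ast\sigma_j=\beta_j\sigma_j$, and $\{\sigma_j\}$ is a basis of $\mathrm{LS}({\cal U}_{f,\tau ,\ast }(K))$ (the spanning part uses $M_\tau^N=\mathrm{id}$ on $\mathrm{LS}({\cal U}_{f,\tau }(K))$ and the fact that $M_\tau^n(\zeta )\to 0$ for every $\zeta\in{\cal B}_{0,\tau }(K)$, exactly as before). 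Expanding an arbitrary unitary eigenvector of $M_\tau^\ast$ on $C(K)^\ast$ in this basis yields ${\cal U}_{v,\tau ,\ast }(K)=\{\beta_j\}={\cal U}_{v,\tau }(K)$, that is $\Uvak =\Uvk $ for $K=S_\tau $ and $\Uval =\Uvl $ for each $L\in\emMin(G_\tau ,\CCI )$.

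The only step requiring genuine care is the transfer of relative compactness of orbits from $C(\CCI )$ to $C(K)$ through the restriction map; this rests entirely on $G_\tau (K)\subset K$ (for $K=L$ this is the defining property of a minimal set, and for $K=S_\tau $ it follows since $S_\tau $ is a finite union of such $L$). Once this is secured, the remainder is a verbatim rerun of Lemmas~\ref{l:cclsfcb0} and~\ref{l:pf2-1} in the spaces $C(S_\tau )$ and $C(L)$.
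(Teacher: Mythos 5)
Your proof is correct and follows essentially the same route as the paper: the identity $\Uvac=\Uvc$ is read off from parts (b) and (c) of statement 2 of Theorem~\ref{t:mtauspec}, and the other two identities are obtained by rerunning the argument of Lemma~\ref{l:pf2-1} in $C(S_{\tau })$ and $C(L)$. The only difference is that you spell out the step the paper leaves implicit, namely transferring relative compactness of the $M_{\tau }$-orbits to $C(K)$ via $G_{\tau }(K)\subset K$ and Tietze extension so that the decomposition of \cite{Ly} applies there; this is a correct and worthwhile clarification but not a different method.
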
 
\begin{proof}
By items (b), (c) of statement~\ref{t:mtauspec2-1} of Theorem~\ref{t:mtauspec} 
(see Lemma~\ref{l:pf2-1}), 
we obtain $\Uvac =\Uvc. $ By using the same method as that in the proof of Lemma~\ref{l:pf2-1},  
we obtain $\Uvak=\Uvk $ and $\Uval=\Uvl $ for each $L\in \Min(G_{\tau },\CCI ).$ 
Thus, we have completed the proof of our lemma. 
\end{proof}
\begin{lem}
\label{l:suppllj}
Under the assumptions of Theorem~\ref{t:mtauspec}, 
statements~\ref{t:mtauspec7-2} and \ref{t:mtauspec8} of Theorem~\ref{t:mtauspec} hold. 
\end{lem}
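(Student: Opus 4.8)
The plan is to deduce statement~\ref{t:mtauspec8} by transporting, for each $L\in \Min(G_{\tau },\CCI )$, the local data $(a_{L},r_{L},\{\psi _{L,j}\}_{j=1}^{r_{L}})$ produced in Lemma~\ref{l:lspec} to $\CCI $ through the isomorphism $\Psi _{S_{\tau }}$ of Lemma~\ref{l:lsiso}, and then feeding the resulting basis of $\LSfc$ into the duality construction of Lemma~\ref{l:pf2-1}; statement~\ref{t:mtauspec7-2} will then be read off from the combinatorics of the level sets of the normalized invariant unitary eigenvector $\psi _{L,1}$ on $L$.

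\textbf{Construction for statement~\ref{t:mtauspec8}.} Fix $L$ and let $a_{L},r_{L},\{\psi _{L,j}\}_{j=1}^{r_{L}}$ be as in Lemma~\ref{l:lspec}; recall from the proof of Lemma~\ref{l:lspec} that $|\psi _{L,j}|\equiv 1$ on $L$. By Lemma~\ref{l:lsfklsfl} the function $\Theta _{L,i}\in C(S_{\tau })$ with $\Theta _{L,i}|_{L}=\psi _{L,i}$ and $\Theta _{L,i}|_{L'}\equiv 0$ for $L'\in \Min(G_{\tau },\CCI )$, $L'\neq L$, lies in $\LSfk$, and since $G_{\tau }(L')\subset L'$ for every minimal set (Remark~\ref{r:minimal}) it satisfies $M_{\tau }(\Theta _{L,i})=a_{L}^{i}\Theta _{L,i}$. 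Put $\varphi _{L,i}:=\Psi _{S_{\tau }}^{-1}(\Theta _{L,i})\in \LSfc$. Because $\Psi _{S_{\tau }}$ is an $M_{\tau }$-equivariant linear isomorphism (Lemmas~\ref{l:phiinj}, \ref{l:lsiso}), $\varphi _{L,i}$ is a unitary eigenvector with $M_{\tau }(\varphi _{L,i})=a_{L}^{i}\varphi _{L,i}$ (this is (a)), while $\varphi _{L,i}|_{L}=\psi _{L,i}$ and $\varphi _{L,i}|_{L'}\equiv 0$ give (b), (c), (d) at once (using $\psi _{L,i}=(\psi _{L,1})^{i}$ for (d)). Since $\{\psi _{L,j}\}_{j}$ is a basis of $\LSfl$ for each $L$, the family $\{\Theta _{L,i}\}$ is a basis of $\LSfk\cong \bigoplus _{L}\LSfl$, hence $\{\varphi _{L,i}\}$ is a basis of $\LSfc$ (consistent with $\dim _{\CC }\LSfc=\sum _{L}r_{L}$, Lemma~\ref{l:pf7}). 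Applying Lemma~\ref{l:pf2-1} to this basis produces continuous linear functionals $\{\rho _{L,i}\}$ forming a basis of $\LSfac$, with $M_{\tau }^{\ast }(\rho _{L,i})=a_{L}^{i}\rho _{L,i}$, $\rho _{L,i}(\varphi _{L',j})=\delta _{(L,i),(L',j)}$ (which is (f)), and $\mbox{supp}\, \rho _{L,i}\subset S_{\tau }$.

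\textbf{The support claim (e).} It remains to upgrade $\mbox{supp}\, \rho _{L,i}\subset S_{\tau }$ to $\mbox{supp}\, \rho _{L,i}=L$. First, if $\varphi \in C(\CCI )$ is supported in a neighbourhood of some $L'\neq L$ disjoint from $L$, write $\varphi =\sum _{L'',j}\rho _{L'',j}(\varphi )\varphi _{L'',j}+\zeta $ with $\zeta \in {\cal B}_{0,\tau }$ and apply $M_{\tau }^{rn}$ with $r$ as in Lemma~\ref{l:cclsfcb0}; restriction to $L$ gives $0=M_{\tau }^{rn}(\varphi |_{L})\to \sum _{j}\rho _{L,j}(\varphi )\psi _{L,j}$, so $\rho _{L,j}(\varphi )=0$ for all $j$ by linear independence of $\{\psi _{L,j}\}$. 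Hence $\mbox{supp}\, \rho _{L,i}\cap L'=\emptyset $, so $\mbox{supp}\, \rho _{L,i}\subset L$. Next, view $\rho _{L,i}$ as a complex Borel measure on $\CCI $ with total variation $|\rho _{L,i}|$ supported in $L$. Using $M_{\tau }^{\ast }(\rho _{L,i})=\int g_{\ast }\rho _{L,i}\, d\tau (g)$ together with $|g_{\ast }\rho _{L,i}|\le g_{\ast }|\rho _{L,i}|$ and the triangle inequality for measure-valued integrals, one gets $|M_{\tau }^{\ast }(\rho _{L,i})|\le M_{\tau }^{\ast }(|\rho _{L,i}|)$; both sides have total mass $\|\rho _{L,i}\|$ (the left because $|a_{L}^{i}|=1$, the right because $M_{\tau }^{\ast }$ preserves the mass of a positive measure and $g(L)\subset L$), so they are equal and $\mu :=|\rho _{L,i}|/\|\rho _{L,i}\|$ is an $M_{\tau }^{\ast }$-invariant Borel probability measure supported in $L$. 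For a positive measure $\mbox{supp}(g_{\ast }\mu )=g(\mbox{supp}\, \mu )$, so $\mbox{supp}\, \mu =\mbox{supp}\big(\int g_{\ast }\mu \, d\tau \big)=\overline{\bigcup _{g\in \Gamma _{\tau }}g(\mbox{supp}\, \mu )}$; thus $\mbox{supp}\, \mu $ is a nonempty compact $G_{\tau }$-forward-invariant subset of $L$, whence $\mbox{supp}\, \mu =L$ by minimality of $L$, and $\mbox{supp}\, \rho _{L,i}=\mbox{supp}\, |\rho _{L,i}|=L$. This is the main obstacle: since $\rho _{L,i}$ is complex valued one cannot read forward invariance of its support off $M_{\tau }^{\ast }(\rho _{L,i})=a_{L}^{i}\rho _{L,i}$ directly, because of cancellation, and the point of passing to $|\rho _{L,i}|$ is exactly to recover a genuine $M_{\tau }^{\ast }$-invariance to which minimality of $L$ applies. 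With (e) established, statement~\ref{t:mtauspec8} is complete.

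\textbf{Statement~\ref{t:mtauspec7-2}.} Fix $L$, put $\psi :=\psi _{L,1}$, so $|\psi |\equiv 1$ on $L$, $M_{\tau }(\psi )=a_{L}\psi $ on $L$, and $a_{L}$ has exact order $r_{L}$. From $\int _{\Gamma _{\tau }}\psi (h(z))\, d\tau (h)=a_{L}\psi (z)$ with $|\psi (h(z))|=1$ for all $h\in \Gamma _{\tau }$, $z\in L$, the equality case of the triangle inequality forces $h\mapsto \psi (h(z))$ to be $\tau $-a.e.\ constant; by continuity and $\Gamma _{\tau }=\mbox{supp}\, \tau $ we get $\psi \circ h=a_{L}\psi $ on $L$ for every $h\in \Gamma _{\tau }$, hence $\psi \circ g=\psi $ on $L$ for every $g\in G_{\tau }^{r_{L}}$ (indeed $\psi \circ g'=a_{L}^{r_{L}}\psi =\psi $ for $g'\in \Lambda _{r_{L}}$, and $G_{\tau }^{r_{L}}=\langle \Lambda _{r_{L}}\rangle $). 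Fixing $z_{0}\in L$ and using $\overline{G_{\tau }(z_{0})}=L$ one gets $\psi (L)=\{a_{L}^{j}\psi (z_{0}):j=1,\dots ,r_{L}\}$, a set of exactly $r_{L}$ points; the level sets $L_{j}:=\psi ^{-1}(\{a_{L}^{j}\psi (z_{0})\})\cap L$ $(j=1,\dots ,r_{L})$ partition $L$ into nonempty disjoint compact sets, each $h\in \Gamma _{\tau }$ maps $L_{j}$ into $L_{j+1}$ (indices mod $r_{L}$), and each $g\in G_{\tau }^{r_{L}}$ maps $L_{j}$ into itself. I claim $\Min(G_{\tau }^{r_{L}},L)=\{L_{1},\dots ,L_{r_{L}}\}$. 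For minimality of $L_{j}$: if $\emptyset \neq K\subset L_{j}$ is compact with $G_{\tau }^{r_{L}}(K)\subset K$, let $\widetilde{K}$ be the union of the images of $K$ under all $k$-fold compositions of elements of $\Gamma _{\tau }$ for $k=0,1,\dots ,r_{L}-1$; then $\widetilde{K}$ is a nonempty compact subset of $L$ with $G_{\tau }(\widetilde{K})\subset \widetilde{K}$ (the only nonobvious inclusion being that $r_{L}$-fold compositions of generators send $K$ into $\Lambda _{r_{L}}(K)\subset G_{\tau }^{r_{L}}(K)\subset K$), so $\widetilde{K}=L$ by minimality of $L$; since the $k$-fold images lie in $L_{j+k}$ and the $L_{j}$ are disjoint, intersecting with $L_{j}$ gives $K=\widetilde{K}\cap L_{j}=L_{j}$. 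Conversely, if $K\in \Min(G_{\tau }^{r_{L}},L)$ and $z\in K$ lies in $L_{j}$, then $K=\overline{G_{\tau }^{r_{L}}(z)}\subset L_{j}$ by invariance of $L_{j}$, so $K=L_{j}$ by minimality of $L_{j}$. Hence $\sharp \Min(G_{\tau }^{r_{L}},L)=r_{L}$. The only delicate point here, minor next to the support argument above, is keeping the composition/index conventions compatible with the definition of $\Lambda _{r_{L}}$ so that ``$r_{L}$ generators'' and the cyclic shift $L_{j}\mapsto L_{j+1}$ agree.
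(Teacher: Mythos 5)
Your proof is correct, and for statement~\ref{t:mtauspec7-2} it is essentially the paper's argument (the paper also partitions $L$ into the level sets $L_{j}$ of $\psi _{L,1}$, observes the cyclic shift $h(L_{j})\subset L_{j+1}$, and identifies $\{L_{j}\}_{j}$ with $\emMin(G_{\tau }^{r_{L}},L)$; your $\widetilde{K}$-saturation argument for minimality of each $L_{j}$ is just a more explicit version of the paper's appeal to $\overline{G_{\tau }(z)}=L$). For statement~\ref{t:mtauspec8}, however, you take a genuinely different route. The paper does not invoke the abstract dual basis of Lemma~\ref{l:pf2-1} at all: it first shows (by a small contradiction argument with roots of unity) that $M_{\tau }^{r_{L}}$ acting on $C(L_{j})$ has no nontrivial unitary eigenvalue, deduces that $M_{\tau }^{nr_{L}}(\varphi )\to \omega _{L,j}(\varphi )\cdot 1_{L_{j}}$ for a unique $(M_{\tau }^{\ast })^{r_{L}}$-invariant probability measure $\omega _{L,j}$ with $\mbox{supp}\, \omega _{L,j}=L_{j}$ and $M_{\tau }^{\ast }(\omega _{L,j})=\omega _{L,j+1}$, and then defines $\rho _{L,i}:=\frac{1}{r_{L}}\sum _{j}a_{L}^{-ij}\omega _{L,j}$ and $\varphi _{L,i}$ with $\varphi _{L,i}|_{L}=\sum _{j}a_{L}^{ij}1_{L_{j}}$ explicitly; property (e) is then immediate because the $\omega _{L,j}$ are mutually singular positive measures whose supports exhaust $L$. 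You instead obtain $\{\rho _{L,i}\}$ from the general duality and recover (e) by showing that the total variation $|\rho _{L,i}|$ is an $M_{\tau }^{\ast }$-invariant probability measure (up to normalization) via the equality case of $|M_{\tau }^{\ast }\rho |\le M_{\tau }^{\ast }|\rho |$, and then applying minimality of $L$ to its support. Both arguments are sound; the paper's explicit construction additionally exhibits the decomposition of $\rho _{L,i}$ into the ergodic pieces $\omega _{L,j}$ (useful elsewhere, e.g.\ Remark~\ref{r:shsimilar}), while your total-variation argument is shorter and isolates exactly the invariance needed for the support claim.
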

\begin{proof}
Let $L\in \Min(G_{\tau },\CCI )$ and 
let $z_{0}\in L\cap F(G_{\tau })$ be a point. 
Let $\{ \psi _{L,j}\} _{j=1}^{r_{L}}$ be as in statement \ref{t:mtauspec5} of 
Theorem~\ref{t:mtauspec}. 
We may assume $\psi _{L,1}(z_{0})=1.$ 
For each $j=1,\ldots ,r_{L}$,  
let $L_{j}:= \{ z\in L\mid \psi _{L,1}(z)=a_{L}^{j}\} .$ 
%Then $L$ is the disjoint union of compact subsets $L_{j}.$ 
By claim 3 in the proof of Lemma~\ref{l:lspec}, 
$L$ is equal to the disjoint union of 
compact subsets $L_{j}$, and 
for each $h\in \Gamma _{\tau }$ and 
for each $j=1,\ldots ,r_{L}$,  
$h(L_{j})\subset L_{j+1}$ where $L_{r_{L}+1}:=L_{1}.$ 
In particular, 
$G_{\tau }^{r_{L}}(L_{j})\subset L_{j}$ for each 
$j=1,\ldots ,r_{L}.$ 
Since $\overline{G_{\tau }(z)}=L$ for each 
$z\in L$, it follows that $\overline{G_{\tau }^{r_{L}}(z)}=L_{j}$ for each 
$j=1,\ldots ,r_{L}$ and for each $z\in L_{j}.$ 
Therefore, $\{ L_{j}\} _{j=1}^{r_{L}}=
\Min (G_{\tau }^{r_{L}}, L).$ 
Thus, statement~\ref{t:mtauspec7-2} of Theorem~\ref{t:mtauspec} holds. 
Let $j\in \{ 1,\ldots ,r_{L}\} .$ 
Let us consider the argument in the proof of Lemma~\ref{l:lspec}, replacing $L$ by $L_{j}$ and 
$G_{\tau }$ by $G_{\tau }^{r_{L}}$. Then the number $r_{L}$ in the proof of Lemma~\ref{l:lspec}
is equal to $1$ in this case. 
For, if there exists a non-zero element $\psi \in C(L_{j})$ and a $b=e^{\frac{2\pi i}{s}}\neq 1$ with 
$s\in \NN $ such that $M_{\tau }^{r_{L}}(\psi ) =b\psi $, 
then extending $\psi $ to the element $\tilde{\psi }\in C(L)$ by setting 
$\tilde{\psi }|_{L_{i}}=0$ for each $i$ with $i\neq j$, 
and setting $\hat{\psi }:= \sum _{j=1}^{sr_{L}}(e^{\frac{2\pi i}{sr_{L}}})^{-j}M_{\tau }^{j}(\tilde{\psi })\in C(L)$, 
we obtain $\hat{\psi }\neq 0$ and $M_{\tau }(\hat{\psi })=e^{\frac{2\pi i}{sr_{L}}}\hat{\psi }$, 
which is a contradiction.    
Therefore, by using the argument in the proof of Lemmas~\ref{l:lspec} and \ref{l:cclsfcb0}, 
it follows that for each $\varphi \in C(L_{j})$, 
there exists a number $\omega _{L,j}(\varphi )\in \CC $ 
such that 
$M_{\tau }^{nr_{L}}(\varphi )\rightarrow \omega _{L,j}(\varphi )\cdot 1_{L_{j}}$ 
as $n\rightarrow \infty .$ 
It is easy to see that $\omega _{L,j}$ is a positive linear functional. 
Therefore, $\omega _{L,j}\in {\frak M}_{1}(L_{j}).$ 
Thus, $\omega _{L,j}$ is the unique $(M_{\tau }^{\ast })^{r_{L}}$-invariant element 
of ${\frak M}_{1}(L_{j}).$ 
Since $L_{j}\in \Min(G_{\tau }^{r_{L}},L)$, 
it is easy to see that supp$\, \omega _{L,j}=L_{j}.$ 
Since $M_{\tau }^{\ast }(\omega _{L,j})\in {\frak M}_{1}(L_{j+1})$ and 
$(M_{\tau }^{\ast })^{r_{L}}(M_{\tau }^{\ast }(\omega _{L,j}))=
M_{\tau }^{\ast }(\omega _{L,j})$, 
it follows that $M_{\tau }^{\ast }(\omega _{L,j})=\omega _{L,j+1}$ for each 
$j=1,\ldots ,r_{L}$, where $\omega _{L,r_{L}+1}:=\omega _{L,1}.$ 
For each 
$i=1,\ldots, r_{L}$, 
let $\rho _{L,i}:= \frac{1}{r_{L}}\sum _{j=1}^{r_{L}}a_{L}^{-ij}\omega _{L,j}\in 
C(L)^{\ast }$  
and $\tilde{\psi } _{L,i}:=\sum _{j=1}^{r_{L}}a_{L}^{ij}1_{L_{j}}\in C(L).$ 
Then it is easy to see that 
$M_{\tau }^{\ast }(\rho _{L,i})=a_{L}^{i}\rho _{L,i}$,  
$M_{\tau }(\tilde{\psi }_{L,i})=a_{L}^{i}\tilde{\psi }_{L,i}$, and 
$\rho _{L,i}(\tilde{\psi }_{L,j})=\delta _{ij}.$ 
By Lemma~\ref{l:lsiso}, 
there exists a unique element $\varphi _{L,i}\in \LSfc$ such that 
$\varphi _{L,i}|_{L}=\tilde{\psi }_{L,i}$ and $\varphi _{L,i}|_{L'}\equiv 0$ for 
each $L'\in \Min (G_{\tau },\CCI )$ with $L'\neq L.$ 
It is easy to see that $\{ \varphi _{L,i}\} _{L,i}$ and $\{ \rho _{L,i}\} _{L,i}$ 
are the desired families. Thus, we have completed the proof of our lemma. 
\end{proof} 
\begin{lem}
\label{l:mtdualpf}
Under the assumptions of Theorem~\ref{t:mtauspec}, 
statement~\ref{t:mtauspecdual} holds. 
\begin{proof}
Statement~\ref{t:mtauspecdual} follows from Lemma~\ref{l:pf2-1}. 
\end{proof}

\end{lem}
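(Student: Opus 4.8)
The plan is to deduce statement~\ref{t:mtauspecdual} entirely from statement~\ref{t:mtauspec2-1} (Lemma~\ref{l:pf2-1}) and Lemma~\ref{l:cclsfcb0}, which already contain all the structural input needed. First I would fix a basis $\{\varphi _{j}\} _{j=1}^{q}$ of $\emLSfc$ with $M_{\tau }(\varphi _{j})=\alpha _{j}\varphi _{j}$, $\alpha _{j}\in \Uvc\subset S^{1}$, and let $\{\rho _{j}\} _{j=1}^{q}$ be the dual family of continuous linear functionals produced by statement~\ref{t:mtauspec2-1}; recall $\{\rho _{j}\}$ is a basis of $\emLSfac$ and $\rho _{i}(\varphi _{j})=\delta _{ij}$. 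The key preliminary remark is that, by Lemma~\ref{l:cclsfcb0}, $M_{\tau }^{r}=\mbox{id}$ on $\emLSfc$ for $r:=\prod _{L\in \emMin (G_{\tau },\CCI )}r_{L}$, so $\alpha _{j}^{r}=1$ for every $j$; hence each $\alpha _{j}$ is a root of unity and the vector $(\alpha _{1}^{n},\dots ,\alpha _{q}^{n})$ is periodic in $n$ with period dividing $r$.

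Next, for $\nu \in {\frak M}_{1}(\CCI )$ I would introduce the linear functionals $\mu _{n}:=\sum _{j=1}^{q}\alpha _{j}^{n}\nu (\varphi _{j})\rho _{j}\in \emLSfac$. By the periodicity above $\mu _{n+r}=\mu _{n}$, so the $\mu _{n}$ take only the finitely many values $\mu _{0},\dots ,\mu _{r-1}$. Since $M_{\tau }^{n}(\varphi )=\sum _{j}\rho _{j}(\varphi )\alpha _{j}^{n}\varphi _{j}+M_{\tau }^{n}(\varphi -\sum _{j}\rho _{j}(\varphi )\varphi _{j})$, applying $\nu $ gives $((M_{\tau }^{\ast })^{n}(\nu ))(\varphi )-\mu _{n}(\varphi )=\nu \big(M_{\tau }^{n}(\varphi -\sum _{j}\rho _{j}(\varphi )\varphi _{j})\big)$, whose modulus is at most $\|M_{\tau }^{n}(\varphi -\sum _{j}\rho _{j}(\varphi )\varphi _{j})\| _{\infty }\to 0$ by statement~\ref{t:mtauspec2-1}. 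Fixing a residue $s\in \{0,\dots ,r-1\}$, this shows $((M_{\tau }^{\ast })^{rk+s}(\nu ))(\varphi )\to \mu _{s}(\varphi )$ for every $\varphi \in C(\CCI )$; since ${\frak M}_{1}(\CCI )$ is compact metrizable, the sequence $\{(M_{\tau }^{\ast })^{rk+s}(\nu )\} _{k}$ has a convergent subsequence whose limit agrees with $\mu _{s}$ on $C(\CCI )$, so $\mu _{s}\in {\frak M}_{1}(\CCI )$, hence $\mu _{s}\in \emLSfac\cap {\frak M}_{1}(\CCI )$. As this holds for every residue, $\mu _{n}\in \emLSfac\cap {\frak M}_{1}(\CCI )$ for all $n$, so $d_{0}((M_{\tau }^{\ast })^{n}(\nu ),\emLSfac\cap {\frak M}_{1}(\CCI ))\le d_{0}((M_{\tau }^{\ast })^{n}(\nu ),\mu _{n})$; and the right-hand side tends to $0$ because, in the series defining $d_{0}$ relative to a dense sequence $\{\phi _{i}\}\subset C(\CCI )$, the $i$-th term is at most $\min\{1,\|M_{\tau }^{n}(\phi _{i}-\sum _{j}\rho _{j}(\phi _{i})\varphi _{j})\| _{\infty }\}\to 0$ as $n\to \infty $, so dominated convergence for series applies. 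This gives the first assertion of statement~\ref{t:mtauspecdual}.

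For the dimension bound I would argue that $\emLSfac=\mbox{LS}(\{\rho _{j}\} _{j=1}^{q})$ is a complex vector space of dimension $q=\dim _{\CC }(\emLSfc )$, finite by Lemma~\ref{l:cclsfcb0}; on a finite-dimensional space all Hausdorff vector topologies coincide, so the weak$^{\ast }$ topology makes $\emLSfac$ homeomorphic to $\CC ^{q}\cong \RR ^{2q}$. Then $\emLSfac\cap {\frak M}_{1}(\CCI )$, with the topology inherited from ${\frak M}_{1}(\CCI )$, is a topological subspace of $\RR ^{2q}$, and the subset theorem for covering dimension yields $\dim _{T}(\emLSfac\cap {\frak M}_{1}(\CCI ))\le \dim _{T}(\RR ^{2q})=2q=2\dim _{\CC }(\emLSfc )<\infty $.

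I do not expect a genuine obstacle: statement~\ref{t:mtauspec2-1} and Lemma~\ref{l:cclsfcb0} do all the real work. The two points that need a little care are the bookkeeping with the period $r$ — so that each limit functional $\mu _{s}$ is realized as a limit of \emph{probability} measures and therefore lies in ${\frak M}_{1}(\CCI )$ — and the (standard) identification of the weak$^{\ast }$ topology on the finite-dimensional space $\emLSfac$ with the Euclidean one, which is what licenses the invocation of the subset theorem for $\dim _{T}$.
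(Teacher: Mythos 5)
Your proposal is correct and is exactly the intended argument: the paper's proof is the one-line remark that statement~\ref{t:mtauspecdual} follows from Lemma~\ref{l:pf2-1} (statement~\ref{t:mtauspec2-1}), and you have simply supplied the details — the approximating functionals $\mu_{n}=\sum_{j}\alpha_{j}^{n}\nu(\varphi_{j})\rho_{j}$, their periodicity via $M_{\tau}^{r}=\mathrm{id}$ on $\emLSfc$, the identification of each $\mu_{s}$ as a weak limit of probability measures, and the finite-dimensionality of $\emLSfac$ for the bound on $\dim_{T}$. No gaps; this matches the paper's route.
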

\begin{lem}
\label{l:mt9pf}
Under the assumptions and notation of Theorem~\ref{t:mtauspec}, 
statement~\ref{t:mtauspec9} holds.  
\end{lem}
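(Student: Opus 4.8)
The plan is to identify each probability function $T_{L,\tau }$ with the continuous, $M_{\tau }$-invariant eigenvector of $M_{\tau }$ attached to $L$ in statement~\ref{t:mtauspec8}; once this identification is made, continuity and the equality $M_{\tau }(T_{L,\tau })=T_{L,\tau }$ are immediate, and $\sum _{L}T_{L,\tau }\equiv 1$ will follow from the fact that $\tilde{\tau }$-almost every random orbit converges to a single minimal set.

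First I would fix notation and record preliminaries. By Lemma~\ref{l:minfin} the set $\Min (G_{\tau },\CCI )=\{ L_{1},\dots ,L_{k}\} $ is finite, its members are mutually disjoint compact sets, and $S_{\tau }=\coprod _{i}L_{i}$. By Lemma~\ref{l:aeconst} (cf.\ the proof of Lemma~\ref{l:minfin}), no connected component of $F(G_{\tau })$ meets two distinct $L_{i}$; hence, setting $W_{i}:=\bigcup \{ A\in \mbox{Con}(F(G_{\tau }))\mid A\cap L_{i}\neq \emptyset \} $, the set $W$ of statement~\ref{t:mtauspec4} is the disjoint union $W=\coprod _{i}W_{i}$, and since for $g\in G_{\tau }$ the image $g(A)$ lies in a single component and $g(L_{i})\subset L_{i}$, each $W_{i}$ satisfies $G_{\tau }(W_{i})\subset W_{i}$. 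From statement~\ref{t:mtauspec8} (Lemma~\ref{l:suppllj}) together with $a_{L_{i}}^{r_{L_{i}}}=1$ (statement~\ref{t:mtauspec5}) there is, for each $i$, an element $\varphi _{i}:=\varphi _{L_{i},r_{L_{i}}}\in \LSfc \subset C(\CCI )$ with $M_{\tau }(\varphi _{i})=\varphi _{i}$, $\varphi _{i}|_{L_{i}}\equiv 1$, and $\varphi _{i}|_{L_{j}}\equiv 0$ for $j\neq i$. Finally, $T_{L,\tau }:\CCI \to [0,1]$ is Borel measurable, because the set $\{ (\gamma ,z)\in X_{\tau }\times \CCI \mid d(\gamma _{n,1}(z),L)\to 0\} $ is Borel and Fubini's theorem applies.

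The main step is the claim that for each fixed $z\in \CCI $ and $\tilde{\tau }$-a.e.\ $\gamma \in X_{\tau }$ there is a unique $i=i(\gamma ,z)$ with $d(\gamma _{n,1}(z),L_{i})\to 0$. Let $\mathcal C_{z}$ be the full-measure set of statement~\ref{t:mtauspec4} (Lemma~\ref{l:kcpt}) intersected with $\bigcap _{n\ge 0}\sigma ^{-n}(\mathcal A)$, where $\mathcal A$ is the full-measure set of Lemma~\ref{l:aeconst}; this set has $\tilde{\tau }$-measure $1$ since $\sigma $ preserves $\tilde{\tau }$. Fix $\gamma \in \mathcal C_{z}$. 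By statement~\ref{t:mtauspec4} there is an $n_{0}$ with $\gamma _{n_{0},1}(z)\in W$, so $\gamma _{n_{0},1}(z)\in W_{i_{0}}$ for a unique $i_{0}$, and by forward invariance $\gamma _{m,1}(z)\in W_{i_{0}}$ for every $m\ge n_{0}$. Let $U_{0}$ be the component of $F(G_{\tau })$ containing $\gamma _{n_{0},1}(z)$; then $U_{0}\subset W_{i_{0}}$, so we may choose $w_{0}\in U_{0}\cap L_{i_{0}}$. The family $\{ \gamma _{m,n_{0}+1}|_{U_{0}}\} _{m>n_{0}}\subset \{ g|_{U_{0}}\} _{g\in G_{\tau }}$ is equicontinuous on $U_{0}$, and since $\sigma ^{n_{0}}\gamma \in \mathcal A$, all of its limit functions are constant (Lemma~\ref{l:aeconst}, applied to $\sigma ^{n_{0}}\gamma $). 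Given any convergent subsequence $\gamma _{m_{k},1}(z)\to p$, pass to a further subsequence so that $\gamma _{m_{k},n_{0}+1}|_{U_{0}}\to c$ locally uniformly for a constant $c$; then $\gamma _{m_{k},1}(z)=\gamma _{m_{k},n_{0}+1}(\gamma _{n_{0},1}(z))\to c$, so $p=c$, and also $\gamma _{m_{k},n_{0}+1}(w_{0})\to c$ with $\gamma _{m_{k},n_{0}+1}(w_{0})\in G_{\tau }(L_{i_{0}})\subset L_{i_{0}}$, whence $c\in L_{i_{0}}$. Thus all limit points of $\{ \gamma _{m,1}(z)\} _{m}$ lie in the compact set $L_{i_{0}}$, i.e.\ $d(\gamma _{m,1}(z),L_{i_{0}})\to 0$; uniqueness of $i_{0}$ is clear from disjointness of the $L_{i}$. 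Consequently the events $E_{i}(z):=\{ \gamma \mid d(\gamma _{n,1}(z),L_{i})\to 0\} $ are disjoint and cover $X_{\tau }$ up to a null set, so $\sum _{i}T_{L_{i},\tau }(z)=\tilde{\tau }\bigl(\coprod _{i}E_{i}(z)\bigr)=1$.

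It remains to show $T_{L_{i},\tau }=\varphi _{i}$. Fix $z\in \CCI $. Since $M_{\tau }(\varphi _{i})=\varphi _{i}$, for every $m\in \NN $ we have $\varphi _{i}(z)=M_{\tau }^{m}(\varphi _{i})(z)=\int _{X_{\tau }}\varphi _{i}(\gamma _{m,1}(z))\, d\tilde{\tau }(\gamma )=\sum _{j=1}^{k}\int _{E_{j}(z)}\varphi _{i}(\gamma _{m,1}(z))\, d\tilde{\tau }(\gamma )$, using that $X_{\tau }=\coprod _{j}E_{j}(z)$ up to a null set. On $E_{j}(z)$ one has $d(\gamma _{m,1}(z),L_{j})\to 0$, and $\varphi _{i}$ is continuous on the compact space $\CCI $ with $\varphi _{i}|_{L_{j}}\equiv \delta _{ij}$, so $\varphi _{i}(\gamma _{m,1}(z))\to \delta _{ij}$ pointwise on $E_{j}(z)$; by the dominated convergence theorem (the integrands being bounded by $\| \varphi _{i}\| _{\infty }$) and finiteness of $k$,
$$\varphi _{i}(z)=\lim _{m\to \infty }\sum _{j=1}^{k}\int _{E_{j}(z)}\varphi _{i}(\gamma _{m,1}(z))\, d\tilde{\tau }(\gamma )=\sum _{j=1}^{k}\delta _{ij}\,\tilde{\tau }(E_{j}(z))=\tilde{\tau }(E_{i}(z))=T_{L_{i},\tau }(z).$$
Hence $T_{L_{i},\tau }=\varphi _{i}\in \LSfc \subset C(\CCI )$ is continuous and satisfies $M_{\tau }(T_{L_{i},\tau })=T_{L_{i},\tau }$, and together with $\sum _{i}T_{L_{i},\tau }\equiv 1$ from the preceding paragraph this establishes statement~\ref{t:mtauspec9}. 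The one genuinely substantial point is the almost sure convergence of orbits to a single minimal set (the combination of statement~\ref{t:mtauspec4}, forward invariance of the $W_{i}$, and Lemma~\ref{l:aeconst}); the rest is routine bookkeeping once the eigenvector structure of Lemma~\ref{l:suppllj} and the transfer-operator identity are in hand.
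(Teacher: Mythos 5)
Your proof is correct, and it reaches the conclusion by a route that differs in one substantive respect from the paper's. The paper's proof of Lemma~\ref{l:mt9pf} also introduces the sets $W_{L}$ and uses their disjointness (derived there from Lemmas~\ref{l:lscf} and \ref{l:lsiso}, which give $\overline{W_{L}}\cap \overline{W_{L'}}=\emptyset $), but it then chooses a continuous cutoff $\varphi _{L}$ equal to $1$ on $W_{L}$ and $0$ on the other $W_{L'}$, shows $T_{L,\tau }(z)=\lim _{n}M_{\tau }^{n}(\varphi _{L})(z)$ by exchanging limit and integral, and obtains continuity of $T_{L,\tau }$ from the equicontinuity of $\{ M_{\tau }^{n}(\varphi _{L})\} _{n}$ supplied by Theorem~\ref{kerJthm1}. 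You instead identify $T_{L,\tau }$ outright with the already-constructed fixed eigenvector $\varphi _{L,r_{L}}$ of statement~\ref{t:mtauspec8} (note that $\varphi _{L,r_{L}}|_{L}\equiv 1$ is read off from the explicit formula $\varphi _{L,i}|_{L}=\sum _{j}a_{L}^{ij}1_{L_{j}}$ in the proof of Lemma~\ref{l:suppllj}, not from the bare statement of item (b), which only controls $|\varphi _{L,i}|$ on $L$), using $\varphi _{i}=M_{\tau }^{m}(\varphi _{i})$ plus dominated convergence over the partition $X_{\tau }=\coprod _{j}E_{j}(z)$. This buys you continuity and invariance for free from the eigenvector, at the cost of invoking statement~\ref{t:mtauspec8}; the paper's version is independent of that statement but leans on Cooperation Principle I for continuity. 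Both arguments rest on the same key dynamical fact --- that for fixed $z$ and $\tilde{\tau }$-a.e.\ $\gamma $ the orbit $\gamma _{n,1}(z)$ converges to a single minimal set --- which the paper leaves largely implicit in the line ``From Lemma~\ref{l:kcpt} and Lemma~\ref{l:aeconst}, it follows that\dots'' and which you prove carefully via forward invariance of the $W_{i}$ and the constant-limit-function property; that added care is a genuine improvement in completeness.
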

\begin{proof}
By Lemma~\ref{l:kcpt}, 
we have $\sum _{L\in \Min(G_{\tau },\CCI )}T_{L,\tau }(z)=1$ for each 
$z\in \CCI .$ 
For each $L\in \Min(G_{\tau },\CCI )$, 
let $W_{L}:= \bigcup _{A\in \text{Con}(F(G_{\tau })),A\cap L\neq \emptyset }A.$ 
Then $G(W_{L})\subset W_{L}$ and $W=\bigcup _{L}W_{L}.$ 
By Lemma~\ref{l:lscf} and Lemma~\ref{l:lsiso}, 
we obtain that $\overline{W_{L}}\cap \overline{W_{L'}}=\emptyset $ whenever 
$L, L'\in \Min(G_{\tau },\CCI )$ and $L\neq L'.$ For each $L\in \Min(G_{\tau },\CCI )$, 
let $\varphi _{L}\in C(\CCI )$ be such that $\varphi _{L}|_{W_{L}}\equiv 1$ and 
$\varphi _{L}|_{\bigcup _{L'\neq L}W_{L'}}\equiv 0.$ 
From Lemma~\ref{l:kcpt} and Lemma~\ref{l:aeconst}, 
it follows that 
\begin{equation}
\label{eq:tltinv}
T_{L,\tau }(z)=\int _{X_{\tau }}\lim _{n\rightarrow \infty }\varphi _{L}(\gamma _{n,1}(z))\ 
d\tilde{\tau }(\gamma )=\lim _{n\rightarrow \infty }\int _{X_{\tau }}\varphi _{L}(\gamma _{n,1}(z))\ d\tilde{\tau }(\gamma )
 =\lim _{n\rightarrow \infty }M_{\tau}^{n}(\varphi _{L})(z)
\end{equation}
 for each $z\in \CCI .$ 
Combining (\ref{eq:tltinv}) and Theorem~\ref{kerJthm1}, we obtain that 
$T_{L,\tau }\in C(\CCI )$ for each $L\in \Min(G_{\tau },\CCI )$. 
Moreover, from (\ref{eq:tltinv}) again, we obtain 
$M_{\tau }(T_{L,\tau })=T_{L,\tau }.$  

Thus, we have proved our lemma. 
\end{proof} 
\begin{lem}
\label{l:mt9-2pf} 
Under the assumptions of Theorem~\ref{t:mtauspec}, statement~\ref{t:mtauspec9-2} holds.  
\end{lem} 
\begin{proof} 
We now suppose  $\sharp \Min (G_{\tau },\CCI)\geq 2$. 
Let $L\in \Min(G_{\tau },\CCI ).$ 
Since 
$T_{L,\tau }:\CCI \rightarrow [0,1]$ is continuous, and since 
$T_{L,\tau }|_{L}\equiv 1$ and $T_{L,\tau }|_{L'}\equiv 0$ for each $L'\neq L$, 
it follows that $T_{L,\tau }(\CCI )=[0,1].$ Since 
$T_{L,\tau }$ is continuous on $\CCI $ and since $T_{L,\tau }\in C_{F(G_{\tau })}(\CCI )$, 
we obtain that $T_{L,\tau }(J(G_{\tau }))=[0,1].$ In particular, 
$\dim _{\CC }(\LSfc )>1.$  
Thus, we have proved our lemma. 
\end{proof}
\begin{lem}
\label{l:kattfpt1}
Under the assumptions of Theorem~\ref{t:mtauspec}, 
%$K= \{ \overline{z\in F(G)\cap K\mid \exists g\in G_{\tau } \mbox{ {\em s.t.} } g(z)=z, |g'(z)|<1} \} .$
statement~\ref{t:mtauspec10} holds. 
\end{lem}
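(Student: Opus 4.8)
The plan is to establish the two inclusions. The inclusion $\supseteq $ is immediate: the set appearing on the right before taking closure lies in $S_{\tau }$ by definition, and $S_{\tau }$ is compact by Lemma~\ref{l:kcpt}, so its closure lies in $S_{\tau }$ as well. For $\subseteq $, write $S_{\tau }=\bigcup _{L\in \Min(G_{\tau },\CCI )}L$ and fix $L\in \Min(G_{\tau },\CCI )$; it suffices to show $L$ is contained in the right-hand side, which I will do by producing, inside $L$, a $G_{\tau }$-invariant closed set consisting of attracting fixed points and then invoking minimality.

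I would first record two facts. First, $L\cap F(G_{\tau })\neq \emptyset $: otherwise $L\subset J(G_{\tau })$, and since $G_{\tau }(L)\subset L$ (Remark~\ref{r:minimal}) we get $g(L)\subset J(G_{\tau })$, i.e.\ $L\subset g^{-1}(J(G_{\tau }))$, for every $g\in G_{\tau }$, so $L\subset \bigcap _{g\in G_{\tau }}g^{-1}(J(G_{\tau }))=J_{\ker }(G_{\tau })=\emptyset $, a contradiction. Second, by Lemma~\ref{l:sjg3} we have $\sharp J(G_{\tau })\geq 3$, so each component $U$ of $F(G_{\tau })$ carries a complete hyperbolic metric; and since $g(F(G_{\tau }))\subset F(G_{\tau })$ for all $g\in G_{\tau }$ (Lemma~\ref{ocminvlem}), whenever $g\in G_{\tau }$ sends some point of $U$ into $U$ it maps $U$ into the single component $U$, so $g|_{U}:U\to U$ is holomorphic and hence $1$-Lipschitz for the hyperbolic metric of $U$.

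Put $E_{L}:=\{z\in L\cap F(G_{\tau })\mid \exists g\in G_{\tau },\ g(z)=z,\ |m(g,z)|<1\}$. The core claims are: (a) $E_{L}\neq \emptyset $, and (b) $\overline{E_{L}}$ is forward invariant under $G_{\tau }$. Granting these, $\overline{E_{L}}$ is a nonempty compact $G_{\tau }$-invariant subset of $L$, hence equals $L$ by minimality, and $L=\overline{E_{L}}$ lies in the right-hand side. For (a): take $w_{0}\in L\cap F(G_{\tau })$, with component $U$. Since $L$ is minimal, $\overline{G_{\tau }(w_{0})}=L\ni w_{0}$, so there are $g_{n}\in G_{\tau }$ with $g_{n}(w_{0})\to w_{0}$, and $g_{n}:U\to U$ for large $n$. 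If some such $g_{n}|_{U}$ is a strict contraction on compact subsets of $U$, then $g_{n}$ has a unique attracting fixed point $q\in U$ with all of $U$ in its basin, so $g_{n}^{j}(w_{0})\to q$; since $g_{n}^{j}(w_{0})\in G_{\tau }(w_{0})\subset L$ and $L$ is closed, $q\in L\cap F(G_{\tau })$, i.e.\ $q\in E_{L}$. For (b): let $q\in E_{L}$ be an attracting fixed point of $g_{0}\in G_{\tau }$ and let $h\in G_{\tau }$; then $w:=h(q)\in L\cap F(G_{\tau })$. Using $\overline{G_{\tau }(w)}=L\ni q$, choose $\eta \in G_{\tau }$ with $\eta (w)$ in the immediate basin of $q$, and set $\psi _{m}:=h\circ g_{0}^{m}\circ \eta \in G_{\tau }$. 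Then $\psi _{m}(w)=h(g_{0}^{m}(\eta (w)))\to h(q)=w$, and since $(g_{0}^{m})'(\eta (w))=\prod _{j<m}g_{0}'(g_{0}^{j}(\eta (w)))\to 0$ as $g_{0}^{j}(\eta (w))\to q$ with $|g_{0}'(q)|<1$, also $\psi _{m}'(w)\to 0$. A standard normal-families-plus-Rouch\'e argument then gives, for large $m$, an attracting fixed point $q_{m}$ of $\psi _{m}$ with $q_{m}\to w$, lying in the component $U'$ of $w$ and with $w$ in its basin, so $\psi _{m}^{j}(w)\to q_{m}$, whence $q_{m}\in L\cap F(G_{\tau })$, i.e.\ $q_{m}\in E_{L}$; thus $w\in \overline{E_{L}}$, proving (b).

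The main obstacle is the remaining case of step (a), in which no element $g_{n}|_{U}$ along an orbit-return sequence is a strict contraction: a holomorphic self-map of a hyperbolic surface fails to strictly contract compacta only when it is an automorphism of $U$ (or, for a rational map, an unbranched finite-degree self-covering, which forces $U$ to be infinitely connected). When $\Gamma _{\tau }\cap \Ratp \neq \emptyset $ this degenerate possibility can be excluded along such words, which is precisely why statement~\ref{t:mtauspec11} yields the sharper conclusion with $g\in G_{\tau }\cap \Ratp $ and $S_{\tau }\subset UH(G_{\tau })\subset P(G_{\tau })$. In the remaining situation the relevant elements of $G_{\tau }$ restrict to $U$ as M\"obius isometries, and one settles it by a direct analysis of $G_{\tau }\cap \mbox{Aut}(\CCI )$ according to the elliptic, parabolic and loxodromic possibilities, along the lines of statement~\ref{t:mtauspecdeg1} (where $\sharp \Min (G_{\tau },\CCI )=1$); in each configuration the unique minimal set is again exhibited as the closure of attracting fixed points contained in $F(G_{\tau })$. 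Assembling the generic case with this case analysis yields statement~\ref{t:mtauspec10}.
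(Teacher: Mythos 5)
Your overall skeleton ($\overline{E_{L}}$ is a non-empty compact forward-invariant subset of $L$, hence equals $L$ by minimality) is a reasonable alternative to the paper's reduction, and your inclusion $\supseteq$ and the observation $L\cap F(G_{\tau })\neq \emptyset $ match the paper. But the core of the lemma is the existence step (your (a)), and there the argument has a genuine gap — in fact two. First, ``some $g_{n}|_{U}$ is a strict contraction on compact subsets of $U$'' does \emph{not} imply that $g_{n}$ has a fixed point in $U$: a holomorphic self-map of a hyperbolic domain with no interior fixed point (Denjoy--Wolff point on the boundary, e.g.\ $z\mapsto z+i\epsilon $ on the upper half-plane) still contracts the hyperbolic metric strictly on compacta, and can even move a given point $w_{0}$ by an arbitrarily small hyperbolic distance. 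So recurrence of $w_{0}$ plus non-quantitative contraction does not produce an attracting fixed point. What is needed is a \emph{quantitative} statement: an element whose hyperbolic derivative on a fixed ball $B_{h}(b,\epsilon )$ is so small that, after composing with a return map, the closed ball is mapped compactly into itself. Second, your ``remaining case'' (all returning maps restrict to isometries of $U$) is left as a sketchy case analysis of $G_{\tau }\cap \mbox{Aut}(\CCI )$ that is never carried out, and your claim that it can be excluded when $\Gamma _{\tau }\cap \Ratp \neq \emptyset $ is asserted, not proved.

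The paper closes both gaps with a single tool you never invoke: Lemma~\ref{l:hypmetcont} (a consequence of Lemma~\ref{l:aeconst}, which in turn rests on $J_{\ker }(G_{\tau })=\emptyset $), which says that for $\tilde{\tau }$-a.e.\ $\gamma $ the hyperbolic norm $\| \gamma _{n,1}'\| _{h}$ tends to $0$ uniformly on compact subsets of $F(G_{\tau })$. Hence for any $\epsilon >0$ there is $g_{1}\in G_{\tau }$ with $g_{1}(B_{h}(b,\epsilon ))\subset B_{h}(g_{1}(b),\epsilon /2)$; by minimality ($\overline{G_{\tau }(g_{1}(b))}=L$) there is $g_{2}\in G_{\tau }$ with $\overline{g_{2}(B_{h}(g_{1}(b),\epsilon /2))}\subset B_{h}(b,\epsilon )$, so $g=g_{2}g_{1}$ maps $\overline{B_{h}(b,\epsilon )}$ compactly into $B_{h}(b,\epsilon )$ and therefore has an attracting fixed point $z_{0}=\lim g^{n}(b)\in B_{h}(b,\epsilon )\cap L$. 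This makes every point of $L\cap F(G_{\tau })$ a limit of points of $E_{L}$ directly, so your step (b) (and its normal-families-plus-Rouch\'{e} argument) becomes unnecessary once one notes $L=\overline{L\cap F(G_{\tau })}$. To repair your proof you should replace the qualitative ``strict contraction'' discussion and the automorphism case analysis by an appeal to Lemma~\ref{l:hypmetcont}.
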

\begin{proof}
Let $L\in \Min(G_{\tau },\CCI ).$ 
Since $J_{\ker }(G_{\tau })=\emptyset $, 
$L\cap F(G_{\tau })\neq \emptyset .$ 
Let $a\in L\cap F(G_{\tau }).$ 
Since $\overline{G_{\tau }(a)}=L$, 
we obtain $L=\overline{L\cap F(G_{\tau })}.$ 
Hence, in order to prove our lemma, it suffices to prove the following claim. \\  
Claim: 
$L\cap F(G_{\tau })\subset 
\overline{\{ z\in L\cap F(G_{\tau })\mid \exists g\in G_{\tau } \mbox{ s.t. } g(z)=z, |m(g,z)|<1\} }.$ 

In order to prove the above claim, let $b\in L\cap F(G_{\tau }).$ Let $U\in \mbox{Con}(F(G_{\tau }))$ 
with $b\in U.$ 
We take the hyperbolic metric on each element of $\mbox{Con}(F(G_{\tau })).$ 
For each $\epsilon _{0}>0$ and for each $c\in F(G_{\tau })$, let $B_{h}(c,\epsilon _{0})$ 
be the disc with center $c$ and radius $\epsilon _{0}$ in $F(G_{\tau })$ 
with respect to the hyperbolic distance.  
Let $\epsilon >0.$ By Lemma~\ref{l:hypmetcont}, there exists an element $g_{1}\in G_{\tau }$ 
such that $g_{1}(B_{h}(b,\epsilon ))\subset B_{h}(g_{1}(b),\frac{\epsilon }{2}).$ 
Since $\overline{G_{\tau }(g_{1}(b))}=L$, there exists an element $g_{2}\in G_{\tau }$ 
such that $\overline{g_{2}(B_{h}(g_{1}(b),\frac{\epsilon }{2}))}\subset B_{h}(b,\epsilon ).$ 
Thus $\overline{g_{2}g_{1}(B_{h}(b,\epsilon ))}\subset B_{h}(b,\epsilon ).$ 
Let $g=g_{2}g_{1}.$ 
Then $z_{0}:= \lim _{n\rightarrow \infty }g^{n}(b)\in B_{h}(b,\epsilon )\cap L$ is an attracting fixed point of 
$g.$ Therefore, we have proved the above claim. 
Thus, we have proved our lemma.
\end{proof}
\begin{lem}
\label{l:kattfpt2}
Under the assumptions of Theorem~\ref{t:mtauspec}, 
statement~\ref{t:mtauspec11} holds. 
\end{lem}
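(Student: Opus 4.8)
Statement~\ref{t:mtauspec11} asserts that if $\Gamma_\tau \cap \emRatp \neq \emptyset$, then
$$S_\tau = \overline{\{ z\in F(G)\cap S_\tau \mid \exists g\in G_\tau\cap\emRatp \text{ s.t. } g(z)=z, |m(g,z)|<1\}} \subset UH(G_\tau)\subset P(G_\tau).$$
The plan is to refine the proof of Lemma~\ref{l:kattfpt1} (statement~\ref{t:mtauspec10}) so that the group element $g$ realizing the attracting fixed point can be taken in $\Ratp$, and then to place the resulting attracting cycles inside the postcritical set.

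First I would record the inclusion $UH(G_\tau)\subset P(G_\tau)$, which is Remark~\ref{r:uhsp}, so only the two remaining claims need work. For the description of $S_\tau$: fix $L\in\emMin(G_\tau,\CCI)$ and a point $b\in L\cap F(G_\tau)$ (which exists since $J_{\ker}(G_\tau)=\emptyset$ forces $L\not\subset J(G_\tau)$, as in Lemma~\ref{l:kattfpt1}). I would rerun the contraction argument from the proof of Lemma~\ref{l:kattfpt1} on a hyperbolic ball $B_h(b,\epsilon)$, producing $g_1,g_2\in G_\tau$ with $\overline{g_2g_1(B_h(b,\epsilon))}\subset B_h(b,\epsilon)$; the only new ingredient is that I may pre-compose or post-compose with a fixed element $h_0\in\Gamma_\tau\cap\emRatp$. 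Since $L\in\emMin(G_\tau,\CCI)$ satisfies $\overline{G_\tau(h_0(b))}=L$ by Remark~\ref{r:minimal}, and $h_0(L)\subset L$ with $h_0(b)\in F(G_\tau)$ (Lemma~\ref{ocminvlem}), I can insert $h_0$ into the word: replace $g$ by $g' = g_2 h_0 g_1$ (adjusting $g_2$ so the contraction onto $B_h(b,\epsilon)$ still holds, using Lemma~\ref{l:hypmetcont} again to shrink images). Then $g'\in G_\tau$, $\deg(g')\geq \deg(h_0)\geq 2$ so $g'\in G_\tau\cap\Ratp$, and $z_0:=\lim_n (g')^n(b)\in B_h(b,\epsilon)\cap L$ is an attracting fixed point of $g'$. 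Since $\epsilon$ was arbitrary this shows $b$ lies in the closure of the set on the right-hand side; combined with $L=\overline{L\cap F(G_\tau)}$ (proved as in Lemma~\ref{l:kattfpt1}) and the union over $L\in\emMin(G_\tau,\CCI)$, we get $S_\tau\subset\overline{\{z\in F(G)\cap S_\tau\mid\exists g\in G_\tau\cap\Ratp, g(z)=z,|m(g,z)|<1\}}$. The reverse inclusion is immediate: each such attracting fixed point $z$ lies in some minimal set contained in $\overline{G_\tau(z)}$, and since $z\in S_\tau$ already, $\overline{G_\tau(z)}\subset S_\tau$, so the right-hand set is contained in $S_\tau$, which is closed by statement~\ref{t:mtauspec4}.

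For the inclusion $S_\tau\subset UH(G_\tau)$: by the description just obtained it suffices to show each attracting fixed point $z_0\in F(G)\cap S_\tau$ of some $g\in G_\tau\cap\Ratp$ lies in $UH(G_\tau)$, and then take closures (noting $UH(G_\tau)$ is closed since $SH_N(G_\tau)$ is open). Suppose for contradiction $z_0\in SH_N(G_\tau)$ for some $N$. Then there is a $\delta>0$ such that every inverse branch of every element of $G_\tau$ over $B(z_0,\delta)$ has degree $\leq N$. But $z_0$ is an attracting fixed point of $g$, so the local inverse branches of $g^n$ near $z_0$ form a normal family converging to the constant $z_0$ — this is the standard picture for the immediate basin of a (super)attracting or attracting cycle, and the usual argument (Koebe/Montel plus the iteration of $g$) shows $z_0\in F(g)\subset F(G_\tau)$, consistent; the contradiction instead comes from comparing with the repelling fixed point. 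Actually the cleanest route: an attracting fixed point $z_0$ of $g\in\Ratp$ forces $g$ to have degree $\geq 2$, hence a critical point, and the orbit structure near $z_0$ — combined with the fact that $z_0\in S_\tau$ meets $J(G_\tau)$ only through limiting behavior — is not what I need. Let me instead use the known fact (cf.\ \cite{S4,S7}) that $UH(G)\supset$ the set of attracting periodic points of elements of $G\cap\Ratp$ whenever such points lie in $F(G)$: near an attracting fixed point of $g$ of degree $d\geq 2$, the branches of $g^{-n}$ that fix $z_0$ have degree growing without bound unless $z_0$ is not a branch point, but the \emph{other} preimages under $g^{-1}$ accumulate and bring in unbounded branching of $g^{-n}$ restricted to a neighborhood. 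I expect the main obstacle to be making this last step rigorous: pinning down exactly why an attracting fixed point of $g\in G_\tau\cap\Ratp$ must fail the semi-hyperbolic branching bound $SH_N$ for the whole semigroup $G_\tau$. The plan is to argue that for the branch $g^{-1}$ that maps a neighborhood of $z_0$ into itself, iterating gives $\deg(g^{-n}:V_n\to B(z_0,\delta))$ bounded, which is fine — so semi-hyperbolicity at $z_0$ relative to $\langle g\rangle$ alone does \emph{not} fail. Thus the contradiction must come from $z_0\in S_\tau$ meeting $J(G_\tau)$: by Remark~\ref{r:uminj}/Example~\ref{r:parajkere}, and more directly because $z_0$ is an attracting fixed point lying in a minimal set $L$ with $\overline{G_\tau(z_0)}=L$, the backward orbit $\bigcup_{n}(g^{-n}\text{-branches, all of them})$ under the full semigroup accumulates on $J(G_\tau)$ in a way incompatible with a uniform degree bound; I will need Lemma~\ref{l:aeconst} and the structure of $P(G_\tau)$ from Remark~\ref{r:pg} (namely $P(G_\tau)=\overline{G_\tau^\ast(\bigcup_{h\in\Gamma_\tau}\{\text{crit.\ values}\})}$) to show $z_0\in P(G_\tau)$: since $z_0$ is attracting for $g=g_2g_1$ with $g_1$ or $g_2$ of degree $\geq 2$, the immediate basin of $z_0$ under $g$ contains a critical point $c$ of some factor, hence $z_0=\lim g^n(c)\in P(G_\tau)$, and then refine to $z_0\in UH(G_\tau)$ using that near such $z_0$ the semi-hyperbolic bound fails because arbitrarily high-degree branches of elements of $G_\tau$ over $B(z_0,\delta)$ arise from composing with high iterates of $g$ through the critical point $c$. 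Assembling these, $S_\tau\subset\overline{P(G_\tau)}=P(G_\tau)$ and in fact $S_\tau\subset UH(G_\tau)$, completing the proof.
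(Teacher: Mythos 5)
Your treatment of the equality
$S_{\tau }=\overline{\{ z\in F(G)\cap S_{\tau }\mid \exists g\in G_{\tau }\cap \Ratp \mbox{ s.t. } g(z)=z,\ |m(g,z)|<1\} }$
is essentially the paper's argument: the paper likewise reruns the contraction argument of Lemma~\ref{l:kattfpt1}, using Lemma~\ref{l:hypmetcont} to arrange that the contracting word contains a factor of degree at least $2$, so your device of inserting $h_{0}\in \Gamma _{\tau }\cap \Ratp$ into the word is the same idea, and your reverse inclusion is fine.

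The genuine gap is in the inclusion $S_{\tau }\subset UH(G_{\tau })$. The fact needed — which the paper invokes in one line — is that an attracting fixed point $z_{0}$ of a single $g\in G_{\tau }\cap \Ratp$ already lies in $UH(\langle g\rangle )\subset UH(G_{\tau })$. Your intermediate claim that ``iterating gives $\deg (g^{-n}:V_{n}\rightarrow B(z_{0},\delta ))$ bounded, so semi-hyperbolicity at $z_{0}$ relative to $\langle g\rangle $ alone does not fail'' is false, and it derails the rest of your argument. Since $z_{0}$ is attracting, for small $\delta $ we have $g(B(z_{0},\delta ))\subset B(z_{0},\delta )$, hence $B(z_{0},\delta )\subset g^{-1}(B(z_{0},\delta ))$, and the components $V_{n}$ of $g^{-n}(B(z_{0},\delta ))$ containing $z_{0}$ form an increasing sequence exhausting the immediate attracting basin of $z_{0}$ for $g$. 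Because $\deg (g)\geq 2$, that immediate basin contains a critical point $c$ of $g$ (classical), so $c\in V_{n_{0}}$ for some $n_{0}$; then $\deg (g:V_{n+1}\rightarrow V_{n})\geq 2$ for every $n\geq n_{0}$, whence $\deg (g^{n}:V_{n}\rightarrow B(z_{0},\delta ))\geq 2^{\, n-n_{0}}\rightarrow \infty $. This is exactly the failure of the $SH_{N}$ bound for every $N$ and every $\delta $, i.e.\ $z_{0}\in UH(\langle g\rangle )\subset UH(G_{\tau })$. You gesture at this mechanism in your last sentence but explicitly decline to make it rigorous, and the detour through $J(G_{\tau })$, Lemma~\ref{l:aeconst}, and the structure of $P(G_{\tau })$ is unnecessary: no interaction with the rest of the semigroup or with the Julia set is involved, only the single map $g$. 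With this step supplied, $S_{\tau }\subset UH(G_{\tau })\subset P(G_{\tau })$ follows as you intend, the last inclusion being Remark~\ref{r:uhsp}.
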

\begin{proof}
We will modify the proof of Lemma~\ref{l:kattfpt1}. 
If $\G _{\tau } \cap \Ratp\neq\ \emptyset $, then by Lemma~\ref{l:hypmetcont}, 
we may assume that the element $g_{1}$ in the proof of Lemma~\ref{l:kattfpt1} belongs to 
$\Ratp.$ 
Therefore, 
$S_{\tau }= \{ \overline{z\in F(G)\cap S_{\tau }\mid \exists g\in G_{\tau }\cap \Ratp \mbox{ {\em s.t.} } g(z)=z, |m(g,z)|<1} \}.$ 
Since any attracting fixed point of $g\in G_{\tau }\cap \Ratp$ belongs to $UH(G_{\tau })$, 
our lemma holds. 
\end{proof}
\begin{lem}
\label{l:pfmtauspec12}
Under the assumptions of Theorem~\ref{t:mtauspec}, 
statement~\ref{t:mtauspec12} holds.
\end{lem}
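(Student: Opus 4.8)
The statement to prove is Theorem~\ref{t:mtauspec}-\ref{t:mtauspec12}: if $\dim_{\CC}(\LSfc)>1$, then for every non-constant $\varphi\in(\LSfc)_{nc}$ there is an uncountable set $A\subset\CC$ with $\varphi^{-1}(\{t\})\cap J(G_\tau)\neq\emptyset$ and $\varphi^{-1}(\{t\})\cap J(G_\tau)\subset J_{res}(G_\tau)$ for each $t\in A$.

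Here is how I would proceed.

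\medskip

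The plan is to exploit the structure of $\LSfc$ obtained in the earlier parts of the theorem, together with the ``boundary'' characterization of the residual Julia set. First I would fix a non-constant $\varphi\in\LSfc$. By statement~\ref{t:mtauspec2} (Lemma~\ref{l:lscf}) we have $\varphi\in C_{F(G_\tau)}(\CCI)$, so $\varphi$ is locally constant on $F(G_\tau)$ and hence, being non-constant and continuous, it genuinely varies on $J(G_\tau)$; moreover $\varphi(\CCI)=\varphi(J(G_\tau))$ is a compact connected subset of $\CC$ containing more than one point, so it contains an arc, in particular uncountably many values. The key mechanism for membership in $J_{res}(G_\tau)$ is: if $z\in J(G_\tau)$ lies on $\partial U$ for some $U\in\mathrm{Con}(F(G_\tau))$, then $\varphi(z)=c_U$, the constant value of $\varphi$ on $U$. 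Since $\sharp\mathrm{Con}(F(G_\tau))\leq\aleph_0$, the set $E:=\{c_U\mid U\in\mathrm{Con}(F(G_\tau))\}$ of ``values attained on boundaries of Fatou components'' is at most countable. Therefore, for every $t\in\varphi(J(G_\tau))\setminus E$, every point of $\varphi^{-1}(\{t\})\cap J(G_\tau)$ avoids all the sets $\partial U$, i.e. lies in $J_{res}(G_\tau)$; and $\varphi^{-1}(\{t\})\cap J(G_\tau)\neq\emptyset$ because $t\in\varphi(J(G_\tau))$.

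\medskip

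So I would set $A:=\varphi(J(G_\tau))\setminus E$. The two things to verify are (i) $A$ is uncountable and (ii) $\varphi^{-1}(\{t\})\cap J(G_\tau)\neq\emptyset$ for $t\in A$; (ii) is immediate from $A\subset\varphi(J(G_\tau))=\varphi(\CCI)$. For (i): $\varphi(\CCI)$ is a nondegenerate continuum in $\CC$ (connected, compact, with at least two points since $\varphi$ is non-constant), hence is uncountable — indeed has positive diameter and therefore uncountably many points. Removing the countable set $E$ still leaves an uncountable set. This gives the claim. The argument that a point on $\partial U$ has $\varphi$-value equal to $c_U$ uses only continuity of $\varphi$ together with $\varphi|_U\equiv c_U$: pick $z\in\partial U$, take $z_n\in U$ with $z_n\to z$, then $\varphi(z)=\lim\varphi(z_n)=c_U$.

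\medskip

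The main (minor) obstacle is making sure the hypothesis $\dim_{\CC}(\LSfc)>1$ is actually used in the right place: by Remark~\ref{r:ls1} this is exactly equivalent to $(\LSfc)_{nc}\neq\emptyset$, so the hypothesis guarantees that there exists a non-constant $\varphi$ to talk about, and for any such $\varphi$ the continuum $\varphi(\CCI)$ is nondegenerate — which is all that is needed. One should also double-check that nothing forces $J(G_\tau)$ to be too small; but under the standing assumptions $J_{\ker}(G_\tau)=\emptyset$ and $J(G_\tau)\neq\emptyset$ we have $\sharp J(G_\tau)\geq 3$ by statement~\ref{t:mtauspecj3} (Lemma~\ref{l:sjg3}), and since $\varphi$ is locally constant on $F(G_\tau)$ and non-constant overall, $\varphi$ necessarily takes at least two values on $J(G_\tau)$, so $\varphi(J(G_\tau))=\varphi(\CCI)$ is genuinely nondegenerate. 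The only remaining point of care is the countability of $\mathrm{Con}(F(G_\tau))$: this holds because $\CCI$ is a separable, locally connected space, so any open subset has at most countably many connected components. Assembling these observations yields the theorem.
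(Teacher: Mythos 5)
Your proof is correct and follows essentially the same route as the paper: both take $A=\varphi(\CCI)\setminus\varphi(F(G_{\tau}))$ (your $E$ is exactly $\varphi(F(G_{\tau}))$), use countability of $\mbox{Con}(F(G_{\tau}))$ together with the nondegeneracy of the continuum $\varphi(\CCI)$ to get uncountability, and use continuity plus local constancy on $F(G_{\tau})$ to place the level sets in $J_{res}(G_{\tau})$. No gaps.
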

\begin{proof}
Suppose $\dim _{\CC }(\LSfc )>1$ and let 
$\varphi \in \LSfc _{nc}.$ 
Let $A:= \varphi (\CCI )\setminus \varphi (F(G_{\tau })) .$ 
Since $\varphi \in C_{F(G_{\tau })}(\CCI )$ and 
since $\sharp \mbox{Con}(F(G_{\tau }))\leq \aleph _{0}$, 
we have $\sharp A>\aleph _{0}.$ 
Moreover, since $\varphi $ is continuous on $\CCI $, 
it is easy to see that for each $t\in A$, 
$\emptyset \neq \varphi ^{-1}(\{ t\} )\subset J_{res}(G_{\tau }).$ 
Thus we have proved our lemma.  
\end{proof}
\begin{lem}
\label{l:pfmtauspeccfi}
Under the assumptions of Theorem~\ref{t:mtauspec}, 
statement~\ref{t:mtauspeccfi} holds.
\end{lem}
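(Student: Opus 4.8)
The plan is a short topological argument by contradiction, exploiting that a non-constant element of $\LSfc$ is continuous on the connected space $\CCI$ but is constant on each of the (assumed finitely many) connected components of $F(G_{\tau })$. First I would invoke Remark~\ref{r:ls1}: since $\dim _{\CC }(\LSfc )>1$, the set $(\LSfc )_{nc}$ is non-empty, so we may fix a non-constant $\varphi \in (\LSfc )_{nc}$. By definition $\varphi \in C(\CCI )$, hence $\varphi $ is continuous on $\CCI $; and by Lemma~\ref{l:lscf} (equivalently, statement~\ref{t:mtauspec2} of Theorem~\ref{t:mtauspec}), $\varphi \in C_{F(G_{\tau })}(\CCI )$, so $\varphi $ is constant on each connected component of $F(G_{\tau })$.

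Next I would argue by contradiction. Suppose $\sharp \mbox{Con}(F(G_{\tau }))<\infty $. If $F(G_{\tau })=\emptyset $ then $J(G_{\tau })=\CCI $, so $\mbox{int}(J(G_{\tau }))=\CCI \neq \emptyset $, contrary to hypothesis; hence we may write $F(G_{\tau })=U_{1}\cup \cdots \cup U_{k}$ with $k\geq 1$ and $\varphi |_{U_{j}}\equiv c_{j}$ for each $j$. Then $\varphi (F(G_{\tau }))\subset \{ c_{1},\ldots ,c_{k}\} $ is finite. Since $\mbox{int}(J(G_{\tau }))=\emptyset $, the open set $F(G_{\tau })=\CCI \setminus J(G_{\tau })$ is dense in $\CCI $, so $\overline{F(G_{\tau })}=\CCI $; by continuity of $\varphi $ and compactness of $\CCI $, $\varphi (\CCI )=\varphi (\overline{F(G_{\tau })})\subset \overline{\varphi (F(G_{\tau }))}=\{ c_{1},\ldots ,c_{k}\} $, so $\varphi (\CCI )$ is a finite subset of $\CC $.

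Finally, since $\CCI $ is connected and $\varphi $ is continuous, $\varphi (\CCI )$ is connected; a finite connected subset of $\CC $ is a single point, so $\varphi $ is constant on $\CCI $, contradicting $\varphi \in (\LSfc )_{nc}$. Hence $\sharp \mbox{Con}(F(G_{\tau }))=\infty $, which is statement~\ref{t:mtauspeccfi}. I do not expect any genuine obstacle in this argument; the only two points requiring care are citing correctly the already-established fact that $\varphi $ is simultaneously continuous on $\CCI $ and locally constant on $F(G_{\tau })$, and using the hypothesis $\mbox{int}(J(G_{\tau }))=\emptyset $ exactly to guarantee that $F(G_{\tau })$ is dense, so that the finitely many values taken by $\varphi $ on $F(G_{\tau })$ already account for all of $\varphi (\CCI )$.
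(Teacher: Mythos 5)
Your proof is correct and is essentially the same argument as the paper's: both use that a non-constant $\varphi \in \LSfc _{nc}$ is continuous on the connected space $\CCI $, locally constant on $F(G_{\tau })$, and that int$(J(G_{\tau }))=\emptyset $ forces $\varphi (\CCI )=\overline{\varphi (F(G_{\tau }))}$, so finitely many Fatou components would make $\varphi $ constant. The paper phrases it directly (the image is uncountable, hence infinitely many components) while you argue by contradiction, but the content is identical.
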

\begin{proof}
Suppose $\dim _{\CC }(\LSfc )>1$ and int$(J(G_{\tau }))=\emptyset .$ 
Let $\varphi \in \LSfc _{nc}.$ 
Then $\sharp \varphi (\CCI )>\aleph _{0}.$ 
Since  int$(J(G_{\tau }))=\emptyset $ and 
$\varphi $ is continuous on $\CCI $, 
we have $\varphi (\CCI )=\overline{\varphi (F(G_{\tau }))}.$ 
Therefore, $\sharp \mbox{Con}(F(G_{\tau }))=\infty .$ 
Thus, we have proved our lemma. 
\end{proof}

We now prove Theorem~\ref{t:mtauspec}. \\ 

\noindent {\bf Proof of Theorem~\ref{t:mtauspec}:}
Combining Lemma~\ref{l:sjg3}--Lemma~\ref{l:pfmtauspeccfi}, 
we easily see that all of the statements 1--20 of Theorem~\ref{t:mtauspec} hold. 
Statement~\ref{t:mtauspecdeg1} follows from statements~\ref{t:mtauspec10} and \ref{t:mtauspec7-2}.   
\qed 
%In order to prove $\Uvac =\Uvc $, let 
%$a\in \Uvac $ and let $\rho \in \Ufac $ with 
%$(M_{\tau })_{\ast }(\rho )=a\rho .$ 
%Let $\{ n_{j}\} _{j=1}^{\infty }$ be a sequence in 
%$\NN $ such that 
%$a^{n_{j}}\rightarrow 1$ as $j\rightarrow \infty .$ 
%Let $\varphi \in C(\CCI ).$ 
%By Lemma~\ref{}
%\end{proof}
\subsection{Proofs of results in subsection~\ref{ProT}}
\label{pfProT}
In this subsection, we give the proofs of the results in subsection~\ref{ProT}.
\begin{lem}
\label{tinftyphilem1}
Let $\tau \in {\frak M}_{1}({\cal P})$ and suppose that 
$\infty \in F(G_{\tau }).$ Let $\phi \in C(\CCI )$ be such that 
$\phi $ is equal to constant function $1$ around $\infty $ and such that 
{\em supp}$\, \phi \subset F_{\infty }(G_{\tau }).$ 
Then, for each $\gamma \in X_{\tau }$, 
$\gamma _{n,1}\rightarrow \infty $ as $n\rightarrow \infty $ locally uniformly 
on $F_{\infty }(G_{\tau })$ and for each $y\in \CCI $, 
\begin{align*}
T_{\infty ,\tau }(y)
& = \tilde{\tau }(\{ \gamma \in X_{\tau }\mid \phi (\gamma _{n,1}(y))\rightarrow \infty \  
(n\rightarrow \infty )\} )\\ 
& = \tilde{\tau }(\{ \gamma \in X_{\tau }\mid \exists n\in \NN \ \phi (\gamma _{n,1}(y))=1  \} )=
\lim _{n\rightarrow \infty }M_{\tau }^{n}(\phi )(y).
\end{align*}
In particular, $M_{\tau }(T_{\infty ,\tau })=T_{\infty ,\tau }.$ 
\end{lem}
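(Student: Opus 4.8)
The plan is to establish the three displayed equalities in turn, then read off the invariance of $T_{\infty ,\tau }$ as a corollary of the last one. The first claim, that $\gamma _{n,1}\rightarrow \infty $ locally uniformly on $F_{\infty }(G_{\tau })$ for every $\gamma \in X_{\tau }$, I would prove by a standard polynomial-dynamics argument: since $\Gamma _{\tau }$ is compact in ${\cal P}$, there is a uniform radius $R>1$ and a uniform constant so that $|h(z)|\geq 2|z|$ for all $|z|\geq R$ and all $h\in \Gamma _{\tau }$; hence $\{|z|>R\}$ is mapped into $\{|z|>2R\}$ by every generator, so on this set $\gamma _{n,1}\to\infty$ uniformly. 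Since $F_{\infty }(G_{\tau })$ is the Fatou component containing $\infty $ and the family $\{\gamma _{n,1}\}$ is normal there (being a subfamily of the Fatou set of $G_\tau$), every locally uniform limit of a subsequence that agrees with $\infty$ on an open subset must be $\equiv \infty$ by the identity theorem; combined with the fact that $F_\infty(G_\tau)$ meets $\{|z|>R\}$, this forces $\gamma _{n,1}\to\infty$ locally uniformly on all of $F_{\infty }(G_{\tau })$.

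For the second and third equalities, the key point is the dynamical dichotomy on $F_{\infty }(G_{\tau })$: with $\phi$ as specified (supported in $F_{\infty }(G_{\tau })$, equal to $1$ near $\infty$), I claim that for each $y$ and each $\gamma$,
\[
d(\gamma _{n,1}(y),\infty )\to 0 \iff \gamma _{n,1}(y)\in F_{\infty }(G_{\tau })\text{ for some }n \iff \phi(\gamma _{n,1}(y))=1\text{ for some }n \iff \phi(\gamma _{n,1}(y))\to\infty.
\]
The implication from ``$\gamma _{n,1}(y)\to\infty$'' downward is immediate from the definition of $\phi$ and openness of $F_{\infty }(G_{\tau })$. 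Conversely, if $\gamma _{n_0,1}(y)\in F_{\infty }(G_{\tau })$ for some $n_0$, then applying the first part of the lemma to the shifted sequence $\sigma ^{n_0}(\gamma )$ and the point $\gamma _{n_0,1}(y)$ gives $\gamma _{n,1}(y)\to\infty$; and ``$\phi(\gamma _{n,1}(y))=1$ for some $n$'' implies $\gamma _{n,1}(y)\in F_{\infty }(G_{\tau })$ by the support condition on $\phi$. So the four events in the displayed chain coincide as subsets of $X_{\tau }$, and taking $\tilde{\tau }$-measure gives the second equality (the leftmost equals the definition of $T_{\infty ,\tau }(y)$).

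For the third equality I would write $\phi\circ\gamma _{n,1}(y)$ as a bounded sequence of measurable functions of $\gamma$; on the event just described it converges (to $1$ where $\phi$ eventually equals $1$, and the limit is finite since $\phi$ is bounded — actually I should use a bounded version, replacing ``$\to\infty$'' by ``$\to 1$'' using that $\phi\equiv 1$ near $\infty$, which is what the lemma's middle line already does). Since $0\le\phi\le 1$ is bounded and $\phi(\gamma _{n,1}(y))$ converges $\tilde{\tau }$-a.e.\ to $\mathbf{1}[\exists n:\phi(\gamma _{n,1}(y))=1]$, the dominated convergence theorem gives $M_{\tau }^{n}(\phi )(y)=\int \phi(\gamma _{n,1}(y))\,d\tilde{\tau }(\gamma )\to T_{\infty ,\tau }(y)$. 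Finally, $M_{\tau }(T_{\infty ,\tau })=T_{\infty ,\tau }$ follows because $M_{\tau }^{n}(\phi )\to T_{\infty ,\tau }$ pointwise and $M_{\tau }$ is a bounded operator commuting with this limit: $M_{\tau }(T_{\infty ,\tau })=M_{\tau }(\lim _n M_{\tau }^{n}\phi)=\lim _n M_{\tau }^{n+1}\phi=T_{\infty ,\tau }$, using that the convergence $M_\tau^n(\phi)\to T_{\infty,\tau}$ can be upgraded to uniform convergence (or else applying $M_\tau$ under the integral sign directly). The main obstacle I anticipate is being careful about the ``for some $n$'' versus ``for all large $n$'' phrasing and ensuring the a.e.\ convergence of $\phi(\gamma_{n,1}(y))$ is genuinely justified by the first part of the lemma applied to shifted sequences — i.e.\ that the transience to $\infty$ is an absorbing (monotone) event, which is exactly what the uniform escape estimate on $\{|z|>R\}$ guarantees.
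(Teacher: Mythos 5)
Your overall architecture matches the paper's: (i) show $\gamma_{n,1}\to\infty$ locally uniformly on $F_{\infty}(G_{\tau})$ for every $\gamma$, (ii) use this to show that entering $F_{\infty}(G_{\tau})$ is an absorbing event, so that $\phi(\gamma_{n,1}(y))$ converges pointwise to the indicator of the escape event (either to $1$ or to $0$), and (iii) apply bounded convergence to get $M_{\tau}^{n}(\phi)(y)\to T_{\infty,\tau}(y)$ and then again under $\int\cdot\,d\tau(g)$ to get $M_{\tau}(T_{\infty,\tau})=T_{\infty,\tau}$. Steps (ii) and (iii) are correct as you state them (and you are right that the ``$\to\infty$'' in the displayed formula should be read as ``$\to 1$''; for the invariance you should use dominated convergence inside the integral rather than hoping for uniform convergence, which is not available, but you do name that alternative).

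The one genuine gap is in step (i). You derive the uniform escape estimate $|h(z)|\geq 2|z|$ for $|z|\geq R$ from compactness of $\Gamma_{\tau}$ in ${\cal P}$, but the lemma is stated for $\tau\in{\frak M}_{1}({\cal P})$ with only the hypothesis $\infty\in F(G_{\tau})$; the support need not be compact, and it is used in that generality later (e.g.\ in Lemmas~\ref{lem:fpt0tconti} and \ref{tinftyconstlem1}). Without a uniform escape radius your identity-theorem argument has no open set on which a subsequential limit is forced to equal $\infty$, so the argument does not close. The paper's proof avoids this: it uses normality of $\{\gamma_{n,1}\}$ on $F_{\infty}(G_{\tau})$ together with the observation that each $\gamma_{n,1}$ is a polynomial fixing $\infty$ with local degree at $\infty$ at least $2^{n}\to\infty$; by Hurwitz/the argument principle a non-constant locally uniform limit would bound these local degrees, so every limit function is the constant $\infty$. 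You should replace your escape-radius step by this local-degree argument (or else add the hypothesis that $\mbox{supp}\,\tau$ is compact, which would weaken the lemma).
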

\begin{proof}
First, we show the following claim.\\ 
Claim. For each $\gamma \in X_{\tau }$, 
$\gamma _{n,1}\rightarrow \infty $ as $n\rightarrow \infty $ locally uniformly 
on $F_{\infty }(G_{\tau }).$ 

 To prove the claim, let $\gamma \in X_{\tau }.$ 
Then $\{ \gamma _{n,1}\} _{n=1}^{\infty }$ is normal in 
$F_{\infty }(G_{\tau }).$ Let $\{ n_{j}\} _{j\in \NN }$ be a 
sequence in $\NN $ such that 
$\gamma _{n_{j},1}$ converges to some $\alpha $ as $j\rightarrow \infty $ 
locally uniformly on $F_{\infty }(G_{\tau }).$  
Since 
 the local degree of $\gamma _{n_{j},1}$ at $\infty $ tends to 
 $\infty $, $\alpha $ should be the constant $\infty .$ Thus, the above claim holds. 

 Let $\gamma \in X_{\tau }$ and let $y\in \CCI .$  By the above claim, the following (1),(2) and (3) are equivalent:
%\begin{itemize}
%\item 
(1) $\gamma _{n,1}(y)\rightarrow \infty $ as $n\rightarrow \infty .$ 
%\item 
(2) $\phi (\gamma _{n,1}(y))\rightarrow 1$ as $n\rightarrow \infty .$ 
%\item 
(3) There exists an $n\in \NN $ such that $\phi (\gamma _{n,1}(y))=1.$
%\end{itemize} 

Moreover, by the claim, for a point $y\in \CCI $, either 
$\phi (\g _{n,1}(y))\rightarrow 1$ or $\phi (\g _{n,1}(y))\rightarrow 0.$ 
Hence 
\begin{align*}
T_{\infty ,\tau }(y)=\tilde{\tau }(\{ \gamma \in X_{\tau } \mid 
\phi (\gamma _{n,1}(y))\rightarrow 1\} )
& = \int _{X_{\tau }}\lim _{n\rightarrow \infty }\phi (\g _{n,1}(y))\ d\tilde{\tau }(\g )\\ 
& = \lim _{n\rightarrow \infty }\int _{X_{\tau }}\phi (\gamma _{n,1}(y))\ d\tilde{\tau }(\g )=
\lim _{n\rightarrow \infty }M_{\tau }^{n}(\phi )(y).
\end{align*}
From these arguments, the statement of the lemma follows. 
\end{proof}
\begin{lem}
\label{lem:fpt0tconti}
Let $\tau \in {\frak M}_{1}({\cal P})$ and suppose that 
$\infty \in F(G_{\tau }).$ Let $y\in F_{pt}^{0}(\tau )$ be a point. 
Then, $T_{\infty ,\tau }$ is continuous at $y.$ 
\end{lem}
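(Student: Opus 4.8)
The plan is to realize $T_{\infty ,\tau }$ as a pointwise limit of iterates $M_{\tau }^{n}(\phi )$ for a well-chosen $\phi \in C(\CCI )$, and then push the one-point equicontinuity hypothesis at $y$ through that limit.

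First I would choose the test function. Since $\infty \in F(G_{\tau })$, the component $F_{\infty }(G_{\tau })$ is an open neighborhood of $\infty $ in $\CCI $, so there exists $\phi \in C(\CCI )$ with $\phi \equiv 1$ on a neighborhood of $\infty $ and $\mbox{supp}\, \phi \subset F_{\infty }(G_{\tau })$. By Lemma~\ref{tinftyphilem1} (whose only hypothesis, $\infty \in F(G_{\tau })$, is exactly what we have assumed), $T_{\infty ,\tau }(z)=\lim _{n\rightarrow \infty }M_{\tau }^{n}(\phi )(z)$ for every $z\in \CCI $.

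Next I would invoke the characterization of $F_{pt}^{0}(\tau )$ given in Lemma~\ref{FJmeaslem1}-\ref{FJmeaslem1-2}: the assumption $y\in F_{pt}^{0}(\tau )$ is equivalent to the statement that for every $\psi \in C(\CCI )$ the family of functions $\{ z\mapsto M_{\tau }^{n}(\psi )(z)\} _{n\in \NN }$ is equicontinuous at the one point $y$. Applying this with $\psi =\phi $: given $\epsilon >0$ there is $\delta >0$ such that $|M_{\tau }^{n}(\phi )(z)-M_{\tau }^{n}(\phi )(y)|<\epsilon $ for all $n\in \NN $ and all $z$ with $d(z,y)<\delta $. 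Letting $n\rightarrow \infty $ and using the previous paragraph gives $|T_{\infty ,\tau }(z)-T_{\infty ,\tau }(y)|\leq \epsilon $ for all such $z$, and since $\epsilon >0$ is arbitrary this is precisely continuity of $T_{\infty ,\tau }$ at $y$.

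There is no substantive obstacle in this argument; it is a short reduction to two already-proved facts. The only points requiring a little care are confirming that a suitable $\phi $ exists (immediate from $\infty \in F(G_{\tau })$) and that the hypotheses of Lemma~\ref{tinftyphilem1} and of the equivalence in Lemma~\ref{FJmeaslem1}-\ref{FJmeaslem1-2} are met, so that the pointwise limit representation of $T_{\infty ,\tau }$ and the function-level equicontinuity at $y$ may both be used.
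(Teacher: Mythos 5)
Your proof is correct and follows exactly the same route as the paper's: represent $T_{\infty ,\tau }$ as the pointwise limit of $M_{\tau }^{n}(\phi )$ via Lemma~\ref{tinftyphilem1}, then apply the one-point equicontinuity characterization of $F_{pt}^{0}(\tau )$ from Lemma~\ref{FJmeaslem1}-\ref{FJmeaslem1-2} and pass to the limit. Nothing is missing.
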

\begin{proof}
Let $\phi \in C(\CCI )$ be as in Lemma~\ref{tinftyphilem1}. 
By Lemma~\ref{tinftyphilem1}, we have that for each $y\in \CCI $, 
$T_{\infty ,\tau }(y)=\lim _{n\rightarrow \infty }M_{\tau }^{n}(\phi )(y).$ 
From Lemma~\ref{FJmeaslem1}-\ref{FJmeaslem1-2}, 
it follows that $T_{\infty ,\tau }$ is continuous at $y.$ 
Thus, we have completed the proof of our lemma.
\end{proof}

\begin{lem}
\label{fmeasytconti}
Let $\tau \in {\frak M}_{1}({\cal P}).$ Suppose that 
$\infty \in F(G_{\tau }) $ and $F_{meas}(\tau )={\frak M}_{1}(\CCI ).$ 
Then, $T_{\infty ,\tau }: \CCI \rightarrow [0,1]$ is continuous on $\CCI .$  
\end{lem}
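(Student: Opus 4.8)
The plan is to deduce Lemma~\ref{fmeasytconti} from the pointwise continuity statement already established in Lemma~\ref{lem:fpt0tconti}, by showing that the hypothesis $F_{meas}(\tau )={\frak M}_{1}(\CCI )$ forces $F_{pt}^{0}(\tau )=\CCI $. First I would invoke Lemma~\ref{FJmeaslem1}-\ref{FJmeaslem1-6} (with $Y=\CCI $), which states precisely that $F_{pt}^{0}(\tau )=\CCI $ if and only if $F_{meas}(\tau )={\frak M}_{1}(\CCI )$. Since we are assuming the latter, we obtain $F_{pt}^{0}(\tau )=\CCI $; equivalently, every point $y\in \CCI $ lies in $F_{pt}^{0}(\tau )$.

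Next, for each $y\in \CCI $, Lemma~\ref{lem:fpt0tconti} (whose hypothesis $\infty \in F(G_{\tau })$ is part of the hypothesis of the present lemma) tells us that $T_{\infty ,\tau }$ is continuous at $y$. Since this holds at every point $y\in \CCI $, the function $T_{\infty ,\tau }:\CCI \rightarrow [0,1]$ is continuous on all of $\CCI $, which is the assertion. So the argument is essentially a two-line chain: $F_{meas}(\tau )={\frak M}_{1}(\CCI )\Rightarrow F_{pt}^{0}(\tau )=\CCI \Rightarrow T_{\infty ,\tau }$ continuous everywhere.

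There is no real obstacle here; the work has already been done in the preparatory lemmas. The only thing to be careful about is bookkeeping: making sure the hypotheses of Lemma~\ref{lem:fpt0tconti} and of Lemma~\ref{FJmeaslem1}-\ref{FJmeaslem1-6} are all in force. Lemma~\ref{FJmeaslem1} requires only that $Y$ is a compact metric space and $\tau \in {\frak M}_{1}(\emCMX)$, which holds with $Y=\CCI $ and $\tau \in {\frak M}_{1}({\cal P})\subset {\frak M}_{1}(\emCMX)$; and Lemma~\ref{lem:fpt0tconti} additionally needs $\infty \in F(G_{\tau })$, which is explicitly assumed in Lemma~\ref{fmeasytconti}. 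Thus the proof reads: ``By Lemma~\ref{FJmeaslem1}-\ref{FJmeaslem1-6}, $F_{meas}(\tau )={\frak M}_{1}(\CCI )$ implies $F_{pt}^{0}(\tau )=\CCI .$ Hence, by Lemma~\ref{lem:fpt0tconti}, $T_{\infty ,\tau }$ is continuous at each point of $\CCI $, i.e., $T_{\infty ,\tau }$ is continuous on $\CCI .$ $\qed$''
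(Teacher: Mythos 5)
Your proof is correct and follows essentially the same route as the paper, whose own proof simply states that the lemma ``easily follows from Lemma~\ref{lem:fpt0tconti}''; your use of Lemma~\ref{FJmeaslem1}-\ref{FJmeaslem1-6} to pass from $F_{meas}(\tau )={\frak M}_{1}(\CCI )$ to $F_{pt}^{0}(\tau )=\CCI $ is exactly the intended (implicit) step. The bookkeeping of hypotheses is also handled correctly.
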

\begin{proof}
The statement of our lemma easily follows from Lemma~\ref{lem:fpt0tconti}.
\end{proof}
We now prove Theorem~\ref{kerJthm2}. 

\noindent {\bf Proof of Theorem~\ref{kerJthm2}:} 
Since supp$\,\tau $ is compact, $\infty \in F(G_{\tau }).$ 
Combining Theorem~\ref{kerJthm1} and Lemma~\ref{fmeasytconti}, 
the statement of Theorem~\ref{kerJthm2} follows.  
\qed 

\begin{lem}
\label{tinftyconstlem1}
Let $\tau \in {\frak M}_{1}({\cal P}).$ Suppose $\infty \in F(G_{\tau }).$ 
Then, for each $U\in \mbox{{\em Con}}(F(G_{\tau }))$,  
there exists a constant $C_{U}\in [0,1]$ such that 
$T_{\infty ,\tau }|_{U}\equiv C_{U}.$  
\end{lem}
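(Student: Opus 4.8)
\textbf{Proof proposal for Lemma~\ref{tinftyconstlem1}.}

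The plan is to show that $T_{\infty ,\tau }$ is a normal-family limit on each Fatou component and then invoke Vitali's (or Montel's) theorem. First I would fix $U\in \mbox{Con}(F(G_{\tau }))$ and pick the cutoff function $\phi \in C(\CCI )$ from Lemma~\ref{tinftyphilem1}: $\phi \equiv 1$ near $\infty $ and $\mbox{supp}\,\phi \subset F_{\infty }(G_{\tau })$. By that lemma, for every $y\in \CCI $ we have $T_{\infty ,\tau }(y)=\lim _{n\to\infty }M_{\tau }^{n}(\phi )(y)$, where $M_{\tau }^{n}(\phi )(y)=\int _{X_{\tau }}\phi (\gamma _{n,1}(y))\ d\tilde{\tau }(\gamma )$. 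So the point is to understand the functions $y\mapsto M_{\tau }^{n}(\phi )(y)$ on $U$.

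Next I would observe that each $M_{\tau }^{n}(\phi )$, restricted to $U$, is a pointwise average of the continuous functions $y\mapsto \phi (\gamma _{n,1}(y))$; since $\phi $ takes values in $[0,1]$, each $M_{\tau }^{n}(\phi )$ is a $[0,1]$-valued continuous function on $U$. The family $\{M_{\tau }^{n}(\phi )|_{U}\}_{n\in\NN }$ is uniformly bounded. I claim it is in fact normal in the sense appropriate for Vitali's theorem: each composite $\gamma _{n,1}$ is holomorphic on $U$ and the family $\{\gamma _{n,1}|_{U}\}$ is equicontinuous (that is the definition of $U\subset F(G_{\tau })$); hence $\{\phi \circ \gamma _{n,1}|_{U}\}$ is equicontinuous on $U$, and averaging over $\tilde{\tau }$ preserves equicontinuity (the modulus of continuity passes through the integral). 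By Ascoli--Arzel\`a the sequence $\{M_{\tau }^{n}(\phi )|_{U}\}$ is then relatively compact in the topology of uniform convergence on compact subsets of $U$. Since it converges pointwise to $T_{\infty ,\tau }$ by Lemma~\ref{tinftyphilem1}, it converges locally uniformly, and so $T_{\infty ,\tau }|_{U}$ is continuous on $U$.

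Finally, to upgrade ``continuous'' to ``constant'' I would use the dynamical structure. Fix $y_{0},y_{1}\in U$. Using equicontinuity of $\{\gamma _{n,1}|_{U}\}$ on the Fatou component, one argues that along any subsequence where $\gamma _{n,1}|_{U}$ converges, the limit function is either constant or, if non-constant, its image lies in a single Fatou component, so that $\phi $ applied to it is again an average limit; in either case $\phi (\gamma _{n,1}(y_{0}))$ and $\phi (\gamma _{n,1}(y_{1}))$ have the same limiting behaviour for a.e.\ $\gamma $. More directly: the cleanest route is to replace the full $M_{\tau }^{n}(\phi )$ argument by the observation that, for each fixed $\gamma \in X_{\tau }$, the function $y\mapsto \lim _{n}\phi (\gamma _{n,1}(y))$ is constant on $U$ — indeed $\{\gamma _{n,1}|_{U}\}$ is a normal family of holomorphic maps into $\CCI $, and whenever a subsequence converges to a non-constant limit its image is contained in a single connected component of $F(G_{\tau })$, on which $\phi $ (being locally constant on $F(G_{\tau })$ away from $\partial F_{\infty }$, or rather: $\phi \circ(\text{limit})$ is then itself a limit of the same type) yields the same value at $y_{0}$ and $y_{1}$; when the limit is constant the values trivially agree. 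Integrating $\lim _{n}\phi (\gamma _{n,1}(y))$ over $\gamma $ (legitimate by bounded convergence together with Lemma~\ref{tinftyphilem1}) gives $T_{\infty ,\tau }(y_{0})=T_{\infty ,\tau }(y_{1})=:C_{U}$, and $C_{U}\in [0,1]$ since $T_{\infty ,\tau }$ is a probability. The main obstacle I anticipate is making the ``$\phi $ applied to a non-constant holomorphic limit is again a limit of the averaged quantity'' step fully rigorous without circularity; I expect this is handled exactly as in the proof of Lemma~\ref{Tlem1} (Vitali's theorem applied to $\{M_{\tau }^{n}(\phi )|_{U}\}$ viewed as harmonic-like/holomorphic-averaged functions), so the honest plan is to reduce Lemma~\ref{tinftyconstlem1} to a verbatim repetition of that Vitali argument, using Lemma~\ref{tinftyphilem1} to identify the limit as $T_{\infty ,\tau }$.
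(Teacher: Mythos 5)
Your final ``direct'' route is the right one, and it is the paper's: for each fixed $\gamma \in X_{\tau }$ one shows that the event ``$\gamma _{n,1}(y)\rightarrow \infty $'' does not depend on the choice of $y\in U$, and then integrates over $\gamma $. But your justification of that key step has a genuine gap. You argue that if a subsequence $\gamma _{n_{j},1}|_{U}$ converges to a non-constant limit whose image lies in one component of $F(G_{\tau })$, then $\phi $ ``yields the same value at $y_{0}$ and $y_{1}$'' because $\phi $ is (locally) constant there. That is false: $\phi $ is only a continuous cutoff function, equal to $1$ near $\infty $ and supported in $F_{\infty }(G_{\tau })$; it is not locally constant on $F(G_{\tau })$, so $\phi $ composed with a non-constant limit need not take equal values at two points. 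Your fallback --- ``handled exactly as in the proof of Lemma~\ref{Tlem1}'' --- is circular, since in the paper Lemma~\ref{Tlem1} is itself deduced from the present lemma. (The Vitali/Ascoli part of your proposal is also not needed: it only yields continuity of $T_{\infty ,\tau }|_{U}$, not constancy, as you note.)

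The correct argument avoids limit functions entirely and uses two facts already available. First, the Claim in Lemma~\ref{tinftyphilem1}: for every $\gamma \in X_{\tau }$, $\gamma _{n,1}\rightarrow \infty $ locally uniformly on $F_{\infty }(G_{\tau })$. Second, Lemma~\ref{ocminvlem}: each $g\in G_{\tau }$ maps $F(G_{\tau })$ into itself. Now fix $y,y'\in U$ and $\gamma $ with $\gamma _{n,1}(y)\rightarrow \infty $. Then $\gamma _{n_{0},1}(y)\in F_{\infty }(G_{\tau })$ for some $n_{0}$; since $\gamma _{n_{0},1}(U)$ is a connected subset of $F(G_{\tau })$ containing that point, it lies entirely in $F_{\infty }(G_{\tau })$, so $\gamma _{n_{0},1}(y')\in F_{\infty }(G_{\tau })$, and applying the Claim to the shifted sequence $\sigma ^{n_{0}}(\gamma )$ gives $\gamma _{n,1}(y')\rightarrow \infty $. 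Hence $\{ \gamma \mid \gamma _{n,1}(y)\rightarrow \infty \} $ is the same set for all $y\in U$, and $T_{\infty ,\tau }|_{U}\equiv C_{U}$ for some $C_{U}\in [0,1]$.
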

\begin{proof}
Let $U\in \mbox{Con}(F(G_{\tau }))$ and let $y\in U.$ 
Moreover, let $\gamma \in X_{\tau }.$  
By Lemma~\ref{tinftyphilem1} and Lemma~\ref{ocminvlem}, 
if $\gamma _{n,1}(y)\rightarrow \infty $ as 
$n\rightarrow \infty $, then 
for each $y'\in U$, 
$\gamma _{n,1}(y')\rightarrow \infty $ as $n\rightarrow \infty .$ 
Thus, there exists a constant $C_{U}\in [0,1]$ such that 
$T_{\infty ,\tau }|_{U}\equiv C_{U}.$ 
\end{proof}
We now prove Lemma~\ref{Tlem1}. 

\noindent {\bf Proof of Lemma~\ref{Tlem1}:}
Since supp$\, \tau $ is compact, it follows that $\infty \in F(G_{\tau })$, and  
the statement of Lemma~\ref{Tlem1} follows from 
Lemma~\ref{tinftyconstlem1}. 
\qed 

\begin{lem}
\label{ksetfundlem1}
Let $G$ be a polynomial semigroup generated by a family of ${\cal P}.$ 
Then, $\hat{K}(G)$ is a compact subset of $\CC $, 
$g(\hat{K}(G))\subset \hat{K}(G)$ for each $g\in G$,  
 $\partial \hat{K}(G)\subset J(G)$, and $F(G)\cap \hat{K}(G)= \mbox{{\em int}}(\hat{K}(G)).$  
\end{lem}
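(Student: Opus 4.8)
\textbf{Proof proposal for Lemma~\ref{ksetfundlem1}.}

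The plan is to establish the four assertions in turn, using only elementary properties of polynomials of degree $\geq 2$ together with the backward self-similarity / invariance facts already available (Lemma~\ref{ocminvlem}).

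First I would prove compactness of $\hat{K}(G)$ in $\CC$. Since $\hat{K}(G)=\bigcap_{g\in G}g^{-1}(\overline{D(0,R_g)})$-type descriptions are awkward for a semigroup, instead fix any single generator $h_0\in\Gamma\subset{\cal P}$; then $\hat{K}(G)\subset K(h_0)$, and $K(h_0)$ is a compact subset of $\CC$ because $h_0$ is a polynomial of degree $\geq 2$ (points with large modulus escape to $\infty$ under iteration of $h_0$). So $\hat{K}(G)$ is bounded in $\CC$. For closedness: if $z_n\to z$ with $z_n\in\hat{K}(G)$, then for every $g\in G$ the set $\{g(z_n)\}_n$ lies in the bounded set $\{w\in\CC\mid |w|\leq\sup_{\zeta\in\hat{K}(G)}|g(\zeta)|\}$; more directly, each $g\in G$ is continuous on $\CC$, so $g(z)=\lim g(z_n)$, and since $\{g(w)\mid w\in\hat{K}(G)\}$ is bounded (it is contained in a fixed bounded set depending only on $g$, because $\hat K(G)$ is bounded and $g$ is continuous on the compact closure), $g(z)$ is finite; one then checks $\{g(z)\mid g\in G\}$ is bounded by a limiting argument over $g$. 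Cleaner: $\hat{K}(G)$ is the intersection $\bigcap_{g\in G^{\ast}}g^{-1}(\hat{K}(G))$ is not quite right either — the robust statement is $\hat K(G)=\{z\in\CC\mid G^{\ast}(z)\text{ is bounded}\}$, and boundedness of an orbit is a closed condition once we know a uniform bound, which we get from $\hat K(G)\subset K(h_0)$ and forward invariance (proved next), forcing $G^{\ast}(z)\subset K(h_0)$ whenever $z\in\hat K(G)$; hence $\hat K(G)=\bigcap_{g\in G^{\ast}}g^{-1}(K(h_0))$, a closed set.

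The forward invariance $g(\hat{K}(G))\subset\hat{K}(G)$ for each $g\in G$ is immediate from the definition: if $\{h(z)\mid h\in G\}$ is bounded in $\CC$, then so is $\{h(g(z))\mid h\in G\}$, since $hg\in G$ for all $h\in G$ and this is a subfamily of $\{h(z)\mid h\in G\}$ together with possibly $g(z)$ itself. Next, for $\partial\hat{K}(G)\subset J(G)$: suppose $z\in\partial\hat{K}(G)\cap F(G)$. Then there is a connected neighborhood $U$ of $z$ on which $\{g|_U\}_{g\in G}$ is equicontinuous, hence normal; since $\infty\in F(G)$ (as $\Gamma$ is compact in ${\cal P}$) is a fixed attracting-type behavior, no limit function of $\{g|_U\}$ equal to $\infty$ can occur on a point of $\hat K(G)$, and by the maximum/openness principle together with the fact that $U$ meets both $\hat K(G)$ and its complement, one derives that points of $U\setminus\hat K(G)$ near $z$ must also have bounded orbit, contradicting $z\in\partial\hat K(G)$; concretely, use that $\hat K(G)\cap U$ is relatively closed in $U$ and, via normality and the identity-principle-type argument for the limit functions, also relatively open in $U$, so $U\subset\hat K(G)$, contradicting $z\in\partial\hat K(G)$. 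Finally $F(G)\cap\hat{K}(G)=\mathrm{int}(\hat{K}(G))$: the inclusion $\supset$ holds because $\mathrm{int}(\hat K(G))$ is open and $G$ is locally bounded there (orbits stay in the bounded set $\hat K(G)$), hence equicontinuous by Montel (the complement $\CCI\setminus\hat K(G)$ contains a neighborhood of $\infty$, so it is a normal family), giving $\mathrm{int}(\hat K(G))\subset F(G)$, and trivially $\mathrm{int}(\hat K(G))\subset\hat K(G)$. For $\subset$: if $z\in F(G)\cap\hat K(G)$ then $z\notin\partial\hat K(G)$ by the previous assertion, and since $z\in\hat K(G)$ this forces $z\in\mathrm{int}(\hat K(G))$.

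The main obstacle will be the proof that $\partial\hat{K}(G)\subset J(G)$: one must rule out that a boundary point of $\hat K(G)$ lies in the Fatou set, and the subtlety is that for a semigroup the limit functions of $\{g|_U\}_{g\in G}$ along different sequences $g$ can behave differently, so the clean single-map argument (escape rate / Green's function) is not directly available. I would handle it by the topological dichotomy sketched above: show $\hat K(G)\cap U$ is both relatively closed (definitional) and relatively open in any Fatou component $U$ — relative openness coming from the fact that near an interior-of-Fatou point where the orbit is bounded, Montel normality plus Hurwitz-type control on limit functions (which cannot be the constant $\infty$ on a set accumulating at $z$) propagates boundedness to a whole neighborhood — whence $U\subset\hat K(G)$ and $\partial\hat K(G)\cap F(G)=\emptyset$.
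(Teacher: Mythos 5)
Your proposal is correct and follows essentially the same route as the paper: compactness via the identity $\hat{K}(G)=\bigcap_{g\in G^{\ast }}g^{-1}(K(h_{0}))$, forward invariance straight from the definition, and the observation that the Fatou property propagates orbit boundedness to a whole neighborhood, which gives $\partial \hat{K}(G)\cap F(G)=\emptyset $ and hence the final identity. One remark: your detour through normality and ``Hurwitz-type control'' to get relative openness of $\hat{K}(G)\cap U$ is both unnecessary and slightly shaky as stated (a non-constant limit function could a priori have a pole arbitrarily close to $w$); since $F(G)$ is defined by equicontinuity with respect to the spherical metric, the uniform bound $d(g(w'),g(w))<\epsilon $ for all $g\in G$, combined with $G(w)\subset \hat{K}(G)$ compact in $\CC $, keeps $G(w')$ bounded in one line, which is what the paper's terse ``hence'' relies on.
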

\begin{proof}
Let $h\in G$ be an element. Then $\hat{K}(G)=\bigcap _{g\in G}g^{-1}(K(h)).$ 
Thus, $\hat{K}(G)$ is a compact subset of $\CC $ and for each $g\in G$, 
$g(\hat{K}(G))\subset \hat{K}(G).$ Hence, we obtain that 
$\partial \hat{K}(G)\cap F(G)=\emptyset .$ Therefore, 
$\partial \hat{K}(G)\subset J(G)$ and $F(G)\cap \hat{K}(G)\subset \mbox{int}(\hat{K}(G)).$ 
Moreover, it is easy to see int$(\hat{K}(G))\subset F(G)\cap \hat{K}(G).$ 
Thus we have completed the proof of our lemma.
\end{proof}
We now prove Proposition~\ref{p:chtinfty}. \\ 
{\bf Proof of Proposition~\ref{p:chtinfty}:}
It is easy to see that 
$M_{\tau }(T_{\infty ,\tau })=T_{\infty ,\tau }$, 
$T_{\infty ,\tau }|_{F_{\infty }(G_{\tau })}\equiv 1$ and 
$T_{\infty ,\tau }|_{\hat{K}(G_{\tau })}\equiv 0.$ 
Let $\varphi :\CCI \rightarrow \RR $ be a bounded Borel measurable function such that 
$\varphi =M_{\tau }(\varphi )$, 
$\varphi |_{F_{\infty }(G_{\tau })}\equiv 1$ and $\varphi |_{\hat{K}(G_{\tau })}\equiv 0.$ 
For each $L\in \Min(G_{\tau },\CCI )$ with $L\neq \{ \infty \} $, 
$L\subset \hat{K}(G_{\tau }).$ 
Hence, by Theorem~\ref{t:mtauspec}-\ref{t:mtauspec4} and Lemma~\ref{ksetfundlem1}, 
we obtain that $\varphi (z)=\lim _{n\rightarrow \infty }M_{\tau }^{n}(\varphi )(z)=T_{\infty ,\tau }(z)$ for each 
$z\in \CCI .$ 
Thus, we have proved Proposition~\ref{p:chtinfty}. 
\qed

We now prove Lemma~\ref{kemptylem1}.\\ 
\noindent {\bf Proof of Lemma~\ref{kemptylem1}:} 

It is easy to see that (1) $\Rightarrow $ (2). 

We now show (2) $\Rightarrow $ (3).  
Suppose $T_{\infty ,\tau }|_{J(G_{\tau })} \equiv 1$ and 
$\hat{K}(G_{\tau })\neq \emptyset .$ 
%Since $\hat{K}(G_{\tau })$ is a compact subset of $\CC $, 
%$\partial \hat{K}(G_{\tau })\subset \hat{K}(G_{\tau }).$ 
%Moreover, since $g(\hat{K}(G_{\tau }))\subset \hat{K}(G_{\tau })$ for each 
%$g\in G_{\tau }$, it is easy to see that 
%$\partial \hat{K}(G_{\tau })\subset J(G_{\tau }).$  
Let $y\in \partial \hat{K}(G_{\tau })\subset J(G_{\tau }).$  
Since we are assuming $T_{\infty ,\tau }|_{J(G_{\tau })} \equiv 1$, 
there exists a $\gamma \in X_{\tau }$ such that 
$\gamma _{n,1}(y)\rightarrow \infty .$ 
However, this contradicts $y\in \hat{K}(G_{\tau }).$ 
Thus, we have proved (2) $\Rightarrow $ (3). 

 We now prove (3) $\Rightarrow $ (1). 
Since supp$\,\tau $ is compact, $\infty \in F(G_{\tau }).$ 
Let $V:= F_{\infty }(G_{\tau }).$ By Lemma~\ref{ocminvlem}, 
for each $g\in G_{\tau }$, $g(V)\subset V.$ 
%Under the notation of Lemma~\ref{Lkerlem}, 
Moreover, 
we have $\bigcap _{g\in G_{\tau }}g^{-1}(\CCI \setminus V)=\hat{K}(G_{\tau }).$ 
Hence, from Lemma~\ref{Lkerlem} and Lemma~\ref{tinftyphilem1}, 
it follows that if $\hat{K}(G_{\tau })=\emptyset $, 
then for each $y\in \CCI $, for $\tilde{\tau }$-a.e. $\gamma \in X_{\tau }$,  
$\gamma _{n,1}(y)\rightarrow \infty $ as $n\rightarrow \infty .$ 
Hence, for each $y\in \CCI $, $T_{\infty ,\tau }(y)=1.$ 
Therefore, we have proved  (3) $\Rightarrow $ (1). 

 Thus, we have proved Lemma~\ref{kemptylem1}. 
\qed 

We now prove Theorem~\ref{kerJthm3}.\\ 
{\bf Proof of Theorem~\ref{kerJthm3}:} 
We first prove statement \ref{kerJthm3-1}. 
%By Lemma~\ref{kemptylem1}, 
%we have $\hat{K}(G_{\tau })\neq \emptyset .$ 
Let $y\in \partial \hat{K}(G_{\tau })$ be a point. 
By Lemma~\ref{ksetfundlem1}, $y\in J(G_{\tau }).$ 
Since $J_{\ker }(G_{\tau })=\emptyset $, 
there exists an element $g\in G_{\tau }$ such that 
$g(z)\in F(G_{\tau }).$ By Lemma~\ref{ksetfundlem1} again, 
we obtain $g(z)\in \mbox{int}(\hat{K}(G_{\tau })).$ 
Therefore, int$(\hat{K}(G_{\tau }))\neq \emptyset .$ 

 We next show statement~\ref{kerJthm3-2}. 
By Theorem~\ref{kerJthm2}, $T_{\infty ,\tau }:\CCI \rightarrow [0,1]$ 
is continuous. 
%Moreover, by statement~\ref{kerJthm3-1}, 
%$\hat{K}(G_{\tau })\neq \emptyset .$ 
Furthermore, 
since supp$\, \tau $ is compact, $\infty \in F(G_{\tau }).$ 
Since $T_{\infty,\tau }|_{\hat{K}(G_{\tau })}\equiv 0$ and 
$T_{\infty ,\tau }|_{F_{\infty }(G_{\tau })}\equiv 1$, 
it follows that $T_{\infty ,\tau }(\CCI )=[0,1].$ 
Let $t\in [0,1]$ be any number.    
From the above argument, there exists a point $z_{0}\in \CCI $ such that 
$T_{\infty ,\tau }(z_{0})=t.$ 
Suppose $z_{0}\in F(G_{\tau }).$ Then 
denoting by $U$ the connected component of $F(G_{\tau })$ containing $z_{0}$, 
Theorem~\ref{kerJthm2} and Lemma~\ref{Tlem1} imply that 
$T_{\infty ,\tau }|_{\overline{U}}\equiv t.$ 
Since $\partial U\subset J(G_{\tau })$, 
it follows that there exists a point $z_{1}\in J(G_{\tau })$ such that 
$T_{\infty ,\tau }(z_{1})=t.$ 
This argument shows that $T_{\infty ,\tau }(J(G_{\tau }))=[0,1].$ 
Therefore, we have proved statement~\ref{kerJthm3-2}.  

 We next show statement~\ref{kerJthm3-3}. 
Suppose that the statement is false. Then, there exist 
$t_{1}$ and $t_{2}$ in $[0,1]$ with $t_{1}<t_{2}$ such that  
denoting by $A$ the unbounded component of 
$\CC \setminus (T_{\infty, \tau }^{-1}(\{ t_{2}\} )\cap J(G_{\tau }))$, 
$T_{\infty ,\tau }^{-1}(\{ t_{1}\} )\cap A\neq \emptyset .$ 
 
Let $w_{0}\in  T_{\infty ,\tau }^{-1}(\{ t_{1}\} )\cap A$ be a point. 
Let $\zeta :[0,1]\rightarrow A$ be a curve such that 
$\zeta (0)=\infty \in T_{\infty ,\tau }^{-1}(\{ 1\} )$ and 
$\zeta (1)=w_{0}\in T_{\infty ,\tau }^{-1}(t_{1}).$ 
Since $t_{1}<t_{2}\leq 1$, there exists an $s\in [0,1)$ such that 
$\zeta (s)\in T_{\infty ,\tau }^{-1}(t_{2}).$ 
Then, we have $\zeta (s)\in A\cap F(G_{\tau }).$ 
Let $U$ be the connected component of $F(G_{\tau })$ containing $\zeta (s).$ 
By Theorem~\ref{kerJthm2} and Lemma~\ref{Tlem1}, 
we have $T_{\infty ,\tau }|_{\overline{U}}\equiv t_{2}.$ 
Since $\zeta (1)\in T_{\infty ,\tau }^{-1}(\{ t_{1}\} )$, 
$\zeta (s)\in U$ and $T_{\infty ,\tau }|_{U}\equiv t_{2}$, 
we obtain that there exists an $s'\in (s,1)$ such that $\zeta (s')\in \partial U\subset 
J(G_{\tau })\cap T_{\infty ,\tau }^{-1}(\{ t_{2}\} ).$ 
However, this is a contradiction since $\zeta (s')\in A$ and 
$A\cap (J(G_{\tau })\cap T_{\infty ,\tau }^{-1}(\{ t_{2}\} ))=\emptyset .$ 
Therefore, statement~\ref{kerJthm3-3} holds. 

 We now prove statement~\ref{kerJthm3-4}. 
Let $t\in (0,1).$ 
Since $\hat{K}(G_{\tau })\subset T_{\infty ,\tau }^{-1}(\{ 0\} )$, 
statement~\ref{kerJthm3-3} implies that 
$\hat{K}(G_{\tau })<_{s}T_{\infty ,\tau }^{-1}(\{ t\} )\cap J(G_{\tau }).$ 
 By Lemma~\ref{tinftyphilem1} and Theorem~\ref{kerJthm2}, 
$\overline{F_{\infty }(G_{\tau })} \subset T_{\infty ,\tau }^{-1}(\{ 1\} ).$ 
Hence, $T_{\infty ,\tau }^{-1}(\{ t\} )\cap J(G_{\tau })<_{s}\overline{F_{\infty }(G_{\tau })}.$ 
Therefore, we have proved statement~\ref{kerJthm3-4}. 

 We now prove statement~\ref{kerJthm3-5}. 
 Let $A:= [0,1]\setminus T_{\infty ,\tau }(F(G_{\tau })).$ 
Since  
$T_{\infty ,\tau }\in C_{F(G_{\tau })}(\CCI )$, 
we have $\sharp ([0,1]\setminus A)\leq \aleph _{0}.$ Let $t\in A.$ 
Since $T_{\infty ,\tau }(J(G_{\tau }))=[0,1]$ and   
$T_{\infty ,\tau }\in C_{F(G_{\tau })}(\CCI )$, 
it follows that $\emptyset \neq T_{\infty ,\tau }^{-1}(\{ t\} )\cap J(G_{\tau })\subset J_{res}(G_{\tau }).$ 
Therefore, we have proved statement~\ref{kerJthm3-5}.  
 
Thus, we have completed the proof of Theorem~\ref{kerJthm3}.  
\qed

We now prove Theorem~\ref{thm:jkreyintj}. \\ 
\noindent {\bf Proof of Theorem~\ref{thm:jkreyintj}:} 
Let $\tau \in {\frak M}_{1}({\cal P})$ be an element such that 
$\G _{\tau }=\G .$ By Theorem~\ref{kerJthm2}, $T_{\infty ,\tau }:\CCI \rightarrow [0,1]$ is 
continuous. 
By Theorem~\ref{kerJthm3}-\ref{kerJthm3-2},  
%Since $T_{\infty ,\tau }|_{F_{\infty }(G)}\equiv 1$ and 
%$T_{\infty ,\tau }|_{\hat{K}(G)}\equiv 1$, 
$T_{\infty ,\tau }(\CCI )=[0,1].$   
Suppose that each of statements (a) and (b) of the theorem does not hold. 
Since statement (b) does not hold, there exists 
a finite set $C=\{ c_{1},\ldots ,c_{n}\} $ of $[0,1]$ such that 
$T_{\infty ,\tau }(F(G))\subset C.$  
Since int$(J(G))=\emptyset $ and $T_{\infty ,\tau }:\CCI \rightarrow [0,1]$ is continuous, 
it follows that $T_{\infty ,\tau }(\CCI )\subset C.$ However, 
this is a contradiction. Therefore, 
at least one of the statements (a) and (b) holds. 
Thus, we have completed the proof of Theorem~\ref{thm:jkreyintj}. 
\qed 

\subsection{Proofs of results in subsection~\ref{Planar}}
\label{pfPlanar} 
In this subsection, we give the proofs of results in subsection~\ref{Planar}.

In order to prove Theorem~\ref{Punbddthm1}, we need several lemmas.

\begin{lem}
\label{finjgcompolem}
Let $\G \in \emCpt ({\cal P})$  
and let $f:\GN \times \CCI \rightarrow \GN \times \CCI $ be 
the skew product associated with $\Gamma .$ 
Let $\g \in \GN $ be an element such that 
$\sharp \mbox{{\em Con}}(J_{\g })<\infty .$ Then, 
there exists an $n\in \NN $ such that 
$J_{\sigma ^{n}(\g )}$ is connected.  
\end{lem}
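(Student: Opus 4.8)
The plan is to show that $n\mapsto \sharp \mbox{Con}(J_{\sigma ^{n}\gamma })$ is non-increasing, hence eventually constant equal to some $a\geq 1$, and then to prove $a=1$ by combining Riemann--Hurwitz with the transformation law for the Green's functions. Write $J_{n}:=J_{\sigma ^{n}\gamma }$ and $d_{n}:=\deg \gamma _{n}$. Since $\Gamma \in \Cpt({\cal P})\subset \Cpt(\Rat)$, the Remark after Definition~\ref{d:sp} gives $\gamma _{n+1}(J_{n})=J_{n+1}$ and $\gamma _{n+1}^{-1}(J_{n+1})=J_{n}$. Hence $C\mapsto$ (the component of $J_{n+1}$ containing $\gamma _{n+1}(C)$) is a well-defined map $\Phi _{n}:\mbox{Con}(J_{n})\to \mbox{Con}(J_{n+1})$, and it is onto (a point $w$ of a component $C'$ of $J_{n+1}$ has a preimage $z\in J_{n}$, whose component is sent to $C'$). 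So $\sharp \mbox{Con}(J_{n})\geq \sharp \mbox{Con}(J_{n+1})$, and as $\sharp \mbox{Con}(J_{0})<\infty$ the sequence stabilises: $\sharp \mbox{Con}(J_{n})=a$ for all $n\geq N$. For such $n$, $\Phi _{n}$ is a surjection between finite sets of equal size, so a bijection; therefore $\gamma _{n+1}^{-1}(C')$ is a single (in particular connected) component of $J_{n}$ for every component $C'$ of $J_{n+1}$. It suffices to show $a=1$; suppose $a\geq 2$.

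For $n\geq N$ set $A_{n}:=A_{\infty ,\sigma ^{n}\gamma }=\CCI \setminus K_{\sigma ^{n}\gamma }$, which is connected (standard for polynomial sequences over a compact $\Gamma $, cf. \cite{Se}). Each component of $J_{n}$ is a connected subset of $K_{\sigma ^{n}\gamma }$, so lies in one component of $K_{\sigma ^{n}\gamma }$; conversely each component $K_{i}$ of $K_{\sigma ^{n}\gamma }$ has $\partial K_{i}$ nonempty, connected and disjoint from $\mbox{int}(K_{\sigma ^{n}\gamma })$, hence meeting exactly one component of $J_{n}$. This gives a bijection between the components of $J_{n}$ and those of $K_{\sigma ^{n}\gamma }$, so $K_{\sigma ^{n}\gamma }$ has exactly $a$ components and $\chi (A_{n})=2-a$ by Alexander duality. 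From the definition of the escape set one checks $\gamma _{n+1}^{-1}(A_{n+1})=A_{n}$, so $\gamma _{n+1}:A_{n}\to A_{n+1}$ is a proper holomorphic map of degree $d_{n+1}\geq 2$. Riemann--Hurwitz gives $2-a=d_{n+1}(2-a)-R_{n}$ with $R_{n}\geq 0$ the branching of $\gamma _{n+1}$ over $A_{n+1}$, i.e. $R_{n}=(d_{n+1}-1)(2-a)$; since $R_{n}\geq 0$ and $d_{n+1}\geq 2$ this forces $a=2$ and $R_{n}=0$. Thus $A_{n}$ is conformally an annulus and every critical point of $\gamma _{n+1}$ lies in $A_{n}$.

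Finally let $G_{n}:=G_{\sigma ^{n}\gamma }$ be the Green's function of Definition~\ref{d:green}; directly from the definition $G_{n}=\tfrac{1}{d_{n+1}}G_{n+1}\circ \gamma _{n+1}$. Because $A_{n}$ has exactly two ``ends'' (the two components of $K_{\sigma ^{n}\gamma }$), a level set $\{ G_{n}=t\} $ can split only once, from one curve into two — a bounded complementary region of a level set on which $G_{n}<t$ would contain no point of $K_{\sigma ^{n}\gamma }$, which is impossible — so $G_{n}$ has a single positive critical value $t(n)>0$, and every critical point of $G_{n}$ takes the value $t(n)$. By the chain rule the critical points of $G_{n}$ are the critical points of $\gamma _{n+1}$ (all in $A_{n}$, so with positive $G_{n}$) together with $\gamma _{n+1}^{-1}$ of the critical points of $G_{n+1}$; evaluating $G_{n}$ on a point of the latter type yields $t(n)=\tfrac{1}{d_{n+1}}t(n+1)$, so $t(N+k)=d_{N+1}\cdots d_{N+k}\,t(N)\to \infty$ as $k\to \infty$. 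On the other hand $t(n)=G_{n}(c)$ for a critical point $c$ of $\gamma _{n+1}$, and compactness of $\Gamma $ provides a fixed disc containing all such $c$ on which $\sup _{n}G_{n}$ is finite (uniform escape estimates); hence $\{ t(n)\} _{n\geq N}$ is bounded, a contradiction. Therefore $a=1$, so $J_{\sigma ^{N}\gamma }$ is connected. I expect the main obstacle to be precisely this last paragraph — extracting the ``two ends'' rigidity of $A_{n}$ and turning the covering relation $\gamma _{n+1}:A_{n}\to A_{n+1}$ into the divergent identity $t(n+1)=d_{n+1}t(n)$ while simultaneously bounding $t(n)$ via compactness of $\Gamma $; the topological bookkeeping of the second paragraph is routine (and can be cited from the non-autonomous theory).
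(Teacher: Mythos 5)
Your overall strategy (stabilise the number of components of $J_{\sigma^{n}(\gamma)}$, then run Riemann--Hurwitz on the basins of infinity) can be made to work, but as written it contains a genuine error in the Riemann--Hurwitz count, and the final paragraph rests on a claim that contradicts your own intermediate conclusion. In the identity $\chi (A_{n})=d_{n+1}\chi (A_{n+1})-R_{n}$ the total branching $R_{n}$ must include the point $\infty \in A_{n}$, where the local degree of the polynomial $\gamma _{n+1}$ is $d_{n+1}$; hence $R_{n}\geq d_{n+1}-1\geq 1$ always. Writing $R_{n}=(d_{n+1}-1)+B_{n}$ with $B_{n}\geq 0$ the contribution of the finite critical points, your identity $R_{n}=(d_{n+1}-1)(2-a)$ becomes $B_{n}=(d_{n+1}-1)(1-a)$, which already forces $a=1$. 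So the case $a=2$ that your third paragraph labours to exclude cannot occur once the count is done correctly; and, conversely, the sentence ``$R_{n}=0$ \ldots thus every critical point of $\gamma _{n+1}$ lies in $A_{n}$'' is self-contradictory, since zero branching means $\gamma _{n+1}$ has \emph{no} critical points in $A_{n}$. The boundedness of $t(n)$ at the end is obtained by evaluating $G_{n}$ at a critical point of $\gamma _{n+1}$ assumed to lie in $A_{n}$, so that step collapses as stated (it could be repaired -- the critical points of $G_{n}$ lie in the convex hull of $K_{\sigma ^{n}(\gamma )}$, which is uniformly bounded by compactness of $\Gamma $ -- but the repair is moot given the corrected Riemann--Hurwitz). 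You would also need to justify, rather than merely assert, the facts imported in passing: that $A_{\infty ,\sigma ^{n}(\gamma )}$ is connected, that $J_{\sigma ^{n}(\gamma )}=\partial K_{\sigma ^{n}(\gamma )}$, and that each component of $K_{\sigma ^{n}(\gamma )}$ meets exactly one component of $J_{\sigma ^{n}(\gamma )}$.

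For comparison, the paper's proof is two lines and avoids all of this topology: when $\sharp \mbox{Con}(J_{\gamma })<\infty $, each component $B$ is relatively open in $J_{\gamma }$, and by B\"uger's self-similarity (blow-up) property of Julia sets of compositions of polynomials there exists $n$ with $f_{\gamma ,n}(B)=J_{\sigma ^{n}(\gamma )}$; the continuous image of the connected set $B$ is connected, so $J_{\sigma ^{n}(\gamma )}$ is connected.
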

\begin{proof}
Let $B\in \mbox{Con}(J_{\g }).$ Since 
$\sharp \mbox{Con}(J_{\g })<\infty $, 
$B$ is an open subset of $J_{\g }.$ 
By the self-similarity of $J_{\g }$ (see \cite{Bu1}), 
there exists an $n\in \NN $ such that 
$f_{\g ,n}(B)=J_{\sigma ^{n}(\g )}.$ 
It follows that for this $n$, $J_{\sigma ^{n}(\g )}$ is connected. 
Thus, we have completed the proof of our lemma. 
\end{proof}
\begin{lem}
\label{infinjgcompolem}
Let $\G \in \emCpt ({\cal P})$  
and let $f:\GN \times \CCI \rightarrow \GN \times \CCI $ be 
the skew product associated with $\Gamma .$ 
Let $\g \in \GN $ be an element  such that 
$\sharp \mbox{{\em Con}}(J_{\g })\geq \aleph _{0}.$ 
Then, $\sharp \mbox{{\em Con}}(J_{\g })> \aleph _{0}.$
\end{lem}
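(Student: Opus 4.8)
The plan is to argue by contradiction: assume $\sharp\mathrm{Con}(J_{\gamma})\leq \aleph_{0}$, so that together with the hypothesis $\sharp\mathrm{Con}(J_{\gamma})=\aleph_{0}$, and then produce a clopen connected component of $J_{\gamma}$ and use the self-similarity of $J_{\gamma}$ to force $J_{\gamma}$ to have only finitely many components, a contradiction.

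First I would extract a clopen component. Let $J_{\gamma}^{\ast }$ denote the quotient of $J_{\gamma}$ by the decomposition into its connected components. Since $J_{\gamma}$ is compact metric and this decomposition is upper semicontinuous into compacta, $J_{\gamma}^{\ast }$ is a compact Hausdorff space, hence a Baire space, and $\sharp J_{\gamma}^{\ast }=\sharp\mathrm{Con}(J_{\gamma})=\aleph_{0}$. A countable compact Hausdorff space cannot be perfect (otherwise it would be a countable union of its closed, nowhere dense singletons, contradicting Baire), so $J_{\gamma}^{\ast }$ has an isolated point; equivalently, $J_{\gamma}$ has a connected component $B$ which is open, hence clopen, in $J_{\gamma}$. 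I expect the only care needed here is the (standard) fact that the quotient of a compact metric space by its components is compact Hausdorff and zero-dimensional.

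Next I would apply the self-similarity of $J_{\gamma}$ exactly as in the proof of Lemma~\ref{finjgcompolem}: since $B$ is an open connected component of $J_{\gamma}$, by \cite{Bu1} there is an $n\in\NN$ with $\gamma_{n,1}(B)=J_{\sigma^{n}(\gamma)}$; in particular $J_{\sigma^{n}(\gamma)}$ is connected, being a continuous image of the connected set $B$. Write $P:=\gamma_{n,1}\colon\CCI\to\CCI$, a non-constant holomorphic (hence open and proper) map of finite degree $d=\deg(\gamma_{n,1})$. Since $\Gamma\in\mathrm{Cpt}({\cal P})\subset\mathrm{Cpt}(\mathrm{Rat})$, the remark after Definition~\ref{d:sp} gives $P^{-1}(J_{\sigma^{n}(\gamma)})=J_{\gamma}$.

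Finally I would show that $J_{\gamma}=P^{-1}(J_{\sigma^{n}(\gamma)})$ has at most $d$ connected components, contradicting $\sharp\mathrm{Con}(J_{\gamma})=\aleph_{0}$ and finishing the proof. The key step, and the main technical point, is that every connected component $C$ of $P^{-1}(E)$ satisfies $P(C)=E$ whenever $E\subset\CCI$ is connected: one takes a neighbourhood base $\{U_{m}\}_{m}$ of $C$ consisting of sets clopen in $P^{-1}(E)$ (available since the component space of a compact metric space is zero-dimensional), writes $U_{m}=W_{m}\cap P^{-1}(E)$ with $W_{m}$ open in $\CCI$, notes that $P(U_{m})=P(W_{m})\cap E$ is both open in $E$ (as $P$ is an open map) and closed in $E$ (as $U_{m}$ is compact) and non-empty, hence $P(U_{m})=E$ by connectedness of $E$, and then $P(C)=\bigcap_{m}P(U_{m})=E$ by compactness. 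Granting this, fix any $w\in J_{\sigma^{n}(\gamma)}$: every component of $J_{\gamma}$ meets the fibre $P^{-1}(w)$, which has at most $d$ points, so $J_{\gamma}$ has at most $d$ components. Everything outside these two observations is routine; the delicate part is the surjectivity lemma for $C$, which I would isolate and prove via the relatively-clopen neighbourhood base as above.
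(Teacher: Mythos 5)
Your proof is correct and uses essentially the same ingredients as the paper's: the Baire category theorem applied to the (compact Hausdorff) quotient of $J_{\gamma}$ by its components, the self-similarity fact $\gamma_{n,1}(B)=J_{\sigma^{n}(\gamma)}$ for an open component $B$, and the bound $\sharp\mbox{Con}(\gamma_{n,1}^{-1}(J_{\sigma^{n}(\gamma)}))\leq\deg(\gamma_{n,1})$. The only differences are organizational — the paper first shows no component is open (its Claim 1) and then invokes Baire, whereas you invoke Baire first to extract an open component — and that you prove the degree bound directly where the paper cites \cite[Lemma 5.7.2]{Be}.
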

\begin{proof}
We first show the following claim. \\ 
Claim 1: Let $B\in \mbox{Con}(J_{\g }).$ Then there exists an 
sequence $\{ B_{j}\} _{j\in \NN }$ in Con$(J_{\g })\setminus \{ B\} $ such that 
$\min _{a\in B, b\in B_{j}} d(a,b)\rightarrow 0$ as $j\rightarrow \infty .$ 

 To prove claim 1,  suppose that there exists no such sequence $\{ B_{j}\} _{j\in \NN }.$ 
Then, $B$ is an open subset of $J_{\g }.$ By the self-similarity of $J_{\g }$ (see \cite{Bu1}), 
there exists an $n\in \NN $ such that $f_{\g, n}(B)=J_{\sigma ^{n}(\g )} $ and 
$J_{\sigma ^{n}(\g )}$ is connected. 
Since $f_{\g ,n}^{-1}(J_{\sigma ^{n}(\g )})=J_{\g }$, 
\cite[Lemma 5.7.2]{Be} implies that $\sharp \mbox{Con}(J_{\g })\leq \deg (f_{\g,n})<\infty .$ 
However, this contradicts the assumption of our lemma. Therefore, we have proved claim 1.

 Let $Z$ be the space obtained by making each element of Con$(J_{\g })$ into one point, 
endowed with the quotient topology. 
Then, by the cut wire theorem (see \cite{N}), 
$Z$ is a compact normal Hausdorff space. 
Suppose that $\sharp \mbox{Con}(J_{\g })=\aleph _{0}.$ 
Then there exists a sequence $\{ C_{j}\} _{j\in \NN }$ of mutually distinct elements 
of Con$(J_{\g })$ such that Con$(J_{\g })=\bigcup _{j=1}^{\infty }\{ C_{j}\}.$  
Let $\pi :J_{\g }\rightarrow Z$ be the canonical projection and 
let $Z_{j}:=\pi (C_{j})$ for each $j\in \NN.$   
Then  
 $Z=\bigcup _{j=1}^{\infty }\{ Z_{j}\} .$ 
We now prove the following claim.\\ 
Claim 2: For each $j\in \NN $, $Z\setminus \{ Z_{j}\} $ is 
dense in $Z.$ 

 To prove claim 2, let $j\in \NN .$ By claim 1, there exists a 
sequence $\{ k_{n}\} _{n\in \NN }$ in $\NN \setminus \{ j\} $ such that  
$\min _{a\in C_{j},b\in C_{k_{n}}}d(a,b)\rightarrow 0$ as $n\rightarrow \infty . $ 
%Then, $\max _{a\in C_{j},b\in C_{k_{n}}}d(a,b)\rightarrow 0$ as $n\rightarrow \infty . $ 
%For, suppose that there exists a subsequence $\{ k_{n_{i}}\} _{i\in \NN }$ of $\{ k_{n}\} _{n\in \NN }$ 
%such that $\max_{a\in C_{j}, b\in C_{k_{n_{i}}}}d(a,b)\not\rightarrow 0$ as $i\rightarrow \infty $. 
%We may assume that $ C_{k_{n_{i}}} $ tends to some connected compact subset $C$ of $J_{\g } .$ Then 
%there exists an $l \in \NN \setminus \{ j\} $ such that $C\cap C_{j}\neq \emptyset $ and $C\cap C_{l}\neq \emptyset $. 
% However, this is a contradiction. Hence, we must have that 
%$\max _{a\in C_{j},b\in C_{k_{n}}}d(a,b)\rightarrow 0$ as $n\rightarrow \infty . $ 
Let $V$ be an open set in $Z$ with $Z_{j}\in V.$ Then $\pi ^{-1}(V)$ is an open set 
in $J_{\gamma }$ with $C_{j}\subset \pi ^{-1}(V).$ Therefore there exists an $n\in \NN $ with 
$\pi ^{-1}(V)\cap C_{k_{n}}\neq \emptyset .$ Let $x\in \pi ^{-1}(V)\cap C_{k_{n}}.$ 
Then $Z_{k_{n}}=\pi (x)\in V.$ 
Therefore, $Z\setminus \{ Z_{j}\} $ is dense in $Z.$ Thus, we have proved claim 2. 

 Since $\emptyset =Z\setminus \bigcup _{j=1}^{\infty }\{ Z_{j}\} =
\bigcap _{j=1}^{\infty }(Z\setminus \{ Z_{j}\} )$, 
 claim 2 and the Baire category theorem imply a contradiction.   
 Therefore, $\sharp \mbox{Con}(J_{\g })>\aleph _{0}.$ 
Thus, we have completed the proof of our lemma. 
\end{proof}

We now prove Theorem~\ref{Punbddthm1}.\\ 
\noindent {\bf Proof of Theorem~\ref{Punbddthm1}:}
Since $P^{\ast }(G)$ is not bounded in $\CC $, there exists an element 
$g\in G$ and a critical value $c$ of $g$ such that 
$c \in F_{\infty }(G).$ 
Then $g^{l}(c)\rightarrow \infty $ as $l\rightarrow \infty .$ 
We write $g$ as $g=h_{n}\circ \cdots \circ h_{1}$, where $h_{j}\in \G $ for each $j=1,\ldots ,n.$ 
For each $j=1,\ldots ,n$, let $B_{j}$ be the small neighborhood of $h_{j}$ in ${\cal P}$ 
such that for each $\alpha =(\alpha _{1},\ldots ,\alpha _{n})\in B_{1}\times \cdots \times B_{n}$, 
there exists a critical value $c_{\alpha }\in F_{\infty }(G)$ of 
$\alpha _{n}\circ \cdots \circ \alpha _{1}.$ 
We set ${\cal U}:= \{ \gamma \in \GN \mid 
\exists \{ j_{k}\} _{k=1}^{\infty }\rightarrow \infty , 
\forall k, \g _{j_{k}}\in B_{1},\ldots ,\g _{j_{k}+n-1}\in B_{n}\} .$ 
Then, ${\cal U}$ is a residual subset of $\GN $ and 
for each $\tau \in {\frak M}_{1}({\cal P})$ with $\G _{\tau }=\G $, 
 $\tilde{\tau }({\cal U})=1.$ 
We now prove the following claim.\\ 
Claim: For each $\g \in {\cal U}$, $\sharp \mbox{Con}(J_{\g })>\aleph _{0}.$ 

 To prove the claim, by Lemma~\ref{infinjgcompolem}, 
it is enough to show that for each $\g \in {\cal U}$, $\sharp \mbox{Con}(J_{\g })\geq \aleph _{0}.$ 
Suppose that there exists an element $\g \in {\cal U}$ such that 
 $\sharp \mbox{Con}(J_{\g })< \infty .$ 
Then, Lemma~\ref{finjgcompolem} implies that 
there exists an $s\in \NN $ such that 
$J_{\sigma ^{s}(\g )}$ is connected. 
Since $\g \in {\cal U}$, there exists an $m\in \NN $ such that 
%$f_{\sigma ^{n}(\g ),m}=\gamma _{n+m}\circ \cdots \circ \gamma _{n+1}$  
$\g _{s+m, s+1}$ 
has a critical point in $A_{\infty ,\sigma ^{s}(\g )}.$ 
For this $m\in \NN $, 
$J_{\sigma ^{s+m}(\g )}=\g _{s+m, s+1}(J_{\sigma ^{s}(\g )}) $ is connected. 
Hence,  $A_{\infty ,\sigma ^{s}(\g )}$ and $A_{\infty ,\sigma ^{s+m}(\g )}$ are simply connected. 
Applying the Riemann-Hurwitz formula to 
$\g _{s+m, s+1}: A_{\infty ,\sigma ^{s}(\g )}\rightarrow A_{\infty ,\sigma ^{s+m}(\g )}$, 
we obtain a contradiction. Therefore, the above claim holds. 

 By the above claim, the statement of Theorem~\ref{Punbddthm1} holds. 
\qed 

\

We now prove Theorem~\ref{Tequals1thm1}.

\noindent {\bf Proof of Theorem~\ref{Tequals1thm1}:} 
By Lemma~\ref{kemptylem1}, $T_{\infty, \tau }\equiv 1 $.   
Moreover, since supp$\,\tau $ is compact, 
$\infty \in F(G_{\tau }).$ Hence for each $z\in \CCI $, 
there exists an element $g\in G_{\tau }$ such that $g(z)\in F_{\infty }(G_{\tau })\subset F(G_{\tau }).$ 
Therefore, $J_{\ker }(G_{\tau })=\emptyset .$ By Theorem~\ref{kerJthm1}, we obtain that 
$F_{meas}(\tau )={\frak M}_{1}(\CCI ).$ 
Moreover, since $T_{\infty, \tau }\equiv 1$, we obtain 
that $(M_{\tau }^{\ast })^{n}(\nu )\rightarrow \delta _{\infty }$ 
as $n\rightarrow \infty $ uniformly on $\nu \in {\frak M}_{1}(\CCI ).$   
% Let $V:= F_{\infty }(G_{\tau }).$ 
%Then $\bigcap _{g\in G_{\tau }}g^{-1}(V)=\hat{K}(G_{\tau })=\emptyset .$ 
%Moreover, by Lemma~\ref{ocminvlem}, $g(V)\subset V$ for each $g\in G_{\tau }.$ 
Let $\tilde{K}:=\bigcup _{\rho \in X_{\tau }}(\{ \rho \} \times K_{\rho })\ (\subset X_{\tau }\times \CCI ).$ 
%From Lemma~\ref{Lkerlem}, 
Since $T_{\infty ,\tau }\equiv 1$, 
it follows that 
for each $y\in \CCI $, 
$\tilde{\tau }(\{ \gamma \in X_{\tau }\mid (\gamma ,y)\in \tilde{K}\} )=0.$ 
Hence, by Fubini's theorem, 
we obtain that there exists a subset ${\cal V}$ of  $X_{\tau }$ with $\tilde{\tau }({\cal V})=1$ 
such that for each $\g \in {\cal V}$, Leb$_{2}(K_{\g })=0.$ 
Since $\partial K_{\g }=J_{\g }$ for each $\g \in X_{\tau }$, 
we get that for each $\g \in {\cal V}$, $K_{\g }=J_{\g }.$ 
Moreover, since $T_{\infty, \tau }\equiv 1 $, we have that $P^{\ast }(G_{\tau })$ is not bounded in $\CC .$ 
By Theorem~\ref{Punbddthm1}, we obtain that for $\tilde{\tau }$-a.e. $\g \in X_{\tau }$,  
$J_{\g }$ has uncountably many connected components. 
 Thus, we have completed the proof of Theorem~\ref{Tequals1thm1}.  
\qed 

\subsection{Proofs of results in subsection~\ref{Conleb}}
\label{pfConleb}
In this subsection, we give the proofs of the results in subsection~\ref{Conleb}. 

In order to prove Theorem~\ref{t:jkucuh}, 
we need some lemmas.
\begin{df}
Let $X$ and $Y$ be two topological space and let $g:X\rightarrow Y$ be a map. 
For each subset $Z$ of $Y$, we denote by $c(Z,g)$ the set of all 
connected components of $g^{-1}(Z).$ 
\end{df}

\begin{lem}
\label{l:jkejgcuh}
Let $\tau \in {\frak M}_{1,c}({\cal P})$.  
Suppose that for each $\g \in X_{\tau }$, $J_{\ker }(G_{\tau })\subset J_{\g }$, 
and that $J_{\ker }(G_{\tau })\cap UH(G_{\tau })=\emptyset .$ 
Then, for $\tilde{\tau }$-a.e. $\g \in X_{\tau }$,   
{\em Leb}$_{2}(J_{\g })=\mbox{{\em Leb}}_{2}(\hat{J}_{\g ,\G _{\tau }})=0.$  
\end{lem}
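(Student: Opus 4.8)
\textbf{Proof proposal for Lemma~\ref{l:jkejgcuh}.}

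The plan is to split the Julia set $J_\gamma $ into two pieces according to whether points eventually fall into the Fatou set of $G_\tau $ under the forward compositions $\gamma _{n,1}$, and then to show that the Lebesgue measure of $\hat J_{\gamma ,\Gamma _\tau }$ is zero for $\tilde\tau $-a.e. $\gamma $ by combining a ``Lebesgue density / Montel'' argument near the semi-hyperbolic part of $J(G_\tau )$ with the observation from Lemma~\ref{Lkerlem} that points cannot stay near a \emph{bounded} set of non-semi-hyperbolic directions with positive probability. More precisely, by Lemma~\ref{l:pctjjg} we have $\hat J_{\gamma ,\Gamma _\tau }=\bigcap _{j=1}^\infty \gamma _{j,1}^{-1}(J(G_\tau ))$, so $\hat J_{\gamma ,\Gamma _\tau }$ is exactly the set of $y$ whose whole forward orbit under the random iteration stays in $J(G_\tau )$. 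First I would fix the hypotheses: $J_{\ker }(G_\tau )\subset J_\gamma $ for every $\gamma $ (equivalently $J_{\ker }(G_\tau )\subset \hat J_{\gamma ,\Gamma _\tau }$ for all $\gamma $ by backward invariance), and $J_{\ker }(G_\tau )\cap UH(G_\tau )=\emptyset $, i.e. $J_{\ker }(G_\tau )\subset \bigcup _{N}SH_N(G_\tau )$.

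The key steps, in order, would be: (1) Apply Lemma~\ref{Lkerlem} with $Y=\CCI $ and $V=F(G_\tau )$ (so $G_\tau (V)\subset V$ by Lemma~\ref{ocminvlem}), which gives: for each $y$, $\tilde\tau $-a.e. $\gamma $ either escapes to $F(G_\tau )$ in finite time, or satisfies $\liminf _{n}d(\gamma _{n,1}(y),L_{\ker })=0$ where $L_{\ker }=\bigcap _{g\in G_\tau }g^{-1}(J(G_\tau ))=J_{\ker }(G_\tau )$. So for Lebesgue-a.e. $y\in J(G_\tau )$ and $\tilde\tau $-a.e. $\gamma $ with $y\in \hat J_{\gamma ,\Gamma _\tau }$, the orbit $\{\gamma _{n,1}(y)\}$ accumulates on $J_{\ker }(G_\tau )$. (2) Use semi-hyperbolicity along $J_{\ker }(G_\tau )$: since $J_{\ker }(G_\tau )\subset SH_N(G_\tau )$ for some $N$ on a neighborhood, there is a uniform bound on local degrees of the inverse branches of elements of $G_\tau $ over a fixed-size spherical ball around any point of $J_{\ker }(G_\tau )$. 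This is the standard mechanism (as in \cite{S4,S7}) giving a bounded-distortion / Koebe-type estimate for the pulled-back branches $\gamma _{n,1}^{-1}$ restricted to the relevant components. (3) Run the Lebesgue-density argument: if $\mathrm{Leb}_2(\hat J_{\gamma ,\Gamma _\tau })>0$ on a positive-$\tilde\tau $-measure set, pick a density point $y$, pull back small balls around the orbit points $\gamma _{n_k,1}(y)$ (which lie near $J_{\ker }(G_\tau )$) by the bounded-degree inverse branches; bounded distortion forces the density of $F(G_\tau )$ inside these shrinking neighborhoods of $y$ to stay bounded below — but since $F(G_\tau )\neq \emptyset $ and $F(G_\tau )$ has positive density somewhere near $J_{\ker }(G_\tau )$ (because $J_{\ker }(G_\tau )$ has empty interior, Remark~\ref{r:kjulia}(3), as $F(G_\tau )\neq \emptyset $), this contradicts $y$ being a density point of $\hat J_{\gamma ,\Gamma _\tau }\subset J(G_\tau )$. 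Hence $\mathrm{Leb}_2(\hat J_{\gamma ,\Gamma _\tau })=0$ a.s., and since $J_\gamma \subset \hat J_{\gamma ,\Gamma _\tau }$, also $\mathrm{Leb}_2(J_\gamma )=0$. (4) Promote ``for a.e. $\gamma $'' across $y$: by Fubini in $X_\tau \times \CCI $ with the skew product $f$, combining the $y$-pointwise statements from step (1) with Lemma~\ref{genskewprodinvlem1} (invariance of $\tilde J(f)$), one gets a single full-measure set of $\gamma $ that works — exactly as in the proof of Proposition~\ref{Jkeremptygenprop2}.

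The main obstacle I expect is step (3), the geometric measure-theoretic heart: one must control the inverse branches $\gamma _{n,1}^{-1}$ along the random orbit carefully, because $J_{\ker }(G_\tau )$ is only the \emph{limit set} of the orbits, not a set the orbits land on, and because the branches depend on $\gamma $. The hypothesis $J_{\ker }(G_\tau )\cap UH(G_\tau )=\emptyset $ is precisely what is needed to get the uniform local-degree bound near the accumulation set, and one should invoke the semi-hyperbolic distortion estimates of \cite{S4,S7} rather than re-deriving them. A secondary subtlety is ensuring that the ``escape to $F(G_\tau )$'' alternative in Lemma~\ref{Lkerlem} combined with Lemma~\ref{genskewprodinvlem1} genuinely removes the escaping part from $\hat J_{\gamma ,\Gamma _\tau }$: once $\gamma _{n,1}(y)\in F(G_\tau )$, forward invariance of $F(G_\tau )$ keeps it there, so such $y\notin \hat J_{\gamma ,\Gamma _\tau }$, which is what lets us restrict attention to the ``accumulates on $J_{\ker }$'' case.
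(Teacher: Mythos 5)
Your overall scaffolding matches the paper's: Lemma~\ref{Lkerlem} with $V=F(G_{\tau })$ to force a.e.\ orbits starting in $\hat{J}_{\g ,\G _{\tau }}$ to accumulate on $J_{\ker }(G_{\tau })$, the uniform local-degree bound coming from $J_{\ker }(G_{\tau })\cap UH(G_{\tau })=\emptyset $, a Lebesgue density point, and Koebe distortion along bounded-degree inverse branches. But the step where you close the contradiction is flawed, and it is exactly the step where the hypothesis $J_{\ker }(G_{\tau })\subset J_{\g }$ must enter --- in your argument it never does. You claim that $F(G_{\tau })$ has Lebesgue density bounded below near $J_{\ker }(G_{\tau })$ ``because $J_{\ker }(G_{\tau })$ has empty interior.'' Empty interior of $J_{\ker }(G_{\tau })$ only says its complement is topologically dense; that complement contains $J(G_{\tau })\setminus J_{\ker }(G_{\tau })$, which can carry all the local Lebesgue mass, and even topological density of $F(G_{\tau })$ would not yield a uniform positive \emph{area} density in fixed-radius disks around every accumulation point on $J_{\ker }(G_{\tau })$. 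Without such a uniform lower bound, pulling back a ``Fatou part'' to the density point gives nothing.

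The paper runs the estimate in the opposite direction: starting from the density point $y_{0}$ of $\hat{J}_{\rho ,\G _{\tau }}$, it pushes the density \emph{forward} through the bounded-degree branches (Koebe for the uniformizations $\phi _{j}$ plus a Cauchy bound on $(g_{j}\circ \phi _{j})'$) and concludes that the density of $\hat{J}_{\sigma ^{n_{j}}(\rho ),\G _{\tau }}$ in the fixed-size disks $D(g_{j}(y_{0}),r)$ tends to $1$; passing to a limit $(\eta ,y_{\infty })$ with $y_{\infty }\in J_{\ker }(G_{\tau })$ gives a full disk $D(y_{\infty },r)\subset \hat{J}_{\eta ,\G _{\tau }}$, whose forward orbit stays in $J(G_{\tau })\neq \CCI $, so Montel forces $y_{\infty }\in F_{\eta }$ --- contradicting $J_{\ker }(G_{\tau })\subset J_{\eta }$. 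You also skip the preliminary claim that $y_{0}\in J_{\rho }$, which is needed so that the pulled-back components shrink via \cite[Corollary 1.9]{S4}; that claim is itself nontrivial and consumes both hypotheses through \cite[Lemma 1.10]{S4}. So the architecture is right, but the decisive measure-theoretic step must be reversed, and the two places where $J_{\ker }(G_{\tau })\subset J_{\g }$ is actually used are missing from your argument.
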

\begin{proof}
Let $f:X_{\tau } \times \CCI \rightarrow X_{\tau } \times \CCI $ be the skew product associated with 
$\G _{\tau }.$ 
By Proposition~\ref{Jkeremptygenprop2},  
%Theorem~\ref{kerJthm1}, 
we may assume that 
$J_{\ker }(G_{\tau })\neq \emptyset .$ 
Combining Lemma~\ref{ocminvlem}, Lemma~\ref{Lkerlem}, Lemma~\ref{l:pctjjg} and Fubini's theorem, 
we obtain that there exists a measurable subset ${\cal U}$ of $X_{\tau }$ with 
$\tilde{\tau }({\cal U})=1$ such that 
for each $\g \in {\cal U}$,  for Leb$_{2}$-a.e. $y\in \hat{J}_{\g ,\G _{\tau }}$, 
$\liminf _{n\rightarrow \infty }d(f_{\g, n}(y), J_{\ker}(G_{\tau }))=0.$ 
In order to prove our lemma, it is enough to show that 
for each $\g \in {\cal U}, \mbox{Leb}_{2}(\hat{J}_{\g, \G _{\tau }})=0.$ 
 For this purpose, suppose that 
there exists an element $\rho \in {\cal U}$ and a point $y_{0}\in \hat{J}_{\rho ,\G _{\tau }}$ 
such that $y_{0}$ is a Lebesgue density point of $\hat{J}_{\rho ,\G _{\tau }}.$ 
We will deduce a contradiction. 
We may assume that $\liminf _{n\rightarrow \infty }d(f_{\rho, n}(y_{0}), J_{\ker }(G_{\tau }))=0.$ 
We show the following claim:\\ 
Claim 1: $y_{0}\in J_{\rho }.$   

 To show claim 1, suppose that $y_{0}\in F_{\rho }.$ 
Then there exists a strictly increasing sequence $\{ n_{j}\} _{j\in \NN }$ in $\NN $ 
and a $\delta >0$ such that $f_{\rho ,n_{j}}|_{B(y_{0},2\delta )}$ tends to 
a holomorphic function $\phi :B(y_{0},2\delta )\rightarrow \CC $ as $j\rightarrow \infty $ 
locally uniformly on $B(y_{0},2\delta ).$ We may assume that there exists a 
point $(\alpha ,a)\in X_{\tau }\times J_{\ker }(G_{\tau })$ such that $f^{n_{j}}(\rho , y_{0})\rightarrow (\alpha ,a)$ as 
$j\rightarrow \infty .$ Since $J_{\ker }(G_{\tau })\cap UH(G_{\tau })=\emptyset $, 
\cite[Lemma 1.10]{S4} implies that 
$\phi :B(y_{0}, 2\delta )\rightarrow \CC $ is non-constant. 
Hence $\phi (B(y_{0},\delta ))$ is a bounded open neighborhood of $a.$ 
Let $D$ be a neighborhood of $\infty $ such that $\overline{D}\cap \phi (B(y_{0},\delta ))=\emptyset $ and 
$h(D)\subset D$ for each $h\in G_{\tau }.$   
Since $a \in J_{\ker }(G_{\tau })\subset J_{\alpha }$, 
there exists a point $b\in B(y_{0}, \delta )$ and a $q\in \NN $  such that 
$\alpha _{q,1}(\phi (b))\in D.$ 
Then there exists a  neighborhood ${\cal V}$ of $(\alpha _{1},\ldots ,\alpha _{q})$ in $\G_{\tau }^{q}$ and 
 a neighborhood $\Omega $ of $\phi (b)$ such that for each $\beta =(\beta _{1},\ldots ,\beta _{q})\in 
 {\cal V}$, $\beta _{q}\cdots \beta _{1}(\Omega )\subset D.$ 
Let $k\in \NN $ be a large number. Then, $f_{\rho ,n_{k}}(b)\in \Omega $ and 
 $(\rho _{n_{k}+1},\rho _{n_{k}+2},\ldots ,\rho _{n_{k}+q})\in {\cal V}.$ 
 This implies that $\phi (b)=\lim _{j\rightarrow \infty }f_{\rho ,n_{j}}(b)\in \overline{D}\subset 
\CCI \setminus \phi (B(y_{0}, \delta ))$, 
which is a contradiction. Therefore, $y_{0}\in J_{\rho }.$ Thus, we have shown Claim 1.  

Let $\{ n_{j}\} _{j\in \NN }$ be a strictly increasing sequence in $\NN $ such that 
$f^{n_{j}}(\rho ,y_{0})$ tends to some $(\eta ,y_{\infty })\in X_{\tau }\times J_{\ker }(G_{\tau }).$  
Let $g_{j}:=f_{\rho ,n_{j}}$ for each $j\in \NN .$ 
 Since $J_{\ker }(G_{\tau })\cap UH(G_{\tau })=\emptyset $, 
 there exists an $0<r$ and an $N\in \NN $ such that 
for each $z\in J_{\ker }(G_{\tau })$, each $g\in G_{\tau }$, 
and each $V\in c(D(z,3r), g)$, 
$\deg (g:V\rightarrow D(z,3r))\leq N.$ 
We may assume that for each $j\in \NN $, 
$g_{j}(y_{0})\in D(J_{\ker }(G_{\tau }),r).$ 
For each $j\in \NN $, let $U_{j}$ (resp. $U_{j}'$) be the element of 
$c(D(g_{j}(y_{0}),r),g_{j})$ (resp. $c(D(g_{j}(y_{0}),2r),g_{j}))$ containing 
$y_{0}$. 
Then, $U_{j}$ and $U_{j}'$ are simply connected. 
Moreover, since $y_{0}\in J_{\rho }$, \cite[Corollary 1.9]{S4} implies that 
diam $U_{j}\rightarrow 0$ as $j\rightarrow \infty .$ 
Since $y_{0}$ is a Lebesgue density point of $\hat{J}_{\rho ,\G _{\tau }}$, 
 \cite[Corollary 1.9]{S4} again implies that 
%\begin{equation}
%\label{eq:l:jkejgcuhpf1}
$
\lim _{j\rightarrow \infty }\frac{\mbox{Leb}_{2}(U_{j}\cap \hat{J}_{\rho ,\G _{\tau }})}{\mbox{Leb}_{2}(U_{j})}=1.
$
%\end{equation}
Hence, 
\begin{equation}
\label{eq:l:jkejgcuhpf1}
\lim _{j\rightarrow \infty }\frac{\mbox{Leb}_{2}(U_{j}\cap \hat{F}_{\rho ,\G _{\tau }})}{\mbox{Leb}_{2}(U_{j})}=0.
\end{equation}
For each $j\in \NN $, let $\phi _{j}:D(0,1)\rightarrow U_{j}'$ be a conformal map such that 
$\phi _{j}(0)=y_{0}.$ By (\ref{eq:l:jkejgcuhpf1}) and the Koebe distortion theorem, 
we obtain 
\begin{equation}
\label{eq:l:jkejgcuhpf2}
\lim _{j\rightarrow \infty }\frac{\mbox{Leb}_{2}(\phi _{j}^{-1}(U_{j}\cap \hat{F}_{\rho ,\G _{\tau }}))}
{\mbox{Leb}_{2}(\phi _{j}^{-1}(U_{j}))}=0.
\end{equation}
By \cite[Corollary 1.8]{S4}, there exists a constant $0<c_{1}<1$ such that 
for each $j\in \NN $, $\phi _{j}^{-1}(U_{j})\subset D(0,c_{1}).$ 
Combining it with Cauchy's formula, we obtain that there exists a constant 
$c_{2}>0$ such that for each $j\in \NN $, 
\begin{equation}
\label{eq:l:jkejgcuhpf3}
|(g_{j}\circ \phi _{j})'(z)|\leq c_{2} \mbox{ on } \phi _{j}^{-1}(U_{j}).
\end{equation}
By (\ref{eq:l:jkejgcuhpf2}) and (\ref{eq:l:jkejgcuhpf3}), we obtain 
\begin{align*}
\frac{\mbox{Leb}_{2}(D(g_{j}(y_{0}),r)\cap \hat{F}_{\sigma ^{n_{j}}(\rho ),\G _{\tau }})}
{\mbox{Leb}_{2}(D(g_{j}(y_{0}),r))}
& =\frac{\mbox{Leb}_{2}((g_{j}\phi _{j})(\phi _{j}^{-1}(U_{j}\cap \hat{F}_{\rho ,\G _{\tau }})))}
{\mbox{Leb}_{2}(D(g_{j}(y_{0}),r))}\\ 
& \leq \frac{\int _{\phi _{j}^{-1}(U_{j}\cap \hat{F}_{\rho, \G _{\tau }})}|(g_{j}\circ \phi _{j})'(z)|^{2}\ d\mbox{Leb}_{2}(z)}
{\mbox{Leb}_{2}(\phi _{j}^{-1}(U_{j}))}\cdot 
\frac{\mbox{Leb}_{2}(\phi _{j}^{-1}(U_{j}))}{\mbox{Leb}_{2}(D(g_{j}(y_{0}),r))}\\ 
& \rightarrow 0, \mbox{ as }j\rightarrow \infty .
\end{align*}
Hence, $\frac{\mbox{Leb}_{2}(D(g_{j}(y_{0}),r)\cap \hat{J}_{\sigma ^{n_{j}}(\rho ),\G _{\tau }})}
{\mbox{Leb}_{2}(D(g_{j}(y_{0}),r))}\rightarrow 1 $ as $j\rightarrow \infty .$ 
%Therefore, 
%$\frac{\mbox{Leb}_{2}(D(y_{\infty },r)\cap \hat{J}_{\eta ,\G _{\tau }})}{\mbox{Leb}_{2}(D(y_{\infty },r))}=1.$ 
Thus, $D(y_{\infty },r)\subset \hat{J}_{\eta ,\G _{\tau }}.$ In particular, 
$f_{\eta ,n}(D(y_{\infty ,}r))$ $\subset J(G_{\tau })$ for each $n\in \NN .$ 
Hence, $y_{\infty }\in F_{\eta }.$ However, since $y_{\infty }\in J_{\ker }(G_{\tau })\subset J_{\eta }$, 
this is a contradiction. Thus, we have completed the proof of our lemma. 
\end{proof}
\begin{lem}
\label{l:jkucuhjkj}
Under the assumptions of Theorem~\ref{t:jkucuh}, 
we have that for each $\g \in X_{\tau }$, $J_{\ker }(G_{\tau })\subset J_{\g }.$ 
\end{lem}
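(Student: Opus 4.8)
\textbf{Proof plan for Lemma~\ref{l:jkucuhjkj}.} The goal is to show that under the hypothesis of Theorem~\ref{t:jkucuh}, namely that $J_{\ker}(G_{\tau})$ is contained in the unbounded component of $\CC \setminus (UH(G_{\tau}) \cap J(G_{\tau}))$, one has $J_{\ker}(G_{\tau}) \subset J_{\gamma}$ for every $\gamma \in X_{\tau}$. The natural approach is by contradiction: suppose there exist $\gamma = (\gamma_1, \gamma_2, \ldots) \in X_{\tau}$ and a point $z_0 \in J_{\ker}(G_{\tau})$ with $z_0 \in F_{\gamma}$. Since $z_0 \in J_{\ker}(G_{\tau})$ and $J_{\ker}(G_{\tau})$ is forward invariant under $G_{\tau}$ (Remark~\ref{r:kjulia}(2)), we have $\gamma_{n,1}(z_0) \in J_{\ker}(G_{\tau})$ for all $n$; also $J_{\ker}(G_{\tau}) \subset J(G_{\tau})$. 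Because $z_0 \in F_{\gamma}$, a subsequence $\gamma_{n_j,1}$ converges locally uniformly near $z_0$ to a holomorphic limit function $\phi$, and after passing to a further subsequence we may assume $f^{n_j}(\gamma, z_0) \to (\alpha, a)$ for some $\alpha \in X_{\tau}$ and $a \in J_{\ker}(G_{\tau})$, with $\phi(z_0) = a$.

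First I would argue that $\phi$ is non-constant. This is where the hypothesis $J_{\ker}(G_{\tau}) \subset$ (unbounded component of $\CC \setminus (UH(G_{\tau}) \cap J(G_{\tau}))$) enters. The point $a$ lies in $J_{\ker}(G_{\tau})$, hence $a \notin UH(G_{\tau}) \cap J(G_{\tau})$; since $a \in J(G_{\tau})$, we get $a \notin UH(G_{\tau})$, i.e. $a \in SH_N(G_{\tau})$ for some $N$. By the distortion/normality results for semi-hyperbolic-type points — concretely \cite[Lemma 1.10]{S4} as used in the proof of Lemma~\ref{l:jkejgcuh} — a limit function of $\{\gamma_{n_j,1}\}$ whose image contains a point of $SH_N(G_{\tau})$ cannot be constant. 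So $\phi$ is non-constant and $\phi(B(z_0, \delta))$ is a genuine open neighborhood of $a$ for a small disk $B(z_0, \delta)$ on which the convergence is locally uniform.

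Next, I would exploit that $a \in J_{\ker}(G_{\tau}) \subset J_{\alpha}$ to derive a contradiction, mimicking the mechanism in Claim~1 of the proof of Lemma~\ref{l:jkejgcuh}. Fix a neighborhood $D$ of $\infty$ with $\overline{D} \cap \phi(B(z_0,\delta)) = \emptyset$ and $h(D) \subset D$ for all $h \in G_{\tau}$ (possible since $\Gamma_{\tau} \subset \mathcal{P}$ and $\Gamma_{\tau}$ is compact). Since $a \in J_{\alpha}$, the sequence $\{\alpha_{n,1}\}$ is not normal near $a$, so some $\alpha_{q,1}$ maps some $\phi(b)$, $b \in B(z_0,\delta)$, into $D$; by openness of this condition there is a neighborhood $\mathcal{V}$ of $(\alpha_1, \ldots, \alpha_q)$ in $\Gamma_{\tau}^q$ and a neighborhood $\Omega$ of $\phi(b)$ with $\beta_q \cdots \beta_1(\Omega) \subset D$ for all $\beta \in \mathcal{V}$. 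For large $k$, both $\gamma_{n_k,1}(b) \in \Omega$ and $(\gamma_{n_k+1}, \ldots, \gamma_{n_k+q}) \in \mathcal{V}$, forcing $\gamma_{n_k+q,1}(b) \in D$. But $\gamma_{n_k+q,1}(b) \to \phi(b)$ would then put $\phi(b) \in \overline{D}$, contradicting $\phi(b) \in \phi(B(z_0,\delta)) \subset \CCI \setminus \overline{D}$. Hence no such $z_0$ exists.

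\textbf{Main obstacle.} The delicate step is the first one: establishing that the limit function $\phi$ is non-constant. The identity $J_{\ker}(G_{\tau}) \cap UH(G_{\tau}) = \emptyset$ follows from the surrounding-order hypothesis together with $J_{\ker}(G_{\tau}) \subset J(G_{\tau})$ and the fact that $UH(G_{\tau}) \subset P(G_{\tau})$ — but one must be careful that "unbounded component of $\CC \setminus (UH(G_{\tau}) \cap J(G_{\tau}))$" genuinely excludes the set $UH(G_{\tau}) \cap J(G_{\tau})$ itself, so that points of $J_{\ker}(G_{\tau})$ really avoid $UH(G_{\tau})$. Once that is in hand, invoking the semi-hyperbolic distortion estimate of \cite[Lemma 1.10]{S4} to rule out a constant limit is routine and parallels Lemma~\ref{l:jkejgcuh}. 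I would also double-check that the forward invariance $\gamma_{n,1}(z_0) \in J_{\ker}(G_{\tau})$ is used only through $a \in J_{\ker}(G_{\tau})$, which is automatic since $J_{\ker}(G_{\tau})$ is closed and $\gamma_{n_j,1}(z_0) \to a$.
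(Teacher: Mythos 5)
Your first step (non-constancy of the limit function) is exactly the paper's: from the hypothesis one gets $J_{\ker}(G_{\tau})\cap UH(G_{\tau})=\emptyset$, so the limit point $a=y_{\infty}$ avoids $UH(G_{\tau})$ and \cite[Lemma 1.10]{S4} rules out a constant limit. The gap is in your second step. You write ``I would exploit that $a\in J_{\ker}(G_{\tau})\subset J_{\alpha}$'' for the limit sequence $\alpha$ of the shifts $\sigma^{n_j}(\gamma)$ — but the inclusion $J_{\ker}(G_{\tau})\subset J_{\alpha}$ for an arbitrary $\alpha\in X_{\tau}$ is precisely the statement of the lemma you are proving. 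In Lemma~\ref{l:jkejgcuh}, whose Claim~1 you are mimicking, that inclusion is an explicit \emph{hypothesis}; here it is not available, and the unconditional fact you do have, namely $a\in\bigcap_{j}\alpha_{1}^{-1}\cdots\alpha_{j}^{-1}(J(G_{\tau}))=\hat{J}_{\alpha,\Gamma_{\tau}}$ (Lemma~\ref{l:pctjjg}), is weaker than $a\in J_{\alpha}$ and does not give you a point $b$ with $\alpha_{q,1}(\phi(b))$ escaping to a neighborhood of $\infty$. So your argument is circular at the decisive moment.

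This is also why your proposal never uses the full strength of the hypothesis: you only use $J_{\ker}(G_{\tau})\cap UH(G_{\tau})=\emptyset$, whereas the assumption is that $J_{\ker}(G_{\tau})$ lies in the \emph{unbounded} component of $\CC\setminus(UH(G_{\tau})\cap J(G_{\tau}))$. The paper's proof needs that extra topological information in the second half: it introduces the open set $V$ of points $y$ near which the return maps $f_{\rho^{i},n_{j}-n_{i}}$ converge uniformly to the identity, uses the non-constancy of $\varphi$ to show $y_{\infty}\in V$, shows $V\cap F_{\infty}(G_{\tau})=\emptyset$ via Lemma~\ref{tinftyphilem1} (so $V$ is bounded), and invokes \cite[Lemma 2.13]{S4} to get $\partial V\subset J(G_{\tau})\cap UH(G_{\tau})$. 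Hence $y_{\infty}$ would sit in a bounded component of $\CC\setminus(UH(G_{\tau})\cap J(G_{\tau}))$, contradicting the hypothesis. To repair your proof you would need to replace the ``$a\in J_{\alpha}$'' step by an argument of this kind (or otherwise establish non-normality of $\{\alpha_{n,1}\}$ at $a$ without assuming the lemma).
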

\begin{proof}
Under the assumptions of Theorem~\ref{t:jkucuh}, suppose that 
there exists a $\g \in X_{\tau }$ such that $J_{\ker }(G_{\tau })\cap F_{\g }\neq \emptyset .$ 
Let $y_{0}\in J_{\ker }(G_{\tau })\cap F_{\g }$ be a point.   
Let $f:X_{\tau }\times \CCI \rightarrow X_{\tau }\times \CCI $ be the skew product associated with 
$\G _{\tau }.$ 
Then there exists a strictly increasing sequence $\{ n_{j}\} _{j\in \NN }$ in $\NN $, 
an open connected neighborhood $U$ of $y_{0}$,   
and a holomorphic map $\varphi :U\rightarrow \CCI $ such that 
$f_{\g ,n_{j}}\rightarrow \varphi $ as $j\rightarrow \infty $ uniformly on $U.$ 
Let $(\rho ^{j},y_{j})=f^{n_{j}}(\g ,y). $ We may assume that there exists a 
point $(\rho ^{\infty }, y_{\infty })\in X_{\tau }\times J_{\ker }(G_{\tau })$ 
such that $(\rho ^{j}, y_{j})\rightarrow (\rho ^{\infty },y_{\infty })$ as $j\rightarrow \infty .$ 
If $\varphi $ is constant, then \cite[Lemma 1.10]{S4} implies that $y_{\infty }\in UH(G_{\tau })$. However, 
this is a contradiction, since $J_{\ker }(G_{\tau })\cap UH(G_{\tau })=\emptyset .$ 
Hence, we obtain that $\varphi $ is not constant. 
We set 
$$V:= \{ y\in \CCI \mid \exists \epsilon >0, \lim _{i\rightarrow \infty }\sup _{j>i}\sup _{d(\xi ,y)\leq \epsilon }
d(f_{\rho ^{i},n_{j}-n_{i}}(\xi ) ,\xi )=0\} .$$ 
Then, $V$ is an open subset of $\CCI .$ 
By Lemma~\ref{tinftyphilem1}, $V\cap F_{\infty }(G_{\tau })=\emptyset .$ 
Moreover, $y_{\infty }\in V$. For, since $\varphi $ is non-constant, 
there exists a number $a>0$ and an $s\in \NN $ such that 
for each $j\in \NN $ with $j\geq s$, $f_{\g ,n_{j}}(U)\supset B(y_{\infty },a).$ 
If $y\in B(y_{\infty },a)$ then $y=f_{\g ,n_{i}}(\xi _{i})$ for some $\xi _{i}\in U$ and so
$d(f_{\rho ^{i},n_{j}-n_{i}}(y),y)=d(f_{\g ,n_{j}}(\xi _{i}), f_{\g ,n_{i}}(\xi _{i}))$ which is small 
if $i$ is large. Hence $y_{\infty }\in V.$ 

 Furthermore, by \cite[Lemma 2.13]{S4}, $\partial V\subset J(G_{\tau })\cap UH(G_{\tau }).$ 
These arguments imply that $y_{\infty }$ belongs to a bounded connected component of 
$\CC \setminus (J(G_{\tau })\cap UH(G_{\tau })).$ However, this contradicts the assumption of our lemma. 
Therefore, for each $\g \in X_{\tau }$, $J_{\ker }(G_{\tau })\subset J_{\g }.$ 
Thus, we have completed the proof of our lemma. 
\end{proof}

We now prove Theorem~\ref{t:jkucuh}.\\ 
\noindent {\bf Proof of Theorem~\ref{t:jkucuh}:} 
Combining Lemma~\ref{l:jkucuhjkj}, Lemma~\ref{l:jkejgcuh},  
Lemma~\ref{l:pctjjg}, Lemma~\ref{l:mgjpt0}, and Lemma~\ref{lem:fpt0tconti}, 
we obtain all statements of Theorem~\ref{t:jkucuh}. 
Thus, we have completed the proof of Theorem~\ref{t:jkucuh}.
\qed
 
%We now prove Corollary~\ref{c:jkucp}.\\ 
%\noindent {\bf Proof of Corollary~\ref{c:jkucp}:} 
%Since $UH(G_{\tau })\subset P(G_{\tau })$, 
%Corollary~\ref{c:jkucp} follows from Theorem~\ref{t:jkucuh}. 
%\qed   
\subsection{Proofs of results in subsection~\ref{Conjkeremp}}
\label{pfConjkeremp}
In this subsection, we give the proofs of the results in subsection~\ref{Conjkeremp}. 
Moreover, we show several related results. 
\begin{lem}
\label{l:gignejke}
Let $\G $ be a non-empty subset of {\em Rat} and let 
$G=\langle \G \rangle .$ 
Suppose that $F(G)\neq \emptyset $, and 
that for each $z\in J(G)$, 
there exists a holomorphic family $\{ g_{\lambda }\} _{\lambda \in \Lambda }$ 
of rational maps 
 such that $\{ g_{\lambda }\mid \lambda \in \Lambda \} \subset \G $ and 
the map 
$\lambda \mapsto g_{\lambda }(z)$ is nonconstant on $\Lambda .$ Then, 
$J_{\ker }(G)=\emptyset .$ 
\end{lem}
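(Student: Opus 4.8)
The plan is to argue by contradiction, exploiting the forward invariance of the kernel Julia set together with the open mapping property of non-constant holomorphic maps. First, if $J(G)=\emptyset$ then $J_{\ker}(G)\subset J(G)=\emptyset$ and there is nothing to prove; so I would assume $J(G)\neq\emptyset$ and suppose, for contradiction, that $J_{\ker}(G)\neq\emptyset$. Pick a point $z_{0}\in J_{\ker}(G)\subset J(G)$. By hypothesis there is a holomorphic family $\{g_{\lambda}\}_{\lambda\in\Lambda}$ of rational maps with $\{g_{\lambda}\mid\lambda\in\Lambda\}\subset\Gamma\subset G$ such that the holomorphic map $\phi:\Lambda\to\CCI$, $\phi(\lambda):=g_{\lambda}(z_{0})$, is non-constant on $\Lambda$.

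Next I would invoke the forward invariance of the kernel Julia set: by Remark~\ref{r:kjulia}(2), $h(J_{\ker}(G))\subset J_{\ker}(G)$ for every $h\in G$, and in particular $\phi(\lambda)=g_{\lambda}(z_{0})\in J_{\ker}(G)$ for every $\lambda\in\Lambda$. Thus the image of the non-constant holomorphic map $\phi$ is contained in $J_{\ker}(G)$. Since a non-constant holomorphic map defined on a connected complex manifold has image with non-empty interior in $\CCI$ (restrict to a holomorphic disc through a point where the differential of $\phi$ does not vanish, then apply the open mapping theorem), it follows that $\mbox{int}(J_{\ker}(G))\neq\emptyset$. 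But $F(G)\neq\emptyset$, so Remark~\ref{r:kjulia}(3) gives $\mbox{int}(J_{\ker}(G))=\emptyset$, a contradiction. Hence $J_{\ker}(G)=\emptyset$.

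I expect no serious obstacle here; the only points requiring a little care are (i) the passage from "$\phi$ non-constant" to "$\phi(\Lambda)$ has non-empty interior", where I would use the convention that the parameter space $\Lambda$ of a holomorphic family is connected, and (ii) the appeal to Remark~\ref{r:kjulia}(3). If one prefers an argument not relying on Remark~\ref{r:kjulia}(3), one can reason directly via Montel's theorem: setting $W:=\mbox{int}(J_{\ker}(G))$, which is a non-empty open set by the step above, every $g\in G$ maps $W$ into $J_{\ker}(G)\subset J(G)=\CCI\setminus F(G)$; since the non-empty open set $F(G)$ contains at least three (indeed infinitely many) points, the family $\{g|_{W}\mid g\in G\}$ omits these three fixed values and is therefore normal, hence equicontinuous, on $W$, which forces $W\subset F(G)$, contradicting $W\subset J(G)$. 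Either way the lemma follows.
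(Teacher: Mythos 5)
Your proposal is correct and follows essentially the same route as the paper: take $z_{0}\in J_{\ker}(G)$, use the forward invariance $g(J_{\ker}(G))\subset J_{\ker}(G)$ to see that the image of the non-constant holomorphic map $\lambda\mapsto g_{\lambda}(z_{0})$ lies in $J_{\ker}(G)$, conclude that $J_{\ker}(G)$ has non-empty interior, and contradict Remark~\ref{r:kjulia}(3). The alternative ending via Montel's theorem is a harmless variant of the same contradiction.
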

\begin{proof}
Suppose that $J_{\ker }(G)\neq \emptyset .$
Let $z_{0}\in J_{\ker }(G)$ be a point. 
Then there exists a holomorphic family $\{ g_{\lambda }\} _{\lambda \in \Lambda }$ 
of rational maps 
such that the map 
$\Theta : \lambda \mapsto g_{\lambda }(z_{0})$ is nonconstant on $\Lambda $ and 
$\{ g_{\lambda }(z_{0})\mid \lambda \in \Lambda \} \subset \Gamma .$ 
Hence, $J_{\ker}(G)$ contains a non-empty open subset $\Theta (\Lambda )$ of $\CCI .$ 
However, this contradicts 
 Remark~\ref{r:kjulia}.  
Therefore, $J_{\ker }(G)=\emptyset .$ Thus, 
we have completed the proof of our lemma.
\end{proof}

 We now prove Lemma~\ref{l:ignejke}.\\ 
\noindent {\bf Proof of Lemma~\ref{l:ignejke}:}
The statement of our lemma immediately follows from Lemma~\ref{l:gignejke}. 
\qed 

We now prove Lemma~\ref{l:aigpjke}.\\ 
\noindent {\bf Proof of Lemma~\ref{l:aigpjke}:}
Since $\G $ is relative compact, $\infty \in F(G).$ 
From Lemma~\ref{l:gignejke}, it follows that $J_{\ker }(G)=\emptyset .$ 
Thus, we have completed the proof of our lemma.
\qed  
\begin{lem}
\label{l:gnbdhjke}
Let ${\cal Y}$ be a closed subset of an open subset of ${\cal P}.$  
Suppose that ${\cal Y}$ is strongly admissible.  
Let $\G \in \mbox{{\em Cpt}}({\cal Y})$ and let $V$ be a neighborhood 
of $\G $ in $\mbox{{\em Cpt}}({\cal Y}).$ 
Then, there exists a $\G '\in V$ such that $J_{\ker }(\langle \G '\rangle )=\emptyset .$ 
\end{lem}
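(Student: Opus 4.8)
\textbf{Proof plan for Lemma~\ref{l:gnbdhjke}.}
The plan is to perturb $\G $ slightly so that the generator system contains a member of a suitable holomorphic family at a point ``near'' any prescribed point of the plane, and then apply Lemma~\ref{l:gignejke} (or rather a localized version of it). First I would fix a neighborhood $V$ of $\G $ in $\mbox{Cpt}({\cal Y})$ with respect to the Hausdorff metric; by shrinking $V$ we may assume $V$ consists of compact sets contained in a fixed open set ${\cal O}\subset {\cal P}$ containing $\G $, and (since ${\cal Y}$ is closed in ${\cal O}$) we may intersect with ${\cal Y}$ freely. The key observation is that strong admissibility gives, for every pair $(z_0,h_0)\in \CC \times {\cal Y}$, a holomorphic family $\{ g_\lambda \} _{\lambda \in \Lambda }$ inside ${\cal Y}$ with $g_{\lambda _0}=h_0$ and $\lambda \mapsto g_\lambda (z_0)$ nonconstant arbitrarily near $\lambda _0$; hence in any neighborhood of any $h_0\in \G $ in ${\cal Y}$ there is a small nontrivial holomorphic arc of maps.

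The core construction: choose a countable dense set $\{ z_i\} _{i\in \NN }\subset \CC $. Pick $h_0\in \G $. For each $i$, use strong admissibility at $(z_i,h_0)$ to produce a holomorphic family $\{ g^{(i)}_\lambda \}$ in ${\cal Y}$ through $h_0$ along which $\lambda \mapsto g^{(i)}_\lambda (z_i)$ is nonconstant near $\lambda _0^{(i)}$; pick a tiny closed sub-arc $A_i$ of this family, of diameter $<\varepsilon _i$ in $\mbox{Rat}$, with $\varepsilon _i\to 0$ fast, so that $A_i\subset {\cal Y}\cap {\cal O}$ and $\bigcup _i A_i \cup \{ h_0\}$ is still within Hausdorff distance of $\G $ less than the radius defining $V$. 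Set $\G ':= \G \cup \overline{\bigcup _{i}A_i}$; since each $A_i$ is a compact arc accumulating only at $h_0\in \G $ (as $\varepsilon _i\to 0$), $\G '$ is compact, $\G '\in \mbox{Cpt}({\cal Y})$, and $\G '\in V$ provided the $\varepsilon _i$ were chosen small enough. Let $G'=\langle \G '\rangle $.

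Now I would show $J_{\ker }(G')=\emptyset $. Suppose not, and let $w\in J_{\ker }(G')$. Since $\G '\supset \G $ contains at least one map of degree $\geq 2$ and $\infty \in F(G')$ (because $\G '$ is a compact subset of ${\cal P}$), we have $F(G')\neq \emptyset $, so by Remark~\ref{r:kjulia}(3), $\mbox{int}(J_{\ker }(G'))=\emptyset $. Pick $z_i$ close enough to $w$ that $g^{(i)}_\lambda (w)$ is nonconstant in $\lambda $ on $A_i$ (the map $(z,\lambda )\mapsto g^{(i)}_\lambda (z)$ is holomorphic, so nonconstancy at $z_i$ persists on a neighborhood of $z_i$, hence at $w$ for $i$ large with $z_i$ near $w$); here I would need to be slightly careful that $z_i$ can be chosen in that neighborhood of $w$ — density of $\{ z_i\}$ handles that, but I must pass to a \emph{subsequence}/re-do the construction so that $A_i$ is built at $(z_i,h_0)$ for $z_i$ ranging over all of $\{ z_i\}$, not just finitely many; this is exactly why we index over a dense countable set. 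Then $\{ g^{(i)}_\lambda (w)\mid \lambda \in A_i\} \subset \G '(w)\subset J(G')$ by backward invariance ($J_{\ker }$ is forward invariant, Remark~\ref{r:kjulia}(2), so $\G '(w)\subset J_{\ker }(G')\subset J(G')$), and this set contains a nonconstant holomorphic image of an arc, hence has nonempty interior in $\CCI $ — contradicting $\mbox{int}(J(G'))\supset$ nothing, more precisely contradicting that $J_{\ker }(G')$, being forward invariant, would then contain this open set, against $\mbox{int}(J_{\ker }(G'))=\emptyset $. This contradiction gives $J_{\ker }(G')=\emptyset $.

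The main obstacle is the bookkeeping in the perturbation: arranging that $\G '$ is \emph{compact}, lies in ${\cal Y}$, is Hausdorff-close to $\G $, yet simultaneously contains, for each point of a dense subset of $\CC $, a genuine nontrivial holomorphic arc of maps through a common base point $h_0\in \G $. The trick is that all arcs emanate from the single point $h_0$ and have diameters tending to $0$, so their union together with $h_0$ is compact and the Hausdorff distance to $\G $ is at most $\sup _i \varepsilon _i$, which we control. Everything else — nonconstancy transferring from $z_i$ to a nearby $w$ by holomorphicity, and the open-mapping/Montel contradiction via Remark~\ref{r:kjulia} — is routine. (With Lemma~\ref{l:gnbdhjke} in hand, Proposition~\ref{p:v1v2rho} follows by additionally noting that a finite sub-arc can be replaced by finitely many points of it at which the relevant map is still nonconstant, and using that $\tau $ can be approximated in $V_1$ by measures supported on such finite sets.)
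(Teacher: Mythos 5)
Your overall mechanism (perturb $\G$ inside ${\cal Y}$, then derive a contradiction from forward invariance of $J_{\ker}$ plus the open mapping theorem applied to a nonconstant holomorphic family) is exactly the mechanism of Lemma~\ref{l:gignejke}, so the skeleton is sound; the compactness and Hausdorff-closeness bookkeeping for $\G'=\G\cup\overline{\bigcup_i A_i}$ is also fine. The gap is in the coverage step. You pre-commit to countably many arcs $A_i$, each certified to be nonconstant only at its own base point $z_i$, and then claim that for an arbitrary $w\in J_{\ker}(G')$ some $A_i$ is nonconstant at $w$ because ``nonconstancy at $z_i$ persists on a neighborhood of $z_i$, hence at $w$ for $z_i$ near $w$.'' This does not follow: the neighborhood $U_i$ of $z_i$ on which nonconstancy persists is not uniform in $i$, so having $z_i\to w$ does not put $w$ in any $U_i$. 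What is actually true (since $g^{(i)}_{\lambda_1}-g^{(i)}_{\lambda_2}$ is a polynomial in $z$) is that each arc $A_i$ is nonconstant at all $z$ outside a \emph{finite} exceptional set $E_i$; but density of $\{z_i\}$ does not prevent a point $w\in\bigcap_i E_i$ (a finite, possibly nonempty set) from escaping every arc — strong admissibility only guarantees the existence of \emph{some} family nonconstant at a prescribed point, and the families you chose at the $z_i$ could all happen to be constant at such a $w$. To close the gap you would have to either add, in a second round, finitely many further arcs anchored at the points of $\bigcap_i E_i$, or argue that the forward orbit of $w$ under $G'$ must leave this finite set.

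The paper avoids all of this by taking $\G':=\{h\in{\cal Y}\mid \kappa(h,\G)\leq\epsilon\}$ for small $\epsilon$: this is compact (here is where the hypothesis that ${\cal Y}$ is a closed subset of an open subset of ${\cal P}$ is used), lies in $V$, and — because it contains a full ${\cal Y}$-neighborhood of each $h_0\in\G$ — automatically contains, for \emph{every} $(z,h_0)\in\CC\times\G$, a parameter neighborhood of the strongly admissible family at $(z,h_0)$ on which $\lambda\mapsto g_\lambda(z)$ is nonconstant. Lemma~\ref{l:gignejke} then applies directly, with no covering argument needed. Your ``thin'' $\G'$ is only really needed for the finite-support refinement, which the paper obtains separately in Lemma~\ref{l:rgnbdhjkef} by a compactness argument on $J(\langle\G\rangle)$ rather than by shrinking the perturbation itself.
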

\begin{proof}
Take a small $\epsilon >0$ such that the element  
 $\G ':=\{ h\in {\cal Y}\mid \kappa (h,\G )\leq \epsilon \} \in \mbox{Cpt}({\cal Y})$ belongs to 
$V$, where $\kappa $ denotes the relative distance in ${\cal P}$ from Rat. By Lemma~\ref{l:gignejke}, 
$J_{\ker }(\langle \G '\rangle )=\emptyset .$ 
Hence, we have completed the proof of our lemma.   
\end{proof}
\begin{lem}
\label{l:rgnbdhjkef}
Let ${\cal Y}$ be a subset of {\em Rat} endowed with the relative distance from 
{\em Rat}. 
Let $\G \in \mbox{{\em Cpt}}({\cal Y})$ be an element such that 
$J_{\ker }(\langle \G \rangle )=\emptyset .$ 
Let $V$ be a neighborhood of $\G $ in $\mbox{{\em Cpt}}({\cal Y}).$ 
Then, there exists an element $\G '\in V$ such that 
$\G' \subset \G , \sharp \G ' <\infty , $ and 
$J_{\ker }(\langle \G '\rangle )=\emptyset .$ 
\end{lem}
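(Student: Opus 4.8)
\textbf{Proof plan for Lemma~\ref{l:rgnbdhjkef}.}
The plan is to use compactness of $\G$ together with the topological characterization of $J_{\ker}$ as an intersection of preimages of $J(\langle\G\rangle)$. First I would handle the trivial case: if $J(\langle\G\rangle)=\emptyset$, then $J_{\ker}(\langle\G\rangle)=\emptyset$ automatically, and since $J$ is an open condition in the generator system (by normality / Montel — a small perturbation keeps the Fatou set all of $\CCI$), any finite $\G'\subset\G$ that is sufficiently dense, in fact any finite subset, can be checked to still have empty Julia set; but more carefully, one wants $\G'$ close to $\G$ in the Hausdorff metric, so I would just pick a finite $\frac{1}{n}$-net $\G'$ of $\G$ for large $n$. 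The point is that $J(\langle\G'\rangle)\subset J(\langle\G\rangle)$ when $\G'\subset\G$, so $J_{\ker}$ can only shrink. Hence the real content is the case $J(\langle\G\rangle)\neq\emptyset$.

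The key step is the following: since $J_{\ker}(\langle\G\rangle)=\bigcap_{g\in\langle\G\rangle}g^{-1}(J(\langle\G\rangle))=\emptyset$, for every point $z\in J(\langle\G\rangle)$ there exists a word $g_z=h_{k}\circ\cdots\circ h_1$ with each $h_i\in\G$ such that $g_z(z)\in F(\langle\G\rangle)$; by continuity of $g_z$ and openness of $F(\langle\G\rangle)$ there is a neighborhood $U_z$ of $z$ with $g_z(\overline{U_z})\subset F(\langle\G\rangle)$. Now $J(\langle\G\rangle)$ is compact, so finitely many such $U_{z_1},\dots,U_{z_p}$ cover it, giving finitely many words $g_{z_1},\dots,g_{z_p}$ involving only finitely many generators $h_1,\dots,h_N\in\G$. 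Let $\G_0:=\{h_1,\dots,h_N\}$, a finite subset of $\G$. Then I claim $J_{\ker}(\langle\G_0\rangle)=\emptyset$: indeed $J(\langle\G_0\rangle)\subset J(\langle\G\rangle)$, and by the backward self-similarity Lemma~\ref{l:bss} together with the covering, every point of $J(\langle\G_0\rangle)$ is mapped into $F(\langle\G\rangle)\cap(\CCI\setminus J(\langle\G_0\rangle))$ — here one uses that $F(\langle\G\rangle)\subset F(\langle\G_0\rangle)$, so $g_{z_j}$ moves each point of $U_{z_j}\cap J(\langle\G_0\rangle)$ out of $J(\langle\G_0\rangle)$, whence no forward-invariant subset of $J(\langle\G_0\rangle)$ can be nonempty, i.e. $J_{\ker}(\langle\G_0\rangle)=\emptyset$. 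The remaining task is purely a Hausdorff-metric matter: adjust $\G_0$ to $\G'\in V$. Since $V$ is a neighborhood of $\G$ in $\mbox{Cpt}({\cal Y})$ and $\G$ is compact, fix $\varepsilon>0$ with $\{A\in\mbox{Cpt}({\cal Y}):d_H(A,\G)<\varepsilon\}\subset V$, take a finite $\varepsilon$-net $\G_1$ of $\G$, and set $\G':=\G_0\cup\G_1$. Then $\G'\subset\G$, $\sharp\G'<\infty$, and $d_H(\G',\G)<\varepsilon$ so $\G'\in V$; moreover $\G_0\subset\G'\subset\G$ forces $J(\langle\G_0\rangle)\subset J(\langle\G'\rangle)\subset J(\langle\G\rangle)$, and rerunning the covering argument with the words $g_{z_j}$ (all of whose letters lie in $\G_0\subset\G'$) shows $J_{\ker}(\langle\G'\rangle)=\emptyset$.

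The main obstacle I anticipate is the subtle point that enlarging the generator set from $\G_0$ to $\G'$ could in principle create new Julia-set points that escape the control of the finitely many words $g_{z_j}$. The resolution is exactly the monotonicity $J(\langle\G'\rangle)\subset J(\langle\G\rangle)$ (valid because $\G'\subset\G$) combined with $F(\langle\G\rangle)\subset F(\langle\G'\rangle)$: any point $z\in J(\langle\G'\rangle)$ lies in $J(\langle\G\rangle)$, hence in some $U_{z_j}$, hence $g_{z_j}(z)\in F(\langle\G\rangle)\subset F(\langle\G'\rangle)$ — so the very same words still do the job, and one concludes via Remark~\ref{r:kjulia}(2) that the forward-invariant set $J_{\ker}(\langle\G'\rangle)$ must be empty. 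I would also remark that strong admissibility of ${\cal Y}$ is not needed here (unlike in Lemma~\ref{l:gnbdhjke}); the statement is a soft consequence of compactness and the definitions, which is why it is phrased for an arbitrary ${\cal Y}\subset\mbox{Rat}$.
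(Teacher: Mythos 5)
Your proposal is correct and follows essentially the same route as the paper's proof: use compactness of $J(\langle \G \rangle )$ to extract finitely many words $g_{j}$ with $\bigcap _{j}g_{j}^{-1}(J(\langle \G \rangle ))=\emptyset $, collect the finitely many letters of these words, pad with a finite net of $\G $ to land in $V$, and conclude via the monotonicity $J(\langle \G '\rangle )\subset J(\langle \G \rangle )$ for $\G '\subset \G .$ Your side remarks (the separate treatment of $J(\langle \G \rangle )=\emptyset $ and the appeal to backward self-similarity) are unnecessary but harmless.
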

\begin{proof}
Since $J_{\ker }(\langle \G \rangle )=\emptyset $, 
there exist finitely many elements $g_{1},\ldots ,g_{r}\in \langle \G \rangle $ and 
finitely many open subsets $U_{1},\ldots ,U_{r}$ of $\CCI $ such that 
$J(\langle \G \rangle )\subset \bigcup _{j=1}^{r}U_{j}$ and 
$\bigcup _{j=1}^{r}g_{j}(U_{j})\subset F(\langle \G \rangle ).$ 
In particular, $\bigcap _{j=1}^{r}g_{j}^{-1}(J(\langle \G \rangle ))=\emptyset .$ 
For each $j=1.\ldots ,r$, we write $g_{j}$ as 
$g_{j}=h_{j,1}\circ \cdots \circ h_{j,t_{j}}$, where 
$h_{j,k}\in \G $ for each $k=1,\ldots ,t_{j}.$ Take an element 
$\G '\in V$ such that $\G '\subset \G $, $\sharp \G '<\infty $ and 
$\G '\supset \bigcup _{j=1}^{r}\{ h_{j,1},\ldots ,h_{j,t_{j}}\} .$ 
Then, $J_{\ker }(\langle \G '\rangle )=\bigcap _{h\in \langle \G '\rangle }h^{-1}(J(\langle \G '\rangle ))
\subset \bigcap _{j=1}^{r}g_{j}^{-1}(J(\langle \G \rangle ))=\emptyset .$ 
Thus, we have completed the proof of our lemma. 
\end{proof} 
\begin{lem}
\label{l:gnbdfhjke}
Let ${\cal Y}$ be a closed subset of an open subset of ${\cal P}.$ 
Suppose that ${\cal Y}$ is strongly admissible.  
Let $\G \in \mbox{{\em Cpt}}({\cal Y})$ and 
let $V$ be a neighborhood of $\G $ in $\mbox{{\em Cpt}}({\cal Y}).$ 
Then, there exists an element $\G '\in V$ such that 
$\sharp \G '<\infty $ and $J_{\ker }(\langle \G '\rangle )=\emptyset .$ 
\end{lem}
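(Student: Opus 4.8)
\textbf{Proof of Lemma~\ref{l:gnbdfhjke} (proposal).}
The plan is to combine the two perturbation lemmas just proved, namely Lemma~\ref{l:gnbdhjke} and Lemma~\ref{l:rgnbdhjkef}, by a simple two-step approximation argument. First I would shrink the given neighborhood: choose a neighborhood $V_{0}$ of $\G $ in $\mbox{Cpt}({\cal Y})$ with $V_{0}\subset V$, and in fact pick $V_{0}$ small enough (say, an $\epsilon/2$-neighborhood with respect to the Hausdorff metric coming from the relative distance $\kappa $ on ${\cal Y}$) that the $\epsilon /2$-neighborhood of any element of $V_{0}$ is still contained in $V$. This is where we will use that $\mbox{Cpt}({\cal Y})$ is a metric space under the Hausdorff metric, so that these nested neighborhoods exist.

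Next, apply Lemma~\ref{l:gnbdhjke} to $\G $ and the neighborhood $V_{0}$: since ${\cal Y}$ is strongly admissible and closed in an open subset of ${\cal P}$, there exists $\G ''\in V_{0}$ with $J_{\ker }(\langle \G ''\rangle )=\emptyset $. Now $\G ''$ is a compact subset of ${\cal Y}$ whose generated semigroup has empty kernel Julia set, so Lemma~\ref{l:rgnbdhjkef} applies to $\G ''$ (with ${\cal Y}$ viewed as a subset of Rat endowed with the relative distance): for any neighborhood $V_{1}$ of $\G ''$ in $\mbox{Cpt}({\cal Y})$ there is $\G '\in V_{1}$ with $\G '\subset \G ''$, $\sharp \G '<\infty $, and $J_{\ker }(\langle \G '\rangle )=\emptyset $. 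Take $V_{1}$ to be the $\epsilon /2$-neighborhood of $\G ''$; by the choice of $V_{0}$ we have $V_{1}\subset V$, hence $\G '\in V$. This $\G '$ is the required finite set.

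The argument is essentially bookkeeping: the two real inputs (producing an element with empty kernel Julia set nearby, and then thinning it to a finite subset while preserving emptiness of the kernel Julia set) are exactly Lemmas~\ref{l:gnbdhjke} and \ref{l:rgnbdhjkef}, and I would just need to verify that the neighborhoods can be nested so that the final $\G '$ lands inside the originally prescribed $V$. The only point requiring a little care — and the closest thing to an obstacle — is making sure the hypotheses of Lemma~\ref{l:rgnbdhjkef} are met by $\G ''$ rather than by $\G $ itself; since Lemma~\ref{l:rgnbdhjkef} only requires $J_{\ker }(\langle \G ''\rangle )=\emptyset $ and compactness of $\G ''$ in ${\cal Y}$, both of which we have secured from the first step, there is no genuine difficulty. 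Thus the proof is complete.
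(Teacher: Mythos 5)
Your proposal is correct and follows exactly the paper's route: the paper's proof of this lemma is the one-line "Combining Lemma~\ref{l:gnbdhjke} and Lemma~\ref{l:rgnbdhjkef}, the statement of our lemma holds," and your two-step argument with nested neighborhoods is precisely the bookkeeping that one-liner leaves implicit.
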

\begin{proof}
Combining Lemma~\ref{l:gnbdhjke} and Lemma~\ref{l:rgnbdhjkef}, 
the statement of our lemma holds. 
\end{proof}

We now prove Proposition~\ref{p:v1v2rho}.\\ 
\noindent {\bf Proof of Proposition~\ref{p:v1v2rho}:}
Let $\rho _{0}\in {\frak M}_{1}({\cal Y})$ be an element 
such that $\sharp \G _{\rho _{0}} <\infty $, $\rho _{0}\in V_{1}$, 
and 
$\G _{\rho _{0}}\in V_{2}.$ 
We write $\rho _{0}$ as $\rho _{0}=\sum _{j=1}^{r}p_{j}\delta _{h_{j}}$, where 
$p_{j}>0 $ for each $j$, $\sum _{j=1}^{r}p_{j}=1,$,  and 
$h_{1},\ldots ,h_{r}$ are mutually distinct elements of ${\cal Y}.$   
Let $U_{1}$ be a small compact neighborhood of $h_{1}$ in ${\cal Y}$ such that 
the compact set $\Lambda  _{1}:= U_{1}\cup \{ h_{2},\ldots ,h_{r}\} $ belongs to $V_{2}.$ 
By Lemma~\ref{l:gignejke}, $J_{\ker }(\langle \Lambda  _{1}\rangle )=\emptyset .$ 
Hence, Lemma~\ref{l:rgnbdhjkef} implies that 
there exists a finitely many elements $g_{1},\ldots ,g_{s}$ of $U_{1}$ 
such that setting $\Lambda  _{2}:= \{ g_{1},\ldots ,g_{s}\} \cup \{ h_{2},\ldots ,h_{r}\} $, 
we have $\Lambda  _{2}\in V_{2}$ and $J_{\ker }(\langle \Lambda _{2}\rangle )=\emptyset .$ 
Let $q_{1},\ldots ,q_{s}$ be positive numbers such that 
$\sum _{j=1}^{s}q_{j}=p_{1}.$ Let $\rho := \sum _{j=1}^{s}q_{j}\delta _{g_{j}}+
\sum _{j=2}^{r}p_{j}\delta _{h_{j}}.$ 
Then $\G _{\rho }\in V_{2}$ and $J_{\ker }(G_{\rho })=\emptyset .$ 
Moreover, if we take $U_{1}$ so small, then $\rho \in V_{1}.$ 
Thus, we have completed the proof of Proposition~\ref{p:v1v2rho}. 
\qed 
\subsection{Proofs of results in subsection~\ref{Almostst}}
\label{pfAlmostst}
In this subsection, we give the proofs of the results in subsection~\ref{Almostst}.

In order to prove Proposition~\ref{p:hyppjke}, 
we need some notations and lemmas. 
\begin{df}
Let $Y$ be a compact metric space and let $U$ be an open subset of $Y.$ 
Let $\G $ be a subset of $\CMX $ and let $G=\langle \G \rangle .$ 
Let $K$ be a non-empty compact subset of $U$. 
\begin{enumerate}
\item 
We say that 
$K$ is a weak attractor for $(G,\G ,U)$ if 
for each $\g \in \GN $ and each $y\in U$, 
$d(\g _{n,1}(y),K)\rightarrow 0$ as $n\rightarrow \infty .$ 
\item We say that $K$ is an attractor for $(G, \G ,U)$ if 
$K$ is a weak attractor for $(G,\G ,U)$ and 
$g(K)\subset K$ for each $g\in \G .$    
\end{enumerate}
\end{df}
\begin{lem}
\label{l:att1}
Let $\G \in \mbox{{\em Cpt}}(\emRat)$ with $\sharp J(\langle \Gamma \rangle )\geq 3.$  Let $G=\langle \G \rangle .$ 
Suppose that there exists an attractor $K$ for $(G, \G , F(G)).$  
Then, for each $L\in \mbox{{\em Cpt}}(F(G))$, 
\begin{equation}
\label{eq:l:att1-1}
\sup \{ d(\g _{n}\cdots \g _{1}(z), K)\mid z\in L, (\g_{1},\ldots ,\g _{n})\in \G ^{n}\} 
\rightarrow 0 \mbox{ as } n\rightarrow \infty 
\end{equation}
 and there exists a constant $C>0$ and an $0<\eta  <1$ 
such that 
\begin{equation}
\label{eq:l:att1-2}
\sup \{ \| (\g _{n}\cdots \g _{1})'(z)\| _{s}\mid z\in L, (\g_{1},\ldots ,\g _{n})\in \G ^{n}\} \leq C\eta ^{n}
 \mbox{ for each } n\in \NN ,
\end{equation}   
where $\| \cdot \| _{s}$ denotes the norm of the derivative with respect to the spherical 
metric of $\CCI .$  
\end{lem}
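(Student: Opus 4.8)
\textbf{Proof proposal for Lemma~\ref{l:att1}.}

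The plan is to deduce both statements from the existence of the attractor $K$ together with a compactness-plus-hyperbolic-contraction argument on $F(G)$. First I would fix $L\in\Cpt(F(G))$. The key observation is that $K\subset F(G)$ is compact, so we may take a relatively compact open neighborhood $N$ of $K$ with $\overline{N}\subset F(G)$; since $g(K)\subset K$ for each $g\in\G$ and $\G$ is compact, by continuity we may further arrange that $g(\overline{N})\subset N$ for every $g\in\G$ (shrink $N$ toward $K$ using uniform continuity of the finite-time maps and compactness of $\G$; a standard Montel/equicontinuity argument on the attracting region does this). Then $\g_{n}\cdots\g_{1}(\overline{N})\subset N$ for all words, and the family $\{\g_{n}\cdots\g_{1}|_{N}\}$ omits more than three points (it stays in $F(G)$ and $\sharp J(\langle\G\rangle)\geq 3$), hence is normal on $N$ by Montel.

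Next I would prove (\ref{eq:l:att1-1}). Suppose it fails: then there are $\epsilon_{0}>0$, points $z_{k}\in L$, and words of length $n_{k}\to\infty$ with $d(\g^{(k)}_{n_{k}}\cdots\g^{(k)}_{1}(z_{k}),K)\geq\epsilon_{0}$. Using that $K$ is a \emph{weak attractor} for $(G,\G,F(G))$, for each fixed starting point of $L$ the orbit under any sequence converges to $K$; the uniformity over $L$ comes from the equicontinuity (normality) just established on a neighborhood of $L$ in $F(G)$ together with the compactness of $L$ and of $\G^{\NN}$ in the product topology, via a diagonal/subsequence extraction. Extract a subsequence so that $z_{k}\to z_{\infty}\in L$, and so that the truncated words converge coordinatewise to some $\gamma\in\G^{\NN}$; equicontinuity lets us compare $\g^{(k)}_{n_{k}}\cdots\g^{(k)}_{1}(z_{k})$ with $\gamma_{n_{k},1}(z_{\infty})$, and the latter tends to $K$ by the weak-attractor hypothesis, a contradiction.

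For (\ref{eq:l:att1-2}) I would work with the hyperbolic metric on $F(G)$: since $\sharp J(\langle\G\rangle)\geq 3$, each component of $F(G)$ carries a hyperbolic metric, and every $g\in\G$ does not increase hyperbolic length (Schwarz--Pick), strictly decreasing it wherever $g$ is non-injective or the target component is "smaller". By (\ref{eq:l:att1-1}), after some uniform time $n_{0}$ every image $\g_{n_{0},1}(L)$ lies in the fixed compact set $\overline{N}\subset F(G)$, on which the hyperbolic and spherical metrics are comparable. It therefore suffices to bound the hyperbolic derivative of $\g_{n,1}$ restricted to $\overline{N}$. Here one uses that $\G$ is compact and each $g\in\G$ maps $\overline{N}$ into a compact subset of the interior (in the hyperbolic sense) of a component, forcing a uniform contraction factor $\eta<1$ per application; composing gives the geometric bound $C\eta^{n}$, and converting back to the spherical norm on $L$ (picking up a bounded factor from the first $n_{0}$ steps and from metric comparison on $\overline{N}$) yields (\ref{eq:l:att1-2}) with adjusted constants. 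The main obstacle is establishing the \emph{uniform} per-step hyperbolic contraction $\eta<1$ on $\overline{N}$: injective maps do not contract, so one must exploit that $g(\overline{N})$ is a compact subset strictly inside the Fatou set and use the Koebe/Schwarz--Pick estimate that the hyperbolic diameter shrinks by a definite factor when a domain is mapped into a compactly contained subdomain, then invoke compactness of $\G$ to make this factor uniform.
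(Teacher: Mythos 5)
Your argument for (\ref{eq:l:att1-1}) is workable in outline, but the proof of the exponential estimate (\ref{eq:l:att1-2}) contains a genuine gap: the claimed \emph{uniform per-step} hyperbolic contraction $\eta<1$ is false in general. A generator $g\in \G$ may map one connected component $V_{j}$ of $F(G)$ onto another component $V_{i}$ as a holomorphic covering (for instance, $\G $ may contain a M\"{o}bius element carrying $V_{j}$ conformally onto $V_{i}$; note $\G \subset \Rat$, not $\Ratp$). By Schwarz--Pick the hyperbolic derivative of such a $g$ from $V_{j}$ to $V_{i}$ is identically $1$, so no single application of $g$ contracts. Your proposed remedy --- that $g(\overline{N})$ is compactly contained in the target component and hence the hyperbolic metric ``shrinks by a definite factor'' --- misapplies the relevant estimate: the factor $\lambda _{V_{i}}/\lambda _{V_{i}'}<1$ appears only when the image of the \emph{whole domain} $V_{j}$ lies in a proper subdomain $V_{i}'$ of $V_{i}$, not when merely the image of a compact subset does (an isometry also maps compact sets to compact sets). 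For the same reason your preliminary claim that one can arrange $g(\overline{N})\subset N$ for every $g\in \G$ by ``uniform continuity'' is not justified and in general fails: with hyperbolic $\epsilon $-neighborhoods one gets $g(W)\subset W$ (open into open), but a covering between components maps the closed $\epsilon $-neighborhood of $K\cap V_{j}$ \emph{onto} the closed $\epsilon $-neighborhood of its image, not into the open one. (For (\ref{eq:l:att1-1}) the weaker forward invariance $G(W)\subset W$ suffices, and your subsequence argument closes once you have arbitrarily small forward-invariant neighborhoods; but that invariance should be obtained from Schwarz--Pick on the hyperbolic $\epsilon $-neighborhoods of $K\cap V_{j}$, not from continuity of finite-time maps.)

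The paper repairs exactly this point with a pigeonhole argument. Since $K$ is compact it meets only finitely many components $V_{1},\ldots ,V_{s}$ of $F(G)$, and $V:=\bigcup _{j}V_{j}$ is forward invariant. Hence along any orbit in $V$, within $2s$ steps there are times $q<p+q\leq 2s$ at which the orbit lies in the \emph{same} component $V_{i}$; the corresponding return composition $\g _{p+q}\cdots \g _{q+1}$ is an element of $G$ mapping $V_{i}$ into itself, and the attractor hypothesis forces such a self-map not to be a self-covering of $V_{i}$ (a covering self-map is a local hyperbolic isometry whose iteration cannot attract all of $V_{i}$ to the compact set $K\cap V_{i}$), so its hyperbolic derivative is $<1$ pointwise, and $\leq \eta <1$ uniformly on the compact sets $\overline{W_{i}}$ by compactness of $\G ^{k}$ for $k\leq 2s$. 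Contraction by a definite factor per block of at most $2s$ steps, rather than per step, is what yields (\ref{eq:l:att1-2}); the finitely many non-contracting steps inside each block are absorbed into the constant $C$. You should replace your per-step claim with this block/return argument (or an equivalent device, e.g., passing to the generator system $\G ^{2s}$ adapted to the component digraph).
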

\begin{proof}
Let $V_{1},\ldots ,V_{s}$ be finitely many connected components of 
$F(G)$ such that $K\subset \bigcup _{j=1}^{s}V_{j}$ and 
$V_{j}\cap K\neq \emptyset $ for each $j=1,\ldots ,s.$ 
We set $V=\bigcup _{j=1}^{s}V_{j}.$  
In each $j=1,\ldots ,s$, 
we take the hyperbolic metric $\rho _{j}$ on $V_{j}$ and 
let $W_{j}$ be the $\epsilon $-neighborhood of $K\cap V_{j}$ 
with respect to $\rho _{j}.$ 
Let $W=\bigcup _{j=1}^{s}W_{j}.$ 
Then $G(W)\subset W.$ 
%for each $g\in G$, $g(W)\subset W.$ 
Let $L\in \Cpt (F(G)).$ 
Since $K$ is an attractor for $(G,\G ,F(G))$ and 
$\GN $ is compact, it follows that 
there exists an $n\in \NN$ such that 
for each $\g \in \GN $, 
$\g _{n,1}(L)\subset W.$ 
Let $j\in \{ 1,\ldots ,s\} $ and let $g\in G$. 
Since $K$ is an attractor for $(G,\G,F(G))$, 
we obtain that if $g(V_{j})\subset V_{j}$, 
then $\| g'(z)\| _{h}<1$ for each $z\in V_{j}$, where 
$\| \cdot \| _{h}$ denotes the norm of the derivative 
with respect to $\rho _{j}.$ 
Moreover, for each $\g \in \GN $ and each $z\in V$, 
there exist $p,q\in \NN $ with $1\leq p,q\leq s$ and an 
$i\in \{ 1,\ldots ,s\} $ such that 
$\g _{q,1}(z)\in V_{i}$ and 
$\g _{p+q,1}(z) \in V_{i}.$ 
From these arguments, the statement of our lemma easily follows. 
\end{proof}
\begin{lem}
\label{l:attas}
Let $\G \in \mbox{{\em Cpt}}(\emRat)$ with $\sharp J(\langle \Gamma \rangle )\geq 3.$ 
Let $G=\langle \G \rangle .$ 
Suppose that there exists an attractor $K$ for $(G, \G , F(G)).$  
Moreover, suppose that $J_{\ker }(G)=\emptyset .$ 
Then, there exists a neighborhood ${\cal U}$ of $\Gamma $ 
in $\emCpt(\emRat)$ such that for each $\Gamma '\in {\cal U}$, 
$\Gamma '$ is mean stable and $J_{\ker }(\langle \G'\rangle )=\emptyset .$ 
\end{lem}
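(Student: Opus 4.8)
\textbf{Proof proposal for Lemma~\ref{l:attas}.}
The plan is to combine the quantitative contraction estimate of Lemma~\ref{l:att1} with Montel's theorem and the characterization of $J_{\ker}$ to produce, for all nearby generator systems $\Gamma'$, the two open sets $U\supset\overline V$ required by Definition~\ref{d:as}. First I would fix a finite collection of connected components $V_1,\dots,V_s$ of $F(G)$ meeting the attractor $K$, set $V:=\bigcup_j V_j$, and choose relatively compact open sets $\widetilde K\subset\subset U_0\subset\subset V$ with $K\subset\widetilde K$. By Lemma~\ref{l:att1} (estimate~(\ref{eq:l:att1-1})) applied to $L:=\overline{U_0}$, there is an $n\in\NN$ with $\gamma_{n,1}(\overline{U_0})\subset\widetilde K$ for every $\gamma\in\Gamma^{\NN}$; moreover~(\ref{eq:l:att1-2}) gives uniform contraction of the spherical derivative along words of length $n$. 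These are statements about finitely many compositions $\gamma_n\circ\cdots\circ\gamma_1$ with $\gamma_i$ ranging over the \emph{compact} set $\Gamma$, so by continuity of composition on $\mathrm{Rat}$ and a Hausdorff-metric perturbation argument there is a neighborhood ${\cal U}_1$ of $\Gamma$ in $\emCpt(\emRat)$ such that for every $\Gamma'\in{\cal U}_1$ and every $\gamma\in(\Gamma')^{\NN}$ we still have $\gamma_{n,1}(\overline{U_0})\subset\widetilde K$ (with the same $n$). Setting $U:=U_0$ and $V:=\widetilde K$ in Definition~\ref{d:as}, this gives conditions (1) and (2) of mean stability for $\Gamma'$, once we know $\overline{U}\subset F(\langle\Gamma'\rangle)$; but the inclusion $\gamma_{n,1}(\overline{U_0})\subset\widetilde K\subset\subset U_0$ for \emph{all} $\gamma$ forces $\overline{U_0}\subset F(\langle\Gamma'\rangle)$ by Montel's theorem (the family $\{\gamma_{kn,1}\}$ omits, say, three points, using $\sharp J\ge 3$ which persists for small perturbations, or more directly because the images stay inside a fixed compact subset of $U_0$ whose complement has more than two points).

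Next I would handle condition (3) of Definition~\ref{d:as}, namely that every point of $\CCI$ is mapped into $U$ by some element of $\langle\Gamma'\rangle$; equivalently, using Remark~\ref{r:asjkere} and Lemma~\ref{ocminvlem}, it suffices to produce for $\Gamma'$ the property that $J_{\ker}(\langle\Gamma'\rangle)=\emptyset$ together with a covering of $J(\langle\Gamma'\rangle)$ by finitely many open sets each driven into $F(\langle\Gamma'\rangle)$ (hence, after one more application of the attractor dynamics, into $U$). For the original $\Gamma$: since $J_{\ker}(G)=\emptyset$, for each $z\in J(G)$ there is $g_z\in G$ and an open neighborhood $B_z$ of $z$ with $g_z(\overline{B_z})\subset F(G)$; by compactness of $J(G)$ finitely many $B_{z_1},\dots,B_{z_p}$ cover $J(G)$, and writing each $g_{z_j}$ as a word $h_{j,k(j)}\circ\cdots\circ h_{j,1}$ with $h_{j,i}\in\Gamma$, perturbation stability of these finitely many compositions gives a neighborhood ${\cal U}_2$ of $\Gamma$ such that for every $\Gamma'\in{\cal U}_2$ the analogous words over $\Gamma'$ still send $\overline{B_{z_j}}$ into $\overline{U_0}$ (shrinking the $B_{z_j}$ and using that $g_{z_j}(\overline{B_{z_j}})$ sits in a fixed compact subset of $F(G)\subset$ the region attracted to $K$). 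Since $\bigcup_j B_{z_j}$ is a fixed open set and, for $\Gamma'$ close to $\Gamma$, $J(\langle\Gamma'\rangle)$ can be made to lie inside $\bigcup_j B_{z_j}$ — this is the one place requiring care, and I would get it from upper semicontinuity of $\Gamma'\mapsto J(\langle\Gamma'\rangle)$, which holds because $\CCI\setminus\bigcup_j B_{z_j}$ is a compact set on which $\langle\Gamma'\rangle$ is uniformly equicontinuous by the Montel argument above — we conclude $\langle\Gamma'\rangle^{-1}$ of $F(\langle\Gamma'\rangle)$ covers everything, so condition (3) holds and $J_{\ker}(\langle\Gamma'\rangle)=\emptyset$.

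Finally I would intersect: take ${\cal U}:={\cal U}_1\cap{\cal U}_2$ (further shrunk so that the Montel/equicontinuity arguments go through), and for $\Gamma'\in{\cal U}$ the three bullet points of Definition~\ref{d:as} hold with the sets $U=U_0$, $V=\widetilde K$ and the integer $n$, so $\langle\Gamma'\rangle$ is mean stable; and $J_{\ker}(\langle\Gamma'\rangle)=\emptyset$ was established en route (it also follows a posteriori from Remark~\ref{r:asjkere}). The main obstacle I anticipate is the \emph{uniformity in $\Gamma'$} of the statement ``$J(\langle\Gamma'\rangle)\subset\bigcup_j B_{z_j}$'': semicontinuity of Julia sets of rational semigroups under Hausdorff-metric perturbation of the generator set is genuinely delicate in general, but here it is rescued by the fact that all relevant compositions (both the contracting words of length $n$ and the finitely many ``escape'' words $g_{z_j}$) are uniformly equicontinuous on the fixed compact set $\CCI\setminus\bigcup_j B_{z_j}$, which pins the Fatou set of $\langle\Gamma'\rangle$ from below on that compact set and hence pins the Julia set from above. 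Everything else is a routine perturbation-of-finitely-many-compositions argument plus Montel's theorem.
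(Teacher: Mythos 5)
Your proposal is correct and follows essentially the same route as the paper: the paper's proof verifies mean stability of $G$ itself by combining Lemma~\ref{l:att1} with the observation that $J_{\ker}(G)=\emptyset$ yields condition (3) of Definition~\ref{d:as}, and then simply cites Lemma~\ref{l:asnbd} (the Montel-based openness of mean stability, using $\sharp(\CCI\setminus V)\geq 3$) together with Remark~\ref{r:asjkere} to pass to nearby $\Gamma'$. What you have written out in detail --- perturbing the finitely many contracting words and the finitely many ``escape'' words $g_{z_j}$, and pinning $F(\langle\Gamma'\rangle)$ from below via equicontinuity --- is precisely the content of that cited lemma, so the argument is sound, with only minor bookkeeping (e.g.\ running the Montel argument on a slightly larger open set so that $\overline{U_0}$, not just $U_0$, lands in $F(\langle\Gamma'\rangle)$).
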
 
\begin{proof}
Since $J_{\ker }(G)=\emptyset $, for each point $z\in \CCI $, there exists an element $g\in G$ 
such that $g(z)\in F(G).$ 
From Lemma~\ref{l:att1}, it follows that $G$ is mean stable. 
The rest of the statement of our lemma easily follows from Lemma~\ref{l:asnbd} and Remark~\ref{r:asjkere}.  
\end{proof}
\begin{df}
Let $G$ be a rational semigroup. 
We set 
$$A(G):= \overline{G(\{ z\in \CCI \mid \exists g\in G \mbox{ s.t. } 
g(z)=z, |m(g,z)|<1\} )}.$$ 
\end{df} 
\begin{lem}
\label{l:shatt1}
Let $\G \in \mbox{{\em Cpt}}(\emRatp)$. Let $G=\langle \G \rangle .$ 
Suppose that $G$ is semi-hyperbolic and $F(G)\neq \emptyset .$ 
Then, 
 $A(G)$ is an attractor for $(G, \G , F(G))$   
and for each $L\in \mbox{{\em Cpt}}(F(G))$, 
\begin{equation}
\label{eq:l:shatt1-1}
\sup \{ d(\g _{n}\cdots ,\g _{1}(z), A(G))\mid z\in L, (\g_{1},\ldots ,\g _{n})\in \G ^{n}\} 
\rightarrow 0 \mbox{ as } n\rightarrow \infty 
\end{equation}
 and there exists a constant $C>0$ and an $0<\eta  <1$ 
such that 
\begin{equation}
\label{eq:l:shatt1-2}
\sup \{ \| (\g _{n}\cdots \g _{1})'(z)\| _{s}\mid z\in L, (\g_{1},\ldots ,\g _{n})\in \G ^{n}\} \leq C\eta ^{n}
 \mbox{ for each } n\in \NN ,
\end{equation}   
where $\| \cdot \| _{s}$ denotes the norm of the derivative with respect to the spherical 
metric of $\CCI .$  
\end{lem}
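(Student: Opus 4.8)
\textbf{Proof proposal for Lemma~\ref{l:shatt1}.}
The plan is to build the attractor $A(G)$ out of the attracting fixed points of elements of $G$ and their $G$-orbits, and then to show it really behaves as an attractor by exploiting semi-hyperbolicity on the compact set $\CCI \setminus F(G) \supset UH(G)$. First I would recall the basic structure of semi-hyperbolic rational semigroups from \cite{S4, S7}: since $G$ is semi-hyperbolic with $F(G) \neq \emptyset$, we have $UH(G) \subset F(G)$, there is a uniform bound $N$ and a radius $r_0$ controlling the local degrees of preimages over small balls centered at points of $J(G)$, and (crucially) for every $\gamma \in \Gamma^{\NN}$ and every $z \in J_\gamma$ the backward-contraction estimates of \cite[Corollary 1.8, Corollary 1.9]{S4} hold uniformly. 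In particular, the forward iterates $\gamma_{n,1}$ are eventually contracting in the spherical metric on compact subsets of $F(G)$ — this is essentially the content we need, but we must phrase it as ``attraction to $A(G)$'' rather than just ``normality.''

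The key steps, in order, would be: (1) Verify that $A(G)$ is a non-empty compact subset of $F(G)$: non-emptiness follows from the existence of at least one attracting fixed point (semi-hyperbolicity with $F(G)\neq\emptyset$ and $G\cap\Ratp\neq\emptyset$ forces such a point to exist in $F(G)$, cf. the argument in Lemma~\ref{l:kattfpt1}), compactness follows from taking the closure, and $A(G)\subset F(G)$ because an attracting fixed point lies in $F(G)$ and $G(F(G))\subset F(G)$ by Lemma~\ref{ocminvlem}. (2) Show $g(A(G))\subset A(G)$ for each $g\in G$ (hence for each $g \in \Gamma$): immediate from $G(G(\cdot))\subset G(\cdot)$ and continuity/closure. (3) Prove $A(G)$ is a weak attractor for $(G,\Gamma,F(G))$, i.e. for each $\gamma\in\Gamma^{\NN}$ and each $y\in F(G)$, $d(\gamma_{n,1}(y),A(G))\to 0$. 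For this I would argue by contradiction: if not, there is a sequence $n_j\to\infty$ with $\gamma_{n_j,1}(y)$ staying a definite distance from $A(G)$; using semi-hyperbolicity (so that $\gamma\mapsto J_\gamma$ is well-controlled and $J_\gamma = \hat J_{\gamma,\Gamma}$ with $\dim_H < 2$), the limit functions of $\{\gamma_{n,1}|_U\}$ on the component $U\ni y$ are constant and their values accumulate on $A(G)$ — one needs that any such limit value is an attracting-type point, which is forced by the contraction estimate $\|\gamma_{n,1}'(y)\|_s\to 0$ combined with the structure of $\overline{G(\cdot)}$. Then (4): once $A(G)$ is established as an attractor for $(G,\Gamma,F(G))$ and $\sharp J(G)\geq 3$ (Remark~\ref{r:sjgg3}, since $G\cap\Ratp\neq\emptyset$), the estimates \eqref{eq:l:shatt1-1} and \eqref{eq:l:shatt1-2} are exactly the conclusions of Lemma~\ref{l:att1} applied with $K = A(G)$.

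So the bulk of the work reduces to step (3), and the main obstacle will be showing the \emph{weak attractor} property in a genuinely uniform way — that the orbit $\gamma_{n,1}(y)$ is not merely normal (which is cheap, since $y\in F(G)$) but actually converges toward the specific set $A(G)$ of attracting-orbit points. The clean route is to use the hyperbolic-metric contraction on components of $F(G)$: semi-hyperbolicity gives, via \cite[Lemma 1.10, Corollary 1.8, Corollary 1.9]{S4}, that limit functions on a component $U$ are constant and that $\sup_{z\in L}\|\gamma_{n,1}'(z)\|_h$ decays; the limit point then must be a point where some subsequence of the iterates ``pools,'' and running the fixed-point-extraction argument of Lemma~\ref{l:kattfpt1} (compose forward and backward excursions to produce an attracting fixed point arbitrarily close) shows the limit point lies in $A(G)$. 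Combining this with the compactness of $\Gamma^{\NN}$ to upgrade pointwise convergence to the uniform statement \eqref{eq:l:shatt1-1} — and then to the exponential derivative bound \eqref{eq:l:shatt1-2} via Lemma~\ref{l:att1} — completes the proof. The semi-hyperbolicity hypothesis is used precisely to guarantee the uniformity (over all $\gamma\in\Gamma^{\NN}$) of these contraction estimates, which would fail for a general rational semigroup.
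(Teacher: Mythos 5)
Your skeleton is the same as the paper's (show $A(G)$ is an attractor for $(G,\G ,F(G))$, then invoke Lemma~\ref{l:att1}), but the two inputs that carry all the weight are not correctly supplied. First, the inclusion $A(G)\subset F(G)$ is not a triviality: an attracting fixed point of an element $g\in G$ lies in $F(g)$, and since $F(G)\subset F(g)$ such a point may perfectly well lie on $J(G)$ for a general semigroup. That $A(G)$ is a \emph{non-empty compact subset of} $F(G)$ when $G$ is semi-hyperbolic and $F(G)\neq \emptyset $ is precisely the non-trivial result \cite[Theorem 1.26]{S4} that the paper cites; your justification (``an attracting fixed point lies in $F(G)$'') would not survive scrutiny, and your appeal to Lemma~\ref{l:kattfpt1} is illegitimate since that lemma is proved under the hypothesis $J_{\ker }(G_{\tau })=\emptyset $, which is not assumed here.

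Second, your step (3) is essentially circular. You propose to deduce that limit values lie in $A(G)$ from ``the contraction estimate $\| \gamma _{n,1}'(y)\| _{s}\rightarrow 0$,'' but that estimate is (a pointwise form of) the conclusion \eqref{eq:l:shatt1-2}; likewise, the statements ``limit functions are constant'' and ``$\sup \| \gamma _{n,1}'\| _{h}$ decays'' are proved in the paper (Lemmas~\ref{l:aeconst}, \ref{l:hypmetcont}) only for $\tilde{\tau }$-a.e.\ $\gamma $ and only assuming $J_{\ker }=\emptyset $, whereas here you need them for \emph{every} $\gamma \in \GN $. The fixed-point-extraction of Lemma~\ref{l:kattfpt1} also cannot be ``run'' here: its return step uses minimality ($\overline{G_{\tau }(z)}=L$) to map back into a small hyperbolic ball around the starting point, which is unavailable. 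The paper's actual mechanism is: (a) on the finitely many components $V_{1},\ldots ,V_{s}$ of $F(G)$ meeting $A(G)$, semi-hyperbolicity forces any $g\in G$ with $g(V_{j})\subset V_{j}$ to contract the hyperbolic metric strictly, and a pigeonhole over the finitely many components yields uniform exponential hyperbolic contraction on a compact neighborhood of $A(G)$ in $V=\bigcup _{j}V_{j}$, hence $d(\gamma _{n,1}(z),A(G))\rightarrow 0$ for all $z\in V$ and all $\gamma $; (b) for arbitrary $w\in F(G)$ and $\rho \in \GN $, \cite[Theorem 1.26]{S4} again gives that $\overline{G(w)}$ is a compact subset of $F(G)$, so two iterates $\rho _{r,1}(w),\rho _{s,1}(w)$ lie in the same component $U$, whence $\rho _{s,r+1}(U)\subset U$ and semi-hyperbolicity forces $U\cap A(G)\neq \emptyset $, reducing to case (a). Without these two ingredients your argument does not close.
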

\begin{proof}
Since $G$ is semi-hyperbolic and $F(G)\neq \emptyset$, 
\cite[Theorem 1.26]{S4} implies that 
$A(G)$ is a non-empty compact subset of $F(G).$  
Moreover, by the definition of $A(G)$, we have that 
$h(A(G))\subset A(G)$ for each $h\in G.$ 
Let $V_{1},\ldots ,V_{s}$ be finitely many connected components of 
$F(G)$ such that $A(G)\subset \bigcup _{j=1}^{s}V_{j}$ and 
$V_{j}\cap A(G)\neq \emptyset $ for each $j=1,\ldots ,s.$ 
We set $V=\bigcup _{j=1}^{s}V_{j}.$  
In each $j=1,\ldots ,s$, 
we take the hyperbolic metric $\rho _{j}$ on $V_{j}$.  
Let $j\in \{ 1,\ldots ,s\} $ and let $g\in G$.
Since $G$ is semi-hyperbolic, 
we obtain that if $g(V_{j})\subset V_{j}$, 
then $\| g'(z)\| _{h}<1$ for each $z\in V_{j}$, where 
$\| \cdot \| _{h}$ denotes the norm of the derivative 
with respect to $\rho _{j}.$ 
Moreover, for each $\g \in \GN $ and each $z\in V$, 
there exist $p,q\in \NN $ with $1\leq p,q\leq s$ and an 
$i\in \{ 1,\ldots ,s\} $ such that 
$\g _{q,1}(z)\in V_{i}$ and 
$\g _{p+q,1}(z) \in V_{i}.$ 
From these arguments, it follows that if $L$ is a compact 
neighborhood of $A(G)$ in $V$, then 
there exists a constant $C>0$ and a $0<\eta <1$ such that 
the inequality (\ref{eq:l:att1-2}) holds. 
In particular, 
for any $z\in V$ and any $\gamma \in \Gamma ^{\NN }$, 
$d(\gamma _{n,1}(z), A(G))\rightarrow 0$ as $n\rightarrow 0.$ 
We now take arbitrary point $w\in F(G).$ Let $\rho \in \Gamma ^{\NN }$ be arbitrary element. 
By \cite[Theorem 1.26]{S4} again, we have $\overline{\bigcup _{g\in G}g(w)}$ is a compact subset of 
$F(G).$ Hence, there exist $r,s\in \NN $ with $r<s$ and a 
$U\in\mbox{Con}(F(G))$ such that the two points  
$\rho _{s,1}(w)$ and $\rho _{r,1}(w)$ belong to $U.$ 
Then $\rho _{s,r+1}(U)\subset U.$ Since $G$ is semi-hyperbolic, 
it follows that $U\cap A(G)\neq \emptyset .$ 
Therefore, $d(\rho _{n,1}(w), A(G))\rightarrow 0$ as $n\rightarrow \infty .$    
From these argument, we obtain that $A(G)$ is an attractor for 
$(G,\Gamma ,F(G)).$ 
By Lemma~\ref{l:att1}, 
the statement of Lemma~\ref{l:shatt1} holds. 
\end{proof}

We now prove Proposition~\ref{p:hyppjke}.\\ 
{\bf Proof of Proposition~\ref{p:hyppjke}:}
%Let $\G =\{ h_{1},\ldots ,h_{m}\} .$ 
%Since $G$ is hyperbolic, 
%$P(G)\subset F(G).$ 
%Let $V$ be a relative compact open neighborhood of $P(G)$ in $F(G).$ 
%By \cite[Theorem 1.36]{S4}, 
%there exists an $n\in \NN $ such that  
%for each $\alpha =(\alpha _{1},\ldots ,\alpha _{n})\in \G ^{n}$, 
%$\overline{\alpha _{n}\cdots \alpha _{1}(V)}\subset V.$ 
%Hence, there exists an open neighborhood $U$ of 
%$(h_{1},\ldots ,h_{m})$ in $(\mbox{Rat})^{m}$ such that 
%for each $(g_{1},\ldots ,g_{m})\in U$, 
%$V\subset F(\langle g_{1},\ldots ,g_{m}\rangle ).$ 
%Moreover, since $J_{\ker }(G)=\emptyset $, \cite[Theorem 1.36]{S4} again implies that 
%there exist finitely many elements $\beta _{1},\ldots ,\beta _{s}$ 
%of $G$ and finitely many open subsets $W_{1},\ldots ,W_{s}$ of $\CCI $ 
%such that $J(G)\subset \bigcup _{j=1}^{s}W_{j}$ and 
%$\overline{\bigcup _{j=1}^{s}\beta _{j}(W_{j})}\subset V.$ 
%Furthermore, by \cite[Corollary 1.40]{S4}, if we take $U$ so small, 
%then for each $(g_{1},\ldots ,g_{m})\in U$, 
%$J(\langle g_{1},\ldots ,g_{m}\rangle )\subset \bigcup _{j=1}^{s}W_{j}.$ 
%From these arguments, it follows that if we take $U$ so small, then 
%for each $(g_{1},\ldots ,g_{m})\in U$, 
%$J_{\ker }(\langle g_{1},\ldots ,g_{m}\rangle )=\emptyset .$ 
%Thus, we have completed the proof of Proposition~\ref{p:hyppjke}. 
Combining Lemma~\ref{l:shatt1} and Lemma~\ref{l:attas}, the statement of our proposition holds. 
\qed 

We now prove Proposition~\ref{p:asmt}. 

\noindent {\bf Proof of Proposition~\ref{p:asmt}:}
From the definition of mean stability, it is easy to see that 
$S_{\tau }\subset \overline{G_{\tau }^{\ast }(\overline{V})}\subset F(G_{\tau }).$ 
Combining this with Theorem~\ref{t:mtauspec}-\ref{t:mtauspec6} and Theorem~\ref{t:mtauspec}-\ref{t:mtauspec4}, 
we easily obtain that  
statement~\ref{p:asmt2} and statement~\ref{p:asmt3} hold. 
\qed 

\begin{rem}
\label{r:shsimilar}
Let $\Gamma \in \Cpt(\Ratp)$ and let $G=\langle \Gamma \rangle .$ 
Let $\tau \in {\frak M}_{1,c}(\Ratp)$ with $\Gamma _{\tau }=\Gamma .$ 
Suppose that 
$G$ is semi-hyperbolic and $F(G)\neq \emptyset .$ 
Then, by Lemma~\ref{l:shatt1} and the arguments in the proof of Theorem~\ref{t:mtauspec}, 
regarding $M_{\tau }:C(A(G))\rightarrow C(A(G))$, statements which are similar to 
statements \ref{t:mtauspec2},\ref{t:mtauspec2-1}, \ref{t:mtauspec3}--\ref{t:mtauspecdual} in Theorem~\ref{t:mtauspec} hold.
\end{rem}
\subsection{Proofs of results in subsection~\ref{Suffnec}}
\label{pfSuffnec}
In this subsection, we give the proofs of the results in subsection~\ref{Suffnec}. 
We need some lemmas. 
\begin{lem}
\label{l:dimjpt0}
Let $\tau \in {\frak M}_{1,c}(\emRat)$. 
%Suppose that $\sharp (J(G_{\tau }))\geq 3.$ 
Then, 
$\dim _{H}(J_{pt}^{0}(\tau ))\leq \emMHDt.$ 
\end{lem}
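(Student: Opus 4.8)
The statement to prove is $\dim_H(J_{pt}^0(\tau)) \le \MHDt$ for $\tau \in {\frak M}_{1,c}(\emRat)$. The plan is to reduce the claim to a statement about the skew product $f$ on $X_\tau \times \CCI$ associated with $\G_\tau$, and to exploit the fact that $\MHDt$ is (by definition) the $\tilde\tau$-almost sure value of $\dim_H(\hat J_{\g,\G_\tau})$. First I would recall, via Lemma~\ref{FJmeaslem2} and Lemma~\ref{FJmeaslem1}-\ref{FJmeaslem1-4}, that if $y \in \CCI$ satisfies $\tilde\tau(\{\g \in X_\tau \mid y \in \bigcap_{j=1}^\infty \g_1^{-1}\cdots \g_j^{-1}(J(G_\tau))\}) = 0$, then $y \in F_{pt}^0(\tau)$. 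By Lemma~\ref{l:pctjjg}, the set $\bigcap_{j=1}^\infty \g_1^{-1}\cdots\g_j^{-1}(J(G_\tau))$ is exactly $\hat J_{\g,\G_\tau}$. Hence $J_{pt}^0(\tau) \subset \{ y \in \CCI \mid \tilde\tau(\{\g \mid y \in \hat J_{\g,\G_\tau}\}) > 0 \}$; call this last set $E$. It suffices to show $\dim_H(E) \le \MHDt$.

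Next I would bound $\dim_H(E)$ using a Fubini/co-area style argument on the product measure together with the almost-sure dimension bound. Fix $t > \MHDt$. For $\tilde\tau$-a.e. $\g$, $\dim_H(\hat J_{\g,\G_\tau}) < t$, hence the $t$-dimensional Hausdorff measure $H^t(\hat J_{\g,\G_\tau}) = 0$ for $\tilde\tau$-a.e. $\g$. Consider the set $\tilde E := \{ (\g,y) \in X_\tau \times \CCI \mid y \in \hat J_{\g,\G_\tau}\} = \tilde J(f)$ (this equality is part of the structure recorded after Definition~\ref{d:sp} together with Lemma~\ref{l:pctjjg}: $\hat J_{\g,\G_\tau} = \pi_\CCI(\pi^{-1}\{\g\}\cap\tilde J(f))$). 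The map $\g \mapsto H^t(\hat J_{\g,\G_\tau})$ is Borel measurable (a standard measurability fact for slices of analytic/Borel sets, or one can use an approximation by finite covers), and it vanishes $\tilde\tau$-a.e. Then by integrating the slice-wise Hausdorff measure against $\tilde\tau$ and applying a Fubini-type inequality for Hausdorff measures (the ``$t$-dimensional slicing'' estimate: $\int_{X_\tau} H^t(\{y : (\g,y)\in\tilde E\})\, d\tilde\tau(\g) \ge c \cdot (\text{outer } H^t\text{-like content of } E \text{ weighted by } \tilde\tau)$), one concludes that the projection to $\CCI$ of the portion of $\tilde E$ lying over a positive-$\tilde\tau$-measure set of $\g$'s has $H^t$-measure zero. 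Since $E$ is precisely the set of $y$ lying in $\hat J_{\g,\G_\tau}$ for a positive-measure set of $\g$, this gives $H^t(E) = 0$, hence $\dim_H(E) \le t$. Letting $t \downarrow \MHDt$ yields $\dim_H(J_{pt}^0(\tau)) \le \dim_H(E) \le \MHDt$.

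The main obstacle I expect is making the Fubini-type passage from ``almost every vertical slice has $H^t$-measure zero'' to ``the relevant subset of the base $\CCI$ has $H^t$-measure zero'' fully rigorous: Hausdorff measure does not satisfy an exact Fubini theorem, only one-sided slicing inequalities, and one must be careful that the set $E$ of ``bad'' base points (those in many slices) is the one controlled, not an arbitrary projection. The clean way around this is to argue by contradiction: if $\dim_H(E) > t$ for some $t > \MHDt$, then $E$ carries a nontrivial $t$-dimensional Frostman measure $\nu$ (by Frostman's lemma); form the product measure $\tilde\tau \times \nu$ restricted to $\tilde E = \tilde J(f)$, which has positive total mass since $\nu(E) > 0$ and each $y \in E$ sits in a positive-$\tilde\tau$-fraction of slices (by Fubini for the product measure, $(\tilde\tau\times\nu)(\tilde E) = \int \tilde\tau(\{\g : y\in\hat J_{\g,\G_\tau}\})\, d\nu(y) > 0$); but then for a positive-$\tilde\tau$-measure set of $\g$, the slice $\{y : (\g,y)\in\tilde E\} = \hat J_{\g,\G_\tau}$ has positive $\nu$-mass, and since $\nu$ is a $t$-Frostman measure this forces $\dim_H(\hat J_{\g,\G_\tau}) \ge t > \MHDt$ on a positive-measure set, contradicting the definition of $\MHDt$. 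This contradiction argument sidesteps the measurability subtleties of $\g\mapsto H^t(\hat J_{\g,\G_\tau})$ and uses only the elementary Fubini theorem for the product measure $\tilde\tau\times\nu$ on the Borel set $\tilde J(f)$, which is exactly what is available.
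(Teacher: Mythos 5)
Your contradiction argument is correct and is essentially the paper's proof: the paper also assumes $\dim _{H}(J_{pt}^{0}(\tau ))>\emMHDt$, produces a suitable measure $\nu $ on a subset of $J_{pt}^{0}(\tau )$, notes that $\nu (\hat{J}_{\gamma ,\Gamma _{\tau }})=0$ for $\tilde{\tau }$-a.e.\ $\gamma $ since $t>\emMHDt$, and derives the contradiction via the Fubini argument packaged in Lemmas~\ref{l:pctjjg} and \ref{l:mgjpt0}. The only difference is the auxiliary measure: the paper takes $\nu =H^{t}|_{F}$ with $0<H^{t}(F)<\infty $ from Falconer's subset theorem and concludes $\nu (J_{pt}^{0}(\tau ))=0$ directly from Lemma~\ref{l:mgjpt0}, whereas you take a $t$-Frostman measure and use the mass distribution principle; both work, and you were right to discard your first ``Hausdorff-measure Fubini'' sketch in favor of this.
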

\begin{proof}
Let $f:X_{\tau }\times \CCI \rightarrow X_{\tau }\times \CCI $ be the 
skew product associated with $\Gamma _{\tau }.$ 
Suppose that $\MHDt<\dim _{H}(J_{pt}^{0}(\tau )).$ 
Let $t\in \RR $ be a number such that $\MHDt<t<\dim _{H}(J_{pt}^{0}(\tau )).$ 
Then $H^{t}(J_{pt}^{0}(\tau ))=\infty $, where $H^{t}$ denotes the 
$t$-dimensional Hausdorff measure. 
By \cite[Theorem 5.6]{Fa}, 
there exists a compact subset $F$ of $J_{pt}^{0}(\tau )$ such that 
$0<H^{t}(F)<\infty .$ 
Let $\nu =H^{t}|_{F}.$ 
Since $\MHDt<t$, for $\tilde{\tau }$-a.e. $\g \in X_{\tau }$,  
$\nu (\hat{J}_{\g ,\Gamma _{\tau }})=0.$ 
%Hence, $(\tilde{\tau } \otimes \nu )(\tilde{J}(f))=0.$ 
%By Fubini's theorem, for $\nu $-a.e. $y\in F$,  
%$\tilde{\tau }(\{ \gamma \in X_{\tau }\mid (\gamma ,y)\in \tilde{J}(f)\} )=0.$ 
From Lemma~\ref{l:pctjjg} and Lemma~\ref{l:mgjpt0}, 
it follows that for $\nu $-a.e. $y\in F$, $y\in F_{pt}^{0}(\tau ).$ However, 
this is a contradiction. Thus, 
$\dim _{H}(J_{pt}^{0}(\tau ))\leq \MHDt.$ 
\end{proof}
\begin{df}[\cite{HM}]
\label{d:E(G)}
Let $G$ be a rational semigroup. 
We set $E(G):= \{ z\in \CCI \mid \sharp G^{-1}(z)<\infty \} .$ 
This is called the exceptional set of $G.$ 
\end{df}
%we easily obtain the following. 
\begin{rem}\label{l:egfg}
Let $\Lambda \in \Cpt (\Ratp)$ and let $G=\langle \Lambda \rangle .$ Then 
by \cite[Theorem 4.1.2]{Be}, 
$E(G)\subset F(G).$ 
\end{rem}
%We now prove the key lemma. 
\begin{lem}
\label{l:atdimjpt0}
Let $\tau \in {\frak M}_{1,c}(\emRat)$. 
Suppose that $\mbox{{\em Leb}}_{2}(\hat{J}_{\g ,\Gamma _{\tau }})=0$ 
for $\tilde{\tau }$-a.e. $\gamma \in X_{\tau }$, and that 
there exists a weak attractor $A$ for $(G_{\tau },\Gamma _{\tau },F(G_{\tau })).$  
Then, we have the following.
\begin{enumerate}
\item \label{l:atdimjpt01}
For $\mbox{{\em Leb}}_{2}$-a.e. $z\in \CCI $, 
$\tilde{\tau }(\{ \gamma \in X_{\tau }\mid z\in \hat{J}_{\g, \Gamma _{\tau }}\})
=
\tilde{\tau }(\{ \gamma \in X_{\tau }\mid z\in \bigcap _{j=1}^{\infty }\gamma _{1}^{-1}
\cdots \gamma _{j}^{-1}(J(G_{\tau }))\} )=0$. Moreover, 
$\mbox{{\em Leb}}_{2}(J_{pt}^{0}(\tau ))=0.$ 
\item \label{l:atdimjpt02}
$J_{\ker}(G_{\tau })\subset J_{pt}^{0}(\tau ).$ 
\item \label{l:atdimjpt02-1} 
$F_{meas}(\tau )={\frak M}_{1}(\CCI )$ if and only if 
$J_{\ker }(G_{\tau })=\emptyset .$ If $J_{\ker }(G_{\tau })\neq \emptyset $, then 
$J_{meas}(\tau )={\frak M}_{1}(\CCI ).$ 
\item \label{l:atdimjpt03}
If, in addition to the assumption, $\sharp \Gamma _{\tau } <\infty $, 
then we have the following.
\begin{enumerate}
\item \label{l:atdimjpt03a}
$G_{\tau }^{-1}(J_{\ker }(G_{\tau }))\subset J_{pt}^{0}(\tau )$.
\item \label{l:atdimjpt03b}If $\sharp (J(G_{\tau }))\geq 3$ and $J_{\ker}(G_{\tau })\setminus E(G_{\tau })\neq \emptyset $, 
then $J_{pt }(\tau )=J(G_{\tau }).$ 
\end{enumerate}
\end{enumerate}
\end{lem}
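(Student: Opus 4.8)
The plan is to establish the four items essentially in order, since (3) and (4) rest on (1) and (2). For (1): by Lemma~\ref{l:pctjjg} we have $\hat J_{\gamma,\Gamma_\tau}=\bigcap_{j=1}^\infty\gamma_{j,1}^{-1}(J(G_\tau))$ for every $\gamma\in X_\tau$, so the two $\gamma$-sets appearing in (1) literally coincide, and $\{(\gamma,z):z\in\hat J_{\gamma,\Gamma_\tau}\}=\tilde J(f)$ is closed, hence Borel. The hypothesis says the $\tilde\tau$-a.e.\ slice of $\tilde J(f)$ has $\mbox{Leb}_2$-measure zero; Fubini for the finite product measure $\tilde\tau\times\mbox{Leb}_2$ on $X_\tau\times\CCI$ gives $(\tilde\tau\times\mbox{Leb}_2)(\tilde J(f))=0$, and Fubini again gives $\tilde\tau(\{\gamma:z\in\hat J_{\gamma,\Gamma_\tau}\})=0$ for $\mbox{Leb}_2$-a.e.\ $z$. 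Since $\mathrm{Rat}\subset\mathrm{OCM}(\CCI)$, Lemma~\ref{l:mgjpt0} applied with $\lambda=\mbox{Leb}_2$ (a finite measure on the compact space $\CCI$) then yields $\mbox{Leb}_2(J_{pt}^0(\tau))=0$.

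For (2), fix $z\in J_{\ker}(G_\tau)$. Since $F(G_\tau)\supset A\neq\emptyset$, the compact set $A$ is a positive distance from the closed set $J(G_\tau)$, so by Urysohn's lemma there is $\phi\in C(\CCI)$ with $0\le\phi\le1$, $\phi\equiv1$ on a neighbourhood of $J(G_\tau)$, and $\phi\equiv0$ on a neighbourhood of $A$. By forward invariance of $J_{\ker}(G_\tau)$ (Remark~\ref{r:kjulia}), $\gamma_{n,1}(z)\in J(G_\tau)$ for all $n$ and all $\gamma$, hence $M_\tau^n(\phi)(z)=1$ for every $n$. On the other hand, put $P:=\{w\in\CCI:\tilde\tau(\{\gamma:w\in\hat J_{\gamma,\Gamma_\tau}\})>0\}$; by (1), $\mbox{Leb}_2(P)=0$, so $\CCI\setminus P$ is dense. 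For $w\in\CCI\setminus P$ and $\tilde\tau$-a.e.\ $\gamma$ there is $j$ with $\gamma_{j,1}(w)\in F(G_\tau)$, so by the weak attractor property $d(\gamma_{n,1}(w),A)\to0$ and thus $\phi(\gamma_{n,1}(w))\to0$; dominated convergence then gives $M_\tau^n(\phi)(w)\to0$. Choosing such $w$ arbitrarily close to $z$ shows $\{M_\tau^n(\phi)\}_n$ is not equicontinuous at $z$, so by Lemma~\ref{FJmeaslem1}(2), $z\notin F_{pt}^0(\tau)$, i.e.\ $z\in J_{pt}^0(\tau)$.

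For (4)(a), when $\sharp\Gamma_\tau<\infty$: given $y\in G_\tau^{-1}(J_{\ker}(G_\tau))$, choose $g=\gamma_k'\cdots\gamma_1'\in G_\tau$ with $g(y)\in J_{\ker}(G_\tau)$, and work on the cylinder $C=\{\gamma:\gamma_j=\gamma_j',\,j\le k\}$, which has $\tilde\tau(C)=\prod_{j=1}^k\tau(\{\gamma_j'\})>0$ since finiteness of $\Gamma_\tau$ makes each generator an atom. Using backward invariance of $J(G_\tau)$ (Lemma~\ref{ocminvlem}) one gets $\gamma_{j,1}(y)\in J(G_\tau)$ for $j\le k$, and forward invariance of $J_{\ker}(G_\tau)$ gives $\gamma_{n,1}(y)\in J(G_\tau)$ for $n\ge k$; hence $M_\tau^n(\phi)(y)\ge\tilde\tau(C)>0$ for all $n$, and contrasting with $M_\tau^n(\phi)(w)\to0$ for $w\in\CCI\setminus P$ near $y$ (as in (2)) gives $y\in J_{pt}^0(\tau)$. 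For (4)(b): since $z_0\in J_{\ker}(G_\tau)\setminus E(G_\tau)$ and $\sharp J(G_\tau)\ge3$, the Hinkkanen--Martin density of backward orbits \cite{HM} gives $J(G_\tau)\subset\overline{G_\tau^{-1}(z_0)}$; and $G_\tau^{-1}(z_0)\subset G_\tau^{-1}(J_{\ker}(G_\tau))\subset J_{pt}^0(\tau)\subset J_{pt}(\tau)$ by (4)(a), where $J_{pt}(\tau)$ is closed, so $J(G_\tau)\subset J_{pt}(\tau)$; the reverse inclusion is $F(G_\tau)\subset F_{pt}(\tau)$ (Lemma~\ref{FJmeaslem1}(5)), whence $J_{pt}(\tau)=J(G_\tau)$. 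As for (3), the ``if'' half of the equivalence is Cooperation Principle~I (Proposition~\ref{Jkeremptygenprop1}, valid since $\Gamma_\tau\subset\mathrm{OCM}(\CCI)$), and the ``only if'' half is the contrapositive: if $J_{\ker}(G_\tau)\neq\emptyset$ then by (2) $J_{pt}^0(\tau)\neq\emptyset$, so $F_{pt}^0(\tau)\neq\CCI$, and Lemma~\ref{FJmeaslem1}(6) forces $F_{meas}(\tau)\neq{\frak M}_1(\CCI)$.

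The main obstacle I anticipate is the last assertion of (3): that $J_{\ker}(G_\tau)\neq\emptyset$ in fact forces $J_{meas}(\tau)={\frak M}_1(\CCI)$, i.e.\ $F_{meas}(\tau)=\emptyset$. The naive perturbation $\mu\mapsto(1-t)\mu+t\delta_{z_0}$ only moves $\mu$ by $O(t)$ in $d_0$ and separates the iterates by $O(t)$, so it cannot by itself destroy equicontinuity on a neighbourhood; and since $F(G_\tau)\subset F_{pt}(\tau)$ is large, some genuine subtlety in the weak topology is needed. The plan is to use that $J_{meas}(\tau)$ is closed and forward invariant under $M_\tau^*$ (Lemma~\ref{FJmeaslem1}(1)) and contains $\Phi(J_{\ker}(G_\tau))\neq\emptyset$, together with a spatial perturbation of an arbitrary $\mu$ — transporting a small piece of its mass by a near-identity homeomorphism supported near a point of $J_{\ker}(G_\tau)$, which changes $\mu$ negligibly in $d_0$ but, through the function $\phi$ of (2), separates the images $(M_\tau^*)^n$ by a definite amount uniformly in $n$ — to conclude that no $\mu$ admits a neighbourhood on which $\{(M_\tau^*)^n\}$ is equicontinuous; this is the step that will require the most care.
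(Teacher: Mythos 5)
Your treatment of statements (1), (2), (4)(a), (4)(b) and of the equivalence in (3) is correct and follows essentially the paper's own route: Lemma~\ref{l:pctjjg} plus Fubini plus Lemma~\ref{l:mgjpt0} for (1); a test function that is $1$ near the Julia set and $0$ near the attractor, with $M_{\tau }^{n}(\phi )(z)\equiv 1$ on the forward-invariant set $J_{\ker }(G_{\tau })$ contrasted against $M_{\tau }^{n}(\phi )(w)\rightarrow 0$ at the $\mbox{Leb}_{2}$-generic points supplied by (1), for (2); the cylinder-set lower bound for (4)(a); and density of backward orbits of non-exceptional points together with the closedness of $J_{pt}(\tau )$ and Lemma~\ref{FJmeaslem1} for (4)(b).

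The genuine gap is the final assertion of (3), that $J_{\ker }(G_{\tau })\neq \emptyset $ forces $J_{meas}(\tau )={\frak M}_{1}(\CCI )$, which you leave as a plan — and both halves of the plan are off. The ``naive perturbation'' you dismiss is in fact the proof: given $\mu $ and a neighborhood $B$ of $\mu $, choose a small $t>0$ and a finite convex combination $\mu '$ of Dirac masses at $\mbox{Leb}_{2}$-generic points approximating $\mu $, and set $\rho _{0}:=(1-t)\mu '+t\delta _{z_{0}}\in B$ with $z_{0}\in J_{\ker }(G_{\tau }).$ Then $(M_{\tau }^{\ast })^{n}(\rho _{0})(\phi )\rightarrow t>0$, while replacing $z_{0}$ by a generic point $z_{1}$ arbitrarily close to it gives $\rho _{1}$ with $d_{0}(\rho _{0},\rho _{1})$ as small as desired and $(M_{\tau }^{\ast })^{n}(\rho _{1})(\phi )\rightarrow 0$; the separation of the iterates is the \emph{fixed} amount $t$, not $O(d_{0}(\rho _{0},\rho _{1}))$, so equicontinuity fails on every neighborhood of $\rho _{0}$, i.e.\ $\rho _{0}\in J_{meas}(\tau )$, and since $F_{meas}(\tau )$ is open and $\rho _{0}\in B$ was arbitrary, $\mu \in J_{meas}(\tau ).$ Your proposed substitute — transporting a small piece of the mass of $\mu $ by a near-identity homeomorphism supported near $J_{\ker }(G_{\tau })$ — does nothing when $\mu $ carries no mass near $J_{\ker }(G_{\tau })$, so it cannot handle an arbitrary $\mu $; and the facts that $J_{meas}(\tau )$ is closed, forward invariant, and contains $\Phi (J_{\ker }(G_{\tau }))$ give no leverage toward $J_{meas}(\tau )={\frak M}_{1}(\CCI ).$ The two-stage perturbation above is exactly the paper's proof of statement~\ref{l:atdimjpt02-1}.
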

\begin{proof}
Statement \ref{l:atdimjpt01} follows from Lemma~\ref{l:pctjjg} and Lemma~\ref{l:mgjpt0}. 
We now show statement \ref{l:atdimjpt02}. 
Let $z_{0}\in J_{\ker }(G_{\tau }).$ 
Let $\varphi \in C(\CCI )$ be an element such that 
supp$\,\varphi \subset \CCI \setminus A$ and $\varphi \equiv 1$ around a 
neighborhood of $J_{\ker }(G_{\tau }).$ Then for each $m\in \NN $, 
$M_{\tau }^{m}(\varphi )(z_{0})=1.$ Moreover, by statement~\ref{l:atdimjpt01}, 
there exists a sequence $\{ z_{n}\} _{n=1}^{\infty }$ in $\CCI $ such that 
$z_{n}\rightarrow z_{0}$ as $n\rightarrow \infty $ and such that 
for each $n\in \NN $, 
$\tilde{\tau }(\{ \gamma \in X_{\tau }\mid z_{n}\in 
\bigcap _{j=1}^{\infty }\gamma _{1}^{-1}\cdots \gamma _{j}^{-1}(J(G_{\tau }))\} )=0.$ 
Hence, for each $n\in \NN $, 
$M_{\tau }^{m}(\varphi )(z_{n})=\int _{X_{\tau }}\varphi (\gamma _{m,1}(z_{n}))\ d\tilde{\tau } (\gamma )
\rightarrow 0$ as $m\rightarrow \infty .$ It implies that $z_{0}\in J_{pt}^{0}(\tau ).$ 
Thus, we have proved statement~\ref{l:atdimjpt02}. 

 We now prove statement~\ref{l:atdimjpt02-1}. 
Combining statement~\ref{l:atdimjpt02} with Theorem~\ref{kerJthm1}, 
we obtain that $F_{meas}(\tau )={\frak M}_{1}(\CCI )$ if and only if 
$J_{\ker }(G_{\tau })=\emptyset .$ 
Suppose that $J_{\ker}(G_{\tau })\neq \emptyset .$ 
Let $\varphi \in C(\CCI )$ be an element such that 
%$\varphi \geq 0$, 
$\varphi \equiv 1 $ in a neighborhood of $J_{\ker}(G_{\tau })$ and 
$\varphi \equiv 0$ in a neighborhood of $A.$ 
Let $\rho \in {\frak M}_{1}(\CCI )$ be an element and let 
$B$ be a neighborhood of $\rho $ in ${\frak M}_{1}(\CCI ).$ 
By statement~\ref{l:atdimjpt01}, there exists an element 
$\rho _{0}\in B$ such that 
$\rho _{0}=\sum _{j=1}^{r}p_{j}\delta _{a_{j}}$, 
where $a_{1}\in J_{\ker }(G_{\tau })$, 
$\tilde{\tau }(\{ \gamma \in X_{\tau }\mid a_{j}\in \hat{J}_{\gamma ,\Gamma _{\tau }}\} )=0$ 
for each $j=2,\ldots ,r$, and $p_{j}>0$ for each $j=1,\ldots ,r.$ 
Then 
$(M_{\tau }^{\ast })^{n}(\rho _{0})(\varphi )=\sum _{j=1}^{r}p_{j}\delta _{z_{j}}(M_{\tau }^{n}(\varphi ))
\rightarrow p_{1}>0$ as $n\rightarrow \infty .$ Moreover, 
by statement~\ref{l:atdimjpt01}, for any neighborhood $B_{0}$ of $\rho _{0}$ in ${\frak M}_{1}(\CCI )$, 
there exists an element
$\rho _{1}\in B_{0}$ such that $\rho _{1}=\sum _{j=1}^{t}q_{j}\delta _{b_{j}}$,
where $\tilde{\tau }(\{ \gamma \in X_{\tau }\mid b_{j}\in \hat{J}_{\gamma , \Gamma _{\tau }}\} )=0$ 
and $q_{j}>0$ for each $j=1,\ldots ,t.$ 
Then $(M_{\tau }^{\ast })^{n}(\rho _{1})(\varphi )\rightarrow 0$ as $n\rightarrow \infty .$ 
Hence, $\rho _{0}\in J_{meas }(\tau ).$ Since $B$ is an arbitrary neighborhood of $\rho $, 
it follows that $\rho \in J_{meas}(\tau ).$  
Thus, we have proved statement~\ref{l:atdimjpt02-1}. 

 We now prove statement~\ref{l:atdimjpt03a}. 
We write $\tau $ as $\sum _{j=1}^{t}p_{j}\delta _{h_{j}}$, where 
$0<p_{j}<1$ and $h_{j}\in \Rat$ for each $j=1,\ldots ,t$. 
Let $z_{0}\in (h_{i_{r}}\cdots h_{i_{1}})^{-1}(J_{\ker}(G_{\tau })).$ 
%Let $\varphi \in C(\CCI )$ be as in the proof of statement~\ref{l:atdimjpt02}.
Let $\varphi \in C(\CCI )$ be an element such that 
$\varphi \geq 0$, 
$\varphi \equiv 1 $ in a neighborhood of $J_{\ker}(G_{\tau })$ and 
$\varphi \equiv 0$ in a neighborhood of $A.$ 
Then for each $m\in \NN $ with $m\geq r+1$, 
%\begin{align*}
$$M_{\tau }^{m}(\varphi )(z_{0})
 \geq p_{i_{r}}\cdots p_{i_{1}}\int \varphi (\gamma _{m}\cdots \gamma _{r+1}h_{i_{r}}\cdots h_{i_{1}}(z_{0}))\ 
d\tau (\gamma _{m})\cdots d\tau (\gamma _{r+1})  
 \geq  p_{i_{r}}\cdots p_{i_{1}}>0.$$
%\end{align*}
Moreover, by statement~\ref{l:atdimjpt01}, 
there exists a sequence $\{ z_{n}\} _{n=1}^{\infty }$ in $\CCI $ such that 
$z_{n}\rightarrow z_{0}$ as $n\rightarrow \infty $ and such that 
for each $n\in \NN $, 
$\tilde{\tau }(\{ \gamma \in X_{\tau }\mid z_{n}\in \bigcap _{j=1}^{\infty }
\gamma _{1}^{-1}\cdots \gamma _{j}^{-1}(J(G_{\tau }))\} )=0.$ 
Hence, for each $n\in \NN $, 
$M_{\tau }^{m}(\varphi )(z_{n})\rightarrow 0$ as $m\rightarrow \infty .$ 
It implies that $z_{0}\in J_{pt}^{0}(\tau ).$ Thus, 
we have proved statement~\ref{l:atdimjpt03a}. 

 We now prove statement~\ref{l:atdimjpt03b}. 
Under the assumptions of statement~\ref{l:atdimjpt03b}, 
by statement~\ref{l:atdimjpt03a} and \cite[Lemma 2.3 (e)]{S3}, 
we obtain that $J(G_{\tau })=\overline{G_{\tau }^{-1}(J_{\ker }(G_{\tau }))}\subset 
J_{pt}(\tau ).$ Combining this with Lemma~\ref{FJmeaslem1}-\ref{FJmeaslem1-5}, 
we get that $J_{pt}(\tau )=J(G_{\tau }).$ 
Therefore, we have proved statement~\ref{l:atdimjpt03b}. 
%
% Thus, we have proved Lemma~\ref{l:atdimjpt0}.  
\end{proof}

We now prove Theorem~\ref{t:dimjpt0jkn}. 

\noindent {\bf Proof of Theorem~\ref{t:dimjpt0jkn}:} 
Combining Lemmas~\ref{l:shatt1}, \ref{l:dimjpt0}, , \ref{l:atdimjpt0}, and 
Remarks~\ref{r:sjgg3}, \ref{r:mhdt}, \ref{l:egfg}, the statement of  
Theorem~\ref{t:dimjpt0jkn} holds.   
\qed 
\subsection{Proofs of results in subsection~\ref{Singular}}
\label{pfSingular}
In this subsection, we give the proofs of the results in 
subsection~\ref{Singular}. We need some lemmas.  

We now give  proofs of Lemmas~\ref{l:disjker} and \ref{l:lsncnonc}.\\ 
\noindent {\bf Proof of Lemma~\ref{l:disjker}:} 
By Lemma~\ref{l:bss}, $J(G)=\bigcup _{j=1}^{m}h_{j}^{-1}(J(G)).$ 
Hence, the statement of our lemma holds.
\qed 
%
%We now prove Lemma~\ref{l:lsncnonc}.\\ 

\noindent {\bf Proof of Lemma~\ref{l:lsncnonc}:}
By \cite[Theorem 2.3]{S2}, int$(J(G))=\emptyset .$  
By Lemma~\ref{l:disjker}, $J_{\ker}(G)=\emptyset .$ 
Let $\varphi \in (\mbox{LS}({\cal U}_{f,\tau }(\CCI )))_{nc}.$ 
By Theorem~\ref{t:mtauspec}-\ref{t:mtauspec2}, $\varphi \in C_{F(G)}(\CCI ).$ 
Moreover, by Theorem~\ref{t:mtauspec}-\ref{t:mtauspec7}, there exists 
an $l\in \NN $ such that $M_{\tau }^{l}(\varphi )=\varphi .$ 
By Theorem~\ref{t:mtauspec}-\ref{t:mtauspecj3}, 
$\sharp J(G)\geq 3$. 
%For, 
%if $\sharp J(G)\leq 2$, then 
%$h^{-1}(J(G))=J(G)$ for each $h\in G$, and this contradicts 
%that $h_{i}^{-1}(J(G))\cap h_{j}^{-1}(J(G))=\emptyset $ for each $(i,j)$ with $i\neq j.$ 
%Therefore $\sharp J(G)\geq 3.$ 
By \cite[Lemma 2.3 (d)]{S3}, it follows that 
$\sharp E(G)\leq 2.$ Moreover, since $G^{-1}(E(G)\cap J(G))\subset E(G)\cap J(G)$ and 
$h_{i}^{-1}(J(G))\cap h_{j}^{-1}(J(G))=\emptyset $ for each $(i,j)$ with $i\neq j$, 
we obtain that $E(G)\cap J(G)=\emptyset .$ 

Suppose that there exists an open subset $V$ of $\CCI $ such that 
$V\cap J(G)\neq \emptyset $ and $\varphi |_{V}$ is constant. 
We will deduce a contradiction. Let $z_{0}\in J(G)$ be any point. Then $z_{0}\cap J(G)\setminus E(G).$ 
%By \cite[Lemma 3.2 and Theorem 2.4]{HM}, 
By \cite[Lemma 2.3 (b) (e)]{S3}, 
there exists an $n\in \NN $, an element $(j_{1},\ldots ,j_{nl})\in \{ 1,\ldots ,m\} ^{nl}$, 
and a point $z_{1}\in J(G)\cap V$ such that 
$h_{j_{nl}}\cdots h_{j_{1}}(z_{1})=z_{0}.$ 
Then for each $(i_{1},\ldots ,i_{nl})\in \{ 1,\ldots ,m\} ^{nl}\setminus \{ (j_{1},\ldots ,j_{nl})\} $, 
$h_{i_{nl}}\cdots h_{i_{1}}(z_{1})\in F(G).$ 
Combining this with 
$M_{\tau }^{l}(\varphi )=\varphi $ and  $\varphi \in C_{F(G)}(\CCI )$, 
% and the argument in the proof 
%of Lemma~\ref{l:nctinfty1}, we easily see that there exists a neighborhood 
we obtain that there exists a neighborhood $W$ of 
$z_{1}$ such that 
$\varphi |_{g(W)}$ is constant, where $g=h_{j_{nl}}\cdots h_{j_{1}}.$ 
Therefore $\varphi $ is constant in a neighborhood of $z_{0}.$ 
From this argument and that $\varphi \in C_{F(G)}(\CCI ),$  
%$U$ of $z_{0}$ such that $\varphi |_{U}$ is constant. 
%Combining this and $\varphi \in C_{F(G)}(\CCI )$, 
it follows that $\varphi :\CCI \rightarrow \CC $ is locally constant on $\CCI $, 
thus $\varphi :\CCI \rightarrow \CC $ is constant. However, this is a contradiction. 

Thus, we have proved Lemma~\ref{l:lsncnonc}. 
\qed 
\begin{lem}
\label{l:nondiff1}
Let $m\in \NN $ with $m\geq 2.$ 
Let $h=(h_{1},\ldots ,h_{m})\in (\emRatp)^{m}$ and we set 
$\Gamma := \{ h_{1},h_{2},\ldots ,h_{m}\} .$ 
Let $G=\langle h_{1},\ldots ,h_{m}\rangle .$ 
Let $f:\Gamma ^{\NN }\times \CCI \rightarrow \Gamma ^{\NN }\times \CCI $ be the  
skew product associated with $\Gamma .$  
Let $p=(p_{1},\ldots ,p_{m})\in {\cal W}_{m}.$  
Let 
$\tau := \sum _{j=1}^{m}p_{j}\delta _{h_{j}}\in {\frak M}_{1}(\Gamma )
\subset {\frak M}_{1}(\emRatp ).$
Suppose that 
%$J_{\ker }(G)=\emptyset , $ 
%$\hat{K}(G)\neq \emptyset $ and 
$h_{i}^{-1}(J(G))\cap h_{j}^{-1}(J(G))=\emptyset $ for each 
$(i,j)$ with $i\neq j$. 
Then, we have all of the following.
\begin{enumerate}
\item \label{l:nondiff1-1}
Let $(\gamma ,z_{0})\in \tilde{J}(f)$ and let $t\geq 0.$ 
Suppose that there exists a point $z_{1}\in J(G)\setminus P(G)$ and a 
sequence $\{ n_{j}\} _{j=1}^{\infty }$ in $\NN $ such that 
$\gamma _{n_{j},1}(z_{0})\rightarrow z_{1}$ and 
$\tilde{p}(f^{n_{j}-1}(\gamma ,z_{0}))\cdots \tilde{p}(\gamma ,z_{0})\| \gamma _{n_{j},1}'(z_{0})\| _{s}^{t}
\rightarrow \infty $ as $j\rightarrow \infty .$ 
Then, for any $\varphi \in (\mbox{{\em LS}}({\cal U}_{f,\tau }(\CCI )))_{nc}$, 
 $\limsup _{z\rightarrow z_{0}} 
\frac{|\varphi (z)-\varphi (z_{0})|}
{d(z, z_{0})^{t}}=\infty $, where $d$ denotes the spherical distance.  
\item \label{l:nondiff1-2}
Suppose that for each $j=1,\ldots ,m$, 
$1<p_{j}\min \{ \| h_{j}'(z)\| _{s}\mid  z\in h_{j}^{-1}(J(G))\} .$ 
Then, for each $z_{0}\in J(G)$ and for each $\varphi \in (\mbox{{\em LS}}({\cal U}_{f,\tau }(\CCI )))_{nc},$  
$\limsup _{z\rightarrow z_{0}}
\frac{|\varphi (z)-\varphi (z_{0})|}
{d(z,z_{0})}=\infty $ and $\varphi $ is 
not differentiable at $z_{0}.$ 
\end{enumerate}
\end{lem}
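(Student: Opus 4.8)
\textbf{Proof proposal for Lemma~\ref{l:nondiff1}.}

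The plan is to exploit the backward self-similarity $J(G)=\coprod_{j=1}^m h_j^{-1}(J(G))$ (Lemma~\ref{l:bss}, with the union disjoint by hypothesis) together with the eigenvalue equation $M_\tau(\varphi)=a\varphi$ for a unitary eigenvector $\varphi$. The disjointness is the crucial point: near a point $w\in J(G)$, exactly one of the preimage branches $h_j^{-1}$ meets $J(G)$, so the identity $a\varphi(w)=\sum_j p_j\varphi(h_j(w))$ separates the ``Julia'' contribution $p_{j(w)}\varphi(h_{j(w)}(w))$ from a part supported, locally, on the Fatou set where $\varphi$ is locally constant (by Theorem~\ref{t:mtauspec}-\ref{t:mtauspec2}, $\varphi\in C_{F(G)}(\CCI)$). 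Iterating this along the orbit of the sequence $\gamma$ gives a telescoping formula: for points $z$ near $z_0$ in the appropriate preimage branch, $\varphi(z)-\varphi(z_0)$ is, up to the unitary factor $a^{-n}$ which has modulus $1$, equal to $\tilde p(f^{n-1}(\gamma,z_0))\cdots\tilde p(\gamma,z_0)$ times $\big(\varphi(\gamma_{n,1}(z))-\varphi(\gamma_{n,1}(z_0))\big)$ plus terms that vanish because $\varphi$ is locally constant on $F(G)$ along the non-selected branches. This is the heart of the matter and I would isolate it as a preliminary claim.

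For part~\eqref{l:nondiff1-1}: pick $z_1\in J(G)\setminus P(G)$ and the sequence $n_j$ from the hypothesis. Since $z_1\notin P(G)$, each $\gamma_{n_j,1}$ has a well-defined univalent inverse branch near $z_1$ sending $\gamma_{n_j,1}(z_0)$ back to $z_0$; using the Koebe distortion theorem on a fixed-size ball around $z_1$ (uniform because $z_1$ stays away from the postcritical set), the image under this branch of a ball of fixed radius around $\gamma_{n_j,1}(z_0)$ contains a ball around $z_0$ of radius comparable to $\|\gamma_{n_j,1}'(z_0)\|_s^{-1}$. Since $\varphi$ is non-constant and $\suppt$ of the associated measure is $J(G)$ (or more elementarily since $\varphi$ is non-constant and continuous on $J(G)$, which is perfect), we can choose $z$ in that preimage ball with $|\varphi(\gamma_{n_j,1}(z))-\varphi(\gamma_{n_j,1}(z_0))|$ bounded below by a fixed constant $c>0$ infinitely often. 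Then by the telescoping formula, $|\varphi(z)-\varphi(z_0)|\gtrsim \tilde p(f^{n_j-1}(\gamma,z_0))\cdots\tilde p(\gamma,z_0)\cdot c$ while $d(z,z_0)^t\lesssim \|\gamma_{n_j,1}'(z_0)\|_s^{-t}$, so the quotient is $\gtrsim \tilde p(\cdots)\cdots\tilde p(\cdot)\,\|\gamma_{n_j,1}'(z_0)\|_s^{t}$, which $\to\infty$ by hypothesis. Hence $\limsup_{z\to z_0}|\varphi(z)-\varphi(z_0)|/d(z,z_0)^t=\infty$.

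For part~\eqref{l:nondiff1-2}: the quantitative hypothesis $1<p_j\min\{\|h_j'(z)\|_s : z\in h_j^{-1}(J(G))\}$ guarantees that for \emph{every} $z_0\in J(G)$ and \emph{every} choice of admissible branch sequence $\gamma$ with $(\gamma,z_0)\in\tilde J(f)$ (such $\gamma$ exists since every point of $J(G)$ lies in $\hat J_{\gamma,\Gamma}$ for a suitable $\gamma$, using $J(G)=\pi_{\CCI}(\tilde J(f))$ from Lemma~\ref{l:pctjjg} and the disjointness of branches which makes the branch sequence essentially forced), one has $\tilde p(f^{n-1}(\gamma,z_0))\cdots\tilde p(\gamma,z_0)\|\gamma_{n,1}'(z_0)\|_s\geq \kappa^n\to\infty$ for a constant $\kappa>1$. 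If moreover $z_0\notin P(G)$ we are immediately in the situation of part~\eqref{l:nondiff1-1} with $t=1$ and $z_1=z_0$. For $z_0\in P(G)\cap J(G)$ one first moves to a nearby point $z_1\in J(G)\setminus P(G)$ along the forward orbit: since $\sharp J(G)\geq 3$ implies $P(G)\cap J(G)$ is nowhere dense in $J(G)$, and since iterates expand, one finds $n_j$ with $\gamma_{n_j,1}(z_0)$ converging to some $z_1\in J(G)\setminus P(G)$ while the product of derivatives and probabilities still blows up; then apply part~\eqref{l:nondiff1-1}. The conclusion $\limsup_{z\to z_0}|\varphi(z)-\varphi(z_0)|/d(z,z_0)=\infty$ gives non-differentiability at $z_0$. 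The main obstacle I anticipate is making the telescoping/Koebe estimate fully rigorous uniformly in $j$ — in particular, controlling the inverse-branch distortion on balls whose radii shrink, and checking that the ``Fatou-supported remainder'' terms in the telescoping really contribute nothing because $\varphi$ is locally constant there; this requires carefully tracking which branch is selected at each step, which is exactly where the disjointness hypothesis $h_i^{-1}(J(G))\cap h_j^{-1}(J(G))=\emptyset$ does the work.
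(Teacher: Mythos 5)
Your proposal follows essentially the same route as the paper's proof: the telescoping identity $\varphi(a)-\varphi(b)=p_{w_{n_j}}\cdots p_{w_1}\bigl(\varphi(\gamma_{n_j,1}(a))-\varphi(\gamma_{n_j,1}(b))\bigr)$ obtained from the common period of the unitary eigenvalues together with the disjointness hypothesis (which forces every non-selected branch word to map the small set $B_j$ into $F(G)$, where $\varphi$ is locally constant), Koebe distortion for the inverse branches of $\gamma_{n_j,1}$ based near $z_1\notin P(G)$, and the hypothesis that $\tilde{p}(f^{n_j-1}(\gamma,z_0))\cdots\tilde{p}(\gamma,z_0)\|\gamma_{n_j,1}'(z_0)\|_s^{t}\to\infty$. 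The one justification that needs repair is your source for the uniform lower bound $c>0$: ``$\varphi$ non-constant and $J(G)$ perfect'' does not by itself produce a point $v$ in a prescribed small ball $B(z_1,r/2)$ with $\varphi(v)\neq\varphi(z_1)$ (a continuous non-constant function on a perfect set can still be locally constant near a given point); what is needed, and what the paper invokes at exactly this step, is Lemma~\ref{l:lsncnonc}, which says that under the disjointness hypothesis every $\varphi\in(\mbox{LS}({\cal U}_{f,\tau }(\CCI )))_{nc}$ is non-constant on every neighborhood of every point of $J(G)$.
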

\begin{proof}
We first show statement \ref{l:nondiff1-1}. 
%Let $R>0$ be a number such that for each $j=1,\ldots m$, 
%$h_{j}(B(\infty ,R))\subset B(\infty ,R)\subset B(\infty ,2R)\subset F_{\infty }(G).$  
%Let $U$ be the set coming from Definition~\ref{d:as}.  
Let $\delta := \min _{x\in P(G)}d(x,z_{1})>0.$ 
We may assume that $\gamma _{n_{j},1}(z_{0})\in B(z_{1},\frac{\delta }{4})$ for each $j\in \NN .$ 
By Theorem~\ref{t:mtauspec}-\ref{t:mtauspec7}, there exists an $l\in \NN $ such that 
for each $\varphi \in \mbox{LS}({\cal U}_{f,\tau }(\CCI ))$, $M_{\tau }^{l}(\varphi )=\varphi .$ 
We may assume that for each $j\in \NN $, $l|n_{j}.$ 
Since $h_{i}^{-1}(J(G))\cap h_{j}^{-1}(J(G))=\emptyset $ for each 
$(i,j)$ with $i\neq j$, there exists a number $r_{0}>0$ such that 
for each $(i,j)$ with $i\neq j$, if $z\in h_{i}^{-1}(J(G))$, then 
$h_{j}(B(z,r_{0}))\subset F(G).$  Let $r$ be a number such that 
$0<4r<\delta .$ For each $j\in \NN $, 
let $\alpha _{j} : B(z_{1}, \delta )\rightarrow \CCI $ be 
the well-defined inverse branch of $\gamma _{n_{j},1}$ such that 
$\alpha _{j}(\gamma _{n_{j},1}(z_{0}))=z_{0}.$ 
%Since $\alpha _{j}(B(\gamma _{n_{j},1}(z_{0}),\frac{\delta }{2}))\subset \CCI \setminus U$, 
By the normality of $\{ \alpha _{j}:B(z_{1},\delta )\rightarrow \CCI \} _{j\in \NN }$ 
(see \cite{HM3}), 
taking $r$ so small, we obtain that for each $j\in \NN $, 
the set $B_{j}:= \alpha _{j}(B(\gamma _{n_{j},1}(z_{0}),r))$ satisfies that 
$\mbox{diam }(\gamma _{k,1}(B_{j}))\leq \frac{r_{0}}{2}$ for each 
$k=1,\ldots ,n_{j}.$ Let $(w_{1},w_{2},\ldots )\in \{ 1,\ldots ,m\} ^{\NN } $ be the sequence 
such that $\gamma _{j}=h_{w_{j}}$ for each $j\in \NN .$ 
It follows that for each $j\in \NN $ and each 
$(u_{1},\ldots ,u_{n_{j}})\in \{ 1,\ldots ,m\} ^{n_{j}}\setminus 
\{ (w_{1},\ldots ,w_{n_{j}})\} $, 
$h_{u_{n_{j}}}\cdots h_{u_{1}}(B_{j})\subset F(G).$ 
Hence, for each $j\in \NN $, each $a,b\in B_{j}$, and 
each $\varphi \in (\mbox{LS}({\cal U}_{f,\tau })(\CCI ))_{nc}\subset C_{F(G_{\tau })}(\CCI )$, 
\begin{equation}
\label{eq:l:nondiff1-1}
\varphi (a)-\varphi (b)=p_{w_{n_{j}}}\cdots p_{w_{1}}
(\varphi (\gamma _{n_{j},1}(a))-\varphi (\gamma _{n_{j},1}(b))).
\end{equation}
Let $\varphi \in (\mbox{LS}({\cal U}_{f,\tau })(\CCI ))_{nc}.$ 
By Lemma~\ref{l:lsncnonc}, 
there exists a point $v\in B(z_{1},\frac{r}{2})$ such that 
$\varphi (z_{1})\neq \varphi (v).$ 
Let $j_{0}\in \NN $ be such that for each $j\in \NN $ with $j\geq j_{0}$, 
$B(z_{1},\frac{r}{2})\subset B(\gamma _{n_{j},1}(z_{0}),r).$   
For each $j\in \NN $ with $j\geq j_{0}$, 
let $b_{j}:=\alpha _{j}(v)\in B_{j}.$ 
By the Koebe distortion theorem, 
there exists a constant $C>0$ such that 
for each $j\in \NN $ with $j\geq j_{0}$, 
$d(z_{0},b_{j})\leq C\| \gamma _{n_{j},1}'(z_{0})\| _{s}^{-1}.$  
Furthermore, 
%since $h_{i}^{-1}(J(G))\cap h_{j}^{-1}(J(G))=\emptyset $ for each 
%$(i,j)$ with $i\neq j$, and since $J(G)=\cup _{j=1}^{m}h_{j}^{-1}(J(G))$ (see \cite[Lemma 1.1.4]{S1}), 
%we have 
%$J_{\ker }(G)=\emptyset $. By  
%Theorem~\ref{kerJthm2}, we obtain that 
since $\varphi \in C(\CCI )$, 
there exists a number $j_{1}\in \NN $ with $j_{1}\geq j_{0} $ such that 
for each $j\in \NN $ with $j\geq j_{1}$, 
$|\varphi (\gamma _{n_{j},1}(z_{0}))-\varphi (v)|\geq 
\frac{1}{2} |\varphi (z_{1})-\varphi (v)|.$ 
From these arguments, it follows that 
for each $j\in \NN $ with $j\geq j_{1}$, 
\begin{align*}
\frac{|\varphi (z_{0})-\varphi (b_{j})|}{d(z_{0},b_{j})^{t}} 
& = \frac{p_{w_{n_{j}}}\cdots p_{w_{1}}}{d(z_{0},b_{j})^{t}}
|\varphi (\gamma _{n_{j},1}(z_{0}))-\varphi (\gamma _{n_{j},1}(b_{j}))|\\ 
& \geq \frac{1}{2C^{t}}p_{w_{n_{j}}}\cdots p_{w_{1}}\| \gamma _{n_{j},1}'(z_{0})\| _{s}^{t}
|\varphi (z_{1})-\varphi (v)|\rightarrow \infty \ (j\rightarrow \infty ). 
\end{align*} 
Thus, we have proved statement~\ref{l:nondiff1-1}. 

Statement~\ref{l:nondiff1-2} easily follows from statement~\ref{l:nondiff1-1}. 

Thus, we have proved our lemma. 
\end{proof}
\begin{lem}
\label{l:diff1}
Let $m\in \NN $ with $m\geq 2.$ 
Let $h=(h_{1},\ldots ,h_{m})\in (\emRatp)^{m}$ and we set 
$\Gamma := \{ h_{1},h_{2},\ldots ,h_{m}\} .$ 
Let $G=\langle h_{1},\ldots ,h_{m}\rangle .$ 
Let $p=(p_{1},\ldots ,p_{m})\in {\cal W}_{m}.$  
Let $f:\Gamma ^{\NN }\times \CCI \rightarrow \Gamma ^{\NN }\times \CCI $ be the  
skew product associated with $\Gamma .$  
Let 
$\tau := \sum _{j=1}^{m}p_{j}\delta _{h_{j}}\in {\frak M}_{1}(\Gamma )
\subset {\frak M}_{1}({\cal P }).$
Let $(\gamma ,z_{0})\in \tilde{J}(f)$ and let $t\geq 0.$ 
Suppose that 
%$J_{\ker }(G)=\emptyset , $ $\hat{K}(G)\neq \emptyset $, 
$G$ is hyperbolic and 
$h_{i}^{-1}(J(G))\cap h_{j}^{-1}(J(G))=\emptyset $ for each 
$(i,j)$ with $i\neq j$. 
Moreover, suppose that 
%there exists a point $z_{1}\in J(G)\setminus P(G)$ and a 
%sequence $\{ n_{j}\} _{j=1}^{\infty }$ in $\NN $ such that 
%$\gamma _{n_{j},1}(z_{0})\rightarrow z_{1}$ and 
$\tilde{p}(f^{n-1}(\gamma ,z_{0}))\cdots \tilde{p}(\gamma ,z_{0})\| \gamma _{n,1}'(z_{0})\| _{s}^{t}
\rightarrow 0 $ as $n\rightarrow \infty .$ 
Then for each $\varphi \in \mbox{{\em LS}}({\cal U}_{f,\tau }(\CCI ))$, 
$\limsup _{z\rightarrow z_{0}} 
\frac{|\varphi (z)-\varphi (z_{0})|}
{d(z,z_{0})^{t}}=0 .$ 
\end{lem}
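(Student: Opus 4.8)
The plan is to mirror the structure of the proof of Lemma~\ref{l:nondiff1}-\ref{l:nondiff1-1}, but now run the estimate in the ``contracting'' direction. Since $(\gamma ,z_{0})\in \tilde{J}(f)$ and $G$ is hyperbolic, we have $J(G)\cap P(G)=\emptyset$, so there is a uniform distance $\delta >0$ between $J(G)$ and $P(G)$, and every backward branch of every $\gamma _{n,1}$ is well-defined and uniformly normal on spherical balls of radius $\delta $ centered at points of $J(G)$ (this is the standard hyperbolic/expanding picture; we invoke the normality results of \cite{HM3} and the Koebe distortion theorem exactly as in Lemma~\ref{l:nondiff1}). First I would fix $\varphi \in \mathrm{LS}({\cal U}_{f,\tau }(\CCI ))$; by Theorem~\ref{t:mtauspec}-\ref{t:mtauspec7} there is an $l\in\NN$ with $M_{\tau }^{l}(\varphi )=\varphi$, and by Theorem~\ref{t:mtauspec}-\ref{t:mtauspec2} $\varphi \in C_{F(G)}(\CCI )$; I would restrict attention to indices $n$ with $l\mid n$ along which the hypothesis $\tilde p(f^{n-1}(\gamma ,z_0))\cdots \tilde p(\gamma ,z_0)\|\gamma_{n,1}'(z_0)\|_s^{t}\to 0$ is applied.

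The key mechanism is the functional equation, as in \eqref{eq:l:nondiff1-1}. Using $h_i^{-1}(J(G))\cap h_j^{-1}(J(G))=\emptyset$ for $i\ne j$, pick $r_0>0$ so that whenever $z\in h_i^{-1}(J(G))$ and $j\ne i$ one has $h_j(B(z,r_0))\subset F(G)$. For small $r>0$, let $\alpha_n$ be the inverse branch of $\gamma_{n,1}$ on $B(\gamma_{n,1}(z_0),\delta)$ sending $\gamma_{n,1}(z_0)$ back to $z_0$, and set $B_n:=\alpha_n(B(\gamma_{n,1}(z_0),r))$. By normality, taking $r$ small we get $\mathrm{diam}(\gamma_{k,1}(B_n))\le r_0/2$ for all $k\le n$, hence for any $(u_1,\dots,u_n)\ne (w_1,\dots,w_n)$ (where $\gamma_j=h_{w_j}$) the image $h_{u_n}\cdots h_{u_1}(B_n)$ lies in $F(G)$. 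Since $\varphi\in C_{F(G)}(\CCI)$ and $M_\tau^n(\varphi)=\varphi$ (as $l\mid n$), for $a,b\in B_n$ we get $\varphi(a)-\varphi(b)=p_{w_n}\cdots p_{w_1}\bigl(\varphi(\gamma_{n,1}(a))-\varphi(\gamma_{n,1}(b))\bigr)$.

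Now I would take an arbitrary point $z$ near $z_0$. Because $\mathrm{diam}(B_n)\to 0$ and the $B_n$ are nested-in-scale neighborhoods of $z_0$ (Koebe gives $B_n\supset B(z_0,c\|\gamma_{n,1}'(z_0)\|_s^{-1})$ for a uniform $c>0$, and $\|\gamma_{n,1}'(z_0)\|_s\to\infty$ by expansion), for each sufficiently small $d(z,z_0)$ there is a largest $n$ with $z\in B_n$; then $d(z,z_0)\ge c'\|\gamma_{n+1,1}'(z_0)\|_s^{-1}$ for a uniform $c'>0$, while bounded distortion controls $\|\gamma_{n+1,1}'(z_0)\|_s$ by $\|\gamma_{n,1}'(z_0)\|_s$ up to a factor depending only on $\max_j\deg h_j$ and the hyperbolic geometry. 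Using the functional equation with $a=z$, $b=z_0$, and $\|\varphi\|_\infty<\infty$,
\begin{equation}
\frac{|\varphi(z)-\varphi(z_0)|}{d(z,z_0)^{t}}\le \frac{p_{w_n}\cdots p_{w_1}\cdot 2\|\varphi\|_\infty}{(c')^{t}\|\gamma_{n+1,1}'(z_0)\|_s^{-t}}\le C\,\bigl(p_{w_n}\cdots p_{w_1}\|\gamma_{n,1}'(z_0)\|_s^{t}\bigr),
\end{equation}
and the right side is exactly $C\,\tilde p(f^{n-1}(\gamma,z_0))\cdots\tilde p(\gamma,z_0)\|\gamma_{n,1}'(z_0)\|_s^{t}\to 0$ as $z\to z_0$ (since $n\to\infty$); passing to the subsequence $l\mid n$ only loses a bounded multiplicative factor, which I would absorb into $C$ by comparing $p_{w_{n}}\cdots p_{w_{n-l+1}}\|\gamma_{n,n-l+2}'\|_s^t$ with a uniform constant. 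Hence $\limsup_{z\to z_0}|\varphi(z)-\varphi(z_0)|/d(z,z_0)^t=0$. The main obstacle is the bookkeeping in this last paragraph: relating $d(z,z_0)$ to the correct ``generation'' $n$ and showing that consecutive products $p_{w_n}\cdots p_{w_1}\|\gamma_{n,1}'(z_0)\|_s^t$ along the full sequence are comparable to those along the subsequence $l\mid n$, so that the hypothesis (stated for all $n$) genuinely forces the $\limsup$ to vanish; this is handled by uniform bounds on $\deg h_j$, $\|h_j'\|_s$ on the compact set $J(G)$, and Koebe distortion, all available from hyperbolicity.
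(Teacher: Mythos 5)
Your proposal is correct and follows essentially the same route as the paper's proof: the eigenfunction identity $M_{\tau }^{nl}(\varphi )=\varphi $ combined with the separation condition kills all off-itinerary terms, and Koebe distortion on inverse branches converts the hypothesis $\tilde{p}(f^{n-1}(\gamma ,z_{0}))\cdots \tilde{p}(\gamma ,z_{0})\| \gamma _{n,1}'(z_{0})\| _{s}^{t}\to 0$ into the vanishing of the difference quotients. The only cosmetic difference is the choice of stopping generation (you take the last $n$ with $z\in B_{n}$ for a fixed target radius, while the paper fixes the scale $s$ of $z$ and takes the first $n$ at which $\gamma _{nl,1}(B(z_{0},s))$ reaches a definite size), and these are equivalent up to the bounded multiplicative factors you already account for.
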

\begin{proof}
By Lemma~\ref{l:disjker}, $J_{\ker }(G)=\emptyset .$ 
By Theorem~\ref{t:mtauspec}-\ref{t:mtauspec7},  
there exists an $l\in \NN $ such that for each $\varphi \in \mbox{LS}({\cal U}_{f,\tau }(\CCI ))$, 
$M_{\tau }^{l}(\varphi )=\varphi .$ 
For each $w=(w_{1},\ldots ,w_{l})\in \{ 1,\ldots ,m\} ^{l}$, 
we set $h_{w}:= h_{w_{l}}\circ \cdots \circ h_{w_{1}}.$ 
Then for each $\alpha ,\beta \in \{ 1,\ldots ,m\} ^{l}$ with $\alpha \neq \beta $, 
$h_{\alpha }^{-1}(J(G))\cap h_{\beta }^{-1}(J(G))=\emptyset .$ 
Let $r_{0}>0$ be a number such that 
for each $\alpha ,\beta \in \{ 1,\ldots ,m\} ^{l}$ with $\alpha \neq \beta $, 
if $z\in h_{\alpha }^{-1}(J(G))$, then 
$h_{\beta }(B(z,r_{0}))\subset F(G).$ 
Since $G$ is hyperbolic, we may assume that 
$B(J(G),r_{0})\subset \CCI \setminus P(G).$ 
%Let $\delta >0$ be a number such that 
%for each $\alpha \in \{ 1,\ldots ,m\} ^{l}$ and each $z\in h_{\alpha }^{-1}(J(G))$, 
%$h_{\alpha }|_{B(z,\delta )}$ is injective. 
We may assume that $2r_{0}<\min \{ d(a,b)\mid a\in J(G), b\in P(G)\} .$ 
We may also assume that for each $\zeta \in \{ 1,\ldots ,m\} ^{l}$ and 
for each $z\in h_{\zeta }^{-1}(J(G))$,  
$h_{\zeta}: B(z,r_{0})\rightarrow \CCI $ is injective. 
We set 
$$c_{1}:= \frac{1}{100}\min \{ \min \{ \| h_{w}'(z)\| _{s}^{-1}\mid w\in \{ 1,\ldots, m\} ^{l},\ z\in B(h_{w}^{-1}(J(G)),r_{0})\} , \frac{1}{2}\} .$$
%where for each $w=(w_{1},\ldots ,w_{l})\in \{ 1,\ldots ,m\} ^{l}$, 
%$h_{w}:= h_{w_{n}}\circ \cdots \circ h_{w_{1}}.$  
By \cite[Theorem 2.14 (2)]{S4}, $z_{0}\in J_{\gamma }.$ 
Hence, for each $s>0$, there exists an $n\in \NN $ 
such that 
$\mbox{diam}(\gamma _{nl,1}(B(z_{0},s)))\geq c_{1}r_{0}.$   
Let $n(s)$ be the minimal number of the set of elements $n$ which satisfies the above. 
Then $\mbox{diam}(\gamma _{n(s)l,1}(B(z_{0},s)))\leq \frac{r_{0}}{2}.$ 
Let $\epsilon >0$ be a number. 
Let $(w_{1},w_{2},\ldots )\in \{ 1,\ldots ,m\} ^{\NN }$ be the 
sequence such that $\gamma _{j}=h_{w_{j}}$ for each $j\in \NN .$ 
There exists a positive integer $n_{0}$ such that 
for each $n\in \NN $ with $n\geq n_{0}$, 
$p_{w_{nl}}\cdots p_{w_{1}}\| \gamma _{nl,1}'(z_{0})\| _{s}^{t}\leq \epsilon .$ 
For this $n_{0}$, there exists an $s_{0}>0$ such that 
for each $s$ with $0<s\leq s_{0}$, $n(s)\geq n_{0}.$ 
Let $s$ be such that $0<s\leq s_{0}$. 
%Applying Koebe distortion theorem 
%to the univalent map  
%$\gamma _{n(s)l,1}: B(z_{0},s)\rightarrow B(\gamma _{n(s)l,1}(z_{0}),r_{0})$, 
%we obtain that there exists a constant $c_{2}>0$, which is independent of $s$, 
%such that $\gamma _{n(s)l,1}(B(z_{0},s))\supset B(\gamma _{n(s)l,1}(z_{0}),c_{2}r_{0}).$ 
Let $\alpha _{n(s)}:B(\gamma _{n(s)l,1}(z_{0}),r_{0})\rightarrow \CCI $ 
be the well-defined inverse branch of $\gamma _{n(s)l,1}$ such that 
$\alpha _{n(s)}(\gamma _{n(s)l,1}(z_{0}))=z_{0}.$  
We have $\alpha _{n(s)}(B(\gamma _{n(s)l,1}(z_{0}),r_{0}))\supset \overline{B(z_{0},s)}.$ 
Since $\mbox{diam}(\gamma _{n(s)l,1}(B(z_{0},s)))\geq c_{1}r_{0}$, by 
\cite[Theorem 2.4]{Mc}, we obtain 
$\mbox{mod}(B(\gamma _{n(s)l,1}(z_{0}),r_{0})\setminus \overline{\gamma _{n(s)l,1}(B(z_{0},s))})\leq c_{1}'$, 
where mod$(\cdot )$  denotes the modulus of the annulus, and  $c_{1}'$ is a positive constant which depends only on $c_{1}.$ 
Thus we obtain $\mbox{mod}(\alpha _{n(s)}(B(\gamma _{n(s)l,1}(z_{0}),r_{0}))\setminus \overline{B(z_{0},s)})\leq c'_{1}.$ 
Hence, by the Koebe distortion theorem, 
%Schwarz lemma, 
there exists a constant $c_{2}>0$, which is independent of $s$, 
such that $\frac{1}{s}\| \alpha _{n(s)}'(\gamma _{n(s)l,1}(z_{0}))\| _{s}\leq c_{2}.$ 
Hence $\frac{1}{s}\leq \|\gamma _{n(s),1}'(z_{0})\| _{s}c_{2}.$  
Combining these arguments and that 
$\mbox{LS}({\cal U}_{f,\tau }(\CCI ))\subset C_{F(G)}(\CCI )$ (Theorem~\ref{t:mtauspec}-\ref{t:mtauspec2}), 
it follows that for each $z\in B(z_{0},s)\setminus B(z_{0},\frac{s}{2})$ and each $\varphi \in \mbox{LS}({\cal U}_{f,\tau }(\CCI ))$,  
\begin{align*}
\frac{|\varphi (z)-\varphi (z_{0})|}
{d(z,z_{0})^{t}}
& = \frac{1}{d(z, z_{0})^{t}}p_{w_{n(s)l}}\cdots p_{w_{1}}
|\varphi (\gamma _{n(s)l,1}(z))-\varphi (\gamma _{n(s)l,1}(z_{0}))|\\ 
& \leq \frac{2^{t}}{s^{t}}p_{w_{n(s)}}\cdots p_{w_{1}}2\| \varphi \| _{\infty }
\leq 2^{1+t}\| \varphi \| _{\infty }c_{2}^{t}\| \gamma _{n(s)l,1}'(z_{0})\| _{s}^{t}p_{w_{n(s)l}}\cdots p_{w_{1}}
\leq 2^{1+t}\| \varphi \| _{\infty }c_{2}^{t}\epsilon .
\end{align*}
Since $2^{1+t}c_{2}^{t}$ is independent of $s$ with $0<s\leq s_{0}$, we obtain that 
for each $a\in \NN $, for each $z\in B(z_{0},\frac{s_{0}}{2^{a}})\setminus 
B(z_{0},\frac{s_{0}}{2^{a+1}})$, and for each  $\varphi \in \mbox{LS}({\cal U}_{f,\tau }(\CCI ))$,  
$\frac{|\varphi (z)-\varphi (z_{0})|}{d(z,z_{0})^{t}}\leq 
2^{1+t}\| \varphi \| _{\infty }c_{2}^{t}\epsilon .$ 
Hence, for each $z\in B(z_{0},s_{0})\setminus \{ z_{0}\} $ and each $\varphi \in \mbox{LS}({\cal U}_{f,\tau }(\CCI ))$,  
$\frac{|\varphi (z)-\varphi (z_{0})|}{d(z,z_{0})^{t}}\leq 
2^{1+t}\| \varphi \| _{\infty }c_{2}^{t}\epsilon .$ 
Thus, we have proved our lemma. 
\end{proof}
We now prove Theorem~\ref{t:hholder}. 

\noindent {\bf Proof of Theorem~\ref{t:hholder}:} 
By Lemma~\ref{l:disjker} and Proposition~\ref{p:hyppjke}, $G$ is mean stable 
and $J_{\ker }(G)=\emptyset . $ 
By Theorem~\ref{t:mtauspec}-\ref{t:mtauspec7}, there exists an $l\in \NN $ 
such that for each $\varphi \in \mbox{LS}({\cal U}_{f,\tau }(\CCI ))$, 
$M_{\tau }^{l}(\varphi )=\varphi .$ 
Let $f:\Gamma ^{\NN }\times \CCI \rightarrow \Gamma ^{\NN }\times \CCI $ be the 
skew product associated with $\Gamma .$ 
Let $t>0$ be a number such that 
$(\max _{j=1}^{m}p_{j})\cdot (\max \{ \| h_{j}'(z)\| _{s}\mid j=1,\ldots ,m, z\in h_{j}^{-1}(J(G))\} )^{t}<1.$ 
Then for any $\epsilon _{1}>0$ there exists a number $n_{0}$ such that 
for each $(\gamma ,z_{0})\in \tilde{J}(f)$ and each $n\in \NN $ with $n\geq n_{0}$,  
$\tilde{p}(f^{nl-1}(\gamma ,z_{0}))\cdots \tilde{p}(\gamma ,z_{0})\| \gamma _{nl,1}'(z_{0})\| _{s}^{t}<\epsilon _{1} .$  
By using the argument in the proof of Lemma~\ref{l:diff1}, 
we obtain that for any $\epsilon _{2}>0$, there exists a number $s_{0}>0$ such that 
for each $(\gamma ,z_{0})\in \tilde{J}(f)$, for each $z\in B(z_{0},s_{0})\setminus \{ z_{0}\} $, and 
for each $\varphi \in \mbox{LS}({\cal U}_{f,\tau }(\CCI ))$,  
$\frac{|\varphi (z)-\varphi (z_{0})|}{d(z,z_{0})^{t}}\leq \epsilon _{2}\| \varphi \| _{\infty }.$  
Combining this with 
 the fact $\pi _{\CCI }(\tilde{J}(f))=J(G)$ (see Lemma~\ref{l:pctjjg}), 
it follows that 
%for any $\epsilon _{2}>0$,  
%$\lim _{z\rightarrow z_{0}}\frac{|T_{\infty ,\tau }(z)-T_{\infty ,\tau }(z_{0})|}{|z-z_{0}|^{t}}=0.$ 
there exists a constant $C>0$ such that 
for each $z_{1},z_{2}\in J(G)$ and each $\varphi \in \mbox{LS}({\cal U}_{f,\tau }(\CCI ))$,  
$|\varphi (z_{1})-\varphi (z_{2})|\leq C\| \varphi \| _{\infty }d(z_{1},z_{2})^{t}.$ 
%Moreover, $T_{\infty ,\tau }$ is locally constant on $F(G)$ (see Lemma~\ref{Tlem1}). 
%Hence, for each $z_{0}\in \CCI $, 
%$\lim _{z\rightarrow z_{0}}\frac{|T_{\infty ,\tau }(z)-T_{\infty ,\tau }(z_{0})|}{d(z,z_{0})^{t}}=0$, 
%where $d$ denotes the spherical distance on $\CCI .$ Therefore, 
%$T_{\infty ,\tau }: \CCI \rightarrow [0,1]$ is H\"{o}lder continuous on $\CCI $ (with exponent $t$).   %
%Let $R>0$ be a large number such that 
%$D(0,R)\supset J(G).$ 
Take any two points $w_{1}, w_{2}\in \CCI .$ 
For any two points $a,b\in \CCI $, 
let $\overline{ab}$ be the geodesic arc from $a$ to $b$ with respect to 
the spherical metric.  
If $\overline{w_{1}w_{2}}$ is included in $F(G)$, 
then by Theorem~\ref{t:mtauspec}-\ref{t:mtauspec2}, $\varphi (w_{1})=\varphi (w_{2})$ for each 
$\varphi \in \mbox{LS}({\cal U}_{f,\tau }(\CCI )).$  
Suppose that $\overline{w_{1}w_{2}}$ is not included in $F(G).$ 
Then there exists a point $w_{3}\in \overline{w_{1}w_{2}}\cap J(G)$ such that 
$\overline{w_{1}w_{3}}\setminus \{ w_{3}\} \subset F(G)$, and there exists a point 
$w_{4}\in \overline{w_{1}w_{2}}\cap J(G)$ such that 
$\overline{w_{4}w_{2}}\setminus \{ w_{4}\} \subset F(G)$. 
%Since $h_{i}^{-1}(J(G))\cap h_{j}^{-1}(J(G))=\empty $ for each 
%$(i,j)$ with $i\neq j$, we have $J_{\ker }(G)=\emptyset .$ 
%Hence, by Theorem~\ref{kerJthm2}, $T_{\infty ,\tau }(w_{1})=T_{\infty, \tau }(w_{3})$ and 
%$T_{\infty ,\tau }(w_{4})=T_{\infty ,\tau }(w_{2}).$ 
By Theorem~\ref{t:mtauspec}-\ref{t:mtauspec2}, $\varphi (w_{1})=\varphi (w_{3})$ and 
$\varphi (w_{4})=\varphi (w_{2})$, for each $\varphi \in \mbox{LS}({\cal U}_{f,\tau }(\CCI )).$ 
Therefore, for each $\varphi \in \mbox{LS}({\cal U}_{f,\tau }(\CCI ))$, 
%\begin{align*}
$|\varphi (w_{1})-\varphi (w_{2})| 
 = |\varphi (w_{3})-\varphi (w_{4})|  
 \leq C\| \varphi \| _{\infty }d(w_{3},w_{4})^{t}\leq C\| \varphi \| _{\infty }d(w_{1},w_{2})^{t}.$ 
%\end{align*} 
%From these arguments, $T_{\infty ,\tau }$ is H\"{o}lder continuous on $D(0,R)$ with exponent $t$, with respect to 
%the Euclidian distance (hence with respect to the spherical distance). 
Therefore, for any $\varphi \in \mbox{LS}({\cal U}_{f,\tau }(\CCI ))$, 
$\varphi :\CCI \rightarrow \CC $ is $t$-H\"{o}lder continuous on $\CCI $.  
%with respect to the spherical distance.    
 
 Thus, we have proved Theorem~\ref{t:hholder}. 
\qed 

\ 

In order to prove Theorems~\ref{t:hnondiff},\ref{t:hdiffornd}, we need a proposition and some lemmas. 
\begin{prop}
\label{p:erghol}
Let $m\in \NN $ with $m\geq 2.$ 
Let $h=(h_{1},\ldots ,h_{m})\in (\emRatp)^{m}$ and we set 
$\Gamma := \{ h_{1},h_{2},\ldots ,h_{m}\} .$ 
Let $G=\langle h_{1},\ldots ,h_{m}\rangle .$ 
Let $p=(p_{1},\ldots ,p_{m})\in {\cal W}_{m}.$  
Let $f:\Gamma ^{\NN }\times \CCI \rightarrow \Gamma ^{\NN }\times \CCI $ be the  
skew product associated with $\Gamma .$  
Let 
$\tau := \sum _{j=1}^{m}p_{j}\delta _{h_{j}}\in {\frak M}_{1}(\Gamma )
\subset {\frak M}_{1}(\emRatp).$ 
Let $\tilde{\nu } \in {\frak M}_{1}(\tilde{J}(f))$ 
be an $f$-invariant ergodic Borel probability measure. 
Let $\nu := (\pi _{\CCI })_{\ast }(\tilde{\nu }).$  
Suppose that 
%$J_{\ker }(G)=\emptyset , $ 
%$\hat{K}(G)\neq \emptyset $, 
$G$ is hyperbolic and 
$h_{i}^{-1}(J(G))\cap h_{j}^{-1}(J(G))=\emptyset $ for each 
$(i,j)$ with $i\neq j$. 
Then, there exists a Borel subset $A$ of $J(G)$ with $\nu (A)=1$ such that 
for each $z_{0}\in A$ and each $\varphi \in (\mbox{{\em LS}}({\cal U}_{f,\tau }(\CCI )))_{nc}$,  
$\emHol(\varphi ,z_{0})=u(h,p,\tilde{\nu }).$ 
\end{prop}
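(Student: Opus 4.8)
The plan is to establish this almost-sure pointwise H\"older exponent by combining the estimates of Lemma~\ref{l:nondiff1} and Lemma~\ref{l:diff1} with Birkhoff's ergodic theorem applied to the $f$-invariant ergodic measure $\tilde{\nu }$ on $\tilde{J}(f)$. First I would record the structural facts we get for free from hyperbolicity and the disjointness hypothesis: by Lemma~\ref{l:disjker} we have $J_{\ker }(G)=\emptyset $, by Proposition~\ref{p:hyppjke} (via Remark~\ref{r:hypdeg2sh}) $G$ is mean stable, by Theorem~\ref{t:mtauspec}-\ref{t:mtauspec7} there is an $l\in \NN $ with $M_{\tau }^{l}(\varphi )=\varphi $ for every $\varphi \in \mbox{LS}({\cal U}_{f,\tau }(\CCI ))$, and by \cite[Theorem 2.14]{S4} $\pi _{\CCI }(\tilde{J}(f))=J(G)$ with $z_{0}\in J_{\gamma }$ whenever $(\gamma ,z_{0})\in \tilde{J}(f)$. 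Since $G$ is hyperbolic, $J(G)\cap P(G)=\emptyset $, so $J(G)\subset \CCI \setminus P(G)$; this is what makes the hypothesis ``$z_{1}\in J(G)\setminus P(G)$'' in Lemma~\ref{l:nondiff1}-\ref{l:nondiff1-1} automatically satisfiable.

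The core of the argument is a Birkhoff-average computation. Consider the function $\Phi :\tilde{J}(f)\rightarrow \RR $ defined by $\Phi (\gamma ,y):=\log \tilde{p}(\gamma ,y)+t_{0}\log \| f'(\gamma ,y)\| _{s}$, where $t_{0}:=u(h,p,\tilde{\nu })$, i.e. $t_{0}$ is exactly the value for which $\int _{\tilde{J}(f)}\Phi \, d\tilde{\nu }=0$ (here one uses that $\int \log \| f'\| _{s}\, d\tilde{\nu }>0$, which holds because $G$ is hyperbolic and $\tilde{\nu }$ is supported on $\tilde{J}(f)$ — the expanding estimate of Lemma~\ref{l:att1}/Lemma~\ref{l:shatt1} gives the lower bound on the Lyapunov exponent; also $\int \log \tilde p\, d\tilde\nu<0$ so $t_0>0$). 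Both $\log \tilde p$ and $\log \| f'\| _{s}$ are continuous on $\tilde{J}(f)$ (for the latter we use $J(G)\cap P(G)=\emptyset $ so there are no critical points of the $h_j$ on $J(G)$, and no poles either), hence $\tilde{\nu }$-integrable. By Birkhoff's ergodic theorem there is a Borel set $\tilde{A}\subset \tilde{J}(f)$ with $\tilde{\nu }(\tilde{A})=1$ such that for every $(\gamma ,z_{0})\in \tilde{A}$,
$$\frac{1}{n}\sum _{k=0}^{n-1}\Phi (f^{k}(\gamma ,z_{0}))\longrightarrow 0,$$
that is $\tilde{p}(f^{n-1}(\gamma ,z_{0}))\cdots \tilde{p}(\gamma ,z_{0})\cdot \| \gamma _{n,1}'(z_{0})\| _{s}^{t_{0}}=\exp (o(n))$. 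For $t<t_{0}$ the same product with exponent $t$ tends to $+\infty $ (since $\log \| \gamma _{n,1}'(z_{0})\| _{s}$ grows linearly, by the expansion estimate), and for $t>t_{0}$ it tends to $0$. Moreover, by ergodicity together with the fact that $\pi _{\CCI }(\mbox{supp}\, \tilde{\nu })$ is a full-measure subset of $J(G)$, for $\tilde\nu$-a.e.\ $(\gamma ,z_0)$ the orbit $\{ \gamma _{n,1}(z_{0})\} $ is dense in $\mbox{supp}\,\nu$, so we can extract a subsequence $\{ n_{j}\} $ along which $\gamma _{n_j,1}(z_0)\to z_1$ for some $z_1\in J(G)\setminus P(G)$ and $l\mid n_j$; applying Lemma~\ref{l:nondiff1}-\ref{l:nondiff1-1} with this subsequence (and $t$ slightly less than $t_0$) gives $\limsup _{z\to z_0}|\varphi (z)-\varphi (z_0)|/d(z,z_0)^{t}=\infty $, hence $\emHol(\varphi ,z_0)\le t_0$. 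For the reverse inequality, for each $t>t_{0}$ the hypothesis of Lemma~\ref{l:diff1} is met at $(\gamma ,z_{0})$ (using also that $l\mid n$ can be arranged and $M_\tau^l\varphi=\varphi$), giving $\limsup _{z\to z_0}|\varphi (z)-\varphi (z_0)|/d(z,z_0)^{t}=0$, hence $\emHol(\varphi ,z_0)\ge t_0$. Combining, $\emHol(\varphi ,z_0)=t_0=u(h,p,\tilde{\nu })$.

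Finally I would push the exceptional set down to $J(G)$: set $A:=\pi _{\CCI }(\tilde{A})$; since $\nu =(\pi _{\CCI })_{\ast }\tilde{\nu }$ we get $\nu (A)=1$, and the conclusion holds for every $z_{0}\in A$ because the H\"older-exponent statement only depends on $z_0$, not on the choice of $\gamma $ with $(\gamma ,z_0)\in\tilde A$ (for any such $\gamma$ both one-sided estimates above apply). The main obstacle I anticipate is the careful bookkeeping around the period $l$ and around the need for two different ``driving sequences'': Lemma~\ref{l:nondiff1} wants a subsequence realizing a limit point $z_1\notin P(G)$ while Lemma~\ref{l:diff1} wants the full-sequence decay estimate, and one must verify that a $\tilde{\nu }$-generic point $(\gamma ,z_0)$ simultaneously has (a) the Birkhoff convergence of $\Phi$, (b) linear growth of $\log\|\gamma_{n,1}'(z_0)\|_s$, and (c) a dense forward orbit in $\mbox{supp}\,\nu$; (b) and (c) follow from hyperbolicity/expansion (Lemma~\ref{l:shatt1}) and ergodicity respectively, but assembling them cleanly, and making sure the divisibility-by-$l$ restriction does not destroy density, requires some care.
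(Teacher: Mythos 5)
Your overall architecture is exactly the paper's: apply Birkhoff's ergodic theorem to $\log \tilde{p}+t\log \| f'\| _{s}$ on $(\tilde{J}(f),f,\tilde{\nu })$, feed the resulting asymptotics of the weighted product $\tilde{p}(f^{n-1}(z))\cdots \tilde{p}(z)\| (f^{n})'(z)\| _{s}^{t}$ into Lemma~\ref{l:nondiff1}-\ref{l:nondiff1-1} and Lemma~\ref{l:diff1}, and project the full-measure set down by $\pi _{\CCI }.$ However, you have the two exponent ranges swapped, and this breaks both halves of the argument. Since $\int \log \tilde{p}\, d\tilde{\nu }<0$ and $\int \log \| f'\| _{s}\, d\tilde{\nu }>0$, the Birkhoff average of $\log \tilde{p}+t\log \| f'\| _{s}$ equals $(t-t_{0})\int \log \| f'\| _{s}\, d\tilde{\nu }$ with $t_{0}=u(h,p,\tilde{\nu })$; hence for $t<t_{0}$ the weighted product tends to $0$ (not to $+\infty $ as you assert --- the negative term $\sum \log \tilde{p}$ wins), and for $s>t_{0}$ it tends to $+\infty .$ Consequently the hypothesis of Lemma~\ref{l:nondiff1}-\ref{l:nondiff1-1} is available only for exponents $s>t_{0}$, giving $\limsup _{z\rightarrow z_{0}}|\varphi (z)-\varphi (z_{0})|/d(z,z_{0})^{s}=\infty $ for every $s>t_{0}$ and hence $\emHol (\varphi ,z_{0})\leq t_{0}$; and the hypothesis of Lemma~\ref{l:diff1} is available only for $t<t_{0}$, giving $\limsup =0$ for every $t<t_{0}$ and hence $\emHol (\varphi ,z_{0})\geq t_{0}$ (the set $\{ \beta \mid \limsup =\infty \} $ is upward closed).

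Note also that even granting your reversed premises, the inequalities you draw would not follow: ``$\limsup =\infty $ at an exponent $t$ slightly less than $t_{0}$'' would force $\emHol (\varphi ,z_{0})\leq t<t_{0}$, and ``$\limsup =0$ at some $t>t_{0}$'' would force $\emHol (\varphi ,z_{0})\geq t>t_{0}$ --- together a contradiction, which is a symptom of the swap. Once the two ranges are interchanged the proof closes and coincides with the paper's. A minor economy: you do not need density of the forward orbit in $\mbox{supp}\, \nu $ to produce the accumulation point $z_{1}$ required by Lemma~\ref{l:nondiff1}-\ref{l:nondiff1-1}; since $\gamma _{n,1}(z_{0})\in J(G)$ for all $n$ and $J(G)\cap P(G)=\emptyset $ by hyperbolicity, any convergent subsequence (which exists by compactness and can be taken with $l\mid n_{j}$) already has its limit in $J(G)\setminus P(G).$
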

\begin{proof}
Let $t_{0}:=u(h,p,\tilde{\nu }).$ 
Let $t<t_{0}.$ 
Then $\int _{\tilde{J}(f)}\log (\tilde{p}(z) \|f'(z)\|_{s}^{t}) d\tilde{\nu }(z)<0.$ 
By Birkhoff's ergodic theorem, there exists a Borel subset $\tilde{A}_{t}$ of 
$\tilde{J}(f)$ with $\tilde{\nu }(\tilde{A}_{t})=1$ such that for 
%for a.e. $z\in \tilde{J}(f)$ with respect to $\tilde{\nu }$, 
each $z\in \tilde{A}_{t}$, 
$$\frac{1}{n}\log \left(\tilde{p}(f^{n-1}(z))\cdots \tilde{p}(z)\|(f^{n})'(z)\|_{s}^{t}\right) 
\rightarrow \int _{\tilde{J}(f)}\log (\tilde{p}(z)\|f'(z)\|_{s}^{t})\ d\tilde{\nu }(z) \ \mbox{ as }n\rightarrow \infty .$$ 
Hence, 
%for a.e. $z\in \tilde{J}(f)$ with respect to $\tilde{\nu }$, 
for each $z\in \tilde{A}_{t}$, 
$\tilde{p}(f^{n-1}(z))\cdots \tilde{p}(z)\|(f^{n})'(z)\|_{s}^{t}\rightarrow 0$ as 
$n\rightarrow \infty .$ Let $A_{t}:= \pi _{\CCI }(\tilde{A}_{t}).$ 
From Lemma~\ref{l:diff1}, 
it follows that for each $z_{0}\in A_{t}$ and for each $\varphi \in (\mbox{LS}({\cal U}_{f,\tau }(\CCI )))_{nc}$,   
$\lim _{z\rightarrow z_{0}}\frac{|\varphi (z)-\varphi (z_{0})|}{d(z,z_{0})^{t}}=0.$ 

 We now let $s>t_{0}.$ By using an argument similar to that of the above, 
we obtain that 
%for a.e. $z\in \tilde{J}(f)$ with respect to $\tilde{\nu }$, 
there exists a subset $\tilde{B}_{s}$ of $\tilde{J}(f)$ with $\tilde{\nu }(\tilde{B}_{s})=1$ such that 
for each $z\in \tilde{B}_{s}$, 
$\tilde{p}(f^{n-1}(z))\cdots \tilde{p}(z)\|(f^{n})'(z)\|_{s}^{s}\rightarrow \infty  $ as 
$n\rightarrow \infty .$ Let $B_{s}=\pi _{\CCI }(\tilde{B}_{s}).$ 
From Lemma~\ref{l:nondiff1}, it follows that 
for each $z_{0}\in B_{s}$ and for each $\varphi \in (\mbox{LS}({\cal U}_{f,\tau }(\CCI )))_{nc}$, 
$\limsup _{z\rightarrow z_{0}}\frac{|\varphi (z)-\varphi (z_{0})|}{d(z,z_{0})^{s}}=\infty .$ 
Let $\{ t_{n}\} _{n=1}^{\infty }$ be a strictly increasing sequence in $\RR $ such that 
$t_{n}\rightarrow t_{0}$ as $n\rightarrow \infty $, and 
let $\{ s_{n}\} _{n=1}^{\infty }$ be a strictly decreasing sequence in $\RR $ such that 
$s_{n}\rightarrow t_{0}$ as $n\rightarrow \infty .$ 
Let $A:= \bigcap _{n=1}^{\infty }A_{t_{n}}\cap \bigcap _{n=1}^{\infty }B_{s_{n}}.$ 
From the above arguments, it follows that $\nu (A)=1$ and 
for each $z_{0}\in A$ and for each $\varphi \in (\mbox{LS}({\cal U}_{f,\tau }(\CCI )))_{nc}$,  
$\Hol(\varphi ,z_{0})=u(h,p,\tilde{\nu }).$ 
Thus, we have proved our proposition.  
\end{proof}
\begin{lem}
\label{l:maxentmeas1}
Let $h=(h_{1},\ldots ,h_{m})\in {\cal P}^{m}$ and we set 
$\Gamma := \{ h_{1},h_{2},\ldots ,h_{m}\} .$ 
Suppose that $h_{i}\neq h_{j}$ for each $(i,j)$ with $i\neq j.$ 
Let $G=\langle h_{1},\ldots ,h_{m}\rangle .$ 
Let $p=(p_{1},\ldots ,p_{m})\in {\cal W}_{m}.$  
Let $f:\Gamma ^{\NN }\times \CCI \rightarrow \Gamma ^{\NN }\times \CCI $ be the  
skew product associated with $\Gamma .$  
Let 
$\tau := \sum _{j=1}^{m}p_{j}\delta _{h_{j}}\in {\frak M}_{1}(\Gamma )
\subset {\frak M}_{1}({\cal P }).$ 
Let $\mu \in {\frak M}_{1}(\tilde{J}(f))$ be the measure defined by 
$\langle \mu ,\varphi \rangle := \int _{\Gamma ^{\NN }}
(\int _{\CCI }\varphi (\gamma ,z) d\mu _{\gamma }(z))d\tilde{\tau }(\gamma )$ 
for any $\varphi \in C(\Gamma ^{\NN }\times \CCI )$, where 
$\mu _{\gamma }$ is the measure coming from Definition~\ref{d:green}. 
Then, $\mu $ is an $f$-invariant ergodic measure, 
$\pi _{\ast }(\mu )=\tilde{\tau }$,  and 
$\mu $ is the maximal relative entropy measure for $f$ with respect to $(\sigma ,\tilde{\tau })$ 
(see Remark~\ref{r:maxrelent}). 
\end{lem}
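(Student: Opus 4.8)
\textbf{Proof proposal for Lemma~\ref{l:maxentmeas1}.}

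The plan is to verify the three asserted properties of $\mu$ in turn, using the skew-product structure and the known transformation rule for the measures $\mu_\gamma$. First I would record the basic equivariance of the family $\{\mu_\gamma\}$: since $G_\gamma(y)=\lim_n\frac{1}{\deg(\gamma_{n,1})}\log^+|\gamma_{n,1}(y)|$ satisfies $G_{\sigma(\gamma)}(\gamma_1(y))=\deg(\gamma_1)\,G_\gamma(y)$ (this is immediate from the definition of $G_\gamma$ and the fact that $\gamma_{n,1}=\gamma_{n,2}\circ\gamma_1$), applying $dd^c$ gives $(\gamma_1)_\ast(\mu_\gamma)=\mu_{\sigma(\gamma)}$. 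This is the one-step pushforward identity for the fiberwise measures, and it is the engine for everything that follows.

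For $f$-invariance, I would test $f_\ast\mu$ against an arbitrary $\varphi\in C(\Gamma^{\NN}\times\CCI)$. Unwinding the definition, $\langle f_\ast\mu,\varphi\rangle=\int_{\Gamma^\NN}\int_\CCI \varphi(\sigma(\gamma),\gamma_1(z))\,d\mu_\gamma(z)\,d\tilde\tau(\gamma)$; using the pushforward identity $(\gamma_1)_\ast\mu_\gamma=\mu_{\sigma(\gamma)}$ the inner integral becomes $\int_\CCI \varphi(\sigma(\gamma),w)\,d\mu_{\sigma(\gamma)}(w)$, and then the $\sigma$-invariance of $\tilde\tau=\otimes_{j=1}^\infty\tau$ together with Fubini finishes the computation, yielding $\langle f_\ast\mu,\varphi\rangle=\langle\mu,\varphi\rangle$. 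The identity $\pi_\ast(\mu)=\tilde\tau$ is just the statement that each $\mu_\gamma$ is a probability measure on the fiber $\{\gamma\}\times\CCI$, which holds because $\mu_\gamma=dd^cG_\gamma$ has total mass $1$ (as recalled in Definition~\ref{d:green}, $\mu_\gamma$ is a Borel probability measure on $J_\gamma$), so integrating $\varphi=\psi\circ\pi$ over a fiber reproduces $\psi(\gamma)$ and integrating over $\Gamma^\NN$ gives $\int\psi\,d\tilde\tau$.

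The substantive content is the identification of $\mu$ as the maximal relative entropy measure, and this is where I expect the main obstacle to lie. The cleanest route is to invoke the author's earlier work on entropy of skew products (the reference \cite{S3} cited in Remark~\ref{r:maxrelent}): by that result there is a \emph{unique} $f$-invariant ergodic measure $\mu_{\max}$ with $\pi_\ast(\mu_{\max})=\tilde\tau$ achieving the maximum relative metric entropy $h_{\mu_{\max}}(f|\sigma)=\sum_j p_j\log\deg(h_j)$, and that $\mu_{\max}$ is characterized as the one whose conditional measures on the fibers are the measures of maximal entropy for the corresponding non-autonomous iteration — which are precisely the $\mu_\gamma=dd^cG_\gamma$ (the fiberwise harmonic/equilibrium measures). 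Thus it suffices to check that the $\mu$ built here has exactly these conditional measures, i.e.\ that the disintegration of $\mu$ over $(\Gamma^\NN,\tilde\tau)$ along $\pi$ is $\gamma\mapsto\mu_\gamma$; but this is immediate from the very definition $\langle\mu,\varphi\rangle=\int_{\Gamma^\NN}(\int_\CCI\varphi(\gamma,z)\,d\mu_\gamma(z))\,d\tilde\tau(\gamma)$. Ergodicity of $\mu$ then follows either from the uniqueness clause in \cite{S3} or, independently, from the mixing properties of the fiberwise measures $\mu_\gamma$ combined with the Bernoulli nature of $(\sigma,\tilde\tau)$; I would cite \cite{S3} for this to keep the argument short. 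The only delicate point is making sure the measurability of $\gamma\mapsto\mu_\gamma$ and the continuity of $(\gamma,y)\mapsto G_\gamma(y)$ (both recorded in Definition~\ref{d:green}) are enough to legitimize the Fubini interchanges above, which they are since $\tilde J(f)$ is compact and $\varphi$ is continuous.
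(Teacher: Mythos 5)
Your verification of $f$-invariance and of $\pi_{\ast}(\mu )=\tilde{\tau }$ is sound and somewhat more self-contained than the paper's: you derive the pushforward identity $(\gamma _{1})_{\ast }\mu _{\gamma }=\mu _{\sigma (\gamma )}$ from the functional equation $G_{\sigma (\gamma )}(\gamma _{1}(y))=\deg (\gamma _{1})G_{\gamma }(y)$ and compute directly, whereas the paper simply invokes the argument of \cite[Theorem 4.2(i)]{J2} for invariance, ergodicity, and the projection identity. That part is fine.

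The gap is in the identification of $\mu $ as the maximal relative entropy measure. You reduce it to the claim that the maximizer is ``characterized as the one whose conditional measures on the fibers are the measures of maximal entropy for the corresponding non-autonomous iteration,'' namely the $\mu _{\gamma }=dd^{c}G_{\gamma }$. No such characterization is stated in Remark~\ref{r:maxrelent} or in \cite{S3} as used here; \cite[Theorem 1.3(e)(f)]{S3} gives only the value of the maximum, $\sum _{j}p_{j}\log \deg (h_{j})$, and the uniqueness of the ergodic maximizer. The characterization you invoke is essentially equivalent to the conclusion you are trying to prove, so citing it is circular unless you prove it separately. The missing substantive step is the entropy lower bound $h_{\mu }(f|\sigma )\geq \int \log \deg (\gamma _{1})\,d\tilde{\tau }(\gamma )=\sum _{j}p_{j}\log \deg (h_{j})$ for the specific measure $\mu $ constructed by integrating the fiberwise equilibrium measures; the paper obtains this from the argument of \cite[Theorem 5.2(i)]{J2}, and only then concludes via the uniqueness clause of \cite{S3} that $\mu $ is the maximal relative entropy measure. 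Your fallback for ergodicity has the same problem: deducing ergodicity from the uniqueness of the maximizer again presupposes that $\mu $ attains the maximum, which is exactly the unproved step. Either supply the entropy estimate (e.g.\ by reproducing Jonsson's argument) or cite it explicitly; as written, the maximality claim does not follow.
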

\begin{proof}
By the argument of the proof of \cite[Theorem 4.2(i)]{J2}, 
$\mu $ is $f$-invariant and ergodic, and 
$\pi _{\ast }(\mu )=\tilde{\tau }.$ 
 Moreover, 
by the argument of the proof of \cite[Theorem 5.2(i)]{J2}, 
we obtain $h_{\mu }(f|\sigma )\geq \int \log \deg (\gamma _{1}) d\tilde{\tau }(\gamma )=
\sum _{j=1}^{m}p_{j}\log \deg (h_{j}).$ Combining this with 
\cite[Theorem 1.3(e)(f)]{S3}, it follows that 
$\mu $ is the unique maximal relative entropy measure for $f$ with respect to $(\sigma, \tilde{\tau }).$
\end{proof}
\begin{lem}
\label{l:lyapcalc}
Let $h=(h_{1},\ldots ,h_{m})\in {\cal P}^{m}$ and we set 
$\Gamma := \{ h_{1},h_{2},\ldots ,h_{m}\} .$ 
Suppose that $h_{i}\neq h_{j}$ for each $(i,j)$ with $i\neq j.$ 
%Let $G=\langle h_{1},\ldots ,h_{m}\rangle .$ 
Let $p=(p_{1},\ldots ,p_{m})\in {\cal W}_{m}.$  
Let 
$\tau := \sum _{j=1}^{m}p_{j}\delta _{h_{j}}\in {\frak M}_{1}(\Gamma )
\subset {\frak M}_{1}({\cal P }).$ 
Let $f:\Gamma ^{\NN }\times \CCI \rightarrow \Gamma ^{\NN }\times \CCI $ be the  
skew product associated with $\Gamma .$  
Let $\mu $ be the maximal relative entropy measure for $f$ with respect to 
$(\sigma ,\tilde{\tau }).$ 
Then $\int _{\Gamma ^{\NN }\times \CCI }\log \|f'\| _{s} d\mu =
\sum _{j=1}^{m}p_{j}\log \deg (h_{j})+\int _{\Gamma ^{\NN }}\Omega (\gamma ) 
d\tilde{\tau }(\gamma ) .$ 
\end{lem}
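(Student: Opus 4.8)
\textbf{Proof plan for Lemma~\ref{l:lyapcalc}.}
The plan is to compute the Lyapunov-type integral $\int _{\Gamma ^{\NN }\times \CCI }\log \|f'\| _{s}\, d\mu$ by splitting the spherical derivative into its Euclidean part and the conformal factor of the spherical metric, and then using potential theory for the Euclidean piece. First I would write, for $(\gamma ,y)\in \Gamma ^{\NN }\times \CC $, the identity $\log \|f'(\gamma ,y)\| _{s} = \log |\gamma _{1}'(y)| + \log \frac{1+|y|^{2}}{1+|\gamma _{1}(y)|^{2}}$, valid $\mu$-a.e.\ since $\mu$ is supported on $\tilde{J}(f)$ and, by hyperbolicity and $P^{\ast }(G)$-considerations implicit in the setup, we may assume $\infty \notin J(G)$ so $J(G)\subset \CC$ (if $\infty \in J(G)$ one reduces to this case after noting $\mu(\{\infty\}\times \cdots)$ is controlled; I would handle this as a short remark). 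The second term is a coboundary: setting $\psi (\gamma ,y):=\log (1+|y|^{2})$, it equals $\psi (\gamma ,y)-\psi (f(\gamma ,y))$, hence by $f$-invariance of $\mu$ its integral against $\mu$ vanishes. So the whole integral reduces to $\int \log |\gamma _{1}'(y)|\, d\mu (\gamma ,y)$.

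Next I would evaluate $\int \log |\gamma _{1}'(y)|\, d\mu$ using the fibered structure of $\mu$ from Lemma~\ref{l:maxentmeas1}: $\langle \mu ,\varphi \rangle =\int _{\Gamma ^{\NN }}\big(\int _{\CCI }\varphi (\gamma ,z)\, d\mu _{\gamma }(z)\big)\, d\tilde{\tau }(\gamma )$, where $\mu _{\gamma }=dd^{c}G_{\gamma }$ and $G_{\gamma }$ is the Green's function of Definition~\ref{d:green}. Thus $\int \log |\gamma _{1}'(y)|\, d\mu = \int _{\Gamma ^{\NN }}\big(\int _{\CC }\log |\gamma _{1}'(z)|\, d\mu _{\gamma }(z)\big)\, d\tilde{\tau }(\gamma )$. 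For fixed $\gamma$, write $\gamma _{1}=h_{w_{1}}$ with leading coefficient $a_{w_{1}}$; then $\log |\gamma _{1}'(z)| = \log (\deg (h_{w_{1}})\,|a_{w_{1}}|) + \sum _{c}\log |z-c|$, the sum over the finite critical points $c$ of $\gamma _{1}$ with multiplicity. Integrating the sum $\sum _{c}\log |z-c|$ against $\mu _{\gamma }$: since $\mu _{\gamma }$ is the equilibrium/harmonic measure of $J_{\gamma }$ with $G_{\gamma }$ its potential (vanishing on $K_{\gamma }$ and asymptotic to $\log |z|$ minus the Robin constant near $\infty$), the standard potential-theoretic identity $\int \log |z-c|\, d\mu _{\gamma }(z) = G_{\gamma }(c) - \gamma (\text{Robin})$ holds — but more usefully I would use the \emph{functional-equation} normalization: $G_{\sigma (\gamma )}(\gamma _{1}(z)) = \deg (\gamma _{1})\, G_{\gamma }(z)$, which on integrating against $\mu _{\gamma }$ and against $dd^{c}$ gives a clean cancellation of the Robin-constant contributions when one averages over $\tilde{\tau }$ (using $\sigma$-invariance of $\tilde{\tau }$). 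Carrying this out, the capacity/Robin terms telescope away under the $\tilde\tau$-average, leaving $\int _{\Gamma ^{\NN }}\big(\int _{\CC }\sum _{c}\log |z-c|\, d\mu _{\gamma }(z)\big)d\tilde\tau (\gamma ) = \int _{\Gamma ^{\NN }}\Omega (\gamma )\, d\tilde{\tau }(\gamma )$, where $\Omega (\gamma )=\sum _{c}G_{\gamma }(c)$ as in Definition~\ref{d:green}. The remaining constant term contributes $\int _{\Gamma ^{\NN }}\log (\deg (h_{w_{1}})|a_{w_{1}}|)\, d\tilde{\tau }(\gamma ) = \sum _{j=1}^{m}p_{j}\log (\deg (h_{j})|a_{h_{j}}|)$, and the $\log |a_{h_{j}}|$ pieces must also cancel against the Robin-constant bookkeeping — this is exactly the point where the normalization of $G_{\gamma }$ (subharmonic on $\CC$, equal to the Green's function on $A_{\infty ,\gamma }$ with pole at $\infty$) does the work — so that what survives is precisely $\sum _{j=1}^{m}p_{j}\log \deg (h_{j}) + \int _{\Gamma ^{\NN }}\Omega (\gamma )\, d\tilde{\tau }(\gamma )$.

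The main obstacle I anticipate is the careful handling of the Robin-constant / leading-coefficient terms: individually $\int \log|z-c|\, d\mu_\gamma$ and $\int \log|\gamma_1'|\, d\mu_\gamma$ involve the Robin constant of $J_\gamma$ and $\log|a_{w_1}|$, which do not vanish fiberwise; it is only after integrating over $\Gamma ^{\NN }$ with respect to $\tilde{\tau }$ and exploiting $\sigma$-invariance (so that the "incoming" Robin constant of $J_\gamma$ matches the "outgoing" one of $J_{\sigma(\gamma)}$ via the degree-scaling functional equation $G_{\sigma(\gamma)}\circ\gamma_1 = \deg(\gamma_1)G_\gamma$) that the telescoping yields the stated clean formula. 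I would therefore organize the computation to first establish the fiberwise pullback formula for $G_\gamma$, then integrate and average, postponing all constant-collection until the last step so the cancellation is transparent. The coboundary argument for the spherical conformal factor and the reduction $J(G)\subset\CC$ are routine and I would dispatch them quickly at the start.
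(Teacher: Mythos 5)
Your plan is correct and follows essentially the same route as the paper's proof: decompose $\mu$ into the fiber measures $\mu_{\gamma}=dd^{c}G_{\gamma}$, use $\int\log|z-c|\,d\mu_{\gamma}=G_{\gamma}(c)-R(\gamma)$ together with the relation $R(\sigma(\gamma))+\log|a(\gamma)|=d(\gamma)R(\gamma)$ coming from $G_{\sigma(\gamma)}\circ\gamma_{1}=d(\gamma)G_{\gamma}$, and let $\sigma$-invariance of $\tilde{\tau}$ telescope the Robin-constant and leading-coefficient terms, with the spherical-versus-Euclidean discrepancy handled as a coboundary (a non-issue here since $J(G)$ is a compact subset of $\CC$, all generators being polynomials).
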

\begin{proof}
For each $\gamma \in \Gamma ^{\NN }$, let 
$d(\gamma )=\deg (\gamma _{1})$ and 
$R(\gamma ):= \lim _{z\rightarrow \infty }(G_{\gamma }(z)-\log |z|).$ 
Moreover, we denote by $a(\gamma )$ the coefficient of highest order term of $\gamma _{1}.$ 
Since $\frac{1}{d(\gamma )}G_{\sigma (\gamma )}(\gamma _{1}(z))=G_{\gamma }(z)$, 
we obtain that $R(\sigma (\gamma ))+\log |a(\gamma )|=d(\gamma )R(\gamma )$ for each 
$\gamma \in \Gamma ^{\NN }.$ 
Moreover, 
since $dd^{c}(\int _{\CC }\log |w-z|d\mu _{\gamma }(w))=\mu _{\gamma }$ and 
$\int _{\CC }\log |w-z|d\mu _{\gamma }(w)=\log |z|+o(1)$ as $z\rightarrow \infty $ (see \cite{Ra}), 
we have $\int _{\CC }\log |w-z|d\mu _{\gamma }(w)=G_{\gamma }(z)-R(\gamma )$ for each $\gamma \in \Gamma ^{\NN }$ 
and $z\in \CC .$ In particular, $\gamma \mapsto R(\gamma )$ is continuous on $\Gamma ^{\NN }.$ 
By using the above formula, we obtain  
$\int _{\CCI }\log |\gamma _{1}'(z)| d\mu _{\gamma }(z)=\log |a(\gamma )|+\log d(\gamma )-(d(\gamma )-1)R(\gamma )+\Omega (\gamma )$ 
for each $\gamma \in \Gamma ^{\NN }.$ 
In particular, $\gamma \mapsto \int _{\CCI }\log |\gamma _{1}'(z)| d\mu _{\gamma }(z)$ is continuous on 
$\Gamma ^{\NN }.$ 
Furthermore, $\sigma _{\ast }(\tilde{\tau })=\tilde{\tau }.$ 
From these arguments and Lemma~\ref{l:maxentmeas1}, 
we obtain 
\begin{align*}
\int _{\Gamma ^{\NN }\times \CCI }\log |f'|d\mu 
& = \int _{\Gamma ^{\NN }}d\tilde{\tau }(\gamma )\int _{\CCI }\log |\gamma _{1}'(z)| d\mu _{\gamma }(z)\\ 
&=\int _{\Gamma ^{\NN }}\left(\log |a(\gamma )|+\log d(\gamma )-(d(\gamma )-1)R(\gamma )+\Omega (\gamma )\right) d\tilde{\tau }(\gamma )\\ 
&=\int _{\Gamma ^{\NN }}\left(R(\gamma )-R(\sigma (\gamma ))+\log d(\gamma )+\Omega (\gamma )\right) d\tilde{\tau }(\gamma )\\ 
&=\int _{\Gamma ^{\NN }}(\log d(\gamma )+\Omega (\gamma )) d\tilde{\tau }(\gamma )
=\sum _{j=1}^{m}p_{j}\log \deg (h_{j})+\int _{\Gamma ^{\NN }}\Omega (\gamma ) d\tilde{\tau }(\gamma ). 
\end{align*}
Moreover, since $\mu $ is $f$-invariant, and since the Euclidian metric and the spherical metric 
are comparable on the compact subset $J(G)$ of $\CC $, we have 
$\int _{\Gamma ^{\NN }\times \CCI }\log |f'|d\mu= \int _{\Gamma ^{\NN }\times \CCI }\log \|f'\| _{s}d\mu .$ 

Thus, we have proved our lemma. 
\end{proof}

We now prove Theorem~\ref{t:hnondiff}. 

\noindent {\bf Proof of Theorem~\ref{t:hnondiff}:}
%By Theorem~\ref{t:hnondiff}-\ref{t:hnondiff0}, $G$ is almost stable. 
By Lemma~\ref{l:disjker} and Proposition~\ref{p:hyppjke}, 
$G_{\tau }=G$ is mean stable and $J_{\ker }(G)=\emptyset .$  
Since $G$ is hyperbolic and $h_{i}^{-1}(J(G))\cap h_{j}^{-1}(J(G))=\emptyset $ for each 
$(i,j)$ with $i\neq j$, \cite[Corollary 3.6]{S2} implies that 
$0<\dim _{H}(J(G))<2.$  By \cite[Theorem 4.3]{S3}, 
we obtain $\mbox{supp}\, \lambda =J(G).$ Moreover, by \cite[Lemma 5.1]{S3}, 
$\lambda (\{ z\} )=0$ for each $z\in J(G).$ 
Thus we have proved statements \ref{t:hnondiff0}--\ref{t:hnondiff3}.

Statement \ref{t:hnondiff4} follows from 
Proposition~\ref{p:erghol}. 
%and Lemma~\ref{l:maxentmeas1}.
% 
%we obtain that for $\lambda $-a.e. $z_{0}\in J(G)$, 
%$\Hol (T_{\infty ,\tau},z_{0})=u(h,p,\mu ).$ 

We now prove statement~\ref{t:hnondiff4-1}. 
Since $\pi _{\ast }(\mu )=\tilde{\tau }$, 
$\int _{\Gamma ^{\NN }\times \CCI }\log \tilde{p}\ d\mu =\sum _{j=1}^{m}p_{j}\log p_{j}.$ 
Combining this with Lemma~\ref{l:lyapcalc}, 
it follows that 
$$u(h,p,\mu )=\frac{-(\sum _{j=1}^{m}p_{j}\log p_{j})}
{\sum _{j=1}^{m}p_{j}\log \deg (h_{j})+\int _{\Gamma ^{\NN }}\Omega (\gamma )\ d\tilde{\tau }(\gamma )}.
$$
Moreover, by \cite[Theorem 1.3 (f)]{S3}, 
$h_{\mu }(f|\sigma )=\sum _{j=1}^{m}p_{j}\log \deg (h_{j}).$ 
Hence, $h_{\mu }(f)=h_{\mu }(f|\sigma )+h_{\pi _{\ast }(\mu )}(\sigma )=
\sum _{j=1}^{m}p_{j}\log \deg (h_{j})-\sum _{j=1}^{m}p_{j}\log p_{j}$, 
where $h_{\mu }(f)$ denotes the metric entropy of $(f, \mu ).$  
Combining this with \cite[Lemma 7.1]{S3}, Lemma~\ref{l:lyapcalc}, and 
that $\pi _{\CCI }:\tilde{J}(f)\rightarrow J(G)$ is a homeomorphism, 
we obtain that 
\begin{equation}
\label{eq:dimhmu}
\dim _{H}(\lambda )=\frac{\sum _{j=1}^{m}p_{j}\log \deg (h_{j})-\sum _{j=1}^{m}p_{j}\log p_{j}}
{\sum _{j=1}^{m}p_{j}\log \deg (h_{j})+\int _{\Gamma ^{\NN }}\Omega (\gamma )\ d\tilde{\tau }(\gamma )},
\end{equation}
where $\dim _{H}(\lambda ):= \inf \{ \dim _{H}(A)\mid A\mbox{ is a Borel subset of } J(G), \lambda (A)=1\} .$ 
Hence, we have proved statement \ref{t:hnondiff4-1}. 

We now prove statement \ref{t:hnondiff5}. 
Suppose that at least one of items (a),(b), and (c) in statement~\ref{t:hnondiff5} holds. 
%either $\sum _{j=1}^{m}p_{j}\log (p_{j}\deg (h_{j}))>0$ or $m=2$. 
We will show the following claim. 

Claim: $u(h,p,\mu )<1.$  

To prove this claim, let $d_{j}:=\deg h_{j}$ for each $j.$ 
Suppose $\sum _{j=1}^{m}p_{j}\log (p_{j}d_{j})>0$. 
%Let $g:{\cal W}_{m}\rightarrow \RR $ be the function defined by 
%$g(p)=\sum _{j=1}^{m}p_{j}(\log p_{j}+\log d_{j}).$ 
%It is easy to see that $\max _{p\in {\cal W}_{m}}g(p)=\log (\sum _{j=1}^{m}\frac{1}{d_{j}})<0.$ 
From statement~\ref{t:hnondiff4-1}, it follows that 
$u(h,p,\mu )\leq (- \sum _{j=1}^{m}p_{j}\log p_{j})/(\sum _{j=1}^{m}p_{j}\log d_{j})<1.$ 
We now suppose $P^{\ast }(G)$ is bounded. 
Then, $\Omega (\gamma )=0$ for each $\gamma \in \Gamma ^{\NN }.$ 
Combining this with the second inequality in statement~\ref{t:hnondiff4-1}, 
we obtain $\sum _{j=1}^{m}p_{j}\log (p_{j}d_{j})>0.$ Thus, 
$u(h,p,\mu )<1.$ 
We now suppose that $m=2$ and  
$P^{\ast }(G)$ is not bounded. Then 
there exists an element $\alpha \in \Gamma ^{\NN }$ such that 
$\Omega (\alpha )>0.$ Since $\gamma \mapsto \Omega (\gamma )$ is continuous on $\Gamma ^{\NN }$, 
statement~\ref{t:hnondiff4-1} implies that 
$u(h,p,\mu )\leq \frac{\log 2}{\log 2+\int _{\Gamma ^{\NN }}\Omega (\gamma ) 
d\tilde{\tau }(\gamma )}<1.$ 
%If $P^{\ast }(G)$ is bounded, 
%then Lemma~\ref{l:pcb2gen} implies that 
%either $\deg (h_{1})>2$ or $\deg (h_{2})>2.$ By Statement~\ref{t:hnondiff4}, 
%it follows that $u(h,p,\mu )<1.$ Therefore, we have proved the above claim. 
%then 
Hence, the above claim holds. 
From the above claim and statements ~\ref{t:hnondiff2}--\ref{t:hnondiff4}, 
we easily obtain that statement~\ref{t:hnondiff5} holds. 

Thus, we have proved Theorem~\ref{t:hnondiff}. 
\qed 

\ 

We now prove Theorem~\ref{t:hdiffornd}. 
We use the following notation.

\noindent {\bf Notation.} 
Let $(h_{1},\ldots ,h_{m})\in (\Rat)^{m}.$ 
We set $\Sigma _{m}:= \{ 1,\ldots ,m\} ^{\NN }$ and  
$\Sigma _{m}^{\ast }:= \bigcup _{j=1}^{\NN }\{ 1,\ldots ,m\} ^{j}$,. 
Moreover, for each $w=(w_{1},\ldots ,w_{k})\in \Sigma _{m}^{\ast }$, 
we set $|w|=k$ and $h_{w}:=h_{w_{k}}\circ \cdots \circ h_{w_{1}}.$

\noindent {\bf Proof of Theorem~\ref{t:hdiffornd}:} 
By Theorem~\ref{t:hnondiff}-\ref{t:hnondiff0}, $G$ is mean stable and $J_{\ker }(G)=\emptyset .$  
Since $G$ is hyperbolic, \cite[Theorem 2.17]{S4} implies that 
$G$ is expanding in the sense of \cite[Definition 3.1]{S6}. 
We use the arguments in \cite{S6}. 
%As in \cite[Remark 2]{S6}, 
%even if we use the Euclidian metric, we obtain the similar results to 
%those in \cite{S6} (replacing the spherical metric by the Euclidean metric). 
%Hence, in the following, we use the Euclidian metric.  
We now prove the following claim. 

Claim 1: Under the assumptions of Theorem~\ref{t:hdiffornd}, 
there exists a $k\in \NN $ and a non-empty open subset $U$ of $\CCI $ 
such that 
$\bigcup _{w:|w|=k}h_{w}^{-1}(U)\subset U$ and 
for each $w,w'\in \{ 1,\ldots, m\} ^{k}$ with $w\neq w'$, 
$h_{w}^{-1}(U)\cap h_{w'}^{-1}(U)=\emptyset .$  

To prove this claim, since $G$ is expanding, there exists a $k\in \NN $ such that 
$\inf _{z\in \tilde{J}(f)}\|(f^{k})'(z)\| _{s}\geq 4.$
By \cite[Theorem 2.4]{HM}, 
we have $J(G)=J(\langle h_{w}\mid |w|=k\rangle )$. Moreover, 
by Lemma~\ref{l:pctjjg}, $\pi _{\CCI }(\tilde{J}(f))=J(G).$ 
Take a number $a>0$ such that 
for each $w\in \{ 1,\ldots ,m\} ^{k}$, 
for each $z\in J(G)$, and for each well-defined inverse branch 
$\zeta : B(z,a)\rightarrow \CCI $ 
of $h_{w}$, $\| \zeta '(x)\| _{s}\leq 1/3$ for each $x\in B(z,a).$    
%$\inf _{z\in h_{w}^{-1}(B(J(G),a))}\| h_{w}'(z)\| _{s}\geq 3.$ 
Let $b>0$ be a number such that 
$$b<\frac{1}{2}\min \{ d(z,z') \mid z\in h_{w}^{-1}(J(G)), z'\in h_{w'}^{-1}(J(G)), 
w,w'\in \{ 1,\ldots ,m\} ^{k}, w\neq w'\} , \mbox{ and } b<a.$$ 
Then $B(h_{w}^{-1}(J(G)),b)\cap B(h_{w'}^{-1}(J(G)),b)=\emptyset $ if 
$|w|=|w'|=k$ and $w\neq w'.$ 
Let $U:=B(J(G),b).$ 
since $h_{w}^{-1}(J(G))\subset J(G)$, 
the above arguments imply that 
$\bigcup _{w:|w|=k}h_{w}^{-1}(U)\subset U$, and 
for each $w,w'\in \{ 1,\ldots m\} ^{k}$ with $w\neq w'$, 
$h_{w}^{-1}(U)\cap h_{w'}^{-1}(U)=\emptyset .$ 
Thus, we have prove Claim 1.

 Let $\Lambda := \{ h_{w}\mid |w|=k\} .$ 
Then $J(\langle \Lambda \rangle )=J(G).$ 
  Let $\overline{f}: \Lambda ^{\NN }\times \CCI \rightarrow 
  \Lambda ^{\NN }\times \CCI $ be the skew product map associated with 
  $\Lambda .$ For each $t\geq 0$, 
  let $L_{t}':C(J(G))\rightarrow C(J(G))$ be the operator 
  defined by $L_{t}'(\varphi )(z)=\sum _{|w|=k}\sum _{h_{w}(y)=z}\varphi (y)\|h_{w}'(y)\| _{s}^{-t}.$ 
By \cite[Theorems 1.1, 1,2, Lemma 4.9]{S6}, 
there exists a unique element $\nu \in {\frak M}_{1}(J(G))$ such that 
$L_{\delta }'^{\ast }\nu =\nu .$ Moreover, 
by \cite[Theorem 1.2]{S6}, $0<H^{\delta }(J(G))<\infty $ and 
$\nu =H^{\delta }/(H^{\delta }(J(G))).$ 
Furthermore, by \cite[Corollary 3.6]{S2}, 
$0<\delta <2.$   
For each $t\geq 0$, 
let $\tilde{L}_{t}: C(\tilde{J}(f))\rightarrow C(\tilde{J}(f))$ be 
the operator defined by 
$\tilde{L}_{t}(\varphi )(z)=\sum _{f(y)=z}\varphi (y)\|f'(y)\|_{s}^{-t}$, and 
let $L_{t}:C(J(G))\rightarrow C(J(G))$ be the operator 
defined by $L_{t}(\varphi )(z)=\sum _{j=1}^{m}\sum _{h_{j}(y)=z}\varphi (y)\|h_{j}'(y)\| _{s}^{-t}.$ 
By \cite[Theorem 1.1, Lemma 3.6, Lemma 4.7]{S6}, 
there exists a $t_{0}\geq 0$ satisfying the following: 
\begin{itemize}
\item[(a)] there exists a unique $\tilde{\nu }_{0}\in {\frak M}_{1}(\tilde{J}(f))$ such that 
$\tilde{L}_{t_{0}}^{\ast }(\tilde{\nu }_{0})=\tilde{\nu }_{0}$; 
\item[(b)] the limits $\tilde{\alpha}_{0}:= \lim _{l\rightarrow \infty }\tilde{L}_{t_{0}}^{l}(1 )\in C(\tilde{J}(f))$ 
and $\alpha _{0} := \lim _{l\rightarrow \infty }L_{t_{0}}^{l}(1)\in C(J(G))$ exist; and 
%where $\B1 $ denotes the constant function taking its value $1$; and 
\item[(c)]$\tilde{\rho }_{0}:= \tilde{\alpha }_{0}\tilde{\nu }_{0}\in {\frak M}_{1}(\tilde{J}(f))$ is $f$-invariant and 
ergodic, and $\min _{z\in J(G)}\alpha _{0} (z)>0.$ 
\end{itemize}
Let $\nu _{0}:= (\pi _{\CCI })_{\ast }(\tilde{\nu }_{0})\in {\frak M}_{1}(J(G)).$ 
Since $(L_{t_{0}})^{k}=L_{t_{0}}'$, 
$L_{t_{0}}'^{\ast }(\nu _{0})=\nu _{0}.$ 
Hence, by \cite[Lemma 4.9]{S6}, we obtain that 
$t_{0}=\delta ,\nu _{0}=\nu $.  
%and if $L_{\delta }^{\ast }(\nu _{1})=\nu _{1}$ 
%for some $\nu _{1}\in  {\frak M}_{1}(J(G))$, then $\nu _{1}=\nu .$  
From these arguments, statements~\ref{t:hdiffornd1}--\ref{t:hdiffornd3} hold. 

 We now prove statement~\ref{t:hdiffornd4}. 
From the above argument, $\tilde{\alpha }_{0}=\tilde{\alpha }$ and 
$\alpha _{0}=\alpha $. Moreover, $\tilde{\rho }:=\tilde{\rho }_{0}$ is $f$-invariant and ergodic. 
From Proposition~\ref{p:erghol}, it follows that 
%for a.e. $z_{0}\in J(G)$ with respect to $H^{\delta }$,  
there exists a Borel subset $A$ of $J(G)$ with $H^{\delta }(A)=H^{\delta }(J(G))$ such that 
for each $z_{0}\in A$ and for each $\varphi \in (\mbox{LS}({\cal U}_{f,\tau }(\CCI )))_{nc}$, 
$\Hol(\varphi , z_{0})=u(h,p,\tilde{\rho }).$ 
Moreover, by \cite[Lemma 4.7]{S6}, 
%we have  
$\alpha \circ \pi _{\CCI }=\tilde{\alpha }.$ Therefore, statement~\ref{t:hdiffornd4} holds. 

Thus, we have proved Theorem~\ref{t:hdiffornd}.   
\qed  

\section{Examples} 
\label{Examples} 
We give some examples to which we can apply Theorem~\ref{kerJthm1}, 
Theorem~\ref{t:mtauspec}, Theorem~\ref{kerJthm2}, Proposition~\ref{p:chtinfty}, 
Theorem~\ref{kerJthm3}, Theorem~\ref{thm:jkreyintj}, Proposition~\ref{p:hyppjke}, 
%Proposition~\ref{p:jnonconst1}, 
Theorem~\ref{t:hnondiff}, Theorem~\ref{t:hdiffornd}, 
Corollary~\ref{c:hypdisjc}, and Theorem~\ref{t:hholder}.  
\begin{prop} 
\label{semihyposcexprop}
%(See \cite{hiroki5}) 
Let $f_{1}\in {\cal P}.$ 
%be a polynomial 
%with $\deg (f_{1})\geq 2$. 
%such that $J(f_{1})$ is connected. 
%Let $K(f_{1})$ be the filled-in Julia set of $f_{1}$ and 
Suppose that {\em int}$(K(f_{1}))$ is not empty. 
Let $b\in \mbox{{\em int}}(K(f_{1}))$ be a point. 
Let $d$ be a positive integer such that 
$d\geq 2.$ Suppose that $(\deg (f_{1}),d)\neq (2,2).$ 
Then, there exists a number $c>0$ such that 
for each $\l \in \{ \l\in \Bbb{C}: 0<|\l |<c\} $, 
setting $f_{\l }=(f_{\l ,1},f_{\l ,2})=
(f_{1},\l (z-b)^{d}+b )$ and $G_{\l }:= \langle f_{1},f_{\l, 2}\rangle $, 
 we have all of the following.
\begin{itemize}
\item[{\em (a)}] 
$f_{\l }$ satisfies the open set condition with 
an open subset $U_{\l }$ of $\CCI $ (i.e., $f_{\l ,1}^{-1}(U_{\l })\cup f_{\l, 2}^{-1}(U_{\l })\subset U_{\l }$ and 
$f_{\l ,1}^{-1}(U_{\l })\cap f_{\l ,2}^{-1}(U_{\l })=\emptyset $), 
$f_{\l ,1}^{-1}(J(G_{\l }))\cap f_{\l, 2}^{-1}(J(G_{\l }))=\emptyset $, 
{\em int}$(J(G_{\l }))=\emptyset $, 
$J_{\ker }(G_{\l })=\emptyset $, 
$G_{\l }(K(f_{1}))\subset K(f_{1})\subset \mbox{{\em int}}(K(f_{\lambda ,2}))$  
and 
$\emptyset \neq K(f_{1})\subset \hat{K}(G_{\l }).$ 
\item[{\em (b)}]
If $K(f_{1})$ is connected, then  
$P^{\ast }(G_{\l })
$ is bounded in $\Bbb{C}$.
%, and $\hat{K}(G_{\lambda })\neq \emptyset $.
\item[{\em (c)}]
If $f_{1}$ is semi-hyperbolic (resp. hyperbolic) and $K(f_{1})$ is connected, 
then 
$G_{\l }$ is semi-hyperbolic (resp. hyperbolic), 
$J(G_{\l } )$ is porous (for the definition of porosity, see \cite{S7}),  and 
$\dim _{H}(J(G_{\l }))<2$.  
\end{itemize} 
\end{prop}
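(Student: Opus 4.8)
\textbf{Proof proposal for Proposition~\ref{semihyposcexprop}.}

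The plan is to analyze the family $f_{\lambda,2}(z)=\lambda(z-b)^{d}+b$ perturbatively as $\lambda\to 0$, exploiting that $b\in\mathrm{int}(K(f_{1}))$ so that $f_{1}$ keeps a whole neighborhood of $b$ inside $K(f_{1})$, while $f_{\lambda,2}$ has $b$ as a superattracting fixed point of local degree $d$ and, for small $\lambda$, a huge filled-in Julia set $K(f_{\lambda,2})$ whose interior swallows $K(f_{1})$. First I would fix a closed disk $\overline{D}=\overline{D(b,r_{0})}\subset\mathrm{int}(K(f_{1}))$ and, enlarging if necessary, also arrange $K(f_{1})\subset D(b,R)$ for some $R$; then I choose $c>0$ so small that for $0<|\lambda|<c$ one has $f_{\lambda,2}(\overline{D(b,R)})\subset D(b,r_{0})\subset\mathrm{int}(K(f_{1}))$ and moreover $f_{\lambda,2}^{-1}(\overline{D(b,R)})\supset \overline{D(b,R)}$ with the relevant inverse branch contracting. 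This immediately gives $f_{1}(K(f_{1}))\subset K(f_{1})$ and $f_{\lambda,2}(K(f_{1}))\subset f_{\lambda,2}(\overline{D(b,R)})\subset K(f_{1})\subset\mathrm{int}(K(f_{\lambda,2}))$, whence $G_{\lambda}(K(f_{1}))\subset K(f_{1})$ and $K(f_{1})\subset\hat K(G_{\lambda})$, so $\hat K(G_{\lambda})\neq\emptyset$. For the open set condition in (a): take $U_{\lambda}$ to be (a neighborhood of) the complement of $\mathrm{int}(K(f_{1}))$ intersected with a large disk, or more precisely the annular region between $J(f_{1})$-side and the outer boundary; the point is that $f_{\lambda,2}^{-1}(U_{\lambda})$ lands deep inside $\mathrm{int}(K(f_{\lambda,2}))$ far from $f_{1}^{-1}(U_{\lambda})$ because $f_{\lambda,2}$ blows the neighborhood of $\infty$ up. I would verify $f_{\lambda,1}^{-1}(U_{\lambda})\cup f_{\lambda,2}^{-1}(U_{\lambda})\subset U_{\lambda}$ and disjointness directly from the geometry; this is the analogue of the construction in \cite{S2} (indeed \cite[Theorem 2.3]{S2} is invoked in Lemma~\ref{l:lsncnonc} and gives $\mathrm{int}(J(G_{\lambda}))=\emptyset$ once the open set / separation condition holds). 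Disjointness of $f_{\lambda,1}^{-1}(J(G_{\lambda}))$ and $f_{\lambda,2}^{-1}(J(G_{\lambda}))$ follows because $f_{\lambda,2}^{-1}(J(G_{\lambda}))\subset \partial K(f_{\lambda,2})$ lies outside the large disk containing $f_{1}^{-1}(J(G_{\lambda}))\subset$ (bounded region), using $J(G_{\lambda})\subset\overline{D(b,R)}$ and backward self-similarity $J(G_{\lambda})=f_{\lambda,1}^{-1}(J(G_{\lambda}))\cup f_{\lambda,2}^{-1}(J(G_{\lambda}))$ (Lemma~\ref{l:bss}). Then $J_{\ker}(G_{\lambda})=\emptyset$ follows from Lemma~\ref{l:disjker}.

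Next, part (b): assume $K(f_{1})$ is connected. The postcritical set $P(G_{\lambda})=\overline{G_{\lambda}^{\ast}(\{\text{critical values of }f_{1}\}\cup\{\text{critical values of }f_{\lambda,2}\})}$ by Remark~\ref{r:pg}. The only finite critical value of $f_{\lambda,2}$ is $b$ itself (the critical point $z=b$ of order $d$ maps to $b$), which is a fixed point; the critical values of $f_{1}$ all lie in $K(f_{1})$ since $K(f_{1})$ connected means the critical points of $f_{1}$ are in $K(f_{1})$ hence their images are too. Since $G_{\lambda}(K(f_{1}))\subset K(f_{1})$ and $b\in K(f_{1})$, the whole forward orbit of this critical value set stays in the compact set $K(f_{1})\subset\CC$, so $P^{\ast}(G_{\lambda})=P(G_{\lambda})\setminus\{\infty\}\subset K(f_{1})$ is bounded in $\CC$.

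For part (c): if $f_{1}$ is semi-hyperbolic (resp. hyperbolic) and $K(f_{1})$ connected, then by (b) $P^{\ast}(G_{\lambda})\subset K(f_{1})$, and I must check $P(G_{\lambda})\subset F(G_{\lambda})$ (hyperbolic case) or $UH(G_{\lambda})\subset F(G_{\lambda})$ (semi-hyperbolic case). The key is that $\mathrm{int}(K(f_{1}))\subset\mathrm{int}(\hat K(G_{\lambda}))\subset F(G_{\lambda})$ by Lemma~\ref{ksetfundlem1}, so the part of $P^{\ast}(G_{\lambda})$ in $\mathrm{int}(K(f_{1}))$ is harmless; the delicate part is the points of $P^{\ast}(G_{\lambda})$ on $\partial K(f_{1})\subset J(f_{1})$. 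Here I would use that semi-hyperbolicity (resp. hyperbolicity) of $f_{1}$ controls $UH(\langle f_{1}\rangle)$ (resp. $P(\langle f_{1}\rangle)$) and show it is inherited by $G_{\lambda}$: a point $z$ with a bad orbit under $G_{\lambda}$ either eventually applies $f_{\lambda,2}$ (which then maps it into $D(b,r_{0})\subset\mathrm{int}(K(f_{1}))\subset F(G_{\lambda})$, so it cannot be in $UH(G_{\lambda})$ by forward invariance-type arguments on $UH$, cf. the proof scheme around Lemma~\ref{l:shatt1}), or only applies $f_{1}$ forever, in which case the semi-hyperbolicity of $f_{1}$ applies. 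Once $G_{\lambda}$ is shown semi-hyperbolic with $F(G_{\lambda})\neq\emptyset$, porosity of $J(G_{\lambda})$ and $\dim_{H}(J(G_{\lambda}))<2$ follow from the cited results on semi-hyperbolic rational semigroups in \cite{S7} (porosity) together with \cite[Corollary 3.6]{S2} or \cite[Theorem 1.16]{S7} for the Hausdorff dimension bound; in the hyperbolic case one can alternatively cite \cite[Corollary 3.6]{S2} directly since hyperbolicity plus the separation condition $f_{\lambda,1}^{-1}(J(G_{\lambda}))\cap f_{\lambda,2}^{-1}(J(G_{\lambda}))=\emptyset$ is exactly its hypothesis.

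The main obstacle I expect is the bookkeeping in (c): making precise how semi-hyperbolicity of the single map $f_{1}$ plus the "absorbing" behavior of $f_{\lambda,2}$ into $\mathrm{int}(K(f_{1}))$ combine to give $UH(G_{\lambda})\subset F(G_{\lambda})$. One must handle sequences of generators that mix $f_{1}$ and $f_{\lambda,2}$ infinitely often versus eventually-constant-$f_{1}$ sequences, and show that in the first case the orbit is eventually trapped in the Fatou set so the point contributes nothing to $UH(G_{\lambda})$, while invoking the definition of $SH_{N}$ carefully (uniform bound on local degree of inverse branches over \emph{all} $g\in G_{\lambda}$). The hypothesis $(\deg(f_{1}),d)\neq(2,2)$ and the smallness of $|\lambda|$ enter precisely to guarantee the open set condition can be realized with genuinely disjoint pieces and to keep $b$ attracting with the right contraction; these are the quantitative inputs that must be tracked through the degree/branching estimates. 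I would also double-check, using \cite[Corollary 3.6]{S2}, that the combination "$G_{\lambda}$ hyperbolic (or semi-hyperbolic) $+$ separation condition $+$ $\mathrm{int}(J)=\emptyset$" is exactly in the form needed, so that the dimension and porosity conclusions are immediate citations rather than new arguments.
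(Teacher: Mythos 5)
Your strategy is the same as the paper's: normalize so that $f_{\lambda ,2}$ is essentially $z\mapsto \lambda z^{d}$ fixing $b$, take $U_{\lambda }=\mbox{int}(K(f_{\lambda ,2}))\setminus K(f_{1})$ as the open set, deduce the separation condition and $J_{\ker }(G_{\lambda })=\emptyset $ from the open set condition and Lemma~\ref{l:disjker}, get (b) from $G_{\lambda }(K(f_{1}))\subset K(f_{1})$ together with connectedness of $K(f_{1})$, and get (c) by combining semi-hyperbolicity of $f_{1}$ with $P^{\ast }(G_{\lambda })\subset K(f_{1})$ and \cite[Lemma 1.10]{S4}, then \cite[Theorem 1.25]{S7} for porosity and the dimension bound. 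However, the crux of part (a) is exactly the verification you defer to ``directly from the geometry.'' After normalizing $b=0$ and the leading coefficient of $f_{1}$ to $1$, the set $f_{\lambda ,2}^{-1}(U_{\lambda })$ lies outside the circle $|z|=(r/|\lambda |)^{1/d}$ (the $f_{\lambda ,2}$-preimage of a small circle inside $K(f_{1})$), while $f_{1}^{-1}(U_{\lambda })\subset f_{1}^{-1}(K(f_{\lambda ,2}))$ lies inside a disk of radius comparable to $(1/|\lambda |)^{1/(d_{1}(d-1))}$ because $f_{1}(z)=z^{d_{1}}(1+o(1))$, where $d_{1}=\deg (f_{1})$. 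Disjointness for all small $|\lambda |$ therefore requires $1/(d_{1}(d-1))<1/d$, i.e.\ $(d_{1}-1)(d-1)>1$, which is precisely the hypothesis $(\deg (f_{1}),d)\neq (2,2)$. This exponent comparison is the whole content of (a) and is where the proposition could fail; it has to be written down, as the paper does with its explicit inequality defining $c_{0}$.

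There is also a genuine error in your justification of $f_{\lambda ,1}^{-1}(J(G_{\lambda }))\cap f_{\lambda ,2}^{-1}(J(G_{\lambda }))=\emptyset $: both containments you invoke, $J(G_{\lambda })\subset \overline{D(b,R)}$ and $f_{\lambda ,2}^{-1}(J(G_{\lambda }))\subset \partial K(f_{\lambda ,2})$, are false. Indeed $J(G_{\lambda })\supset J(f_{\lambda ,2})=\partial K(f_{\lambda ,2})$, a circle of radius $(1/|\lambda |)^{1/(d-1)}$ which escapes every fixed disk as $\lambda \rightarrow 0$, and $f_{\lambda ,2}^{-1}(J(f_{1}))$ sits well in the interior of $K(f_{\lambda ,2})$. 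The correct localization is $J(G_{\lambda })\subset \overline{U_{\lambda }}=K(f_{\lambda ,2})\setminus \mbox{int}(K(f_{1}))$ (its complement lies in $F(G_{\lambda })$), and the separation of the Julia-set preimages then follows from the disjointness of $f_{1}^{-1}(\overline{U_{\lambda }})$ and $f_{\lambda ,2}^{-1}(\overline{U_{\lambda }})$ — that is, from the same quantitative estimate applied to closures — not from $J(G_{\lambda })$ being confined near $J(f_{1})$. With these two points repaired, parts (b) and (c) go through as you describe and with the citations the paper itself uses.
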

\begin{proof} 
Conjugating $f_{1}$ by a M\"{o}bius transformation, 
we may assume that $b=0$ and the coefficient 
of the highest degree term of $f_{1}$ is equal to $1.$ 
%Suppose $f_{1}$ is semi-hyperbolic. 
% For each $r>0$, we denote by $D(0,r)$ the Euclidean disc with 
 %radius $r$ and center $0.$ 
Let $r>0$ be a 
 number such that $\overline{B(0,r)}\subset \mbox{int}(K(f_{1})).$ 
We set $d_{1}:=\deg (f_{1}).$ 
 Let $\alpha >0$ be a number. 
Since $d\geq 2$ and $(d,d_{1})\neq (2,2)$, 
it is easy to see that 
$(\frac{r}{\alpha })^{\frac{1}{d}}>
2\left(2(\frac{1}{\alpha })
^{\frac{1}{d-1}}\right)^{\frac{1}{d_{1}}}
$ if and only if 
\begin{equation}
\label{Contproppfeq1}
\log \alpha <
\frac{d(d-1)d_{1}}{d+d_{1}-d_{1}d}
( \log 2-\frac{1}{d_{1}}\log \frac{1}{2}-\frac{1}{d}\log r) .
\end{equation} 
We set 
\begin{equation}
\label{Contproppfeq2}
c_{0}:=\exp \left(\frac{d(d-1)d_{1}}{d+d_{1}-d_{1}d}
( \log 2-\frac{1}{d_{1}}\log \frac{1}{2}-\frac{1}{d}\log r) \right)
\in (0,\infty ).
\end{equation}
 
Let $0<c<c_{0}$ be a small number and let $\l \in \Bbb{C} $ 
be a number with $0<|\l |<c.$ 
Put $f_{\l ,2}(z)=\l z^{d}.$ 
Then, we obtain $K(f_{\l ,2})=\{ z\in \Bbb{C} \mid 
|z|\leq (\frac{1}{|\l |})^{\frac{1}{d-1}}\} $ and 
$$f_{\l,2}^{-1}(\{ z\in \Bbb{C} \mid  |z|=r\} )=
\{ z\in \Bbb{C} \mid |z|=(\frac{r}{|\l |})^{\frac{1}{d}}\} .$$
Let 
$D_{\l }:=\overline{B(0,2(\frac{1}{|\l |})^{\frac{1}{d-1}})}.$ 
Since $f_{1}(z)=z^{d_{1}}(1+o(1))\ (z\rightarrow \infty )$,  
it follows that if $c$ is small enough, then 
for any $\l \in \Bbb{C} $ with $0<|\l |<c$, 
%$h^{-1}\left( \overline{D(0,2(\frac{1}{|a|})^{\frac{1}{d-1}}})\right) \subset 
$$f_{1}^{-1}(D_{\l })\subset 
\left\{ z\in \Bbb{C} \mid 
|z|\leq 2\left( 2(\frac{1}{|\l |})^{\frac{1}{d-1}}\right) 
^{\frac{1}{d_{1}}}\right\} .$$  
This implies that  
\begin{equation}
\label{Contproppfeq3}
%h^{-1}\left( \overline{D(0,2(\frac{1}{|a|})^{\frac{1}{d-1}}})\right) \subset 
f_{1}^{-1}(D_{\l })\subset f_{\l ,2}^{-1}(\{ z\in \Bbb{C} \mid |z|<r\} ).
\end{equation} 
Hence, 
setting $U_{\l }:=(\mbox{int}(K(f_{\l ,2})))\setminus K(f_{1})$, 
$f_{1}^{-1}(U_{\l })\cup f_{\l ,2}^{-1}(U_{\l })\subset U_{\l }$ and 
$f_{1}^{-1}(\overline{U_{\l }})\cap f_{\l, 2}^{-1}(\overline{U_{\l }})=\emptyset $. 
%Furthermore, since $f_{1}$ is semi-hyperbolic, 
%$\oc \setminus K(f_{1})$ is a John domain (see \cite{CJY}). 
%Hence, $U_{\l }$ satisfies (osc3).  
%Therefore, $G_{\l }$ satisfies the nice open set condition 
%with $U_{\l }.$ 
We have $J(G_{\l })\subset \overline{U_{\l }}
\subset K(f_{\l ,2})\setminus \mbox{int}(K(f_{1})). $ 
In particular, 
$f_{\l, 1}^{-1}(J(G_{\l }))\cap f_{\l, 2}^{-1}(J(G_{\l }))=\emptyset $ and 
$(\mbox{int}(K(f_{1})))\cup (\CCI \setminus K(f_{\l ,2}))\subset 
F(G_{\l }).$ By \cite[Theorem 2.3]{S2}, int$(J(G_{\l }))=\emptyset .$ 
Moreover, by Lemma~\ref{l:disjker}, we obtain that $J_{\ker }(G_{\l })=\emptyset .$ 
Furthermore, (\ref{Contproppfeq3}) implies that 
$f_{\l ,2}(K(f_{1}))\subset \mbox{int}(K(f_{1})).$ 
Thus, $G_{\l }(K(f_{1}))\subset K(f_{1})\subset \mbox{int}(K(f_{\lambda ,2}))$ and 
$\emptyset \neq K(f_{1})\subset \hat{K}(G_{\l }).$ 

We now assume that $K(f_{1})$ is connected. 
Then we have $P^{\ast }(G_{\l })  
= \bigcup _{g\in G_{\l }^{\ast }}
g(CV^{\ast }(f_{1})\cup CV^{\ast }(f_{\l, 2}))
\subset 
K(f_{1})$, 
where $CV^{\ast }(\cdot )$ denotes the set of 
all critical values in $\Bbb{C}.$ Hence, 
$P^{\ast }(G_{\l })$ is bounded in $\Bbb{C}.$ 

We now suppose that $f_{1}$ is semi-hyperbolic and $K(f_{1})$ is connected. 
Then  
there exist an $N\in \NN $ and a $\delta _{1}>0$ such that 
for each $x\in J(f_{1})$ and for each $n\in \NN $, 
$\deg (f_{1}^{n}:V\rightarrow B(x,\delta _{1}))\leq N$ for each 
connected component $V$ of $f_{1}^{-n}(B(x,\delta _{1})).$ 
Moreover, $f_{\l, 2}^{-1}(J(f_{1}))\cap K(h_{1})=\emptyset $ and 
so $f_{\l ,2}^{-1}(J(f_{1}))\subset \CCI \setminus P(G_{\l }). $ 
From these arguments and \cite[Lemma 1.10]{S4}, 
it follows that there exists a $0<\delta _{2} (<\delta _{1})$ such that 
for each $x\in J(f_{1})$ and each $g\in G_{\l }$, 
$\deg (g:V\rightarrow B(x,\delta _{2}))\leq N$ for each connected component $V$ of 
$g^{-1}(B(x,\delta _{2})).$ 
Since $P^{\ast }(G_{\l })\subset K(f_{1})$ again, 
we obtain that there exists a $0<\delta _{3} (<\delta _{2} )$ such that 
for each $x\in J(G_{\l })$ and each $g\in G_{\l }$, 
$\deg (g:V\rightarrow B(x,\delta _{3}))\leq N$ for each connected component 
$V$ of $g^{-1}(B(x,\delta _{3})).$ Thus, $G_{\l }$ is semi-hyperbolic. 
Since $J(G_{\l })\subset f_{1}^{-1}(\ov{U_{\l }})\cup f_{\l ,2}^{-1}(\overline{U_{\l }})
\subsetneqq \ov{U_{\l }}$, 
%\cite{hiroki4} 
\cite[Theorem 1.25]{S7} implies that $J(G_{\l })$ is porous and 
$\dim _{H} (J(G_{\l }))<2.$ 
%Moreover, by 
%Theorem~\ref{Theorem A}, 
%we have 
%$h(f_{\l })=\dim _{H}(J(G_{\l })).$

 We now suppose that $f_{1}$ is hyperbolic and $K(f_{1})$ is connected. 
Then we may assume that the above $N$ is equal to $1.$ Therefore, 
$G_{\l }$ is hyperbolic. 
 
Thus we have proved our proposition. 
\end{proof}

\begin{ex}[Devil's coliseum] 
\label{ex:dc1}
Let $g_{1}(z):=z^{2}-1, g_{2}(z):=z^{2}/4, h_{1}:=g_{1}^{2},$ and  $h_{2}:=
g_{2}^{2}.$ Let $G=\langle h_{1},h_{2}\rangle $ and $\tau := \sum _{i=1}^{2}\frac{1}{2}\delta _{h_{i}}.$ 
Then it is easy to see that 
setting $A:= K(h_{2})\setminus D(0,0.4)$, 
we have 
$\overline{D(0,0.4)}\subset \mbox{int}(K(h_{1}))$, $h_{2}(K(h_{1}))\subset \mbox{int}(K(h_{1}))$,  
$h_{1}^{-1}(A)\cup h_{2}^{-1}(A)\subset A$, and $h_{1}^{-1}(A)\cap h_{2}^{-1}(A)=\emptyset .$ 
Therefore $h_{1}^{-1}(J(G))\cap h_{2}^{-1}(J(G))=\emptyset $ and $\emptyset \neq K(h_{1})\subset \hat{K}(G).$ 
 Moreover, using the argument in the proof of 
Proposition~\ref{semihyposcexprop}, we obtain that $G$ is hyperbolic. 
By Lemma~\ref{l:disjker}, $J_{\ker}(G)=\emptyset .$ 
By Theorem~\ref{kerJthm2} and Lemma~\ref{l:lsncnonc}, 
%Proposition~\ref{p:jnonconst1}, 
we obtain that $T_{\infty ,\tau }$ is continuous on $\CCI $ and the set of varying points of 
$T_{\infty ,\tau }$ is equal to $J(G).$ Moreover, by Theorem~\ref{t:hnondiff}, 
$\dim _{H}(J(G))<2$ and for each non-empty open subset $U$ of $J(G)$ there exists an uncountable dense 
subset $A_{U}$ of $U$ such that for each $z\in A_{U}$, 
$T_{\infty ,\tau }$ is not differentiable at $z.$ See Figures~\ref{fig:dcjulia}, \ref{fig:dcgraphgrey2}, 
and \ref{fig:dcgraphudgrey2}.  $T_{\infty ,\tau }$ is called a devil's coliseum. 
It is a complex analogue of the devil's staircase. 
\end{ex}

\begin{figure}[htbp]
%\begin{minipage}{.50\linewidth}
\caption{The Julia set of $G=\langle h_{1}, h_{2}\rangle $, where  
$g_{1}(z):=z^{2}-1, g_{2}(z):=z^{2}/4, h_{1}:=g_{1}^{2}, h_{2}:=
g_{2}^{2}.$ 
%$G$ satisfies the assumption of Theorem~\ref{gdisjker}. 
%Hence 
We have $J_{\ker }(G)=\emptyset $ and $\dim _{H}(J(G))<2.$\ }
\ \ \ \ \ \ \ \ \ \ \ \ \ \ \ \ \ \ \ \ \ \ \ \ \ \ \ \ \ \ \ 
\ \ \ \ \ \ \ \ \ \ \ \ \ \ \ \ \ \ \ \ 
\includegraphics[width=3cm,width=3cm]{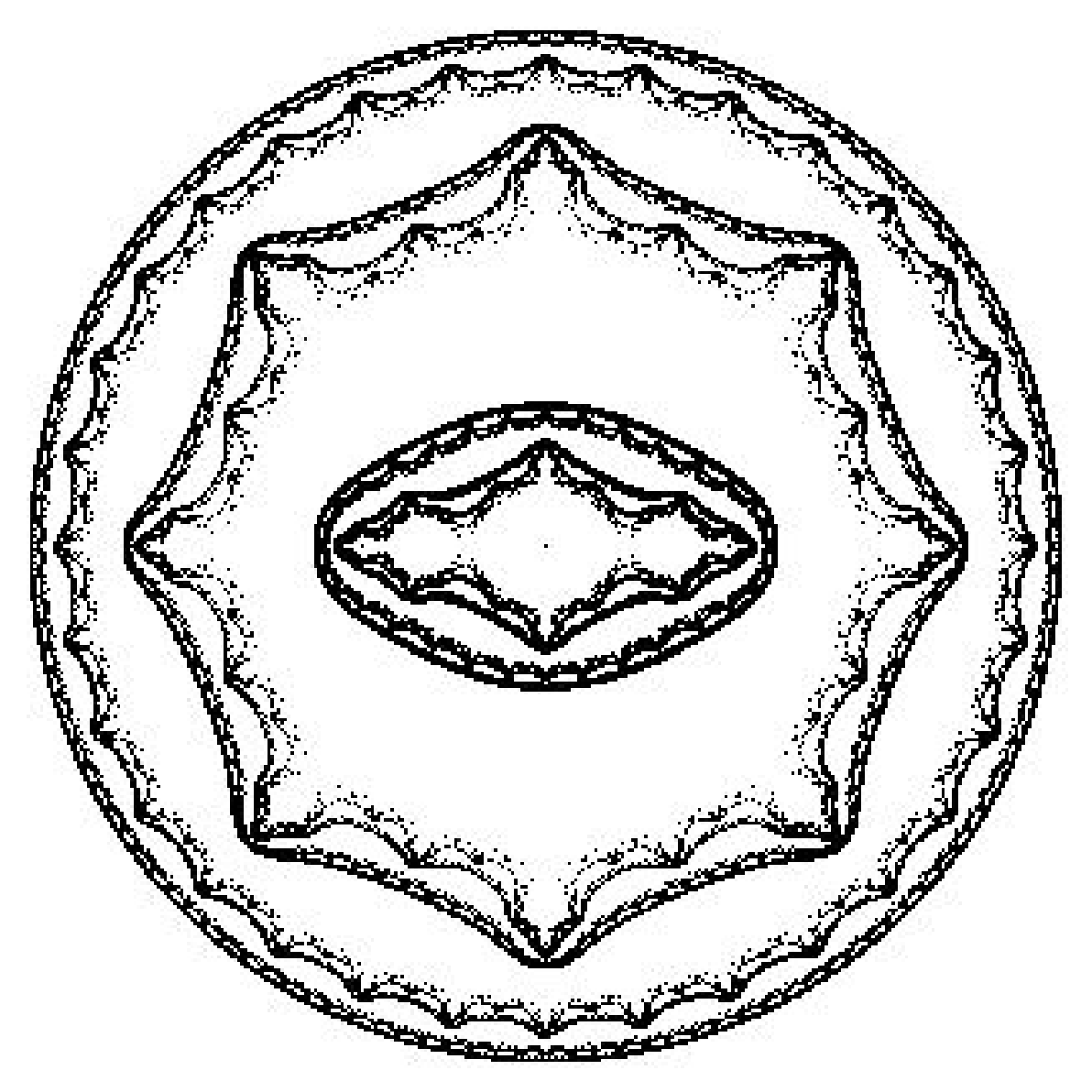}
\label{fig:dcjulia}
\end{figure}
\begin{figure}[htbp]
\caption{The graph of $T_{\infty ,\tau }$, where $\tau =\sum _{i=1}^{2}\frac{1}{2}\delta _{h_{i}}$ with the same 
$h_{i}$ as in Figure~\ref{fig:dcjulia}. 
$T_{\infty ,\tau }$ is continuous on $\CCI .$ The set of varying points of $T_{\infty, \tau }$ is 
equal to $J(G)$ in Figure~\ref{fig:dcjulia}. 
 A``devil's coliseum'' (A complex analogue of 
the devil's staircase). }
\ \ \ \ \ \ \ \ \ \ \ \ \ \ \ \ \ \ \ \ \ \ \ \ \ \ \ \ \ \ \ \ 
\ \ \ \ \ \ \ \ \ \ \ \ \ \ 
\includegraphics[width=4cm,width=4cm]{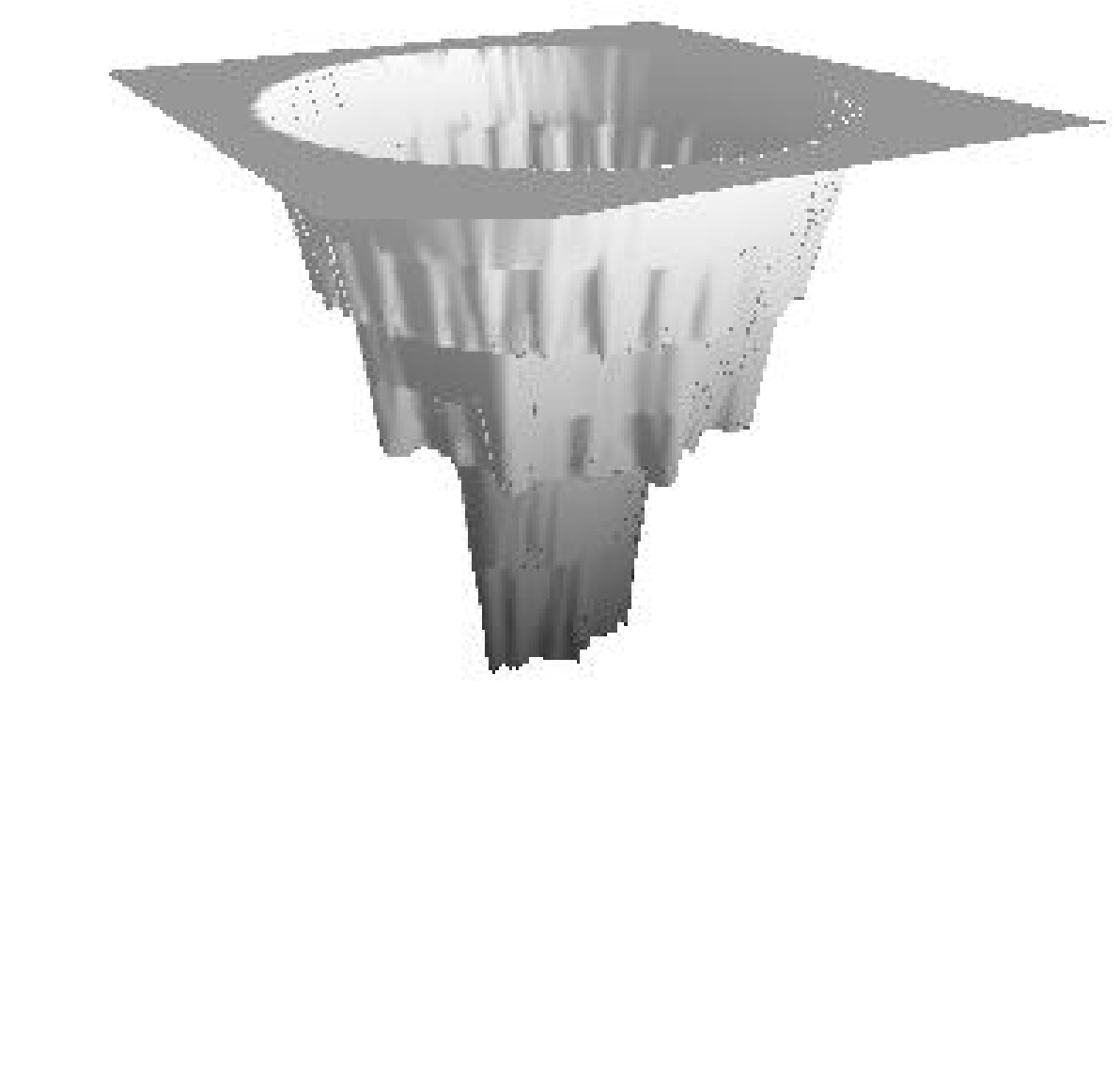}
\label{fig:dcgraphgrey2}
\end{figure}
\begin{figure}[htbp]
\caption{Figure \ref{fig:dcgraphgrey2} upside down. A ``fractal wedding cake''. }
%g_{2}^{2}.$ }
 \ \ \ \ \ \ \ \ \ \ \ \ \ \ \ \ \ \ \ \ \ \ \ \ \ \ \ \ \ \ \ \ \ \ 
\ \ \ \ \ \ \ \ \ \  
\includegraphics[width=4.6cm,width=4.6cm]{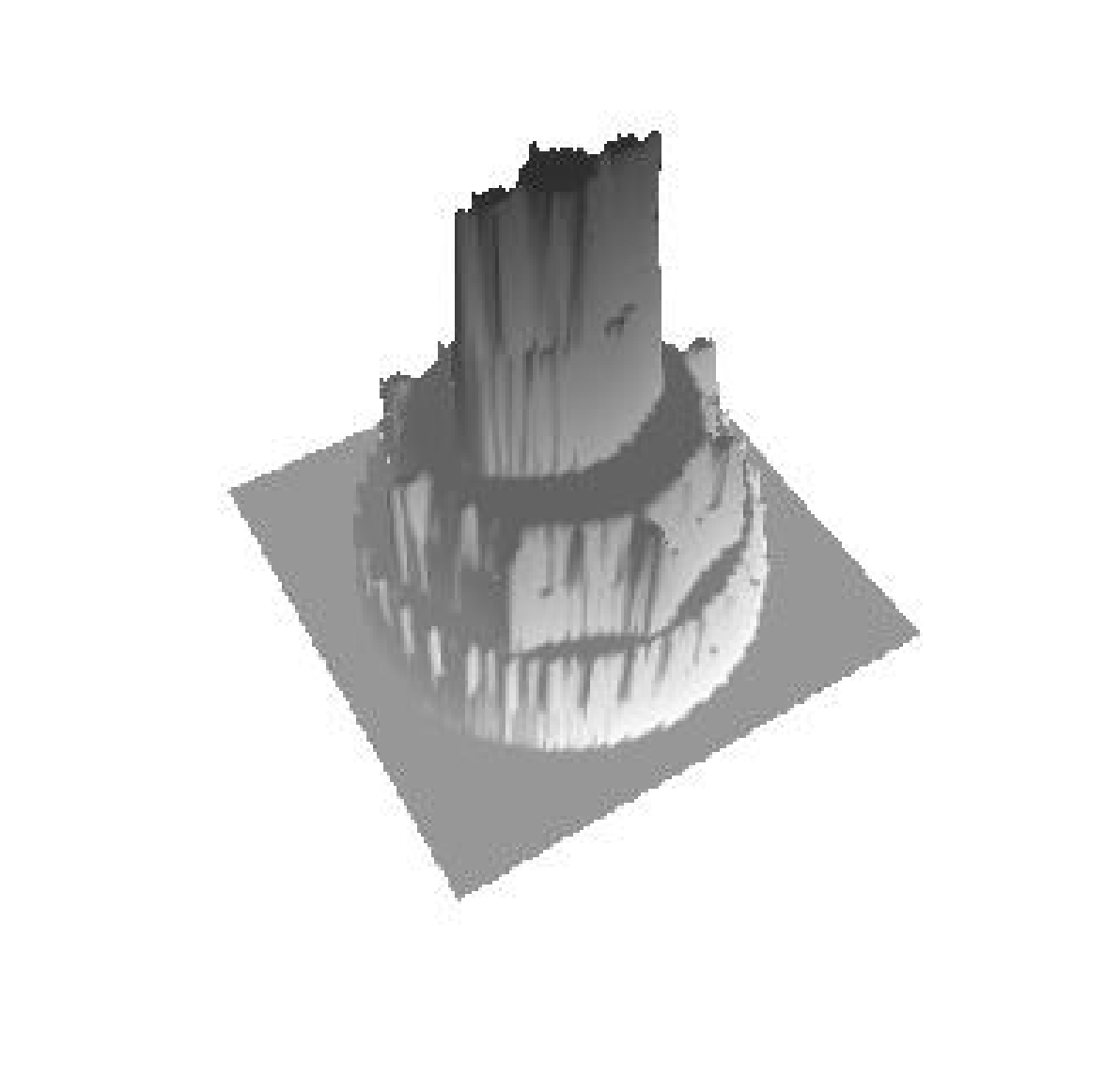}
\label{fig:dcgraphudgrey2}
\end{figure}
%\newpage 
%\begin{figure}
%\caption{The graph of $z\mapsto \frac{\partial T}{\partial p}
%(h_{1},h_{2},1/2,z)$. A complex analogue of the Takagi function.}
%\ \ \ \ \ \ \ \ \ \ \ \ \ \ \ \ \ \ \ \ \ \ \ \ \ \ \ \ \ 
%\includegraphics[width=5cm,width=5cm]{ctgraphgrey1.eps}
%\label{fig:ctgraphgrey1}
%\end{figure}
We now present a way to construct examples of $(h_{1},h_{2})\in {\cal P}^{2}$ 
such that $G=\langle h_{1},h_{2}\rangle $ is hyperbolic, $\bigcap _{j=1}^{2}h_{j}^{-1}(J(G))=\emptyset $, 
and $\hat{K}(G)\neq \emptyset .$ 
\begin{prop}
\label{p:hyphigh}
Let $g_{1},g_{2}\in {\cal P}$ be hyperbolic. 
Suppose that $(J(g_{1})\cup J(g_{2}))\cap (P(g_{1})\cup P(g_{2}))=\emptyset $, 
$K(g_{1})\subset \mbox{{\em int}}(K(g_{2}))$, and the union $A$ of attracting cycles 
of $g_{2}$ in $\CC $ is included in {\em int}$(K(g_{1})).$ Then, there exists an $m\in \NN $ 
such that for each $n\in \NN $ with $n\geq m$, 
setting $h_{i,n}=g_{i}^{n}$ and $G_{n}=\langle h_{1,n}, h_{2,n}\rangle $, we have that 
$G_{n}$ is hyperbolic, $h_{1,n}^{-1}(J(G_{n}))\cap h_{2,n}^{-1}(J(G_{n}))=\emptyset $, 
and $\emptyset \neq K(g_{1})\subset \hat{K}(G_{n}).$  
\end{prop}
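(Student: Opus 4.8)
\textbf{Proof proposal for Proposition~\ref{p:hyphigh}.}

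The plan is to show that for $n$ large the iterates $h_{i,n}=g_i^n$ form a family satisfying the hypotheses of Proposition~\ref{semihyposcexprop}-style separation: namely that $J(G_n)$ lands in an annular region between $K(g_1)$ and $\partial K(g_2)$, that the two preimages of $J(G_n)$ under $h_{1,n}$ and $h_{2,n}$ are disjoint, and that $K(g_1)$ is forward invariant under $G_n$. First I would record the elementary semigroup facts that are independent of $n$: since $g_1,g_2$ are single maps, $J(h_{i,n})=J(g_i)$ and $K(h_{i,n})=K(g_i)$, and $P(\langle h_{i,n}\rangle)=P(g_i)$. The hyperbolicity hypothesis $(J(g_1)\cup J(g_2))\cap(P(g_1)\cup P(g_2))=\emptyset$ is what will ultimately give hyperbolicity of $G_n$ once we know $P(G_n)$ stays in the Fatou set, using the characterization $P(G)=\overline{G^\ast(\bigcup_{h\in\Gamma}\{\text{critical values of }h\})}$ from Remark~\ref{r:pg}.

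Next I would set up the invariant region. Let $A$ be the union of attracting cycles of $g_2$ in $\CC$, which by hypothesis lies in $\mathrm{int}(K(g_1))$. Because $g_2$ is hyperbolic, every point of $K(g_2)$ is attracted to $A$ under iteration of $g_2$; in particular, for any compact neighborhood $N$ of $A$ inside $\mathrm{int}(K(g_1))$ there is $m_1$ with $g_2^{m_1}(K(g_2))\subset N\subset \mathrm{int}(K(g_1))$ (using that $K(g_2)$ is compact and the attraction is uniform on compacts avoiding $J(g_2)$, which it does since $K(g_2)\cap J(g_2)=\partial K(g_2)$ needs a little care — actually one should shrink to a forward-invariant compact set; here I would use that $K(g_2)\setminus J(g_2)$ is in the immediate basin structure and apply a standard hyperbolic-dynamics attraction estimate on $K(g_2)$). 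Hence for $n\ge m_1$, $h_{2,n}(K(g_1))\subset g_2^n(K(g_2))\subset \mathrm{int}(K(g_1))$, and trivially $h_{1,n}(K(g_1))=g_1^n(K(g_1))\subset K(g_1)$. So $G_n(K(g_1))\subset K(g_1)$, giving $\emptyset\ne K(g_1)\subset \hat K(G_n)$ via Definition~\ref{d:sfj}. This also shows $\mathrm{int}(K(g_1))\subset F(G_n)$.

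Then I would handle the separation of preimages and hyperbolicity. Set $U:=\mathrm{int}(K(g_2))\setminus K(g_1)$. I claim that for $n$ large, $h_{1,n}^{-1}(U)\cup h_{2,n}^{-1}(U)\subset U$ and $h_{1,n}^{-1}(U)\cap h_{2,n}^{-1}(U)=\emptyset$ — this is the heart of the argument and I expect it to be the main obstacle. For $h_{2,n}$: since $g_2^n(K(g_1))\subset\mathrm{int}(K(g_1))$ for $n\ge m_1$, points mapping into $U$ (hence outside $K(g_1)$) cannot come from $K(g_1)$, and since $g_2(\CCI\setminus K(g_2))\subset \CCI\setminus K(g_2)$, $h_{2,n}^{-1}(\mathrm{int}(K(g_2)))\subset \mathrm{int}(K(g_2))$; combining, $h_{2,n}^{-1}(U)\subset U$. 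For $h_{1,n}$: $g_1$ being hyperbolic with $K(g_1)\subset\mathrm{int}(K(g_2))$, and $g_1^n$ contracting toward $K(g_1)$ in a neighborhood, one gets $h_{1,n}^{-1}(\overline{K(g_2)^c})$... — more precisely I would choose a neighborhood $V\supset K(g_1)$ with $\overline V\subset\mathrm{int}(K(g_2))$ and $g_1(\overline V)\subset V$ (possible by hyperbolicity of $g_1$), so that $h_{1,n}^{-1}(\CCI\setminus V)$ grows away from $K(g_1)$; picking $n\ge m_2$ so that $g_1^n(\overline{U})$ is contained deep inside $K(g_2)^c$-side... The disjointness then follows because $h_{1,n}^{-1}(U)$ accumulates only on $J(g_1)\subset\mathrm{int}(K(g_1))$-side while $h_{2,n}^{-1}(U)$ accumulates only on $J(g_2)\subset\partial K(g_2)$-side, and these are separated by the nonempty open region between $\partial K(g_1)$ and $J(g_2)$. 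Granting this, $J(G_n)\subset\overline U$ by Montel (the complement is in $F(G_n)$), so $J(G_n)\subset \overline{K(g_2)}\setminus\mathrm{int}(K(g_1))$, whence $h_{1,n}^{-1}(J(G_n))$ and $h_{2,n}^{-1}(J(G_n))$ are disjoint, being contained in the respective disjoint pieces $h_{1,n}^{-1}(\overline U)$ and $h_{2,n}^{-1}(\overline U)$. Finally, hyperbolicity of $G_n$: $P(G_n)=\overline{G_n^\ast(CV^\ast(h_{1,n})\cup CV^\ast(h_{2,n})\cup\{\infty\})}$; the finite critical values of $h_{i,n}$ are iterates $g_i^k$ of critical values of $g_i$, so lie in $P(g_1)\cup P(g_2)$ up to finitely many points, and by the hypothesis $(P(g_1)\cup P(g_2))\cap(J(g_1)\cup J(g_2))=\emptyset$ together with the attraction statements above, the whole $G_n$-orbit of these critical values stays in a compact subset of $F(G_n)$ (critical values of $g_1$ get pulled toward attracting cycles of $g_1$ inside $\mathrm{int}(K(g_1))\subset F(G_n)$; critical values of $g_2$ similarly, noting $g_2$'s attracting cycles lie in $\mathrm{int}(K(g_1))$ and the finite critical points not escaping to $\infty$ are in $K(g_2)$ hence attracted into $\mathrm{int}(K(g_1))$); and $\infty\in F(G_n)$). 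Hence $P(G_n)\subset F(G_n)$, i.e.\ $G_n$ is hyperbolic, which completes the proof with $m:=\max(m_1,m_2)$.

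The step I expect to be genuinely delicate is the simultaneous verification that $h_{1,n}^{-1}(U)\subset U$ and the disjointness $h_{1,n}^{-1}(U)\cap h_{2,n}^{-1}(U)=\emptyset$: it requires a careful choice of nested neighborhoods of $K(g_1)$ and of $\partial K(g_2)$ and quantitative control of how fast $g_1^n$ (resp.\ $g_2^n$) pushes points toward $K(g_1)$ (resp.\ away from $K(g_2)$ and toward $J(g_2)$), which is where the hyperbolicity of the individual maps and the inclusion $K(g_1)\subset\mathrm{int}(K(g_2))$ with the attracting-cycle condition all get used together. Everything else is bookkeeping with the standard facts on $J$, $K$, $P$ for iterated single maps and the backward self-similarity $J(G_n)=h_{1,n}^{-1}(J(G_n))\cup h_{2,n}^{-1}(J(G_n))$ from Lemma~\ref{l:bss}.
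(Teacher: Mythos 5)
Your overall strategy is viable and close in spirit to the paper's, but as written the proof has concrete gaps. First, the intermediate claim that $g_{2}^{m_{1}}(K(g_{2}))\subset N$ for a compact neighborhood $N$ of $A$ inside $\mathrm{int}(K(g_{1}))$ is false: $J(g_{2})\subset K(g_{2})$ is completely invariant, so $g_{2}^{n}(K(g_{2}))\supset J(g_{2})$ for every $n$ and can never be absorbed into $\mathrm{int}(K(g_{1}))$. What you actually need, and what is true, is only $g_{2}^{n}(K(g_{1}))\subset \mathrm{int}(K(g_{1}))$ for large $n$; this holds because $K(g_{1})$ is a compact subset of $\mathrm{int}(K(g_{2}))=K(g_{2})\cap F(g_{2})$, on which $g_{2}^{n}\to A$ uniformly by hyperbolicity of $g_{2}$. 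Second, and more seriously, the step you yourself call the heart of the argument --- the invariance $h_{1,n}^{-1}(U)\subset U$ and the disjointness $h_{1,n}^{-1}(U)\cap h_{2,n}^{-1}(U)=\emptyset$ --- is left as a sketch trailing off in ellipses, so the proof is incomplete. It can be closed: $\CCI \setminus \mathrm{int}(K(g_{2}))$ is a compact subset of $\CCI \setminus K(g_{1})$, on which $g_{1}^{n}\to \infty$ uniformly, so for large $n$ its image misses $K(g_{2})\supset U$ and hence $g_{1}^{-n}(U)\subset \mathrm{int}(K(g_{2}))$; together with $g_{1}^{-n}(U)\cap K(g_{1})=\emptyset$ this gives $h_{1,n}^{-1}(U)\subset U$. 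For the disjointness, cover $\overline{U}$ by the compact set $\overline{U}\cap \overline{B(K(g_{1}),\delta )}$ (contained in $\mathrm{int}(K(g_{2}))$ for small $\delta$, where $g_{2}^{n}\to A$ uniformly, so $g_{2}^{n}(\cdot )\notin U$ eventually) and the compact set $\overline{U}\setminus B(K(g_{1}),\delta )$ (contained in $\CCI \setminus K(g_{1})$, where $g_{1}^{n}\to \infty$ uniformly, so $g_{1}^{n}(\cdot )\notin U$ eventually); no point can then lie in both preimages.

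For comparison, the paper bypasses the open-set-condition region entirely: it fixes $\epsilon >0$ with $B(J(g_{1})\cup J(g_{2}),2\epsilon )\cap B(P(g_{1})\cup P(g_{2}),2\epsilon )=\emptyset$ and chooses $m$ so that for $n\geq m$ one has $g_{2}^{n}(K(g_{1}))\subset K(g_{1})$, $\bigcup _{i}g_{i}^{-n}(B(J,\epsilon ))\subset B(J,\epsilon )$ with $\bigcap _{i}g_{i}^{-n}(\overline{B(J,\epsilon )})=\emptyset$, and $\bigcup _{i}g_{i}^{n}(B(P,\epsilon ))\subset B(P,\epsilon )$. Backward invariance of $B(J,\epsilon )$ plus Montel give $J(G_{n})\subset \overline{B(J,\epsilon )}$, so the empty common preimage yields the disjointness at once; forward invariance of $B(P,\epsilon )$ together with $CV(g_{i}^{n})\subset P(g_{i})$ gives $P(G_{n})\subset \overline{B(P,\epsilon )}$, hence hyperbolicity --- note that it is this forward invariance under both generators that controls arbitrary words of $G_{n}$, something your case-by-case discussion of individual critical orbits does not quite deliver. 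The uniform-convergence facts underlying that choice of $m$ are exactly the ones your route needs as well, so whichever packaging you prefer, they must be isolated and proved rather than gestured at.
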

\begin{proof}
Let $\epsilon >0$ be a number such that 
$B(J(g_{1})\cup J(g_{2}),2\epsilon )\cap B(P(g_{1})\cup P(g_{2}),2\epsilon )=\emptyset .$  
Let $m\in \NN $ be a number such that for each $n\in \NN $ with $n\geq m$, we have 
$g_{2}^{n}(K(g_{1}))\subset K(g_{1})$, 
$\bigcap _{i=1}^{2}g_{i}^{-n}(\overline{B(J(g_{1})\cup J(g_{2}),\epsilon )})
=\emptyset $, $\bigcup _{i=1}^{2}g_{i}^{-n}(B(J(g_{1})\cup J(g_{2}),\epsilon ))\subset 
B(J(g_{1})\cup J(g_{2}),\epsilon )$, and 
$\bigcup _{i=1}^{2}g_{i}^{n}(B(P(g_{1})\cup P(g_{2}),\epsilon ))\subset 
B(P(g_{1})\cup P(g_{2}),\epsilon ).$ 
%For each $n\geq m$, let $h_{i,n}=g_{i}^{n}$ and $G_{n}=\langle h_{1,n},h_{2,n}\rangle .$ 
Let $n\geq m.$ 
 Then for each $n\geq m$, $J(G_{n})\subset \overline{B(J(g_{1})\cup J(g_{2}),\epsilon )}.$ Hence 
$h_{i,n}^{-1}(J(G_{n}))\cap h_{2,n}^{-1}(J(G_{n}))=\emptyset .$ 
%$ h_{i,n}^{-1}(J(G_{n}))\cap h_{2,n}^{-1}(J(G_{n}))
Since $\bigcup _{i=1}^{2}\mbox{CV}(h_{i,n})\subset P(g_{1})\cup P(g_{2})$, 
we obtain that $P(G_{n})= \overline{G_{n}^{\ast }(\bigcup _{i=1}^{2}\mbox{CV}(h_{i,n}))}
\subset \overline{B(P(g_{1})\cup P(g_{2}),\epsilon )}$, where 
CV$(\cdot )$ denotes the set of all critical values. Therefore 
$J(G_{n})\cap P(G_{n})=\emptyset .$ Thus $G_{n}$ is hyperbolic. 
Furthermore, 
$\emptyset \neq K(g_{1})\subset \hat{K}(G_{n}).$ 
Thus we have proved our proposition. 
\end{proof}
\begin{prop}
\label{p:hypdispur}
Let $m\in \NN $ and let $g=(g_{1},\ldots ,g_{m})\in {\cal P}^{m}.$ 
Let $G=\langle g_{1},\ldots ,g_{m}\rangle .$ 
Suppose that $g_{i}^{-1}(J(G))\cap g_{j}^{-1}(J(G))=\emptyset $ for each $(i,j)$ with $i\neq j$, 
that $G$ is hyperbolic, and that $\hat{K}(G)\neq \emptyset .$ Then, 
there exists a neighborhood $U$ of $g$ in ${\cal P}^{m}$ such that 
for each $h=(h_{1},\ldots ,h_{m})\in U$, 
setting $H=\langle h_{1},\ldots ,h_{m}\rangle $, 
we have that  $h_{i}^{-1}(J(H))\cap h_{j}^{-1}(J(H))=\emptyset $ for each $(i,j)$ with $i\neq j$, 
that $H$ is hyperbolic, and that $\hat{K}(H)\neq \emptyset .$ 
\end{prop}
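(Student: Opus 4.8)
\textbf{Proof proposal for Proposition~\ref{p:hypdispur}.}
The plan is to show that each of the three conditions --- hyperbolicity of $H$, disjointness $h_i^{-1}(J(H))\cap h_j^{-1}(J(H))=\emptyset$ for $i\neq j$, and $\hat K(H)\neq\emptyset$ --- is an open condition in $(h_1,\ldots,h_m)\in{\cal P}^m$ near $g$, and then intersect the three neighborhoods. The guiding principle throughout is that hyperbolic rational semigroups generated by a compact set of maps are stable under perturbation: $G=\langle g_1,\ldots,g_m\rangle$ hyperbolic means $P(G)\subset F(G)$, and since $P(G)=\overline{G^\ast(\bigcup_i\{\text{critical values of }g_i\})}$ (Remark~\ref{r:pg}) depends ``continuously'' on the generators in a suitable sense, a small perturbation keeps the postcritical set inside the Fatou set. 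This type of structural stability for expanding/hyperbolic semigroups is exactly what is used implicitly in Proposition~\ref{semihyposcexprop}(c) and Proposition~\ref{p:hyphigh}, and the mechanism (uniform expansion on a neighborhood of $J(G)$, together with boundedness of the forward orbit of the critical values away from $J(G)$) persists under perturbation.

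First I would make the expansion quantitative. Since $G$ is hyperbolic and $g_i\in{\cal P}$, by Remark~\ref{r:hypdeg2sh} $G$ is semi-hyperbolic with $F(G)\neq\emptyset$, and by \cite[Theorem 2.17]{S4} (as invoked in the proof of Theorem~\ref{t:hdiffornd}) $G$ is expanding: there is a $k\in\NN$ and constants so that $\inf_{z\in\tilde J(f)}\|(f^k)'(z)\|_s\geq 4$, where $f$ is the skew product associated with $\Gamma=\{g_1,\ldots,g_m\}$. Choose $\epsilon>0$ with $B(J(G),2\epsilon)\cap B(P(G),2\epsilon)=\emptyset$ (possible since $P(G)\subset F(G)$ is compact and disjoint from $J(G)$). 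For $h=(h_1,\ldots,h_m)$ close to $g$ in ${\cal P}^m$ one has, by a standard normal-families / Montel argument together with the backward self-similarity $J(H)=\bigcup_j h_j^{-1}(J(H))$ (Lemma~\ref{l:bss}), that $J(H)\subset \overline{B(J(G),\epsilon)}$; likewise $P(H)=\overline{H^\ast(\bigcup_i \text{CV}(h_i))}\subset \overline{B(P(G),\epsilon)}$, using that the critical values of $h_i$ are close to those of $g_i$ and that forward orbits under the (uniformly expanding, hence locally-contracting-inverse) semigroup stay trapped near $P(G)$. These two inclusions together give $J(H)\cap P(H)=\emptyset$, i.e. $H$ is hyperbolic, exactly as in the proof of Proposition~\ref{p:hyphigh}.

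Next, the disjointness condition. Since $g_i^{-1}(J(G))\cap g_j^{-1}(J(G))=\emptyset$ for $i\neq j$ and these are compact sets, there is $r_0>0$ with $B(g_i^{-1}(J(G)),r_0)\cap B(g_j^{-1}(J(G)),r_0)=\emptyset$ for all $i\neq j$. For $h$ near $g$, $h_i^{-1}(J(H))$ is contained in a small neighborhood of $g_i^{-1}(J(G))$ (again by $J(H)\subset\overline{B(J(G),\epsilon)}$ and continuity of $z\mapsto h_i^{-1}(\cdot)$ for polynomials of the given degree), so the sets $h_i^{-1}(J(H))$ remain pairwise disjoint. Finally, for $\hat K(H)\neq\emptyset$: fix a point $w\in\hat K(G)$, so $\{g(w)\mid g\in G\}$ is bounded in $\CC$; since $\hat K(G)$ is a compact subset of $\CC$ with $G(\hat K(G))\subset\hat K(G)$ and (by Lemma~\ref{ksetfundlem1}) $\operatorname{int}(\hat K(G))\cap F(G)=\operatorname{int}(\hat K(G))$, and since $G$ is hyperbolic, $\hat K(G)$ has nonempty interior contained in $F(G)$; a compact neighborhood $Q$ of a point of $\operatorname{int}(\hat K(G))$ lying in $F(G)$ is, by the attractor description in Lemma~\ref{l:shatt1}, uniformly contracted toward $A(G)\subset F(G)$ under all words, hence $Q\subset\hat K(H)$ for $h$ close enough to $g$ (the forward orbit of $Q$ under $H$ stays in a fixed compact subset of $\CC$). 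I expect the main obstacle to be the clean justification that $J(H)$ and $P(H)$ vary upper-semicontinuously in the required way --- concretely, that for $h$ near $g$ one genuinely has $J(H)\subset B(J(G),\epsilon)$ and the forward $H$-orbits of the critical values stay in $B(P(G),\epsilon)$ --- since Julia sets of semigroups are not continuous in general; the resolution is to use semi-hyperbolicity of $G$, which by Remark~\ref{r:mhdt} and \cite[Theorem 2.14]{S4} gives continuity of $\gamma\mapsto J_\gamma$ and $J_\gamma=\hat J_{\gamma,\Gamma}$, and then deduce the inclusion $J(H)\subset B(J(G),\epsilon)$ from $\pi_{\CCI}(\tilde J(f))=J(G)$ (Lemma~\ref{l:pctjjg}) applied to perturbed generator systems. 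Once these inclusions are in hand, the three openness statements are routine, and taking the intersection of the three neighborhoods $U$ finishes the proof. \qed
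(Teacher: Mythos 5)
Your overall strategy is sound and, for the $\hat{K}(H)\neq\emptyset$ part, coincides with the paper's; the main difference is in the first two conditions. The paper does not re-prove stability of hyperbolicity and of the separation condition $h_i^{-1}(J(H))\cap h_j^{-1}(J(H))=\emptyset$: it cites \cite[Theorem 2.4.1]{S1} as a black box, which delivers a single neighborhood $V$ of $g$ on which both conditions persist. Your direct sketch (uniform expansion on a neighborhood of $J(G)$, $2\epsilon$-separation of $J(G)$ from $P(G)$, upper semicontinuity of $J(H)$ via semi-hyperbolicity and continuity of $\gamma\mapsto J_\gamma$, and trapping of the forward orbits of the critical values near $P(G)$) is essentially a proof of that cited theorem, so it is more self-contained but also where most of the unfinished technical work in your write-up lives.

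For the third condition, the one step you should tighten is the passage from ``$Q$ is uniformly contracted toward $A(G)$ under all $G$-words'' to ``the forward orbit of $Q$ under $H$ stays in a fixed compact subset of $\CC$.'' Contraction under $G$-words does not transfer to $H$-words over unboundedly many iterations, because the perturbation error can accumulate; also note $\infty\in A(G)$, so attraction to $A(G)$ alone does not even give boundedness. The paper's resolution is to build a genuine trapping region of finite combinatorial depth: it takes a minimal set $L\subset\hat{K}(G)$, uses Theorem~\ref{t:mtauspec}-\ref{t:mtauspec11} and hyperbolicity to get $L\subset P(G)\subset F(G)$, hence $L\subset\mbox{int}(\hat{K}(G))$ by Lemma~\ref{ksetfundlem1}, chooses $\overline{B(L,2\epsilon)}\subset\mbox{int}(\hat{K}(G))$, and applies Lemma~\ref{l:shatt1} to find one fixed $l\in\NN$ with $g_{i_l}\cdots g_{i_1}(B(L,2\epsilon))\subset B(L,\epsilon)$ for all words of length $l$. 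This is a finite, hence open, condition: for $h$ near $g$ every length-$l$ $H$-word maps $B(L,2\epsilon)$ into itself, and splitting an arbitrary $H$-word into length-$l$ blocks plus a bounded remainder gives $B(L,2\epsilon)\subset\hat{K}(H)$. Your $Q$ plays the role of $B(L,2\epsilon)$, but you need to record the fixed word length $l$ and the strict margin $B(L,\epsilon)\subset B(L,2\epsilon)$ explicitly to make the openness argument close. (Your assertion that $\mbox{int}(\hat{K}(G))\neq\emptyset$ is correct but should be routed through $J_{\ker}(G)=\emptyset$ from Lemma~\ref{l:disjker} together with Theorem~\ref{kerJthm3}-\ref{kerJthm3-1}, or through the minimal-set argument above, rather than attributed to hyperbolicity alone.)
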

\begin{proof}
By \cite[Theorem 2.4.1]{S1}, 
there exists a neighborhood $V$ of $g$ such that 
for each $h=(h_{1},\ldots ,h_{m})\in V$, 
setting $H=\langle h_{1},\ldots ,h_{m}\rangle $, 
we have that  $h_{i}^{-1}(J(H))\cap h_{j}^{-1}(J(H))=\emptyset $ for each $(i,j)$ with $i\neq j$, 
and that $H$ is hyperbolic. 
Since $\hat{K}(G)\neq \emptyset $, 
there exists a minimal set $L$ for $(G,\CCI )$ with $L\subset \hat{K}(G).$ 
By Theorem~\ref{t:mtauspec}-\ref{t:mtauspec11}, 
$L\subset A(G)\subset P(G).$ Since $G$ is hyperbolic, 
it follows that $L\subset \mbox{int}(\hat{K}(G)).$ 
Let $\epsilon >0$ be a number such that 
$\overline{B(L, 2\epsilon )}\subset \mbox{int}(\hat{K}(G)).$ 
By Lemma~\ref{l:shatt1},  
there exists an $l\in \NN $ such that 
for each $(i_{1},\ldots ,i_{l})\in \{ 1,\ldots ,m\} ^{l}$, 
$g_{i_{l}}\cdots g_{i_{1}}(B(L,2\epsilon ))\subset B(L,\epsilon ).$ 
Then there exists a neighborhood $W$ of $g$ in ${\cal P}^{m}$ such that 
for each $(i_{1},\ldots ,i_{l})\in \{ 1,\ldots ,m\} ^{l}$ and 
for each $h=(h_{1},\ldots ,h_{m})\in W$, 
$h_{i_{l}}\cdots h_{i_{1}}(B(L,2\epsilon ))\subset B(L,2\epsilon ).$ 
Hence for each $h\in W$, 
$B(L,2\epsilon )\subset \hat{K}(\langle h_{1},\ldots ,h_{m}\rangle ).$ 
Let $U=V\cap W.$ 
Then this $U$ is the desired neighborhood of $g.$   
\end{proof}
We now give an example to which we can apply Lemma~\ref{l:nondiff1}-\ref{l:nondiff1-2}.
\begin{prop}
\label{p:nondiffallj}
Let $(g_{1},g_{2})\in {\cal P}^{2}$ and let $(p_{1},p_{2})\in {\cal W}_{2}.$ 
For each $n\in \NN $, we set 
$h_{1,n}:= g_{1}^{n}, h_{2,n}:=g_{2}^{n}$, $G_{n}:=\langle h_{1,n},h_{2,n}\rangle $, and 
$\tau _{n}:= \sum _{j=1}^{2}p_{j}\delta _{h_{j,n}}.$ 
Suppose that $\bigcap _{j=1}^{2}g_{j}^{-1}(J(G_{1}))=\emptyset $, $G_{1}$ is hyperbolic  
 and 
$\hat{K}(G_{1})\neq \emptyset .$ 
Then, there exists an $m\in \NN $ such that 
for each $n\in \NN $ with $n\geq m$, 
{\em (1)} $G_{n}$ is hyperbolic, {\em (2)} $\bigcap _{j=1}^{2}h_{j,n}^{-1}(J(G_{n}))=\emptyset $, 
{\em (3)} $\hat{K}(G_{n})\neq \emptyset $, 
{\em (4)} $(\emLSfc )_{nc}\neq \emptyset $, 
{\em (5)} for each $j=1,2,$, 
$1<p_{j}\min \{ \| h_{j,n}'(z)\| _{s}\mid 
z\in h_{j,n}^{-1}(J(G_{n}))\} $, 
and 
{\em (6)} for each $z_{0}\in J(G_{n})$ and for each 
$\varphi \in (\emLSfc )_{nc}$, 
$\limsup _{z\rightarrow z_{0}}\frac{|\varphi (z)-\varphi (z_{0})|}{d(z,z_{0})}=\infty $ and 
$\varphi $ is not differentiable at $z_{0}.$ 
\end{prop}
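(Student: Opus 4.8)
The plan is to reduce Proposition~\ref{p:nondiffallj} to a combination of earlier results, exploiting the stability of hyperbolicity and of the disjointness condition under taking iterates. First I would establish items (1), (2), (3). Since $G_{1}=\langle g_{1},g_{2}\rangle$ is hyperbolic and $g_{i}^{-1}(J(G_{1}))$ are disjoint, I would apply Proposition~\ref{p:hyphigh}-type reasoning (or simply observe that $J(G_{n})=J(\langle g_{1}^{n},g_{2}^{n}\rangle)$ stays close to $J(G_{1})$ while $P(G_{n})$ stays close to $P(G_{1})$, both under control since $G_{1}$ is hyperbolic): choose $\epsilon>0$ with $B(J(G_{1}),2\epsilon)\cap B(P(G_{1}),2\epsilon)=\emptyset$, and pick $m$ so that for $n\geq m$ one has $\bigcup_{i}g_{i}^{-n}(B(J(G_{1}),\epsilon))\subset B(J(G_{1}),\epsilon)$, $\bigcap_{i}g_{i}^{-n}(\overline{B(J(G_{1}),\epsilon)})=\emptyset$, and $\bigcup_{i}g_{i}^{n}(B(P(G_{1}),\epsilon))\subset B(P(G_{1}),\epsilon)$. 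This gives $J(G_{n})\subset \overline{B(J(G_{1}),\epsilon)}$, hence $h_{1,n}^{-1}(J(G_{n}))\cap h_{2,n}^{-1}(J(G_{n}))=\emptyset$, and $P(G_{n})\subset\overline{B(P(G_{1}),\epsilon)}$, giving $J(G_{n})\cap P(G_{n})=\emptyset$, so $G_{n}$ is hyperbolic. For (3), since $\hat{K}(G_{1})\neq\emptyset$, there is a minimal set $L$ for $(G_{1},\CCI)$ with $L\subset\hat{K}(G_{1})$; by hyperbolicity $L\subset\mathrm{int}(\hat{K}(G_{1}))$, and by Lemma~\ref{l:shatt1} some forward image of a neighborhood $B(L,2\epsilon')$ lands in $B(L,\epsilon')$, so enlarging $m$ if necessary, $B(L,2\epsilon')\subset\hat{K}(G_{n})$ for $n\geq m$, in particular $\hat{K}(G_{n})\neq\emptyset$.

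Next I would handle (4). By Lemma~\ref{l:disjker}, the disjointness established in (2) gives $J_{\ker}(G_{n})=\emptyset$. Since $\hat{K}(G_{n})\neq\emptyset$ from (3), Remark~\ref{r:pnckem} (which says, under $J_{\ker}(G_{\tau})=\emptyset$, that $\hat{K}(G_{\tau})\neq\emptyset$ iff $(\LSfc)_{nc}\neq\emptyset$) immediately yields $(\mathrm{LS}({\cal U}_{f,\tau_{n}}(\CCI)))_{nc}\neq\emptyset$. Alternatively, this also follows from Remark~\ref{r:tinftyuns} together with Theorem~\ref{kerJthm2}: $T_{\infty,\tau_{n}}$ is a non-constant element of $(\LSfc)_{nc}$.

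Then comes (5), which I expect to be the main obstacle, because it is the only genuinely quantitative claim: I must show that for large $n$ and $j=1,2$, $p_{j}\cdot\min\{\|h_{j,n}'(z)\|_{s}:z\in h_{j,n}^{-1}(J(G_{n}))\}>1$. The idea is that $h_{j,n}=g_{j}^{n}$ and the derivative of an $n$-fold iterate on the Julia set of a hyperbolic semigroup grows exponentially in $n$. More precisely, since $G_{1}$ is hyperbolic, $\langle g_{1},g_{2}\rangle$ is expanding in the sense of \cite{S6} on $\tilde{J}$ (cf. the proof of Theorem~\ref{t:hdiffornd}), so there is a constant $\Lambda>1$ with $\|g_{j}'(w)\|_{s}\geq\Lambda$ for all $w$ in a neighborhood of the relevant preimage sets; applying the chain rule along the $n$ backward steps of $g_{j}$ starting from a point of $J(G_{n})$ (which is $\epsilon$-close to $J(G_{1})$) gives $\|h_{j,n}'(z)\|_{s}=\|(g_{j}^{n})'(z)\|_{s}\geq\Lambda^{n}$ for $z\in h_{j,n}^{-1}(J(G_{n}))$. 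The care needed is to verify that the whole orbit $z,g_{j}(z),\ldots,g_{j}^{n-1}(z)$ stays in the expanding region — this is where I would invoke that $J(G_{n})$ is close to $J(G_{1})$ and that $g_{j}^{-k}(J(G_{n}))$ remains trapped near $J(G_{1})$, using the forward-invariance of $\overline{B(J(G_{1}),\epsilon)}$ under the relevant inverse branches from step (1). Once $\|h_{j,n}'(z)\|_{s}\geq\Lambda^{n}$ holds, since $p_{j}>0$ is fixed, choosing $m$ large enough that $p_{j}\Lambda^{m}>1$ for $j=1,2$ gives (5).

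Finally, (6) is a direct application of Lemma~\ref{l:nondiff1}-\ref{l:nondiff1-2}: its hypotheses are exactly $h_{i,n}^{-1}(J(G_{n}))\cap h_{j,n}^{-1}(J(G_{n}))=\emptyset$ (item (2)) and $1<p_{j}\min\{\|h_{j,n}'(z)\|_{s}:z\in h_{j,n}^{-1}(J(G_{n}))\}$ for each $j$ (item (5)), and its conclusion is precisely that for each $z_{0}\in J(G_{n})$ and each $\varphi\in(\mathrm{LS}({\cal U}_{f,\tau_{n}}(\CCI)))_{nc}$ one has $\limsup_{z\rightarrow z_{0}}|\varphi(z)-\varphi(z_{0})|/d(z,z_{0})=\infty$ and $\varphi$ is not differentiable at $z_{0}$. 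Taking $m$ to be the maximum of the finitely many thresholds arising in (1)–(5) completes the proof. The only real work is the uniform exponential lower bound on $\|(g_j^n)'\|_s$ along Julia-set orbits and the bookkeeping of neighborhoods; everything else is assembling cited results.
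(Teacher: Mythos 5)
Your proof is correct and takes essentially the same route as the paper, whose entire argument is to deduce item (5) from the expansion property of the hyperbolic semigroup $G_{1}$ (via \cite[Theorem 2.17]{S4}) and then invoke Lemma~\ref{l:nondiff1}-\ref{l:nondiff1-2}, leaving (1)--(4) implicit. (Incidentally, (1)--(3) and the orbit-trapping issue you flag in (5) all follow at once from the inclusion $G_{n}\subset G_{1}$, which gives $J(G_{n})\subset J(G_{1})$, $P(G_{n})\subset P(G_{1})$, $\hat{K}(G_{1})\subset \hat{K}(G_{n})$, and $g_{j}^{k}(z)\in g_{j}^{-(n-k)}(J(G_{1}))\subset J(G_{1})$ for every $z\in h_{j,n}^{-1}(J(G_{n}))$ and $0\leq k\leq n$, so the neighborhood bookkeeping is not needed.)
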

\begin{proof}
Since $G_{1}$ is hyperbolic, by \cite[Theorem 2.17]{S4}, 
there exists an $m\in \NN $ such that for each $n\in \NN $ with $n\geq m$, 
$1<p_{j}\min \{ \| h_{j,n}'(z)\| _{s}\mid z\in h_{j,n}^{-1}(J(G_{n}))\} .$ 
By Lemma~\ref{l:nondiff1}-\ref{l:nondiff1-2}, our proposition holds. 
\end{proof}
\begin{rem}
\label{r:manydhk}
Combining Proposition~\ref{semihyposcexprop}, Proposition~\ref{p:hyphigh}, 
 Proposition~\ref{p:hypdispur}, Proposition~\ref{p:hyppjke}, and Remark~\ref{r:pnckem},  
 we obtain many examples to which 
we can apply Theorem~\ref{t:mtauspec}, Lemma~\ref{l:lsncnonc}, Proposition~\ref{p:erghol}, 
Theorem~\ref{t:hnondiff}, Theorem~\ref{t:hdiffornd}, Corollary~\ref{c:hypdisjc}, 
and Theorem~\ref{t:hholder}. 
Moreover, combining Proposition~\ref{semihyposcexprop}, Proposition~\ref{p:hyphigh}, 
 Proposition~\ref{p:hypdispur} and Proposition~\ref{p:nondiffallj}, 
 we obtain many examples to which we can apply Lemma~\ref{l:nondiff1}-\ref{l:nondiff1-2}. 
\end{rem}
We now give an example of $\tau \in {\frak M}_{1,c}({\cal P})$ such that $J_{\ker }(G_{\tau })=\emptyset $ and such that there exists a minimal set $L\in \Min(G_{\tau },\CCI )$ with 
$L\cap J(G_{\tau })\neq \emptyset .$ 
\begin{ex}
\label{r:parajkere}
Let $f_{1}\in {\cal P}$ and suppose that $f_{1}$ has a parabolic cycle $\alpha .$ 
Let $b$ be a point of the immediate basin of $\alpha $. 
Let $d\in \NN $ with $d\geq 2$ such that $(\deg (f_{1}),d)\neq (2,2).$ 
Then by Proposition~\ref{semihyposcexprop}, 
there exists a $c>0$ such that for each $a\in \CC $ with $0<|a|<c$, 
setting $f_{2}:= a(z-b)^{d}+b$ and $G=\langle f_{1},f_{2}\rangle $, 
we have $f_{1}^{-1}(J(G))\cap f_{2}^{-1}(J(G))=\emptyset $ and 
$G(K(f_{1}))\subset K(f_{1})\subset \mbox{int}(K(f_{2})).$ 
Let $p=(p_{1},p_{2})\in {\cal W}_{2}$ and let 
$\tau =\sum _{i=1}^{2}p_{i}\delta _{f_{i}}.$ 
Then by Lemma~\ref{l:disjker}, 
$J_{\ker }(G_{\tau })=J_{\ker }(G)=\emptyset .$ 
Since $G(K(f_{1}))\subset K(f_{1})\subset \mbox{int}(K(f_{2}))$, 
there exists a minimal set $L$ for $(G_{\tau },\CCI )$ such that 
$L\subset K(f_{1}).$ Since $b$ belongs to the immediate basin of $\alpha $ for $f_{1}$, 
it  follows that $\alpha \subset L.$ In particular, 
$L\cap J(G_{\tau })\neq \emptyset .$   
\end{ex} 
We now give an example of small perturbation of a single map. 
\begin{ex}
\label{ex:FS}
Let $\Bbb{D}:= \{ z\in \CC \mid |z|<1\} .$ 
Let $R:\CCI \times \Bbb{D}\rightarrow \CCI $ be a holomorphic map 
such that for each $z\in \CCI $, 
$c\mapsto R(z,c)$ is non-constant on $\Bbb{D}.$ 
We set $R_{c}(z):= R(z,c)$ for each $(z,c)\in \CCI \times \Bbb{D}.$ 
Let $m\in \NN $ and suppose that $R_{0}$ has exactly $m$ attracting cycles 
$\alpha _{1},\ldots ,\alpha _{m}.$ For each $j$,  
let $A_{j}$ be the immediate basin of $\alpha _{j}$ for $R_{0}.$  Then by \cite[Theorem 0.1]{FS} and Theorem~\ref{t:mtauspec}, 
there exists a $\delta _{0}>0$ such that for each $0<\delta <\delta _{0}$,    
denoting by $\tau _{\delta }$ the normalized $2$-dimensional Lebesgue measure 
on $\overline{D(0,\delta )}$, we have 
(1) $\tau _{\delta }$ is mean stable, (2) $J_{\ker }(G_{\tau _{\delta }})=\emptyset $, 
(3) $\sharp \Min(G_{\tau _{\delta }},\CCI )=m$, 
(4) for each $L\in \Min(G_{\tau _{\delta }},\CCI )$,  
there exists a $j$ such that $L\subset A_{j}$, and 
(5) for each $L\in \Min(G_{\tau _{\delta }},\CCI )$, $r_{L}:= \dim _{\CC }(\LSfl )$ is equal to 
the period of $\alpha _{j}$ for $R_{0}.$    
\end{ex} 
We now give an example of higher dimensional random complex dynamics to which 
we can apply Theorem~\ref{kerJthm1}. 
\begin{ex}
\label{ex:hdjker}
Let $h\in \mbox{NHM}(\CC \Bbb{P}^{n}).$ Suppose that 
int$(J(h))=\emptyset $ and there exist finitely many attracting periodic cycles 
$\alpha _{1},\ldots ,\alpha _{m}$ such that for every $z\in F(h)$, $d(h^{n}(z),\bigcup _{j=1}^{m}\alpha _{j})
\rightarrow 0$ as $n\rightarrow \infty .$ 
Then, there exists a compact neighborhood $\G $ of $h$ 
in  $\mbox{NHM}(\CC \Bbb{P}^{n})$ such that $\G $ is mean stable, 
such that $J_{\ker }(\langle \G \rangle )=\emptyset $, 
and such that for any $\tau \in {\frak M}_{1}( \mbox{NHM}(\CC \Bbb{P}^{n}))$ with 
$\G _{\tau }=\G $, 
Leb$_{2n}(J_{\g })=0$ for $\tilde{\tau }$-a.e. 
$\g \in (\mbox{NHM}(\CC \Bbb{P}^{n}))^{\NN }.$  
For, if $\G $ is small enough, then there exists a neighborhood $U$ of 
$\bigcup _{j=1}^{m}\alpha _{j}$ such that $\overline{\langle \G \rangle (U)} \subset U\subset \overline{U}
\subset F(\langle \G \rangle ).$  
Moreover, for each $z\in \CC \Bbb{P}^{n}$, there exists a $g\in \G $ such that 
$g(z)\in F(h).$ Thus $\G $ is mean stable and $J_{\ker }(\langle \G \rangle )=\emptyset .$ 
By Theorem~\ref{kerJthm1}, it follows that for each $\tau \in {\frak M}_{1}(\mbox{NHM}(\CC \Bbb{P}^{n}))$ 
with $\G _{\tau }=\G $, Leb$_{2n}(J_{\g })=0$ for $\tilde{\tau }$-a.e. 
$\g \in  (\mbox{NHM}(\CC \Bbb{P}^{n}))^{\NN }.$ 
\end{ex} 
We now give an example of $\tau $ with $J_{\ker }(G_{\tau })\neq \emptyset $ 
to which we can apply Theorem~\ref{t:dimjpt0jkn}. 
\begin{ex}
\label{ex:hypcnc}
Let $0<a<1$ and 
let $g_{1}(z)=z^{2}.$ Let $g_{2}\in {\cal P}$ be such that 
$J(g_{2})=\{ z\in \CC \mid |z+a|=|1+a|\} $, $g_{2}(1)=1$ and $g_{2}([1,\infty ))\subset [1,\infty ).$  
Let $l\in \NN $ with $l\geq 2$ and let $\alpha \subset J(g_{2})$ be a 
repelling cycle of $g_{2}$ of period $l.$ Then there exists an $m\in \NN $ such that 
$P(\langle g_{1}^{m},g_{2}^{m}\rangle )\subset F(\langle g_{1}^{m},g_{2}^{m}\rangle )$ and 
$g_{1}^{m}(\alpha )\subset F_{\infty }(\langle g_{1},g_{2}\rangle )\subset 
F_{\infty }(\langle g_{1}^{m},g_{2}^{m}\rangle ).$ 
Let $h_{1}:=g_{1}^{m}$ and $h_{2}:= g_{2}^{m}.$ 
Let $(p_{1},p_{2})\in {\cal W}_{2}$ and let $\tau := \sum _{i=1}^{2}p_{i}\delta _{h_{i}}.$ 
Then we have $1\in J_{\ker }(G_{\tau })\cap \partial F_{\infty }(G_{\tau })$, 
$G_{\tau }$ is hyperbolic, and $\alpha \subset F_{pt}^{0}(\tau )$ (see Lemma~\ref{FJmeaslem2}).  
Thus $T_{\infty ,\tau }$ is discontinuous at $1$, $1\in J_{pt}^{0}(\tau )$, and 
$T_{\infty ,\tau }$ is continuous at each point of $\alpha $ (see Lemma~\ref{lem:fpt0tconti}). 
Moreover, by Theorem~\ref{t:dimjpt0jkn}, we have 
$\dim _{H}(J_{pt}^{0}(\tau ))\leq \mbox{MHD}(\tau )<2$, 
$J_{meas}(\tau )={\frak M}_{1}(\CCI )$, 
and $J_{pt}(\tau )=J(G_{\tau }).$ 
\end{ex}
We now give an example of $\tau $ with $J_{\ker }(G_{\tau })\neq \emptyset $ to which 
we can apply Theorem~\ref{t:jkucuh}.
\begin{ex}
\label{ex:pjne}
%Let $0<a<1.$ 
%Let $g_{1}(z):= z^{2}.$ Let $g_{2}\in {\cal P}$ be such that 
%$J(g_{2})=\{ z\in \CC \mid |z+a|=|1+a|\} $ and $g_{2}(1)=1.$ 
%Let $g_{3}\in {\cal P}$ be such that 
%$J(g_{3})=\{ z\in \CC \mid |z-\frac{1-a}{2}|=\frac{1+a}{2}\} $,  
%$g_{3}(1)=1$, and $g_{3}(-a)=-a.$  
Let $g_{1}(z)=z^{2}-1.$ Let 
$a=\frac{1+\sqrt{5}}{2}.$ Then 
$g_{1}(a)=a\in J(g_{1}).$ Moreover, $-1$ is a superattracting fixed point of 
$g_{1}^{2}.$  
Let $b:=\frac{a+(-1)}{2}.$ 
Then it is easy to see that $b$ belongs to 
the immediate basin $A_{1}$ of $0$ for the dynamics of $g_{1}^{2}.$ 
Let $g_{2}\in {\cal P}$ be such that 
$J(g_{2})=\{ z\in \CC \mid |z-b|=a-b\} $, $g_{2}(a)=a$ and 
$g_{2}(-1)=-1.$ 
Let $\epsilon >0$ be a small number so that 
$b-\epsilon $ belongs to $A_{1}.$  
Let $c=b-\epsilon .$ 
Let $g_{3}\in {\cal P}$ be such that 
$J(g_{3})=\{ z\in \CC \mid |z-c|=a-c\} $ and $g_{3}(a)=a.$ 
Then $b$ is an attracting fixed point of $g_{2}$, 
$c$ is an attracting fixed point of $g_{3}$, 
$\{ b,c\} $ is included in $A_{1}$, 
$\{ 0,c\} $ is included in the immediate basin $A_{2}$ of $b$ for $g_{2}$, 
and $\{ 0,b,-1\}$ is included in the immediate basin $A_{3}$ of $c$ for $g_{3}.$ 
 
Let $m\in \NN $ be sufficiently large and let 
$h_{1}=g_{1}^{2m}$, $h_{2}=g_{2}^{m}, $ and $h_{3}=g_{3}^{m}.$  
Let $G=\langle h_{1},h_{2},h_{3}\rangle .$ 
Then  $UH(G)\cap J(G)=P(G)\cap J(G)=\{ -1\} $, $-1\not\in J_{\ker }(G)$ 
and $a\in J_{\ker }(G).$ 
Let $(p_{1},p_{2},p_{3})\in {\cal W}_{3}$ and 
let $\tau =\sum _{i=1}^{3}p_{i}\delta _{h_{i}}.$  
By Theorem~\ref{t:jkucuh}, we obtain that 
(1) for $\tilde{\tau }$-a.e. $\gamma \in {\cal P}^{\NN }$,  
$\mbox{Leb}_{2}(J_{\gamma })
=\mbox{Leb}_{2}(\hat{J}_{\gamma , \Gamma _{\tau }})=0$, 
(2) $\mbox{Leb}_{2}(J_{pt}^{0}(\tau ))=0$, and 
(3) for Leb$_{2}$-a.e. $y\in \CCI $,  
$T_{\infty ,\tau }$ is continuous at $y.$ 
Moreover, since $-1$ is a superattracting fixed point of $h_{1}$ and 
$-1\in J(h_{2})$, 
setting $\rho =(h_{1},h_{1},h_{1},\ldots )\in X_{\tau }$, 
we have $-1 \in \mbox{int}(\hat{J}_{\rho , \Gamma _{\tau }})$ (see \cite[Theorem 1.6(2)]{S7}). 
Therefore for each $\beta \in \bigcup _{n\in \NN }\sigma ^{-n}(\rho )$, 
int$(\hat{J}_{\beta ,\Gamma _{\tau }})\neq \emptyset .$ Note that 
$\bigcup _{n\in \NN }\sigma ^{-n}(\rho )$ is dense in $X_{\tau }.$ 
Thus, (I) for $\tilde{\tau }$-a.e. $\gamma \in X_{\tau }$,  
$\mbox{Leb}_{2}(\hat{J}_{\gamma ,\Gamma _{\tau }})=0$, and 
(II) there exists a dense subset $B$ of $X_{\tau }$ such that for each 
$\beta \in B$, $\mbox{int}(\hat{J}_{\beta ,\Gamma _{\tau }})\neq \emptyset .$ 
\end{ex}
%\begin{rem}
%As we see in this paper, both the theory of rational semigroups and that of the random dynamics of 
%rational maps are related to each other very deeply. For the research of polynomial semigroups, 
%see \cite{S12} and  \cite{S10}. In \cite{S10}, many new phenomena which can hold in the dynamics of 
%polynomial semigroups 
%(or random dynamics of polynomial maps) but cannot hold in the usual dynamics of a single polynomial 
%map were found and systematically investigated. For example, in \cite{S12}, 
%it was shown that for each $n\in {\NN }\cup \{ \aleph _{0}\} $, there exists a finitely 
%generated polynomial semigroup $G\subset {\cal P}$ such that the cardinality of the 
%set of all connected components of $J(G)$ is equal to $n.$ This cardinality is related to 
%a new cohomology theory on the backward self-similar systems, which the author initiated (see \cite{S15}).  
%\end{rem}
%\newpage 

\end{document}